\newtheorem{thm}{Theorem}[section]
\newtheorem{theorem}[thm]{Theorem}
\newtheorem{lemma}[thm]{Lemma}
\newtheorem{proposition}[thm]{Proposition}
\newtheorem{corollary}[thm]{Corollary}
\newtheorem{remark}[thm]{Remark}
\newtheorem{definition}[thm]{Definition}
\newtheorem{example}[thm]{Example}
\newtheorem{conjecture}{Conjecture}[section]
\newcommand{\R}{{\mathbb{R}}}
\newcommand{\e}{{\varepsilon}}
\newcommand{\g}{{\gamma}}
\newcommand{\pa}{{\partial}}
\newcommand{\vphi}{{\varphi}}
\newcommand{\W}{{\mathcal{W}}}
\newcommand{\uB}{{\underline{B}}}
\newcommand{\PP}{{P}}
\newcommand{\Ric}{{\rm{Ric}}}
\newcommand{\Rm}{{\rm{Rm}}}
\newcommand{\vol}{{\rm{vol}}}
\newcommand{\Om}{\Omega}
\newcommand{\om}{\omega}
\newcommand{\na}{{\nabla}}
\newcommand{\ETA}{{{\lambda}}}
\newcommand{\BETA}{{\theta}}
\newcommand{\B}{{\mathcal{B}}}
\definecolor{grey}{rgb}{.7,.7,.7}
\numberwithin{equation}{section}
\title[$d_p$-convergence with entropy and scalar lower bounds]{$d_p$ convergence and $\epsilon$-regularity theorems for entropy and scalar curvature lower bounds}
 \author{Man-Chun Lee}
\address{Department of Mathematics, Northwestern University, 2033 Sheridan Road, Evanston, IL 60208}
\address{Mathematics Institute, Zeeman Building,
University of Warwick, Coventry CV4 7AL}
\email{mclee@math.northwestern.edu, Man.C.Lee@warwick.ac.uk}
\author{Aaron Naber}
\address{Department of Mathematics, Northwestern University, 2033 Sheridan Road, Evanston, IL 60208}
\email{anaber@math.northwestern.edu}
\author{Robin Neumayer}
\address{Department of Mathematics, Northwestern University, 2033 Sheridan Road, Evanston, IL 60208}
\email{neumayer@math.northwestern.edu}
\begin{document}
\maketitle
\begin{abstract}
Consider a sequence of Riemannian manifolds $(M^n_i,g_i)$ whose scalar curvatures and entropies are bounded from below by small constants $R_i,\mu_i \geq -\epsilon_i$.  The goal of this paper is to understand notions of convergence and  the structure of limits for such spaces.  As a first issue, even in the seemingly rigid case $\epsilon_i\to 0$, we will construct examples showing that from the Gromov-Hausdorff or Intrinsic Flat points of view,  such a sequence may converge wildly, in particular to metric spaces with varying dimensions and topologies and at best a Finsler-type structure.  On the other hand, we will see that these classical notions of convergence are the incorrect ones to consider.  Indeed, even a metric space is the wrong underlying category to be working on. \\ 

Instead, we will introduce a weaker notion of convergence called  $d_p$ convergence that is valid for a class of rectifiable Riemannian  spaces.  These rectifiable spaces will have a well-behaved topology, measure theory, and analysis.  This includes the existence of gradients of functions and absolutely continuous curves, though potentially there will be no reasonably associated distance function.  Under this $d_p$ notion of closeness, a space with almost nonnegative scalar curvature and small entropy bounds must in fact always be close to Euclidean space, and this will constitute our $\epsilon$-regularity theorem.  In particular, any sequence $(M^n_i,g_i)$ with lower scalar curvature and entropies tending to zero must $d_p$ converge to Euclidean space.  \\

More generally, we have a compactness theorem saying that sequences of Riemannian manifolds $(M^n_i,g_i)$ with small lower scalar curvature and entropy bounds $R_i,\mu_i \geq -\epsilon$ must $d_p$ converge to such a  rectifiable  Riemannian space $X$.  In the context of the examples from the first paragraph, it may be that the distance functions of $M_i$ are degenerating, even though in a well-defined sense the analysis cannot be.  Applications for manifolds with small scalar and entropy lower bounds include an $L^\infty$-Sobolev embedding and apriori $L^p$ scalar curvature bounds for $p<1$.

\end{abstract}

\tableofcontents
\section{Introduction}

It is a well known theme that understanding the structure of a manifold $(M^n,g)$ under restrictions on curvature is essentially equivalent to understanding the structure of singular limits $M^n_i\to X$.  During the early days of studying manifolds with bounded curvature operator, it was sufficient to restrict the study to manifold limits $X$ under $C^{k,\alpha}$ convergence \cite{CheegerThesis, CheegerFiniteness}.  When the analysis of spaces with lower and bounded Ricci curvature began, it became necessary to expand this point of view to general metric space limits $X$, and to discuss convergence in the Gromov-Hausdorff sense \cite{Gromov2007}.  This allowed for the necessary formation of singularities in possible limit spaces.  It also became quite important at this stage to distinguish between {\it collapsed} and {\it noncollapsed} limits, where noncollapsing of the sequence $M^n_i$ can be understood as the existence of a uniform lower bound on the volumes of balls.  A key result in this context, and indeed the beginning point for the regularity theory, is an $\epsilon$-regularity theorem.  This says that if the volume of a unit ball is close to that of the Euclidean ball, then that ball must be close both topologically and geometrically to a Euclidean ball.\\

Our goal in this paper is to study manifolds and sequences $M^n_i$ under lower bounds on scalar curvature.  The correct replacement for {\it noncollapsing} in this context is a lower bound on the entropy $\mu$ of the manifold, or almost equivalently one could ask for bounds on the $L^1$-Sobolev constant, though we will see there are unnatural aspects to that assumption.  Our goal is then to prove and understand the corresponding $\epsilon$-regularity in this context: a statement which should say that if the scalar and entropy lower bounds are small, then a ball should be close to a Euclidean ball.\\

It is already understood from \cite{SormaniSurvey} that there is an immediate problem when dealing with lower scalar curvature bounds, namely the notion Gromov-Hausdorff closeness cannot be the correct one.  The examples in \cite{SormaniSurvey} mimic those from minimal surface theory, and show that small volume tentacles may appear when only a lower scalar curvature bound is assumed.  One possible fix for issues like this is the Intrinsic Flat distance \cite{sormani2011intrinsic}.  We will see in this paper that the problem is actually much worse.  We will build examples, see Theorems \ref{thm: counterex}--\ref{thm: fixed delta} and Section~\ref{examples-entropy}, which show that even under small lower bounds on scalar curvature and entropy, the Gromov-Hausdorff and Intrinsic Flat limits may be completely wild.  Wild here can include jumps in topology, dimension, and the formation of Finsler or worse types of geometries.  Fundamentally, the issue at hand is that distance functions simply do not behave well under lower scalar curvature and entropy bounds, and therefore any notion of convergence which is based on the distance function must correspondingly fail.  From the correct perspective, this should not be surprising as the distance function is closely related to the $W^{1,\infty}$ behavior of functions, and it may simply be too much to ask that this remains uniformly controlled in such a sequence.  Indeed, it is now well understood from the study of RCD spaces \cite{Ambrosio_CalcHeat,Bakry-Emery,Sturm_ricci,LottVillani} that (said correctly) $W^{1,\infty}$-control on the analysis is essentially equivalent to lower bounds on {\it Ricci} curvature, and therefore one should almost expect distance functions to break down in the context of only scalar curvature bounds. \\

In order to solve this problem we will introduce a new notion of convergence, $d_p$ convergence,  in this paper.  The effect of this will be to take the required $W^{1,\infty}$-control needed for convergence of distance functions, and reduce it to a required $W^{1,p}$-control for this weaker notion of convergence.   The notion of $d_p$ convergence is based on associating to a manifold, or more generally a rectifiable space, a natural family of distance functions $d_p$.  As we will see, $d_p$ understands and controls the behavior of the Sobolev space $W^{1,p}$, with $d_\infty=d$ becoming the standard distance function.  Let us begin with a definition:

\begin{definition}[$d_p$ distance on manifolds]
	Given a Riemannian manifold $(M^n,g)$ and a real number $p \in (n, \infty]$, we define the $d_{p,g}$ distance between any $x,y \in M$  by
\begin{align}
d_{p,g}(x,y) = d_p(x,y) = \sup\left\{ |f(x)-f(y)| \ : \ \int_M |\na f|^p \,d\vol_g \leq 1 \right\}.	
\end{align}
\end{definition}
\begin{remark}{\rm
It is interesting to check that this is an actual distance function, and for $p\to \infty$ this converges to the usual geodesic distance.
} 
\end{remark}

We will discuss this more precisely in Section~\ref{subsec: dp}, but let us observe that $d_p$ does not need an underlying metric structure in order to be defined.  A rectifiable structure, which gives the ability to differentiate functions and integrate them, will be sufficient.  In particular, the functions $d_p$ are well-defined on rectifiable Riemannian  spaces $(X,g)$.  These are precisely defined in Definition \ref{def: rect RM}, but roughly are topological measure spaces with a compatible rectifiable structure and Riemannian metric on the rectifiable charts. \\ 

For such a rectifiable space $X$, it may be that  $d_p(x,y)=0$ for $p$ sufficiently large, and thus $d_p$ only defines a weak distance function.  This will be possible even for limits of manifolds under small lower scalar curvature and entropy bounds $R,\mu\geq -\epsilon$; see Example \ref{example: delta fixed}.  We will say $X$ is $d_p$-complete when it defines an honest distance function whose topology is that of $X$, see Section \ref{subsec: dp} for a larger account of the subtle points which arise.  A consequence of our main theorems is that for $p\leq p(n,\epsilon)$ limits will be $d_p$ complete, and indeed $d_p$ is actually very well behaved. In particular, such limits $X$ will be doubling spaces up to scale $1$ with respect to the $d_p$ distances. 
\\

Now that we have the $d_p$ distance defined and the correct category of spaces to consider it on, namely rectifiable spaces $X$ with a Riemannian structure, let us consider their convergence.  As is usual let us begin with the compact case:

\begin{definition}[$d_p$ convergence]
A sequence $\{(X_i, g_i)\}$ of compact rectifiable Riemannian  spaces, in particular a sequence of compact Riemannian manifolds, converges to a compact rectifiable Riemannian space $(X,g)$  in the $d_p$ sense if 
\begin{align}
d_{mGH}\left( (X_i, d_{p,g_i}, d\vol_{g_i} ) , (X, d_{p,g}, d\vol_g) \right) \to 0\, .	
\end{align}
Here $d_{mGH}$ denotes the measured Gromov Hausdorff distance between metric spaces. 
\end{definition}
\begin{remark}\rm{
One could easily replace mGH with Intrinsic Flat distance in the above and the results of this paper are the same, the main key for us is the weakening of the usual distance with $p=\infty$ to $p<\infty$.
}
\end{remark}
\begin{remark}\rm{
Even if a sequence $\{(M_i,g_i)\}$ of Riemannian manifolds has a (geodesic) Gromov-Hausdorff limit $(Y,d)$, the spaces $X$ and $Y$ need not even be topologically equivalent.
} 
\end{remark}
\begin{remark}{\rm
Let us briefly mention that pointed convergence for noncompact spaces is defined in a similar spirit	 as in the Gromov-Hausdorff case, however there is a subtle point due to the behavior of $d_p$ at large distances.  See Definition~\ref{def: dp convergence} for precision.
}
\end{remark}

Throughout the paper we will let $\B_{p,g}(x, r)$ denote the ball of radius $r$ with respect to $d_p$. That is,
\begin{equation}\label{eqn: def of pball}
	\B_{p,g}(x, r) = \{ y \in M : d_{p}(x,y) <r\}.
\end{equation}


\vspace{.4cm}

\subsection{Main $\epsilon$-Regularity Theorem}

Let us now move toward our first main result of the paper.  We begin by recalling that the Perelman $\W$-functional, introduced in \cite{perelman2002entropy}, is defined for a function $f \in C^\infty(M)$ and real number $\tau>0$ by

\begin{equation}\label{eqn: W functional}
\W(g,f,\tau) = \frac{1}{(4\pi \tau)^{n/2}}\int_M \left\{ \tau(|\na f|^2 + R) +f-n\right\} {e^{-f}} \, d \vol_g  \,.
\end{equation}
The Perelman entropy $\mu(g,\tau)$, which can be viewed as the optimal constant in a log-Sobolev inequality at scale $\tau^{1/2}$, is given by 
\begin{equation}\label{eqn: perelman entropy def}
	\mu(g,\tau) = \inf \left\{ \W(g,f,\tau) : \frac{1}{(4\pi \tau)^{n/2}}\int_M {e^{-f}} \, d\vol_g= 1,{e^{-f/2}\in W^{1,2}(M)}\right\}\,.
\end{equation}
 Finally, Perelman's $\nu$-functional is given by 
\begin{equation*}
	\nu(g,\tau) = \inf\{ \mu(g,\tau'): \tau'\in (0, \tau)\}\, ,
\end{equation*}
and just guarantees that we are measuring the entropy at all scales below some point.  See Section~\ref{subsec: entropy} for more background.  The Perelman entropy $\mu(g, \tau)$ of a complete well-behaved Riemannian manifold $(M,g)$ is nonpositive for all $\tau >0$. Moreover, if the entropy is equal to zero for some $\tau>0$, then $(M,g)$ is isometric to Euclidean space. This rigidity statement is the basis of our  first main result, which is perturbative in nature.

\begin{theorem}[$\epsilon$-Regularity Theorem]\label{thm: main thm, Lp def new}
Let $(M^n,g)$ be a complete Riemannian manifold with bounded curvature
and fix $\e>0$ and $p\geq n+1$.  There exists $\delta = \delta(n,\e,p)$ such that if
	\begin{align}\label{eqn: th 1.2 hp}
		R\geq -\delta, &\qquad \qquad \nu(g, 2) \geq -\delta\, ,
	\end{align}
then for all $x\in M$, we have
 \begin{equation}\label{eqn: dp statement intro}
	d_{GH}\left( (\B_{p,g}(x, 1), d_{p,g}) ,\,  (\B_{p,g_{euc}}(0, 1), d_{p,g_{euc}})\right) \leq \e 
\end{equation}
and for any $0<r\leq 1$
\begin{equation}
(1-\e) |\B_{p,g_{euc}}(0, r) | \leq \vol_{g} (\B_{p,g}(x_0, r)) \leq (1+\e) |\B_{p,g_{euc}}(0, r) |.
\end{equation}
 Here $|\cdot|$ denotes the Euclidean volume. In particular, the measure $d\vol_g$ on the metric measure space $(M,d_{p,g},d\vol_g)$ is a doubling measure for all scales $r\leq 1$.
\end{theorem}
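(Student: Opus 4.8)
The plan is to prove this $\epsilon$-regularity statement by a contradiction/compactness argument built on top of Perelman's entropy rigidity. Suppose no such $\delta$ exists: then there is a sequence of complete manifolds $(M_i^n, g_i)$ with bounded curvature, points $x_i \in M_i$, satisfying $R_{g_i} \geq -\delta_i$ and $\nu(g_i, 2) \geq -\delta_i$ with $\delta_i \to 0$, but for which one of the two conclusions fails at $x_i$ by at least a fixed amount. The heart of the argument is then a \emph{compactness} step: one must extract from $(M_i, g_i, x_i)$ a limiting rectifiable Riemannian space $(X, g, x)$ in the pointed $d_p$ sense, together with a limiting entropy statement. Since $\delta_i \to 0$, the limit should have, in an appropriate weak/synthetic sense, scalar curvature $\geq 0$ and $\nu \geq 0$; combined with the entropy rigidity theorem (entropy $=0$ for some $\tau$ forces isometry to $\R^n$), the limit $X$ must be Euclidean space $(\R^n, g_{euc})$. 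This forces $d_{GH}((\mathcal{B}_{p,g_i}(x_i, 1), d_{p,g_i}), (\mathcal{B}_{p,g_{euc}}(0,1), d_{p,g_{euc}})) \to 0$ and the volume ratios to converge to the Euclidean ones, contradicting the failure assumption.

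\textbf{Key steps in order.} First, establish \emph{a priori estimates} along the sequence: from $R \geq -\delta_i$ and the $\nu$-bound one gets a uniform (log-)Sobolev inequality, hence a uniform $L^1$-Sobolev constant, which in turn yields uniform upper volume bounds $\vol_{g_i}(B^{g_i}(x_i, r)) \leq C(n) r^n$ for $r \leq 1$ and a noncollapsing-type lower bound, so that the $d_p$-balls are comparable to a fixed model at unit scale. Second, prove the \emph{$d_p$-precompactness}: using these volume and Sobolev bounds, show the sequence $(M_i, d_{p,g_i}, \vol_{g_i}, x_i)$ is precompact in the pointed measured Gromov--Hausdorff topology and passes to a limit which is a rectifiable Riemannian space $(X, g)$ — this is where one invokes the structural theory of $d_p$ from Section~\ref{subsec: dp} and the fact that $W^{1,p}$-control (for $p > n$) is stable enough to pass to the limit, giving a well-defined rectifiable chart structure and a limiting metric $g$ on it. Third, verify the \emph{stability of the entropy/scalar conditions}: show $\mu(g, \tau)$ (or a suitable synthetic analogue on $X$) is upper semicontinuous under $d_p$-convergence so that $\nu_X(2) \geq 0$, and likewise the scalar lower bound survives. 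Fourth, apply \emph{rigidity}: $\nu_X(2) \geq 0$ together with $\nu_X \leq 0$ (which holds for any such space, by the same monotonicity/nonpositivity as in the smooth case) forces $\mu_X(\tau_0) = 0$ for some $\tau_0 \leq 2$, hence $X \cong (\R^n, g_{euc})$. Finally, \emph{unwind the contradiction}: $d_p$-convergence to $\R^n$ gives $d_{GH}$-convergence of the $d_p$-unit balls and convergence of the volume ratios $\vol_{g_i}(\mathcal{B}_{p,g_i}(x_i,r))/|\mathcal{B}_{p,g_{euc}}(0,r)| \to 1$ uniformly for $r \leq 1$, contradicting the standing assumption. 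The doubling conclusion then follows immediately from the explicit Euclidean comparison: $\vol_g(\mathcal{B}_{p,g}(x, 2r)) \leq (1+\e)|\mathcal{B}_{p,g_{euc}}(0,2r)| = (1+\e) 2^n |\mathcal{B}_{p,g_{euc}}(0, r)| \leq \frac{1+\e}{1-\e} 2^n \vol_g(\mathcal{B}_{p,g}(x,r))$ for all $r \leq 1$.

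\textbf{The main obstacle.} The crux — and the genuinely hard part — is the compactness step together with the correct interpretation of the limiting entropy. Unlike the Ricci setting, one has no distance control to extract a Gromov--Hausdorff limit in the usual sense, and indeed the excerpt's examples show the ordinary limit can be wild; so the compactness must be carried out entirely in terms of the $d_p$ distances, using that these are governed by $W^{1,p}$ rather than $W^{1,\infty}$. One must show that the uniform bounds (volume growth, Sobolev/log-Sobolev constants, and whatever gradient/Hessian-type estimates for the entropy minimizers $f_i$ the bounded-curvature hypothesis and the near-rigidity permit) are exactly the ingredients needed to (i) produce a limiting rectifiable structure with a limiting Riemannian metric, (ii) make sense of and pass to the limit in Perelman's $\W$-functional — in particular controlling the minimizers $e^{-f_i/2}$ in $W^{1,2}$ and their weak limits — and (iii) conclude the limiting entropy is $0$. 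Establishing that the near-extremal condition $\nu(g_i,2) \geq -\delta_i$ with bounded curvature forces enough regularity of the conformal factors $e^{-f_i}$ to localize and pass to the limit is where the real analytic work lies; everything downstream is then a clean consequence of Perelman's rigidity plus the definitions of $d_p$-convergence and doubling.
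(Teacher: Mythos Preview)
Your proposal has a genuine circularity problem, and it misses the paper's main technique entirely.

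The paper does \emph{not} argue by contradiction and compactness at the level of the original metrics. Instead it proceeds directly and quantitatively, with Ricci flow as the central tool. The logical chain is: (i) the entropy bound $\nu(g,2)\geq -\delta$ forces the Ricci flow starting from $g$ to exist for unit time with scale-invariant curvature bounds $|\Rm_{g(t)}|\leq \lambda/t$ (Theorem~\ref{prop: uniform existence time and small curvature estimates}); (ii) this plus the scalar lower bound yields an integral Ricci estimate $\int_0^t (s/t)^{-\theta}\fint|\Ric_{g(s)}|\,d\vol\,ds\leq\e^2$ (Theorem~\ref{thm: integral bounds for Ricci curvature}), proved via heat kernel bounds along the flow; (iii) a stopping-time argument on this integral estimate decomposes each ball into ``good'' sets $\mathcal{G}^k$ where $g(0)$ and $g(1)$ are $(1+\e)^k$-equivalent, with geometrically decaying volumes (Theorem~\ref{thm: decomposition theorem}); (iv) summing over the decomposition produces an explicit diffeomorphism $\psi:\Omega\to B(0,1)$ with $\fint|(\psi^{-1})^*g-g_{euc}|^P\leq\e$ and controlled $W^{1,p}$-transfer of functions (Theorem~\ref{claim1}); (v) finally, one uses these charts, a localization lemma for $d_p$ on $\R^n$, and continuity of $d_{q}$ in $q$ to directly compare $d_{p,g}$ with $d_{p,g_{euc}}$ by pushing test functions back and forth through $\psi$. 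There is a contradiction/rigidity step in the paper, but it lives at stage~(i) inside the Ricci flow (via Hamilton's compactness for flows and the smooth entropy rigidity of Lemma~\ref{lem: rigidity}), not at the level of the original manifolds.

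Your approach instead tries to extract a $d_p$-limit of $(M_i,g_i,x_i)$ directly and apply entropy rigidity on that limit. The obstacle you flag is exactly where it fails: there is no a priori $d_p$-precompactness theorem available from just $R\geq -\delta$ and $\nu\geq -\delta$. In the paper, the existence of $d_p$-limits as rectifiable Riemannian spaces is Theorem~\ref{global-thm}, and its proof (Section~\ref{s:global_convergence}) already consumes the full Ricci flow machinery---the Decomposition Theorem, Corollary~\ref{cor: useful for limit}, the $W^{1,p}$ charts, Gehring's lemma, and in fact the $\e$-regularity theorem itself (via Remark~\ref{rmk: limit space}). So invoking $d_p$-compactness to prove $\e$-regularity is circular. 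Moreover, the rigidity step you need---``$\nu=0$ on a rectifiable Riemannian space forces it to be $\R^n$''---is only proved in the paper for smooth complete manifolds with bounded curvature (Lemma~\ref{lem: rigidity}); a synthetic version on rectifiable spaces is not established and would be a separate hard theorem. The uniform Sobolev/volume bounds you propose as inputs are real consequences of the hypotheses, but they are outputs of the Ricci flow analysis, not substitutes for it.
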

\begin{remark}{\rm
The assumption of (nonuniformly) bounded curvature is simply to control degeneration at infinity of $M$, a local version of these statements would drop this condition.
}
\end{remark}
\begin{remark}[$L^1$ Sobolev constant]
	{\rm 
	We may replace the entropy lower bound  in Theorems~\ref{thm: main thm, Lp def new} by a rigid bound on the $L^1$-Sobolev constant. Namely, we may replace the assumption $\nu(g,2) \geq -\delta$ in \eqref{eqn: th 1.2 hp} with the assumption that for all compactly supported $f:B_{g}(x, 1)\to \R$ with $x\in M$ we have
	\begin{align}
	\left( \int_M |f|^{\frac{n}{n-1}}\right)^{\frac{n-1}{n}}\leq (1+\delta)c_n\int_M |\nabla f|\, ,
	\end{align}
where $c_n$ is the sharp Sobolev constant on Euclidean space.  However, we avoid focusing on this because, as we will see, metric balls are badly behaved objects, and thus any condition which used a metric ball may be more restrictive than it appears.  The $\mu$-entropy intrinsically understands the correct $d_p$ distance, and thus $\mu(g,1)$ becomes a condition on the unit $d_p$-scale, as opposed to the $d=d_\infty$ scale.

	}
\end{remark}

 \begin{remark}[Scaling]\label{rmk: hyp under rescaling}
 	{\rm
 For any Riemannian manifold $(M,g)$, the rescaled metric $\tilde{g} = r^{-2}g$ satisfies   $\mathcal{B}_{p, \tilde{g}}(x_0,\rho) = \mathcal{B}_{g,p}(x_0,\rho r^{1-n/p})$, $R_{\tilde{g}} = r^{-2} R_g$,  $\nu(g,2 r^2 ) = \nu(\tilde{g}, 2)$. 
 If $(M,g)$ is closed or is well behaved at infinity (see \cite{ZhangLS} for instance), then
 $ 	\lim_{\tau \to 0} \mu(g,\tau) =0.$
In particular, for any such Riemannian manifold $(M,g)$, the hypotheses of Theorem~\ref{claim1} hold at some scale.
}

 \end{remark}

\vspace{.2cm}

The proof of Theorem \ref{thm: main thm, Lp def new} depends on the following, which guarantees the existence of $W^{1,p}$ charts on an $\epsilon$-regularity ball.  Further, one is able to get that for large but finite $p$ it is possible to control the $W^{1,p}$ energies of limiting functions; this connects to  the perspective discussed above of $d_p$ convergence  as  a type of convergence of Sobolev spaces.\\

\begin{theorem}[$L^p$ estimates for the metric coefficients]\label{thm: RF}\label{claim1}
Let $(M^n,g)$ be a complete Riemannian manifold with bounded curvature.
Fix, $\e>0$, $\kappa>1$ and $p \in [\kappa,\infty)$. There exists $\delta=\delta(n,p,\kappa, \e)>0$ such that if
	\begin{align}
		R\geq -\delta, &\qquad \qquad \nu(g, 2) \geq -\delta\, ,
	\end{align}
then for any $x \in M$ there exist an open set $\Omega\subset M$ containing $x$ and a smooth diffeomorphism 
	 $\psi: \Omega \to B(0,1) \subset \R^n$ with $\psi(x)=0$ 	  satisfying 
\begin{align}\label{eqn: W1p ests}
  \fint_{B(0,1)} |(\psi^{-1})^*g -g_{euc} |^{p} \, dy \leq \e, \qquad
 \fint_{\Omega} |\psi^* g_{euc} - g|^{p} \, d\vol_g \leq \e\, .
 \end{align}	
Furthermore, for any $f \in W^{1,p}(B(0,1))$, we have 
	\begin{align}
\label{eqn: Lp norms small error}	 	
(1-\e)\| \psi^* f\|_{L^{p/\kappa}(\Om)} &\leq \| f\|_{L^p(B(0,1))} \leq (1+\e) \| \psi^* f\|_{L^{\kappa p}(\Om)},\\
\label{eqn: gradient small error}
(1-\e)\| \na \psi^* f\|_{L^{p/\kappa}(\Om)} &\leq \|\nabla f\|_{L^p(B(0,1))} \leq (1+\e) \|\na \psi^* f\|_{L^{\kappa p}(\Om)} 	. 	
	 \end{align}
\end{theorem}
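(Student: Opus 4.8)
The chart $\psi$ will be produced by running the Ricci flow and recording a coordinate system at a small positive time. Since $(M,g)$ has bounded curvature, by Shi's existence theorem the Ricci flow $g(t)$ with $g(0)=g$ exists on a maximal interval $[0,T)$ with $T>0$; after the rescaling described in Remark~\ref{rmk: hyp under rescaling} we may assume $T\geq 2$. Along the flow, Perelman's monotonicity for the $\mu$-entropy under the shift $\tau(t)=2-t$ shows that $t\mapsto \mu(g(t),2-t)$ is nondecreasing, while the universal bound $\mu\leq 0$ forces
\[
-\delta\;\leq\;\nu(g,2)\;\leq\;\mu(g(t),2-t)\;\leq\;0, \qquad t\in[0,1];
\]
monotonicity of $\nu$ likewise gives $\mu(g(t),\tau)\geq -\delta$ for all $\tau\leq 2-t$, so the almost-vanishing entropy persists at all scales up to a definite size. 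Similarly $\partial_t R_{\min}\geq \tfrac{2}{n}R_{\min}^2$, so the bound $R\geq -\delta$ is preserved (indeed improved) on $[0,1]$. Thus the hypotheses of the theorem are retained, with the same $\delta$, all along the flow.

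The first main step is an $\epsilon$-regularity statement for the flow at a fixed positive time: there is $t_0=t_0(n)\in(0,1)$ so that at $x$ the metric $g(t_0)$ satisfies $|\Rm|_{g(t_0)}\leq C(n)$ on the $g(t_0)$-ball of a definite radius about $x$, and in $g(t_0)$-harmonic coordinates on that ball $g(t_0)$ is $C^{1,\alpha}$-close — in particular smoothly, hence $W^{1,p}$-close for our fixed $p$ — to the flat metric. The mechanism is the conjugate-heat-kernel/Nash-entropy analysis: entropy close to $0$ at a scale forces the conjugate heat kernel based at $(x,t_0)$ to be quantitatively close to the Euclidean Gaussian and forces almost-Euclidean volume ratios, which together with $R\geq-\delta$ and Perelman's pseudolocality theorem upgrade to the stated curvature bound, and then Shi-type interior estimates give the smooth control. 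I expect this to be the principal obstacle: it is the Ricci-flow analogue of Cheeger–Colding almost-rigidity, carried out with the entropy gap in place of a volume-ratio hypothesis, and it is where essentially all of the analysis is concentrated.

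Having controlled $g(t_0)$, I transport this back to $t=0$. One cannot use a pointwise Ricci bound on $[0,t_0]$ — that is exactly the behavior that fails — but Perelman's monotonicity also yields the integrated estimate
\[
\int_0^{t_0}\!\!\int_M \Big|\Ric+\na^2 f_t-\tfrac{1}{2\tau}g\Big|^2\, u_t\, d\vol_{g(t)}\, dt \;\leq\; \mu(g(t_0),2-t_0)-\mu(g,2)\;\leq\;\delta,
\]
where $u_t=(4\pi\tau)^{-n/2}e^{-f_t}$ solves the conjugate heat equation. Using Gaussian upper and lower bounds for $u_t$ on the region of interest (again a consequence of the almost-zero entropy) and standard elliptic estimates to absorb the $\na^2 f_t$ term, this converts into smallness of $\int_0^{t_0}\|\Ric(t)\|_{L^2(B)}^2\,dt$ in a weighted sense, hence of $\|g(0)-g(t_0)\|_{L^1(B)}\leq \int_0^{t_0}2\|\Ric(t)\|_{L^1(B)}\,dt$; interpolating this $L^1$-smallness against the a priori $L^\infty$ bound on $g(0)$ (from bounded curvature after the local normalization) and running a reverse-Hölder iteration upgrades it to smallness of $\|g(0)-g(t_0)\|_{L^p(B)}$. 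Combining with the previous step, and normalizing the $g(t_0)$-harmonic chart so that $\psi(\Omega)=B(0,1)$ and $\psi(x)=0$, yields the first estimate in \eqref{eqn: W1p ests}; the second follows from the first together with the almost-Euclidean volume comparison, since the two $\fint$'s differ only by volume-ratio factors close to $1$.

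Finally, \eqref{eqn: Lp norms small error}–\eqref{eqn: gradient small error} are a formal consequence of \eqref{eqn: W1p ests}. Writing $A=(\psi^{-1})^*g$ on $B(0,1)$, so $\fint_{B(0,1)}|A-g_{euc}|^p\leq\e$, one has $d\vol_g=\sqrt{\det A}\,dy$ and $|\na_g(\psi^* f)|^2 = A^{ij}\pa_i f\,\pa_j f$, each of which differs from its Euclidean counterpart by a factor $1+O(|A-g_{euc}|)$. The claimed inequalities then follow from Hölder's inequality applied on $B(0,1)$: the gap between the exponents $p/\kappa$ and $\kappa p$ is precisely what is needed to absorb the error $|A-g_{euc}|$, which is small in $L^p$ but not in $L^\infty$. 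Replacing $A$ by $A^{-1}$ throughout gives the reverse estimates, completing the proof.
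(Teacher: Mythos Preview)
Your overall architecture is right (Ricci flow, $\epsilon$-regularity at a positive time, transport the comparison back to $t=0$), and indeed the paper takes the chart at $t=1$, where $|\Rm|\leq\lambda$ gives $(1-\e/2)g_{euc}\leq\phi^*g(1)\leq(1+\e/2)g_{euc}$. The gap is in the transport step. You claim an ``a priori $L^\infty$ bound on $g(0)$ (from bounded curvature after the local normalization)'' and then interpolate from $L^1$-smallness to $L^p$-smallness. But the bounded-curvature hypothesis on $(M,g)$ in the theorem is \emph{not quantitative}: it only guarantees that the Ricci flow exists, and there is no uniform $L^\infty$ bound on $g(0)$ in the chart. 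The paper's own examples (Section~\ref{examples-entropy}) build metrics satisfying $R\geq -\delta$, $\nu\geq -\delta$ whose coefficients in the natural chart blow up in $L^\infty$ while remaining close to $g_{euc}$ only in $L^p$. So no interpolation or reverse-H\"older argument can work here; you genuinely need a mechanism that produces $L^p$-smallness without passing through $L^\infty$.

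The paper's mechanism is the Decomposition Theorem~\ref{thm: decomposition theorem}. First, the integral Ricci estimate (Theorem~\ref{thm: integral bounds for Ricci curvature}) is obtained not from Perelman's monotonicity---which, as you note, would require absorbing $\nabla^2 f$, and there is no elliptic estimate for this as $t\to 0$---but by integrating the scalar-curvature evolution $\partial_t R=\Delta R+2|\Ric|^2$ against a conjugate-heat cutoff, using Zhang's heat kernel lower bound to control the cutoff from below. This yields $\int_0^t (s/t)^{-\theta}\fint_{B_{g(t)}(x,4t^{1/2})}|\Ric_{g(s)}|\,d\vol\,ds\leq \e^2$. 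Then a stopping-time argument on each scale-invariant ball $\uB_t(x)$ produces a decomposition $B_{g(1)}(x_0,2)=\bigcup_k\mathcal{G}^k\cup\mathcal{A}$ where on $\mathcal{G}^k$ one has $(1-\e)^k g(1)\leq g(0)\leq(1+\e)^k g(1)$ pointwise, $\vol(\mathcal{G}^k)\lesssim\e^{k-1}$ for $k\geq 2$, and $\vol(\mathcal{A})=0$. The $L^p$ estimate \eqref{eqn: W1p ests} then follows from a layer-cake computation over the $\mathcal{G}^k$, and \eqref{eqn: Lp norms small error}--\eqref{eqn: gradient small error} follow by H\"older on each $\mathcal{G}^k$, using the $\kappa$-gap to absorb the geometrically decaying volumes---roughly as you said in your last paragraph, but with the decomposition doing the work that the nonexistent $L^\infty$ bound cannot.
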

The notation  $\fint_\Omega u\, d\vol_{g}$ is used to denote ${\vol_{g}(\Omega)}^{-1} \int_\Omega u\, d\vol_{g}.$  In \eqref{eqn: W1p ests}, the notation $|\cdot |$ indicates the tensor norm with respect to $g_{euc}$ and $g$ respectively. 
\medskip

\vspace{.2cm}
\subsection{Examples and Counter-Examples}

We have been explaining from the beginning what can fail as one converges with sequences of spaces with lower scalar curvature and entropy bounds.  In particular, we have discussed how the distance function itself is almost entirely uncontrollable.  Let us now make this precise, and in the process see that the $d_p$ convergence in Theorem~\ref{thm: main thm, Lp def new} cannot be replaced with  Gromov-Hausdorff convergence or Intrinsic Flat convergence:

\begin{theorem}[Counterexample to Gromov-Hausdorff Convergence]\label{thm: counterex} Fix $n\geq 4$ and $\e>0$. 
There exists a sequence of metrics $(\mathbb{T}^n,g_i)$ on the $n$-dimensional torus with $\delta_i\to 0$ such that
	\begin{align}\label{eqn: hp counter ex}
		R_{g_{i}}\geq -\delta_i,  \qquad \qquad \nu(g_{i}, 2) \geq -\e\, ,
	\end{align} 
and such that $(\mathbb{T}^n,g_{i})$ converges in the Gromov-Hausdorff topology to a point and in the Intrinsic Flat topology to the zero current as $i \to \infty$. On the other hand, the sequence $(\mathbb{T}^n, g_i)$ converges to a flat torus $(\mathbb{T}^n, g_{flat})$ in the $d_p$ sense for all finite $p \in [n+1,\infty).$
\end{theorem}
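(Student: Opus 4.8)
\emph{Setup.} I would first reduce to producing, for the given $n\ge 4$ and $\e>0$, a single metric on $\mathbb{T}^n$ depending on small parameters, and then let the parameters degenerate. Fix once and for all a flat torus $(\mathbb{T}^n, g_{\mathrm{flat}})$ of large enough volume that $\nu(g_{\mathrm{flat}},2)\ge -\e/2$; such a torus exists because $\nu(g,2r^2)=\nu(r^{-2}g,2)$ and $\lim_{\tau\to0}\mu(g,\tau)=0$ (Remark~\ref{rmk: hyp under rescaling}), so a sufficiently large rescaling of any fixed flat torus works. Every $g_i$ will agree with $g_{\mathrm{flat}}$ outside a thin neighborhood of a fine graph, and will be modified inside it so as to collapse all distances while keeping $R_{g_i}\ge -\delta_i$ with $\delta_i\to 0$.

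\emph{Construction.} Let $G=G_k\subset\mathbb{T}^n$ be the $1$-skeleton of the cubical subdivision of $\mathbb{T}^n$ at scale $1/k$, a connected graph of combinatorial diameter $O(k)$, and let $N(G)$ be its tubular neighborhood of radius $\rho\ll 1/k$; keep $g_{\mathrm{flat}}$ on $\mathbb{T}^n\setminus N(G)$. On each edge-tube, written $[0,1/k]_s\times D^{n-1}(\rho)$, define the new metric $\bar g$ in three concentric zones: (a) near the lateral boundary keep it flat, to match $g_{\mathrm{flat}}$ on $\partial N(G)$; (b) a rotationally symmetric collar in which the transverse cross-section, a round sphere $S^{n-2}$, is contracted from radius $\rho$ down to a tiny radius $a$; (c) a core that is a long thin tube $S^{n-2}(a)\times[0,L]$ capped off by a small round ball, and in which the $s$-direction is warped down by a factor $\sigma$ (by a warping function $h$ increasing from $\sigma$ at the cap to $1$ at the base of the core), so that the core curve of the edge has $\bar g$-length $\sigma/k$; at each vertex of $G$, where the contracted edge-cores meet, interpolate by a Gromov--Lawson--type gluing of thin, positively curved tubes. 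The scalar curvature can be kept $\ge -\delta$ for any prescribed small $\delta$: on the collar, where $R=-(n-2)\big(2\psi''/\psi+(n-3)((\psi')^2-1)/\psi^2\big)$ for the profile $\psi$, the term $(n-3)((\psi')^2-1)/\psi^2$ --- nonzero precisely because $n-3\ge 1$ --- can be made strongly negative where $\psi$ is small, cancelling $2\psi''/\psi$ and letting one contract $\psi$ over radial distance $\sim\rho$ with $R\ge-\delta$; and on the core, $R=(n-2)(n-3)/a^2-2h''/h$, so the reserve $(n-2)(n-3)/a^2$ (again using $n-2\ge 2$) absorbs the warping term $2h''/h$ once $a$ is small enough relative to $\sigma$. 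Both mechanisms --- equivalently, the codimension $n-1\ge 3$ of $G$ --- require $n\ge 4$. Sending $k\to\infty$ and then $\rho,a,L,\sigma\to 0$ fast enough in terms of $k$ produces the sequence $g_i$ with $\delta=\delta_i\to 0$.

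\emph{Verification of the conclusions.} For Gromov--Hausdorff convergence to a point: every point of $\mathbb{T}^n$ lies within $\bar g$-distance $\lesssim 1/k$ of $N(G)$, and inside $N(G)$ one travels along the edge-cores --- since $G_k$ has combinatorial diameter $O(k)$ and each core has $\bar g$-length $\sigma/k$, any two points of $N(G)$ are at $\bar g$-distance $\lesssim\sigma$ --- so $\mathrm{diam}(\mathbb{T}^n,g_i)\lesssim 1/k+\sigma\to 0$. For the volume: $\vol_{g_{\mathrm{flat}}}(N(G))\sim k^{n-2}\rho^{n-1}\to 0$, and a direct estimate of the three zones and the vertices gives $\vol_{g_i}(N(G))\to 0$ as well, so $\vol_{g_i}(\mathbb{T}^n)\to\vol_{g_{\mathrm{flat}}}(\mathbb{T}^n)$. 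For Intrinsic Flat convergence to the zero current: the $\mathbb{T}^n$ are closed (so the associated currents are boundaryless) with bounded volume and vanishing diameter, so a metric-cone filling has mass $\lesssim\mathrm{diam}(\mathbb{T}^n,g_i)\cdot\vol_{g_i}(\mathbb{T}^n)\to 0$. For the entropy: since $g_i=g_{\mathrm{flat}}$ off $N(G)$, $R_{g_i}\ge-\delta_i$ on $N(G)$, $\vol_{g_{\mathrm{flat}}}(N(G))\to 0$, and $N(G)$ is a thin tube (hence of small $W^{1,2}$-capacity, $n\ge 3$), a stability estimate for the $\W$-functional yields $\nu(g_i,2)\ge\nu(g_{\mathrm{flat}},2)-C\delta_i-o(1)\ge -\e$ for $i$ large. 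Finally, for $d_p$-convergence: the measures converge by the volume statement, and for the distances one uses $d_{p,g}(x,y)=\sup\{|f(x)-f(y)|:\int_{\mathbb{T}^n}|\na f|^p\,d\vol_g\le 1\}$; since $g_i=g_{\mathrm{flat}}$ on $\mathbb{T}^n\setminus N(G)$, the two functionals differ only on $N(G)$, and because $p>n$ the $p$-capacity of the thin set $N(G)$ --- estimated via the $\sim k^{n-2}$ edge-disjoint routes in $G_k$ together with the Morrey oscillation bound $O((1/k)^{1-n/p})$ for a $W^{1,p}$ competitor along each edge --- is negligible, uniformly over $p\in[n+1,\infty)$ again thanks to $n-3\ge 1$; hence $|d_{p,g_i}-d_{p,g_{\mathrm{flat}}}|\to 0$ uniformly and $(\mathbb{T}^n,d_{p,g_i},d\vol_{g_i})\to(\mathbb{T}^n,d_{p,g_{\mathrm{flat}}},d\vol_{g_{\mathrm{flat}}})$ in the measured Gromov--Hausdorff sense for every finite $p\in[n+1,\infty)$.

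\emph{Main obstacle.} The crux is the metric construction: one must contract \emph{every} direction inside $N(G)$ to force the diameter to zero, yet keep $R\ge -\delta$ with $\delta$ small, and the two features that make this compatible --- the large positive scalar curvature of the transverse $S^{n-2}$ and the strongly negative $(n-3)$-term available on the collars --- are exactly where the hypothesis $n\ge 4$ enters. The vertex gluings and the quantitative matching of the parameters $k,\rho,a,L,\sigma$ against each other are the remaining technical work; once the $g_i$ are in hand, the Gromov--Hausdorff and Intrinsic Flat assertions and the $d_p$-convergence are comparatively soft.
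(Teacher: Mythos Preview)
Your construction is genuinely different from the paper's, and the gap is in the entropy bound, which the paper itself flags as the hard part (``showing that the entropies are well behaved takes quite a bit more work'').

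\textbf{The paper's construction.} The paper does \emph{not} use Gromov--Lawson surgery. Its building block (Section~9.1) is a warped product $g_{\delta,\e}=dr^2+f_{\delta,\e}^2(r)\,h_{S^{n-2}}+\varphi_{\delta,\e}^2(r)\,dx^2$ on $\R^{n-1}\times\R$, where $\varphi_{\delta,\e}(r)\approx r^\delta$ degenerates \emph{slowly} toward $r=0$ (specifically $\delta=(-\log\e)^{-1/2}$, so $|\varphi'/\varphi|\le 50\delta/r$). The $f$-factor is a smoothed cone that supplies just enough positive scalar curvature to absorb the $\varphi$-contribution. These are glued into thin strips around a $\mathbb{Z}^{n}$-periodic family of lines in all coordinate directions and descended to $\mathbb{T}^n$ (Example~9.7). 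The point is that the metric tensors converge to $g_{\mathrm{flat}}$ in $L^p_{\mathrm{loc}}$ for every $p<\infty$, which plugs directly into the paper's general $d_p$-machinery (Theorem~\ref{global-thm}).

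\textbf{The entropy gap.} Your one-line ``stability estimate for the $\W$-functional'' is not a known result and does not follow from small $W^{1,2}$-capacity. The entropy lower bound is a log-Sobolev inequality; to show it is nearly sharp you must rule out \emph{concentration of minimizers} near the modified region, and small volume/capacity does not preclude this. The paper proves the entropy bound for its specific $g_{\delta,\e}$ via a dedicated blow-up argument (Proposition~9.9, Lemmas~9.10--9.13): assume $\mu(\tau^{-1}g_{\delta,\e},1)\le-\eta$, take a minimizer $u_\tau$, recenter at its maximum, pull back through an explicit diffeomorphism $\Phi_{\tau,\bar r,\delta,\e}$ adapted to the warped structure, prove a uniform mean-value inequality by Moser iteration (handling the mild degeneracy via the weight $\Lambda\sim r^{-\delta}$), and pass to a limit solving the Euler--Lagrange equation on $\R^n$, where a Liouville-type lemma (Lemma~9.10) forces $\mu_\infty=0$, a contradiction. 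This analysis uses the explicit form of $g_{\delta,\e}$ and the slow-degeneration estimate $|\varphi'/\varphi|\le 50\delta/r$ in an essential way; your collared-tube geometry has no analogue of it, and the vertex gluings introduce further regions whose log-Sobolev behavior you have not addressed.

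\textbf{Smaller issues.} Your core description (``long thin tube $S^{n-2}(a)\times[0,L]$ capped off by a small round ball'') is ambiguous as a metric on the cross-disk $D^{n-1}$, and the vertex interpolation is left entirely to ``Gromov--Lawson--type gluing''. Your $d_p$ argument via $p$-capacity of $N(G)$ is plausible in spirit but is not how the paper proceeds; the paper gets $d_p$-convergence for free from $L^p$-convergence of the metric tensors together with its structure theorem. If you want to keep your construction, you would still need to verify that $g_i\to g_{\mathrm{flat}}$ in $L^p$ and then either invoke the paper's machinery or produce a direct argument.
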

The example of Theorem~\ref{thm: counterex} is given in Example~\ref{example-torus-3}.
Preserving the lower scalar curvature in the above example is not too challenging, but showing that the entropies are well behaved takes quite a bit more work.  Philosophically, this example is similar to situations studied very recently by Allen-Sormani \cite{AllenSormani}, without the lower scalar curvature and entropy requirements, see also \cite{AllenSormani2}.

In fact, in Section~\ref{examples-entropy}, we construct a variety of other compact and noncompact examples of sequences satisfying \eqref{eqn: hp counter ex} such that the Gromov-Hausdorff and Intrinsic Flat limits are not locally Euclidean. For example, we have the following.
\begin{theorem}\label{thm: lines on rn} Fix $n\geq 4$. 
	There exist a sequence of metrics  $(\R^n, g_i)$ satisfying 
	\begin{equation}
		R_{g_i} \geq - 1/i, \qquad \qquad \nu(g_i, 2) \geq -1/i.
	\end{equation}
	that converge in the $d_p$-sense to flat Euclidean space for any $p  \in [n+1, \infty)$, but whose pointed Gromov-Hausdorff limit is $(\R^n, \ell^\infty)$, i.e. the taxicab  metric on Euclidean space. 
\end{theorem}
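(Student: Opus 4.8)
The plan is to build the sequence $(\R^n, g_i)$ by a local surgery/gluing construction that tacks on thin, long ``spikes'' (tentacles) along a dense countable set of directions and base points, arranged so that two points far apart in the original Euclidean metric are connected through these spikes by paths that are short in each coordinate direction but whose Euclidean ($\ell^2$) combination saturates at the $\ell^\infty$ value. More concretely, I would start from flat $\R^n$ and, near a sequence of points $x_k$ escaping to infinity (or a countable dense net, suitably truncated for the pointed picture), replace small balls by carefully chosen metrics supported in those balls. The building block is essentially the ``scalar-flat thin neck'' construction already used for Theorem~\ref{thm: counterex} / Example~\ref{example-torus-3}: one can make $R_{g_i}\ge -1/i$ by a standard Gromov--Lawson / conformal-smoothing argument on the gluing region, because only a lower scalar bound (not two-sided) is needed and the necks can be taken long and thin with controlled scalar curvature.

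The second and more delicate ingredient is the entropy bound $\nu(g_i,2)\ge -1/i$. Here I would reuse, essentially verbatim, whatever estimate is proved for Example~\ref{example-torus-3}: the spikes must have total volume tending to zero and be ``thin'' in a log-Sobolev sense, so that the optimal $f$ in the $\W$-functional sees only the nearly-Euclidean bulk. Concretely, one estimates $\W(g_i,f,\tau)$ from below by cutting off into the Euclidean region (where $\W$ is close to $0$) and the spike region (where the measure $(4\pi\tau)^{-n/2}e^{-f}d\vol$ is forced to carry negligible mass once the volume is small and $\tau$ is of order $1$), using that $\mu(g_{euc},\tau)=0$ and a Poincaré/cutoff argument to control the cross terms. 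This is the step I expect to be the main obstacle: one has to keep the entropy within $-1/i$ while simultaneously having \emph{many} spikes (enough to affect every pair of far-apart points and hence corrupt the GH limit), so the total volume and the number/geometry of spikes must be balanced very carefully against $\tau\in(0,2)$; monotonicity of $\mu(g_i,\tau)$ in $\tau$ and the scaling relations in Remark~\ref{rmk: hyp under rescaling} will be used to reduce to a single scale.

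Granting the construction, the two convergence statements are comparatively soft. For the $d_p$ limit: by Theorem~\ref{thm: RF}/\ref{thm: main thm, Lp def new} (applied at unit $d_p$-scale, whose hypotheses hold since $R_{g_i},\nu(g_i,2)\ge -1/i\to 0$), every unit $d_p$-ball is $d_{GH}$-close to the Euclidean one and the volumes match, and since the metrics agree with $g_{euc}$ outside a set of volume $o(1)$ one upgrades this to global pointed $d_p$-convergence $(\R^n,d_{p,g_i},\vol_{g_i})\to(\R^n,d_{p,g_{euc}},\vol_{g_{euc}})$ for each $p\in[n+1,\infty)$; here one invokes that $|\na f|^p$-mass on the spikes is negligible, so the sup defining $d_{p,g_i}$ is, up to $o(1)$, the Euclidean one. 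For the GH limit: one shows $d_{g_i}(x,y)\to \|x-y\|_{\ell^\infty}$ for all $x,y$, and uniformly on compact sets. The upper bound comes from exhibiting, for each pair $x,y$ and each $i$ large, a path that traverses spikes oriented along the $n$ coordinate axes so that moving from $x$ to $y$ costs (to leading order) $\max_j|x_j-y_j|$ in total length — the point being that a long thin spike has essentially zero ``width'' cost, so one can realize motion in each coordinate almost for free except for the single dominant coordinate, which one is forced to traverse in the bulk; the lower bound $d_{g_i}(x,y)\ge (1-o(1))\|x-y\|_{\ell^\infty}$ follows from a projection/Lipschitz argument (each coordinate function $x\mapsto x_j$ is still $1$-Lipschitz on the bulk and the spikes only shorten within a bounded neighborhood), so no path can beat $\max_j|x_j-y_j|$ by more than $o(1)$. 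Passing to the limit with the Arzelà–Ascoli–type compactness for the pointed GH setting then identifies the limit as $(\R^n,\ell^\infty)$.
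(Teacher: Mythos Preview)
Your construction has the wrong geometric mechanism, and as written it cannot produce a non-Euclidean Gromov--Hausdorff limit. If you leave the bulk metric Euclidean and only attach spikes or tentacles supported in small balls, then for any two bulk points the straight Euclidean segment is still an admissible path, so $d_{g_i}(x,y)\le |x-y|_{\ell^2}$ always. Your proposed lower bound $d_{g_i}(x,y)\ge (1-o(1))\max_j|x_j-y_j|$ is weaker than this, and your proposed upper bound via ``free motion along spikes in all but the dominant coordinate'' is incoherent: if motion along a spike in direction $e_j$ were free, you could do this for every $j$ and get distance $o(1)$, not $\max_j|x_j-y_j|$. Tentacles that go out and return do not shorten any distance in the base.

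The paper's construction (Example~\ref{example: lines on rn}) is genuinely different. The building block is not a tentacle but a warped product $g_{\delta,\e}=dr^2+f_{\delta,\e}^2 h+\varphi_{\delta,\e}^2\,dx^2$ on $\R_+\times\mathbb{S}^{n-1}\times\R$ in which the warping factor $\varphi_{\delta,\e}$ \emph{degenerates} toward zero near $r=0$; this collapses the fiber direction along the core line. One then glues rescaled copies of this into tubular neighborhoods of a dense family of axis-parallel lines in $\R^{n+1}$. In the rescaled picture, the bulk metric has size $\approx\epsilon^{-2}$ while motion along the core lines stays unit-scale, so optimal paths are forced onto the axis-parallel grid and one obtains $d_{\hat g_\epsilon}(x,y)=\sum_j|x_j-y_j|+o(1)$, the taxicab metric. (The statement's ``$\ell^\infty$'' is a misnomer; the limit is $\ell^1$.)

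Your proposed entropy argument is also off. The modification is \emph{not} small-volume: the degenerate strips are unbounded and the coefficient $\varphi^2$ actually tends to zero on a positive-volume region. The paper's bound $\nu(g_{\delta,\e},L)\ge -\eta$ is Theorem~\ref{main example singular}, proved by a contradiction argument: one takes minimizers $u_\tau$ of $\mu(\tau^{-1}g_{\delta,\e},1)$, proves a uniform mean-value inequality (Lemma~\ref{lem: MVI}) using only that the degeneracy is slow in the sense $|\varphi'/\varphi|\le C\delta/r$, extracts a nontrivial $W^{1,2}$ limit on $\R^{n+1}$, and applies the Euclidean log-Sobolev rigidity (Lemma~\ref{lem: euclidean gaussian}) to force $\mu_\infty=0$. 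A bulk-plus-small-volume cutoff would fail because for small $\tau$ the test functions concentrate at scale $\sqrt\tau$, possibly entirely inside a degenerate strip. Similarly, the scalar lower bound is not a Gromov--Lawson neck estimate but the explicit computation of Lemma~\ref{Scalar}: the cone factor $dr^2+f^2 h$ contributes positive scalar curvature of order $\delta/r^2$ which dominates the negative terms $-2\varphi''/\varphi$ and $-2(n-1)\varphi' f'/(\varphi f)$, and this balance requires the sphere factor to have dimension $\ge 2$, hence ambient dimension $\ge 4$.

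Your final paragraph on $d_p$-convergence is essentially correct in spirit once the right metrics are in place: $\tilde g_{r_0}\to g_{euc}$ in $L^p_{loc}$, and then Theorem~\ref{global-thm} gives the $d_p$-limit.
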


The example of Theorem~\ref{thm: lines on rn} is given  in Example~\ref{example: lines on rn}.
Theorems~\ref{thm: counterex} and \ref{thm: lines on rn} demonstrate that one cannot replace $d_p$ closeness with Gromov-Hausdorff or Intrinsic Flat closeness in Theorem~\ref{thm: main thm, Lp def new}.
Furthermore, the following theorem shows that the $p$ for which we establish $d_p$ convergence in Theorem~\ref{thm: main thm, Lp def new}  cannot be taken arbitrarily large for fixed  $\delta.$
\begin{theorem}\label{thm: fixed delta}
	Fix $n\geq 4$ and $\delta>0$.  There exists a sequence $(\R^n, g_i)$ that satisfies 
	\begin{equation}
				R_{g_i} \geq - \delta, \qquad \qquad \nu(g_i, 2) \geq -\delta
	\end{equation}
and a singular metric $g_\infty$ on $\R^n$ such that $(\R^n, g_i)$ converges in $d_p$ to $(\R^n, g_\infty)$, as a rectifiable Riemannian space, for all $p \in [n+1, p_0)$ for some $p_0= p_0(\delta)$, but  does not $d_p$-converge to $(\R^n, g_\infty)$ for $p \geq p_0$. 
\end{theorem}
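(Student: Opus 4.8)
The plan is to construct the sequence explicitly, as a conformal pinch of flat space at the origin. Fix a small parameter $\alpha=\alpha(n,\delta)\in(0,1)$, to be pinned down at the end, and let $g_\infty=u^2g_{euc}$ on $\R^n$, where $u$ is a smooth positive radial function with $u(x)=|x|^{-\alpha}$ for $0<|x|\leq 1/2$ and $u\equiv 1$ for $|x|\geq 1$. Writing $u=v^{2/(n-2)}$ one has $R_{g_\infty}=-\tfrac{4(n-1)}{n-2}v^{-(n+2)/(n-2)}\Delta v$; since $|x|^{-\gamma}$ with $\gamma=\alpha(n-2)/2\in(0,n-2)$ is superharmonic on $\R^n\setminus\{0\}$, we get $R_{g_\infty}\geq 0$ on $\{0<|x|<1/2\}$, while on the fixed transition annulus $\{1/2\leq|x|\leq 1\}$, where $v$ is bent back to the constant $1$, a standard Gromov--Lawson bending estimate gives $R_{g_\infty}\geq -C(n)\gamma$. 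The smooth approximants are $g_i=u_i^2g_{euc}$, where $u_i=u$ for $|x|\geq 2/i$ and $u_i$ is constant on $\{|x|\leq 1/i\}$, interpolated radially so that $v_i=u_i^{(n-2)/2}$ stays superharmonic on the inner transition $\{1/i\leq|x|\leq 2/i\}$ --- this is possible because there the function $r\mapsto r^{n-1}v_i'(r)$ need only be moved monotonically from the value $0$ at the flat cap to the negative value dictated by the model at the cone, which is consistent with superharmonicity. Thus each $g_i$ is flat (a rescaled Euclidean ball) on $\{|x|\leq 1/i\}$, equals $g_\infty$ on $\{2/i\leq|x|\leq 1/2\}$ and on $\{|x|\geq 1\}$, is Euclidean outside $B(0,1)$, and --- once $\alpha$ is small enough --- satisfies $R_{g_i}\geq -\delta$.

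The heart of the matter is the entropy bound $\nu(g_i,2)\geq -\delta$. At every scale $\tau\in(0,2)$ the relevant local geometry of $g_i$ is, up to a bounded rescaling, either essentially flat (deep inside the cap, or far from the origin) or the exactly self-similar model near the origin, namely the $\alpha$-conformal cone $|x|^{-2\alpha}g_{euc}$. So it suffices to bound from below the Perelman entropy --- equivalently the optimal log-Sobolev constant --- of this model and of its flat-capped truncations, uniformly in scale, and to show the deficit tends to $0$ as $\alpha\to 0$. I would establish this either via a quantitative stability form of the rigidity that $\mu=0$ forces the Euclidean metric, or directly: the model is $(1+C(n)\gamma)$-bi-Lipschitz to flat space on every dyadic annulus about the origin, with uniform constant, and patching the Euclidean log-Sobolev inequalities across a partition of unity adapted to those annuli costs only $-C(n)\gamma$; the flat cap and the Euclidean exterior only help. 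This is the step I expect to be the main obstacle: Perelman's entropy is a global, infimal quantity, and one must control it uniformly over all $\tau\in(0,2)$ and all $i$ --- precisely the delicacy flagged in the discussion after Theorem~\ref{thm: counterex}. It is also this step that determines how small $\alpha=\alpha(\delta)$ must be, and hence fixes the threshold.

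Finally, let $p_0=p_0(\delta)$ be the critical exponent at which the relevant $L^p$ norms of $g_\infty-g_{euc}$ --- against Lebesgue measure and against $d\vol_{g_\infty}$ --- first become infinite; since $g_\infty-g_{euc}\sim|x|^{-2\alpha}$ near the origin, $p_0$ is of order $1/\alpha$, so for $\alpha=\alpha(\delta)$ small we have $p_0>n+1$, and $p_0\to\infty$ as $\delta\to 0$ (consistent with Theorem~\ref{thm: main thm, Lp def new}). For $p\in[n+1,p_0)$ the coefficients of $g_\infty$ lie in the relevant $L^p_{loc}$ spaces, so $(\R^n,g_\infty)$ is a genuine rectifiable Riemannian space with nondegenerate $d_{p,g_\infty}$; moreover $g_i-g_\infty$ is supported in the shrinking ball $\{|x|\lesssim 1/i\}$, over which $\int|g_i-g_{euc}|^p\,d\vol_{g_i}\to 0$. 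Combining Theorem~\ref{claim1} applied to the $g_i$ (whose hypotheses hold by construction), the smooth convergence $g_i\to g_\infty$ away from the origin, the vanishing of the capped-ball contribution, and the resulting uniform $d_p$-doubling and precompactness, one obtains $d_{mGH}((\R^n,d_{p,g_i},d\vol_{g_i}),(\R^n,d_{p,g_\infty},d\vol_{g_\infty}))\to 0$, i.e.\ $d_p$-convergence. For $p\geq p_0$, however, $g_\infty-g_{euc}\notin L^p_{loc}$: the $L^p$ quantities controlling the convergence diverge, $\int_{B(0,1)}|g_i-g_{euc}|^p\,d\vol_{g_i}\to\infty$, and $(\R^n,g_\infty)$ is no longer a $d_p$-complete rectifiable Riemannian space. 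Consequently $(\R^n,d_{p,g_i},d\vol_{g_i})$ cannot $d_p$-converge to $(\R^n,g_\infty)$ --- any $d_p$-limit must absorb the non-$L^p$ concentration near the origin, hence differs from $g_\infty$ --- which yields the claimed dichotomy at $p_0=p_0(\delta)$.
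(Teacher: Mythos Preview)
Your construction is genuinely different from the paper's, and the difference is not cosmetic: it is precisely what makes your argument fail at the key step. The paper builds a warped product $g_{\delta,\e}=dr^2+f_{\delta,\e}^2(r)h+\varphi_{\delta,\e}^2(r)dx^2$ on $\R^{n-1}\times\R$ in which the $dx^2$ component \emph{degenerates} (vanishes) along the line $\{r=0\}$; fixing $\delta$ and sending $\e\to 0$ gives $g_\infty=g_{cone}+r^{2\delta}dx^2$. The failure of $d_p$-convergence for large $p$ then has a clean geometric mechanism (cf.\ Example~\ref{example: dp}): once $p$ exceeds a threshold depending on $\delta$, every admissible $W^{1,p}(g_\infty)$ test function is forced to be constant along $\{r=0\}$, so $d_{p,g_\infty}$ collapses the entire line to a point and $(\R^n,g_\infty)$ is not $d_p$-complete. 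That is what produces the dichotomy.

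Your $g_\infty=|x|^{-2\alpha}g_{euc}$ does the opposite --- it \emph{blows up} at a single point --- and after the substitution $s=r^{1-\alpha}/(1-\alpha)$ it becomes $ds^2+(1-\alpha)^2s^2h$, i.e.\ a standard metric cone with angle defect. Such a cone is $d_p$-complete for \emph{every} $p>n$: by self-similarity $d_{p,g_\infty}(0,x)\sim s(x)^{1-n/p}>0$, and away from the vertex the metric is smooth, so $d_{p,g_\infty}$ generates the usual topology. Your proposed obstruction --- that $|g_\infty-g_{euc}|\notin L^p_{loc}$ past some $p_0$ --- measures only how far $g_\infty$ sits from the \emph{Euclidean} metric in a particular chart; it says nothing about whether $(\R^n,d_{p,g_\infty})$ is a legitimate $d_p$-complete space, nor about whether the $d_{p,g_i}$ distances converge to $d_{p,g_\infty}$. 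In fact your $g_i$ agree with $g_\infty$ outside a ball of $g_\infty$-radius $\sim(1/i)^{1-\alpha}\to 0$, so one expects $d_p$-convergence to the cone for \emph{all} $p>n$, and no dichotomy at all. The sentence ``any $d_p$-limit must absorb the non-$L^p$ concentration near the origin, hence differs from $g_\infty$'' is exactly where the argument breaks.

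Secondary point: your entropy sketch is reasonable in spirit but substantially underdeveloped compared to what is actually required. The paper's Proposition~\ref{thm: entropy goes to zero} carries out a careful blow-up analysis of entropy minimizers (existence, Gaussian decay, Moser iteration through the mildly singular region, and a rigidity/compactness contradiction via Lemma~\ref{lem: euclidean gaussian}) to obtain $\nu\geq -\eta$ uniformly in both approximation parameters; a partition-of-unity patching of Euclidean log-Sobolev inequalities across dyadic annuli does not obviously control the infimum over all test functions and all $\tau\in(0,L)$.
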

The example of Theorem~\ref{thm: fixed delta} is given in Example~\ref{example: delta fixed}.

\vspace{.2cm}
\subsection{Structure of Limit Spaces}

The next main result of the paper is the following compactness result and structure theorem for limit spaces.  We show that under almost nonnegative scalar curvature and entropy, we indeed have a rectifiable Riemannian  limit $X$.  

\begin{theorem}[Structure of limit spaces]\label{global-thm} 
Let $\{(M_i,g_i,x_i)\}$ be a sequence of complete pointed Riemannian manifolds with bounded curvature and let $p\geq n+1$. Then there exists $\delta=\delta(n,p)>0$ such that if
\begin{equation}
R_{g_i}\geq -\delta, \qquad\qquad \nu(g_i,2)\geq -\delta\, ,
\end{equation}
then there exists a pointed rectifiable Riemannian space $(X, g, x)$, with  $X$ topologically but not necessarily metrically a smooth manifold, such that the following holds.
\begin{enumerate}
	\item We have $d_p((M_i, g_i, x_i),  (X, g, x))\to 0$ in the pointed sense of Definition~\ref{def: dp convergence}.
	\item  The space $(X, g, x)$ is  $W^{1,p}$-rectifiably complete and $d_p$-rectifiably complete, in the sense of Definitions~\ref{def: rect completeness} and \ref{def: dp RC}  respectively.
\end{enumerate}
\end{theorem}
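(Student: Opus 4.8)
The plan is to run a compactness-plus-structure argument driven by the two $\e$-regularity results, Theorems~\ref{thm: main thm, Lp def new} and \ref{claim1}. Fix $p\geq n+1$ and choose $\delta=\delta(n,p)$ small enough that these apply with a small parameter $\e_0$, which we will eventually let go to zero along a diagonal subsequence. By Theorem~\ref{thm: main thm, Lp def new}, every $d_p$-unit ball $\B_{p,g_i}(y,1)\subset M_i$ is $\e_0$-close in Gromov--Hausdorff distance to a Euclidean unit ball and carries a volume doubling at all scales $r\leq 1$ with constant depending only on $n$. Hence the pointed metric measure spaces $(M_i,d_{p,g_i},\vol_{g_i},x_i)$ are uniformly doubling up to scale $1$; since the bounded curvature hypothesis prevents degeneration at finite distance, the family is precompact in the pointed measured Gromov--Hausdorff sense along an exhaustion by $d_p$-balls, and we extract a limit $(X,d_X,\mu_X,x)$. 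Because $d_p$ is not additive at large distances, this extraction is carried out ball-by-ball and diagonalized, matching the definition of pointed $d_p$ convergence in Definition~\ref{def: dp convergence}; this is bookkeeping rather than substance.

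Next I build the rectifiable Riemannian structure on $X$. Fix a maximal $d_p$-separated net in each $M_i$ and, at each net point, invoke Theorem~\ref{claim1} to obtain a smooth chart $\psi_i:\Om_i\to B(0,1)\subset\Rn$ with $\fint_{B(0,1)}|(\psi_i^{-1})^*g_i-g_{euc}|^p\leq\e_0$. The chart images are fixed copies of $B(0,1)$, the pulled-back metric coefficients are uniformly bounded in $L^p$, and, by \eqref{eqn: W1p ests}, \eqref{eqn: Lp norms small error}, \eqref{eqn: gradient small error} together with the chain rule, the transition maps between overlapping charts are uniformly bounded in $W^{1,p}$. Passing to the limit, the coefficients converge weakly in $L^p$ (strongly in $L^q$ for every $q<p$) to an $L^p_{\mathrm{loc}}$ Riemannian metric $g$, and the transition maps converge to limiting bi-measurable $W^{1,p}$ maps; this produces a countable rectifiable atlas on $X$ with metric $g$, compatible with the mGH limit in the sense that each chart domain is the limit of the $\Om_i$ and $\mu_X=\vol_g$. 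Since each chart is a homeomorphism onto $B(0,1)$ for the $d_X$-topology, and since by Theorem~\ref{thm: main thm, Lp def new} the $d_p$-balls are Gromov--Hausdorff close to Euclidean balls, $X$ is locally homeomorphic to $\Rn$; hence $X$ is topologically an $n$-manifold, which we endow with a compatible smooth structure. It need not be metrically a manifold because $g$ is only $L^p$ and $d_p$ may degenerate, as the examples in Theorems~\ref{thm: counterex}--\ref{thm: fixed delta} show.

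It remains to upgrade the metric-measure convergence to genuine $d_p$ convergence, that is, to identify $(X,d_X)$ with $(X,d_{p,g})$ and to show $d_{p,g_i}\to d_{p,g}$. The key point is that $d_p$ sees only the $W^{1,p}$ class of functions, which is precisely the data that converges. Given a competitor $f$ on $X$ with $\int_X|\na f|^p\,d\vol_g\leq 1$, a partition of unity subordinate to the chart cover together with the pullback estimate \eqref{eqn: gradient small error} produces on $M_i$ a function whose gradient $L^p$ norm is at most $1+o(1)$ as $i\to\infty$ and $\e_0,(\kappa-1)\to 0$, and whose values track those of $f$ under the identification, giving $d_{p,g_i}(y_i,z_i)\geq (1-o(1))\,d_{p,g}(y,z)$; the reverse inequality follows symmetrically by pushing competitors forward through the charts. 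Combined with $\mu_X=\vol_g$ this yields $d_p$ convergence in the pointed sense. Finally, $W^{1,p}$-rectifiable completeness and $d_p$-rectifiable completeness of $(X,g)$ are read off from the construction: the charts cover $X$ up to a $\vol_g$-null set with a consistent $W^{1,p}$ calculus, inherited from the genuine completeness of the $M_i$ and the uniformity in Theorem~\ref{claim1}, while the two-sided volume bound in Theorem~\ref{thm: main thm, Lp def new} guarantees that $d_{p,g}$ is an honest distance inducing the topology of $X$ up to scale $1$.

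The main obstacle is the third paragraph: transferring test functions between $M_i$ and the limit while controlling their $W^{1,p}$ energies globally rather than chart-by-chart, and doing so compatibly with the non-additive large-scale behaviour of $d_p$. The $\kappa$-loss built into Theorem~\ref{claim1} is exactly what makes this gluing succeed, but organizing it across a covering and sending all error parameters to zero simultaneously is the delicate point; a close second is the verification in the second paragraph that the limiting transition maps genuinely assemble into a rectifiable atlas with an $L^p$ metric compatible with the mGH limit.
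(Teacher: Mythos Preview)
Your outline has two genuine gaps that are not just matters of bookkeeping.

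\textbf{Construction of the limit manifold.} You extract a pointed mGH limit $(X,d_X,\mu_X)$ of the $d_p$-metric measure spaces and then argue that because unit $d_p$-balls are $\e_0$-GH close to Euclidean balls, $X$ is locally homeomorphic to $\Rn$. This inference is not valid: GH-closeness to a ball does not force the limit to be a topological ball, and the limiting charts you build by passing metric coefficients to weak $L^p$ limits are only $W^{1,p}$ maps, which need not be homeomorphisms. The paper avoids this entirely by running the Ricci flow: under the hypotheses one has flows $(M_i,g_i(t))_{t\in(0,1]}$ with uniform scale-invariant curvature bounds $|\Rm|\leq \lambda/t$ (Theorem~\ref{prop: uniform existence time and small curvature estimates}), so Hamilton's compactness produces a smooth Cheeger--Gromov limit $(M_\infty,g_\infty(t))$. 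The smooth manifold $M_\infty$ is then fixed once and for all, and the singular metric $g_\infty$ is obtained as the $L^P$-limit of $g_\infty(t)$ as $t\to 0$, using that the Decomposition Theorem forces $\{g_\infty(t)\}$ to be Cauchy in $L^P$ on compact sets (Corollary~\ref{cor: useful for limit}). This is what guarantees $X$ is genuinely a smooth manifold.

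\textbf{Transferring competitors and the $\kappa$-loss.} You correctly flag this as the hard step, but the mechanism you propose cannot close. Theorem~\ref{claim1} gives $\|\na f\|_{L^p(B)}\leq (1+\e)\|\na\psi^*f\|_{L^{\kappa p}(\Om)}$: to control the $L^p$-energy of a pulled-back competitor you need an $L^{\kappa p}$ bound on the original, but a generic admissible $f$ for $d_p$ only has $\|\na f\|_{L^p}\leq 1$. You cannot send $\kappa\to 1$, because $\delta=\delta(n,p)$ is fixed in the theorem statement and $\kappa$ is determined by it. The paper's resolution (Lemma~\ref{improved-estimate-GehringLemma}) is to work not with arbitrary competitors but with the \emph{extremal} competitor $f$ realizing $d_{p,g,\Omega}(x,y)$; this $f$ is $p$-harmonic away from $x,y$, satisfies a Caccioppoli inequality, and hence a reverse H\"older inequality. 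Gehring's lemma on the doubling space $(M,d_{p,g},\vol_g)$ then yields $\|\na f\|_{L^{\kappa p}}\leq C_0\|\na f\|_{L^p}$ with $\kappa,C_0$ depending only on $n,p$. This higher integrability is exactly what is needed to push the extremal competitor through the $\kappa$-loss in Theorem~\ref{claim1} without degradation, and is the key analytic input you are missing.

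Finally, the $W^{1,p}$-rectifiable completeness is not something one ``reads off'': the paper spends Section~\ref{sec: RC} verifying that the minimal $p$-weak upper gradient on the limit coincides with the coordinate gradient, which requires tracking the behavior of absolutely continuous curves through the singular metric and is not automatic from uniform chart estimates.
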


The first part of the above theorem just tells us that there exists a rectifiable space $X$ to which the $M_i$ converge.  As we have emphasized, it may be that $X$ does not have a well behaved metric structure and this convergence may not be in the Gromov-Hausdorff or Intrinsic Flat sense.  The second condition in the above theorem touches on some subtle points we have avoided in the introduction, and essentially tells us that $X$ is a well-behaved rectifiable space which behaves the way one might feel it should in a reasonable scenario.  In particular, the gradient of a function is indeed the coordinate gradient that one would compute in rectifiable charts, and the metric $d_p$ generates the topology of $X$.  Note for $q>>p$ this may fail for $d_q$.




\vspace{.2cm}
\subsection{Further Results under Lower Scalar Curvature and Entropy}

Finally, let us conclude by discussing some applications of the results to the underlying structure of spaces with lower scalar curvature and entropy bounds.  To begin, we obtain on such spaces an apriori $L^q$ bound for the scalar curvature, for $q<1$:\

\begin{theorem}[$L^q$ scalar curvature estimates]\label{thm: integral scalar bound}
Let $(M^n,g)$ be a closed Riemannian manifold and let $\epsilon>0$ and $q\in (0,1)$ be fixed.  There exists $\delta=\delta(n,q,\e)>0$ such that if
	\begin{align}
		R\geq -\delta, 	\qquad	\qquad	\nu(g, 2) \geq -\delta \, ,
	\end{align}
then we have 
	 \begin{equation}
	 	\fint_{M} |R|^q\,d\vol_{g}\leq \e\, .
	 \end{equation}
\end{theorem}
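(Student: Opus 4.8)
The plan is to control $\fint_M |R|^q$ by splitting the scalar curvature into its positive and negative parts and handling them separately. The negative part is essentially free: since $R \geq -\delta$, we have $\fint_M (R_-)^q \leq \delta^q$, which is small once $\delta = \delta(n,q,\e)$ is chosen appropriately. So the entire difficulty is to bound $\fint_M (R_+)^q\,d\vol_g$. For this I would use the basic trace/Bochner identity underlying Perelman's entropy, namely that the minimizer $f$ of the $\W$-functional at scale $\tau$ satisfies, via the Euler--Lagrange equation $\tau(2\Delta f - |\nabla f|^2 + R) + f - n = \mu(g,\tau)$, a pointwise relation that lets one integrate $R$ against the weight $e^{-f}$. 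Concretely, integrating the Euler--Lagrange equation against $(4\pi\tau)^{-n/2}e^{-f}$ and using the normalization gives a clean identity expressing $\int_M R\,(4\pi\tau)^{-n/2} e^{-f}\,d\vol_g$ in terms of $\mu(g,\tau)$ and $\int_M |\nabla f|^2 (4\pi\tau)^{-n/2}e^{-f}$. The point is that the entropy being almost zero forces the $f$-weighted total scalar curvature, and the $f$-weighted Dirichlet energy of $f$, to both be small.

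The next step is to upgrade this weighted $L^1$-type smallness of $R_+$ into the unweighted $L^q$ bound for $q<1$. Here is where I expect to lean on Theorem~\ref{thm: integral scalar bound}'s hypotheses together with the structural results already established. One route: by Remark~\ref{rmk: hyp under rescaling} and the $\e$-regularity Theorem~\ref{thm: main thm, Lp def new}, after suitable rescaling every point of $M$ sits in a ball that is $d_p$-close to a Euclidean ball with volume close to Euclidean, so one has effective volume noncollapsing and a Sobolev inequality with almost-Euclidean constant at unit scale. This controls the weight $e^{-f}$ from below on a definite portion of $M$ — more precisely, the log-Sobolev/entropy near-rigidity should give that $e^{-f}$ is close in an integral sense to the Gaussian profile, hence bounded below away from a small-measure set. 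Then on the complement of that small set, $\fint (R_+)^q \lesssim \fint R_+\,e^{-f}$ up to a constant, which is small; and on the small set itself one uses $q<1$ together with Hölder: $\int_{\text{bad}} (R_+)^q \leq |\text{bad}|^{1-q}\big(\int R_+\big)^{q}$, and $\int_M R_+$ is controlled because $\int_M R \geq -\delta\,\vol(M)$ combined with the weighted bound on $\int R e^{-f}$ — though getting an unweighted bound on $\int_M R_+$ is itself delicate and may instead require a covering argument applying the local weighted estimate on a Vitali family of unit balls.

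The main obstacle, as flagged, is precisely this passage from the $e^{-f}$-weighted integral bound on $R$ to an unweighted $L^q$ bound: the weight $e^{-f}$ can be extremely small (this is exactly the mechanism producing the wild tentacle examples of Theorems~\ref{thm: counterex}--\ref{thm: fixed delta}), so on regions where $e^{-f}$ degenerates the weighted bound gives no pointwise information about $R$. The exponent $q<1$ is what makes the argument possible at all, since it allows the Hölder step $\int (R_+)^q \leq |\text{set}|^{1-q}(\int R_+)^q$ to absorb a large scalar curvature on a small set; for $q=1$ this fails, consistent with the examples showing $\int |R|$ need not be small. I would organize the proof so that the quantitative near-rigidity of the entropy (controlling both $\mu(g,\tau)\approx 0$ forcing $f$ close to the Gaussian and $\int |\nabla f|^2 e^{-f}$ small) is the engine, and a Vitali covering by $\e$-regularity balls localizes everything to unit scale where Theorem~\ref{thm: main thm, Lp def new} applies. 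A second, more technical obstacle is ensuring the constants are uniform in the (nonuniform) curvature bound — but as in the other theorems this only enters to rule out degeneration at infinity on the closed manifold $M$, so it is not a real issue here.
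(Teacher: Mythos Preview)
Your split into $R_+$ and $R_-$ and the observation that $q<1$ is what allows the H\"older absorption on small sets are both on point, and match the paper. But the core of your argument---the static entropy Euler--Lagrange approach---has a genuine gap that you partially flag but do not resolve.

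The issue is the de-weighting step. The entropy minimizer $f$ (or the conjugate heat kernel based at a point) gives you control of $\int_M R\,e^{-f}$, but $e^{-f}$ is essentially a single Gaussian profile concentrated at one scale-$\sqrt{\tau}$ ball. Away from that ball it decays exponentially, so the weighted bound says nothing about $R$ there. Your proposed fix is H\"older on the bad set, $\int_{\text{bad}} (R_+)^q \leq |\text{bad}|^{1-q}(\int_M R_+)^q$, but this requires an \emph{unweighted} bound on $\int_M R_+$. You suggest obtaining this by a Vitali covering with a localized weighted estimate at each cover point---but summing weighted estimates centered at different points still does not produce an unweighted $\int R_+$ bound without already knowing the weights are uniformly bounded below, which they are not. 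In fact, the unweighted $L^1$ bound $\fint_M |R| \leq B$ is exactly the open conjecture stated immediately after this theorem in the paper, so your H\"older step cannot be fed in this way.

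The paper's proof is completely different and avoids this circularity by using Ricci flow as a smoothing mechanism. Under the hypotheses, Theorem~\ref{prop: uniform existence time and small curvature estimates} gives a Ricci flow $g(t)$ on $[0,1]$ with $|\Rm_{g(t)}|\leq \lambda/t$; in particular at $t=1$ the scalar curvature is \emph{pointwise} small, so $\fint_M (R_{g(1)}+2\delta)^q$ is trivially small. The function $f=(R+2\delta)^q$ is then shown to be a \emph{supersolution} of the heat equation coupled to the flow (here $q<1$ enters: $q(q-1)<0$ makes the gradient term have the right sign), so integrating the conjugate formula \eqref{eqn: conjugate} with $v=1$ gives $\int_M f^q\,d\vol_{g(0)} \leq \int_M f^q\,d\vol_{g(1)} + \int_0^1\int_M R f^q$. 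The first term is small by the smoothing; the second is controlled by the space-time $L^2$ Ricci estimate (Lemma~\ref{lem: basic observation} and Proposition~\ref{prop: cutoff} with $\varphi\equiv 1$) plus H\"older interpolation between $|R|^{q+1}$ and $|R|^2$. The parabolic viewpoint is what replaces your missing unweighted $L^1$ bound.
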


Motivated by the above we conjecture the following:

\begin{conjecture}
Let $(M^n,g)$ be a closed Riemannian manifold with $R,\nu(g, 2)\geq -A$, then there exists $B(n,A)>0$ such that 
\begin{equation}
	 	\fint_{M} |R|\,d\vol_{g}\leq B\, .
	 \end{equation}	
\end{conjecture}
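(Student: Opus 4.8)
The statement to prove is the conjecture: a closed Riemannian manifold with $R, \nu(g,2) \geq -A$ satisfies $\fint_M |R| \, d\vol_g \leq B(n,A)$.

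\textbf{Overall strategy.} The plan is to reduce the claimed $L^1$ bound to two inputs: (i) a \emph{uniform upper bound} on the average of the negative part $R_- = \max(-R, 0)$, which is essentially free from the hypothesis $R \geq -A$ once one controls the total volume at the relevant scale, and (ii) a uniform upper bound on the average of the positive part $R_+$. Since $|R| = R_+ + R_- = R + 2R_-$, and $\fint_M R_- \leq A$ trivially (pointwise $R_- \leq A$), the entire difficulty concentrates on bounding $\fint_M R_+$, equivalently $\fint_M R$, from above. So I would first record the trivial estimate $\fint_M R_- \, d\vol_g \le A$, and then devote the argument to showing $\fint_M R\, d\vol_g \le C(n,A)$.

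\textbf{Bounding $\fint_M R$ from above via the entropy.} The key observation is that the Perelman entropy $\mu(g,\tau)$ encodes an integrated scalar curvature bound through the $\W$-functional. Choosing the test function $f = \tfrac n2 \log(4\pi\tau)$ (the constant making $(4\pi\tau)^{-n/2}\int e^{-f} = \vol_g(M)/\text{(normalizing const)}$, adjusted by an additive constant to meet the constraint), or more robustly $f$ equal to a constant $c$ determined by $\vol_g(M)$, one gets from $\mu(g,\tau) \le \W(g,f,\tau)$ with $f\equiv c$:
\begin{equation*}
\mu(g,\tau) \le \frac{1}{(4\pi\tau)^{n/2}} \int_M \big(\tau R + c - n\big) e^{-c}\, d\vol_g = \frac{e^{-c}}{(4\pi\tau)^{n/2}}\Big(\tau \int_M R\, d\vol_g + (c-n)\vol_g(M)\Big).
\end{equation*}
With the constraint $(4\pi\tau)^{-n/2} e^{-c}\vol_g(M) = 1$, i.e. $e^{-c} = (4\pi\tau)^{n/2}/\vol_g(M)$, this rearranges to
\begin{equation*}
\frac{\tau}{\vol_g(M)} \int_M R\, d\vol_g \ge \mu(g,\tau) - c + n = \mu(g,\tau) + \log\frac{\vol_g(M)}{(4\pi\tau)^{n/2}} + n,
\end{equation*}
which is a \emph{lower} bound on $\fint_M R$ — the wrong direction. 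The honest route to an \emph{upper} bound is different: one must use that $g$ itself, not a far-from-optimal test function, nearly achieves the infimum, together with the evolution/monotonicity structure. I would instead run the argument locally using Theorem~\ref{thm: main thm, Lp def new} / Theorem~\ref{thm: RF}: on the scale where $\nu(g,2) \ge -A$, one does \emph{not} get $\fint |R|$ small (since $A$ is fixed), but one should still get a definite \emph{upper} bound $\fint_{\B_{p}(x,1)} |R| \le C(n,A)$ from the $L^p$ control on metric coefficients and a Bochner/$\epsilon$-regularity-type iteration, because the proof of Theorem~\ref{thm: integral scalar bound} surely produces, before taking $\delta\to 0$, an explicit bound of the form $\fint |R|^q \le C(n,q,A)$ for \emph{every} $A$ (not just small $A$); one then needs to upgrade $q<1$ to $q=1$.

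\textbf{From $L^q$, $q<1$, to $L^1$.} This is the main obstacle. Having $\fint_M |R|^q \le C(n,q,A)$ for all $q < 1$ does not by itself give $\fint_M |R| \le B$; one needs a weak-$L^1$ or distributional bound, e.g. a bound of the form $\vol_g(\{|R| > \lambda\}) \le C\lambda^{-1}\vol_g(M)$ with logarithmic corrections excluded, or a reverse-Hölder / higher integrability improvement coming from the PDE satisfied by $R$. A natural mechanism: on the good charts from Theorem~\ref{claim1} the metric is $W^{1,p}$-close to Euclidean for $p$ large, so $R$ lies in a negative Sobolev space with quantitative norm, and one can try to interpolate against the \emph{sign} structure $R \ge -A$ — the point being that a function bounded below whose negative part integrates to at most $A$ and which lies in $L^q$ for $q$ near $1$ with controlled norm, and which is the scalar curvature of a metric with controlled $W^{1,p}$ charts, cannot have a heavy positive tail because the Gauss–Bonnet-type or total-scalar-curvature functional is controlled. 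Concretely I expect the cleanest path is: (a) establish $\fint_{\B_p(x,1)}|R| \le C(n,A)$ directly by tracking constants through the proof of Theorem~\ref{thm: RF}, using that the Ricci-flow smoothing there runs for a definite time and yields $\int_0^T \int |R| \lesssim $ (entropy gap) $\le A$ along the flow, hence for the initial metric after controlling the time-derivative of $\int R$; (b) cover $M$ by boundedly many $d_p$-balls $\B_p(x_j,1)$ using the doubling property from Theorem~\ref{thm: main thm, Lp def new}, whose number is controlled by $\vol_g(M)$ at the unit $d_p$-scale; (c) sum. The hard part is step (a): turning the \emph{monotonicity} of $\nu$ under Ricci flow into a genuine $L^1$-in-space bound on $R$ for the initial metric, rather than merely a space-time $L^1$ bound or an $L^q$ bound with $q<1$ — this requires controlling $\partial_t \int_M R\, d\vol = -\int_M (|\Ric|^2 + \tfrac12|\nabla R|^2 \cdot 0 \ldots)$, i.e. $\partial_t \int R = \int(|\Ric|^2 - \tfrac12 \Delta R - \ldots)$... in fact $\partial_t \int_M R\,d\vol_g = \int_M (-\Delta R - 2|\Ric|^2 + R^2 \cdot 0) \le$ requires the Ricci term, and that term has the good sign only after integrating against the right weight. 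I would therefore expect the proof to combine the local smoothing estimates underlying Theorem~\ref{thm: RF} with a Moser-type iteration for $R$ to cross the $q=1$ threshold, and I flag that threshold crossing as the genuine difficulty that the conjecture, as opposed to Theorem~\ref{thm: integral scalar bound}, is pointing at.
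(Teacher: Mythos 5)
This statement is a \emph{conjecture} in the paper, not a theorem: the authors explicitly pose it as an open problem motivated by, and sharpening, Theorem~\ref{thm: integral scalar bound}, which only gives $\fint_M |R|^q \le \e$ for $q<1$ under \emph{small} lower bounds. There is therefore no paper proof against which to measure your proposal, and your proposal does not supply one — you are candid about this, ending by flagging ``that threshold crossing as the genuine difficulty.'' That honesty is appropriate, but it should be drawn as the conclusion: what you have written is a discussion of obstructions, not a proof.

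Two of the obstructions you identify are worth being precise about. First, as you notice mid-computation, testing the $\W$-functional against a constant $f$ (or any fixed test function) can only produce an \emph{upper} bound on $\mu(g,\tau)$, which after unwinding the constraint yields a \emph{lower} bound on $\fint_M R$. This is not an accident of your choice of $f$: a lower bound on the entropy is a one-sided, non-collapsing constraint, and no variational upper bound on $\mu$ can be converted into an upper bound on $\fint_M R$ by this route. Second, and more importantly, the proof of Theorem~\ref{thm: integral scalar bound} does not survive the limit $q\to 1$. Tracing constants, the estimate there relies on a H\"{o}lder split against a weight $s^{-\BETA}$ with $\BETA = \ETA(1+q)/(1-q)$, which forces $\ETA \lesssim 1-q$; as $q \to 1$ one must take $\ETA \to 0$, hence $\delta \to 0$, and the resulting bound $C(q)(\ETA+\delta)^{(1+q)/2}$ degenerates in exactly the regime you need. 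So the $q<1$ theorem is not a uniform-in-$q$ family of bounds that one can pass to the limit in; the $q=1$ case would require a genuinely new mechanism, e.g.\ an unweighted space-time $L^1$ estimate for $|\Ric|^2$ along the flow that does not lose a power of $t$ near $t=0$, and then a way to transfer a space-time bound on $|\Ric|^2$ back to a spatial $L^1$ bound on $R_+$ at $t=0$. Both steps are open. Your item (a) — controlling $\partial_t \int_M R\, d\vol_g$ — also runs into the issue that along the Ricci flow $\partial_t \int_M R\, d\vol_g = \int_M (2|\Ric|^2 - R^2)\, d\vol_g$, whose sign is not controlled, so naive integration in time does not close either.

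In short: recognize the statement as an open conjecture, report that the approaches sketched (entropy test functions, limiting $q\to 1$ in Theorem~\ref{thm: integral scalar bound}, and time-integration of $\int_M R$) each break down for identifiable reasons, and do not present the discussion as a proof.
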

\vspace{.1cm}

In our last main result we prove that Riemannian manifolds satisfying a uniform lower bound on entropy and scalar curvature satisfy a Morrey-Sobolev embedding with a uniform constant.

\begin{theorem}[$L^\infty$ Sobolev Embedding]\label{Sob-embedding}
Let $(M^n,g)$ be a complete Riemannian manifold with bounded curvature and let $p\geq n+1$ and $q>n$.    There exists $\delta= \delta(n,p,q)>0$ and $C_{n,q}>0$ such that if 
		\begin{align}
		\label{eqn: scalar lower bound}
		R\geq -\delta, 	\qquad	 \qquad \nu(g, 2) \geq -\delta \, ,
	\end{align}
then for all $f \in W^{1,q}(M)$, we have 
	\begin{equation}\label{So-em global-intro}
\|f \|_{L^\infty(M)}\leq  C_{n,q} \left(\|\nabla f\|_{L^{q}(M)}+ \|f\|_{L^{q}(M)}\right).
\end{equation}
More locally, for all  $x_0 \in M$ and $f \in W_0^{1,q}(\B_{p,g}(x_0, 1))$, we have 
	\begin{equation}\label{So-em-intro}
		\| f\|_{L^\infty(\B_{p,g}(x_0,1))} \leq  C_{n,q} \| \na f\|_{L^q(\B_{p,g}(x_0,1))}\, .
	\end{equation}
In terms of the $d_p$ distance we can upgrade this to a H\"older embedding: 
there exists $\alpha= \alpha(n,q) \in (0,1)$	\begin{equation}\label{So-em-2-intro}
	|f(x)-f(y)|\leq C_{n,q, p} d_p(x,y)^{\alpha}\|\nabla f\|_{L^q(\B_{p,g}(x_0,1))}\, 
	\end{equation}
for all $x,y\in \B_{p,g}(x_0,1)$.
\end{theorem}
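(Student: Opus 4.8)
The plan is to reduce everything to the Euclidean Morrey--Sobolev inequality via the $W^{1,p}$-charts produced by Theorem~\ref{claim1}, being careful to track how $L^q$ norms and gradients distort under the chart map. Fix $q>n$ and choose an auxiliary exponent $\kappa>1$ close to $1$ with $\kappa q > n$ still, and apply Theorem~\ref{claim1} with parameters $(p, \kappa, \e')$ for a small $\e'$ to be fixed at the end; this yields $\delta = \delta(n,p,q)$ such that under $R\geq -\delta$, $\nu(g,2)\geq -\delta$, every point $x_0\in M$ sits in an open set $\Omega = \Omega(x_0)$ with a diffeomorphism $\psi\colon\Omega\to B(0,1)$, $\psi(x_0)=0$, satisfying the metric estimates \eqref{eqn: W1p ests} and the norm/gradient comparisons \eqref{eqn: Lp norms small error}--\eqref{eqn: gradient small error}. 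The key point is that these comparisons are stated with a loss of exponent (from $q$ to $q/\kappa$ or $\kappa q$), which is exactly what lets us run the Euclidean Sobolev inequality on the chart and pull it back.

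For the local statement \eqref{So-em-intro}: given $f\in W_0^{1,q}(\B_{p,g}(x_0,1))$, first observe that the $d_p$-ball $\B_{p,g}(x_0,1)$ is comparable, via the $\e$-regularity content, to the chart domain $\Omega$ up to the closeness we control; more precisely the $\epsilon$-regularity Theorem~\ref{thm: main thm, Lp def new} tells us $\B_{p,g}(x_0,1)$ is $\e$-close to a Euclidean $d_p$-ball, so after possibly rescaling the chart we may assume $\psi(\B_{p,g}(x_0,1))$ contains a fixed Euclidean ball and is contained in $B(0,1)$. Push $f$ forward to $h = f\circ\psi^{-1}$ on $B(0,1)$, which is compactly supported. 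Apply the Euclidean Morrey inequality: $\|h\|_{L^\infty}\leq C_{n,q}\|\nabla h\|_{L^q(B(0,1))}$ (no zeroth-order term is needed since $h$ has compact support). Now $\|f\|_{L^\infty(\B_{p,g}(x_0,1))} = \|h\|_{L^\infty}$ since $\psi$ is a diffeomorphism, and by \eqref{eqn: gradient small error} (upper bound) $\|\nabla h\|_{L^q(B(0,1))} \leq (1+\e')\|\nabla(\psi^*h)\|_{L^{\kappa q}(\Omega)} = (1+\e')\|\nabla f\|_{L^{\kappa q}(\Omega)}$. This leaves an $L^{\kappa q}$ rather than $L^q$ norm on the right, so one runs the argument the other way as well: by the lower bound in \eqref{eqn: gradient small error} applied with the roles arranged so that $\|\nabla \psi^* h\|_{L^{q/\kappa}} \leq \tfrac1{1-\e'}\|\nabla h\|_{L^q}$; iterating this observation, or rather choosing $\kappa$ at the outset so that the composition of the two comparisons only costs a factor $(1+\e')^2$ while keeping all exponents on the interval $(n,\infty)$, gives $\|f\|_{L^\infty(\B_{p,g}(x_0,1))}\leq C_{n,q}\|\nabla f\|_{L^q(\B_{p,g}(x_0,1))}$ after absorbing constants. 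The global statement \eqref{So-em global-intro} follows by a standard covering/partition argument: cover $M$ by such charts, use the local embedding on each with the zeroth-order term reinstated (since $f$ need not be compactly supported in a single chart, the Euclidean inequality on $B(0,1)$ produces $\|h\|_{L^\infty}\leq C_{n,q}(\|\nabla h\|_{L^q(B(0,1))}+\|h\|_{L^q(B(0,1))})$), then sum using that the charts have controlled overlap and the volume comparisons keep the $L^q$ norms uniformly comparable to the intrinsic ones.

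For the H\"older upgrade \eqref{So-em-2-intro}: on the Euclidean chart, Morrey's inequality gives not just boundedness but $|h(a)-h(b)|\leq C_{n,q}|a-b|^{1-n/q}\|\nabla h\|_{L^q}$ for $a,b\in B(0,1)$. We need to convert the Euclidean distance $|a-b| = |\psi(x)-\psi(y)|$ into $d_{p,g}(x,y)$. Here the comparison runs through the definition of $d_p$: on the Euclidean side, $|\psi(x)-\psi(y)| \le C\, d_{p,g_{euc}}(\psi(x),\psi(y))^{1}$ up to dimensional factors when $p>n$ (indeed for $p>n$ the $d_p$ distance on $\R^n$ is bi-H\"older-equivalent to the Euclidean distance, with exponent depending on $n,p$), and then the chart comparison of $W^{1,p}$ energies \eqref{eqn: W1p ests} together with the definition of $d_p$ as a sup over functions with controlled $\int|\nabla f|^p$ shows $d_{p,g_{euc}}(\psi(x),\psi(y))$ is comparable to $d_{p,g}(x,y)$ up to the $\e'$ errors. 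Composing, $|\psi(x)-\psi(y)| \leq C_{n,q,p}\, d_{p,g}(x,y)^{\beta}$ for some $\beta = \beta(n,p)\in(0,1)$, and then $|f(x)-f(y)| = |h(\psi(x))-h(\psi(y))| \leq C_{n,q}|\psi(x)-\psi(y)|^{1-n/q}\|\nabla h\|_{L^q} \leq C_{n,q,p}\,d_{p,g}(x,y)^{\alpha}\|\nabla f\|_{L^q(\B_{p,g}(x_0,1))}$ with $\alpha = \beta(1-n/q)$, after converting $\|\nabla h\|_{L^q}$ back to $\|\nabla f\|_{L^q}$ via \eqref{eqn: gradient small error} as above.

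The main obstacle I expect is not any single inequality but the bookkeeping of exponents: because Theorem~\ref{claim1} only compares $L^q$ with $L^{q/\kappa}$ and $L^{\kappa q}$ (it cannot give an exact $L^q$--$L^q$ comparison — that would essentially be bi-Lipschitz control of the chart, which is false in this setting), every step loses a little integrability, and one must verify that the finitely many applications needed can all be arranged with $\kappa$ chosen close enough to $1$ that the relevant exponents stay strictly above $n$ and the accumulated multiplicative errors stay bounded. A secondary subtlety is matching the $d_p$-ball $\B_{p,g}(x_0,1)$ with the chart domain $\Omega$: since the theorem's hypotheses are exactly those of the $\epsilon$-regularity Theorem~\ref{thm: main thm, Lp def new}, one should use that result to guarantee $\B_{p,g}(x_0,1)$ has the expected shape and is sandwiched between Euclidean $d_p$-balls under $\psi$, so that "compactly supported in $\B_{p,g}(x_0,1)$" transfers to "compactly supported in $B(0,1)$" and Morrey applies cleanly. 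The global covering step is routine once the local estimate with a zeroth-order term is in hand.
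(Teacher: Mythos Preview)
Your overall strategy matches the paper's: push through the chart $\psi$ from Theorem~\ref{claim1}, apply Euclidean Morrey--Sobolev, and pull back. But the exponent bookkeeping you flag as the ``main obstacle'' has a cleaner resolution than the iteration you sketch, and your sketch as written does not actually close. The point is not to apply Euclidean Morrey at exponent $q$ and then fight to convert $\|\nabla h\|_{L^q(B)}$ into $\|\nabla f\|_{L^q(\Omega)}$; instead apply Euclidean Morrey at exponent $q/\kappa$ (still $>n$ since $\kappa$ is close to $1$), and then a \emph{single} use of the upper bound in \eqref{eqn: gradient small error} with exponent $q/\kappa$ gives $\|\nabla h\|_{L^{q/\kappa}(B)} \leq (1+\e')\|\nabla f\|_{L^{\kappa\cdot(q/\kappa)}(\Omega)} = (1+\e')\|\nabla f\|_{L^q(\Omega)}$, exactly what is needed. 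No iteration, no composition of two comparisons. Your attempted fix via the lower bound does not work: that inequality bounds $\|\nabla f\|_{L^{q/\kappa}(\Omega)}$ by $\|\nabla h\|_{L^q(B)}$, which is the wrong direction.

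Two further simplifications the paper makes. For the global bound \eqref{So-em global-intro}, no covering is needed: for each $x_0$ multiply $h=\psi_*f$ by a fixed Euclidean cutoff $\varphi$ supported in the chart, apply Morrey to $h\varphi$, and conclude $|f(x_0)|\leq C(\|\nabla f\|_{L^q(M)}+\|f\|_{L^q(M)})$; since $x_0$ is arbitrary this gives the sup bound directly. For the H\"older upgrade \eqref{So-em-2-intro}, rather than passing through $d_{p,g_{euc}}$, the paper shows the \emph{linear} bound $|\psi(x)-\psi(y)|\leq C\,d_{p,g}(x,y)$ by exhibiting an explicit test function: $\varphi(z)=\min\{|\psi(x)-\psi(z)|,\text{const}\}$ is compactly supported with bounded Euclidean gradient, so by \eqref{eqn: gradient small error} a multiple of $\psi^*\varphi$ is admissible in the definition of $d_{p,g}(x,y)$, yielding $d_{p,g}(x,y)\geq C^{-1}|\psi(x)-\psi(y)|$. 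This gives $\alpha = 1-n\kappa/q$ directly. Your route through the bi-H\"older relation between $|\cdot|$ and $d_{p,euc}$ would also work but with a worse exponent.
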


\begin{remark}
	{\rm 
	The examples of Section~\ref{examples-entropy} demonstrate that the H\"{o}lder embedding of \eqref{So-em-intro} cannot hold with the geodesic distance in place of the $d_p$-distance.
	}
\end{remark}

\vspace{.1cm}
The following theorem provides a type of stability for a theorem of Schoen-Yau \cite{Schoen_1979}  and Gromov-Lawson \cite{GromovLawson} which states that a metric of nonnegative scalar curvature on a torus must be flat. Stability for this rigidity theorem statement was conjectured by Gromov in \cite{Gromov14}, with a more concrete formulation of the conjecture given by Sormani in \cite{SormaniSurvey}. Progress toward this conjecture has been made in various cases. The first developments were due to Gromov \cite{Gromov14}, also established by Bamler using Ricci flow \cite{Bamler16}, which showed that  if a sequence of metrics $g_i$ on a torus that converge in $C^0$ to a $C^2$ metric $g$ and have $R_{g_i} \geq -1/i$, then $g$ is the flat metric. Using (regularizing) Ricci flow, Burkhardt-Guim \cite{BG19} extended this result to limiting metrics that are only $C^0$, and also proved a generalization of the rigidity result to  $C^0$ metrics with non-negative scalar curvature in a weak sense.  Further progress toward this conjecture, in the form stated by  Sormani in \cite{SormaniSurvey}, has been made in the setting of warped product metrics \cite{WarpedTorus} by Allen, Hernandez-Vazquez, Parise, Payne, and Wang, graphical tori \cite{GraphTorus} by Cabrera Pacheco, Ketterer, and Perales, and metrics that are conformal to the flat metric \cite{AllenConformal} by Allen. We note that the hypotheses in Theorem~\ref{thm: flat torus} differ from those in the conjecture of Sormani; most notably our assumption of an entropy lower bound takes the place of the lower  bound on the minA quantity there, and here stability is with respect to the $d_p$ distance rather than the Intrinsic Flat distance.

\begin{theorem}\label{thm: flat torus}
	Fix $n\geq 2$ and $ p \geq n+1$. There exists $\delta = \delta(n,p)$ and $V_0 = V_0(n,p)$ such that the following holds. For any $V\geq V_0$, let $(M_i, g_i)$ be a sequence of compact Riemannian manifolds,  diffeomorphic to tori,  with $\vol_{g_i}(M_i) \leq V$  and satisfying 
	\begin{equation}
		\nu( g_i, 2) \geq -\delta , \qquad R_i \geq -1/i
	\end{equation}
	 Then $(M_i, g_i)$ converges in the $d_p$ sense to a flat torus with $\nu(g , 2)  \geq -\delta $.
\end{theorem}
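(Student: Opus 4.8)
The plan is to combine the compactness theorem (Theorem~\ref{global-thm}) with the $\epsilon$-regularity theorem (Theorem~\ref{thm: main thm, Lp def new}) and the rigidity of nonnegative scalar curvature on the torus. First I would fix $p \geq n+1$ and choose $\delta = \delta(n,p)$ small enough that both Theorem~\ref{thm: main thm, Lp def new} and Theorem~\ref{global-thm} apply with this $\delta$; the volume threshold $V_0$ will come in through the total-measure bound needed to extract a limit in the pointed/unpointed $d_p$ sense for compact spaces of bounded volume. Given a sequence $(M_i, g_i)$ of tori with $\vol_{g_i}(M_i) \leq V$, $\nu(g_i,2) \geq -\delta$, and $R_i \geq -1/i$, Theorem~\ref{global-thm} produces (after passing to a subsequence) a pointed rectifiable Riemannian space $(X, g, x)$ which is $W^{1,p}$- and $d_p$-rectifiably complete, with $X$ topologically a smooth manifold, such that $(M_i, g_i) \to (X, g, x)$ in $d_p$. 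Because the volumes $\vol_{g_i}(M_i)$ are uniformly bounded above and (by the lower volume bound built into the entropy hypothesis via the $\epsilon$-regularity ball volume estimates) uniformly bounded below, the convergence is in the unpointed mGH sense on $(X, d_{p,g}, d\vol_g)$, a compact space.

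The next step is to identify $X$ topologically and geometrically. Since each $M_i$ is a torus and the $d_p$-convergence is through mGH of the spaces $(M_i, d_{p,g_i}, d\vol_{g_i})$, and since Theorem~\ref{global-thm} asserts $X$ is topologically a smooth manifold, I would argue that $X$ is diffeomorphic to $\mathbb{T}^n$: the $\epsilon$-regularity ball estimates of Theorem~\ref{thm: main thm, Lp def new} give uniform control that rules out topology change (each point of $X$ has a neighborhood $W^{1,p}$-close to a Euclidean ball by Theorem~\ref{claim1}), and a covering/degree argument together with the stability of $d_p$-balls shows the limit carries the torus topology. Then I would apply Theorem~\ref{thm: main thm, Lp def new} itself at the level of the limit: the scalar curvature lower bounds $R_i \geq -1/i \to 0$ pass to the limit so that, heuristically, $X$ has "nonnegative scalar curvature" in the appropriate weak/rectifiable sense, and the entropy bound $\nu(g_i, 2) \geq -\delta$ is preserved by $d_p$-convergence (the entropy is $d_p$-continuous, as emphasized in the remark after Theorem~\ref{thm: main thm, Lp def new}), giving $\nu(g, 2) \geq -\delta$ on $X$.

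To conclude that $X$ is a flat torus, I would run a second limiting argument: having established that any $d_p$-limit of such a sequence is a rectifiable Riemannian torus with scalar curvature $\geq 0$ (weakly) and $\nu(g,2) \geq -\delta$, I would then further improve. One route is to invoke the $L^q$ scalar curvature estimate (Theorem~\ref{thm: integral scalar bound}): since $R_i \geq -1/i$, for closed tori $\fint_{M_i} |R_i|^q \, d\vol \to 0$, hence $R_i \to 0$ in $L^q$; combined with the $W^{1,p}$-charts this forces the limiting metric $g$ to be $W^{1,p}$-close to flat on every chart and, by the rigidity/classification, to actually be the flat metric on $\mathbb{T}^n$. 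Alternatively, and perhaps more cleanly, once we know $X$ is topologically $\mathbb{T}^n$ with almost-nonnegative scalar curvature in the limit, a Ricci-flow smoothing argument in the spirit of Bamler/Burkhardt-Guim applied to the nearly-flat charts shows the metric must be flat. The entropy of a flat torus of the given volume is a negative constant depending only on $n$ and the volume, so choosing $V_0$ appropriately guarantees this constant is $\geq -\delta$, matching the claimed conclusion $\nu(g,2) \geq -\delta$.

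\textbf{Main obstacle.} The hard part will be making precise the sense in which "$R \geq 0$" and the torus rigidity theorem of Schoen-Yau/Gromov-Lawson survive passage to a rectifiable Riemannian limit that need not be a Riemannian manifold and need not have a good distance function. Unlike the $C^0$-limit setting of Bamler and Burkhardt-Guim, here the convergence is only $d_p$, so one cannot directly quote their $C^0$-rigidity; instead one must either (i) show that the $W^{1,p}$-chart convergence of Theorem~\ref{claim1} is strong enough to run a Ricci-flow argument chart-by-chart and patch, controlling the flow uniformly using the entropy (which is monotone under Ricci flow), or (ii) develop a notion of weak nonnegative scalar curvature for rectifiable Riemannian spaces and prove a rigidity statement there. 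I expect the entropy monotonicity under Ricci flow, combined with the rigidity case of Perelman's entropy (entropy $=0$ forces Euclidean space) applied at small scales, to be the key mechanism that closes the argument, but verifying that the flow exists and the estimates are uniform along the degenerate sequence is where the real work lies.
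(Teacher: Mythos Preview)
Your overall strategy---apply the compactness theorem, identify the limit topologically as a torus, and then invoke torus rigidity---is correct, but you have misidentified where the difficulty lies and consequently propose a much harder route than necessary.

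The obstacle you flag (defining weak nonnegative scalar curvature on a rectifiable Riemannian space and proving Schoen--Yau/Gromov--Lawson rigidity there) is completely bypassed in the paper. The key point is that the limit space in Theorem~\ref{global-thm} is not produced abstractly: by the proof of Proposition~\ref{constructon-limit-metric}, one first runs the Ricci flow $(M_i, g_i(t))_{t\in[0,1]}$ on each smooth $M_i$ (uniform existence and scale-invariant curvature bounds are already established in Theorem~\ref{prop: uniform existence time and small curvature estimates}), then takes a smooth Cheeger--Gromov limit $(M_\infty, g_\infty(t))_{t\in(0,1]}$ using Hamilton's compactness. Since scalar curvature lower bounds are preserved under Ricci flow, $R_{g_i(t)} \geq -1/i$ for all $t$, and passing to the smooth limit gives $R_{g_\infty(t)} \geq 0$ for each fixed $t>0$. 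The smooth convergence at $t=1$ shows $M_\infty$ is diffeomorphic to a torus (no covering/degree argument or chart patching needed). Now one applies the classical Schoen--Yau/Gromov--Lawson theorem to the \emph{smooth} metric $g_\infty(t)$ on $\mathbb{T}^n$ to conclude it is flat for every $t\in(0,1]$; since $g_\infty(t)$ is a Ricci flow, it is the constant flat metric. Finally, the rectifiable limit $g_\infty$ is the $L^p$ limit of $g_\infty(t)$ as $t\to 0$ (again from Proposition~\ref{constructon-limit-metric}), hence $g_\infty$ is flat.

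In short: you never need weak scalar curvature, $L^q$ scalar estimates, or a new rigidity theorem. The Ricci flow regularizes each $M_i$ uniformly, the limit is taken at the level of smooth flows, and rigidity is applied at a smooth positive-time slice. What you describe as ``the real work''---uniform Ricci flow existence along the sequence---is exactly Theorem~\ref{prop: uniform existence time and small curvature estimates} and is already available.
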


\subsection{Acknowledgements}
 M.-C. Lee was partially supported by the National Science Foundation under Grant No. DMS-1709894 and EPSRC grant number EP/T019824/1. R. Neumayer was partially supported by the National Science Foundation under Grant No. DMS-1901427, as well as Grant No. DMS-1502632 ``RTG: Analysis on manifolds'' at Northwestern University and Grant No. DMS-1638352 at the Institute for Advanced Study.  Aaron Naber was partially supported by the National Science Foundation Grant No. DMS-1809011. We thank Brian Allen, Dan Lee, and Christina Sormani for helpful comments.


\bigskip
\section{Rectifiable Riemannian spaces and convergence}\label{sec: Lp}

\subsection{Rectifiable Riemannian spaces}
We introduce precisely  the notion of rectifiable Riemannian spaces, which are the objects which arise as limits in the $d_p$ sense of Riemannian manifolds with uniform lower bounds on scalar curvature and on the entropy.

Let $X$ be a Hausdorff topological space equipped with a measure $m$ on $X$. We will refer to $(X,m)$ as a  topological measure space.
 We first define the notion of rectifiability of a topological measure space. Since these spaces are not equipped with metrics, the rectifiable structure is provided via an atlas of charts with bi-Lipschitz transition maps.

\begin{definition}[Rectifiable atlas] Let $(X,m)$ be a topological measure space, and consider a collection of charts $\{(U_a, \phi_a)\}_{a \in \mathcal{I}}$, where $U_a \subseteq \R^n$,  $\phi_a :U_a \to X$  is one-to-one and continuous with continuous inverse on its image, and $ \mathcal{I}$ is  a countable index set.  For each $a,b\in  \mathcal{I}$ let us denote $U_{a,b}\equiv U_a\cap \phi_a^{-1}(\phi_b(U_b)) \subseteq \R^n$.  We say that $\{(U_a, \phi_a)\}_{a \in  \mathcal{I}}$ is a rectifiable atlas for  $(X,m)$  if:
	\begin{enumerate}
		\item For each $a,b \in  \mathcal{I}$ such that $U_{a,b}$	is nonempty,  every point in $U_{a,b}$  has Lebesgue density one.
		\item For each $a, b \in  \mathcal{I}$ such that $U_{a,b}$ is nonempty, the transition map 
		\[
		\phi_{ba}\equiv \phi^{-1}_b\circ \phi_a:U_{a,b}\to U_{b,a}
		\]
		 is  bi-Lipschitz.
		\item We have 
		\[
		m\left(X\setminus \bigcup_{a\in  \mathcal{I}} \phi_a(U_a)\right)=0 .
		\] 
		\item For each $U_a$ we have that $(\phi_{a}^{-1})_* m$ is absolutely continuous to the Lebesgue measure. 
	\end{enumerate}
\end{definition}

\smallskip

Given a topological measure space $(X,m)$  equipped with a rectifiable atlas $\{(U_a, \phi_a)\}$, we may define a Riemannian structure on $X$ by defining a (possibly degenerate) Riemannian metric in the charts $U_a$. Naturally, we must ask that this metric is suitably compatible with the rectifiable atlas and the measure. We call the resulting space a rectifiable Riemannian  space.
\begin{definition}[Rectifiable Riemannian space] \label{def: rect RM}
 Let $(X,m)$ be a  topological measure space. We say that  $(X, m)$ has a rectifiable Riemannian structure if there is a rectifiable atlas $\{(U_a, \phi_a)\}_{a\in  \mathcal{I}} $ on  $(X,m)$  and collection of matrix-valued functions $g_a : U_a \to \R^{n\times n}$  satisfying
\begin{enumerate}
\item For each $x \in U_a,$ $g_a(x)$ a positive definite symmetric matrix such that 
\[\sup_{x \in U_a} \| g_a\| + \| g_a^{-1}\| \leq C_a.
\]
and $g_a$ is continuous on $U_a$.
	\item For each nonempty $U_{ab}$, we have 
\[
g_b = \phi_{ba}^*g_a.
\]
	\item The measure $\phi_a^* m$  on $U_a$ is given by 
	\[
	 \phi_a^* m = \sqrt{\det g_a} \, dx.
	 \] 
\end{enumerate}
We say that $\{g_a\}_{a\in  \mathcal{I}}$ is the coordinate expression of a rectifiable Riemannian metric $g$ on $X$ and call $(X,g)$ a rectifiable Riemannian space.
\end{definition}

\begin{remark}{\rm
For the condition (1), the continuity assumption is only made for convenience. Indeed since $U_a$ is not necessarily an open set in $\mathbb{R}^n$, by the Lebesgue differentiation theorem one can always remove an arbitrarily small set so that the continuity holds under the $L^\infty$ condition.
}
\end{remark}

\medskip

\subsubsection{Examples of rectifiable Riemannian spaces}

One can imagine a variety of ways in which a rectifiable Riemannian space can degenerate.  We will first work our way through some basic examples which explore this.  This will give some first intuition on what kind of structure is needed to avoid this.  Future sections will explore examples which might arise as limits, which will tell us when these degeneracies can and cannot be avoided.\\

\begin{example}\label{ex: smooth m}
	{\rm
	Any smooth Riemannian manifold $(M,g)$ is a rectifiable Riemannian space.
	}
\end{example}
\vspace{.1cm}

With regard to Example~\ref{ex: smooth m}, observe that even for a smooth Riemannian manifold $(M,g)$ a given rectifiable atlas may only cover $M$ up to a set of measure zero:
\begin{example}\label{ex: rn with bad charts}
	{\rm
	Let $X= \R^n $ with $g_{euc}$ the Euclidean metric, and let $m = dx$ be the Lebesgue measure. Consider the rectifiable atlas $\{ (U_1, \phi_1), (U_2, \phi_2)\}$ where $U_1 = \{ (x_1,\dots, x_n ) : x_1 > 0\}$ and $U_2 = \{ (x_1,\dots, x_n) : x_1 < 0\}$ are complementary open half spaces and $\phi_i$ is the identity chart restricted to $U_i$ for $i=1,2$. Then $(\R^n, g_{euc})$ is a rectifiable Riemannian space with respect to this rectifiable atlas.
	}
\end{example}
\vspace{.1cm}
\begin{example}\label{ex: strat}
	{\rm
	Any stratified Riemannian manifold $X$ is a rectifiable Riemannian space. 
	}
\end{example}
\vspace{.1cm}
\begin{example}\label{ex: strat with lines}
{\rm
As a concrete case of Example~\ref{ex: strat}, let $X \subset \R^2$ be a countable union of lines $\{\ell_i\}_{i\in \mathbb{N}}$ passing through the origin and let $m$ be defined by $m|_{\ell_i}= \mathcal{H}^1|_{\ell_i}.$ Define $g|_{\ell_i} = g_{\R^2}|_{\ell_i}$ 
and let $\{( \R\setminus \{0\} , \phi_i)\}_{i \in \mathbb{N}}$ be the rectifiable atlas with $\phi_i: \R\setminus \{ 0\}  \to \ell_i \setminus\{0\}$ defined via the obvious isometric embedding.  
Then $(X,g)$ is a one dimensional rectifiable Riemannian space.
}
\end{example} 
\vspace{.1cm}
\begin{example}
	{\rm
	Let $(X^n,d,m)$ be a metric measure space which is also an $RCD(N,K)$ space, that is, a metric measure space with lower bounds on the Ricci curvature in the generalized sense.  Then it follows from \cite{MondinoNaber} that $X$ is a rectifiable space. 
	}
\end{example}
\vspace{.1cm}

Due to the flexibility, we can also allow the metric tensor to be mildly singular.  Let us consider some examples of this.

\begin{example}[Degenerate metric on $\R^n$]\label{example: prelim top change}
	{\rm 
	Let $X = \R^n $ and consider the metric defined by 
	$g=\sum_{i=1}^n f_i(x)^2 (dx^i)^2$, where each $f_i$ is a smooth non-negative function on $\mathbb{R}^n$ such that the set $\Sigma = \cup_{i=1}^n\{ x: f_i(x)= 0\}$ has  Lebesgue measure zero.  Further let $m = \sqrt{\det g} \,dx$ be the induced measure.  Consider the the rectifiable atlas on the topological measure space $(\R^n, m)$ given by $\{ (U_a , \phi_a)\}_{a \in \mathbb{N}}$ where 
$U_a = \cap_{i=1}^n\{x \in \R^n: a^{-1} \leq f_i \leq a\}$ and $\phi_a $ is the identity chart on $\R^n$ restricted to $U_a$. Then, with respect to this rectifiable atlas, $(\R^n,g)$ is a rectifiable Riemannian space.
}
\end{example}

An important feature of Example~\ref{example: prelim top change} is that, while the geodesic distance gives rise to a metric space structure $(X,d)$, the metric space may not even be topologically equivalent to $\R^n$, as seen in the following example.

\begin{example}\label{example: power growth}
	{\rm
	 Consider  the rectifiable Riemannian space $(\R^2, g)$ where $g=dx^2+|x|^2 dy^2$, which is a special case of Example~\ref{example: prelim top change} above. Let $d_g$ be the distance function with respect to $g$, i.e. $d_g(x,y) = \inf_{\g} \int_0^1 |\dot{\g}(t)|dt$, where the infimum is taken among all curves $\g$ 
	with $\g(0)=x, \g(1)=y$. Then we see that
		$d_g(p_1,p_2) = 0$  for all $p_1, p_2 \in \ell$
	where $\ell=\{(x,y):x=0\}$.	In particular, the metric space $(\mathbb{R}^2,d_g)$ collapses $\ell$ to a point and is not topologically equivalent to $\R^2$. We will examine the $d_p$ distance for this example in Section~\ref{subsec: dp}.
	}
\end{example}

In Section~\ref{examples-entropy}, we will construct rectifiable Riemannian metrics that are qualitatively similar to Examples~\ref{example: prelim top change} and \ref{example: power growth} that arise as limits of smooth Riemannian manifolds with uniform lower bounds on scalar curvature and entropy.

\medskip
\subsubsection{$W^{1,p}$ spaces on rectifiable Riemannian spaces}\label{sec: w1p structure}

We would like to use the rectifiable structure of a space in order to do analysis.  In order to do this, we need to make sense of $W^{1,p}$ functions in our context, which means being able to take gradients of functions and look at their norms.  Ideally, we would want to use the rectifiable charts in order to do this in coordinates.  Realistically, one has to be quite careful about this.  A function might be perfectly differentiable in every coordinate chart, but not really be a $W^{1,p}$ function as its gradient may have a distributional component, as we see in the following example.\\

\begin{example}\label{ex: rn bad gradient}{\rm
	Consider $(\R^n, g_{euc})$ with the rectifiable atlas $\{(U_1, \phi_1), (U_2, \phi_2)\}$ comprising two open half spaces as in Example~\ref{ex: rn with bad charts}. The function $f: \R^n \to \R$ defined by $f(x) = 0 $ if $x_1<0$ and $f(x) = 1 $ if $x_1\geq 0$ clearly does not have gradient in $L^p(\R^n)$, since its distributional gradient is a singular measure supported on $\{x_1=0\}$. However, letting $f_a = \phi_a^*f$ for $a=1,2$, we have $g^{ij} \pa_i f_a\pa_j f_a \equiv 0$ for all $x \in U_a$ for $a =1,2$.
		}
\end{example}

In order to deal with this issue, we will follow a classical approach from metric measure spaces (see for instance \cite[Sections 5-7]{Hiwash03}) to build the Sobolev space theory by considering the behavior of functions along curves.  The key is that these ideas adapt themselves very well to this context, as in the end even when no apriori distance function is available, the notion of an absolutely continuous curve and the behavior of a function along it is available and can be studied.\\

Let us begin by discussing  the notion of an absolutely continuous curve on a rectifiable Riemannian space. In the setting of a smooth manifold or a metric measure space,  the theory of Sobolev spaces can be built up by considering the behavior of functions along rectifiable curves. In these settings, a rectifiable curve is defined as one with finite length, where length is defined via approximation by polygonal curves. This definition is independent of parametrization, and every rectifiable curve in a smooth Riemannian manifold or a metric space admits an absolutely continuous (in fact, Lipschitz) parametrization, namely the arc length parametrization.  In practice, it is this absolutely continuous parametrization that is used in the Sobolev space theory.

Moving to the context of rectifiable Riemannian spaces, there are two major factors that must be taken into account when determining the appropriate class of curves along which to study the behavior of functions. First,  curves must be appropriately compatible with the rectifiable atlas on the space in order to avoid the difficulty illustrated in Example~\ref{ex: rn bad gradient}. Second,  the absence of a distance function prohibits us from speaking about the length of a polygonal curve, and thus of considering rectifiable curves in the sense described above. Instead, we must restrict our attention to absolutely continuous parametrizations of curves. To be more specific about these two considerations, the definition of an  absolutely continuous curve in a rectifiable Riemannian space is given in  Definition~\ref{def: rect curve} below.

Let $(X,g)$ be a rectifiable Riemannian space with rectifiable atlas $\{(U_a,\phi_a)\}_{a\in  \mathcal{I}}$, and denote the singular part of $X$ by $X^s=X\setminus \cup_a U_a$.  Note this may or may not correspond to topological singularities of the space.  

\begin{definition}[Absolutely Continuous Curves]\label{def: rect curve}
Let  $\g:[\alpha,\beta]\rightarrow X$ be a  continuous curve and define $I_{a}=\gamma^*(\phi_{a}(U_{a}))$ for each $a \in \mathcal{I}.$ We say that $\gamma$ is absolutely continuous if the following properties hold.
\begin{enumerate}
\item[(a)]
 {$\gamma^*(X^s)\subset [\alpha, \beta]$ is a countable set};
\item[(b)]  
For every $\e>0$, there exists $\delta>0$ such that  if $\{(s_i,t_i)\}_{i=1}^\infty$ is a collection of disjoint intervals in $[\alpha,\beta]$ such that for each $i$, we have  $s_i, t_i \in I_{a_i}$ for some $a_i\in \mathcal{I}$ and  $\sum_{i=1}^\infty |s_i-t_i|<\delta$, then 
\begin{equation}\label{eqn: sum to eps}
	\sum_{i=1}^\infty  |\gamma_{a_i}(s_i)-\gamma_{a_i}(t_i)|_{g_a(\gamma(s_i))}<\e.
\end{equation}
\end{enumerate}
\end{definition}
Some further discussion is in order about Definition~\ref{def: rect curve}. 
 As we saw in Example~\ref{ex: rn bad gradient}, part (a) of the definition is necessary to guarantee that the behavior of an absolutely continuous curve $\gamma $ can be entirely reflected in the charts of its rectifiable atlas, since the rectifiable charts may only cover $(X,g)$ up to a set of measure zero. Together with the assumption that $\g$ is continuous, part (a) ensures that there is no contribution to the singular part of the distributional derivative of $\g$ on the set $\gamma^*(X^s)$, and in particular eliminates the issue illustrated in Example~\ref{ex: rn bad gradient}:
  \begin{example}\label{eqn: rn bad curve}
 	{\rm 
 	Observe that in Example~\ref{ex: rn bad gradient}, the curve $\gamma(t) = (t, 0, \dots , 0)$ is not an absolutely continuous curve in the sense of Definition~\ref{def: rect curve} because it violates condition (a).
 	}
 \end{example}

 Part (b) of Definition~\ref{def: rect curve} is a replacement of the classical notion of a curve with finite length, as one typically takes the supremum over lengths of polygonal approximations to the curve.  As we noted above, in the context of rectifiable Riemannian spaces this notion is unsuitable as one does not have a notion of a distance function with which to measure the length. Instead, we will see that condition (b) guarantees that the curve is absolutely continuous in each chart in a suitably uniform sense. 
 
 Note that Definition~\ref{def: rect curve} is parametrization dependent, as it requires an absolutely continuous parametrization. This is not restrictive, as on a smooth Riemannian manifold with a classical atlas of charts, every rectifiable curve in the classical sense admits a reparametrization, namely its arc length parametrization, that is an absolutely continuous curve in the sense of Definition~\ref{def: rect curve}.  
%

  The following lemma provides some basic consequences of the Definition~\ref{def: rect curve} that further clarify this notion of absolutely continuous curve and how it fits into the classical notion on smooth spaces.
 \begin{lemma}\label{lem: basic prop rect curves}
Let $\gamma: [\alpha, \beta]\to X$ be an absolutely continuous curve in the sense of Definition~\ref{def: rect curve} above. Then the following properties hold.
\begin{enumerate}
	\item For each $a \in  \mathcal{I}$, the function $\gamma_a=\phi_a^{-1}\circ \gamma: I_a \to U_a$ is differentiable for a.e. $s \in I_a$. Here we again let $I_{a}=\gamma^*(\phi_{a}(U_{a}))\subset [\alpha, \beta]$.
	\item  For all $\e>0$, there exists $\delta>0$ such that  if $S\subset [\alpha ,\beta]$ with $|S|<\delta$, then $\int_S |\dot\gamma|_g <\e$.
	\item If $(X,g)$ is a smooth Riemannian manifold, then the length of $\gamma$ is given by $L(\gamma) = \int_\alpha^\beta |\dot{\gamma}|_g \,dt.$
\end{enumerate}
 \end{lemma}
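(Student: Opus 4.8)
The plan is to prove the three items in order, since the first is the main local regularity statement and the other two follow from it (plus the definition). I will work chart-by-chart throughout, using that condition (a) reduces everything to the countably many arcs $I_a = \gamma^*(\phi_a(U_a))$, and condition (b) gives a uniform absolute-continuity modulus across these arcs.

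\textbf{Step 1: differentiability a.e.\ in each chart (item 1).} Fix $a \in \mathcal{I}$ and work on the set $I_a \subset [\alpha,\beta]$. The first observation is that $I_a$ is relatively open in $[\alpha,\beta]$ (since $\gamma$ is continuous and $\phi_a(U_a)$ is open in $X$... actually $\phi_a$ is only a homeomorphism onto its image, but the image is open in the subspace topology, which suffices once one checks $\phi_a(U_a)$ is open in $X$; if this is not guaranteed one instead argues on $\gamma^{-1}$ of an open neighborhood). Hence $I_a$ is a countable union of relatively open intervals $(s_j, t_j)$. On each such interval $\gamma_a = \phi_a^{-1}\circ\gamma$ takes values in $U_a \subset \R^n$, and I claim $\gamma_a$ is locally absolutely continuous there in the classical Euclidean sense. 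Indeed, given $\e > 0$ take $\delta$ from condition (b); for any finite disjoint family of subintervals of a fixed compact $[s_j + \eta, t_j - \eta]$ with total length $< \delta$, the corresponding sum $\sum |\gamma_a(s_i) - \gamma_a(t_i)|_{g_a(\gamma(s_i))}$ is $< \e$ by (b), and since $g_a$ is continuous with $\sup_{U_a}(\|g_a\| + \|g_a^{-1}\|) \le C_a$ on a compact piece of the arc, the Euclidean length sum $\sum |\gamma_a(s_i) - \gamma_a(t_i)|$ is comparably small. This is precisely classical absolute continuity of the $\R^n$-valued function $\gamma_a$, so by the Lebesgue differentiation / Rademacher-type theorem for absolutely continuous functions, $\gamma_a$ is differentiable for a.e.\ $s$ on each subinterval, hence a.e.\ on $I_a$. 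The mild subtlety to flag: one must make sure $g_a(\gamma(s_i))$ is a legitimate uniformly-bounded-below weight, which is exactly condition (1) of Definition~2.8; and one must use the same $\delta$ uniformly across the (countably many) subintervals of $I_a$, which is fine since (b) gives a single $\delta$.

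\textbf{Step 2: uniform absolute continuity of $s \mapsto \int |\dot\gamma|_g$ (item 2).} Here $|\dot\gamma(s)|_g$ is defined, for $s \in I_a$ where $\gamma_a$ is differentiable, as $|\dot\gamma_a(s)|_{g_a(\gamma(s))}$; by condition (2) of the atlas/metric definition this is independent of the chart, so the integrand is well-defined a.e.\ on $[\alpha,\beta] \setminus \gamma^*(X^s)$, and the excluded set is countable hence null by (a). Given $\e > 0$, take $\delta$ from condition (b) (with $\e/2$, say). Let $S \subset [\alpha,\beta]$ with $|S| < \delta$. Decompose $S = \bigsqcup_a (S \cap I_a)$ up to a null set. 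For each $a$, approximate $S \cap I_a$ from outside by a countable disjoint union of intervals contained in $I_a$ with total length still $< \delta$; on each such interval $\int |\dot\gamma_a|_{g_a(\gamma)}$ is controlled by the variation of $\gamma_a$ over that interval, and summing over all intervals and all $a$ (keeping total length $< \delta$) condition (b) bounds the total by $\e/2 < \e$. The one place to be careful: condition (b) as stated bounds sums of the form $\sum |\gamma_{a_i}(s_i) - \gamma_{a_i}(t_i)|_{g_a(\gamma(s_i))}$ over disjoint intervals with endpoints in a common chart, which is exactly the total-variation quantity that dominates $\int_S |\dot\gamma|_g$ after passing to the limit over finer interval partitions and using absolute continuity of the integral established in Step 1.

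\textbf{Step 3: the length formula on smooth manifolds (item 3).} When $(X,g)$ is a smooth Riemannian manifold, $\gamma$ is (by Steps 1--2) a curve that is absolutely continuous in each coordinate chart with the uniform modulus of (b), hence a classically absolutely continuous curve in the manifold, and $X^s = \emptyset$ up to finitely many points. The classical fact is then that $L(\gamma) = \sup \sum_i d_g(\gamma(\tau_{i-1}), \gamma(\tau_i))$ over partitions equals $\int_\alpha^\beta |\dot\gamma|_g\, dt$. I would prove "$\le$" by the fundamental theorem of calculus for absolutely continuous $\R^n$-valued functions applied in charts: $d_g(\gamma(\tau_{i-1}),\gamma(\tau_i)) \le \int_{\tau_{i-1}}^{\tau_i}|\dot\gamma|_g$ (first on a single chart using that $g$ is locally comparable to Euclidean and a covering argument, then summing), and "$\ge$" by a standard lower-semicontinuity / Riemann-sum argument: for a.e.\ $t$, $d_g(\gamma(t),\gamma(t+h))/h \to |\dot\gamma(t)|_g$, combined with Fatou. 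This is entirely classical and I would cite it, e.g., \cite[Sections 5--7]{Hiwash03}, rather than reprove it.

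\textbf{Main obstacle.} The genuinely delicate point is Step 1: extracting classical Euclidean absolute continuity of $\gamma_a$ on the arcs $I_a$ from condition (b), because (b) measures increments using the \emph{variable} metric weight $g_a(\gamma(s_i))$ evaluated at the left endpoint, not a fixed norm, and the arcs $I_a$ need not be intervals — they are merely relatively open subsets of $[\alpha,\beta]$, possibly with infinitely many components accumulating at points of $\gamma^*(X^s)$. Handling the interplay between the countable decomposition from (a), the uniform $\delta$ from (b), and the chart-dependent but locally bounded metric from Definition~2.8(1) — so that one really does land on a function that is absolutely continuous in the textbook sense and therefore a.e.\ differentiable with an $L^1$ derivative — is where the real work lies; everything downstream is bookkeeping with the fundamental theorem of calculus.
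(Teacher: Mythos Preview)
Your Step~1 has a genuine gap: you assume $I_a = \gamma^{-1}(\phi_a(U_a))$ is relatively open in $[\alpha,\beta]$, but nothing in the definition of a rectifiable atlas forces $\phi_a(U_a)$ to be open in $X$. The charts $U_a \subseteq \R^n$ are arbitrary subsets (subject only to the density-one condition on overlaps), and in the paper's own examples they are closed sublevel sets. So $I_a$ is in general a messy measurable subset of $[\alpha,\beta]$, not a countable union of intervals, and your ``work on each open subinterval'' strategy does not get off the ground. Your hedge (``if this is not guaranteed one instead argues on $\gamma^{-1}$ of an open neighborhood'') does not help, because there is no ambient open chart to fall back on.

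The paper's fix is the key idea you are missing: first use condition~(b) to extend $\gamma_a$ continuously to the closure $\bar I_a$, then extend to all of $[\alpha,\beta]$ by \emph{linear interpolation} across each complementary gap $(\alpha_j,\beta_j)$, obtaining a globally defined curve $\tilde\gamma_a:[\alpha,\beta]\to\R^n$. One then shows $\tilde\gamma_a$ is absolutely continuous in the classical Euclidean sense, which requires splitting an arbitrary family of test intervals into those with endpoints in $\bar I_a$ (controlled directly by~(b) and the metric bound $C_a$), those in finitely many ``large'' gaps (controlled because finitely many linear pieces are each AC), and those in the tail of small gaps (controlled by~(b) applied to the gap endpoints, which lie in $\bar I_a$). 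This extension trick is what makes the a.e.\ differentiability go through without any openness assumption on the charts.

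Your Step~2 inherits the same structural problem (approximating $S\cap I_a$ from outside by intervals \emph{in $I_a$} is not generally possible), and is also vague about how the integral $\int_S|\dot\gamma|_g$ is dominated by the sum in~(b), since the metric weight $g_a(\gamma(s_i))$ is evaluated only at one endpoint. The paper handles this by an Egorov argument: the difference quotients $|\tilde\gamma_a(t)-\tilde\gamma_a(s)|_{g_a(t)}/|t-s|$ converge to $|\dot\gamma_a(t)|_{g_a(t)}$ pointwise, hence uniformly off a set of small measure, which lets one compare the integral to a Riemann-type sum that~(b) controls directly. Your ``passing to finer partitions'' gesture is in the right spirit but does not address the variable-metric issue. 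Step~3 is fine and matches the paper.
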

 \begin{remark}
 	{\rm 
 	 In Lemma~\ref{lem: basic prop rect curves} (2) and (3) and in the sequel, we  let $|\dot\gamma|_g \equiv \sqrt{g(\dot\gamma,\dot\gamma)}$, which is well-defined for  a.e. $t \in [\alpha,\beta]$ by Lemma~\ref{lem: basic prop rect curves}(1) and via the rectifiable atlas  $\{(U_a,\phi_a)\}_{a\in  \mathcal{I}}$. 	
 }
 \end{remark}
  \begin{remark}
 	{\rm 
Having in mind Lemma~\ref{lem: basic prop rect curves}(3),  we define the length of an absolutely continuous curve $\gamma: [\alpha,\beta]\to X$ in a rectifiable Riemannian space by $L(\gamma) = \int_\alpha^\beta |\dot \gamma|_g $. One can check that this notion is independent of Lipschitz reparametrizations.
 }
 \end{remark}

 \begin{proof}[Proof of Lemma~\ref{lem: basic prop rect curves}]
 We first prove (1). 
 	Fix $a \in  \mathcal{I}$. 
 	Because $\phi_a^{-1}$ is continuous, we see that $\gamma_a : I_a \to U_a$ satisfies the absolute continuity property \eqref{eqn: sum to eps} for endpoints $s_i, t_i \in I_a$, thus $\gamma_a$ extends to a continuous function $\bar{\gamma}_a : \bar{I}_a \to \bar{U}_a$. Moreover, the complement $[\alpha,\beta]\setminus \bar{I}_a$ of  $\bar{I}_a$ is relatively open in $[\alpha,\beta]$ and therefore comprises a countable union of disjoint relatively open intervals $ (\alpha_j, \beta_j)$ in $[\alpha,\beta]$. We may therefore extend $\bar{\gamma}_a$ to a continuous curve $ \tilde{\gamma}_a : [\alpha, \beta] \to \R^n$ by letting $\tilde{\gamma}_a$ interpolate linearly between $\bar{\gamma}_a(\alpha_j )$ and $\bar{\gamma}_a(\beta_j)$ for each $j \in \mathbb{N}$. 

To prove (1), we will show that the curve $\tilde{\gamma}_a : [\alpha, \beta] \to \R^n$ is a uniformly continuous curve on $\R^n$. More specifically, we claim that for any $\e>0$, there exists  $\delta_0>0$ such that for any disjoint collection of intervals $\{ [s_i, t_i]\}_{i=1}^\infty$ such that $\sum_{i=1}^\infty|s_i - t_i|<\delta_0$,
we have 
  \begin{equation}\label{eqn: ac finite}
  \sum_{i=1}^\infty |\tilde\gamma(s_i) - \tilde\gamma(t_i) |_{euc} \leq \e.
  \end{equation}
  It will follow immediately from this absolute continuity of $\tilde{\gamma}$ that $\tilde{\gamma}$ is differentiable for a.e. $t \in [\alpha, \beta]$ and so in particular that $\gamma$ is differentiable for a.e. $t \in I_a$,  proving (1).
  
Fix $\e>0$, let $\delta_0>0$ be a fixed number to be determined within the proof, and consider a disjoint collection of intervals $\{ [s_i, t_i]\}_{i=1}^\infty$ in $[\alpha,\beta]$ such that $\sum_{i=1}^\infty|s_i - t_i|<\delta_0$. Up to refining the collection of intervals (which can only increase \eqref{eqn: ac finite}), we may assume for each $i\in \mathbb{N}$ that  $s_i,t_i$ either both lie in  $\bar{I_a}$ or both lie in $\overline{[\alpha,\beta] \setminus \bar{I}_a} = \bigcup_{j=1}^{\infty} [\alpha_j, \beta_j]$. So, subdividing the index set $\mathbb{N}$ by letting
\begin{equation}
	\begin{split}
\mathcal{J}_0 &= \left\{ i \in\mathbb{N} : s_i, t_i \in \bar I_a\right\}, \\
 \mathcal{J}_1 &= \bigg\{ i \in \mathbb{N}  : s_i, t_i \in\bigcup_{j=1}^{\infty} [\alpha_j, \beta_j]\bigg\},
	\end{split}
\end{equation} 
we will establish \eqref{eqn: ac finite} by showing that 
\begin{align}
\label{26a}	\sum_{i \in \mathcal{J}_0} |\tilde\gamma(s_i) - \tilde\gamma(t_i) |_{euc} & \leq \e/2,\\
\label{26b} \sum_{i \in \mathcal{J}_1} |\tilde\gamma(s_i) - \tilde\gamma(t_i) |_{euc} & \leq \e/2.
\end{align}
Recall that there exists $C_a>0$ such that 
\begin{equation}
	\label{eqn: comparable}
	C_a^{-1} g_a \leq  g_{euc} \leq C_a g_a
\end{equation}
in $U_a$. So, \eqref{26a} follows directly from this fact and \eqref{eqn: sum to eps}, provided we take $\delta_0 \leq \delta_1$, where $\delta_1>0$ is a number small enough so that \eqref{eqn: sum to eps} holds with $\e/4C_a$ in place of $\e$. In order to establish \eqref{26b}, we further subdivide the index set $\mathcal{J}_1$ in the following way. 
Since $\sum_{j=1}^{\infty}|\beta_j- \alpha_j| \leq |\beta-\alpha|,$ there exists $J_0$ such that $\sum_{j= J_0}^\infty |\beta_j - \alpha_j| \leq \delta_1$. Up to further refinement of our collection of intervals, we may assume that for each $i \in \mathcal{J}_1$, both endpoints $s_i, t_i$ lie in $\bigcup_{j=1}^{J_0-1} [\alpha_j, \beta_j]$ or both endpoints  lie in $\bigcup_{j=J_0}^{\infty} [\alpha_j, \beta_j]$. So, we let 
\begin{equation}
\begin{split}
	\mathcal{J}_{2} & =\bigg\{ i \in \mathbb{N}  : s_i, t_i \in\bigcup_{j=1}^{J_0-1} [\alpha_j, \beta_j]\bigg\} \subset \mathcal{J}_1, \\
	 \mathcal{J}_{3} & =\bigg\{ i \in \mathbb{N}  : s_i, t_i \in\bigcup_{j=J_0}^{\infty} [\alpha_j, \beta_j]\bigg\}\subset \mathcal{J}_1,
\end{split}
\end{equation}
so that $\mathcal{J}_{2} \cup \mathcal{J}_{3} =\mathcal{J}_1$. We establish \eqref{26b} by bounding the sums over $ \mathcal{J}_{2}$ and $\mathcal{J}_{3}$ separately, starting with $\mathcal{J}_{3}$. By the triangle inequality and the piece-wise linear way that $\tilde{\gamma}$ was defined on $[\alpha,\beta]\setminus \bar{I}_a,$ we see that 
 \begin{equation}\label{eqn: ac small open a}
\sum_{i \in \mathcal{J}_{3}} |\tilde\gamma(s_i) - \tilde\gamma(t_i) |_{euc}  \leq  \sum_{j= J_0}^\infty|\tilde\gamma(\alpha_j)-\tilde\gamma(\beta_j)|_{euc} .
 \end{equation}
 Then, since $\alpha_j, \beta_j \in \bar{I}_a$, we may apply \eqref{eqn: comparable} and  \eqref{eqn: sum to eps} to find that 
 \begin{equation}\label{eqn: ac small open b}
 	\sum_{j= J_0}^\infty|\tilde\gamma(\alpha_j)-\tilde\gamma(\beta_j)|_{euc}\leq \e/4
 \end{equation}
 provided $\delta_0 \leq \delta_1$, with $\delta_1$ as above. Together \eqref{eqn: ac small open a} and \eqref{eqn: ac small open b} show that 
 \begin{equation}\label{eqn: j1b sum}
 	\sum_{i \in \mathcal{J}_{3}} |\tilde\gamma(s_i) - \tilde\gamma(t_i) |_{euc} \leq \e/4.
 \end{equation}
Now, to bound the analogous summation over $\mathcal{J}_{2},$ notice that the linear segment of $\tilde{\gamma}$ defined on $[\alpha_j,\beta_j]$ is absolutely continuous for each $j=1,\dots, J_0$. Since there are finitely many such intervals, we may find $\delta_2>0$ such that if $\delta_0 \leq \delta_2,$ then 
 \begin{equation}\label{eqn: j1a sum}
 	\sum_{i \in \mathcal{J}_{2}} |\tilde\gamma(s_i) - \tilde\gamma(t_i) |_{euc} \leq \e/4.
 \end{equation}
Finally, choose $\delta_0 \leq \min\{\delta_1, \delta_2\}$. Then together \eqref{eqn: j1a sum} and \eqref{eqn: j1b sum} prove \eqref{26b}. This shows that $\tilde{\gamma}$ is an absolutely continuous curve on $\R^n$ and thereby establishes (1).

Before moving to the proof of (2), observe that 
as a consequence of (1), we may define
$|\dot \gamma_a|_{g_a} = \sqrt{g_a(\dot\gamma_a, \dot \gamma_a)}$ for a.e. $t \in I_a, $ and we have
\begin{equation}\label{eqn: difference  quotient a}
	\lim_{s \to t} \frac{|\tilde{\gamma}_a(t) - \tilde{\gamma}_a(s)|_{g_a(t)}}{|s-t|} = |\dot \gamma_a(t)|_{g_a(t)} 
\end{equation} 
for each such $t$. 
It is easily checked that this definition is independent of $a$ and thus we may define $|\dot \gamma|_g$ for a.e. $t \in [\alpha, \beta]$.

 Now we establish (2). The main idea will be to use \eqref{eqn: difference  quotient a} to relate the integral of  $|\dot  \gamma|_g$  to the absolute continuity assumption \eqref{eqn: sum to eps}. Fix  $\e>0$ and let  $\delta>0$ be a fixed number to be specified within the proof. Fix  a measurable set $S\subset [\alpha, \beta]$ with $|S|<\delta$, and  let $\{S_a\}_{a\in \mathcal{I}}$ be a collection of pairwise disjoint subsets of $S$ with $S_a \subset I_a$ for each $a \in \mathcal{I}$ such that $|S\setminus \cup_{a \in \mathcal{I}} S_a | = 0$,  $\tilde{\gamma}_a$ is differentiable for every $t \in S_a$, and every point of $S_a$ is a density point of $I_a$. 
 
 In order to make use of \eqref{eqn: difference  quotient a}, we must show that the limit in \eqref{eqn: difference  quotient a} is uniform on a large subset of each $S_a$. More specifically,  we claim that for any $\eta>0$, there exist $r_\eta>0$ and a measurable set $S_a^\eta \subset S_a$ with $|S_a^\eta| \geq (1-\eta) |S_a|$ such that if $ t \in S_a^\eta$, then 
 \begin{equation}\label{eqn: difference quotient}
 	|\dot\gamma_a(t)|_{g_a(t)} - \eta \leq \frac{|\tilde\gamma_a(t) - \tilde\gamma_a(s)|_{g_a(t)}}{|t-s|} \leq 	|\dot\gamma_a(t)|_{g_a(t) } + \eta
 \end{equation}
for all $s \in S_a$ with $|t-s|<r_\eta.$ 
 Indeed, let 
 \begin{equation}
 	R(s,t) = \left|  \frac{|\tilde\gamma_a(t) - \tilde\gamma_a(s)|_{g_a(t)}}{|t-s|} -|\dot\gamma_a(t)|_{g_a(t)}\right|.
 \end{equation}
 For each fixed $t \in S_a$, we have $\lim_{s\to t} R(s,t) = 0$. 
 So, if  we define the sequence of functions $\rho_k (t) = \sup\{ |R(s,t)| : s \in S_a ,\, |s-t| < 1/k\}$, we see that $\rho_k(t) \to 0$ pointwise as $k \to \infty$. Applying Egorov's theorem, for any $\eta>0$, there exists a measurable set $S_a^\eta$ such that $|S_a \setminus S_a^\eta| \leq \eta |S_a|$ and $\rho_k(t) \to 0$ uniformly on $S_a^\eta.$ In particular, letting $r_\eta = 1/k$ for $k$ chosen sufficiently large, we see that $|R(s,t)| \leq \eta$ for all $t \in S_a^\eta$. This establishes the claim.

 Next, for each $a \in  \mathcal{I}$, we aim to estimate  the integral of $|\dot \gamma|$ over $S_a^\eta.$ Consider a collection of disjoint intervals $\{I_{a,i}= [s_i,t_i]\}_{i=1}^\infty$ covering $S_a^\eta$ with endpoints in $S_a$ such that $|I_{a,i}|<r_\eta$ for each $i$ and such that $ \sum_i |I_{a,i}| \leq (1+\eta) |S_a^\eta|$.  
For each $i$, let $\hat{t}_i \in S_{a}^\eta \cap I_{a,i}$ be a point  such that 
\begin{equation}
	(1+\eta ) |\dot \gamma_a(\hat{t}_i)|_{g_a(\hat{t}_i)} \geq \sup\{   |\dot \gamma_a(t)|_{g_a(t)}  \,: \, t \in S_{a}^\eta \cap I_{a,i}\}.
\end{equation}
At least one of the endpoints of $I_{a,i}$ has distance at least $|I_{a,i}|/2$ from $\hat{t}_i$; without loss of generality suppose it is $s_i$. So, thanks to  \eqref{eqn: difference quotient}, we find that for each $i$,
\begin{equation}
\begin{split}
	\int_{S_a^\eta \cap I_{a,i}}  |\dot \gamma_a(t)|_{g_a(t)}\,dt 
	 & \leq (1+\eta)| I_{a,i}|\,   |\dot \gamma_a(\hat{t}_i)|_{g_a(\hat{t}_i)}\\
	& \leq (1+\eta)|I_{a,i} | \,  \frac{|\tilde\gamma_a(\hat{t}_i) - \tilde\gamma_a(s_i)|_{g_a(\hat{t}_i)}}{|s_i - \hat{t}_i|} + (1+\eta)\eta |I_{a,i}|\\
	& \leq 2(1+\eta) |\tilde\gamma_a(\hat{t}_i) -\tilde \gamma_a(s_i)|_{g_a(\hat{t}_i)}+(1+\eta)\eta |I_{a,i}|\\
	& =2(1+\eta) |\gamma_a(\hat{t}_i) - \gamma_a(s_i)|_{g_a(\hat{t}_i)}+(1+\eta)\eta |I_{a,i}|.
\end{split}	
\end{equation}
The  final equality follows from the definition of  $\tilde\gamma$ and the fact that $\hat{t}_i, s_i \in S_a \subset I_a$.
Consequently, summing up over $a$ and $i$, we find that 
 \begin{equation}\label{27}
 \begin{split}
 	\sum_{a\in \mathcal{I}} \int_{S_{a}^\eta} |\dot \gamma|
 	& \leq   \sum_{a\in \mathcal{I}} \sum_{i=1}^\infty \int_{S_{a}^\eta \cap I_{a,i}}  |\dot \gamma| \\
 &	\leq   \sum_{a\in \mathcal{I}} \sum_{i=1}^\infty  \big(2(1+\eta) |\gamma_a(\hat{t}_i) - \gamma_a(s_i)|+(1+\eta)\eta |I_{a,i}|\big)\\
 	& \leq 2(1+\eta) \e + (1+\eta)^2\eta \delta,
 	 \end{split}	
 \end{equation}
 where in the final inequality we have applied \eqref{eqn: sum to eps},  again using that $\hat{t}_i, s_i \in S_a \subset I_a$.
  
Finally, we send $\eta \to 0$. The right-hand side of \eqref{27} tends to $2\e$, while, making use of the dominated convergence theorem, we see that the left-hand side converges to $\int_S |\dot\gamma|$. We therefore see that (2) holds. We omit the proof of (3) since it is standard.
 \end{proof} 


Following the classical approach in metric measure spaces, we next want to use our absolutely continuous curves to define the notion of a $p$-weak upper gradient of a function. We will see that most of the Sobolev theory is built up in an identical fashion to the metric measure space setting.

 To begin, we need a notion of a collection of curves that have $p$-measure zero,  an  idea first introduced by Ahlfors and Beurling in \cite{AB50}  and further developed by Fuglede in \cite{F57} in the Euclidean and Riemannian settings. To this end, let $\mathfrak{M}$ denote the collection of all absolutely continuous curves on $(X,g)$.
For $1\leq p <\infty,$ we say that a family of curves $\Gamma\subset \mathfrak{M}$ has $\text{Mod}_p(\Gamma)=0$ if there exists a nonnegative Borel measurable function $f \in L^p(X)$ such that $\int_\g f = +\infty$ for every $\g \in \Gamma$.
Here and in the sequel, we use the notation $\int_\g f$ to mean
\begin{equation}
\int_\g f  := \int_\alpha^\beta f(\gamma(t))\,|\dot \gamma|_g\,dt.
\end{equation} A property is said to hold for $p$-a.e absolutely continuous curve if it holds for every curve in $\mathfrak{M}\setminus\Gamma$ where $\text{Mod}_p(\Gamma)=0$. 
For the corresponding definition of families of curves with $\text{Mod}_p(\Gamma)=0$ in the metric measure space context, see \cite[Definition 5.1]{Hiwash03} and the equivalent formulation of the definition given in  \cite[Theorem 5.5]{Hiwash03}.

It follows directly from the definition that for any nonnegative  Borel measurable function $f \in L^p(X)$, then $\int_\gamma f <\infty$ for $p$-a.e. absolutely continuous curve. In a similar vein, the following lemma shows that convergent sequences in $L^p(X)$ converge along $p$-a.e. curve:
\begin{lemma}[c.f. Theorem 5.7 of \cite{Hiwash03}]\label{lem: 5.7 Hiwash} Fix $1\leq p < \infty$. 
	Let $u_k : X \to \R\cup \{\pm \infty\} $ be a sequence of Borel measurable functions converging in $L^p(X)$ to a Borel measurable function $u : X \to \R\cup \{\pm \infty\}.$ Up to a subsequence, 
	\begin{equation}
		\int_a^b |u_k -u| |\dot{\g}|_g \,dt \to 0
	\end{equation}
 as $k \to \infty$ for all $\g \in \mathfrak{M} \setminus \Gamma,$ where $\text{Mod}_{p}(\Gamma) = 0$.
\end{lemma}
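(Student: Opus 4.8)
The plan is to adapt the classical Fuglede lemma essentially verbatim; the statement is exactly the rectifiable-Riemannian analogue of Theorem~5.7 of \cite{Hiwash03}, and the proof strategy carries over without change once one checks that the objects involved make sense in the present setting. First I would use the $L^p$ convergence $u_k \to u$ to pass to a subsequence, not relabeled, along which $\|u_k - u\|_{L^p(X)} \leq 2^{-k}$ for every $k$. This is the only place where the ``up to a subsequence'' in the statement is used, and it is the real crux of the whole argument.

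Next I would set $g := \sum_{k=1}^\infty |u_k - u|$, after declaring each $|u_k - u|$ to equal $0$ on the ($m$-null) set where the difference of the two extended-real-valued Borel functions is undefined, so that $g$ is a well-defined nonnegative Borel function as a countable sum. By Minkowski's inequality, $\|g\|_{L^p(X)} \leq \sum_{k} \|u_k - u\|_{L^p(X)} \leq \sum_k 2^{-k} = 1$, so $g \in L^p(X)$.

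Then I would introduce the exceptional family $\Gamma := \{\gamma \in \mathfrak{M} : \int_\gamma g = +\infty\}$. By the very definition of $p$-modulus recalled in the excerpt, taking $g$ itself as the admissible test function, $\text{Mod}_p(\Gamma) = 0$. For any $\gamma \in \mathfrak{M}\setminus \Gamma$ one has $\int_\gamma g < \infty$; since the partial sums of the defining series increase pointwise to $g$, the monotone convergence theorem applied along $\gamma$ with respect to the measure $|\dot\gamma|_g\,dt$ on the parameter interval $[\alpha,\beta]$ yields $\sum_{k=1}^\infty \int_\gamma |u_k - u| = \int_\gamma g < \infty$, so the general term of this convergent series tends to $0$. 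Recalling that $\int_\gamma |u_k - u| = \int_\alpha^\beta |u_k(\gamma(t)) - u(\gamma(t))|\,|\dot\gamma|_g\,dt$, this is exactly the asserted convergence.

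The only technical points to verify — none of which I expect to present a genuine obstacle — are the following. First, $g \circ \gamma$ is measurable in $t$ (it is the composition of the Borel function $g$ with the continuous, hence Borel, curve $\gamma$) and $|\dot\gamma|_g$ is measurable by Lemma~\ref{lem: basic prop rect curves}(1), so $\int_\gamma g \in [0,\infty]$ is meaningful for every $\gamma \in \mathfrak{M}$. Second, the extended values $\pm\infty$ cause no trouble: for $\gamma \notin \Gamma$ the function $t \mapsto g(\gamma(t))\,|\dot\gamma|_g$ is integrable on $[\alpha,\beta]$ and hence finite for a.e.\ $t$, which forces $|u_k(\gamma(t)) - u(\gamma(t))| \leq g(\gamma(t)) < \infty$ for a.e.\ $t$ and all $k$, so every integral above is well defined. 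I anticipate no hard step; the entire mathematical content is the fast-subsequence trick together with the identification of $\Gamma$ through the definition of $\text{Mod}_p$.
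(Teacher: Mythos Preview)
Your proof is correct and is exactly the standard Fuglede-type argument; the paper itself does not give a proof but simply notes that the proof of \cite[Theorem~5.7]{Hiwash03} carries over without modification, and your argument is precisely that proof.
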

Lemma~\ref{lem: 5.7 Hiwash} was shown in the context of metric measure spaces in \cite[Theorem 5.7]{Hiwash03}. In our setting, the proof carries over without modification.
\\

Having in hand the notions of absolutely continuous curves and families $\Gamma$ of absolutely continuous curves with $\text{Mod}_p(\Gamma)=0$, we are now in a position to define upper gradients and $p$-weak upper gradients of functions $u: X \to \R$. The notion of weak upper gradient was first introduced by Heinonen and Koskela in \cite{HK98}, and the definition we give here is analogous to \cite[Definition 6.1]{Hiwash03}.
{
\begin{definition}[Upper gradients and $p$-weak upper gradients] Let $u:X\rightarrow \mathbb{R}$  and $G:X\rightarrow [0,\infty]$ be Borel measurable functions.
We say that $G$ is an upper gradient for $u$ if 
\begin{equation}\label{eqn: upper gradient condition}
|u(\gamma(a) ) - u(\gamma(b))| \leq \int_\g G
\end{equation}
for every absolutely continuous curve $\g:[a,b] \to X$. For $1 \leq p <\infty$, we say that $G$ is a $p$-weak upper gradient  for $u$ 
if the upper gradient condition \eqref{eqn: upper gradient condition} holds for $p$-a.e. absolutely continuous curve $\gamma:[a,b]\to X $.
\end{definition}
}
The following example shows that this is a natural notion of gradient.
\begin{example}
	{\rm
	Consider Euclidean space as a rectifiable Riemannian space with the rectifiable atlas comprising only the identity chart. For a smooth function $u : \R^n \to \R$, the classical gradient $|\na u|$ is an upper gradient for $u$. 
	}
\end{example}
Furthermore, we note that the potential issues highlighted by Example~\ref{ex: rn bad gradient} are eliminated with respect to this definition.
\begin{example}
	{\rm
	Consider the rectifiable Riemannian space and the function $f$ defined in Example~\ref{ex: rn bad gradient}. We see clearly that $G=0$ is not a $p$-weak upper gradient for $f$, since the upper gradient condition ~\ref{eqn: upper gradient condition} fails for any curve that crosses the hyperplane $\{x_1=0\}$. In fact, considering the family $\Gamma$ of absolutely continuous curves of the form $\gamma(t) = (0,x') + t e_1$ for $t \in (-\e,\e)$, we easily see that 
	$f$ has no $p$-weak upper gradient in $L^p(X)$. 
	}
\end{example}
\medskip

Now, let $\tilde{W}^{1,p}(X)$ be the collection of all  Borel measurable functions $u:X\to \R$ such that $u$ is $L^p$ integrable and   $u$ has a $p$-weak upper gradient  $G \in L^p(X)$. The Sobolev space $W^{1,p}(X)$ on a rectifiable Riemannian space is defined in the following way,  following the definition first introduced by Shanmugalingam in \cite{Shanmu}  in  the context of metric measure spaces and presented in Definition 7.1 of \cite{Hiwash03}. We note that a closely related  definition of Sobolev spaces on a metric measure space was given by Cheeger in \cite{CheegerMMS99}.

\begin{definition}\label{def: w1p} For any $u \in \tilde{W}^{1,p}(X)$,
we define
\begin{equation}
\|u \|_{W^{1,p}(X)}=\|u\|_{L^p(X)} +  \inf_G  \|G\|_{L^p(X)},
\end{equation}
where the infimum is taken over all $p$-weak upper gradients $G$ of $u$.
 We define the space $W^{1,p}(X) = 
\tilde{W}^{1,p}(X) / \sim,$ where  $u\sim v$ for $u,v \in \tilde{W}^{1,p}(X)$ if $\| u-v\|_{W^{1,p}(X)}=0$.
\end{definition}
\begin{remark}
	{\rm 
One subtlety of Definition~\ref{def: w1p}, which is also present in the analogous metric measure space setting, it that it is possible to modify a function $u \in \tilde{W}^{1,p}(X)$ on a set of $m$ measure zero to obtain a function $\tilde{u}$ that is not in $\tilde{W}^{1,p}(X)$. For instance, on Euclidean space, consider the functions $u \equiv 0$ in $W^{1,p}(\R^n)$ and $\tilde{u}= \chi_E$, where $E$ is the set of all rational points in $\R^n$. Then $\tilde{u}$ has no $p$-weak upper gradient in $L^p$.  This subtlety explains why some of the statements in Proposition~\ref{prop: basic prop w1p}  below require the choice of a certain representative for an $L^p$ function.  However, if $u, \tilde{u} \in \tilde{W}^{1,p}(X)$ and $u=\tilde{u}$ $m$-a.e., then $u$ and $\tilde{u}$ define the same element in $W^{1,p}(X)$. 
	}
\end{remark}

From this point, we can establish a number of basic properties of the space $W^{1,p}(X)$ showing that this space possesses many of the important features of Sobolev spaces in smooth settings, which we collect in the following proposition. The analogous properties are established in the metric measure space setting in \cite[Section 7]{Hiwash03}. In fact, the proofs there can be carried over almost verbatim,   with only the modification being the distinction between the use of absolutely continuous curves in our setting as opposed to rectifiable curves in the setting of metric measure spaces. For this reason, we omit the proofs and instead point the reader to the corresponding statements in \cite[Section 7]{Hiwash03}.  Properties  (1)-(3), (5)  were originally proven by Shanmugalingam \cite{Shanmu}, and property (4) was established by Cheeger in \cite{CheegerMMS99}  for $p>1$   and Haj\l  asz \cite{Hiwash03} for $p=1.$ 

\begin{proposition}[Basic properties of the Sobolev space $W^{1,p}(X)$] 
\label{prop: basic prop w1p}
Let $(X, g)$ be a rectifiable Riemannian space and fix $1 \leq p <\infty$. Then the following properties hold.
\begin{enumerate}
	\item (Closedness, c.f. \cite[Lemma 7.8]{Hiwash03})
Suppose $\{u_i\}_{i=1}^\infty,\{G_i\}_{i=1}^\infty$ are sequences in $L^p(X)$ such that $u_i$ and $G_i$ converge weakly to $u\in L^p(X)$ and $G\in L^p(X)$ respectively. If $G_i$ is a $p$-weak upper gradient of $u_i$ for each $i\in\mathbb{N}$, then there is a representative of $u$ in $L^p$ such that $G$ is a $p$-weak upper gradient of $u$.
\item(Lower semi-continuity, c.f. \cite[Cor. 7.10]{Hiwash03}) Let $p\in (1,\infty)$ and let $u_i \in W^{1,p}$ be a bounded sequence converging weakly in $L^p(X)$ to $u$. Then there is a representative of $u$ such that $u \in W^{1,p}(X)$ and 
\begin{equation}
	\| u \|_{W^{1,p}(X)} \leq \liminf_{i \to \infty} \| u_i\|_{W^{1,p}(X)}.
\end{equation}
\item (Banach space, c.f  \cite[Theorem 7.12]{Hiwash03}) The space $W^{1,p}(X)$ is a Banach space.
\item (Minimal $p$-weak upper gradient, c.f. \cite[Theorem 7.16]{Hiwash03}) There exists a minimal $p$-weak upper gradient $G_u\in L^p(X)$ in the sense that $G_u \leq G$ $m$-a.e. for every $p$-weak upper gradient $G \in L^p(X)$.
\item (Smooth spaces, c.f.   \cite[Theorem 7.13, Corollary 7.15]{Hiwash03})
Suppose $(X,g)$ is a smooth Riemannian manifold, then $W^{1,p}(X)$ coincides with the standard Sobolev space of $X$. Moreover, the norm of gradient vector $|\nabla u|_g$ is the least $p$-weak upper gradient for $u\in W^{1,p}(X)$. 
\end{enumerate}
	
\end{proposition}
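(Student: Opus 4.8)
The plan is to exploit the fact that Shanmugalingam's Newtonian construction of $W^{1,p}$ \cite{Shanmu,Hiwash03} is, at bottom, axiomatic in the curve structure: inspecting \cite[Section 7]{Hiwash03}, every argument there uses only (i) the ability to integrate a nonnegative Borel function along the distinguished class of curves, via $\int_\gamma f = \int_\alpha^\beta f(\gamma(t))\,|\dot\gamma|_g\,dt$; (ii) the $p$-modulus $\mathrm{Mod}_p$ of a curve family together with its countable subadditivity and the defining property that a nonnegative $L^p$ function has $\int_\gamma f = +\infty$ only along a $\mathrm{Mod}_p$-null family; and (iii) the Fuglede-type fact that $L^p$-convergent sequences converge in $L^1$ along $p$-a.e.\ curve. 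In our setting (i) is available by Lemma~\ref{lem: basic prop rect curves}, which makes $|\dot\gamma|_g$ and hence $\int_\gamma f$ well defined for an absolutely continuous curve; (ii) is purely measure-theoretic and transfers verbatim; and (iii) is precisely Lemma~\ref{lem: 5.7 Hiwash}. Granting these, each of (1)--(4) is obtained by running the corresponding argument of \cite[Section 7]{Hiwash03} with the single replacement ``rectifiable curve $\mapsto$ absolutely continuous curve in the sense of Definition~\ref{def: rect curve}.''

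In more detail: for (1) I would take, by Mazur's lemma, convex combinations $\tilde u_j$ of the $u_i$ and $\tilde G_j$ of the $G_i$ converging strongly in $L^p$, apply Lemma~\ref{lem: 5.7 Hiwash} to pass to a further subsequence along which, off a $\mathrm{Mod}_p$-null family of curves, $\int_\gamma \tilde G_j \to \int_\gamma G$ while the traces $\tilde u_j(\gamma(a)),\tilde u_j(\gamma(b))$ converge for a suitable representative of $u$, and then pass to the limit in the upper gradient inequality. Part (2) follows by combining (1) with weak-$L^p$ lower semicontinuity of the norm applied to a weakly convergent subsequence of the minimal upper gradients $G_{u_i}$, whose $L^p$-limit $G$ satisfies $G_u\le G$ $m$-a.e.\ by (1). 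Part (3) reduces to completeness, which follows from (1): a $W^{1,p}$-Cauchy sequence has an $L^p$-limit $u$ and its minimal upper gradients have an $L^p$-limit $G$, and (1) certifies $G$ as an upper gradient of a representative of $u$. Part (4) follows from the observation that the set of $p$-weak upper gradients of $u$ is a convex lattice closed in $L^p$ --- closure under $\min(G_1,G_2)$ is checked curve-by-curve off a $\mathrm{Mod}_p$-null family --- so a minimizing sequence may be taken decreasing and its monotone limit $G_u$ is the desired minimal element.

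For (5) the extra point is to identify $W^{1,p}(X)$ with the classical Sobolev space when $(X,g)$ is a smooth Riemannian manifold. By Lemma~\ref{lem: basic prop rect curves}(3) the absolutely continuous curves of Definition~\ref{def: rect curve} are exactly the Lipschitz reparametrizations of classical rectifiable curves, with the same length and the same line integrals, so $\mathrm{Mod}_p$ coincides with the classical $p$-modulus. One then invokes the classical facts, carried over to this framework: for smooth $u$, $|\nabla u|_g$ is an upper gradient by the fundamental theorem of calculus along curves, and it is minimal by testing against the family of line-like curves in a fixed coordinate direction (as in \cite[Theorem 7.13]{Hiwash03}); a mollification argument together with the absolute continuity of Sobolev functions on almost every line then gives $W^{1,p}(M)=W^{1,p}_{\mathrm{cl}}(M)$ with equal norms, cf.\ \cite[Theorem 7.13, Corollary 7.15]{Hiwash03}.

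I expect the main obstacle to be bookkeeping rather than conceptual: one must check that no step in \cite[Section 7]{Hiwash03} covertly uses an ambient metric --- for instance in the covering arguments behind the Fuglede lemma, or in constructing competitor curves for the minimality of $|\nabla u|_g$ --- and, where it does, replace that use by the rectifiable-atlas data $(U_a,\phi_a,g_a)$ and the comparability bounds \eqref{eqn: comparable} supplied by Definition~\ref{def: rect RM}. A related point, already visible in the remark following Definition~\ref{def: w1p}, is that (1) and (2) must be stated (as they are) for a suitably chosen representative of $u$, since an $m$-null modification can destroy the existence of an $L^p$ upper gradient. The one genuinely new ingredient --- that the restricted, atlas-compatible class of absolutely continuous curves is still rich enough that the upper gradient notion is non-vacuous and the modulus estimates survive --- has been isolated in Lemmas~\ref{lem: basic prop rect curves} and \ref{lem: 5.7 Hiwash}; with those in hand the remainder is routine, which is why we only point to \cite[Section 7]{Hiwash03}.
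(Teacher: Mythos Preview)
Your proposal is correct and follows essentially the same approach as the paper, which likewise defers to \cite[Section 7]{Hiwash03} and notes that the only modification needed is the replacement of rectifiable curves by absolutely continuous curves in the sense of Definition~\ref{def: rect curve}. You have in fact supplied more detail than the paper does---the paper simply omits the proofs and points to the corresponding statements in \cite{Hiwash03}, while you have correctly isolated the three axiomatic ingredients (line integrals along curves, the $p$-modulus machinery, and the Fuglede-type Lemma~\ref{lem: 5.7 Hiwash}) that make the transfer work.
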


Thanks to Proposition~\ref{prop: basic prop w1p}, for any $u \in W^{1,p}(X)$ we may write
\begin{equation}
	\| u\|_{W^{1,p}(X)} = \| u\|_{L^p(X)} + \| G_u\|_{L^p(X)},
\end{equation}
where $G_u$ is the least $p$-weak upper gradient of $u$. 
\\

Without imposing any additional structure, the space $W^{1,p}$ on a rectifiable Riemannian space may be trivial.  Moreover, as we saw in Example~\ref{ex: rn bad gradient}, the usual coordinate expression for the norm of the gradient may not be meaningful. For this reason, we introduce the notion of rectifiable Riemannian spaces that are {\it $W^{1,p}$-rectifiably complete}, that is, spaces for which  {the space $W^{1,p}$ is sufficiently large and} the minimal $p$-weak upper gradient coincides with the derivative in charts almost everywhere. 
\begin{definition}[$W^{1,p}$-rectifiable completeness]\label{def: rect completeness} Fix $p>n$.
We say that $(X,g)$ is $W^{1,p}$-rectifiably complete  if the following holds:
\begin{enumerate}
\item[(a)] $W^{1,p}(X)$ is dense in $L^p(X)$;
\item[(b)] For all $u\in W^{1,p}(X)$ and $a\in I$, the function
 $u_a=\phi_a^* u:U_a\rightarrow \mathbb{R}$ is differentiable a.e. and  
 \[
G_u(\phi_a(x))= |\nabla u|_g\equiv \sqrt{g_a^{-1}(\partial u_a(x),\partial u_a(x))}
 \]
   for $\phi^*_a m$-a.e. $ x\in  U_a$.  Here, $\partial u_a$ denotes the Euclidean gradient of $u_a$.
\end{enumerate}
\end{definition}

\vspace{.1cm}
\begin{example}
	{\rm 
	A smooth Riemannian manifold is $W^{1,p}$-rectifiably complete for any $p \in (n,\infty)$. 
	}
\end{example}
\vspace{.1cm}
\begin{example}
	{\rm
It is easy to check that the rectifiable Riemannian space of Example~\ref{ex: strat with lines} is $W^{1,p}$-rectifiably complete for any $p \in (n,\infty)$.
	}
\end{example}
\vspace{.1cm}
\begin{example}
	{\rm
For $\alpha >0$, consider the rectifiable Riemannian space $(\R^2, g_\alpha)$, where $g_\alpha=dx^2+|x|^{2\alpha}dy^2$.
This is a generalization of Example~\ref{example: power growth} and a special case of Example~\ref{example: prelim top change}.  
	Fix $ p >2$. There exists $\alpha = \alpha(p) \in (0, 1/2p)$ such that   $(\R^2, g_\alpha)$ is rectifiably complete; the proof of this fact is a special case of the proof of Proposition~\ref{prop: rc} in Section~\ref{sec: RC}.}
\end{example}
\vspace{.1cm}
\begin{example}{\rm
Thanks to Theorem~\ref{global-thm}, the $d_p$ limits of sequences of smooth Riemannian manifolds satisfying uniform lower bounds on scalar curvature and entropy are $W^{1,p}$-rectifiably complete for suitably chosen $p$. See Section~\ref{s:global_convergence} for further discussion and the proof of this fact.
}
\end{example}

\medskip

\subsection{The $d_p$ distance}\label{subsec: dp}

In view of Example~\ref{example: prelim top change}, we see that the geodesic distance does not reflect the underlying structure of  a rectifiable Riemannian space. This is mainly due to the degeneracy of the metric.  More seriously, we will see in Section \ref{examples-entropy} that these types of examples can arise as limits of manifolds with lower scalar curvature and entropy bounds.  At its heart, this occurs because the distance function requires $W^{1,\infty}$-control, which will be too much to ask for.  We introduce and discuss the notion of the $d_p$-distance here, which depends only on $W^{1,p}$-control of our space.  In the context of lower scalar curvature and entropy bounds, this will be obtainable for at least some $n<p<\infty$.

\begin{definition}[$d_p$ distance] \label{def: dp} Let $(X,g)$ be a rectifiable Riemannian space. Given $p \in (1,\infty)$ and $x, y \in X,$ 
the $d_p$ distance $d_{p,g,X }(x,y)$ between $x$ and $y$ is defined to be
\[
d_{p,g,X}(x,y)=\sup\left\{|f(x)-f(y)|: \int_X |\nabla f|^p\, dm \leq 1,\,  f\in W^{1,p}_{loc}(X)\cap C^0_{loc}(X)\right\}
\]
\end{definition}
When there is no ambiguity, we will frequently write $d_{p}$ or $d_{p,g}$ or $d_{p, X}$ in place of $d_{p, g, X}$.  On Euclidean space, we will often use the short-hand $d_{p,euc} = d_{p, g_{euc}, \R^n}$.  The definition of $d_p$ makes sense for any $p \in (1,\infty)$, but is only interesting for $p>n$. For instance, on Euclidean space,  $d_{p}(x,y)=+\infty$ for all $x \neq y$ whenever $p\leq n$.


\subsubsection{Examples}
We consider two examples of the $d_p$ distance on some rectifiable Riemannian spaces. To begin with, we study the behavior of the $d_p$ distance on Euclidean space.

\begin{example}[The $d_p$ distance on Euclidean space]
	{\rm

On Euclidean space, for $p>n$ we directly compute that $d_p(x,y) = S|x-y|^{1-n/p},$ where
\begin{equation}
	S= S_{n,p}= \sup\{ |f(x)-f(0)| : x \in B(0,1), \int_{\R^n} |\na f|^p \,dx \leq1 \}
\end{equation}
 is a normalizing constant.  Note $S_{n,p}\to 1$ as $p\to \infty$.  Thus, 
\begin{equation}\label{eqn: Euclidean ball and p ball}
	\B_{p,g_{euc}}(0, Sr^{1-n/p}) = B(0,r)
\end{equation}
for any $r>0$. 
{By taking the test function that is equal to $|x|\om_n^{-n/p}$ in $B(0,1)$ and is the constant $\om_n^{-n/p}$ on $\R^n\setminus B(0,1)$, we see that $S = S_{n,p} \geq \om^{-n/p}$, 
and thus $\B_{p,g_{euc}}(0,r) \subseteq B(0, \om_n^{n/(p-n)} r^{p/(p-n)}).$
 In particular, if $p>n$ then 
\begin{equation}\label{eqn: uniform ball containment} 
	\B_{p,g_{euc}}(0,r) \subseteq B(0, C_n r^{p/(p-n)})
\end{equation} 
for all $r>0$, where $C_n$ depends only on the dimension.}	
	}
\end{example}
\vspace{2mm}

\begin{example}[Hyperbolic space]\label{ex: hyperbolic}
{\rm
Given $n\geq 2$ and $n<p<\infty$, hyperbolic space $(\mathbb{H}^n, g_{hyp})$ has finite bounded diameter with respect to $d_p$. More specifically, there is a constant $C=C(n,p)$ such that $d_{p}(x,y) \leq C$ for all $x,y \in \mathbb{H}^n$. Indeed, this follows from the Morrey-Sobolev inequality on hyperbolic space established  in \cite{nguyen2018} (see also \cite{MugTal98} for the two dimensional case), which states that there exists $C= C(n,p)>0$ (which is, in fact, explicit and sharp) such that for any $f \in W^{1,p}(\mathbb{H}^n)$, we have 
\begin{equation}\label{eqn: hyp sob}
	\sup_{x \in \mathbb{H}^n} |f(x) | \leq C \| \na f\|_{L^p(\mathbb{H}^n)}.
\end{equation} 
In the definition of $d_p$, we do not require $f$ to be globally integrable. However, the proof of \eqref{eqn: hyp sob} is based on the Polya-Szeg\"{o} principle and the symmetric decreasing rearrangement of $f$, so it is easy to check that the same proof implies that, for any $R>0$ and $f \in W^{1,p}(B_{g_{hyp}}(R))$, 
\begin{equation}
	\sup_{x,y \in B_{g_{hyp}}(R)}  |f(x) - f(y)|  \leq C  \| \na f\|_{L^p(B_{g_{hyp}}(R))},
\end{equation}
where $C$ is the same constant as in \eqref{eqn: hyp sob} and in particular is independent of $R$. Thus, passing $R\to \infty,$ we arrive at the inequality 
\begin{equation}
	\sup_{x,y \in \mathbb{H}^n} |f(x) -f(y) | \leq C \| \na f\|_{L^p(\mathbb{H}^n)}.
\end{equation}
Consequently, $d_p(x,y) \leq C$ for all $x,y \in \mathbb{H}^n$.
}	
\end{example}
\vspace{2mm}

On any smooth closed Riemannian manifold $(M,g)$, $d_{p,g}$ defines a distance metric on $X$ as long as $p>n$. On the other hand, $d_p$ may only define a pseudometric if the metric is degenerate, as we see in the following example.
\begin{example}\label{example: dp}
	{\rm
Fix $\alpha>0$ and consider the rectifiable Riemannian space $(\R^2, g_\alpha)$, where $g_\alpha=dx^2+|x|^{2\alpha}dy^2$.
This is a generalization of Example~\ref{example: power growth} and a special case of Example~\ref{example: prelim top change}. For $p$ such that $\alpha p \geq 1,$ we have  $d_{p,g_\alpha}(x,y)=0$ for all $x,y \in \{0\}\times \mathbb{R}$. 
	}
\end{example}
\vspace{2mm}

\begin{remark}
	{\rm The definition of the  $d_p$ distance makes sense more generally for any space equipped with a $W^{1,p}$ structure. For instance, one may define the $d_p$ distance on a metric measure space $(X,d,m)$.  For reasonable spaces $X$, for instance those with lower Ricci bounds, one can show $d_p\to d$ as $p\to\infty$.
	}
\end{remark}
\vspace{2mm}

\subsubsection{Properties of the $d_p$ distance}
Let us discuss some basic facts about the $d_p$ distance.

First, as a consistency check, the following shows that two Riemannian manifolds which are $d_p$-isometric must in fact be isometric as Riemannian manifolds.
\begin{proposition}\label{prop: dp isometric} Fix $n\geq 2$ and $p>n$.
	Let $(M,g)$ and $(N,h)$ be compact Riemannian $n$-manifolds and suppose that $(M,d_{p,g})$ and $(N, d_{p,h})$ are isometric as metric spaces. Then $(M,g)$ and $(N,h)$ are isometric as Riemannian manifolds.
\end{proposition}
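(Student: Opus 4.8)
The plan is to recover the Riemannian metric $g$ on $M$ from the metric space $(M, d_{p,g})$ in a canonical way, so that any $d_p$-isometry $F : (M, d_{p,g}) \to (N, d_{p,h})$ is forced to be a Riemannian isometry. First I would observe that a $d_p$-isometry is in particular a homeomorphism (this uses $p > n$, so that $d_{p,g}$ induces the manifold topology; on a compact manifold this follows from the Morrey--Sobolev inequality, which gives $d_{p,g}(x,y) \le C\,d_g(x,y)^{1-n/p}$, together with a lower bound $d_{p,g}(x,y) \ge c(x,y) > 0$ for $x \ne y$ coming from a fixed bump function). So $F$ is a homeomorphism $M \to N$ and $\vol_h(M) = \vol_g(M)$ is not yet known, but we will get the measures along the way.

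Next I would extract the local structure. The key point is that near a fixed point $x_0$, and for a fixed small geodesic ball, the $d_p$ distance has a precise infinitesimal asymptotic. Concretely, I would show that for $x_0 \in M$ and a unit tangent vector $v \in T_{x_0}M$, if $\gamma_v$ is the geodesic with $\gamma_v'(0) = v$, then
\begin{equation}\label{eqn:dp-infinitesimal}
\lim_{t \to 0^+} \frac{d_{p,g}(x_0, \gamma_v(t))}{t} = |v|_g \cdot c_{n,p}(\vol_g(M)),
\end{equation}
or more precisely that $d_{p,g}(x_0, \cdot)$ is, after rescaling, asymptotic to a fixed multiple of the Riemannian distance, with the multiplicative constant depending only on $n$, $p$, and the total volume. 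The upper bound in \eqref{eqn:dp-infinitesimal} is the easy direction, coming from a test function adapted to the distance $d_g(x_0,\cdot)$; the matching lower bound requires producing near-optimal test functions, which one does by solving an appropriate $p$-Laplace-type problem (or using the known sharp Morrey--Sobolev extremals on a ball and a blow-up/localization argument, noting that as $t \to 0$ the relevant scale shrinks and the geometry looks Euclidean). Granting \eqref{eqn:dp-infinitesimal}, the conformal factor $|v|_g$ — hence the full metric $g$ up to the global scalar $c_{n,p}(\vol_g(M))$ — is determined by $d_{p,g}$: for each $x_0$ one recovers the norm $|\cdot|_g$ on $T_{x_0}M$ as a limit of rescaled $d_p$-distances, and then $g$ itself by polarization.

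Now I would run this on both $M$ and $N$: the $d_p$-isometry $F$ sends $d_{p,g}$-geodesic-like infinitesimal structure to that of $d_{p,h}$, so $F$ pulls back $|\cdot|_h$ to a constant multiple $\lambda^2$ of $|\cdot|_g$, i.e. $F^* h = \lambda^2 g$ for some constant $\lambda > 0$ (the constant is global because both normalizing constants $c_{n,p}(\vol)$ are global). Finally I would pin down $\lambda = 1$: since $F$ is a $d_p$-isometry, it preserves the numerical value of $d_p$, and from \eqref{eqn:dp-infinitesimal} applied on both sides the scale-invariant ratio $d_{p,g}(x,y)/\big(\text{intrinsic } |v|_g\big)$ involves $\vol_g(M)$, while $F^*h = \lambda^2 g$ forces $\vol_h(N) = \lambda^n \vol_g(M)$ and rescales distances by $\lambda^{1 - n/p}$ (this is exactly Remark~\ref{rmk: hyp under rescaling}: under $g \mapsto \lambda^2 g$ one has $\B_{p}(x,\rho) = \B_{p}(x, \rho \lambda^{n/p - 1})$ up to the volume change); matching these two scalings forces $\lambda = 1$, hence $F^* h = g$ and $(M,g)$, $(N,h)$ are isometric as Riemannian manifolds.

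The main obstacle I expect is establishing the sharp infinitesimal asymptotic \eqref{eqn:dp-infinitesimal} with the correct constant, in particular the lower bound: one must show that the supremum defining $d_{p,g}(x_0,\gamma_v(t))$ is asymptotically achieved, which amounts to a careful blow-up analysis showing that near-extremal test functions concentrate at the right scale and see only the Euclidean (and locally constant $g$) structure. A cleaner alternative that avoids computing the constant explicitly is to argue by a scaling/homogeneity comparison: show only that $d_{p,g}(x_0,\cdot)^{1/(1-n/p)}$ is comparable to $d_g(x_0,\cdot)$ with ratio tending to a \emph{point-independent and manifold-dependent-only-through-}$\vol$ constant, which is all that is needed for the rigidity; even this weaker statement is the crux of the argument and is where essentially all the work lies.
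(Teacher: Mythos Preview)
Your key asymptotic \eqref{eqn:dp-infinitesimal} is false as written: since $d_{p,g}(x_0,\gamma_v(t))$ behaves like $t^{1-n/p}$ with $1-n/p\in(0,1)$, the ratio $d_{p,g}(x_0,\gamma_v(t))/t$ tends to $+\infty$, not to a finite constant. The correct statement, which is exactly Claim~1 in the paper's proof, is
\[
\lim_{y\to x}\frac{d_{p,g}(x,y)}{d_g(x,y)^{1-n/p}}=S_{n,p},
\]
and crucially the constant $S_{n,p}$ is the \emph{universal} Euclidean Morrey--Sobolev constant, independent of $\vol_g(M)$. (This is forced by scale invariance: the ratio $d_{p,g}/d_g^{1-n/p}$ is unchanged under $g\mapsto\rho^{-2}g$, cf.\ Remark~\ref{rmk: scaling prelim}, so the limit cannot see global volume.) The paper proves this by the blow-up you allude to: rescale so that $d_{g_i}(x,y_i)=1$, observe $(M,g_i,x)\to(\R^n,g_{euc},0)$ in the Cheeger--Gromov sense, and pass to the limit. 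With the correct constant, your entire $\lambda$-discussion evaporates: the same $S_{n,p}$ appears on both sides, so combining the asymptotic on $M$ and on $N$ with the $d_p$-isometry gives immediately $\lim_{y\to x} d_h(\phi(x),\phi(y))/d_g(x,y)=1$.

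The second genuine gap is the jump from this \emph{infinitesimal} statement to ``$F^*h=\lambda^2 g$''. You only know $F$ is a homeomorphism, so $F^*h$ is not even defined; and recovering $|v|_g$ along a smooth curve $\gamma$ on $M$ does not let you compare with $N$, since $F\circ\gamma$ need not be differentiable. The paper handles this with a separate bootstrap (its Claim~2): from $d_h(\phi(x),\phi(y))/d_g(x,y)\to 1$ one shows by a continuity argument along $g$-geodesics that in fact $d_h(\phi(x),\phi(y))<(1+\e)d_g(x,y)$ on a whole ball, for every $\e>0$, hence $\phi$ is a local $d_g$--$d_h$ isometry; then Myers--Steenrod gives smoothness and $\phi^*h=g$. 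This upgrading step is short but essential, and is missing from your outline.
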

\begin{proof}

Since manifolds are $d_p$-complete this immediately tells us that we have a homeomorphism $\phi:M\rightarrow N$ such that $d_{p,g}(x,y)=d_{p,h}(\phi(x),\phi(y))$ for all $x,y\in M$. By the Myers-Steenrod theorem, it suffices to show that $d_g$ and $d_h$ are locally isometric. We first show that $d_{p,g}$ is locally a function of $d_g$.

\textit{Claim 1.} For any $x\in M$, we have 
\begin{equation}
\lim_{y\rightarrow x}\frac{d_{p,g}(x,y)}{d_g(x,y)^{1-n/p}}=S.
\end{equation}
\begin{proof}
[Proof of Claim 1.]
Suppose that on the contrary, we can find a sequence $\{y_i\}\subset M$ with $d_i=d_g(x,y_i)\rightarrow 0$ and $\e_0>0$ such that 
\begin{equation}\label{dp-expansion}
\left|\frac{d_{p,g}(x,y_i)}{d_i^{1-n/p}}-S \right|\geq \e_0>0.
\end{equation}
Consider the rescaled metric $g_i=d_i^{-2}g$ so that $d_{g_i}(x,y_i)=1$ for all $i\in \mathbb{N}$ and $d_{p,g_i}(x,y_i)=d_i^{n/p-1}d_{p,g}(x,y_i)$. Clearly, we have $(M,g_i,x)\rightarrow (\mathbb{R}^n,g_{euc},0)$ in the $C^\infty$-Cheeger-Gromov sense. By the proof of Theorem~\ref{thm: main thm, Lp def new}, it is easy to see that $d_{p,g_i}(x,y_i)\rightarrow S$ which contradicts \eqref{dp-expansion}.
\end{proof}

In particular, together from the claim  and the $d_p$ isometry, we conclude that for any $x\in M$, 
\begin{equation}\label{asymptotic-isometry}
\lim_{y\rightarrow x}\frac{d_h(\phi(x),\phi(y))}{d_g(x,y)}=1.
\end{equation}

Now fix $x\in M$ and $\phi(x)\in N$. Without loss of generality, we may assume that the injectivity radius at $x$ and $\phi(x)$ are at least $1$ by rescaling. By symmetry, it suffices to prove the following claim.
 
\textit{Claim 2.} 
For all $\e>0$, we have $d_h(\phi(x),\phi(y)) < (1+\e)d_g(x,y)$ for all $y\in B_g(x,1)$.
\begin{proof}
[Proof of Claim 2]
Let $s= \sup\{ r>0 : d_h(\phi(x),\phi(y))<  (1+\e)d_g(x,y) \text{ for all } y \in B_{g}(x,r)\}$. Thanks to \eqref{asymptotic-isometry}, we see that $s >0$. 
To establish Claim 2, we will show that $s\geq 1.$ 

Assume by way of contradiction that $s<1$. So, by definition of $s$, there exists $v\in T_xM$ with $|v|=1$ such that
\begin{enumerate}
\item[(a)] $d_h(\tilde \gamma(t),\tilde\gamma(0))<(1+\e) d_g(\gamma(t),\gamma(0))$ for all $t<s$;
\item[(b)]$d_h(\tilde \gamma(s),\tilde\gamma(0))=(1+\e) d_g(\gamma(s),\gamma(0))$
\end{enumerate}
where $\gamma(t)=\exp_x(tv)$ and $\tilde\gamma(t)=\phi(\gamma(t))$ for $t\in [0,1]$.
 Therefore, we have 
\begin{equation}
\begin{split}
(1+\e)s&=(1+\e) d_g(\gamma(s),\gamma(0))\\
&=d_h(\tilde\gamma(s),\tilde \gamma(0))\\
&\leq d_h(\tilde\gamma(s-\delta),\tilde \gamma(s))+d_h(\tilde\gamma(s-\delta),\tilde \gamma(0))\\
&\leq \delta\cdot  \frac{d_h(\tilde\gamma(s-\delta),\tilde\gamma(s))}{d_g(\gamma(s-\delta),\gamma(s))}+ (1+\e)(s-\delta).
\end{split}
\end{equation}
By \eqref{asymptotic-isometry}, if we choose $\delta$ sufficiently small, we have 
\begin{equation}
(1+\e)s\leq \delta(1+\e/2)+(1+\e)(s-\delta)
\end{equation}
which is impossible.
\end{proof}
By repeating the argument on $\phi^{-1}$ and letting $\e\rightarrow 0$, we have local geodesic distance isometry around $x$ and $\phi(x)$. Now, Myers-Steenrod Theorem implies that $\phi$ is differentiable at $x$ and satisfies $(\phi^*h)(x)=g(x)$. Since $x$ is arbitrary, this completes the proof.
\end{proof}
\vspace{2mm}
\begin{remark}[Scaling]\label{rmk: scaling prelim}
{\rm Given a $W^{1,p}$-complete rectifiable Riemannian space $(X,g)$,  let $\tilde{g}= \rho^{-2}g.$ Then  for any $x,y \in M$, we have 
\begin{equation}
	d_{p,\tilde{g}}(x,y)  =\rho^{n/p-1} d_{p,g}(x,y).
\end{equation}
}	
\end{remark}

\vspace{.1cm}

Next, in Example~\ref{example: dp}, we saw an example of a  degenerate metric on a rectifiable Riemannian space for which  $d_p$ only defined a pseudometric. It would therefore be sensible to formalize knowing when this does and does not happen: 

\begin{definition}[$d_p$-rectifiable completeness]\label{def: dp RC}
Given a rectifiable Riemannian space $(X,g)$, we say that  $(X,g)$ is $d_p$-rectifiably complete if $d_p$ defines a metric on $X$ and the topology induced by $d_p$ coincides with the topology of $X$.
\end{definition}

\begin{remark}\label{rmk: tautological sob}
	{\rm
From the definition of $d_p$, we see that for all $x \in X$,  we obtain the following local Sobolev inequality: 
\begin{equation}
	\sup_{y \in \B_p(x,R)} |f(x) -f(y)| \leq  R \| \na f\|_{L^p(X)} ,
\end{equation}
In particular, if $(X,g)$ is a compact rectifiable Riemannian space that is $d_p$-rectifiably complete, then it satisfies the Sobolev embedding $W^{1,p}(X) \hookrightarrow L^\infty(X)$.
}
\end{remark}


\begin{remark}[$d_p$ as the $(W^{1,p})^*$ norm]\label{rmk: dual space}
{\rm

Fix $p>n$ and let $(X,g)$ be a rectifiable Riemannian space satisfying the Sobolev embedding $W^{1,p}(X) \hookrightarrow L^\infty(X)$. For instance, one can consider any compact $d_p$-rectifiably complete $(X,g)$ by Remark~\ref{rmk: tautological sob} above.  Then for any $x \in X$, the Dirac delta $\delta_x$ is an element of the dual space $(W^{1,p}(X))^*$, and 
\begin{equation}
d_{p}(x,y) \equiv \| \delta_x - \delta_y \|_{(W^{1,p}(X))^*}.
\end{equation}
In particular, if we reinterpret the usual distance function $d(x,y)= \| \delta_x - \delta_y \|_{(W^{1,\infty}(X))^*}$, then we see the $d_p$ distance function been obtained by weakening the function space norm we use the measure the distance between the distributions $\delta_x,\delta_y$.
}	
\end{remark}

\vspace{2mm}

\subsubsection{$d_p$ convergence}

Our primary interest in the $d_p$ distance is to give rise to a notion of convergence which captures some Sobolev control.  We begin with $d_p$ convergence of compact sequences. 
\begin{definition}[$d_p$ convergence]\label{def: dp convergence} 
Let $(X,  g)$ and $(Y,h )$ be compact $d_p$ complete rectifiable Riemannian spaces.
Given $\e >0$, we say that 
\begin{equation}
	d_p((X,g) , (Y,h)) \leq \e 
\end{equation}
if there exist collections of points $\{x_i \}_{i=1}^N \subset X$ and $\{ y_i \}_{i=1}^N \subset Y$ such that each collection is $\e$-dense with respect to $d_p$ and 
\begin{equation}
	| d_{p, g,X}(x_i, x_j ) - d_{p,h,Y}(y_i, y_j ) | \leq \e
\end{equation} 
and further
\begin{equation}\label{eqn: dp conv vol}
1-\e \leq 	\frac{\vol_g(\B_p(x_i, r))}{\vol_h(\B_p(y_i, r))} \leq 1+\e
\end{equation}
for all $r \in [\e, 1]$.
\end{definition}
\begin{remark}
{\rm It is more standard to replace \eqref{eqn: dp conv vol} with something like
\begin{align}
	1-\e \leq 	\frac{\int_{\B_p(x_i, r)}1-r^{-1}d_p(x_i,z)}{\int_{\B_p(y_i, r)}1-r^{-1}d_p(y_i,z)} \leq 1+\e\, ,
\end{align}
which is strictly weaker and does not rule out the possibility that the measures are concentrating.  In our context we can work with the stronger condition, so we leave it like as in \eqref{eqn: dp conv vol}.
}
\end{remark}

In other words, two compact spaces are $\e$ close in the $d_p$ sense if their $d_p$ metric spaces are $\e$ Gromov-Hausdorff close and the volumes of balls above scale $\e$ are close. 
\begin{remark}
	{\rm In \eqref{eqn: dp conv vol}, we require volumes of  balls to be $\e$-close up to scale $1$. Up to scaling, we may replace  $1$ with any other  fixed number. 
	}
\end{remark}

Recall that a sequence of pointed proper metric spaces $(X_i, d_i, x_i)$ is said to converge to a pointed proper metric $(X,d, x)$   in the pointed Gromov-Hausdorff topology if $(\overline{B}_{d_i}(x_i ,R) , d_i) \to (\overline{B}_d(x, R),d)$ in the Gromov-Hausdorff topology for every $R>0$. In view of Example~\ref{ex: hyperbolic}, we clearly cannot adopt a direct analogue of this definition when defining pointed $d_p$ convergence. Indeed, we have seen that the hyperbolic space equipped with the $d_p$ metric is {\it not} a proper metric space for $p>n$, since sufficiently large balls have noncompact closure. For this reason, we cannot define pointed $d_p$ convergence by asking for $d_p$ convergence on $d_p$ balls of increasingly large radius. Instead, we make use of $d_p$ completeness to construct an exhaustion that plays the role of balls of large radius. 

More specifically, let $(X, g, x)$ be a $d_p$-complete pointed rectifiable Riemannian space. By $d_p$ completeness, for any $y \in X$, there is some radius $r\leq 1$ sufficiently small such that $\B_p(y,4r)\Subset X$ has compact closure. 
Roughly speaking, we define $Cov(x,N)$ to be the set of points that are linked to $x$ by a sequence of $N$ precompact balls of radius at most $1$. More concretely, define $Cov(x,N)$ be to the collection of points $y$  such that there is $\{(z_i,r_i)\}_{i=1}^N$ satisfying 
\begin{enumerate}
\item $r_i\leq 1$; 
\item $x,y\in \cup_{i=1}^N \B_p(z_i,r_i)$;
\item $\B_p(z_i,r_i)\cap \B_p(z_{i+1},r_{i+1})\neq \emptyset$ for all $i=1,...,N-1$;
\item $\B_p(z_i,4r_i)$ is pre-compact.
\end{enumerate}
\hfill\\
Note that $Cov(x,N)$ is an open set,  and  that by the triangle inequality, we always have the containment $Cov(x,N) \subseteq  \B_p(x,2N).$ 
To get an intuitive idea for how the sets $Cov(x,N)$  behave,  if we define  the analogue of $Cov(x,N)$ with respect to the geodesic distance instead of the $d_p$  distance,  then on any Riemannian manifold (or more  generally, on any  proper  length space),  this set is simply a geodesic ball of radius $2N$.



The main advantage of working with the sets $Cov(x,N)$ instead of $p$-balls of increasing radius is highlighted  in the following two initial lemmas, which show the sense in which $\{Cov(x,N)\}_{N\in\mathbb{N}}$ provides an exhaustion of $X$.  The first lemma shows that any $y \in X$  is contained in   $Cov(x,N)$   for some  $N\in\mathbb{N}$.



\begin{lemma} Let $(X,g,x)$  be a  $d_p$-complete  rectifiable Riemannian  space. 
	For any compact connected set $\Omega\subset X$ containing $x$, there exists $N \in \mathbb{N}$ such that $\Omega\subset Cov(x,N).$
\end{lemma}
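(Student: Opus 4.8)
The plan is to exploit the connectedness of $\Omega$ together with its compactness via a chaining argument on a finite cover by precompact $d_p$-balls.

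First I would produce the cover. For each $y\in\Omega$, $d_p$-completeness provides a radius $r_y\le 1$ with $\B_p(y,4r_y)\Subset X$; since $d_p$ generates the topology of $X$, the family $\{\B_p(y,r_y)\}_{y\in\Omega}$ is an open cover of $\Omega$, so compactness yields a finite subcover $\{\B_p(z_j,r_j)\}_{j=1}^M$. After discarding any ball disjoint from $\Omega$ — which does not affect the fact that the remaining balls cover $\Omega$ — I may assume $\B_p(z_j,r_j)\cap\Omega\neq\emptyset$ for each $j$, while retaining $r_j\le 1$ and the precompactness of $\B_p(z_j,4r_j)$.

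The next step is to record that the "nerve" of this cover, intersected with $\Omega$, is connected. Consider the graph $\mathcal G$ on vertex set $\{1,\dots,M\}$ with $j\sim k$ iff $\B_p(z_j,r_j)\cap\B_p(z_k,r_k)\cap\Omega\neq\emptyset$. If $\mathcal G$ were disconnected, a partition of the vertices into nonempty sets $A$ and $B$ with no edges across would give the two sets $U=\big(\bigcup_{j\in A}\B_p(z_j,r_j)\big)\cap\Omega$ and $V=\big(\bigcup_{k\in B}\B_p(z_k,r_k)\big)\cap\Omega$; these are relatively open in $\Omega$ (unions of $d_p$-balls are open), nonempty (each retained ball meets $\Omega$), disjoint (a common point would create an edge across), and cover $\Omega$ (the balls cover $\Omega$), contradicting connectedness of $\Omega$. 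Hence $\mathcal G$ is connected.

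Finally I would chain. Fix $j_0$ with $x\in\B_p(z_{j_0},r_{j_0})$, and given $y\in\Omega$ pick $j_1$ with $y\in\B_p(z_{j_1},r_{j_1})$; connectedness of $\mathcal G$ gives a simple path $j_0=i_0\sim\cdots\sim i_L=j_1$ of length $L\le M-1$. The tuple $(z_{i_0},r_{i_0}),\dots,(z_{i_L},r_{i_L})$ satisfies conditions (1)--(4) in the definition of $Cov(x,\cdot)$: consecutive balls intersect (their intersections with $\Omega$ are even nonempty), $r_{i_\ell}\le 1$, $\B_p(z_{i_\ell},4r_{i_\ell})$ is precompact, and $x,y\in\bigcup_{\ell}\B_p(z_{i_\ell},r_{i_\ell})$. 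Thus $y\in Cov(x,L+1)$, and padding the tuple with repeated copies of its last ball (which only adds trivially nonempty self-intersections and does not disturb the other conditions) shows $Cov(x,L+1)\subseteq Cov(x,M)$. Taking $N=M$ then gives $\Omega\subset Cov(x,N)$. I expect the only genuinely delicate point to be keeping the roles straight: $d_p$-balls need not be connected, so all the connectedness in the argument must enter through $\Omega$ and the separation argument above, while the fact that every point admits a precompact $d_p$-ball of radius at most $1$ is precisely what $d_p$-rectifiable completeness supplies; everything else is bookkeeping.
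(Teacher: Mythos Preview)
Your proof is correct and follows the same approach as the paper's: cover $\Omega$ by finitely many small precompact $d_p$-balls and use connectedness to chain them. The paper's version is terser---it takes a uniform radius $r$ (via a Lebesgue-number-type step) and then simply asserts that $Cov(x,N)$ contains $\Omega$---whereas you spell out the nerve-connectedness argument and the padding to get from $L+1$ to $M$ balls, which are exactly the details the paper's one-line conclusion glosses over.
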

\begin{proof}
By compactness of $\Omega$, we can find $1>r>0$ sufficiently small such that for all $z\in \Omega$, $\B_p(z,4r)\Subset X$. Then by compactness of $\Omega$, we can find $N$ such that $\Omega$ is covered by $\{\B_p(z_i,r) \}_{i=1}^N$ for $z_i\in \Omega$. And hence $Cov(x,N)$ contains $\Omega$.
\end{proof}

The second lemma is more subtle, and shows that $Cov(x,N)$ has compact closure for any $N \in \mathbb{N}$.
\begin{lemma} Let $(X,g,x)$  be a  $d_p$-complete  rectifiable Riemannian  space. 
  For any $N \in \mathbb{N}$  the set $Cov(x,N)$ has compact closure.
\end{lemma}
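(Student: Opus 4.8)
The plan is to prove by induction on $N$ that $\overline{Cov(x,N)}$ is compact, with the convention $Cov(x,0):=\{x\}$, so the case $N=0$ is trivial. Two consequences of the $d_p$-completeness hypothesis will be used repeatedly: (i) $d_p$ metrizes the topology of $X$, so each $d_p(w,\cdot)$ is continuous, whence $\overline{\B_p(w,s)}\subset\{u:d_p(w,u)\le s\}\subset\B_p(w,s')$ for every $s'>s$ (in particular, closures of balls lying inside a precompact ball are compact, and since $X$ is metrizable it suffices to check sequential compactness); and (ii) $X$ is locally compact -- every point lies in a precompact $d_p$-ball.

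Fix $N\ge1$ and assume $K:=\overline{Cov(x,N-1)}$ is compact. I would first normalize the chains witnessing $y\in Cov(x,N)$. Given such $y$ with data $\{(z_i,r_i)\}_{i=1}^N$, condition (2) puts $x$ in some $\B_p(z_a,r_a)$ and $y$ in some $\B_p(z_b,r_b)$; after possibly reversing the chain we may assume $a\le b$, and then replacing it by the sub-chain between indices $a$ and $b$ and padding back out to length $N$ by repeating the last entry -- operations preserving (1), (3), (4) -- produces a chain with $x\in\B_p(z_1,r_1)$ and $y\in\B_p(z_N,r_N)$. Now peel off the last ball: pick $w\in\B_p(z_{N-1},r_{N-1})\cap\B_p(z_N,r_N)$ (nonempty by (3)) when $N\ge2$, and $w=x$ when $N=1$. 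Then $\{(z_i,r_i)\}_{i=1}^{N-1}$ witnesses $w\in Cov(x,N-1)\subset K$, we have $d_p(w,y)<2r_N$ since $w,y\in\B_p(z_N,r_N)$, and $\B_p(w,3r_N)\subset\B_p(z_N,4r_N)$ by the triangle inequality, the latter precompact by (4). Hence
\[
Cov(x,N)\ \subset\ A_K:=\bigl\{\,y\in X:\ \exists\,w\in K,\ \rho\in(0,1]\ \text{ with }\ d_p(w,y)<2\rho\ \text{ and }\ \B_p(w,3\rho)\ \text{precompact}\,\bigr\}.
\]

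It then suffices to show $A_K$ has compact closure for every compact $K\subset X$, since $\overline{Cov(x,N)}$ is then a closed subset of the compact set $\overline{A_K}$. I would argue sequentially: let $y_k\in A_K$ have witnesses $(w_k,\rho_k)$, and pass to subsequences with $w_k\to w_\infty\in K$ and $\rho_k\to\rho_\infty\in[0,1]$. If $\rho_\infty=0$, then $d_p(w_\infty,y_k)\le d_p(w_\infty,w_k)+2\rho_k\to0$, so $y_k\to w_\infty$. If $\rho_\infty>0$, put $\epsilon_0=\rho_\infty/10$; for $k$ large, $d_p(w_k,w_\infty)<\epsilon_0$ and $\rho_k>\rho_\infty-\epsilon_0$, so the triangle inequality gives $\B_p(w_\infty,2\rho_\infty+4\epsilon_0)\subset\B_p(w_k,3\rho_k)$, which is precompact; hence $\B_p(w_\infty,2\rho_\infty+4\epsilon_0)$ is precompact, while $y_k\in\B_p(w_\infty,2\rho_\infty+3\epsilon_0)$ for $k$ large, and $\overline{\B_p(w_\infty,2\rho_\infty+3\epsilon_0)}\subset\B_p(w_\infty,2\rho_\infty+4\epsilon_0)$ is therefore compact. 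In either case $\{y_k\}$ has a convergent subsequence, so $\overline{A_K}$ is compact, closing the induction.

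The main obstacle is the case $\rho_\infty>0$, and it is exactly why condition (4) requires precompactness of the enlarged balls $\B_p(z_i,4r_i)$ and not merely of $\B_p(z_i,r_i)$: an increasing union of precompact $d_p$-balls whose radii approach a supremum need not be precompact, since $d_p$ is not a length metric (hyperbolic space, for instance, has bounded $d_p$-diameter), so no Hopf--Rinow-type argument is available. The definite slack between the radii $2\rho$, $3\rho$ and $4\rho$ is precisely what lets one trap all but finitely many $y_k$ inside a single precompact ball centered at the limit point $w_\infty$.
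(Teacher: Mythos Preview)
Your proof is correct and follows essentially the same inductive strategy as the paper: peel off the last ball in the chain to land in $Cov(x,N-1)$, pass to subsequences so the intermediate points and radii converge, and split into the cases $\rho_\infty=0$ and $\rho_\infty>0$. Your version is slightly cleaner in two respects: by starting the induction at $N=0$ you absorb the paper's separate $N=1$ argument into the general step, and in the $\rho_\infty=0$ case you observe directly that $y_k\to w_\infty$, whereas the paper invokes local compactness at $z_\infty$ to trap the sequence in a precompact ball---an extra step your argument shows is unnecessary.
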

\begin{proof}
We  argue by induction.
For $N=1$, 
let $r_0 \leq 1$ be the  supremum over  radii $r \leq 1$ such that for some $y \in X$ we have $x \in \B_p(y,r ) $ and  $\B_p(y, 4r) \Subset X$. So, we may find some $y_0 \in X$ and $r_{y_0} \in (9r_0/10, r_0)$ such that $x \in \B_p(y_0,r_{y_0} ) $ and  $\B_p(y_0, 4r_{y_0}) \Subset X$ . We claim that 
\begin{equation}\label{eqn: contains}
	Cov(x, 1) \subset \B_p(y_0, 4r_{y_0}),
\end{equation}
the latter of which has compact closure by assumption. Indeed, for any $z \in Cov(x, 1)$, we have $z \in \B_p(y, r)$ for some $y \in X$ and for some $r \leq  r_0$ with $\B_p(y, 4r)\Subset X$.
Then by repeatedly applying the triangle inequality, 
 \begin{equation}
 z \in \B_p(y, r_0) \subset  \B_p(x, 2r_0) \subset \B_p(y_0, 3 r_0 ) \subset  \B_p (y_0 , 4r_{y_0}).
 \end{equation}
This establishes \eqref{eqn: contains}.

Now, suppose we have shown that $Cov(x,N)$ is pre-compact.  We claim that $Cov(x,N+1)\Subset X$. It suffices to show that for all sequences in $Cov(x,N+1)$, there is a subsequence that converges with respect to the underlying topology on $X$.  

Let $x_i$ be a sequence in $Cov(x,N+1)$.
For each $i$, we can find $\{(z_{i,a},r_{i,a})\}_{a=1}^{N+1}$ such that 
\begin{enumerate}
\item $r_{i,a}\leq 1$ for all $a\leq N+1$;
\item $x,x_i\in \cup_{a=1}^{N+1}\B_p(z_{i,a},r_{i,a})$;
\item $\B_p(z_{i,a},r_{i,a})\cap \B_p(z_{i,a+1},r_{i,a+1})\neq \emptyset$ for $a=1,...,N$;
\item $\B_p(z_{i,a},4r_{i,a})\Subset X$. 
\end{enumerate}

We may assume $x_i\in \B_p(z_{i,N+1},r_{i,N+1})$ and $x\in \cup_{a=1}^N \B_p(z_{i,a},r_{i,a})$ for all $i$ large enough. Otherwise, $x_i\in Cov(x,N)$ which we already know  to have compact closure and thus $x_i$ has convergent subsequence with respect  to the underlying topology. Let $z_i\in \B_p(z_{i,N+1},r_{i,N+1})\cap \B_p(z_{i,N},r_{i,N})\subseteq Cov(x,N)$. By compactness, we can assume $z_i\rightarrow z_\infty\in \overline{Cov(x,N)}$ which is compactly contained in $X$. Therefore for all $i,k$,
\begin{equation}
\begin{split}
d_p(x_i,z_{k})&\leq d_p(x_i,z_i)+d_p(z_i,z_k)\\
&\leq 2r_{i,N+1}+d_p(z_i,z_k).
\end{split}
\end{equation}

By compactness, we may assume $r_{i,N+1}\rightarrow r_\infty\in [0,1]$ as $i \to \infty$.  

{\bf Case 1.} If $r_\infty>0$, then we can find $K$ such that for all $i\geq K$,
\begin{equation}\begin{split}
d_p(z_i,z_K)<\frac12 r_{K,N+1},\quad \text{and}\quad 
r_{i,N+1}<\frac32 r_{K,N+1}\, .
\end{split}
\end{equation}
Therefore for $i\geq K,$ $x_i\in \B_p(z_K,3r_{K,N+1})\subset \B_p(z_{K,N+1},4r_{K,N+1})\Subset X$ which is compactly contained in $X$. 
\hfill\\

{\bf Case 2.} If $r_\infty=0$, then 
\begin{equation}\label{d_p-Cov}
\begin{split} 
d_p(x_i,z_\infty)&\leq d_p(x_i,z_i)+d_p(z_i,z_\infty)\\
&\leq 2r_{i,N+1}+d_p(z_i,z_\infty)\rightarrow 0.
\end{split}
\end{equation} 
By $d_p$ completeness, there is $r_{z_\infty}>0$ such that $\B_p (z_\infty,r_{z_\infty})\Subset X$. By \eqref{d_p-Cov}, $x_i\in \B_p (z_\infty,r_{z_\infty})$ for $i$ sufficiently large. This completes the proof. 
\end{proof}

With these two lemmas in hand, we can now define pointed $d_p$ convergence.
\begin{definition}\label{def: pointed dp} Let $(X_i, g_i, x_i)$ and $(X,g,x)$ be $d_p$-complete 
 pointed rectifiable Riemannian spaces. We say that
 \begin{equation}
 	(X_i, g_i, x_i) \to (X,g,x)
 \end{equation} 
 in the pointed $d_p$ sense if the following holds. 
  For all $N \in \mathbb{N}$, there exists $N'\geq  N$ and compact sets $\Omega\subset X$ and $\Omega_i \subset X_i$ such that 
 \begin{enumerate}
 	\item $Cov(x,N)  \subset \Omega \subset  Cov(x,N')$,
 	\item $Cov_{i}(x_i, N) \subset \Omega_i \subset Cov_{i}(x_i, N')$ for $i$ sufficiently large,
 	\item $d_p( (\Omega_i, g_i) , (\Omega, g)) \to 0.$
 \end{enumerate}
\end{definition}
\begin{remark}
	{\rm 
	Observe that in part (3) of Definition~\ref{def: pointed dp} above, the $d_p$ convergence on the compact sets $\Omega_i, \Omega$ corresponds to the relative $d_p$ distances $d_{p, g_i, \Omega_i}$ and $d_{p, g,\Omega}.$  This is necessary as $d_p$ is not a local object.
	}
\end{remark}


\vspace{.4cm}

\section{Further preliminaries}\label{sec: preliminaries} 

In this section, we introduce further preliminaries that will be needed in the paper.
 
 \medskip
\subsection{Ricci flows}\label{subsec: Ricci Flows}
Let us cover some of the basics of the Ricci flow in this subsection.  A Ricci flow $(M,g(t))_{t\in (0,T)}$ is a family of smooth metrics $g(t)$ on a smooth manifold $M^n$ satisfying the evolution equation 
\begin{equation}
	\pa_t g(t) = -2\Ric_{g(t)}.
\end{equation}
If $(M^n,g)$ is a complete Riemannian manifold with bounded curvature:
\begin{equation}
\sup_{x \in M} |\Rm_{g}|(x) <+\infty\, ,
\end{equation}
 then Shi in \cite{ShiExistence} established the short time existence of the Ricci flow for such a metric, following the existence theory on closed Riemannian manifolds due to Hamilton \cite{HamiltonExistence} and the trick by DeTurck \cite{DeTurck}.
 	
A sequence  $\{(M_i, g_i(t), x_i)_{t\in (0,T)}\}$ of complete pointed solutions of the Ricci flow satisfying
\begin{align} \label{eqn: scale invariant curvature bounds}
 		|\Rm_{g(t)}|\leq C/t,\qquad
 		\text{inj}(M,g(t)) \geq c_0 t^{1/2},
\end{align}
 	is compact in the $C^\infty$ Cheeger-Gromov topology. That is, up to a subsequence, $\{(M_i, g_i(t), x_i)_{t\in(0,T)}\}$ converges smoothly to a pointed complete solution of the Ricci flow $(M_\infty, g_\infty(t), x_\infty)_{t \in (0,T)}$, which also satisfies \eqref{eqn: scale invariant curvature bounds}.  This compactness theorem was originally observed in Hamilton \cite{HamiltonCptness}; see also \cite[Theorem 6.35]{CLNbook}.

Along the Ricci flow, the scalar curvature $R= R_{g(t)}(x)$ evolves by
 \begin{equation}\label{eqn: evolution of scalar}
 	\pa_t R = \Delta R + 2|\Ric|^2.
 \end{equation}
The equation is coupled to the Ricci flow  in the sense that $\Delta=\Delta_{g(t)}.$ 
 Because it is a super-solution of the heat equation, lower bounds on the scalar curvature are preserved under the Ricci flow. In other words, if $R_{g(0)} \geq -\delta $ for all $x \in M$, then 
 \begin{equation}\label{eqn: scalar lower bound flow2}
 R_{g(t)} \geq -\delta\, ,
 \end{equation} for all $x \in M$ and $t\in (0,T)$. This monotonicity provides one-sided control on the expansion of volumes under the Ricci flow, since the volume form evolves by
 \begin{equation}
 \pa_t d\vol_{g(t)}= -R_{g(t)}d\vol_{g(t)}.	
 \end{equation}
 As such, a flow satisfying \eqref{eqn: scalar lower bound flow2} has $
 	d\vol_{g(t)}\leq \exp\{\delta(t-s)\}d\vol_{g(s)}$ for all $s \leq t.$ So, provided $\delta\leq 1/2,$  a Taylor expansion shows that for all $ 0\leq s \leq t\leq \min\{1,T\}$, 
 	\begin{equation}
 	\label{eqn: volume forms}
 	d\vol_{g(t)}\leq \left\{1+2\delta(t-s)\right\}d\vol_{g(s)}. 
 \end{equation} 
\medskip   

\subsection{Heat flows coupled to Ricci flow}\label{sec: Kernels}
Given a complete bounded curvature Ricci flow $(M,g(t))$, $t \in [0,T)$, we consider the heat operator $\pa_t -\Delta$ coupled to Ricci flow, along with its formal adjoint the conjugate heat operator $-\pa_t -\Delta +R$. The Cauchy problem for the conjugate heat equation is well-posed backward in time.
 The two operators are conjugate in the sense that if $u,v$ are smooth functions, with suitable decay at infinity if $M$ is non-compact, then
 \begin{equation}\label{eqn: conjugate}
 \begin{split}
  \int_M uv \,&d\vol_{g(T)}-\int_M uv\, d\vol_{g(0)} = \int_0^T \int_M \{v(\pa_t-\Delta)u	-u(-\pa_t -\Delta +R)v\}\,d\vol_{g(t)}\,dt.	
 \end{split}	
 \end{equation}
For $0\leq s<t\leq T$, we let $K(x,t; y,s)$ denote the heat kernel with singularity at $(y,s)$, i.e. the solution to 
\begin{equation}
	\begin{cases}\label{eqn: heat kernel heat equation}
		(\pa_t-\Delta_x ) K(x,t;y,s) = 0 & \text{ for } s \in(0,t)\\
		\underset{t\searrow s}{\lim} K(\,\cdot\, ,t;y,s) = \delta_y\, .
	\end{cases}
\end{equation}
As a function of $(y,s)$, $K(x,t;y,s)$ is the kernel for the  conjugate heat equation with singularity at $(x,t)$, that is,
\begin{equation}
	\begin{cases}
		(\pa_s +\Delta_y- R) K(x,t;y,s) =0& \text{ for } s \in(0,t)\\
		\underset{s\nearrow t}{\lim} K(x,t;\ \cdot \ ,s) = \delta_x\,.
	\end{cases}
\end{equation}
Suppose $\vphi$ satisfies $(\pa_t +\Delta -R)\vphi = 0$ on $M\times [0,T]$, then 
\begin{equation}
	\label{eqn: L1 norm preserved}
	\int_M \vphi(x,t) \, d\vol_{g(t)}(x) = \int_M \vphi(x,T) \, d\vol_{g(T)}(x)
\end{equation}
for all $t \in [0,T]$. In particular, for any $s\in [0,t)$ we have
\begin{align}\label{eqn: int 1 for heat kernel}
	\int_M K(x,t ; y, s) \, d\vol_{g(s)}(y) &=1.\\
\shortintertext{
If \eqref{eqn: scalar lower bound flow2}  holds, then \eqref{eqn: volume forms} and  \eqref{eqn: heat kernel heat equation} imply that for $s\leq t \leq \min\{1,T\}$}
\label{eqn: int 2 for heat kernel} 
\int_M  K(x,t ; y, s) \, d\vol_{g(t)}(x) &\leq 1+ 2\delta(t-s)\,.	
\end{align}
From the evolution \eqref{eqn: evolution of scalar}, we have the following representation formula for the scalar curvature  in terms of the heat kernel:
\begin{equation}
\begin{split}
R_{g(t)}(x) &= \int_M K(x,t;y,0)R_{g(0)}(y)\, d\vol_{g(0)}(y)\\
&  +2 \int_0^t\int_M K(x,t;y,s)|\Ric_{g(s)}(y)|^2  d\vol_{g(s)}(y)\,ds.	
\end{split}	
\end{equation}

 We refer the reader to \cite[Chapter 26.1]{Chow3} for these basic properties on kernels for heat-type equations coupled to Ricci flow.
\\

\subsection{The $\mathcal{W}$ functional and Perelman entropy}\label{subsec: entropy}
Let $(M,g(t))_{t \in (0,T]}$ be a complete Ricci flow with bounded curvature.
The $\mathcal{W}$ functional defined in \eqref{eqn: W functional} is monotone along the Ricci flow in the following sense.  Set $\tau(t) = T-t$ and let 
 \begin{equation}
 	 u(x,t) = (4\pi \tau)^{-n/2}e^{-f(x,t)}\, ,
 \end{equation}
be a solution of the conjugate heat equation along the flow on $[0,T]$. 
Provided $u$ decays suitably at infinity (e.g. if $u(T)$ is compactly supported) if $M$ is non-compact, then 
 \begin{equation}\label{eqn: W monotone}
 	\mathcal{W}(g(s), f(s),\tau(s))\leq \mathcal{W}(g(t), f(t),\tau(t))
 \end{equation}
for $s\leq t.$ This monotonicity was shown in \cite{perelman2002entropy} for a closed manifolds and in \cite[Theorem 7.1(i),(ii)]{CTY11} for complete manifolds with bounded curvature. 
 By taking a compactly supported minimizing sequence for $\mu(g(t), T)$,  we see from \eqref{eqn: W monotone} that 
the Perelman entropy $\mu(\tau)$ defined in \eqref{eqn: perelman entropy def} is also monotone along the Ricci flow in the sense that 
\begin{equation}
	\mu(g(s),\tau(s))\leq \mu(g(t),\tau(t))\, ,
\end{equation}
for $s\leq t$.
Correspondingly, if $(M,g(t))_{t \in (0,2)}$ is a Ricci flow and if $\nu(g(0), 2) \geq -\delta$, then $\nu(g(t), 1) \geq -\delta$ for all $t \in (0,1].$ 

Suppose infimum in $\mu(g(t), \tau(t))$ is achieved. This is the case, for instance, if $M$ is closed; see \cite{ZhangLS} for necessary and sufficient conditions in the complete non-compact case with bounded geometry. Then the entropy $\mu(g(t), \tau(t))$ is constant in $t$ only if the Ricci flow is a gradient shrinking soliton that becomes singular at time $T$. This means that, if $f$ is a function achieving the infimum in $\mu(g(t), \tau(t))$ and  $\vphi(t)$ is the diffeomorphism generated by $\na f(t)$, then
	\begin{equation}\label{eqn: gradient shrinking soliton}
	g(t) = (T-t)\vphi(t)^*g(0).
	\end{equation}
On Euclidean space $(\R^n, g_{euc})$, the entropy $\mu(g_{euc}, \tau)$ is equal to zero for all $\tau>0$, and the infimum in  $\mu(g_{euc}, \tau)$ is achieved by $f(x)=|x|^2/4\tau.$
The following lemma asserts that Euclidean space is the only complete Riemannian manifold with bounded curvature with $\mu(g,\tau)=0$; c.f. \cite[Section 3.1]{perelman2002entropy}.
\begin{lemma}\label{lem: rigidity}
	Let $(M,g(t))_{t\in (0,T)}$ be a complete Ricci flow with bounded curvature. Then $\mu(g(0),\tau)\leq 0$ for all $\tau\in(0, T)$, with equality if and only if $M$ is noncompact and the flow is isometric to Euclidean space. Furthermore, fix  $t\in (0,T)$ and $x\in M$. Set $\tau(s) = t-s$, and let $f(y,s)$ be defined by  
	\begin{equation}\label{eqn: f kernel}
		K(x,t;y,s) = (4\pi\tau)^{-n/2} e^{-f(y,s)} .
	\end{equation}
If
	\begin{equation}
		\W(g(s),f(s),\tau(s)) =0\, ,
	\end{equation}
then the  flow is isometric to the constant Euclidean flow.
\end{lemma}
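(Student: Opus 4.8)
The plan is to establish the rigidity statement by exploiting the equality case in Perelman's monotonicity formula \eqref{eqn: W monotone} together with the explicit evolution equation that governs the $\W$-functional along the conjugate heat flow. First I would recall that, for $u(x,t) = (4\pi\tau)^{-n/2}e^{-f(x,t)}$ solving the conjugate heat equation with $\tau(s) = T-s$, Perelman's computation gives
\begin{equation}
\frac{d}{ds}\W(g(s),f(s),\tau(s)) = 2\tau\int_M \Bigl| \Ric_{g(s)} + \na^2 f - \tfrac{1}{2\tau}g \Bigr|^2 u \, d\vol_{g(s)} \geq 0\, ,
\end{equation}
valid on complete bounded-curvature flows under the decay hypotheses cited from \cite{CTY11}. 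Since $\mu(g(0),\tau) \leq \W(g(0),f(0),\tau(0)) \leq \W(g(s),f(s),\tau(s))$ and each $\W$-value along the flow is itself $\geq \mu(g(s),\tau(s))$, the first assertion $\mu(g(0),\tau)\leq 0$ follows once we know $\mu$ is nonpositive somewhere; here I would invoke the fact that $\lim_{\tau\to 0}\mu(g,\tau) = 0$ for bounded-geometry complete manifolds (as in Remark~\ref{rmk: hyp under rescaling}) combined with monotonicity of $\mu(g,\tau)$ in $\tau$, so $\mu(g(0),\tau)\leq \lim_{\tau'\to 0}\mu(g(0),\tau') = 0$.

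For the equality case, suppose $\mu(g(0),\tau_0) = 0$ for some $\tau_0 \in (0,T)$. Running the flow from a time $T = \tau_0$ and taking a compactly supported minimizing sequence for $\mu(g(0),\tau_0)$ (legitimate by the approximation argument used to prove \eqref{eqn: W monotone}), we obtain that $\W(g(s),f(s),\tau(s)) \equiv 0$ along the associated conjugate heat flow, hence $\frac{d}{ds}\W \equiv 0$, which forces the soliton identity
\begin{equation}
\Ric_{g(s)} + \na^2 f(s) = \frac{1}{2\tau(s)}g(s)
\end{equation}
pointwise for a.e. $s$, i.e. $(M,g(s))$ is a gradient shrinking Ricci soliton becoming singular at time $T$, as recorded in \eqref{eqn: gradient shrinking soliton}. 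The task then is to classify complete bounded-curvature gradient shrinking solitons with zero entropy: the vanishing of $\W$ together with the soliton equation and the normalization $\int u = 1$ pins down $f$ up to the Euclidean profile $f = |x|^2/4\tau$, and one checks the soliton equation then reads $\Ric = 0$ with $\na^2 f = \frac{1}{2\tau}g$; the existence of a function whose Hessian is a constant positive multiple of the metric forces the manifold to split isometrically as a Euclidean factor, and completeness plus the fact that the level sets degenerate at the minimum of $f$ forces $M = \R^n$ with $g$ flat. Equivalently one can cite \cite[Section 3.1]{perelman2002entropy} for this classification.

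The final sentence — that if $\W(g(s),f(s),\tau(s)) = 0$ for the function $f$ defined by the heat kernel $K(x,t;y,s) = (4\pi\tau)^{-n/2}e^{-f(y,s)}$ then the flow is the constant Euclidean flow — is then an immediate specialization: the heat kernel based at $(x,t)$ is precisely a conjugate-heat-flow solution with $\int_M u\,d\vol = 1$ (by \eqref{eqn: int 1 for heat kernel}), so the preceding argument applies verbatim and yields that $g(s)$ is flat Euclidean for all $s$; since a complete Ricci flow starting from the flat metric on $\R^n$ is stationary, the flow is the constant Euclidean flow. I expect the main obstacle to be the rigidity/classification step in the middle paragraph: verifying carefully that zero entropy (rather than merely the soliton equation) forces flatness and the Euclidean topology in the complete, possibly noncompact, bounded-curvature setting — in particular ruling out nontrivial complete shrinkers and justifying the compactly-supported approximation so that the monotonicity formula and its equality case remain valid. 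The volume and heat-kernel estimates \eqref{eqn: volume forms}, \eqref{eqn: int 2 for heat kernel} together with the bounded-curvature compactness from Section~\ref{subsec: Ricci Flows} are the tools I would use to control the behavior at infinity in that step.
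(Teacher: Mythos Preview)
Your argument for $\mu(g(0),\tau)\leq 0$ has a genuine gap: $\mu(g,\tau)$ is \emph{not} monotone in $\tau$ at a fixed metric $g$ (the monotonicity in Section~\ref{subsec: entropy} is along the Ricci flow with $\tau(s)=T-s$, not in the scale parameter alone). So knowing $\lim_{\tau\to 0}\mu(g,\tau)=0$ does not by itself give $\mu(g,\tau)\leq 0$ for larger $\tau$. The paper bypasses this entirely by taking $f$ to be the heat-kernel potential \eqref{eqn: f kernel} based at $(x,t)$: one has $\lim_{s\to t}\W(g(s),f(s),\tau(s))=0$, and then the flow monotonicity \eqref{eqn: W monotone} gives $\mu(g(0),t)\leq \W(g(0),f(0),t)\leq 0$ directly. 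This also immediately handles the equality case --- if $\mu(g(0),t)=0$ then the heat-kernel $f$ itself realizes the infimum, so you have an honest conjugate-heat solution achieving equality and hence the soliton identity pointwise. Your route via a compactly supported minimizing \emph{sequence} does not produce a single conjugate-heat solution with $\W\equiv 0$, so it does not yield the pointwise rigidity $\Ric+\na^2 f=\tfrac{1}{2\tau}g$ without further work.

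Your classification step is also different from the paper's, and the paper's is both shorter and more robust. Rather than trying to pin down $f$ and deduce $\Ric=0$ from the Hessian structure (which is circular as written --- you assume the Euclidean profile to conclude flatness), the paper argues as follows: a gradient shrinker becoming singular at time $t$ satisfies $|\Rm_{g(s)}|=(t-s)^{-1}|\Rm_{g(0)}|$ by \eqref{eqn: gradient shrinking soliton}, but the hypothesis is that the flow exists with bounded curvature for $s\in(0,T)$ with $T>t$; hence $|\Rm_{g(0)}|=0$. A flat gradient shrinker is a metric cone (citing \cite{WangLocalEnt}), and a flat cone is Euclidean space. This avoids any delicate analysis of the potential $f$ or behavior at infinity.
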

\begin{proof}
	Fix  $t\in (0,T)$ and $x\in M$. Set $\tau(s) = t-s$, and let $f(y,s)$ be defined by \eqref{eqn:  f kernel}.
	Then $\lim_{s\to t} \W(g(s), f(s), \tau(s) ) =0$. Hence, by monotonicity and the definition of $\mu(g,\bar\tau)$ as an infimum,
	\begin{equation}\label{eqn: nonpositive}
	\mu(g, t) \leq \W(g(0), f(0), t) \leq 0\,.
	\end{equation}
	This concludes the proof of the first claim. Furthermore, suppose that $\mu(g(0), t) = 0$ for some $t\in(0,T)$.  By \eqref{eqn: nonpositive}, $f$ achieves the infimum in $\mu(g(0), t)$, where again we let $f$ be defined by \eqref{eqn: f kernel}
	So, $g(t)$ is a gradient shrinking soliton given by \eqref{eqn: gradient shrinking soliton}.
In particular, 
\begin{equation}\label{eqn: soliton curvature}
	|\text{Rm}_{g(s)}| =   \frac{1}{t-s} |\text{Rm}_{g(0)}|.
\end{equation}
The flow exists and has bounded curvature for $s\in (0,T)$, with $T>t$. Thus 
\eqref{eqn: soliton curvature} implies that $|\text{Rm}_{g(0)}|= 0$. This (in particular, that $|\Ric_{g(0)}|=0$) together with  \eqref{eqn: gradient shrinking soliton} implies that  $(M,g(t))$ is a metric cone, c.f. \cite{WangLocalEnt}. However, a flat manifold which is also a metric cone can only be the Euclidean space.
We conclude 
$(M,g) = (\R^n, g_{euc})$.
\end{proof}

Finally, let us recall Perelman's no local collapsing theorem, which  ensures that small balls are non-collapsed along the flow if the entropy is bounded below. More specifically,  fix $x \in M$ and $t \in (0,T)$. Then for $ r\in (0,t^{1/2})$, if $R_{g(t)} \leq r^{-2}$ on $B_{g(t)}(x,r)$, then
  \begin{equation}\label{eqn: no local collapsing}
  \vol_{g(t)}(B_{g(t)}(x,r)) \geq \kappa r^n,	
  \end{equation}
  where $\kappa$ depends only on $n$ and $\nu(g(0), 2T).$

 \medskip
\subsection{Uniform existence time and scale invariant estimates}

A crucial point throughout the paper is that, under the lower bounds on the entropy assumed in our main theorems, the Ricci flow starting from $(M,g)$ exists for a uniform time and enjoys small scale invariant estimates on the curvature tensor. The following theorem establishes this fact, and  essentially follows from an epsilon regularity theorem of Hein and Naber in \cite{HeinNaber}. For the sake of completeness, and because the assumptions made here are slightly different than the ones there, we include the proof.  
\begin{theorem}\label{prop: uniform existence time and small curvature estimates}
	Fix $n\geq 2$ and $\ETA>0$. There exists $\delta=\delta(n,\ETA)>0$ such that the following holds. Let $(M,g)$ be a complete Riemannian $n$-manifold with bounded curvature satisfying 
		\begin{align}
		\label{eqn: entropy lower bounds2}
		\nu(g,2) \geq -\delta \,.
	\end{align}
	 Then the Ricci flow $(M, g(t))$ with $g(0)=g$ exists for $t \in (0, 1]$. Furthermore,
for all $x \in M$ and $t \in (0,1]$, we have the scale-invariant estimates
 \begin{equation}\label{eqn: tiny curvature bounds 0}
 	|\Rm_{g(t)}| \leq \ETA/t.
 \end{equation}
\end{theorem}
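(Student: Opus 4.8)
The plan is to run the Ricci flow from $(M,g)$ — which exists for a short time by Shi's theorem since $g$ has bounded curvature — and argue by a continuity/contradiction scheme that it persists on $(0,1]$ with the scale-invariant bound $|\Rm_{g(t)}|\leq\ETA/t$. The key mechanism is the $\e$-regularity theorem of Hein--Naber \cite{HeinNaber}: if the Perelman entropy at a sufficiently large scale is close to $0$ (here quantitatively $\nu(g,2)\geq-\delta$), then the conjugate heat kernel based at any point $(x_0,t_0)$ is pointwise close to the Euclidean Gaussian, and this forces a pointwise curvature bound $|\Rm_{g(t_0)}|(x_0)\leq\ETA/t_0$. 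Because the entropy $\nu$ is monotone along the flow — recall from Section~\ref{subsec: entropy} that $\nu(g(0),2)\geq-\delta$ implies $\nu(g(t),1)\geq-\delta$ for $t\in(0,1]$, and by scaling the entropy at the relevant parabolic scale stays $\geq-\delta$ — the hypothesis is preserved, so the $\e$-regularity input is available at every time the flow exists.

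The steps, in order, would be: (1) By Shi, let $[0,T_{\max})$ be the maximal existence interval of the bounded-curvature Ricci flow with $g(0)=g$. (2) Fix the target $\ETA$, and choose $\delta=\delta(n,\ETA)$ small enough that the Hein--Naber $\e$-regularity theorem yields: whenever a bounded-curvature Ricci flow is defined on a parabolic neighborhood of $(x_0,t_0)$ of the appropriate size and satisfies the entropy lower bound $-\delta$ at scale comparable to $t_0^{1/2}$, then $|\Rm|(x_0,t_0)\leq\ETA/t_0$; here one uses the lower scalar curvature bound (preserved along the flow) and the no-local-collapsing theorem \eqref{eqn: no local collapsing} to verify the hypotheses of that $\e$-regularity statement, in particular to rule out collapsing and control volume ratios. (3) Define $T_0=\min\{T_{\max},1\}$ and observe that on $(0,T_0)$ the entropy condition $\nu(g(t),2t)\geq-\delta$ (equivalently, after rescaling, $\nu\geq-\delta$ at unit scale) holds by monotonicity and scaling. (4) Apply the $\e$-regularity estimate at every $(x,t)$ with $t\in(0,T_0)$ to conclude $|\Rm_{g(t)}|\leq\ETA/t$ on $(0,T_0)$. (5) Finally, a standard continuation argument: if $T_{\max}<1$, then the scale-invariant bound $|\Rm_{g(t)}|\leq\ETA/t$ on $(0,T_{\max})$ gives $\sup_M|\Rm_{g(t)}|\leq\ETA/(T_{\max}/2)$ uniformly for $t$ near $T_{\max}$, so the curvature does not blow up and the flow extends past $T_{\max}$, contradicting maximality; hence $T_{\max}\geq1$ and the bound \eqref{eqn: tiny curvature bounds 0} holds on $(0,1]$.

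The main obstacle is Step~(2): carefully extracting the pointwise curvature bound from the Hein--Naber machinery under the present hypotheses, which differ slightly from theirs (they assume a Ricci flow a priori defined on a definite time interval with entropy control, whereas here we must bootstrap existence simultaneously). This requires being careful that the parabolic neighborhood on which the $\e$-regularity theorem needs the flow to be defined is itself controlled — one typically phrases this via a rescaling $\tilde g(s)=t_0^{-1}g(t_0 s)$ so that the estimate is needed at the fixed time $s=1$ for a flow defined on $s\in(0,1]$ with entropy $\geq-\delta$ at scale $1$, and then the no-local-collapsing bound provides the noncollapsing input \eqref{eqn: no local collapsing} with $\kappa=\kappa(n,\delta)$ needed to close the $\e$-regularity hypothesis. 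A secondary technical point is verifying that the conjugate heat kernel $K(x,t;\cdot,\cdot)$ and the associated function $f$ from \eqref{eqn: f kernel} satisfy the decay conditions required for the monotonicity \eqref{eqn: W monotone} and for the Hein--Naber estimates in the complete noncompact bounded-curvature setting; this is where the results of \cite{CTY11} and \cite{ZhangLS} cited earlier are invoked.
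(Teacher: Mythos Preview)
Your overall plan is correct and matches the paper's: Shi short-time existence, a scale-invariant curvature bound via an $\e$-regularity mechanism \`a la Hein--Naber, and a continuation argument to reach $t=1$. The paper itself notes that the theorem ``essentially follows from an epsilon regularity theorem of Hein and Naber'' and then gives a self-contained proof because the hypotheses differ slightly.

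Where your proposal leaves a gap is exactly your Step~(2), which you flag as the main obstacle but do not resolve. The paper does not invoke \cite{HeinNaber} as a black box; instead it executes Step~(2) by a point-picking contradiction argument built on Lemma~\ref{lem: limit space is Euclidean}. Concretely, it introduces the regularity scale $r_{|\Rm|}(x,t)$ and reduces the theorem to the claim that $r_{|\Rm|}(x,T)^2\geq T/\ETA$ whenever $\nu(g(0),2T)\geq-\delta$. If the claim fails, one picks near-minimizers $x_i$ of the regularity scale, rescales so that $r_{|\Rm|}(x_i,0)^2=2/\ETA$ while $r_{|\Rm|}(y,0)^2\geq 1/\ETA$ everywhere, and uses the uniform curvature bound $|\Rm|\leq\ETA$ on $M_i\times[-1,0]$ together with no-local-collapsing to extract a smooth Cheeger--Gromov limit. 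Lemma~\ref{lem: limit space is Euclidean} then forces the limit to be the constant Euclidean flow, contradicting the persistence of the finite regularity scale. This is the concrete mechanism you are missing; your proposal gestures at the right ingredients (heat kernel, $\W$-monotonicity, no-local-collapsing) but does not assemble them into an argument.

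One correction: the theorem does \emph{not} assume a lower scalar curvature bound, only $\nu(g,2)\geq-\delta$, so your appeal to ``the lower scalar curvature bound (preserved along the flow)'' in Step~(2) is out of place here. The noncollapsing input comes purely from the entropy bound via \eqref{eqn: no local collapsing}.
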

\begin{remark}\label{rmk: derivs}
\rm{ Thanks to Shi's derivative estimates, \eqref{eqn: tiny curvature bounds 0} implies that for all $k\in \mathbb{N}$, we have
\begin{equation}
	|\na^k \Rm|\leq C(n,k,\ETA)/t^{1+k/2}.
\end{equation}
}
\end{remark}
The key point of the proof of Theorem~\ref{prop: uniform existence time and small curvature estimates} is the continuity property contained in the following lemma.

\begin{lemma}\label{lem: limit space is Euclidean} Fix $n\geq 2 $, $C>0$.
	Let $\{(M_i, g_i(t), x_i)_{t \in (-1,1]}\}$ be a sequence of complete Ricci flows such that each time slice has bounded curvature and
	\begin{equation}\label{eqn: uniform curvature bound 2}
	|\Rm|\leq C \text{ on } M \times (-1,0].	
	\end{equation} 
	Suppose that 
	\begin{equation}\label{eqn: nu to zero}
	\nu(g_i(-1), 2) \geq -\delta_i
	\end{equation}
	where $\delta_i \to 0$. Then, up to a subsequence, $\{(M_i ,g_i(t) ,x_i)_{t \in (-1,0]}\}$ converges smoothly to the constant Euclidean flow $(\R^n, g_{euc}, 0^n)_{t \in (-1,0]}$.
\end{lemma}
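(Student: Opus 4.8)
The plan is to combine Hamilton's Cheeger--Gromov compactness theorem for Ricci flows with the entropy rigidity of Lemma~\ref{lem: rigidity}, in the spirit of the $\epsilon$-regularity of Hein--Naber. First I would record the consequences of the entropy hypothesis. Shifting time so that the flows are reparametrized on $(0,2]$ with the good curvature bound holding on $(0,1]$, the hypothesis $\nu(g_i(-1),2)\geq-\delta_i$ and the propagation of $\nu$ along the Ricci flow recalled in Section~\ref{subsec: entropy} give $\nu(g_i(s),1)\geq-\delta_i$, and hence $\mu(g_i(s),\tau)\geq-\delta_i$, for all $s\in(-1,0]$ and $\tau\in(0,1)$. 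For $i$ large enough that $\delta_i\leq 1$, Perelman's no-local-collapsing estimate \eqref{eqn: no local collapsing} then applies with a uniform constant $\kappa=\kappa(n)$, and together with the uniform bound $|\Rm|\leq C$ on $M_i\times(-1,0]$ this produces, for each $\eta>0$, a uniform lower bound on the injectivity radius of $g_i(s)$ for $s\in[-1+\eta,0]$.

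With uniform curvature and injectivity radius bounds in hand, Hamilton's compactness theorem (cf.\ Section~\ref{subsec: Ricci Flows}) yields, after passing to a subsequence, a complete pointed Ricci flow $(M_\infty,g_\infty(t),x_\infty)_{t\in(-1,0]}$ with $|\Rm_{g_\infty(t)}|\leq C$ such that $(M_i,g_i(t),x_i)_{t\in(-1,0]}\to(M_\infty,g_\infty(t),x_\infty)_{t\in(-1,0]}$ in the $C^\infty$ Cheeger--Gromov sense. It remains to show the limit is the constant Euclidean flow. For this I would run the conjugate heat kernel argument from the proof of Lemma~\ref{lem: rigidity}, now on the sequence: for $s\in(-1,0)$ let $u_i(y,s)=K_i(x_i,0;y,s)$ be the conjugate heat kernel based at $(x_i,0)$, and define $f_i$ by $u_i=(4\pi(-s))^{-n/2}e^{-f_i}$. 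Since $f_i(s)$ is admissible in the variational problem defining $\mu(g_i(s),-s)$ (its $e^{-f_i(s)}$ is normalized by \eqref{eqn: int 1 for heat kernel}, and $-s\in(0,1)$), we get $\W(g_i(s),f_i(s),-s)\geq\mu(g_i(s),-s)\geq-\delta_i$; on the other hand, \eqref{eqn: W monotone} together with the fact (already used in the proof of Lemma~\ref{lem: rigidity}) that $\W(g_i(s),f_i(s),-s)\to 0$ as $s\to 0^-$ gives $\W(g_i(s),f_i(s),-s)\leq 0$. Hence $-\delta_i\leq\W(g_i(s),f_i(s),-s)\leq 0$, so $\W(g_i(s),f_i(s),-s)\to 0$ as $i\to\infty$ for each fixed $s\in(-1,0)$.

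The remaining step, and the main obstacle, is to pass $\W$ to the limit: fixing $s\in(-1,0)$, one must show $\W(g_i(s),f_i(s),-s)\to\W(g_\infty(s),f_\infty(s),-s)$, where $f_\infty$ is defined from the conjugate heat kernel $u_\infty=K_\infty(x_\infty,0;y,s)$ on the limit flow. This requires the uniform-in-$i$ interior estimates for $u_i$ --- Gaussian upper bounds and gradient bounds, available from the uniform curvature bound and the non-collapsing --- which yield local smooth convergence $u_i\to u_\infty$ together with enough decay of $e^{-f_i}$ in $d_{g_i(s)}(x_i,\cdot)$ to prevent loss of mass at infinity in the integral defining $\W$. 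Granting this, $\W(g_\infty(s),f_\infty(s),-s)=0$, and the rigidity statement of Lemma~\ref{lem: rigidity} forces $(M_\infty,g_\infty(t))_{t\in(-1,0]}$ to be isometric to the constant Euclidean flow $(\R^n,g_{euc})$; composing with an isometry we may take $x_\infty=0^n$, which completes the proof. An alternative route would be to transfer the bound $\mu(g_i(s),\tau)\geq-\delta_i$ directly to $\mu(g_\infty(s),\tau)\geq 0$ by pulling back compactly supported test functions, then combining with $\mu\leq 0$ and Lemma~\ref{lem: rigidity}; the essential difficulty is the same, now hidden in the approximation of $\mu$ by compactly supported competitors.
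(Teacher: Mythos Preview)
Your proposal is correct and follows essentially the same approach as the paper: no-local-collapsing plus the uniform curvature bound gives injectivity radius control, Hamilton compactness produces a limit flow, the conjugate heat kernel based at the marked point yields $-\delta_i\le \W(g_i(s),f_i(s),\tau(s))\le 0$, Gaussian heat-kernel estimates pass $\W$ to the limit, and Lemma~\ref{lem: rigidity} identifies the limit as Euclidean. The only notable difference is cosmetic: the paper bases the conjugate heat kernel at $(x_i,1)$ (exploiting that the flows exist on $(-1,1]$, which makes the application of Lemma~\ref{lem: rigidity} immediate since the limit flow then exists strictly past the base time), whereas you base at $(x_i,0)$; your choice is arguably more natural given that the uniform curvature bound is only assumed on $(-1,0]$, and the rigidity step still goes through since a nontrivial shrinking soliton singular at time $0$ would violate $|\Rm_{g_\infty(s)}|\le C$ as $s\to 0^-$.
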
	
\begin{proof}
Perelman's no local collapsing \eqref{eqn: no local collapsing}
together with the curvature bounds \eqref{eqn: uniform curvature bound 2} provide a lower bound on the injectivity radius for each time-slice; see for instance \cite[Chapter 10, Lemma 51]{petersenBook}. So, by Hamilton's compactness theorem, the flows converge smoothly to a smooth limiting Ricci flow $(M_\infty, g_\infty(t), x_\infty)_{t\in[0,1]}$ that satisfies \eqref{eqn: uniform curvature bound 2}.

For each $i\in\mathbb{N}$, let $f_i(y,s) = f_i(x_i,1; y,s)$ be the function defined by 
\begin{equation}
	K_i(x_i, 1; y, s) = (4\pi(1-s))^{-n/2} \exp\{ -f_i(y,s) /4(1-s)\}.
\end{equation}
Here $K_i$ is the heat kernel on $(M_i,g_i,x_i)$ defined in \eqref{eqn: heat kernel heat equation}.
From \eqref{eqn: nu to zero}, we see that
\begin{equation}\label{eqn: w below}
	-\delta_i \leq \W(g_i(s), f_i(s), 1-s) \leq 0\,.
\end{equation}

The uniform curvature bounds \eqref{eqn: uniform curvature bound 2} ensure that the heat kernels are uniformly Gaussian. That is, setting $\tau =1-s$ for $s\in[0,1]$, we have
	\begin{equation}
		\frac{c}{\tau^{n/2}}\exp\Big\{ \frac{-d_{g_i(1)}(x_i,y)^2}{4\tau}\Big\} \leq K_i(x_i,1;y,s) \leq 
		\frac{C}{\tau^{n/2}}\exp\big\{ \frac{-d_{g_i(1)}(x_i,y)^2}{4\tau}\big\}.
	\end{equation}
Since $K_i(x_i,1;y,s)$ is also a solution of the conjugate heat equation \eqref{eqn: conjugate}, we determine that for $s \in (0,1)$, the sequence $\{f_i\}$ converges smoothly on compact sets to a function $f_\infty :M \times (0,1) \to \R$ that satisfies the minimization constraint in the definition of $\mu(g(0),\tau)$.  Thus from \eqref{eqn: w below}, we have
\begin{equation}
	\W(g_\infty(s), f_\infty(s), \tau(s)) = 0.
\end{equation}
We conclude by applying Lemma~\ref{lem: rigidity}.
\end{proof}

Now, Theorem~\ref{prop: uniform existence time and small curvature estimates} follows from Lemma~\ref{lem: limit space is Euclidean} by a standard contradiction argument, which we show below. Before giving the proof, it will be convenient to introduce the following notation. Let $(M,g(t))_{t\in(0,T)}$ be a smooth Ricci flow such that each time-slice is complete with bounded curvature.
Given $x\in M$ and $t\in(0,T)$,  we define the regularity scale $r_{|\Rm|}(x,t)$ to be
\begin{equation}
r_{|\Rm|}(x, t) = \sup \{ r>0 : \sup_{P(x,t,r)} |\Rm| \leq r^{-2} \} .
\end{equation}
Here $P(x,t,r) = B_{g(t)} (x,r) \times (t-r^2, t]$ is a parabolic cylinder.

\begin{proof}[Proof of Theorem~\ref{prop: uniform existence time and small curvature estimates}] 
In order to prove the theorem, it suffices to prove the following claim:

\noindent{\it Claim:}	Fix $n \in \mathbb{N}$ and $\ETA>0$. There exists $\delta=\delta(n,\ETA)$ such that if $(M,g(t))_{t\in(0,T]}$ satisfying $\nu(g(0), 2T)\geq -\delta $, then
	\begin{equation}
	r_{|\Rm|}(x,T)^2 \geq T/\ETA.
	\end{equation}

Before proving the claim, let us see how it implies the theorem.
 Let $T_{\max}>0$ be the maximal existence time for the Ricci flow with $g(0)=g$, and let $T_0 = \min\{1, T_{\max}\}$. For any $x\in M$ and $t \in(0 ,T_0)$,  we apply the claim to find that \eqref{eqn: tiny curvature bounds 0} holds at $(x,t)$. 
Finally, recall that if $T_{\max}<+\infty$, 
then $\sup_M |\Rm| \to +\infty$ 
as $t\to T_{\max}$. So, we conclude that $T_{\max}>T_0$, and thus $T_0 =1.$ 
This concludes the proof of the theorem.

Let us now prove the claim.
	Up to rescaling the flow and translating in time, it is equivalent to show that if $(M,g(t))_{t \in (-1,0]}$ satisfies $\nu(g(-1), 2)\geq -\delta$, then $r_{|\Rm|}(x,0)^2 \geq 1/\ETA$. Suppose for the sake of contradiction that the theorem fails. Then we may find a sequence of flows $\{(M_i, g_i(t))_{t \in [-1,0]}\}$
	with $\nu(g_i(-1),2) \geq -\delta_i$ for a sequence $\delta_i \to 0$, but 
	 $\inf\{ r_{|\Rm|}(x,0)^2 : x\in M_i \} < 1/\ETA.$ 
Let $x_i \in M_i$ be chosen so that 
\begin{equation}
 r_{|\Rm|}(x_i,0)^2 \leq 2 \inf\{ r_{|\Rm|}(x,0)^2 : x\in M_i \}
 \end{equation}
 and set $\rho_i^2 =\ETA  r_{|\Rm|}(x_i,0)^2/2 <1.$ Then the rescaled flow
 $
 \tilde{g}_i(t) = \rho_i^{-2} g_i(\rho_i^2 t)
 $
 is defined on $[-\rho_i^{-2}, 0]$ and satisfies $\nu(\tilde{g}_i(-\rho_i^{-2}), 2\rho_i^{-2}) \geq -\delta_i$.  In particular, thanks to the monotonicity of the Perelman entropy, \eqref{eqn: nu to zero} holds with $\tilde{g}_i$ replacing $g_i$.
  Furthermore, with respect to the rescaled metric, 
 \begin{equation}\label{eqn: reg scale}
 	r_{|\Rm|}(x_i,0)^2 =2/\ETA
 \end{equation}
 and $r_{|\Rm|}(y,0)^2 \geq 1/\ETA$ for all $y \in M_i$. This latter fact implies, in particular, that $ |\Rm_{\tilde{g}_i(t)}| \leq \ETA$
for all $(y,t) \in M_i \times [-1,0]$. 
Applying Lemma~\ref{lem: limit space is Euclidean}, we see that the flows converge smoothly to the constant Euclidean flow. On the other hand, the regularity scale \eqref{eqn: reg scale} passes to the limit, and we reach a contradiction.
	\end{proof}

\subsection{Applications of  Theorem~\ref{prop: uniform existence time and small curvature estimates}}

Finally, we have two direct applications of Theorem~\ref{prop: uniform existence time and small curvature estimates}. See also \cite{Wang2020} for applications of pseudolocality for a localized entropy. These applications could in fact be understood without most of the structure of Theorem \ref{global-thm}. The first is a finiteness theorem and is closely related to the pseudolocality finiteness in \cite{KleinerLott} and the $\epsilon$-regularity of \cite{HeinNaber}:
\begin{theorem}[Finiteness theorem]\label{thm: finiteness theorem}
	Fix $n\geq 2.$ There exists $\delta_0=\delta_0(n)>0$ such that for any $\delta \leq \delta_0$, and for any positive $C_0,\tau_0, V$,
 the space  $\mathcal{M}_{\delta, C_0,\tau_0, V}$ of compact Riemannian manifolds satisfying 
\begin{align*}
R\geq -C_0, \qquad \nu(g,\tau_0)\geq -\delta, \qquad \vol_g(M) \leq V
\end{align*}
contains finitely many diffeomorphism types.
\end{theorem}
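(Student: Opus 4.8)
The plan is to flow the (weak) initial data by the Ricci flow for a definite amount of time, so that Theorem~\ref{prop: uniform existence time and small curvature estimates} together with Perelman's no-local-collapsing upgrade the scalar and entropy lower bounds into uniform control on the full curvature tensor, its covariant derivatives, the injectivity radius, and the diameter; the Cheeger--Gromov $C^\infty$ compactness theorem then finishes the argument. Since the diffeomorphism type of a closed manifold is unchanged under rescaling of the metric, I would first normalize the scale: given $(M,g)\in\M_{\delta,C_0,\tau_0,V}$, set $\tilde g=(2/\tau_0)g$, so that $\nu(\tilde g,2)=\nu(g,\tau_0)\geq-\delta$ by the scaling relation in Remark~\ref{rmk: hyp under rescaling}, while $R_{\tilde g}\geq-\bar C$ and $\vol_{\tilde g}(M)\leq\bar V$ with $\bar C=(\tau_0/2)C_0$ and $\bar V=(2/\tau_0)^{n/2}V$. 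It then suffices to produce $\delta_0=\delta_0(n)$ such that the rescaled family contains finitely many diffeomorphism types whenever $\delta\leq\delta_0$.

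First, apply Theorem~\ref{prop: uniform existence time and small curvature estimates} with $\ETA=1$ to obtain $\delta_0=\delta_0(n)$ so that, for $\delta\leq\delta_0$, the Ricci flow $\tilde g(t)$ with $\tilde g(0)=\tilde g$ exists on $(0,1]$ and satisfies $|\Rm_{\tilde g(t)}|\leq 1/t$; by Shi's estimates (Remark~\ref{rmk: derivs}) one also has $|\na^k\Rm_{\tilde g(t)}|\leq C(n,k)\,t^{-1-k/2}$ for all $k$. Freeze the flow at $t=1/2$: then $|\Rm_{\tilde g(1/2)}|\leq 2$ and $|\na^k\Rm_{\tilde g(1/2)}|\leq C_k(n)$ for all $k$. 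Since lower scalar curvature bounds are preserved under the flow, $R_{\tilde g(t)}\geq-\bar C$, so the volume form satisfies $d\vol_{\tilde g(t)}\leq e^{\bar C t}\,d\vol_{\tilde g(0)}$ and hence $\vol_{\tilde g(1/2)}(M)\leq e^{\bar C/2}\bar V=:\bar V_1$.

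Next I would extract an injectivity radius lower bound and a diameter bound at time $t=1/2$. As $t=1/2$ lies in the existence interval $(0,1)$, Perelman's no-local-collapsing \eqref{eqn: no local collapsing} applies with $T=1$, and its constant $\kappa$ depends only on $n$ and $\nu(\tilde g(0),2)\geq-\delta_0$, hence only on $n$. Since $|R_{\tilde g(1/2)}|\leq C(n)$, choosing $r_0=r_0(n)\in(0,1/2)$ with $r_0^{-2}\geq C(n)$ gives $\vol_{\tilde g(1/2)}(B(x,r_0))\geq\kappa(n)r_0^n=:v_0(n)>0$ for every $x\in M$; combined with $|\Rm_{\tilde g(1/2)}|\leq 2$, the Cheeger--Gromov--Taylor estimate yields $\text{inj}_{\tilde g(1/2)}(x)\geq i_0(n)>0$. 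For the diameter, take a maximal $r_0$-separated subset $\{x_j\}_{j=1}^N\subset M$: the balls $B(x_j,r_0/2)$ are pairwise disjoint and, again by no-local-collapsing, each has volume at least $\kappa(n)(r_0/2)^n=:v_1(n)$, so $N\leq\bar V_1/v_1$; since the $B(x_j,r_0)$ cover the connected manifold $M$, we conclude $\text{diam}_{\tilde g(1/2)}(M)\leq 4r_0 N\leq D$ with $D=D(n,\bar C,\bar V)$.

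Finally, the class of closed Riemannian $n$-manifolds $(N,h)$ with $|\Rm_h|\leq 2$, $|\na^k\Rm_h|\leq C_k(n)$ for all $k$, $\text{inj}_h\geq i_0(n)$, and $\text{diam}_h\leq D$ is precompact in the $C^\infty$ topology by the Cheeger--Gromov compactness theorem; in particular any sequence in it subconverges in $C^\infty$ to a smooth closed manifold, and $C^\infty$-close closed manifolds are diffeomorphic, so this class contains only finitely many diffeomorphism types. Since $(M,\tilde g(1/2))$ lies in this class for every $(M,g)$ in the rescaled family, and the diffeomorphism type of $M$ coincides with that of $(M,\tilde g(1/2))$ and is scale-invariant, $\M_{\delta,C_0,\tau_0,V}$ has only finitely many diffeomorphism types. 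The only genuinely nontrivial input is Theorem~\ref{prop: uniform existence time and small curvature estimates}, i.e.\ the passage from the scalar-plus-entropy lower bounds to uniform Ricci flow existence with scale-invariant curvature control; granting that, the remaining work is a routine Ricci-flow-smoothing plus Cheeger--Gromov-compactness argument, and the one point requiring care is the bookkeeping that $\delta_0$, $\kappa$, $i_0$, $r_0$, and the $C_k$ depend on $n$ alone, while $\bar C$, $\bar V_1$, and $D$ are allowed to depend also on $C_0$, $\tau_0$, $V$.
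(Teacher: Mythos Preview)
Your proof is correct and follows precisely the approach the paper indicates: the paper's own proof is a one-sentence pointer to the Kleiner--Lott/Perelman pseudolocality finiteness argument, with Perelman's pseudolocality replaced by Theorem~\ref{prop: uniform existence time and small curvature estimates} and volume control coming from the almost-monotonicity \eqref{eqn: volume forms}. You have simply written out this argument in full---rescale, flow, freeze at a fixed positive time, harvest uniform curvature/derivative/injectivity-radius/diameter bounds, and invoke Cheeger--Gromov compactness---with the correct bookkeeping on which constants depend only on $n$ and which may depend on $C_0,\tau_0,V$.
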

\begin{proof}The proof is analogous to \cite[Theorem 37.1]{KleinerLott} which was proved by Perelman \cite[Remark 10.5]{perelman2002entropy} using Perelman's pseudolocality. In our case, we replace the use of Perelman's pseudolocality by Proposition~\ref{prop: uniform existence time and small curvature estimates} under assumption on entropy and the almost-monotonicity of the volume \eqref{eqn: volume forms}.
\end{proof}

We also show that a compact Riemannian manifold with entropy and scalar curvature lower bounds admits a metric of nonnegative scalar curvature.
\begin{theorem}
	Fix $n\geq 2.$ There exists $\delta_0 = \delta_0(n)>0$ such that  for any positive $\tau_0$ and $V$, there exists $\e = \e (\delta_0, \tau_0, V)$ such that if $(M,g)$ is a 
	\begin{equation}
		R \geq -\e, \qquad \nu(g, \tau_0) \geq \delta_0, \qquad \vol_g(M) \leq V.
	\end{equation}
	Then $M$ admits a metric of nonnegative scalar curvature.
\end{theorem}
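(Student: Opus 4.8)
The plan is to argue by contradiction, using Theorem~\ref{prop: uniform existence time and small curvature estimates} to run the Ricci flow for a definite time and then extracting a smooth limit, in the same spirit as Lemma~\ref{lem: limit space is Euclidean}. Fix $\tau_0, V>0$ and set $\delta_0 = \delta_0(n)$ to be the constant produced by Theorem~\ref{prop: uniform existence time and small curvature estimates} for, say, $\ETA = 1$. Suppose the conclusion fails for every $\e>0$; then there is a sequence of closed Riemannian manifolds $(M_i, g_i)$ with $R_{g_i}\geq -\e_i$ for some $\e_i\to 0$, with $\nu(g_i,\tau_0)\geq -\delta_0$ and $\vol_{g_i}(M_i)\leq V$, none of which admits a metric of nonnegative scalar curvature. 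Replacing $g_i$ by $(2/\tau_0)g_i$ — a rescaling that preserves the property of admitting a metric of nonnegative scalar curvature and that gives $\nu(\cdot,2)\geq -\delta_0$, $R\geq -\e_i'$ with $\e_i' := (\tau_0/2)\e_i\to 0$, and $\vol\leq (2/\tau_0)^{n/2}V$ — I may assume $\tau_0 = 2$.

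Next I would run the flow. By Theorem~\ref{prop: uniform existence time and small curvature estimates}, each $(M_i,g_i)$ admits a Ricci flow $g_i(t)$, $t\in(0,1]$, with $g_i(0)=g_i$ and $|\Rm_{g_i(t)}|\leq 1/t$; in particular $|\Rm_{g_i(1)}|\leq 1$, and Shi's estimates (Remark~\ref{rmk: derivs}) bound all covariant derivatives of the curvature of $g_i(1)$ in terms of $n$ alone. The lower scalar curvature bound is preserved along the flow by \eqref{eqn: scalar lower bound flow2}, so $R_{g_i(1)}\geq -\e_i'\to 0$ (indeed the maximum principle applied to $\partial_t R \geq \Delta R + \frac{2}{n}R^2$ improves this to $R_{g_i(1)}\geq -n\e_i'/(n+2\e_i')$). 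The volume estimate \eqref{eqn: volume forms} gives $\vol_{g_i(1)}(M_i)\leq 2(2/\tau_0)^{n/2}V$ for $i$ large, and Perelman's no-local-collapsing \eqref{eqn: no local collapsing} — applied at a fixed scale on which $R_{g_i(1)}$ is controlled by the curvature bound, using $\nu(g_i(1),1)\geq \nu(g_i,2)\geq -\delta_0$ — yields a uniform non-collapsing constant $\kappa=\kappa(n)$. Combining this with the uniform curvature bound and the uniform volume bound, the metrics $g_i(1)$ have uniformly bounded geometry: bounded curvature and all its derivatives, a uniform lower bound on the injectivity radius, and (covering $M_i$ by a controlled number of non-collapsed balls) a uniform upper bound on the diameter.

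Then I would invoke the Cheeger--Gromov compactness theorem: a subsequence of $(M_i, g_i(1))$ converges in the $C^\infty$ topology to a closed Riemannian manifold $(M_\infty, g_\infty)$, and in particular $M_i$ is diffeomorphic to $M_\infty$ for all large $i$. Passing the inequality $R_{g_i(1)}\geq -\e_i'$ to the limit gives $R_{g_\infty}\geq 0$, and pulling $g_\infty$ back to $M_i$ via the diffeomorphism furnished by the convergence produces, for large $i$, a metric of nonnegative scalar curvature on $M_i$ — a contradiction. (For $n=2$ one can bypass the flow: Gauss--Bonnet forces $\chi(M_i)\geq 0$ as soon as $\e$ is small relative to $V$, and every closed surface with nonnegative Euler characteristic carries a metric of nonnegative curvature; but the argument above also covers $n=2$.)

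The step that needs the most care is the verification of uniformly bounded geometry for the time-one slices $g_i(1)$ — concretely, extracting the non-collapsing constant from \eqref{eqn: no local collapsing} (including the harmless time-rescaling needed to apply it at $t=1$) and combining it with the volume bound to get a uniform diameter bound, so that Cheeger--Gromov compactness applies on a fixed underlying manifold. Everything else — preservation/improvement of the lower scalar bound, the volume comparison, and Shi's derivative estimates — is a routine consequence of Theorem~\ref{prop: uniform existence time and small curvature estimates} and the standard Ricci flow evolution inequalities recalled in Section~\ref{subsec: Ricci Flows}.
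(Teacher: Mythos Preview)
Your proposal is correct and follows essentially the same approach as the paper: argue by contradiction, rescale to normalize $\tau_0$, apply Theorem~\ref{prop: uniform existence time and small curvature estimates} to run the Ricci flow for unit time, and use Cheeger--Gromov compactness of the time-one slices to extract a closed limit with $R\geq 0$, contradicting the assumption via diffeomorphism stability. Your write-up is in fact more detailed than the paper's, which simply asserts ``uniformly bounded geometry, $g_i(1)$-diameter uniformly bounded above'' without spelling out the no-local-collapsing and volume-to-diameter steps you make explicit; your choice to rescale to $\tau_0=2$ (rather than the paper's $\tau_0=1$) is also the more careful normalization for invoking Theorem~\ref{prop: uniform existence time and small curvature estimates} as stated.
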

\begin{proof}
Suppose by way of contradiction that the claim is false, so we may find a sequence of compact Riemannian manifolds $(M_i, g_i)$	such that $\nu(g_i, \tau_0) \geq -\delta_0$, $\vol_{g_i}(M_i) \leq V$, and $R_{g_i} \geq -1/i$  and $M_i$ does not admit a metric of nonnegative scalar curvature. Up to rescaling each metric, we may assume that without loss of generality $\tau_0  =1$. Applying Theorem~\ref{prop: uniform existence time and small curvature estimates}, we obtain a sequence of Ricci flows $(M_i, g_i(t))_{t \in (0,1]}$, such that the sequence $(M_i, g_i(1))$ has uniformly bounded geometry, $g_i(1)$-diameter uniformly bounded above, and satisfies $R_{g_i(1)} \geq -1/i.$ So, up to a subsequence, $(M_i, g_i(1))$ converges in the Cheeger-Gromov sense to a compact Riemannanian manifold $(M,g)$ with $R_{g} \geq 0$ and $\vol_g(M) \leq V$. In particular, for $i$ sufficiently large, $M_i$ is diffeomorphic to $M$. This contradicts the assumption that each $M_i$ does not admit a metric of nonnegative scalar curvature.
\end{proof}

\medskip

\subsection{Basic Ricci flow estimates}

In the final subsection of this preliminaries section, we give two further basic estimates that will be needed in the paper. 

First, the following lemma is a consequence of the proof of Theorem~\ref{prop: uniform existence time and small curvature estimates} which shows that the $g(1)$ ball of radius $16$ is smoothly close to the Euclidean ball of radius $16,$ provided that the entropy is chosen to be sufficiently small; see also \cite[Theorem 1.2]{Wang2020}. This observation will allow us to compare the metrics $g=g(0)$ and $g_{euc}$ by way of comparing $g(0)$ and $g(1)$, and will be used repeatedly throughout the paper.  

\begin{lemma}\label{rmk: good charts under regularity scale}
	{\rm
	Given any fixed $\e>0$, we may choose $\delta>0$ and $\ETA>0$ sufficiently small such that if \eqref{eqn: entropy lower bounds2} and \eqref{eqn: tiny curvature bounds 0} hold, then for any $x\in M$ and $t\in(0,1]$, we may find a
	diffeomorphism $\phi:B_{g(t)}(x_0,16t^{1/2})\to \Omega\subset \R^n$, with inverse $\phi = \psi^{-1}$ such that $\psi(x_0)=0$ and
\begin{equation}
		(1-\e) g_{euc}  \leq   \phi^*g(t)\leq (1+\e)g_{euc} 
\end{equation}
for all $x\in \Omega$. 
	In particular, 
	\begin{equation}\label{eqn: under reg scale euclidean volumes}
		(1-\e)\om_n r^n \leq \vol_{g(t)}(B_{g(t)}(x,r))\leq(1+\e)\om_n r^n
	\end{equation}
	for any $r\in (0,16t^{1/2}).$
	}
\end{lemma}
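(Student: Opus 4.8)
The plan is to deduce the lemma from the proof of Theorem~\ref{prop: uniform existence time and small curvature estimates} by a contradiction-and-compactness argument, combined with the standard construction of harmonic (or geodesic normal) coordinates on a ball with small scale-invariant curvature bounds and a non-collapsing lower bound. First I would observe that, by the scaling $\tilde g(t) = t^{-1} g(t\cdot)$, it suffices to treat the case $t = 1$: one must produce a diffeomorphism $\phi$ on $B_{g(1)}(x_0, 16)$ with $(1-\e)g_{euc} \le \phi^* g(1) \le (1+\e)g_{euc}$, and then rescale. By Theorem~\ref{prop: uniform existence time and small curvature estimates}, choosing $\delta, \ETA$ small forces $|\Rm_{g(t)}| \le \ETA/t$ for all $t \in (0,1]$, and by Remark~\ref{rmk: derivs} all higher derivatives of curvature are controlled, $|\na^k \Rm_{g(1)}| \le C(n,k,\ETA)$, on the unit-time slice. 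Moreover Perelman's no-local-collapsing \eqref{eqn: no local collapsing}, applied with the entropy lower bound $\nu(g(0),2) \ge -\delta$, gives a lower bound $\vol_{g(1)}(B_{g(1)}(x_0, r)) \ge \kappa(n) r^n$ on balls of definite size. Together these give uniform $C^\infty$ control and non-collapsing on $B_{g(1)}(x_0, 16)$.

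Next I would run the contradiction argument directly at the level of the lemma, which is cleanest. Suppose the lemma fails: there is $\e_0 > 0$ and a sequence of bounded-curvature Ricci flows $(M_i, g_i(t))_{t \in (0,1]}$ with $\nu(g_i(0), 2) \ge -\delta_i$, $\delta_i \to 0$, and points $x_i \in M_i$ such that no diffeomorphism $\phi: B_{g_i(1)}(x_i, 16) \to \Omega_i \subset \R^n$ satisfies the two-sided bound with constant $\e_0$. Applying Theorem~\ref{prop: uniform existence time and small curvature estimates} (with a fixed small $\ETA$) gives $|\Rm_{g_i(t)}| \le \ETA/t$; combined with no-local-collapsing this yields a uniform injectivity radius lower bound at time $t=1$ (as in \cite[Chapter 10]{petersenBook}), so Hamilton's compactness theorem applies. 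Up to a subsequence, $(M_i, g_i(t), x_i)_{t\in(0,1]} \to (M_\infty, g_\infty(t), x_\infty)$ in the pointed $C^\infty$ Cheeger--Gromov sense, and by Lemma~\ref{lem: limit space is Euclidean} (run on the interval $(0,1]$ in place of $(-1,0]$, which is permissible after a trivial time-translation, since the curvature bound $|\Rm| \le \ETA$ on the relevant time interval is all that is used there) the limit is the constant Euclidean flow $(\R^n, g_{euc}, 0)$.

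Now smooth Cheeger--Gromov convergence to flat space means precisely that for $i$ large there are diffeomorphisms $F_i$ from a fixed Euclidean ball, say $B(0, 20) \subset \R^n$, onto neighborhoods of $x_i$ in $(M_i, g_i(1))$ with $F_i^* g_i(1) \to g_{euc}$ in $C^\infty_{loc}$; in particular $\|F_i^* g_i(1) - g_{euc}\|_{C^0(B(0,20))} \to 0$. Since $g_i(1)$-distances also converge, $F_i(B(0,20))$ contains $B_{g_i(1)}(x_i, 16)$ for $i$ large, and setting $\phi_i = F_i^{-1}$ restricted there gives a diffeomorphism onto an open subset $\Omega_i \subset B(0,20)$ with $(1-\e_0) g_{euc} \le \phi_i^* g_i(1) \le (1+\e_0) g_{euc}$ for $i$ large — one can also translate $F_i$ so that $\phi_i(x_i) = 0$. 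This contradicts the choice of the sequence, proving the rescaled ($t=1$) statement. The bound $\eqref{eqn: under reg scale euclidean volumes}$ follows immediately: the diffeomorphism $\phi$ with $(1\pm\e)$-pinched pullback metric distorts both volumes and distances by factors between $(1-\e)^{n/2}$ and $(1+\e)^{n/2}$, so after adjusting $\e$ one gets $(1-\e)\om_n r^n \le \vol_{g(t)}(B_{g(t)}(x,r)) \le (1+\e)\om_n r^n$ for $r \in (0, 16 t^{1/2})$; the general $t$ case is recovered by undoing the parabolic rescaling. The main obstacle is largely bookkeeping: one must be careful that the radius $16$ (versus $16 t^{1/2}$ after scaling) is genuinely below the regularity/injectivity scale so that Hamilton compactness and the passage to Euclidean charts actually cover the whole ball $B_{g(1)}(x_0,16)$ uniformly in $i$ — this is exactly what the uniform curvature and non-collapsing bounds from Theorem~\ref{prop: uniform existence time and small curvature estimates} and Perelman's theorem guarantee, once $\delta$ and $\ETA$ are taken small enough depending only on $n$ and $\e$.
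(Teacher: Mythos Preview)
Your proposal is correct and follows the same approach the paper implicitly takes: the paper gives no separate proof of this lemma, stating only that it is ``a consequence of the proof of Theorem~\ref{prop: uniform existence time and small curvature estimates},'' and your contradiction-and-compactness argument via Lemma~\ref{lem: limit space is Euclidean} is exactly how one unpacks that remark. One minor point of bookkeeping: Lemma~\ref{lem: limit space is Euclidean} as stated assumes a flow on $(-1,1]$ with uniform curvature on $(-1,0]$, whereas your sequence lives on $[0,1]$ with curvature blowing up like $\ETA/t$; to invoke it cleanly you should restrict to, say, $t\in[1/2,1]$ where $|\Rm|\le 2\ETA$ uniformly and $\nu(g_i(1/2),1)\ge -\delta_i$ by monotonicity, then time-translate and rescale --- but this is exactly the ``trivial time-translation'' you allude to, and the proof of Lemma~\ref{lem: limit space is Euclidean} adapts without change.
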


The second fact contained in this subsection is the  following elementary lemma using the scale invariant estimates \eqref{eqn: tiny curvature bounds 0} to bound the evolution of the metric from one dyadic scale to the next. We will make use of this lemma in Section~\ref{sec: decomposition}.

 \begin{lemma}\label{lem: bad bound} Fix $n\geq 2$ 
 and $\beta\in(0,1/4)$. There exists $\ETA=\ETA(n,\beta)>0$ such that the following holds. Let $(M,g(t))_{t \in (0,1]}$ be a Ricci flow satisfying \eqref{eqn: tiny curvature bounds 0}.
For any $x \in M$  and $0<s_1 \leq s_2 \leq 1$, we have 		
\begin{equation}\label{eqn: bad bound general}
		\Big(\frac{s_1}{s_2}\Big)^\beta g(s_1) \leq g(s_2) \leq \Big(\frac{s_1}{s_2}\Big)^{-\beta} g(s_1).
	\end{equation}
Consequently, for any $r >0$ we have 
	\begin{equation}\label{eqn: bad containment of balls}
		B_{g(s_1)} \big(x, rs_1^{1/2}\big) \subseteq B_{g(s_2)}\big(x,rs_2^{1/2} \big) \qquad \forall s_1 \leq s_2.
	\end{equation}
Furthermore, there is a universal constant $\ETA_0$ such that if $\ETA\leq \ETA_0$ then for $x \in B_{g(1)}(p, 2)$, we have
	\begin{equation}\label{eqn: basic containment}
		B_{g(t)}\big(x, 4t^{1/2}\big)\subseteq  B_{g(1)}(p, 4) \qquad \forall t \leq 1/2^5.
	\end{equation}

\end{lemma}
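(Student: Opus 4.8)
The plan is to deduce the metric comparison \eqref{eqn: bad bound general} by integrating the Ricci flow equation from scale $s_1$ to scale $s_2$ while controlling the Ricci curvature via the scale-invariant bound \eqref{eqn: tiny curvature bounds 0}. Fix $x\in M$ and a tangent vector $v\in T_xM$, and consider the function $h(t)=g(t)(v,v)$ on $[s_1,s_2]$. From $\partial_t g = -2\Ric$ and $|\Ric_{g(t)}|\leq |\Rm_{g(t)}|\leq \ETA/t$ we get $|\partial_t \log h(t)| \leq 2n\ETA/t$ (using $|\Ric(v,v)|\leq c_n|\Rm||v|^2_{g(t)}$; one can absorb the dimensional constant into the choice of $\ETA$). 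Integrating from $s_1$ to $s_2$ gives
\begin{equation}
\Big|\log\frac{g(s_2)(v,v)}{g(s_1)(v,v)}\Big| \leq 2n\ETA \log\frac{s_2}{s_1},
\end{equation}
so that $(s_1/s_2)^{2n\ETA}\, g(s_1)(v,v)\leq g(s_2)(v,v)\leq (s_1/s_2)^{-2n\ETA}\,g(s_1)(v,v)$. Choosing $\ETA=\ETA(n,\beta)$ so that $2n\ETA\leq \beta$ yields \eqref{eqn: bad bound general}. The only mild subtlety is that $|\Rm_{g(t)}|$ and $|v|_{g(t)}$ are both measured with the $t$-dependent metric, so one should write the differential inequality directly for $h(t)=g(t)(v,v)$ and bound $|\partial_t h| = 2|\Ric_{g(t)}(v,v)| \leq 2c_n|\Rm_{g(t)}|_{g(t)}\,h(t) \leq (2c_n\ETA/t)h(t)$, which is exactly the Gronwall-type estimate above with $2n$ replaced by $2c_n$.

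Next, \eqref{eqn: bad containment of balls} follows from \eqref{eqn: bad bound general} by a length comparison. Given $y\in B_{g(s_1)}(x, rs_1^{1/2})$, take a $g(s_1)$-curve $\gamma$ from $x$ to $y$ with $L_{g(s_1)}(\gamma) < rs_1^{1/2}$. Since $s_1\leq s_2$, the right inequality in \eqref{eqn: bad bound general} gives $|\dot\gamma|_{g(s_2)} \leq (s_1/s_2)^{-\beta/2}|\dot\gamma|_{g(s_1)}$ pointwise, hence
\begin{equation}
L_{g(s_2)}(\gamma) \leq (s_1/s_2)^{-\beta/2} L_{g(s_1)}(\gamma) < (s_1/s_2)^{-\beta/2} r s_1^{1/2} = r s_1^{(1-\beta)/2} s_2^{\beta/2} \leq r s_2^{1/2},
\end{equation}
the last step using $s_1\leq s_2$ and $\beta<1$ (so $s_1^{(1-\beta)/2}\leq s_2^{(1-\beta)/2}$). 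Thus $d_{g(s_2)}(x,y)< rs_2^{1/2}$, proving \eqref{eqn: bad containment of balls}.

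Finally, for \eqref{eqn: basic containment} we fix $p$, take $x\in B_{g(1)}(p,2)$ and $t\leq 2^{-5}$, and must show $B_{g(t)}(x,4t^{1/2})\subseteq B_{g(1)}(p,4)$. Let $z\in B_{g(t)}(x,4t^{1/2})$ and let $\gamma$ join $x$ to $z$ with $L_{g(t)}(\gamma)<4t^{1/2}$. Applying \eqref{eqn: bad bound general} with $s_1=t$, $s_2=1$ gives $|\dot\gamma|_{g(1)} \leq t^{-\beta/2}|\dot\gamma|_{g(t)}$, so $L_{g(1)}(\gamma)\leq t^{-\beta/2}\cdot 4t^{1/2} = 4t^{(1-\beta)/2}\leq 4\cdot(2^{-5})^{(1-\beta)/2}$. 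Choosing the universal $\ETA_0$ small enough that the corresponding $\beta=\beta(\ETA_0)$ from the first part satisfies $(2^{-5})^{(1-\beta)/2}\leq 2^{-2}$ (e.g. $\beta\leq 1/2$ suffices, since $(2^{-5})^{1/4}=2^{-5/4}<2^{-2}$... wait, $2^{-5/4}\approx 0.42 > 0.25$, so take $\beta$ small enough that $(1-\beta)/2\geq 2/5$, i.e. $\beta\leq 1/5$, giving $(2^{-5})^{2/5}=2^{-2}$), we get $d_{g(1)}(x,z) < 1$, hence $d_{g(1)}(p,z)\leq d_{g(1)}(p,x)+d_{g(1)}(x,z)< 2+1 < 4$. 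This proves \eqref{eqn: basic containment}.

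The bulk of the work is the Gronwall estimate in the first paragraph; the remaining two statements are routine length comparisons. I expect no genuine obstacle — the only point requiring a little care is tracking the dimensional constant $c_n$ relating $|\Ric(v,v)|$ to $|\Rm|_{g(t)}|v|^2_{g(t)}$ and the fact that all norms in the curvature bound are taken with respect to the evolving metric $g(t)$, which is precisely what makes the differential inequality for $\log g(t)(v,v)$ clean.
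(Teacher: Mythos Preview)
Your approach is essentially identical to the paper's: integrate the differential inequality $|\partial_t \log g(t)(v,v)| \leq C_n\ETA/t$ to get \eqref{eqn: bad bound general}, then derive the ball containments by length comparison. The first two parts are fine as written.

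In the last part you have a small muddle. You aim for $d_{g(1)}(x,z)<1$, which forces you into awkward numerics (and into treating $\beta$ as a function of $\ETA_0$ to be chosen). This is unnecessary: since $\beta\in(0,1/4)$ is already part of the hypothesis, your own estimate $d_{g(1)}(x,z)<4t^{(1-\beta)/2}$ at $t\le 2^{-5}$ gives
\[
4\,(2^{-5})^{(1-\beta)/2}\le 4\cdot 2^{-5\cdot 3/8}=2^{1/8}<2,
\]
so $d_{g(1)}(x,z)<2$, and then the triangle inequality yields $d_{g(1)}(p,z)<2+2=4$ (strict, since both summands are strict). This is exactly how the paper argues. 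The role of the ``universal $\ETA_0$'' is simply that any fixed $\beta<1/4$ works here, so one may take $\ETA_0=\ETA(n,\beta_0)$ for, say, $\beta_0=1/8$; $\beta$ is not being tuned after the fact.
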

\begin{proof}
We first show the \eqref{eqn: bad bound general}. Fix $v\in TM$ and consider the function $g(t)(v,v)$. By the scale invariant bounds \eqref{eqn: tiny curvature bounds 0}, we have for all $t>0$,
\begin{equation}
\begin{split}
 -\frac{C_n\lambda}{t} g(v,v)\leq \partial_t g(v,v)&= -2Ric(v,v)\leq  \frac{C_n\lambda}{t} g(v,v).
\end{split}
\end{equation}
for some dimensional constant $C_n$. This can be seen by taking normal coordinate at each point such that $Ric$ is diagonal while $g$ is a identity matrix. Hence, by integrating the function $\log g(v,v)$, we see that for any $0<s\leq t\leq 1$, we have
	\begin{equation}
	\begin{split}
	\left( \frac{s}{t}\right)^{C_n\lambda }g(s)\leq g(t)\leq \left( \frac{t}{s}\right)^{C_n\lambda }g(s).
	\end{split}
	\end{equation}
	 This showed \eqref{eqn: bad bound general} by choosing $\lambda$ sufficiently small.	 


From \eqref{eqn: bad bound general} we directly deduce that for all $r>0$ and $s_1 \leq s_2$ we have
	\begin{equation}\label{eqn: immediate containment}
		B_{g(s_1)} \big(x, rs_1^{1/2}\big) \subseteq B_{g(s_2)}\big(x,rs_1^{1/2}(s_2/s_1)^\beta\big).
	\end{equation}
Then \eqref{eqn: bad containment of balls} follows from \eqref{eqn: immediate containment} because $\beta<1/2$. 
 To see \eqref{eqn: basic containment}, we take $r=4$ and $s_2=1$ in \eqref{eqn: immediate containment} to find
	\begin{equation}
	B_{g(t)}(x, 4t^{1/2})\subseteq B_{g(1)}(x, 4t^{1/2-\beta}) \subseteq B_{g(1)}(x, 2),
	\end{equation}
	where the second containment holds for all $t\leq 1/2^5$ because $\beta<1/4$.
The triangle inequality then ensures that \eqref{eqn: basic containment} holds.
\end{proof}

\bigskip
\section{Integral estimates for Ricci and scalar curvature}\label{sec: Ricci integral estimates}
This section has two main goals. The first is to prove an integral estimate for the Ricci curvature under the hypotheses of Theorem~\ref{thm: main thm, Lp def new}. This key estimate is used in Section~\ref{sec: decomposition} to prove the the Decomposition Theorem~\ref{thm: decomposition theorem}.
The second goal of this section is to prove the integral bounds of scalar curvature in Theorem~\ref{thm: integral scalar bound}, whose proof goes along similar lines to that of Theorem~\ref{thm: integral bounds for Ricci curvature}.

\begin{theorem}[Integral Ricci estimate]\label{thm: integral bounds for Ricci curvature}
	Fix $n\geq 2$, $\e>0$, and $\BETA \in [0,1/2)$. There exists $\delta=\delta(n,\BETA,\e)>0$ such that the following holds. Suppose $(M,g(t))_{t \in (0,1]}$ is a Ricci flow satisfying
	 \begin{align}
\label{eqn: scalar lower bound flow4} 	R_{g(0) } & \geq -\delta \\
 \label{eqn: entropy lower bound flow4}	\nu(g(0),2) & \geq -\delta.
 \end{align}
 Then for any $x \in M$ and any $t \in (0,1]$,
 \begin{align}\label{eqn: main ricci estimate}
		\int_0^t \left(\frac{s}{t}\right)^{-\BETA} \fint_{B_{g(t)}(x,4t^{1/2})} |\Ric_{g(s)}| \, d\vol_{g(s)} \, ds 
		& \leq   \e^2\, .
			\end{align}
\end{theorem}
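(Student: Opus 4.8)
The plan is to reduce to the scale $t=1$, bound the heat-kernel weighted quantity $\int_0^1\!\int_M |\Ric_{g(\sigma)}|^2 K(x,1;y,\sigma)\,d\vol_{g(\sigma)}\,d\sigma$ using the representation formula for the scalar curvature, and then pass to the ball average by Cauchy--Schwarz together with the volume and heat-kernel estimates that follow from the scale-invariant curvature bounds of Theorem~\ref{prop: uniform existence time and small curvature estimates}. For \emph{Step 1} (reduction and setup): under the parabolic rescaling $g(\sigma)\mapsto t^{-1}g(t\sigma)$ the left-hand side of \eqref{eqn: main ricci estimate} is unchanged, while \eqref{eqn: scalar lower bound flow4} and \eqref{eqn: entropy lower bound flow4} are preserved because $t\le 1$ and $\nu(g,\cdot)$ is nonincreasing in its second argument; so we may assume $t=1$. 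Choose $\lambda=\lambda(n,\BETA,\e)>0$ small (to be fixed below). For $\delta$ small in terms of $n$ and $\lambda$, Theorem~\ref{prop: uniform existence time and small curvature estimates} gives that the flow exists on $(0,1]$ with $|\Rm_{g(\sigma)}|\le\lambda/\sigma$; in particular $|R_{g(1)}|\le C_n\lambda$ and $|\Ric_{g(\sigma)}|\le C_n\lambda/\sigma$ pointwise. Shrinking $\lambda,\delta$ further, Lemma~\ref{rmk: good charts under regularity scale} yields $\vol_{g(\sigma)}(B_{g(\sigma)}(x,r))\ge\tfrac12\om_n r^n$ for $r<16\sigma^{1/2}$, and Lemma~\ref{lem: bad bound} yields $\sigma^\beta g(1)\le g(\sigma)\le\sigma^{-\beta}g(1)$ on $(0,1]$ together with $B_{g(\sigma)}(x,4\sigma^{1/2})\subseteq B_{g(1)}(x,4)$, for some $\beta=\beta(n,\lambda)$. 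We will impose at the end that $\lambda$ is small enough that $\beta<1/4$ and
\begin{equation}\label{eqn: plan weight cond}
2\BETA + C_n\lambda + \tfrac{n\beta}{2} < 1 ;
\end{equation}
since $\BETA<1/2$ and $\beta\to 0$ as $\lambda\to 0$, this is possible, and it is the only place the dependence on $\BETA$ enters.

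\emph{Step 2: the weighted $L^2$ estimate for $\Ric$.} Apply the representation formula for the scalar curvature at the base point $(x,1)$. Since $R_{g(0)}\ge-\delta$ and $\int_M K(x,1;y,0)\,d\vol_{g(0)}(y)=1$ by \eqref{eqn: int 1 for heat kernel}, the first term of the formula is $\ge -\delta$; combined with $R_{g(1)}(x)\le C_n\lambda$ this gives
\begin{equation}\label{eqn: plan L2}
\int_0^1\int_M |\Ric_{g(\sigma)}(y)|^2\, K(x,1;y,\sigma)\,d\vol_{g(\sigma)}(y)\,d\sigma\le \tfrac12\big(C_n\lambda+\delta\big).
\end{equation}
We never use the monotonicity of $\W$ directly here; it enters only through Theorem~\ref{prop: uniform existence time and small curvature estimates}, via Lemma~\ref{lem: rigidity}.

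\emph{Step 3: passing to the ball average.} Split $\int_0^1 = \int_0^{1/2}+\int_{1/2}^1$. On $[1/2,1]$ the crude bound $|\Ric_{g(\sigma)}|\le C_n\lambda/\sigma\le 2C_n\lambda$ gives $\int_{1/2}^1 \sigma^{-\BETA}\fint_{B_{g(1)}(x,4)}|\Ric_{g(\sigma)}|\,d\vol_{g(\sigma)}\,d\sigma\le C_n\lambda$. On $(0,1/2]$, Jensen's inequality, the volume lower bound $\vol_{g(\sigma)}(B_{g(1)}(x,4))\ge \sigma^{n\beta/2}\vol_{g(1)}(B_{g(1)}(x,4))\ge c_n\sigma^{n\beta/2}$ (from $g(\sigma)\ge\sigma^\beta g(1)$ and \eqref{eqn: under reg scale euclidean volumes}), and a lower Gaussian bound $K(x,1;y,\sigma)\ge c_n\sigma^{C_n\lambda}$ for $y\in B_{g(1)}(x,4)$ give
\begin{equation}\label{eqn: plan Jensen}
\fint_{B_{g(1)}(x,4)}|\Ric_{g(\sigma)}|\,d\vol_{g(\sigma)}
\le \Big(\fint_{B_{g(1)}(x,4)}|\Ric_{g(\sigma)}|^2\,d\vol_{g(\sigma)}\Big)^{1/2}
\le C_n\,\sigma^{-\frac12\left(C_n\lambda+\frac{n\beta}{2}\right)}\,H(\sigma)^{1/2},
\end{equation}
where $H(\sigma):=\int_M |\Ric_{g(\sigma)}|^2 K(x,1;y,\sigma)\,d\vol_{g(\sigma)}(y)$. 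Multiplying by $\sigma^{-\BETA}$, integrating over $(0,1/2]$, and applying Cauchy--Schwarz in $\sigma$ with \eqref{eqn: plan L2},
\begin{equation}\label{eqn: plan final}
\int_0^{1/2}\sigma^{-\BETA}\fint_{B_{g(1)}(x,4)}|\Ric_{g(\sigma)}|\,d\vol_{g(\sigma)}\,d\sigma
\le C_n\Big(\int_0^{1/2}\sigma^{-2\BETA-C_n\lambda-\frac{n\beta}{2}}\,d\sigma\Big)^{1/2}\big(\tfrac12(C_n\lambda+\delta)\big)^{1/2}
= C(n,\BETA)\,(C_n\lambda+\delta)^{1/2},
\end{equation}
the $\sigma$-integral being finite by \eqref{eqn: plan weight cond}. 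Finally choose $\lambda$, then $\delta$, small (in terms of $n,\BETA,\e$) so that $C(n,\BETA)(C_n\lambda+\delta)^{1/2}\le\e^2/2$ and $C_n\lambda\le\e^2/2$; adding the two pieces gives the bound $\e^2$, and undoing the rescaling yields \eqref{eqn: main ricci estimate}.

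\textbf{Main obstacle.} The crux is the uniform lower Gaussian bound $K(x,1;y,\sigma)\ge c_n\sigma^{C_n\lambda}$ for $y\in B_{g(1)}(x,4)$ and all $\sigma\in(0,1/2]$. For $\sigma$ bounded away from $0$ this is the standard Gaussian lower bound, since the curvature is bounded on $[\sigma,1]$ and the flow is non-collapsed by \eqref{eqn: no local collapsing}. The delicate point is the behavior as $\sigma\to 0$: here one must use that the scale-invariant bound $|\Rm_{g(\sigma)}|\le\lambda/\sigma$ forces the flow to be uniformly almost-Euclidean at every scale, so that the conjugate heat kernel based at $(x,1)$ stays comparable to the Euclidean one up to the mild distortion $\sigma^{\pm C_n\lambda}$ coming from Lemma~\ref{lem: bad bound}; the relevant estimates are those of Hein--Naber and subsequent work on such flows. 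If one prefers to avoid this input, one may instead try to cover $B_{g(1)}(x,4)$ by parabolic-scale balls $B_{g(\sigma)}(z,\sqrt\sigma)$ and apply the representation formula at the base points $(z,2\sigma)$, but then controlling the number of balls against the weight $\sigma^{-\BETA}$ becomes the essential point.
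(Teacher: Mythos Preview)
Your proposal is correct and follows essentially the same route as the paper: reduce to $t=1$, use the evolution of $R$ (equivalently the representation formula) together with $R_{g(0)}\ge -\delta$ and $|R_{g(1)}|\le C_n\lambda$ to get a weighted $L^2$ bound on $\Ric$, then Cauchy--Schwarz in time to pass to the weighted $L^1$ average. Your ``main obstacle'' is exactly what the paper isolates as Proposition~\ref{prop: heat kernel lower bounds}, and it is proved there from Zhang's lower Gaussian bound (Proposition~\ref{prop: Zhang}): controlling the term $\tau^{-1/2}\int_0^\tau \sqrt{s'}\,R(y,t-s')\,ds'$ with $|R_{g(s)}|\le C_n\lambda/s$ produces precisely the factor $\sigma^{C_n\lambda}$ you anticipated, with no need for further input from Hein--Naber.

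Two small differences worth noting. First, the paper packages the kernel bound into an evolving cutoff $\varphi$ (Proposition~\ref{prop: cutoff}) solving the conjugate heat equation with terminal data a cutoff on $B_{g(1)}(x,16)$; pairing $\varphi$ with $(\pa_t-\Delta)R$ (Lemma~\ref{lem: basic observation}) gives \eqref{eqn: plan L2} in one line and avoids the split $[0,1/2]\cup[1/2,1]$. Second, for the volume lower bound $\vol_{g(\sigma)}(B_{g(1)}(x,4))\ge c_n$ the paper uses the scalar-based monotonicity \eqref{eqn: volume forms} (giving a \emph{constant} lower bound, equation~\eqref{eqn: intermed volume constant}) rather than the metric comparison of Lemma~\ref{lem: bad bound}; this removes the $\sigma^{n\beta/2}$ loss in your \eqref{eqn: plan Jensen} and makes the exponent condition \eqref{eqn: plan weight cond} simply $2\theta+\lambda<1$, though your version works equally well.
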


This section is organized in the following way. In Section~\ref{subsec: heat kernel lower bounds}, we prove an almost-Gaussian lower bound for the conjugate heat kernel  and for a cutoff function evolving by the conjugate heat equation. A major tool is a heat kernel estimate due to Zhang (see Proposition~\ref{prop: Zhang} below). In Sections~\ref{subsec: proof of integral ricci estimate} and \ref{sec: scalar integral bound}, we establish Theorem~\ref{thm: integral bounds for Ricci curvature} and Theorem~\ref{thm: integral scalar bound} respectively by integrating the evolution equation for the scalar curvature  \eqref{eqn: evolution of scalar} against suitably chosen functions to which we apply the estimates of Section~\ref{subsec: heat kernel lower bounds}.

By Theorem~\ref{prop: uniform existence time and small curvature estimates}, the hypotheses of Theorem~\ref{thm: integral bounds for Ricci curvature} imply that the scale invariant curvature bounds 
 \begin{equation}\label{eqn: tiny curvature bounds4}
 	|\Rm_{g(t)}| \leq \ETA/t
 \end{equation}
 hold for all $x \in M$ and $t \in (0,1]$ with $\ETA$ as small as desired by choosing $\delta$ sufficiently small.

 \subsection{Heat kernel lower bounds and evolving cutoff function}\label{subsec: heat kernel lower bounds}
In Proposition~\ref{prop: heat kernel lower bounds} below, we establish lower bounds on the heat kernel, and in  Proposition~\ref{prop: cutoff} we prove lower bounds for a cutoff function that evolves by the conjugate heat equation. Both lower bounds become degenerate for small times. Nonetheless, the degeneration occurs in a sufficiently controlled way for our application  in the proofs of Theorems~\ref{thm: integral bounds for Ricci curvature} and \ref{thm: integral scalar bound}.

\begin{proposition}\label{prop: heat kernel lower bounds} Fix $n\geq 2$ and $\ETA>0$. There exist $\delta=\delta(n,\ETA)>0$ and $C=C(n)>0$ such that the following holds. Suppose that $(M,g(t))_{t \in [0,1]}$ is a complete Ricci flow with bounded curvature satisfying \eqref{eqn: scalar lower bound flow4}, \eqref{eqn: entropy lower bound flow4}, and \eqref{eqn: tiny curvature bounds4}. Then for any $0<s<t<1$, letting $\tau = t-s$, we have
	 \begin{equation}\label{eqn: heat kernel lower bound}
	 	K(x,t;y,s) \geq \left(\frac{s}{t}\right)^{\ETA}{C}\tau^{-n/2} \exp\left\{- 4d_{g(t)}(x,y)^2/\tau\right\}\,.
	 \end{equation}
\end{proposition}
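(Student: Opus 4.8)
The core input is Zhang's Gaussian heat kernel estimate, Proposition~\ref{prop: Zhang}, which provides two‑sided Gaussian bounds for the (conjugate) heat kernel of a Ricci flow whose curvature is \emph{uniformly} bounded over a fixed time interval. The obstruction to applying it directly on $[s,t]$ is that the scale‑invariant bound \eqref{eqn: tiny curvature bounds4} only gives the uniform bound $|\Rm_{g(r)}|\le \ETA/s$ on $[s,t]$, so the relevant dimensionless quantity $\ETA(t-s)/s$ is large when $s\ll t$ and Zhang's constants then degenerate. To circumvent this I would first use Theorem~\ref{prop: uniform existence time and small curvature estimates} to note that the entropy bound \eqref{eqn: entropy lower bound flow4} already forces $|\Rm_{g(t)}|\le \eta/t$ on $M\times(0,1]$ for a constant $\eta=\eta(n,\ETA)$ that may be taken as small as we wish; it then suffices to prove \eqref{eqn: heat kernel lower bound} with $(s/t)^{C(n)\eta}$ in place of $(s/t)^{\ETA}$, since we may finally choose $\delta$, hence $\eta$, small enough that $C(n)\eta\le\ETA$.

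The plan is to decompose $[s,t]$ into the dyadic blocks $[t_{j+1},t_j]$ with $t_j=2^{-j}t$ (and a final, shorter block ending at $s$), so that there are $N\le \log_2(t/s)+1$ blocks, and to chain the heat kernel across them using the semigroup property (cf.~\eqref{eqn: conjugate})
\[
K(x,t;y,s)=\int_M \cdots \int_M K(x,t;z_1,t_1)K(z_1,t_1;z_2,t_2)\cdots K(z_{N-1},t_{N-1};y,s)\,d\vol_{g(t_1)}(z_1)\cdots d\vol_{g(t_{N-1})}(z_{N-1}).
\]
On each block, parabolically rescaling by $t_j^{-1/2}$ produces a Ricci flow on a subinterval of $[1/2,1]$ whose curvature is bounded by $2\eta$. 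On such a block, Proposition~\ref{prop: Zhang}, or---in the regime where the intermediate Gaussians are not already exponentially small---the smooth convergence of the rescaled flows to the constant Euclidean flow (Lemma~\ref{lem: limit space is Euclidean}, together with the uniform Gaussian \emph{upper} bounds that localize the kernels), shows that the block's heat kernel differs from the Euclidean heat kernel by a factor in $[1-\epsilon(\eta),1+\epsilon(\eta)]$ with $\epsilon(\eta)\to 0$ as $\eta\to0$. Since the Euclidean heat kernel satisfies the semigroup identity \emph{exactly}, each chaining step then costs only a multiplicative factor $1-\epsilon(\eta)$, so $N$ steps cost at most $(1-\epsilon(\eta))^{N}\ge (s/t)^{C(n)\epsilon(\eta)}$, which is the advertised degeneration.

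Carrying this out requires placing the intermediate points $z_j$ along a minimizing $g(t)$‑geodesic from $x$ to $y$ at the location that balances the successive Gaussian exponents in the sense dictated by Cauchy--Schwarz; this balancing, together with the comparison $(t_{j+1}/t_j)^{\beta}g(t_{j+1})\le g(t_j)\le (t_{j+1}/t_j)^{-\beta}g(t_{j+1})$ of Lemma~\ref{lem: bad bound} used to pass between consecutive metrics $g(t_j)$ (the factors $2^{\pm\beta}=1+O(\eta)$ costing only prefactor errors), keeps the resulting exponential constant at $\le 4$. The one step where the kernel is genuinely far from Euclidean---when $d_{g(t)}(x,y)$ is large compared with $\sqrt{t-s}$---is isolated into a single ``long'' step handled by the honest (non‑perturbative) Gaussian lower bound of Proposition~\ref{prop: Zhang}, at a fixed dimensional cost $c(n)$; the volume lower bounds needed to integrate each step over the relevant balls come from Lemma~\ref{rmk: good charts under regularity scale} (or Perelman's no‑local‑collapsing \eqref{eqn: no local collapsing}). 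Collecting everything gives $K(x,t;y,s)\ge c(n)(s/t)^{C(n)\eta}\tau^{-n/2}\exp\{-4d_{g(t)}(x,y)^2/\tau\}$, and absorbing $c(n)$ into $C$ and taking $\delta$ small yields \eqref{eqn: heat kernel lower bound}.

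I expect the main obstacle to be exactly the bookkeeping that keeps the cumulative loss over the $\sim\log_2(t/s)$ chaining steps of the form $(s/t)^{O(\eta)}$ rather than merely $(s/t)^{O_n(1)}$: this is precisely what forces one to rescale each block so that its curvature is not just bounded but small, to exploit $C^\infty$‑closeness to the Euclidean flow so that every constant entering an individual step (kernel ratio, volume ratio, metric comparison) is within $1\pm\epsilon(\eta)$ of the lossless Euclidean value, and to quarantine the unavoidable single $O(1)$ loss in one tail step. A more routine secondary difficulty is tracking the exponential constant through the Cauchy--Schwarz balancing and the $g(t_j)\leftrightarrow g(t_{j+1})$ distance comparisons to verify it stays below $4$.
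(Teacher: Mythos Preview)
You have misread the hypotheses of Zhang's estimate. Proposition~\ref{prop: Zhang} as stated in the paper does \emph{not} require uniform curvature bounds over a fixed time interval; it assumes only the scalar lower bound \eqref{eqn: scalar lower bound flow4} and the entropy lower bound \eqref{eqn: entropy lower bound flow4}. In exchange, Zhang's bound carries an explicit scalar-curvature integral in the exponent:
\[
K(x,t;y,s)\ge \frac{c}{\tau^{n/2}}\exp\Big\{-\frac{4}{\tau}d_{g(t)}(x,y)^2-\frac{1}{\sqrt\tau}\int_0^\tau\sqrt{s'}\,R(y,t-s')\,ds'-2\delta\Big\}.
\]
The paper's proof is then essentially a one-line computation: the scale-invariant bound \eqref{eqn: tiny curvature bounds4} gives $R(y,t-s')\le\ETA/(t-s')$, so
\[
\frac{1}{\sqrt\tau}\int_0^\tau\sqrt{s'}\,R(y,t-s')\,ds'\;\le\;\ETA\int_0^\tau\frac{ds'}{t-s'}\;=\;\ETA\log\frac{t}{s},
\]
and exponentiating produces exactly the factor $(s/t)^{\ETA}$. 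No dyadic decomposition, no chaining via the semigroup property, no appeal to Lemma~\ref{lem: limit space is Euclidean}, no placement of intermediate points along geodesics.

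Your chaining strategy is a reasonable fallback had one only possessed a uniform-curvature heat-kernel bound, and with enough care it could plausibly be pushed through. But it is considerably more delicate than your sketch indicates: restricting the semigroup integrals to balls around chosen $z_j$ costs a volume factor at each of the $\sim\log_2(t/s)$ steps; tracking the exponential constant through that many Cauchy--Schwarz balancings together with the metric comparisons of Lemma~\ref{lem: bad bound} is genuinely tricky; and cleanly isolating the single ``long'' step requires matching Gaussian upper bounds you have not stated. All of this is bypassed by reading Zhang's statement correctly: the scalar-curvature integral there is precisely the mechanism that absorbs the degeneration as $s/t\to 0$.
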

\begin{remark}
	{\rm The hypothesis \eqref{eqn: scalar lower bound flow4} is not actually necessary in Proposition~\ref{prop: heat kernel lower bounds}, but for convenience we assume it to allow for a simpler proof; in particular we can directly call upon Zhang's heat kernel lower bound Proposition~\ref{prop: Zhang} below.
	}
\end{remark}
Before proving Proposition~\ref{prop: heat kernel lower bounds}, let us state its main consequence. Fix $t \in(0,1]$ and $r^2 \geq t$. Consider a smooth function $\vphi: M\times \{t\}$ such that 
\begin{equation}
	\label{eqn: cutoff}
	\vphi(y) = \begin{cases}
	1 & \text{ in } B_{g(t)}(x_0, 8r)\\
	0 & \text{ in } M\setminus B_{g(t)}(x_0,16r)\,.
\end{cases}
\end{equation}
For $s\in[0,t)$, let $\vphi(y,s)$ be the solution of the conjugate heat equation with terminal data given by $\vphi(y,t)$:
\begin{equation}\label{eqn: evolution of cutoff}
	\begin{cases}
		(\pa_s +\Delta - R) \vphi(y,s) = 0 & \text{ in } M\times [0,t)\\
		\vphi(y,t) = \vphi(y).
	\end{cases}
\end{equation}

Proposition~\ref{prop: cutoff} below shows that $\vphi(y,s)$ behaves sufficiently like a cutoff function for all $s\in (0,1]$ so that we can derive useful estimates.

\begin{proposition}\label{prop: cutoff}
	Fix $n\geq 2$. There exist $\delta=\delta(n)>0$ and $C=C(n)>0$ such that the following holds. Suppose that $(M,g(t))_{t \in [0,1]}$ is a Ricci flow for satisfying \eqref{eqn: scalar lower bound flow4}, \eqref{eqn: entropy lower bound flow4}, and \eqref{eqn: tiny curvature bounds4} for $\ETA\leq 1$.
	Then we have 
	\begin{equation}\label{eqn: cutoff lower bound}
		\vphi(y,s) \geq C\left(\frac{s}{t}\right)^{\ETA}
	\end{equation}
	for all $(y,s) \in B_{g(t)}(x_0,4r) \times (0,t)$.

\end{proposition}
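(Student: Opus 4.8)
The plan is to deduce the lower bound for the evolving cutoff $\vphi(y,s)$ directly from the heat kernel lower bound of Proposition~\ref{prop: heat kernel lower bounds} by representing $\vphi(y,s)$ as an integral against the conjugate heat kernel. Recall that $\vphi(\cdot,t)$ is supported in $B_{g(t)}(x_0,16r)$, is identically $1$ on $B_{g(t)}(x_0,8r)$, and evolves backward by the conjugate heat equation. Since $K(x,t;y,s)$ is the kernel for the conjugate heat equation with singularity at $(x,t)$, we have the representation
\begin{equation}
\vphi(y,s) = \int_M K(x,t;y,s)\, \vphi(x,t)\, d\vol_{g(t)}(x) \geq \int_{B_{g(t)}(x_0,8r)} K(x,t;y,s)\, d\vol_{g(t)}(x)\,,
\end{equation}
using $\vphi(x,t)\geq 0$ everywhere and $\vphi(x,t)=1$ on $B_{g(t)}(x_0,8r)$.

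First I would fix $(y,s) \in B_{g(t)}(x_0,4r)\times(0,t)$ and restrict the integral further to the smaller ball $B_{g(t)}(x_0,4r) \subset B_{g(t)}(x_0,8r)$, or more conveniently to $B_{g(t)}(y, 4r)\cap B_{g(t)}(x_0,8r)$, which contains a definite fraction of a ball. For $x$ in this region, $d_{g(t)}(x,y)\leq Cr$, so Proposition~\ref{prop: heat kernel lower bounds} gives
\begin{equation}
K(x,t;y,s) \geq \Big(\frac{s}{t}\Big)^{\ETA} C\,\tau^{-n/2}\exp\{-4d_{g(t)}(x,y)^2/\tau\} \geq \Big(\frac{s}{t}\Big)^{\ETA} C\,\tau^{-n/2}\exp\{-Cr^2/\tau\}\,,
\end{equation}
where $\tau = t-s \leq t \leq 1 \leq r^2$ (using the hypothesis $r^2 \geq t$). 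Next I would integrate this lower bound over a ball of radius comparable to $\tau^{1/2}$ rather than $r$, since the Gaussian factor $\exp\{-Cr^2/\tau\}$ is tiny when $\tau \ll r^2$. The cleaner route: integrate $K(x,t;y,s)$ over $B_{g(t)}(y,\tau^{1/2})$, which lies inside $B_{g(t)}(x_0,8r)$ because $d_{g(t)}(x_0,y)<4r$ and $\tau^{1/2}\leq 1 \leq r$, so $B_{g(t)}(y,\tau^{1/2})\subset B_{g(t)}(x_0,5r)\subset B_{g(t)}(x_0,8r)$. On this ball $d_{g(t)}(x,y)\leq \tau^{1/2}$, hence the exponential factor is bounded below by $e^{-4}$, and by the volume estimate \eqref{eqn: under reg scale euclidean volumes} (valid under \eqref{eqn: tiny curvature bounds4}, since $\tau^{1/2}\leq 1 \leq 16 t^{1/2}$... — more carefully, one uses \eqref{eqn: bad bound general} / Lemma~\ref{rmk: good charts under regularity scale} to get $\vol_{g(t)}(B_{g(t)}(y,\tau^{1/2}))\geq c_n \tau^{n/2}$). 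Therefore
\begin{equation}
\vphi(y,s) \geq \int_{B_{g(t)}(y,\tau^{1/2})} K(x,t;y,s)\, d\vol_{g(t)}(x) \geq \Big(\frac{s}{t}\Big)^{\ETA} C\,\tau^{-n/2}\, e^{-4}\cdot c_n\tau^{n/2} = C'\Big(\frac{s}{t}\Big)^{\ETA}\,,
\end{equation}
which is exactly \eqref{eqn: cutoff lower bound}.

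I do not expect a serious obstacle here; the one point requiring a little care is the containment $B_{g(t)}(y,\tau^{1/2})\subset \{\vphi(\cdot,t)=1\}$ and the lower volume bound for $B_{g(t)}(y,\tau^{1/2})$. The containment follows from $d_{g(t)}(x_0,y)<4r$, $\tau^{1/2}\leq 1\leq r$ and the choice of the $8r$-radius in \eqref{eqn: cutoff}; the volume bound follows from the scale-invariant curvature bound \eqref{eqn: tiny curvature bounds4} and the standard volume comparison packaged in Lemma~\ref{rmk: good charts under regularity scale} (taking $\ETA$ small enough), since $\tau^{1/2}\leq 1$. Everything else is just bookkeeping of constants, and the factor $(s/t)^{\ETA}$ is inherited directly and only from Proposition~\ref{prop: heat kernel lower bounds}, so no new loss occurs. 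The hypothesis $\ETA\leq 1$ is used only to keep $(s/t)^\ETA$ from collapsing too fast and to invoke Proposition~\ref{prop: heat kernel lower bounds} with the stated exponent.
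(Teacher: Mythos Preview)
Your proposal is correct and follows essentially the same route as the paper: represent $\vphi(y,s)$ via the conjugate heat kernel, restrict to a small $g(t)$-ball about $y$ contained in $\{\vphi(\cdot,t)=1\}$, apply Proposition~\ref{prop: heat kernel lower bounds}, and use Lemma~\ref{rmk: good charts under regularity scale} to bound the resulting Gaussian integral below by a dimensional constant. The only slip is the chain ``$\tau^{1/2}\leq 1\leq r$'' (and similarly ``$1\leq 16t^{1/2}$''): nothing guarantees $r\geq 1$ or $t\geq 1/256$; the correct inequalities are $\tau^{1/2}\leq t^{1/2}\leq r$ (from $r^2\geq t$) for the containment, and $\tau^{1/2}\leq t^{1/2}<16t^{1/2}$ for invoking Lemma~\ref{rmk: good charts under regularity scale}, after which your argument goes through unchanged.
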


We now proceed to the proof of Proposition~\ref{prop: heat kernel lower bounds}, which follows from the following lower heat kernel bound due to Zhang in \cite{Zhang12}.
\begin{proposition}[Zhang]\label{prop: Zhang}
	 Fix $n\geq 3$ and let $(M,g(t))_{t \in [0,1]}$ be a Ricci flow for satisfying \eqref{eqn: scalar lower bound flow4} and \eqref{eqn: entropy lower bound flow4}. Then for any $0\leq s< t\leq 1,$ we have 
		 \begin{equation}\label{eqn: zhang estimate}
		 \begin{split}
		K(x,t;y,s)\geq  \frac{c}{\tau^{n/2}} \exp\left\{ \frac{-4}{\tau}d_{g(t)}(x,y)^2 - \frac{1}{\sqrt{\tau} }\int_0^\tau \sqrt{s'} R(y,t-s') \,ds' - 2 \delta\right\}.	
		 \end{split}
		 \end{equation}
	  Here $c= c(n)$ and we let $\tau = t-s.$
\end{proposition}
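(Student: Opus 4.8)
The plan is to follow Zhang's strategy, whose backbone is the uniform Sobolev inequality that the entropy bound forces along the Ricci flow. First I would record that, by Perelman's monotonicity of the $\W$-functional (recalled in Section~\ref{subsec: entropy}), the hypothesis $\nu(g(0),2)\geq-\delta$ propagates to $\mu(g(t),\sigma)\geq-\delta$ for all $t\in[0,1]$ and $\sigma\in(0,1]$. Writing a unit-$L^2(g(t))$ test function as $w=(4\pi\sigma)^{-n/4}e^{-f/2}$, the inequality $\W(g(t),f,\sigma)\geq\mu(g(t),\sigma)\geq-\delta$ is exactly the time-uniform logarithmic Sobolev inequality
\[
\int_M w^2\log w^2\,d\vol_{g(t)}\leq\sigma\int_M\big(4|\nabla w|^2+Rw^2\big)\,d\vol_{g(t)}-\tfrac n2\log(4\pi\sigma)-n+\delta ,
\]
valid at every $t\in[0,1]$ and every scale $\sigma\in(0,1]$. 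Optimizing in $\sigma$ (the Davies--Gross argument) upgrades this to a time-uniform $L^2$-Sobolev inequality of conformal-Laplacian type, $\|w\|_{L^{2n/(n-2)}(g(t))}^2\leq A(n)\int_M\big(|\nabla w|^2+\tfrac14 Rw^2\big)\,d\vol_{g(t)}+B(n)\|w\|_{L^2(g(t))}^2$, with constants independent of $t$ and the $\delta$-defect absorbed harmlessly since $\delta\leq1$; this step is precisely Zhang's uniform Sobolev theorem, which may alternatively be invoked as a black box (this is where $n\geq 3$ is used).

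With the time-uniform Sobolev inequality in hand, I would run a parabolic Moser/Nash iteration for the heat equation $\partial_t K=\Delta_x K$, exploiting the conjugate-heat structure together with the almost-monotonicity $d\vol_{g(t)}\leq(1+2\delta(t-s))\,d\vol_{g(s)}$ coming from $R\geq-\delta$ (cf. \eqref{eqn: volume forms}). This produces the on-diagonal upper bound $K(x,t;y,s)\leq C(n)\tau^{-n/2}e^{c(n)\delta}$ and, via Davies' exponential-perturbation method applied to the conjugate heat equation with a weight Lipschitz in $d_{g(t)}(x,\cdot)$, the crude Gaussian upper bound $K(x,t;y,s)\leq C(n)\tau^{-n/2}\exp\{-c\,d_{g(t)}(x,y)^2/\tau+c\delta\}$ (only a non-sharp constant is required). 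The matching on-diagonal lower bound $K(x,t;x,s)\geq c(n)\tau^{-n/2}e^{-c(n)\delta}$ then follows from conservation of mass $\int_M K(x,t;z,\sigma)\,d\vol_{g(\sigma)}(z)=1$ (cf. \eqref{eqn: int 1 for heat kernel}) taken at the intermediate time $\sigma=s+\tau/2$: the Gaussian upper bound confines a definite fraction of the mass to a $g(\sigma)$-ball of radius $\sim\sqrt\tau$, and a Cauchy--Schwarz/reproduction-formula argument (using $\|K(x,t;\cdot,\sigma)\|_{L^1(g(\sigma))}=1$ and the on-diagonal sup bound) converts this into the claimed pointwise lower bound.

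The last and most delicate step is to propagate the on-diagonal lower bound into the off-diagonal estimate \eqref{eqn: zhang estimate}, with its weak Gaussian factor $e^{-4d_{g(t)}(x,y)^2/\tau}$ and its scalar-curvature penalty $e^{-\frac1{\sqrt\tau}\int_0^\tau\sqrt{s'}R(y,t-s')\,ds'}$. The natural mechanism is a barrier comparison for the conjugate heat equation whose barrier has exactly this shape; the form of the barrier is dictated by Perelman's $\mathcal L$-geometry, where testing the reduced distance $\ell_{(x,t)}(y,\tau)$ against the path that travels from $x$ to $y$ during a fixed fraction of the time interval and then stays at $y$ yields a bound $\lesssim d_{g(t)}(x,y)^2/\tau+\tfrac1{\sqrt\tau}\int_0^\tau\sqrt{s'}R(y,t-s')\,ds'$ (the first leg producing the Gaussian term with a generous constant, the stationary leg producing the $\sqrt{s'}$-weighted time integral of $R$ at the fixed point $y$). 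Concretely I would set $u(y,\sigma)=K(x,t;y,t-\sigma)$, which solves $\partial_\sigma u=\Delta u-Ru$, form $\tilde u=u\,\exp\{\int_0^\sigma R(y,t-\sigma')\,d\sigma'\}$ to cancel the zeroth-order term along the fibre $\{y\}\times[0,\tau]$, and compare $\tilde u$ to an explicit Gaussian-type supersolution in $d_{g(t)}(x,\cdot)$ by the maximum principle, absorbing the residual gradient-of-the-weight terms into the generous constants. The scalar lower bound $R\geq-\delta$ is used twice here: to keep volumes from expanding (needed throughout the iteration and the mass argument, and to give the factor $e^{-2\delta}$), and to ensure that where $R$ is not large the penalty term is merely a factor $e^{O(\delta)}$, so the bound degenerates only where curvature genuinely concentrates.

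The main obstacle I anticipate is precisely the interaction of the exponential weights with the distortion of the distance function $d_{g(t)}(x,\cdot)$ across the interval $(s,t)$: since only a one-sided (scalar, not Ricci) curvature bound is available, this distortion cannot be controlled sharply, which is exactly why the Gaussian constant is the non-sharp $4$ and why the correction appears as a $\sqrt{s'}$-weighted time integral of $R$ at $y$ rather than a pointwise term. Making the barrier comparison fully rigorous --- in particular verifying the supersolution property of the barrier, justifying the maximum principle on the (possibly noncompact) complete bounded-curvature flow, and keeping the $\delta$-dependence confined to the harmless factor $e^{-2\delta}$ --- is the technical heart of the argument.
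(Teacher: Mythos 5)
The paper does not prove Proposition~\ref{prop: Zhang}: it is an external result quoted from \cite{Zhang12} (hence the attribution in the statement), and the text passes directly to the proof of Proposition~\ref{prop: heat kernel lower bounds} which \emph{uses} \eqref{eqn: zhang estimate} as a black box. So there is no in-paper argument to compare against; what can be assessed is whether your sketch is a plausible reconstruction of the cited theorem. Your overall plan does sit in the right circle of ideas (entropy $\Rightarrow$ uniform log-Sobolev $\Rightarrow$ uniform Sobolev $\Rightarrow$ Moser iteration $\Rightarrow$ Gaussian upper bounds, together with an $\mathcal{L}$-geometry heuristic for the lower bound), but as written there are two places where the argument would not close.

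First, the on-diagonal lower bound. The reproduction formula is $K(x,t;x,s)=\int_M K(x,t;z,\sigma)K(z,\sigma;x,s)\,d\vol_{g(\sigma)}(z)$, and in the static, self-adjoint case this is $\|K(x,\cdot)\|_{L^2}^2$, which is what makes the Cauchy--Schwarz/mass-confinement trick give a lower bound. Along Ricci flow the two factors are \emph{different} kernels (the first a conjugate heat kernel of unit $L^1(g(\sigma))$-mass via \eqref{eqn: int 1 for heat kernel}, the second a forward heat kernel, whose $L^1(g(\sigma))$-mass satisfies only the \emph{upper} bound \eqref{eqn: int 2 for heat kernel} and need not be bounded below without an upper bound on $R$). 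So $K(x,t;x,s)$ is not an $L^2$ norm, Cauchy--Schwarz gives an upper bound for the product integral rather than a lower bound, and the localization step as described does not yield a pointwise lower bound for $K(x,t;x,s)$. The cleaner route --- and I believe the one underlying \cite{Zhang12} --- is to bypass the Nash--Moser scaffolding for the lower bound entirely and invoke Perelman's monotone quantity directly: $K(x,t;y,s)\geq(4\pi\tau)^{-n/2}e^{-\ell(y,\tau)}$ with $\ell$ the reduced distance based at $(x,t)$, a comparison that needs no entropy input and already produces the dimensional $\tau^{-n/2}$ prefactor.

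Second, the off-diagonal penalty. You correctly identify the concatenated test path (move to $y$, then sit at $y$) as the source of the $\frac{1}{\sqrt\tau}\int_0^\tau\sqrt{s'}R(y,t-s')\,ds'$ term, and you flag the distance-distortion issue, but this is not a loose end to be ``absorbed into the generous constants'': the velocity term in the $\mathcal{L}$-length of the moving leg is measured in $g(t-s')$, and relating it to $d_{g(t)}(x,y)^2/\tau$ with a definite constant requires bounding $g(t-s')$ above by a multiple of $g(t)$ along the curve, which a one-sided scalar bound $R\geq-\delta$ does not give. Either the entropy/no-local-collapsing hypothesis must enter here (e.g.\ through the scale-invariant curvature bound $|\mathrm{Rm}_{g(t)}|\lesssim\ETA/t$ of Theorem~\ref{prop: uniform existence time and small curvature estimates}, which furnishes distance comparison), or a more careful choice of test curve is needed. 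Since you already name this as ``the technical heart,'' the honest reading is that your proposal outlines the shape of the argument but leaves its crux open.
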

	\begin{proof}[Proof of Proposition~\ref{prop: heat kernel lower bounds}]
Provided we choose $\delta\leq 1$, the estimate \eqref{eqn: zhang estimate} implies that 
	\begin{equation}
		K(x,t;y,s) \geq c\tau^{-n/2} \exp\left\{- 4d_{g(t)}(x,y)^2 /\tau \right\} \times  F(y,s)
	\end{equation}
	where $c$ is a universal constant and
	\begin{equation}\label{eqn: F for heat kernel bound-0}
	F(y,s) = 
		\exp\left\{-\frac{1}{\sqrt{\tau} }\int_0^\tau \sqrt{s'} R(y,t-s') \,ds'\right\}\,.
	\end{equation}
	We claim that 
	\begin{equation}\label{eqn: F for heat kernel bound}
	\log F(s,y)\geq \log \left(\frac{s}{t}\right)^{\ETA}.	
	\end{equation}
Exponentiating \eqref{eqn: F for heat kernel bound} will conclude the proof of the proposition.
	To this end, we change variables and then bound the scalar curvature using the scale invariant curvature bound \eqref{eqn: tiny curvature bounds4}, finding that
\begin{equation}\begin{split}
	-\log F(s,y)&=  \frac{1}{\tau^{1/2}} \int_0^\tau \sqrt{s'} R(y,t-s') \,ds'\\
	   &\leq  \frac{\ETA}{\tau^{1/2}}\int_0^\tau \sqrt{s'}(t-s')^{-1} \,ds'\\
	   & \leq \ETA \int_0^\tau (t-s')^{-1} \, ds' = \ETA \int_s^t \rho^{-1}\,d\rho = \ETA \log \frac{t}{s}.
\end{split}\end{equation}
Negating this expression establishes \eqref{eqn: F for heat kernel bound} and thus concludes the proof.
	\end{proof}	
We now prove Proposition~\ref{prop: cutoff}.
\begin{proof}[Proof of Proposition~\ref{prop: cutoff}] 
 Expressing the solution with respect to the conjugate heat kernel, we have  
 \begin{equation}
 	\label{eqn: evolve cutoff}
 	\vphi(y,s)= \int_M \vphi(x) K(x,t; y,s) \, d\vol_{g(t)}(x).
 \end{equation} 
 Fix any $y\in B_{g(t)}(p,4r)$. Having chosen $r^2\ge t$, we note that $B_{g(t)}(y,t^{1/2})\subset  B_{g(t)}(p,8r)$, and in particular $\vphi(x,t)=1$ in this set.
Using this observation, followed by Proposition~\ref{prop: heat kernel lower bounds}, we find that 
\begin{align}
	\left(\frac{s}{t}\right)^{-\ETA}\vphi(y,s)  & \geq \left(\frac{s}{t}\right)^{-\ETA}\int_{B_{g(t)}(y,t^{1/2})} K(x,t;y,s)\,d\vol_{g(t)}(x)\\
\label{eqn: integral lower bound}	&\geq \int_{B_{g(t)}(y,t^{1/2})} \frac{C}{\tau^{n/2}} \exp\left\{- \frac{4}{\tau}d_{g(t)}(x,y)^2\right\} \, d\vol_{g(t)}(x).
	\end{align}
	Since $\tau =t-s\leq t,$ we see from Lemma~\ref{rmk: good charts under regularity scale} that the right-hand side is bounded below by a universal constant, namely, by
	\begin{equation}
	\int_{B(0,\tau^{1/2})} \tau^{-n/2} \exp\{-4|x|^2/\tau\}\,dx = \int_{B(0,1)} \exp\{-4|x|^2\}\,dx,
\end{equation}
so long as $\ETA \leq \ETA_0.$
 This completes the proof.
\end{proof}

\subsection{Proof of Theorem~\ref{thm: integral bounds for Ricci curvature}} \label{subsec: proof of integral ricci estimate}
Before proving Theorem~\ref{thm: integral bounds for Ricci curvature}, let us make the following observation.
\begin{lemma}\label{lem: basic observation}
Fix $n\geq 2,$ $\delta>0$, and $\ETA>0$. Let $(M,g(t))_{t \in [0,1]}$ be a Ricci flow satisfying \eqref{eqn: scalar lower bound flow4} and \eqref{eqn: tiny curvature bounds4}. For any $t\in (0,1]$, let $\vphi:M\times \{t\} \to \R$ be a nonnegative smooth function, and if $M$ is  non-compact then assume $\vphi$ has compact support. Let $\vphi(y,s)$ be the evolution of $\vphi$ by the conjugate heat equation for $s \in (0,t)$. Then 
\begin{equation}\label{eqn: ricci bound a}
\begin{split}
	2\int_0^t \int_M  |\Ric_{g(s)}&(y)|^2\vphi(y,s)\, d\vol_{g(s)}(y)\, ds\leq \Big(\frac{\ETA}{t}+\delta\Big) \int_M  \varphi(y,t) d \vol_{g(t)}(y).
\end{split}	
\end{equation}
\end{lemma}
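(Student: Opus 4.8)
The plan is to integrate the evolution equation for the scalar curvature \eqref{eqn: evolution of scalar} against the evolving cutoff function $\vphi(y,s)$ and exploit the conjugate heat equation structure. First, recall that $\pa_s \vphi = -\Delta\vphi + R\vphi$, so that $(\pa_s + \Delta - R)\vphi = 0$, and that $R$ satisfies $(\pa_s - \Delta)R = 2|\Ric|^2$. Using the conjugacy identity \eqref{eqn: conjugate} applied to $u = R_{g(s)}$ and $v = \vphi(\cdot,s)$ on the time interval $[\sigma, t]$ for $\sigma > 0$, we obtain
\begin{equation*}
\int_M R_{g(t)}\vphi(\cdot,t)\,d\vol_{g(t)} - \int_M R_{g(\sigma)}\vphi(\cdot,\sigma)\,d\vol_{g(\sigma)} = \int_\sigma^t \int_M \vphi\,(\pa_s - \Delta)R\,d\vol_{g(s)}\,ds = 2\int_\sigma^t\int_M |\Ric_{g(s)}|^2\vphi\,d\vol_{g(s)}\,ds\,.
\end{equation*}
Rearranging, the left-hand side controls the desired double integral from below (after sending $\sigma \to 0$), so it remains to bound $\int_M R_{g(t)}\vphi(\cdot,t)\,d\vol_{g(t)}$ from above and to control the limiting behavior of $\int_M R_{g(\sigma)}\vphi(\cdot,\sigma)\,d\vol_{g(\sigma)}$ as $\sigma \to 0$.

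For the first term, I would use the scale-invariant curvature bound \eqref{eqn: tiny curvature bounds4}, which gives $R_{g(t)} \leq n\ETA/t$ pointwise (or more simply $|\Rm_{g(t)}| \leq \ETA/t$ implies $R_{g(t)} \leq C_n\ETA/t$; absorbing constants appropriately, the statement as written uses $\ETA/t$, so one takes $\ETA$ here to already incorporate the dimensional factor), hence
\begin{equation*}
\int_M R_{g(t)}\vphi(\cdot,t)\,d\vol_{g(t)} \leq \frac{\ETA}{t}\int_M \vphi(\cdot,t)\,d\vol_{g(t)}\,.
\end{equation*}
For the term at time $\sigma$, the key point is that $\int_M \vphi(\cdot,s)\,d\vol_{g(s)}$ is essentially preserved: since $(\pa_s + \Delta - R)\vphi = 0$, the identity \eqref{eqn: L1 norm preserved} gives $\int_M \vphi(\cdot,\sigma)\,d\vol_{g(\sigma)} = \int_M \vphi(\cdot,t)\,d\vol_{g(t)}$ for all $\sigma \in [0,t]$. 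Combined with the lower scalar bound $R_{g(\sigma)} \geq -\delta$ from \eqref{eqn: scalar lower bound flow4} (which is preserved along the flow by \eqref{eqn: scalar lower bound flow2}) and the nonnegativity of $\vphi$, we get
\begin{equation*}
-\int_M R_{g(\sigma)}\vphi(\cdot,\sigma)\,d\vol_{g(\sigma)} \leq \delta\int_M \vphi(\cdot,\sigma)\,d\vol_{g(\sigma)} = \delta\int_M \vphi(\cdot,t)\,d\vol_{g(t)}\,.
\end{equation*}
Putting these together and letting $\sigma \to 0$ (using monotone convergence for the double integral, whose integrand is nonnegative) yields \eqref{eqn: ricci bound a}.

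The main technical obstacle is justifying the integration by parts \eqref{eqn: conjugate} and the $L^1$-preservation \eqref{eqn: L1 norm preserved} on a noncompact manifold — this requires adequate decay of $\vphi(\cdot,s)$ and its derivatives at spatial infinity for $s > 0$. Since $\vphi(\cdot,t)$ is taken compactly supported and the flow has bounded curvature on $[0,1]$, the solution $\vphi(\cdot,s)$ of the conjugate heat equation inherits Gaussian-type decay (as discussed in Section~\ref{sec: Kernels}), so these manipulations are legitimate; one should also check that $R_{g(s)}\vphi(\cdot,s)$ is integrable uniformly in $s$, which follows from the scale-invariant bounds away from $s=0$ together with the lower bound on $R$ and the integrability of $\vphi$. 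The behavior as $\sigma \to 0$ is handled by the uniform bound just established, which is independent of $\sigma$, so no delicate limiting argument is needed beyond monotone convergence.
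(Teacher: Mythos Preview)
Your proposal is correct and follows essentially the same approach as the paper: integrate the scalar curvature evolution equation against $\vphi$, use the conjugacy identity \eqref{eqn: conjugate} to produce the boundary terms, bound the term at time $t$ via $R_{g(t)}\le \ETA/t$ from \eqref{eqn: tiny curvature bounds4}, and bound the term at the initial time via $R\ge -\delta$ together with the $L^1$-preservation \eqref{eqn: L1 norm preserved}. The only cosmetic difference is that the paper works directly on $[0,t]$ rather than on $[\sigma,t]$ with $\sigma\to 0$, and it likewise notes that the integration by parts is justified by the exponential spatial decay of $\vphi$ inherited from the compactly supported terminal data.
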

\begin{proof}
	We multiply $\vphi(y,s)$ by the evolution equation for the scalar curvature \eqref{eqn: evolution of scalar} and integrate in space and time to obtain the following. After an integration by parts, we find that 
	
\begin{equation}
\begin{split}
\label{eqn: rhs}		2\int_0^t \int_M & |\Ric_{g(s)}(y)|^2 \vphi(y,s) \, d\vol_{g(s)}(y) \, ds \\
&= \int_0^t \int_M (\pa_s -\Delta)R_{g(s)}(y) \vphi(y,s) \,  d\vol_{g(s)}(y) \, ds \\
	& =  \int_0^t \int_M  R_{g(s)}(y)\, (\pa_s +\Delta -R_{g(s)})\vphi(y,s) \,  d\vol_{g(s)} (y)\, ds \\
	& \ \ \ + \int_M R_{g(t)}\varphi(y,t) \, d\vol_{g(t)}(y) - \int_M R_{g(0)} \vphi(y,0) \vol_{g(0)}(y) \\
&=\int_M R_{g(t)}\varphi(y,t) \, d\vol_{g(t)}(y)- \int_M R_{g(0)} \vphi(y,0) \vol_{g(0)}(y) .
\end{split}
\end{equation}
Note that this integration by parts is justified because, for each fixed time-slice,  $\varphi$ and $|\na \varphi|$ decay exponentially with respect to $d_{g(t)}(x,\cdot);$ see \cite[Chapter 26.1]{Chow3}. 
We wish to bound the right-hand side of the last equation on \eqref{eqn: rhs} from above. By the maximum principle the function $\vphi(y,s)$ is nonnegative for all $y,s$. Hence, making use first of the lower bound on scalar curvature \eqref{eqn: scalar lower bound flow4} and then of the conservation of the $L^1$ norm under the conjugate heat equation \eqref{eqn: L1 norm preserved}, we have 
\begin{align}
 - \int_M R \vphi \vol_{g(0)} &  \leq \delta \int_M \vphi \vol_{g(0)} =\delta \int_M \vphi \vol_{g(t)}	.
\end{align}	

Pairing this with \eqref{eqn: rhs} and applying the scale-invariant curvature estimates \eqref{eqn: tiny curvature bounds4} to bound the scalar curvature in the $t$-time slice, we find that
\begin{equation}
\begin{split}
2\int_0^t \int_M  |\Ric_{g(s)}(y)|^2\vphi(y,s)\, d\vol_{g(s)}(y) \, ds &\leq \int_M (R_{g(t)}(y)+\delta) \varphi(y,t) d \vol_{g(t)}(y)\\
&\leq \Big(\frac{\ETA}{t}+\delta\Big) \int_M  \varphi(y,t) d \vol_{g(t)}(y)\,.
\end{split}
\end{equation}
This concludes the proof of the lemma.
\end{proof}

Finally, we prove Theorem~\ref{thm: integral bounds for Ricci curvature}.
\begin{proof}[Proof of Theorem~\ref{thm: integral bounds for Ricci curvature}]
Up to rescaling the flow, we may assume that $t=1.$
Together Lemma~\ref{lem: basic observation} and Proposition~\ref{prop: cutoff} (with $r=t^{1/2}=1$)  imply that
	 \begin{equation}\label{eqn: intermediate bound}
	 \begin{split}
		\int_0^1 s^{\ETA} \int_{B_{g(1)}(x,4)} |\Ric_{g(s)}|^2 \, d\vol_{g(s)}\,ds &\leq C(\ETA + \delta)\vol_{g(1)}(B_{g(1)}(x,16)) \leq  C\left(\ETA + \delta\right), 	
	 \end{split}	
	 \end{equation}
	 where the second inequality comes from \eqref{eqn: under reg scale euclidean volumes}.
	  Noting further that by \eqref{eqn: volume forms} and \eqref{eqn: under reg scale euclidean volumes}, we have 
 \begin{equation}\label{eqn: intermed volume constant}
 \inf_{0<s<1} \vol_{g(s)}(B_{g(1)}(x,4r)) \geq 
 	(1-2\delta)\vol_{g(1)}(B_{g(1)}(x,4r)) \geq c.
 \end{equation}
Hence,	
	 \begin{equation}\label{eqn: intermediate bound 2}
	 \begin{split}
		\int_0^1 s^{\ETA} &\fint_{B_{g(1)}(x,4)}|\Ric_{g(s)}|^2 \, d\vol_{g(s)}\,ds \leq C(\ETA +\delta)
	 \end{split}	
	 \end{equation}


Now, fix $\ETA>0$ sufficiently small so that $\ETA\leq 1/2-\BETA$. In this way, if we set $\BETA_0:=\BETA + \ETA/2$, we ensure that 
\begin{equation}\label{eqn: numerology}
1-2\BETA_0 \geq 1/2 - \BETA.
\end{equation}
 Choose $\delta$ sufficiently small so that \eqref{eqn: tiny curvature bounds4} holds for this choice of $\ETA.$
Then by H\"{o}lder's inequality, for any $\Omega \subseteq M$, we have
\begin{equation}
\begin{split}
	\label{eqn: holder 1}
	\int_0^1 s^{-\BETA} \fint_\Omega |\Ric_{g(s)}| \, d\vol_{g(s)} \, ds
	& \leq \Big(\int_0^1 s^{-2\BETA_0 } \,ds\Big)^{1/2}\Big(\int_0^1 s^{\ETA} \fint_\Omega |\Ric|^2 \, d\vol_{g(s)}\,ds \Big)^{1/2}\\
	 &= (1- 2\BETA_0)^{-1/2}\Big(\int_0^1 s^{\ETA} \fint_\Omega |\Ric|^2 \, d\vol_{g(s)}\,ds \Big)^{1/2}.
	 \end{split}
\end{equation}
	The constant $(1- 2\BETA_0)^{-1/2}$ is bounded above by $(1/2-\BETA)^{-1/2}$ thanks to \eqref{eqn: numerology}. 
	By choosing $\ETA$ and $\delta$ such that $C(\ETA + \delta)^{1/2}\leq \e^2$, together with \eqref{eqn: holder 1} and \eqref{eqn: intermediate bound 2}
	 conclude the proof. 
\end{proof}

\subsection{Integral bounds for the scalar curvature}\label{sec: scalar integral bound}
We now prove Theorem~\ref{thm: integral scalar bound}, which we restate below as Theorem~\ref{thm: integral scalar bound restated} below. The proof is similar to that of Theorem~\ref{thm: integral bounds for Ricci curvature}.
\begin{theorem}[$L^q$ scalar curvature estimates]\label{thm: integral scalar bound restated}
	Fix $n\geq 2$, $q\in(0,1)$, and $\e>0$.
There exists $\delta=\delta(n,q,\e)>0$ such that the following holds. 
Let $(M,g)$ be a closed Riemannian $n$-manifold 
 such that 
	\begin{align}
		R\geq -\delta, \qquad \qquad 
		\nu(g, 2) \geq -\delta \,.
	\end{align}
	 Then we have
	 \begin{equation}
	 	\fint_{M} |R|^q\,d\vol_{g}\leq \e.
	 \end{equation}
\end{theorem}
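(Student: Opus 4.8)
\textbf{Proof plan for Theorem~\ref{thm: integral scalar bound restated}.}

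The plan is to mimic the structure of the proof of Theorem~\ref{thm: integral bounds for Ricci curvature}, running the Ricci flow $(M,g(t))_{t\in(0,1]}$ with $g(0)=g$ and exploiting the representation formula for the scalar curvature together with the heat kernel lower bounds. Since $M$ is closed, Theorem~\ref{prop: uniform existence time and small curvature estimates} guarantees the flow exists on $(0,1]$ with scale-invariant bounds $|\Rm_{g(t)}|\leq \ETA/t$ for $\ETA$ as small as we like. First I would observe that the scalar curvature evolution \eqref{eqn: evolution of scalar} and the lower bound \eqref{eqn: scalar lower bound flow2}, namely $R_{g(s)}\geq -\delta$, let us control the integral of $R_{g(s)}$: integrating $\partial_t R=\Delta R + 2|\Ric|^2$ over $M$ gives $\frac{d}{ds}\int_M R_{g(s)}\,d\vol_{g(s)} = \int_M (2|\Ric|^2 - R^2)\,d\vol_{g(s)} \geq -\int_M R^2$, but more usefully the reverse direction: combining with the Ricci integral estimate already proved (Theorem~\ref{thm: integral bounds for Ricci curvature}), one gets that at time $t=1$ the scalar curvature is close to nonnegative in a strong averaged sense. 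Concretely, $R_{g(1)}(x) = \int_M K(x,1;y,0)R_{g(0)}(y)\,d\vol_{g(0)}(y) + 2\int_0^1\int_M K(x,1;y,s)|\Ric_{g(s)}(y)|^2\,d\vol_{g(s)}(y)\,ds \geq -\delta(1+2\delta)$ using \eqref{eqn: int 2 for heat kernel} and $R_{g(0)}\geq-\delta$. So $R_{g(1)}\geq -C\delta$ pointwise.

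Next I would bound the positive part of $R_{g(1)}$ in $L^1$. Since $\frac{d}{ds}\vol_{g(s)}(M) = -\int_M R_{g(s)}\,d\vol_{g(s)}$ and the total volume can only grow by a factor $1+2\delta$ on $[0,1]$ by \eqref{eqn: volume forms}, while $\int_M R_{g(s)}\,d\vol_{g(s)} \geq -\delta\vol_{g(s)}(M)$, one extracts $\int_0^1\int_M R_{g(s)}\,d\vol_{g(s)}\,ds \leq C\delta\,\vol(M)$, and then using the representation formula integrated against $1$ together with the Ricci estimate \eqref{eqn: main ricci estimate} (with $\BETA=0$, $x$ arbitrary, after a covering argument, or directly the global version) one gets $\int_M (R_{g(1)})_+ \,d\vol_{g(1)} \leq \e_1\vol_{g(1)}(M)$ for $\e_1$ small. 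Combined with the pointwise lower bound $R_{g(1)}\geq -C\delta$, this yields $\int_M |R_{g(1)}|\,d\vol_{g(1)} \leq \e_1 \vol_{g(1)}(M)$. Now the key interpolation: for $q\in(0,1)$, writing $|R_{g(1)}| = |R_{g(1)}|^q\cdot|R_{g(1)}|^{1-q}$ is the wrong direction; instead use that for the \emph{original} metric one has $\fint_M |R|^q \leq \big(\fint_M |R|\big)^q$ by Jensen (concavity of $x\mapsto x^q$), but we only control $\int|R_{g(1)}|$, not $\int|R_{g(0)}|$ — that integral could be large. This is the crux.

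\textbf{The main obstacle} is precisely transferring the $L^1$-smallness of $R_{g(1)}$ back to an $L^q$ bound on $R=R_{g(0)}$. The resolution I anticipate: use the representation formula in the form $R_{g(0)}(y) \leq$ (something controllable) is false since $R_{g(0)}$ is the initial data, not an average. Instead one should integrate the evolution equation against a well-chosen weight and use that $\int_0^1 \int_M s^{-\BETA}|\Ric_{g(s)}|^2$-type quantities are small — but the honest route is: $|R_{g(s)}|$ for small $s$ is close to $|R_{g(0)}|$ only in an averaged-in-time sense, and the bound $\int_0^1 s^{\ETA}\fint_{B} |\Ric|^2 \leq C(\ETA+\delta)$ from \eqref{eqn: intermediate bound 2} controls how fast $R$ can change. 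One then writes $\fint_M |R_{g(0)}|^q \leq \fint_M |R_{g(0)} - R_{g(s)}|^q + \fint_M |R_{g(s)}|^q$ using subadditivity of $x\mapsto x^q$, integrates in $s$ against the weight $s^{-\BETA}$, and estimates $R_{g(0)}-R_{g(s)} = -\int_0^s \partial_\sigma R_{g(\sigma)}\,d\sigma = -\int_0^s(\Delta R + 2|\Ric|^2)\,d\sigma$; the Laplacian term integrates to zero against a cutoff (here $\equiv 1$ since $M$ closed) and the Ricci term is controlled by \eqref{eqn: intermediate bound 2}, while $\fint_M|R_{g(s)}|^q \leq (\fint_M |R_{g(s)}|)^q \to 0$ uniformly via the $L^1$ bound derived above and the volume comparison \eqref{eqn: volume forms}. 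Choosing $q<1$ is exactly what makes the subadditivity $|a+b|^q \leq |a|^q+|b|^q$ available and lets us absorb the error terms, which is why the theorem fails (conjecturally) for $q=1$. Finally, I would track constants so that $\delta = \delta(n,q,\e)$ forces all error terms below $\e$, completing the proof.
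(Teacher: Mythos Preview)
Your overall strategy---run the flow, exploit the space-time Ricci-squared bound of Section~\ref{subsec: heat kernel lower bounds}, and use $q<1$ to get a concavity/subadditivity gain---is on the right track, but the ``resolution'' of your main obstacle contains a genuine gap.

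You write $R_{g(0)}-R_{g(s)}=-\int_0^s(\Delta R+2|\Ric|^2)\,d\sigma$ and then assert that ``the Laplacian term integrates to zero against a cutoff (here $\equiv 1$ since $M$ closed).'' But the quantity you need to bound is $\fint_M |R_{g(0)}-R_{g(s)}|^q$, and the spatial integral sits \emph{outside} the nonlinearity $|\cdot|^q$. The pointwise quantity $\int_0^s \Delta R(x,\sigma)\,d\sigma$ can be large even though its spatial average vanishes; you cannot exchange $\fint_M$ with $|\cdot|^q$ here. So this step does not go through, and with it the whole transfer from positive time back to $t=0$ collapses. (Your step~3 is also shaky: the volume-evolution argument gives $\int_0^1\int_M R_{g(s)}\,d\vol_{g(s)}\,ds = \vol_{g(0)}(M)-\vol_{g(1)}(M)$, and at this stage of the paper you do not yet know that this difference is $o(1)\cdot\vol(M)$.)

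The paper bypasses the pointwise comparison entirely. The key observation you are missing is that for $q\in(0,1)$ the function $f^q$, where $f=R_{g(t)}+2\delta\geq 0$, is a \emph{super-solution} of the heat equation coupled to the flow: since $q(q-1)<0$,
\[
(\pa_t-\Delta)f^q = qf^{q-1}(\pa_t-\Delta)f - q(q-1)f^{q-2}|\nabla f|^2 \geq qf^{q-1}(\pa_t-\Delta)R \geq 0.
\]
Applying the conjugate identity \eqref{eqn: conjugate} with $u=f^q$ and $v\equiv 1$ then gives directly
\[
\int_M f^q\,d\vol_{g(0)} \leq \int_M f^q\,d\vol_{g(1)} + \int_0^1\!\!\int_M R\,f^q\,d\vol_{g(t)}\,dt.
\]
The first term is $\leq (\ETA+2\delta)^q\vol_{g(1)}(M)$ by the curvature bound at $t=1$; the second is controlled by $\int_0^1\int_M |R|^{q+1}$, which one bounds via H\"older against the weighted $L^2$ Ricci estimate $\int_0^1 t^\ETA\int_M |R|^2\leq C(\ETA+\delta)\vol(M)$ (Lemma~\ref{lem: basic observation} with $\vphi\equiv 1$, Proposition~\ref{prop: cutoff}). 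This is precisely where $q<1$ is used twice: once for the super-solution property, and once so that $q+1<2$ lets H\"older close. Since $f\geq R_+$ and $\fint R_-^q\leq\delta^q$, this finishes the proof.
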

\begin{proof}[Proof of Theorem~\ref{thm: integral scalar bound}] Let $R_+$ and $R_-$ denote the positive and negative parts of $R$  respectively. Since $ \fint R_-^q \,d\vol_{g} \leq \delta^q$, we choose $\delta^q \leq \e/2$ and it suffices to show that $\fint R_+^q \,d\vol_{g} \leq \e/2$. By Theorem~\ref{prop: uniform existence time and small curvature estimates}, for any fixed $\ETA>0$, we may choose $\delta$ small enough so that the Ricci flow $(M,g(t))$ with $g(0)=0$ exists for $t\in(0,1]$ and enjoys the scale invariant curvature bounds \eqref{eqn: tiny curvature bounds4} for all $x\in M$ and $t\in (0,1]$.
	Consider the nonnegative function
	\begin{equation}
	f(x,t) = R_{g(t)}(x) + 2\delta.
	\end{equation}
	Note that $f\geq R_+$, so it suffices to show that $\fint_M f^q \,d\vol_{g(0)}\leq \e/2.$
	For any $q\in (0,1]$, we see that $f^q$ is a super-solution of the heat equation coupled to Ricci flow. Indeed, noting that $q(q-1)<1$ and recalling \eqref{eqn: evolution of scalar}, we compute that 
\begin{equation}
\begin{split}
(\pa_t -\Delta)f^q & = qf^{q-1}(\pa_t -\Delta)f -q(q-1) f^{q-2}|\na f|^2\\
	& \geq qf^{q-1}(\pa_t -\Delta)R\geq 0.
\end{split}	
\end{equation}
So, applying \eqref{eqn: conjugate}  with $u=f^q$ and $v=1,$ we find
\begin{align}\nonumber
\int_M f^q\,d\vol_{g(0)} & = \int_M f^q\,d\vol_{g(1)} -\int_0^1\int_M\{ (\pa_t -\Delta)f^q -Rf^q\}\,d\vol_{g(t)}\,dt\\
\label{eqn: scalar int bound intermediate}
& \leq 	 \int_M f^q\,d\vol_{g(1)} +\int_0^1\int_M Rf^q\,d\vol_{g(t)}\,dt.
\end{align}

We bound each of the terms on the right-hand side of \eqref{eqn: scalar int bound intermediate} separately. For the first term, using the scale-invariant curvature bounds \eqref{eqn: tiny curvature bounds4} and \eqref{eqn: volume forms}, we see that 
\begin{equation}\label{2}
\begin{split}
\int_M f^q\,d\vol_{g(1)} &\leq (\ETA+ 2\delta)^q \vol_{g(1)}(M)\\
& \leq 2(\ETA+2\delta )^q\vol_{g(0)}(M).	
\end{split}
\end{equation}
As for the second term on the right-hand side of \eqref{eqn: scalar int bound intermediate}, we note that $Rf^q \leq f^{q+1} \leq 2^{q+1}(|R|^{q+1}+(2\delta)^{q+1} )$. So, again making use of \eqref{eqn: volume forms}, we find
\begin{equation}\label{3}
	\int_0^1 \int_M Rf^q \, d\vol_{g(t)}\,dt \leq C\delta^q\vol_{g(0)}(M) + C\int_0^1 \int_M |R|^{q+1} \, d\vol_{g(t)}\,dt,
\end{equation}
where $C$ is a constant depending on $q$.
We bound second term on the right-hand side of \eqref{3} using the same argument as in the proof of Theorem~\ref{thm: integral bounds for Ricci curvature}. More specifically, let $\vphi: M\times (0,1) \to \R$ be the solution to the conjugate heat equation with terminal data $\vphi(x,1)=1 $ on $M \times \{1\}$. By (the proof of) Proposition~\ref{prop: cutoff}, we see that $\vphi(y,s) \geq cs^\ETA$ for all $y\in M$  and $s\in (0,1]$, where $c=c(n)$. Thus, applying Lemma~\ref{lem: basic observation} to this choice of $\vphi$, we find that
\begin{equation}\label{eqn: ric squared bound}
\begin{split}
	  \int_0^1 t^\ETA \int_M |R|^2 \, d\vol_{g(t)}\,dt 
\leq	C \int_0^1\int_M |R|^2 \vphi(y,t) \, d\vol_{g(t)}\,dt
	& \leq C(\ETA +\delta) \vol_{g(1)}(M)\\
	& \leq C (\ETA+\delta) \vol_{g(0)}(M).
\end{split}	
\end{equation}
 We choose $\delta$ sufficiently small so that $\ETA<(1-q)/(1+q)$, and set $\BETA=\ETA(1+q)/(1-q)<1$. So, H\"{o}lder's inequality (with $p=2/(1+q)$ and $p'=2/(1-q)$) together with \eqref{eqn: ric squared bound} allows us to deduce that
\begin{equation}\begin{split}
	\int_0^1 \int_M |R|^{q+1}\,d\vol_{g(t)}\,dt
	& 
	\leq \Big(2\vol_{g(0)}(M)\int_0^1 s^{-\BETA}\,ds\Big)^{(1-q)/2} \Big(\int_0^1 s^\ETA\int_M  |R|^2\,d\vol_{g(s)}\,ds\Big)^{(1+q)/2}\\
	&\leq C(q)(\ETA+\delta)^{(1+q)/2} \vol_{g(0)}(M).
\end{split}\end{equation}
 So, pairing this estimate with \eqref{eqn: scalar int bound intermediate}, \eqref{2} and \eqref{3} we find that
\begin{align}\label{eqn:  final scalar int bound}
	\fint_M f^q \,d\vol_{g(0)}& \leq C(\delta + \ETA)^q + C\delta^q +C(\ETA+\delta)^{(1+q)/2},
\end{align}
where $C$ depends on $q$ and $n$.
Choose $\delta$ sufficently small so that the right-hand side of \eqref{eqn:  final scalar int bound} is bounded above by $\e/2$. Recalling that $f\geq R_+$, this concludes the proof.
\end{proof}

\section{Decomposition theorem} \label{sec: decomposition}
The main goal of this section is to establish the Decomposition Theorem~\ref{thm: decomposition theorem} below.  The end purpose of this decomposition is to allow us to gain $W^{1,p}$-control on our initial manifold for large but finite $p<\infty$.  Thus, Theorem~\ref{thm: decomposition theorem} will be an essential tool used to prove Theorem~\ref{claim1}. 
The integral estimate for Ricci curvature established in Theorem~\ref{thm: integral bounds for Ricci curvature} is the key estimate in the proof.

Before stating the Decomposition Theorem precisely, let us give an informal description of its contents. Given a complete Ricci flow satisfying $-\delta$ lower bounds on the scalar curvature and the  entropy,  each ball $B_{g(1)}(x_0, 2)$ can be decomposed as a countable union of  ``good sets'' $\mathcal{G}^k$ and a ``bad set'' $\mathcal{A}$. The bad set has measure zero and on the $k$th good set, the metrics $g(0)$ and $g(1)$ are equivalent up to an error of size $(1+\e)^k$. Furthermore, the volumes of the $\mathcal{G}^k$ decay geometrically, and the complement of the first $k$ good sets satisfies a geometrically decaying content bound.

In fact, if we restrict the time to compare $g(0)$ and $g(t)$ for $t$ small, then we can obtain the same kind of decomposition with smaller error.

 \begin{theorem}[Decomposition Theorem]\label{thm: decomposition theorem}
 For each $\e>0$ there exists $\delta= \delta(n,\e)>0$, such that the following holds. 
 Let $(M,g(t))_{t \in (0,1]}$ be a complete Ricci flow with bounded curvature satisfying  
  \begin{align}
\label{eqn: scalar lower bound flow5} 	R_{g(0)} & \geq -\delta \\
 \label{eqn: entropy lower bound flow5}	\nu(g(0),2) & \geq -\delta.
 \end{align}
Fix $x_0 \in M$ and $\eta \leq \e$. There exists $\hat{t}= \hat{t}(n,\eta) \in (0,1]$ such that every ball $B_{g(1)}(x_0,2)$ can be decomposed into good sets $\mathcal{G}^k$ and a bad set $\mathcal{A}$ in the following way:
 \begin{equation}\label{eqn: decomposition}
 	B_{g(1)}(x_0,2) = \bigcup_{k=1}^\infty \mathcal{G}^{k} \cup \mathcal{A}
 \end{equation} 	
 where
 \begin{enumerate}
 	\item\label{item: decomp A vol}  $\vol_{g(0)}(\mathcal{A}) = 0.$
 	\item\label{item: decomp G bounds} For all $x \in \mathcal{G}^{k}$ and for all $s,t\in (0,\hat t]$, the metrics satisfy 
 	\begin{equation}
 		{(1-\eta)(1-\e)^{k-1} g(s) \leq g( t) \leq (1-\eta)(1+\e)^{k-1} g(s)}
	\end{equation}  
 	\item\label{item: decomp volume bounds G} For each $k\geq 2$, we have 
 {	$\vol_{g(0)}(\mathcal{G}^{k}) \leq (1+\e)^{k}\eta \e^{k-2}.$}
 	\item\label{item: decomp Ak bounds}  
 	For each $k\in \mathbb{N}$, let $\mathcal{A}^k = B_{g(1)}(x_0, 2) \, \Big\backslash \,  \bigcup_{\ell=1}^k \mathcal{G}^\ell$ be the complement of the first $k$ good sets. There is a countable collection $\mathcal{C}^k$ and a mapping $y\mapsto t_y$ for $y \in \mathcal{C}^k$ such that 
 	\begin{equation}\label{eqn: content bound 1}
 	 	\mathcal{A}^k \subseteq \bigcup_{y\in\mathcal{C}^k	} B_{g(t_y)}(y, 12t_y^{1/2}),
 	\end{equation}
with $\sum_{y\in\mathcal{C}^k} t_y^{n/2}\leq \eta \e^{k-1}.$
 \end{enumerate}
 When $\eta = \e$, then we may take $\hat{t} =1$.
 \end{theorem}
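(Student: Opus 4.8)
The plan is to build the decomposition directly from the integral Ricci estimate of Theorem~\ref{thm: integral bounds for Ricci curvature} together with the scale-invariant curvature bounds of Theorem~\ref{prop: uniform existence time and small curvature estimates}. The key quantitative input is that, after choosing $\delta$ small, we have $\ETA$ as small as we like and, for a weight exponent $\BETA\in[0,1/2)$ to be calibrated against $\e$, the bound
\begin{equation*}
\int_0^1 \left(\tfrac{s}{1}\right)^{-\BETA} \fint_{B_{g(1)}(x_0,4)} |\Ric_{g(s)}|\,d\vol_{g(s)}\,ds \leq \e_0^2
\end{equation*}
holds with $\e_0$ controllable. First I would translate this into control on the pointwise logarithmic distortion of the metric: from $\pa_s g = -2\Ric$, for $x \in B_{g(1)}(x_0,2)$ the quantity $\Phi(x):=\int_0^{\hat t} s^{-\BETA}|\Ric_{g(s)}|(x)\,ds$ (or its weighted variant) controls $\sup_{s,t\le\hat t}\|\log(g(t)/g(s))\|(x)$ up to a constant, since $\int_0^{\hat t}|\Ric_{g(s)}(x)|\,ds \le \hat t^{\BETA}\Phi(x)$ and $\hat t^{\BETA}$ can be made small by shrinking $\hat t$ (this is exactly where the freedom to choose $\hat t=\hat t(n,\eta)$ small enters, and why when $\eta=\e$ one does not need to shrink and can take $\hat t=1$). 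The averaged bound says $\fint_{B_{g(1)}(x_0,4)}\Phi\,d\vol_{g(t)}$ is small uniformly in $t\le 1$, using \eqref{eqn: volume forms} to pass between time slices.

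Next I would run a stopping-time / dyadic decomposition on the superlevel sets of $\Phi$. Define $\mathcal{G}^k$ to be (essentially) the set of $x\in B_{g(1)}(x_0,2)$ where $\Phi(x)$ lies in a geometric window forcing $(1-\e)^{k-1}\le$ distortion $\le (1+\e)^{k-1}$, and $\mathcal{A}$ the set where $\Phi=+\infty$; part~\eqref{item: decomp A vol} follows since $\Phi\in L^1$. Part~\eqref{item: decomp G bounds} is then immediate from the $\pa_s g=-2\Ric$ integration once the windows are chosen so that the $k$th window contributes at most $\log(1+\e)$ (after incorporating the $(1-\eta)$ safety factor coming from $\hat t^{\BETA}$ smallness). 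Part~\eqref{item: decomp volume bounds G}, the geometric decay of $\vol_{g(0)}(\mathcal{G}^k)$, is Chebyshev: $\vol(\{\Phi \ge c^{k}\})\le c^{-k}\|\Phi\|_{L^1}$, and the $L^1$ norm being $\lesssim \eta$ (choose $\delta$ accordingly) yields the stated $(1+\e)^k\eta\e^{k-2}$ after bookkeeping the window sizes; I'd choose the geometric ratio of the windows to be comparable to $\e$.

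The remaining and, I expect, most delicate point is part~\eqref{item: decomp Ak bounds}: upgrading the measure bound on $\mathcal{A}^k=B_{g(1)}(x_0,2)\setminus\bigcup_{\ell\le k}\mathcal{G}^\ell$ to a \emph{content} (covering) bound $\sum_{y}t_y^{n/2}\le \eta\e^{k-1}$ with the covering by parabolic-scale balls $B_{g(t_y)}(y,12 t_y^{1/2})$. Here I would use a Vitali-type covering argument at varying scales: for each $y\in\mathcal{A}^k$, the definition of $\mathcal{A}^k$ means $\Phi$ is large on a ball around $y$ (not just at $y$) — more precisely one should work with a maximal-function version, letting the scale $t_y$ be chosen by a stopping condition on $\fint_{B_{g(t_y)}(y, c t_y^{1/2})} \Phi$, exploiting that at the regularity scale $\sqrt t$ the metrics $g(t)$ and $g(1)$ are uniformly equivalent (Lemma~\ref{rmk: good charts under regularity scale}) so Euclidean-type volume comparisons $\vol_{g(t_y)}(B_{g(t_y)}(y,r))\sim r^n$ hold. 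A Vitali $5r$-covering (here $12=5\cdot 2 + $ slack, with the factor accounting for passing between $g(t_y)$- and $g(1)$-balls via \eqref{eqn: bad containment of balls} of Lemma~\ref{lem: bad bound}) then gives disjoint balls $B_{g(t_{y_i})}(y_i, \sim t_{y_i}^{1/2})$ on each of which $\fint \Phi \gtrsim c^k$, so $\sum t_{y_i}^{n/2} \lesssim c^{-k}\int_{B_{g(1)}(x_0,4)}\Phi \lesssim c^{-k}\eta$; rescaling the geometric ratio gives $\eta\e^{k-1}$. The care needed is to ensure the stopping scales stay $\le$ some $\hat t$ and that the enlarged balls still sit inside $B_{g(1)}(x_0,4)$, which is where \eqref{eqn: basic containment} is used. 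Finally, the case $\eta=\e$: since then no extra smallness beyond $\e$ is demanded of the windows, one never needs $\hat t^{\BETA}$ small, so $\hat t=1$ works and one runs the same argument with the unweighted ($\BETA$ chosen $0$, or absorbed) estimate on $[0,1]$.
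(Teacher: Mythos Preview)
Your outline assembles the right ingredients—the integral Ricci estimate, Vitali coverings at varying parabolic scales, the role of $\hat t$—but there is a genuine quantitative gap that a single level-set/Chebyshev argument cannot close. The theorem demands two things simultaneously: on $\mathcal{G}^k$ the metric distortion is at most $(1+\e)^{k-1}$, i.e.\ $\int_0^{\hat t}|\Ric_{g(s)}(x)|\,ds\lesssim (k-1)\e$; and the volume/content of $\mathcal{A}^k$ decays like $\e^{k-1}$. If you define $\mathcal{G}^k$ via windows of the pointwise quantity $\Phi(x)=\int_0^{\hat t}s^{-\BETA}|\Ric_{g(s)}(x)|\,ds$ and invoke Chebyshev, then to force $\vol(\{\Phi>T_k\})\lesssim \e^{k}$ from an $L^1$ bound you need $T_k\gtrsim \e^{-k}$, growing \emph{geometrically} in $k$. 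But then the distortion on $\mathcal{G}^k$ is $\lesssim \hat t^{\BETA}T_k\sim \hat t^{\BETA}\e^{-k}$, which blows up exponentially rather than staying $\lesssim (k-1)\e$. Your proposal is internally inconsistent on exactly this point: you first say the windows are chosen so that distortion $\le(1+\e)^{k-1}$ (forcing $T_k$ linear in $k$), then say you would ``choose the geometric ratio of the windows to be comparable to $\e$'' (forcing $T_k\sim\e^{-k}$). No choice of $\hat t$ reconciles these for all $k$.

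The paper resolves this by an \emph{iterative multiscale} scheme, not a single Chebyshev. One defines a stopping time $t(x)$ as the first scale at which the \emph{averaged} Ricci on the parabolic ball $\uB_t(x)=B_{g(t)}(x,4t^{1/2})$ exceeds $\e\, t^{\BETA/2-1}$; the good set is $\{t(x)=0\}$ and the bad set is covered, via a Vitali lemma at varying scales, by the stopping balls $\uB_{t_y}(y)$. Two ingredients you are missing are essential. First, a parabolic mean-value inequality (Moser iteration for the subsolution $|\Ric|$ under $|\Rm|\le\ETA/t$) converts the averaged stopping condition into a \emph{pointwise} bound $|\Ric_{g(s)}|\le C\e\, s^{\BETA/2-1}$ on the slightly smaller cylinder, whose integral in $s$ gives the single $(1\pm\e)$ distortion at that stage. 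Second—and this is what produces exponential content decay—the integral Ricci estimate of Theorem~\ref{thm: integral bounds for Ricci curvature} is \emph{scale invariant}, so it can be re-applied on each stopping ball $\uB_{t_y}(y)$ to yield a new good/bad split with $\sum_z t_z^{n/2}\le \e\, t_y^{n/2}$. Iterating, the content of $\mathcal{A}^k$ gains a fresh factor of $\e$ at every level, while the distortion accumulates only one additional $(1\pm\e)$ factor because the \emph{same} scale-invariant estimate (not a worse one) controls each new generation. Your approach applies the estimate once and tries to read off all $k$; an $L^1$ bound alone cannot manufacture geometric decay in $k$. The refinement $\eta<\e$ then requires one further idea: an improved integral Ricci estimate (obtained by shrinking $\hat t$ and a maximal-function argument over centers) that makes the first-level content $\lesssim\eta$ rather than $\lesssim\e$, after which the same iteration runs.
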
 
\begin{remark}{\rm
The effect of $\eta>0$	in the above theorem is that for small $t$, one can force the bad sets comparing $g(0)$ to $g(t)$ to have decreasingly small volume.  In this way one gets that $g(t)$ is converging to $g(0)$ in various norms.
}
\end{remark}


Throughout this section we use the notation 
\begin{equation}\label{eqn: scale invariant balls}
	\uB_t (x):= B_{g(t)}(x,4t^{1/2})
\end{equation}
to denote the scale invariant balls of radius $4$ and we let
\begin{equation}\label{eqn: ub}
	\uB := \uB_1(x_0).
\end{equation}
In this notation, Theorem~\ref{thm: integral bounds for Ricci curvature} states that for any $\BETA \in (0,1/2)$ and $\e>0$, we may find $\delta=\delta(n,\BETA,\e)>0$ such that, under the hypotheses of Theorem~\ref{thm: decomposition theorem}, we have
 \begin{align}\label{eqn: new conclusion}
		\int_0^t \left(\frac{s}{t}\right)^{-\BETA} \fint_{\uB_t(x)} |\Ric_{g(s)}| \, d\vol_{g(s)} \, ds 
		& \leq   \e^2.
			\end{align}

\medskip

\subsection{Preliminary results}

 As in the previous sections we have by Theorem~\ref{prop: uniform existence time and small curvature estimates} that for any $\ETA>0$, we may choose $\delta$ sufficiently small in Theorem~\ref{thm: decomposition theorem} so that
 \begin{equation}\label{eqn: tiny curvature bounds5}
 	|\Rm_{g(t)}| \leq \ETA/t\, ,
 \end{equation}
 holds for all $x \in M$ and $t \in (0,1]$. The norm of the Ricci curvature $|\Ric_{g(t)}|$ evolves along the Ricci flow by 
	\begin{equation}\label{eqn: evolution of Ricci}
		(\pa_t -\Delta)|\Ric_{g(t)}| \leq c_n |\Rm_{g(t)}| |\Ric_{g(t)}|\, ;
	\end{equation}
see \cite[Lemma 6.38]{ChowKnopfBOOK}. For a Ricci flow satisfying \eqref{eqn: tiny curvature bounds5}, the evolution \eqref{eqn: evolution of Ricci} becomes 
\begin{equation}
	\Big(\pa_t -\Delta -\frac{c_n\ETA}{t}\Big)|\Ric_{g(t)}| \leq 0\, .
\end{equation}
That is to say, when $t$ is uniformly bounded away from zero, the norm of the Ricci curvature evolves as a sub-solution of a heat-type equation with smooth bounded potential. Note that \eqref{eqn: tiny curvature bounds5} provides uniform lower bounds for the Ricci tensor when $t$ is bounded away from zero, a necessary ingredient for establishing parabolic regularity estimates. 
So, after rescaling the metric, a standard Moser iteration argument (along with a trick of Li and Schoen
\cite{LiSchoen} to pass from the $L^2$ norm to the $L^1$ norm) leads to the following pointwise estimates for the norm of the Ricci curvature; see \cite[Theorem 25.2]{Chow3} for a proof. 
\begin{proposition}\label{prop: pointwise estimates} Fix $n\geq 2$. There exist  constants $C=C(n)$ and $\lambda_0(n)$ such that if $(M,g(t))_{t \in (0,1]}$ is a Ricci flow satisfying \eqref{eqn: tiny curvature bounds5} with $\ETA \leq \ETA_0(n)$, then for any $t \in (0,1]$ we have
	\begin{align}\label{eqn: pointwise estimates}
	|\Ric_{g(s)}(y)| \leq C\fint_{t/4}^t \fint_{\uB_t(x)} |\Ric_{g(s)}| \, d\vol_s(y) \, ds 
	\end{align}
for all $(y,s) \in B_{g(t)}\big(x,3t^{1/2}\big) \times (t/2, t)$.
\end{proposition}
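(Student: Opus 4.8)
The plan is to reduce the statement to a standard parabolic mean value inequality for subsolutions of a heat-type equation with bounded potential, applied after a parabolic rescaling that normalizes the time interval to unit size. First I would fix $t\in(0,1]$ and introduce the rescaled flow $\tilde g(\sigma) = t^{-1} g(t\sigma)$ for $\sigma\in(0,1]$, so that on the rescaled flow the scale-invariant bound \eqref{eqn: tiny curvature bounds5} reads $|\Rm_{\tilde g(\sigma)}|\le \ETA/\sigma$, and in particular $|\Rm_{\tilde g(\sigma)}|\le 4\ETA$ for $\sigma\in[1/4,1]$. On this interval the evolution inequality \eqref{eqn: evolution of Ricci} becomes $(\pa_\sigma - \Delta)|\Ric_{\tilde g(\sigma)}| \le 4c_n\ETA\, |\Ric_{\tilde g(\sigma)}|$, i.e. $u:=|\Ric|$ is a nonnegative subsolution of $(\pa_\sigma - \Delta - V)u\le 0$ with $\|V\|_\infty \le 4c_n\ETA$ on the parabolic region $P:=\uB_1^{\tilde g}(x)\times[1/4,1]$, where the relevant balls are now at unit scale. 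The curvature bound $|\Rm_{\tilde g(\sigma)}|\le 4\ETA$ on $[1/4,1]$ also gives two-sided control of the metric in time (e.g. via Lemma~\ref{lem: bad bound}) and, with Lemma~\ref{rmk: good charts under regularity scale} or Perelman no-local-collapsing, a lower volume bound, so $(P,\tilde g)$ has bounded geometry uniformly in the sequence.

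Next I would invoke the standard Moser iteration / parabolic mean value inequality for such subsolutions: there exist $C=C(n)$ and $\ETA_0(n)$ such that if $\ETA\le\ETA_0$ then
\begin{equation*}
\sup_{(y,\sigma)\in B_{\tilde g(1)}(x, 3)\times[1/2,1)} u(y,\sigma) \le C \int_{1/4}^{1}\fint_{\uB_1^{\tilde g}(x)} u(\cdot,\sigma)\, d\vol_{\tilde g(\sigma)}\, d\sigma,
\end{equation*}
with the $L^1$ (rather than $L^2$) average on the right obtained by the Li--Schoen iteration trick exploiting that $u$ is a subsolution and using a telescoping over shrinking parabolic cylinders together with the $\e$-Poincaré and Sobolev inequalities valid on $P$ from bounded geometry. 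This is exactly the content of \cite[Theorem 25.2]{Chow3}, so I would cite it rather than reproduce the iteration; one only needs to check that the hypotheses there (a heat-type subsolution with a potential bounded in terms of $\ETA$, on a flow with controlled geometry on the relevant time interval) are met, which follows from the reductions above.

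Finally I would undo the rescaling: points $(y,\sigma)\in B_{\tilde g(1)}(x,3)\times[1/2,1)$ correspond to $(y,s)\in B_{g(t)}(x, 3t^{1/2})\times(t/2,t)$, the quantity $u=|\Ric_{\tilde g(\sigma)}|$ rescales to $t\,|\Ric_{g(s)}|$, and the spacetime integral $\int_{1/4}^1\fint_{\uB_1^{\tilde g}(x)}|\Ric_{\tilde g(\sigma)}|\,d\vol\,d\sigma$ rescales to $t\,\fint_{t/4}^{t}\fint_{\uB_t(x)}|\Ric_{g(s)}|\,d\vol_{g(s)}\,ds$; the common factor of $t$ cancels, yielding \eqref{eqn: pointwise estimates}. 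The main obstacle is the bookkeeping in the Moser iteration to get the scale-invariant constant $C(n)$ independent of $t$ and of the particular flow — but since \eqref{eqn: tiny curvature bounds5} is precisely scale-invariant and the rescaled problem has uniformly bounded geometry on $[1/4,1]$, this is routine, and indeed is already carried out in \cite[Theorem 25.2]{Chow3}; so in practice the proof reduces to verifying the hypotheses and performing the rescaling bookkeeping.
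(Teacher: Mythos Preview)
Your proposal is correct and follows essentially the same approach as the paper: rescale so that the relevant time interval has unit size, observe that $|\Ric|$ is a subsolution of a heat-type equation with bounded potential (from \eqref{eqn: evolution of Ricci} and \eqref{eqn: tiny curvature bounds5}), and then invoke the parabolic Moser iteration together with the Li--Schoen trick to obtain the $L^1$ mean value inequality, citing \cite[Theorem 25.2]{Chow3}. The paper likewise does not reproduce the iteration but simply points to this reference after noting the rescaling and the subsolution structure.
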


In the proof of Theorem~\ref{thm: decomposition theorem}, we will need the following Vitali-type lemma. The difference from a usual Vitali cover is that the balls are not taken with respect to a fixed metric, but rather the covering comprises geodesic balls with respect different time slices $g(t)$ along a Ricci flow. At various points in the proof, we will call upon the elementary containments of balls established in Lemma~\ref{lem: bad bound}.
\begin{lemma}[Vitali-type lemma]\label{lem: vitali}
	Given $n\geq 2$, there exists $\ETA_0(n)$ such that the following holds. Let $(M,g(t))_{t \in (0,1]}$ be a Ricci flow satisfying \eqref{eqn: tiny curvature bounds5} with $\ETA \leq \ETA_0(n)$. For any $x_0 \in M$ and $t_0 \in (0,1]$, consider a set $\mathcal{A} \subseteq B_{g(t_0)}(x_0, 2t_0^{1/2})$ and a mapping $y\mapsto t_y\in(0,t_0/200]$ defined for all $y \in \mathcal{A}$. There exists a countable collection $\mathcal{C}\subseteq \mathcal{A}$ such that
	\begin{enumerate}
		\item\label{item: Vitali disjoint} The balls $\uB_{t_y}(y)$ 
	are pairwise disjoint for all $y \in \mathcal{C}$.
	\item\label{item: Vitali cover}  The collection $\{  B_{g(36 t_y)}(x, 12t_y^{1/2})\}_{y \in \mathcal{C}}$ is a covering of $\mathcal{A}$. 
	\item\label{item: Vitali containment} For each $y \in \mathcal{C}$,  $\uB_{36t_y}(y)\subseteq \uB_{t_0}(x_0)$ and $\uB_{t_y}(y)\subseteq \uB_{t_0}(x_0)$.
	\end{enumerate} 
	\end{lemma}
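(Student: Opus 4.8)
The plan is to prove this Vitali-type covering lemma by adapting the standard greedy selection argument used for the classical $5r$-covering lemma, with the main twist being that the radii of our balls are measured with respect to \emph{different} metrics $g(t_y)$ along the flow, so all comparisons between balls at different times and base points must be routed through the containments established in Lemma~\ref{lem: bad bound}. Throughout I will write $\rho_y = t_y^{1/2}$ and take $\ETA \leq \ETA_0(n)$ small enough that \eqref{eqn: bad bound general}, \eqref{eqn: bad containment of balls}, and \eqref{eqn: basic containment} are all available with $\beta$ fixed (say $\beta = 1/8$); I will also shrink $\ETA_0$ further as needed for the finitely many scale comparisons below.

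First I would set up the greedy selection. Partition $\mathcal{A}$ into the dyadic bands $\mathcal{A}_j = \{ y \in \mathcal{A} : t_0/2^{j+1} < 200\, t_y \leq t_0/2^j \}$ for $j \geq 0$. Select $\mathcal{C}_0 \subseteq \mathcal{A}_0$ to be a maximal subcollection such that the balls $\uB_{t_y}(y) = B_{g(t_y)}(y, 4\rho_y)$ are pairwise disjoint; maximality is achieved by Zorn's lemma, and since each such ball has $g(t_y)$-volume bounded below by $c_n \rho_y^n \geq c_n (t_0/200\cdot 2)^{n/2}$ (using \eqref{eqn: under reg scale euclidean volumes} from Lemma~\ref{rmk: good charts under regularity scale}, valid since $4\rho_y \leq 16 t_y^{1/2}$) while all these balls sit inside the fixed precompact region $\uB_{t_0}(x_0)$ by \eqref{eqn: basic containment} (after noting $t_y \leq t_0/2^5$), the collection $\mathcal{C}_0$ is countable. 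Inductively, having chosen $\mathcal{C}_0,\dots,\mathcal{C}_{j-1}$, let $\mathcal{C}_j \subseteq \mathcal{A}_j$ be a maximal subcollection subject to: the balls $\uB_{t_y}(y)$, $y \in \mathcal{C}_j$, are pairwise disjoint and each is disjoint from every $\uB_{t_z}(z)$ with $z \in \mathcal{C}_0 \cup \dots \cup \mathcal{C}_{j-1}$. Set $\mathcal{C} = \bigcup_{j\geq 0}\mathcal{C}_j$. By construction this gives property \eqref{item: Vitali disjoint}, and property \eqref{item: Vitali containment} follows from \eqref{eqn: basic containment} applied to both $\uB_{t_y}(y)$ and $\uB_{36 t_y}(y)$ once $\ETA_0$ is small enough that $36 t_y \leq t_0/2^5$ holds for all $y\in\mathcal{A}$ (which is fine since $t_y \leq t_0/200$, so $36 t_y \leq t_0 \cdot 36/200 < t_0/5$ — I would then just require $\beta$ small enough in the relevant exponent of \eqref{eqn: immediate containment} to absorb the factor, or simply cite \eqref{eqn: basic containment} after the trivial rescaling).

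The heart of the argument is property \eqref{item: Vitali cover}: every $y \in \mathcal{A}$ lies in $B_{g(36 t_z)}(z, 12\rho_z)$ for some $z \in \mathcal{C}$. Fix $y \in \mathcal{A}$, say $y \in \mathcal{A}_j$. If $y \in \mathcal{C}_j$ we are done since $y \in B_{g(36 t_y)}(y, 12\rho_y)$ trivially. Otherwise, by maximality of $\mathcal{C}_j$, the ball $\uB_{t_y}(y)$ must intersect some $\uB_{t_z}(z) = B_{g(t_z)}(z, 4\rho_z)$ with $z \in \mathcal{C}_0 \cup \dots \cup \mathcal{C}_j$. For such a $z$ we have $200 t_z > t_0/2^{j+1} \geq 200 t_y / 2$, hence $\rho_z > \rho_y/\sqrt{2}$, so $\rho_z$ is at least comparable to $\rho_y$ (up to a $\sqrt 2$ factor, and $t_z \geq t_y/2$). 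Pick a point $w \in \uB_{t_y}(y) \cap \uB_{t_z}(z)$. Using \eqref{eqn: bad containment of balls} and \eqref{eqn: bad bound general} with $\beta = 1/8$ to pass all distances up to the time slice $g(36 t_z)$ (noting $t_y \leq t_z \cdot 2 \leq 36 t_z$ and $4\rho_z \leq 16\sqrt{t_z} $, and all balls involved remain inside $\uB_{t_0}(x_0)$ where the metric comparison is controlled), we estimate
\begin{equation}
d_{g(36 t_z)}(z,y) \leq d_{g(36 t_z)}(z, w) + d_{g(36 t_z)}(w, y) \leq (36/t_z)^{\beta} t_z^{\beta}\big( 4\rho_z\cdot 6^{2\beta} + 4\rho_y \cdot 6^{2\beta}\big),
\end{equation}
and after plugging in $\rho_y < \sqrt 2\, \rho_z$ and $\beta = 1/8$ the right-hand side is bounded by $6^{1/4}\cdot 4(1+\sqrt2)\rho_z < 12\rho_z$ once $\ETA_0$ is chosen small enough (the numerical constants $6^{1/4} \approx 1.565$ and $4(1+\sqrt2)\approx 9.66$ multiply to roughly $15$, so I would actually tighten the band definition, replacing $200$ by a larger constant $\Lambda(n)$ so that $\rho_y/\rho_z$ is as close to $1$ as needed, or equivalently use finer dyadic bands — this is a routine adjustment). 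This shows $y \in B_{g(36 t_z)}(z, 12\rho_z)$ and establishes \eqref{item: Vitali cover}.

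The main obstacle I anticipate is \emph{not} the topology of the argument but rather keeping the numerical constants consistent: the covering radius is prescribed as exactly $12 t_y^{1/2}$ and the disjoint balls have radius exactly $4 t_y^{1/2}$, so the ratio $12/4 = 3$ must accommodate both the triangle-inequality loss ($4\rho_z + 4\rho_y$ when $\rho_z \approx \rho_y$, giving factor $\approx 8$, already too big unless $\rho_z \gg \rho_y$) and the metric-distortion loss from comparing $g(t_y)$, $g(t_z)$ and $g(36 t_z)$. This forces the dyadic bands to be chosen fine enough (constant $\Lambda(n)$ large) that $\rho_z / \rho_y$ is bounded below by something like $2$, not merely $1/\sqrt2$; with $\rho_z \geq 2\rho_y$ the loss becomes $4\rho_z + 4\rho_y \leq 6\rho_z$, and the remaining slack $12/6 = 2$ comfortably absorbs the $6^{2\beta}$ and $(36)^\beta$ distortion factors once $\beta = 1/8$ (so $6^{1/4}\cdot 36^{1/8} < 1.9$). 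I would carry out this bookkeeping carefully at the start by fixing $\beta$ and then $\Lambda(n)$, and only afterwards run the selection and covering argument; everything else — countability via volume lower bounds, the containments in $\uB_{t_0}(x_0)$ — is a direct application of Lemmas~\ref{lem: bad bound} and \ref{rmk: good charts under regularity scale}.
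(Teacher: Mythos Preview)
Your approach is the paper's: dyadic bands in $t_y$, greedy maximal disjoint selection level by level, then triangle inequality plus Lemma~\ref{lem: bad bound} for the covering; the paper rescales to $t_0=1$, sets $F_k = \{\uB_{t_y}(y): t_y \in (2^{-k-1},2^{-k}]\}$, and selects $G_k \subseteq F_k$ maximally subject to disjointness from $G_0\cup\cdots\cup G_{k-1}$.

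Where your argument goes wrong is the numerical fix you propose for the covering constant. You cannot force $\rho_z/\rho_y \geq 2$ by any choice of dyadic bands: whenever $z$ and $y$ land in the \emph{same} band the ratio is pinned near $1$, and coarsening the bands only makes the worst-case lower bound on $\rho_z/\rho_y$ smaller, not larger. The hypothesis constant $200$ is fixed and irrelevant here; it enters only via \eqref{eqn: basic containment} for property~\eqref{item: Vitali containment}. The paper's resolution is to do the triangle inequality at time $t_z$ first --- in the same-band case $d_{g(t_z)}(y,w) \leq 2^{\beta/2}\cdot 4\rho_y \leq 2^{\beta/2}\cdot 4\sqrt{2}\,\rho_z$, while in the earlier-band case $t_y \leq t_z$ and \eqref{eqn: bad containment of balls} gives $\uB_{t_y}(y)\subseteq B_{g(t_z)}(y,4\rho_z)$ directly --- yielding $d_{g(t_z)}(z,y) < 10\rho_z$ in both cases, and only then to pass to time $36t_z$ with a single distortion factor $36^{\beta/2}$. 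One needs $10\cdot 36^{\beta/2} \leq 12$, which holds once $\beta$ is small enough, and this is precisely why $\ETA_0(n)$ is yours to choose. The slack comes from shrinking $\ETA_0$, not from reorganizing the bands.
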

\begin{definition}\label{def: covering pair}
	We call a pair $(\mathcal{C}, y\mapsto t_y)$ satisfying \eqref{item: Vitali disjoint}-\eqref{item: Vitali containment} a {\it covering pair} of $\mathcal{A}$ in $\uB_{t_0}(x_0)$.
\end{definition}
	\begin{proof}[Proof of Lemma~\ref{lem: vitali}] Up to rescaling the flow, we may assume that $t_0 = 1.$
	The inductive construction of the cover is similar to a standard Vitali covering argument. For each $k\in \mathbb{N}$, let 
	\begin{equation}
		F_k = \left\{ \uB_{t_y}(y)
		: y \in \mathcal{A}, t_y \in (2^{-k-1}, 2^{-k}]\right\}.
	\end{equation}	
	Set $H_0=F_0$ and let $G_0$ be a maximal disjoint subcollection of $H_0.$ Now, suppose we have defined $G_0, \dots G_{k-1}$. Then let 
	\begin{equation}
		H_{k} =\left\{ B\in F_{k} : B\cap B' = \emptyset ,\ \forall B' \in G_0\cup \dots \cup G_{k-1}\right\},
	\end{equation}
	and take $G_{k}$ to be a maximal disjoint subcollection of $H_{k}$. Note that $G_k$ contains finitely many balls. We define the countable set $\mathcal{C} \subseteq \mathcal{A}$ by
		\begin{equation}
				\mathcal{C} = \bigcup_{k=1}^\infty \left\{ y \in \mathcal{A} : \uB_{t_y}(y)\in G_k\right\}.
	\end{equation}
	
	Let us verify that the three properties claimed in the lemma are valid.  Lemma~\ref{lem: vitali}\eqref{item: Vitali disjoint} holds by construction, and both parts of Lemma~\ref{lem: vitali}\eqref{item: Vitali containment}  follow directly from \eqref{eqn: basic containment} in 
	Lemma~\ref{lem: bad bound} thanks to the assumption that $t_y \leq 1/200$.

	To establish Lemma~\ref{lem: vitali}\eqref{item: Vitali cover}, fix any $x \in \mathcal{A}$ and let $k\in \mathbb{N}$ be chosen so that $t_x \in (2^{-k-1}, 2^{-k}]$. Then either $ \uB_{t_x}(x) \in H_k$ or not. In the first case, since $G_k$ is a maximal set, we know that $\uB_{t_x}(x)$ intersects some $\uB_{t_y}(y)\in G_k$ (where possibly $x=y$). In this case $t_y/2 \leq t_x \leq 2t_y$. So, if we take $z \in  \uB_{t_x}(x)\cap \uB_{t_y}(y)$, the triangle inequality and  \eqref{eqn: bad bound general} imply that 
\begin{equation}
\begin{split}
		d_{g(t_y)}(x, y)& \leq d_{g(t_y)}(y, z) +d_{g(t_y)}(x, z) \\
		&\leq 4t_y^{1/2} + 4^{\lambda+1} t_x^{1/2}\\
		&  \leq 10 t_y^{1/2}.
\end{split}\end{equation}
	The final inequality holds provided we have taken $\ETA$ sufficiently small. 
	In the second case, when $\uB_{t_x}(x) \not \in H_k$, we see that $\uB_{t_x}(x)$ must intersect some $\uB_{t_y}(y) \in H_\ell$ with  $\ell\in\{1,\dots, k-1\}$. Then, since $t_x \leq t_y,$ we find by \eqref{eqn: bad containment of balls} that $\uB_{t_x}(x)\subseteq B_{g(t_y)}(x,5t_y^{1/2})$. In particular, $B_{g(t_y)}(x,5t_y^{1/2})$ and $\uB_{t_y}(y)$ intersect nontrivially, and thus by the triangle inequality $x \in B_{g(t_y)}(y, 10t_y^{1/2})$.

	So, in both the first and second cases, we have $x \in B_{g(t_y)}(y, 10t_y^{1/2})$ for some $y \in \mathcal{C}$. In order to complete the proof of (2), 
we apply \eqref{eqn: bad bound general} to find that
	\begin{equation}\begin{split}
			B_{g(t_y)}(y, 10t_y^{1/2} )
		 &\subseteq B_{g(36t_y)}(y, 2(36t_y)^{1/2} ),
	\end{split}
	\end{equation}
where the final containment holds provided we choose $\ETA$ small enough.
This completes the proof of the lemma.
			\end{proof}
\medskip

\subsection{Good and bad sets on an arbitrary ball}\label{subsec: good bad}
Throughout this section, we fix $\e\in(0,1)$ and $\BETA \in (0,1/2)$ we assume that $(M,g(t))_{t\in[0,1]}$ is a Ricci flow satisfying \eqref{eqn: scalar lower bound flow5} and \eqref{eqn: entropy lower bound flow5} with $\delta$ chosen according to Theorem~\ref{thm: integral bounds for Ricci curvature}.

For a ball $\uB$,  we define the  {\it stopping time} $t(x)$ for each $x \in  B_{g(1)}(x_0, 2)$ by
\begin{equation}\label{eqn: stopping time definition}
\begin{split}
t(x) = \inf \Big\{ t'\leq 1/200 \, :\, \fint_{t/4}^t \fint_{{\uB}_t(x)}& |\Ric_{g(s)}| \, d\vol_{g(s)} \, ds 
	 \leq t^{\BETA/2-1}\e \quad \forall t \in [t' , 1/200]\Big\}.	
\end{split}
\end{equation}
Observe that, provided we take $\e < 200^{-2}$, applying \eqref{eqn: new conclusion} with $t=200^{-1}$ ensures that
\begin{align}\label{eqn: condition at 200-2}
\fint_{1/800}^{1/200} &\fint_{{\uB}_{1/200}(x)}|\Ric_{{g}(t)}| \, d\vol_{g(t)}\,dt  \leq 200^{\BETA/2-1} \e,
\end{align}
so the stopping condition holds at $t'=1/200$. 

For a  ball $\uB_{t_0}(x_0)$ with $t_0<1$, we define the  stopping time $t(x)$  by
\begin{equation}
	t(x) = t_0\, \tilde{t}(x)
\end{equation}
for each $x \in  B_{g(t_0)}(x_0, 2t_0^{1/2})$, where $\tilde{t}(x)$ is the stopping time defined in \eqref{eqn: stopping time definition} applied to the rescaled flow $	\tilde{g}(t) = t_0^{-1} g(t_0\,t).$

The good and bad sets on $\uB_{t_0}(x_0)$, respectively, are defined by
\begin{equation}
\label{eqn: good and bad set def}
\begin{split}
\mathcal{G}(\uB_{t_0}(x_0)) & = \left\{ x \in B_{g(t_0)}(x_0, 2t_0^{1/2}) \ :\ t(x) = 0\right\},\\
\mathcal{A}(\uB_{t_0}(x_0)) &  = \left 	\{ x \in B_{g(t_0)}(x_0, 2t_0^{1/2})\ :\ t(x) > 0\right\}.	
\end{split}	
\end{equation}

In the following proposition, we establish estimates on the good and bad sets that will be iteratively applied to establish Theorem~\ref{thm: decomposition theorem}. When convenient, we adopt the shorthand $t_x = t(x)$.
\begin{proposition}\label{prop: good and bad} Fix $n\geq 2$, $\e\in (0,1)$, and $\theta\in(0,1/2)$. There exists $\delta =\delta(n,\e,\theta)>0$ such that the following holds. Let $(M,g(t))_{t\in [0,1]}$ be a Ricci flow satisfying \eqref{eqn: scalar lower bound flow5} and \eqref{eqn: entropy lower bound flow5}. Fix  $\uB_{t_0}(x_0).$
\begin{enumerate}
	\item\label{item: goodbad good} For any $x \in \mathcal{G}(\uB_{t_0}(x_0))$
	and for any $s,s' \in [0,t_0]$, the metrics $g(s)$ and $g(s')$ at $x$ satisfy
	\begin{equation}\label{eqn: 0 to 1}
		(1-\e) g(s) \leq g(s') \leq (1+\e) g(s).
	\end{equation}
\item\label{item: goodbad bad}
 Suppose $x \in \mathcal{A}(\uB_{t_0}(x_0))$ and fix $s_0\in [ t_x, t_0]$. 
Then  for all $s, s' \in [s_0, t_0]$ and $y \in B_{g(s_0)}(x,2s_0^{1/2})$, the metrics $g(s)$ and $g(s')$ at $y$ satisfy 
	\begin{equation}\label{eqn: bound at stopping time}
		(1-\e)g(s) \leq g(s') \leq (1+\e)g(s).
	\end{equation}

\item\label{item: goodbad content}  There is a countable collection $\mathcal{C} =\mathcal{C} (\uB_{t_0}(x_0))\subseteq \mathcal{A}(\uB_{t_0}(x_0))$ such that  $(\mathcal{C} , t_y)$ is a covering pair for $\mathcal{A}(\uB_{t_0}(x_0))$ in the sense of Definition~\ref{def: covering pair}, where $t_y=t(y)$ is the stopping time, and 
	\begin{equation}\label{eqn: content bound}
		\sum_{y \in \mathcal{C}} {t}_y^{(n-\BETA)/2} \leq  \e \,t_0^{(n-\BETA)/2} \,.
	\end{equation}
\end{enumerate}
	\end{proposition}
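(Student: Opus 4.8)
\textbf{Proof plan for Proposition~\ref{prop: good and bad}.}

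The plan is to reduce to the case $t_0 = 1$ by the scale-invariance built into the definition of the stopping time, so throughout we work with $\uB = \uB_1(x_0)$ and the stopping time \eqref{eqn: stopping time definition}. The three parts are proved in order, each using the defining stopping inequality together with the pointwise Ricci estimate of Proposition~\ref{prop: pointwise estimates}. For part~\eqref{item: goodbad good}: if $x \in \mathcal{G}(\uB)$ then $t(x) = 0$, so the averaged Ricci bound $\fint_{t/4}^t \fint_{\uB_t(x)} |\Ric_{g(s)}|\, d\vol_{g(s)}\, ds \leq t^{\BETA/2-1}\e$ holds for all $t \in (0, 1/200]$. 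Feeding this into \eqref{eqn: pointwise estimates} gives a pointwise bound $|\Ric_{g(s)}(x)| \leq C t^{\BETA/2 - 1}\e$ for $s \in (t/2, t)$; integrating $\partial_s g(v,v) = -2\Ric(v,v)$ in $s$ over dyadic ranges $(2^{-k-1}, 2^{-k}]$ and summing the geometric series $\sum_k 2^{-k(\BETA/2)}$ (which converges since $\BETA/2 > 0$) yields $|\log(g(s')(v,v)/g(s)(v,v))| \leq C\e$ for all $s, s' \in [0, 1/200]$; then \eqref{eqn: bad bound general} of Lemma~\ref{lem: bad bound} handles the range $[1/200, 1]$ with error controlled by $\ETA$. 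Choosing $\delta$ (hence $\ETA$, hence $C\e$ and the constant from Lemma~\ref{lem: bad bound}) small gives \eqref{eqn: 0 to 1} after absorbing constants.

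For part~\eqref{item: goodbad bad}: here $x \in \mathcal{A}(\uB)$ has $t_x = t(x) > 0$, and by definition of the infimum the stopping inequality holds for all $t \in [t_x, 1/200]$ — in particular for all $t \in [s_0, 1/200]$ since $s_0 \geq t_x$. The key geometric input is that for $y \in B_{g(s_0)}(x, 2s_0^{1/2})$ and $t \in [s_0, 1/200]$, the ball $\uB_t(y) = B_{g(t)}(y, 4t^{1/2})$ is contained in $\uB_{t}(x)$ enlarged by a controlled factor — more precisely one uses $B_{g(s_0)}(x, 2s_0^{1/2}) \subset B_{g(t)}(x, ct^{1/2})$ via \eqref{eqn: bad bound general} to see $\uB_t(y) \subseteq \uB_{Ct}(x)$ for a fixed constant, so the averaged Ricci bound over $\uB_t(y)$ is controlled (up to adjusting the radius constant in Proposition~\ref{prop: pointwise estimates}, which permits balls of radius up to $3t^{1/2}$) by the stopping bound. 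Then the same dyadic summation as in part~\eqref{item: goodbad good}, now over $s, s' \in [s_0, 1/200]$, gives \eqref{eqn: bound at stopping time} on that range, and Lemma~\ref{lem: bad bound} extends it to $[1/200, t_0]$.

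For part~\eqref{item: goodbad content}: apply the Vitali-type Lemma~\ref{lem: vitali} to the set $\mathcal{A}(\uB)$ with the mapping $y \mapsto t_y = t(y)$ (valid since $t_y \leq 1/200$), producing a countable $\mathcal{C} \subseteq \mathcal{A}(\uB)$ with the $\uB_{t_y}(y)$ pairwise disjoint, all contained in $\uB$, and $\{B_{g(36t_y)}(y, 12t_y^{1/2})\}$ covering $\mathcal{A}(\uB)$; this is exactly a covering pair. To get the content bound \eqref{eqn: content bound} we must exploit that at the stopping time $t_y$, by a standard continuity/minimality argument the stopping inequality fails \emph{at} $t_y$ (is an equality in the limit), i.e. $\fint_{t_y/4}^{t_y} \fint_{\uB_{t_y}(y)} |\Ric_{g(s)}| \gtrsim t_y^{\BETA/2 - 1}\e$. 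Multiplying by $\vol_{g(s)}(\uB_{t_y}(y)) \sim t_y^{n/2}$ (using the almost-Euclidean volume from Lemma~\ref{rmk: good charts under regularity scale} and \eqref{eqn: volume forms}), this says $\int_{t_y/4}^{t_y} \int_{\uB_{t_y}(y)} |\Ric_{g(s)}|\, d\vol_{g(s)}\, ds \gtrsim \e\, t_y^{(n + \BETA)/2 - 1}\cdot t_y = \e\, t_y^{(n+\BETA)/2}$ — the power bookkeeping needs care here. Summing over the \emph{disjoint} balls $\uB_{t_y}(y)$ and bounding the resulting space-time integral by the global estimate \eqref{eqn: new conclusion} (after checking the time intervals $(t_y/4, t_y)$ do not overlap too much — at most a bounded dyadic overlap, absorbed into constants, and using the weight $(s/t)^{-\BETA}$ with $t = 1$ dominates $1$ on $(0,1)$) gives $\sum_{y \in \mathcal{C}} \e\, t_y^{(n+\BETA)/2} \lesssim \e^2$, hence $\sum t_y^{(n-\BETA)/2} \leq \sum t_y^{(n+\BETA)/2} \lesssim \e$ after choosing $\delta$ small (noting $t_y \leq 1$ so the smaller power dominates). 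The main obstacle I expect is the precise exponent accounting in this last step — reconciling the $\BETA/2 - 1$ power in the stopping time, the $t^{n/2}$ volume factor, the weight $(s/t)^{-\BETA}$ in \eqref{eqn: new conclusion}, and ensuring the disjointness in space compensates for the mild overlap of the time intervals $(t_y/4, t_y]$ — together with verifying the minimality-at-$t_y$ claim rigorously (which requires continuity in $t$ of the averaged Ricci quantity, itself a consequence of smoothness of the flow away from $t=0$).
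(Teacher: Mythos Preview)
Your overall strategy matches the paper's, but there is one genuine error in part~\eqref{item: goodbad content} and one unnecessary detour in part~\eqref{item: goodbad bad}.

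\textbf{The exponent error in part~\eqref{item: goodbad content}.} Your final step asserts $\sum t_y^{(n-\BETA)/2} \leq \sum t_y^{(n+\BETA)/2}$ because $t_y \leq 1$. This inequality is backwards: for $t_y \in (0,1]$ and $(n-\BETA)/2 < (n+\BETA)/2$, one has $t_y^{(n-\BETA)/2} \geq t_y^{(n+\BETA)/2}$, so a bound on the sum with the larger exponent does \emph{not} yield the claim. The paper avoids this by never producing the exponent $(n+\BETA)/2$ at all. Instead, starting from the equality at the stopping time and the volume bound $\vol_{g(s)}(\uB_{t_y}(y)) \geq c\, t_y^{n/2}$, it rearranges to
\[
t_y^{(n-\BETA)/2} \leq \frac{C\, t_y^{-\BETA}}{\e} \int_{t_y/4}^{t_y} \int_{\uB_{t_y}(y)} |\Ric_{g(s)}|\, d\vol_{g(s)}\, ds \leq \frac{C}{\e} \int_{t_y/4}^{t_y} s^{-\BETA} \int_{\uB_{t_y}(y)} |\Ric_{g(s)}|\, d\vol_{g(s)}\, ds,
\]
using $t_y^{-\BETA} \leq C s^{-\BETA}$ on $[t_y/4, t_y]$. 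The weight $s^{-\BETA}$ is precisely the one appearing in \eqref{eqn: new conclusion}, so summing over the spatially disjoint balls $\uB_{t_y}(y) \subset \uB$ (your worry about time-interval overlap is a non-issue: for each fixed $s$ the spatial regions are disjoint, which is all that is needed) and applying \eqref{eqn: new conclusion} gives $\sum_y t_y^{(n-\BETA)/2} \leq C\e^2/\e = C\e$.

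\textbf{The detour in part~\eqref{item: goodbad bad}.} You try to control the averaged Ricci over $\uB_t(y)$ by containing it in an enlarged ball at $x$. This is workable but unnecessary. The paper instead observes that Proposition~\ref{prop: pointwise estimates} already converts the stopping bound at $x$ into a \emph{pointwise} bound $|\Ric_{g(t)}| \leq C\e\, t^{\BETA/2-1}$ on all of $B_{g(t)}(x, 3t^{1/2})$, not just at $x$. Then the ball containment \eqref{eqn: bad containment of balls} gives $B_{g(s_0)}(x, 2s_0^{1/2}) \subset B_{g(t)}(x, 3t^{1/2})$ for every $t \in [s_0, 1/200]$, so the pointwise Ricci bound holds at each such $y$ throughout that time range, and one integrates directly. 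No re-averaging at $y$ is needed. The paper also notes that \eqref{item: goodbad good} is the special case $s_0 = 0$ of \eqref{item: goodbad bad}, so the two are handled together.
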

\begin{proof} 
Up to rescaling the flow, we may assume without loss of generality that $t_0=1.$ We choose $\delta$ sufficiently small that \eqref{eqn: new conclusion} holds.

Observe that \eqref{item: goodbad good} follows immediately from \eqref{item: goodbad bad}, since the estimate \eqref{eqn: 0 to 1} is a particular case of \eqref{eqn: bound at stopping time} with $s_0=0$. Let us prove \eqref{item: goodbad bad}. 
	 Thanks to \eqref{eqn: bad bound general}, it suffices to establish \eqref{eqn: bound at stopping time} for $s,s' \in [s_0, 1/200]$.  
Together the pointwise estimates of Proposition~\ref{prop: pointwise estimates} and the definition of the stopping time imply that 
\begin{align}
	|\Ric_{g(t)}|& \leq C\e t^{\BETA/2-1}\quad \text{ on }  B_{g(t)}(x, 3t^{1/2}) \times \{t\}\\
\shortintertext{
for all $t\in[s_0,1/200]$. So, calling upon \eqref{eqn: bad containment of balls}, we find that }
\label{eqn: bound intermediate}
	|\Ric_{g(t)}| &\leq C\e t^{\BETA/2-1}\quad \text{ on }  B_{g(s_0)}(x, 2s_0^{1/2}) \times [s_0,1/200].
\end{align}
Now, for any $y \in  B_{g(s_0)}(x, s_0^{1/2})$, we integrate \eqref{eqn: bound intermediate} from $s$ to $s'$ precisely as in the proof of Lemma~\ref{lem: bad bound} to find
\begin{equation}\label{eqn: X}
	(1-C\e)g(s) \leq g(s') \leq (1+C\e)g(s')
\end{equation}
where $C=C(n)$. Up to further decreasing $\delta$ so that \eqref{eqn: X} holds with $\e'=\e/C$ in place of $\e$, this establishes \eqref{eqn: bound at stopping time} and hence \eqref{item: goodbad good} and \eqref{item: goodbad bad}.

We now prove \eqref{item: goodbad content}. We apply the Vitali-type Lemma~\ref{lem: vitali}, taking $\mathcal{A}=\mathcal{A}({\uB})$ with the mapping $y\mapsto t_y$ given by the stopping time $t_y = t(y).$ Lemma~\ref{lem: vitali} ensures the existence of a covering pair $(\mathcal{C},t_y)$ for $\mathcal{A}(\uB)$ in $\uB$ with $\mathcal{C}\subseteq \mathcal{A}({\uB})$. 
 We prove the content bound \eqref{eqn: content bound} in the following way. For any $y\in \mathcal{C}$, the definition of the stopping time $t_y$ guarantees that
	\begin{equation}\label{a}
 \int_{t_y/4}^{t_y} \fint_{{\uB}_{t_y}(y)}  |\Ric_{{g}(s)} | \, d\vol_{{g}(s)} \, ds= \e t_y^{\BETA/2}.
	\end{equation}
By rearranging terms in \eqref{a} and calling upon the volume lower bound in \eqref{eqn: under reg scale euclidean volumes}, we find that
\begin{equation}\label{eqn: bound from stopping time}
\begin{split}
	t_y^{(n-\BETA)/2 } &\leq \frac{Ct_y^{-\BETA}}{\e}\int_{t_y/4}^{t_y} \int_{{\uB}_{t_y}(y)}  |\Ric_{{g}(s)} | \, d\vol_{{g}(s)} \, ds \\
	& \leq \frac{C}{\e}
	\int_{t_y/4}^{t_y}s^{-\BETA}
	 \int_{{\uB}_{t_y}(y)} |\Ric_{{g}(s)} | \, d\vol_{{g}(s)} \, ds.
\end{split}	
\end{equation}
Now, by Lemma~\ref{lem: vitali}\eqref{item: Vitali disjoint}  and \eqref{item: Vitali containment} respectively, the balls $\{{\uB}_{t_y}(y)\}_{y \in \mathcal{C}}$ are pairwise disjoint and contained in $\uB$. So, we sum \eqref{eqn: bound from stopping time}  over all $y \in \mathcal{C}$ and apply \eqref{eqn: new conclusion} to discover that
\begin{align}\label{eqn: Y}
\sum_{y \in \mathcal{C}} t_y^{(n -\BETA)/2} 
  & \le \frac{C}{\e} \int_{0}^{1} s^{-\BETA}\int_{\uB}  |\Ric_{g(s)} | \, d\vol_{g(s)} \, ds \leq C \e.
\end{align}
Again, up to further decreasing $\delta$ so that \eqref{eqn: Y} holds for $\e'=\e/C$, this concludes the proof of \eqref{item: goodbad content} and thus the proposition.
\end{proof}

\medskip

\subsection{The $k$th good and bad sets and the proof of Theorem~\ref{thm: decomposition theorem} with $\eta =\e$}
In this section, we apply Proposition~\ref{prop: good and bad} inductively in order to define $k$th good and bad sets and establish Theorem~\ref{thm: decomposition theorem} in the case when $\eta =\e$, and thus $\hat{t}=1$. Separating the proofs when $\eta <\e$ and $\eta =\e$ is convenient as it allows us to apply Corollary~\ref{corollary: volume bounds} below to establish estimates needed for the case $\eta <\e$. 
\begin{proof}[Proof of Theorem~\ref{thm: decomposition theorem} when $\eta =\e$]
Let $\delta$ be chosen according to Proposition~\ref{prop: good and bad}.
\\

{\it Step 1:} We inductively define sets
$\mathcal{G}^{k}\subseteq \tilde{\mathcal{A}}^{k-1}$, $\tilde{\mathcal{A}}^{k} \subseteq \tilde{\mathcal{A}}^{k-1}$ and $\mathcal{C}^{k}\subseteq \tilde{\mathcal{A}}^{k}$ for each $k \in \mathbb{N}$ satisfying the following properties:
\begin{enumerate}
	\item\label{item: ind claim1}  For all $x\in \mathcal{G}^{k}$ and $s,s' \in [0,1]$, we have 
\begin{align}\label{eqn: induction good bound}
(1-\e)^{k} g(s')\leq g(s) \leq (1+\e)^{k}g(s').
\end{align}
\item\label{item: ind claim2} For each $x \in \mathcal{C}^k$ we have a mapping $y\mapsto  t_y \in (0,200^{-k})$ such that $(\mathcal{C}^k, t_y)$ is a covering pair for $\tilde{\mathcal{A}}^k$ in $\uB$ as in Definition~\ref{def: covering pair} and such that
 \begin{equation}\label{eqn: induction content bound}
	\sum_{y \in \mathcal{C}^k} t_y^{n/2} \leq \e^k.
\end{equation} 
\item\label{item: ind claim3}
Furthermore, if $y \in \mathcal{C}^k,$ then 
\begin{equation}\label{eqn: bbb}
	(1-\e)^k g(s') \leq g(s) \leq (1+\e)^k g(s')
\end{equation} 
for all $x \in B_{g(t_y)} (y, 2t_y^{1/2})$ and for all $s, s' \in [t_y, 1]$
\end{enumerate}

In the claim above and in its proof, we suppress in the notation the dependence of $t_y$ on $k$ for $y\in\mathcal{C}^k.$ Let $\tilde{\mathcal{A}}^0=\uB$, and for $k=1,$ we set
\begin{equation}
\begin{split}
\mathcal{G}^{1} & = \mathcal{G}(\uB),\\
\tilde{\mathcal{A}}^{1} &  =\mathcal{A}(\uB),	
\end{split}	
\end{equation}
as defined in \eqref{eqn: good and bad set def}. 
Let $(\mathcal{C}^1,t_y)$ be the covering pair provided by Proposition~\ref{prop: good and bad}. Then 
properties \eqref{item: ind claim1}--\eqref{item: ind claim3} above for $k=1$ follow directly from Proposition~\ref{prop: good and bad}.

Now, suppose that we  have defined the sets $\mathcal{G}^{k}\subseteq \tilde{\mathcal{A}}^{k-1}, \tilde{\mathcal{A}}^{k}\subseteq \tilde{\mathcal{A}}^{k-1},$ and $\mathcal{C}^{k}\subseteq \tilde{\mathcal{A}}^{k}$ satisfying properties \eqref{item: ind claim1}--\eqref{item: ind claim3}.
We define  $\mathcal{G}^{k+1}$ by
\begin{align}
\mathcal{G}^{k+1}& = \tilde{\mathcal{A}}^k \cap \bigcup_{y \in \mathcal{C}^k} \mathcal{G}(\uB_{t_y}(y)).
\end{align}
If $x \in \mathcal{G}^{k+1},$ then $x\in \mathcal{G}(\uB_{t_y}(y))$ for some $y \in \mathcal{C}^k$. So, Proposition~\ref{prop: good and bad}\eqref{item: goodbad good} applied to $\mathcal{G}(\uB_{t_y}(y))$ implies that for all $s,s'\in[0,t_y]$, we have
\begin{equation}\label{eqn: aaa}
	(1-\e) g(s') \leq g(s) \leq (1+\e)g(s')
\end{equation} 
at $x$.
The inductive hypothesis ensures that  
\eqref{eqn: bbb} holds at $x$. 
Together, \eqref{eqn: aaa} and \eqref{eqn: bbb} imply that 
\begin{align}
(1-\e)^{k} g(s')\leq g(s) \leq (1+\e)^{k}g(s')
\end{align}
for all $s,s'\in [0,1].$
Therefore, \eqref{eqn: induction good bound} holds with $k+1$ replacing $k$ for each $x \in \mathcal{G}^{k+1}$. 

Similarly, define
\begin{align}
\tilde{\mathcal{A}}^{k+1} &=\tilde{\mathcal{A}}^k\cap \bigcup_{y \in \mathcal{C}^k} \mathcal{A}({\uB}_{t_y}(y)).
\end{align}
For each $y\in \mathcal{C}^k$, Proposition~\ref{prop: good and bad}\eqref{item: goodbad content} ensures the existence of a covering pair $(\mathcal{C}_y, z\mapsto t_z)$ for $\mathcal{A}({\uB}_{t_y}(y))$ in $\uB_{t_y}(y)$ with $t_z \in (0,t_y/200) \subseteq  (0, 200^{-(k+1)})$. We set 
\begin{equation}
\mathcal{C}^{k+1} = \bigcup_{y \in \mathcal{C}^k} \mathcal{C}_y\,.
\end{equation}
Together Proposition~\ref{prop: good and bad}\eqref{item: goodbad content} and the inductive hypothesis \eqref{item: ind claim2} ensure that $(\mathcal{C}^{k+1}, z\mapsto t_z)$ is a covering pair for $\tilde{\mathcal{A}}^{k+1}$ in $\uB$. Furthermore, Proposition~\ref{prop: good and bad}\eqref{item: goodbad content}  and \eqref{eqn: induction content bound} show that
 \begin{equation}
	\sum_{z \in \mathcal{C}^{k+1}} t_z^{n/2} \leq \e^{k+1},
\end{equation}
so \eqref{eqn: induction content bound} holds for $\mathcal{C}^{k+1}$ with $k+1$ replacing $k$. Thus property \eqref{item: ind claim2} holds for $k+1$. Finally, Proposition~\ref{prop: good and bad}\eqref{item: goodbad bad} along with \eqref{eqn: bbb} ensures that
\begin{equation}
	(1-\e)^{k+1} g(s') \leq g(s) \leq (1+\e)^{k+1} g(s')
\end{equation} 
for all $x \in B_{g(t_z)} (z, 2t_z^{1/2})$ and for all $s,s' \in [t_z, 1]$. Thus property \eqref{item: ind claim3} holds for $k+1.$
This concludes the proof of the claim.\\

{\it Step 2:} We now finish the proof of Theorem~\ref{thm: decomposition theorem}. Theorem~\ref{thm: decomposition theorem}\eqref{item: decomp G bounds} follows directly from \eqref{eqn: induction good bound} and the definition of the sets $\mathcal{G}^k$. Noting that 
\begin{equation}
	\mathcal{A}^k:=	B_{g(1)}(x_0, 2)\setminus \bigcup_{\ell=1}^k \mathcal{G}^\ell \subset \tilde{\mathcal{A}}^k,
\end{equation}
Theorem~\ref{thm: decomposition theorem}\eqref{item: decomp Ak bounds} follows from property \eqref{item: ind claim2} above.
Next, since $\mathcal{G}^k \subseteq \tilde{\mathcal{A}}^{k-1}$ by construction, we have
\begin{equation}\label{eqn: vol bound Gk a}
\begin{split}
	\vol_{g(1)}(\mathcal{G}^k) & \leq \vol_{g(1)}(\tilde{\mathcal{A}}^{k-1}) 	\leq \sum_{y \in \mathcal{C}^{k-1} }\vol_{g(1)} (B_{g(36t_y)}(y, 2(36t_y)^{1/2}).	
\end{split}	
\end{equation}
We apply \eqref{eqn: volume forms} followed by \eqref{eqn: under reg scale euclidean volumes} to find that 
\begin{equation}
	\label{eqn: vol bound Gk b}
	\begin{split}
		\vol_{g(1)} (B_{g(36t_y)}&(x, 2(36t_y)^{1/2})  \leq (1+2\delta) \vol_{g(36t_y)} (B_{g(36t_y)}(x, 2(36t_y)^{1/2}) \leq Ct_y^{n/2},	
	\end{split}
\end{equation}
where $C$ is a dimensional constant. Therefore, by \eqref{eqn: vol bound Gk a}, \eqref{eqn: vol bound Gk b}, and \eqref{eqn: induction content bound}, we see that 
\begin{equation}
	\vol_{g(1)}(\mathcal{G}^k)  \leq C\sum_{y \in \mathcal{C}^{k-1}} t_y^{1/2}  \leq C\e^{k-1}.
	\end{equation}
Pairing this estimate with Theorem~\ref{thm: decomposition theorem}\eqref{item: decomp G bounds}, this establishes Theorem~\ref{thm: decomposition theorem}\eqref{item: decomp volume bounds G}. Finally, observe that the same argument shows that 
$
	\vol_{g(200^{-k})}(\tilde{\mathcal{A}}^k) \leq C\e^k.
	$
So, defining
$	\mathcal{A} = \cap_{k=1}^\infty \tilde{\mathcal{A}}^k,
$
we see that 
\begin{equation}
	\vol_{g(0)}(\mathcal{A}) = \lim_{k \to \infty} \vol_{g(200^{-k})}(\mathcal{A}) \leq \lim_{k \to \infty} \vol_{g(200^{-k})}(\tilde{\mathcal{A}}^k)=0.
\end{equation}
This proves Theorem~\ref{thm: decomposition theorem}\eqref{item: decomp A vol} and thus concludes the proof of the theorem.
\end{proof}

An immediate consequence of Theorem~\ref{thm: decomposition theorem} with $\hat{t}=1$ is the following comparison of volumes of balls.
\begin{corollary}\label{corollary: volume bounds}
	Fix $n\geq 2$ and $\e>0$. 
 There exists $\delta= \delta(n,\e)>0$, such that the following holds. 
 Let $(M,g(t))_{t \in (0,1]}$ be a Ricci flow satisfying  \eqref{eqn: scalar lower bound flow5} and \eqref{eqn: entropy lower bound flow5}.
Fix $x_0 \in M$, $t_0\in(0,1]$. For all $s,t \in (0,t_0]$, we have 
\begin{equation}
	(1-\e) \leq \frac{\vol_{g(t)}(B_{g(t_0)}(x_0,{2t_0^{1/2}}))}{\vol_{g(s)}(B_{g(t_0)}(x_0,{2t_0^{1/2}}))} \leq (1+\e)
\end{equation}
\end{corollary}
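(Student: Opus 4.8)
The plan is to deduce the corollary from the Decomposition Theorem~\ref{thm: decomposition theorem} in the borderline case $\eta=\e$ (for which $\hat t=1$), together with the volume-form comparison \eqref{eqn: volume forms} and the near-Euclidean volume estimate \eqref{eqn: under reg scale euclidean volumes}. By parabolic rescaling of the flow by $t_0^{-1}$, which preserves the hypotheses (cf.\ Remark~\ref{rmk: hyp under rescaling}, using also $\nu(g,2t_0)\ge\nu(g,2)$ since $t_0\le1$), it suffices to treat $t_0=1$; set $\Omega=B_{g(1)}(x_0,2)$. Fix an auxiliary $\e'=\e'(n,\e)$, to be pinned down at the end, take $\delta\le\delta(n,\e')$ small enough that Theorem~\ref{thm: decomposition theorem} and Lemma~\ref{rmk: good charts under regularity scale} both apply with error $\e'$, and decompose $\Omega=\bigcup_{k\ge1}\mathcal G^k\cup\mathcal A$ with $\eta=\e=\e'$. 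Since $\vol_{g(0)}(\mathcal A)=0$ and \eqref{eqn: volume forms} gives $d\vol_{g(r)}\le 2\,d\vol_{g(0)}$ for $r\in(0,1]$ (once $\delta\le1/2$), the set $\mathcal A$ is $g(r)$-null for all such $r$, so $\vol_{g(r)}(\Omega)=\sum_{k\ge1}\vol_{g(r)}(\mathcal G^k)$ and the difference $\vol_{g(t)}(\Omega)-\vol_{g(s)}(\Omega)$ telescopes over the good sets.

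For the tail $k\ge2$: \eqref{eqn: volume forms} gives $\vol_{g(r)}(\mathcal G^k)\le 2\vol_{g(0)}(\mathcal G^k)$, and Theorem~\ref{thm: decomposition theorem}\eqref{item: decomp volume bounds G} with $\eta=\e'$ gives $\vol_{g(0)}(\mathcal G^k)\le(1+\e')^k(\e')^{k-1}$, so summing the geometric series yields $\sum_{k\ge2}\vol_{g(r)}(\mathcal G^k)\le C\e'$ uniformly in $r\in(0,1]$ once $\e'$ is small. For $k=1$, Proposition~\ref{prop: good and bad}\eqref{item: goodbad good} applied to $\mathcal G^1=\mathcal G(\uB)$ gives $(1-\e')g(r)\le g(r')\le(1+\e')g(r)$ on $\mathcal G^1$ for all $r,r'\in[0,1]$, so the quantities $\vol_{g(s)}(\mathcal G^1),\vol_{g(t)}(\mathcal G^1),\vol_{g(1)}(\mathcal G^1)$ pairwise differ only by factors in $[(1-\e')^{n/2},(1+\e')^{n/2}]$. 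Moreover $\vol_{g(1)}(\mathcal G^1)$ is pinched between dimensional constants: $\vol_{g(1)}(\mathcal G^1)\le\vol_{g(1)}(\Omega)\le(1+\e')\om_n2^n$ by \eqref{eqn: under reg scale euclidean volumes}, while for the lower bound I would use Theorem~\ref{thm: decomposition theorem}\eqref{item: decomp Ak bounds} with $k=1$ to cover $\Omega\setminus\mathcal G^1$ by balls $B_{g(t_y)}(y,12t_y^{1/2})$ with $\sum_y t_y^{n/2}\le\e'$, bound each $\vol_{g(1)}(B_{g(t_y)}(y,12t_y^{1/2}))\le C_n t_y^{n/2}$ using \eqref{eqn: volume forms} and \eqref{eqn: under reg scale euclidean volumes}, and conclude $\vol_{g(1)}(\Omega\setminus\mathcal G^1)\le C_n\e'$, hence $\vol_{g(1)}(\mathcal G^1)\ge(1-\e')\om_n2^n-C_n\e'\ge c_n>0$ for $\e'$ small.

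Assembling these, the $k=1$ term obeys $|\vol_{g(t)}(\mathcal G^1)-\vol_{g(s)}(\mathcal G^1)|\le\bigl[(1+\e')^{n/2}-1\bigr]\vol_{g(1)}(\mathcal G^1)\le C_n\e'$, so together with the tail estimate we obtain the \emph{absolute} bound $|\vol_{g(t)}(\Omega)-\vol_{g(s)}(\Omega)|\le C_n\e'$, while $\vol_{g(s)}(\Omega)\ge\vol_{g(s)}(\mathcal G^1)\ge(1-\e')^{n/2}\vol_{g(1)}(\mathcal G^1)\ge c_n'>0$ uniformly in $s\in(0,1]$ (and likewise for $t$). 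Hence $\bigl|\vol_{g(t)}(\Omega)/\vol_{g(s)}(\Omega)-1\bigr|\le C_n\e'/c_n'$, and choosing $\e'=\e'(n,\e)$ so that the right-hand side is $\le\e$ finishes the proof. The substantive work is entirely contained in Theorem~\ref{thm: decomposition theorem}; in this deduction the one point that needs care is that $\mathcal G^1$ carries a definite fraction of $\vol(\Omega)$ — so that the absolute error $C_n\e'$ converts into a relative error — which is exactly where the content bound of Theorem~\ref{thm: decomposition theorem}\eqref{item: decomp Ak bounds} enters; the rest is routine bookkeeping with the geometric decay of the $\vol(\mathcal G^k)$ and repeated use of \eqref{eqn: volume forms}.
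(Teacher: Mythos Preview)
Your proof is correct and follows essentially the same route as the paper: apply the Decomposition Theorem with $\eta=\e$, control $\mathcal G^1$ via the metric comparison and the tail $k\ge2$ via the geometric volume decay, then convert the resulting absolute error into a relative one using a uniform lower bound on $\vol_{g(s)}(\Omega)$. The paper streamlines this slightly by first using \eqref{eqn: volume forms} to reduce to the single inequality $\vol_{g(0)}(\Omega)\le(1+\e)\vol_{g(1)}(\Omega)$, which also makes your detour through the content bound Theorem~\ref{thm: decomposition theorem}\eqref{item: decomp Ak bounds} unnecessary --- the lower bound $\vol_{g(s)}(\Omega)\ge c_n$ follows directly from \eqref{eqn: volume forms} and \eqref{eqn: under reg scale euclidean volumes}.
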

\begin{proof}
	Up to parabolic rescaling, we may assume that $t_0=1$. Thanks to \eqref{eqn: volume forms}, it suffices to show that $\vol_{g(0)}(B_{g(1)}(x_0, 2)) \leq (1+\e) \vol_{g(1)}(B_{g(1)}(x_0, 2))$ provided $\delta$ is taken to be sufficiently small depending on $\e$. Let $\delta$ also be taken sufficiently small according to Theorem~\ref{thm: decomposition theorem}.
	Applying Theorem~\ref{thm: decomposition theorem} with $\eta =\e$ and thus $\hat{t}=1,$ we find that 
	\begin{equation}\begin{split}
	\vol_{g(0)}(B_{g(1)}(x_0, 2)) = \sum_{k=1}^\infty \vol_{g(0)}(\mathcal{G}^k) 
	& \leq  \vol_{g(0)}(\mathcal{G}^k) +  \sum_{k=2}^\infty (1+\e)^k \e^{k-1}\\
	& \leq (1+\e)\vol_{g(1)}(x_0, 2) + C\e\\
	& \leq (1+C\e) \vol_{g(1)}(x_0, 2).
	\end{split}\end{equation}
	The final inequality follows from Lemma~\ref{rmk: good charts under regularity scale}. 
	By further decreasing $\delta$, we may replace $\e$ by $\e/C$ to complete the proof.
\end{proof}

\subsection{Improved Ricci estimate and good and bad sets on the initial ball}
When $\eta<\e$, we will need to apply a refined form of Proposition~\ref{prop: good and bad}. To this end, we first show that an improved integral estimate for the Ricci curvature holds outside a set of small content. As usual, we use the notation $\uB_t(x) = B_{g(t)}(x, 4 t^{1/2})$ as defined in \eqref{eqn: scale invariant balls}.
\begin{lemma}[Improved Ricci integral estimate]\label{lem: improved ricci integral} 
	Fix $n\geq 2$, $\e>0$ and $\BETA \in (0,1/2)$. 
 There exists $\delta= \delta(n,\e, \BETA)>0$ such that the following holds. 
 Let $(M,g(t))_{t\in[0,1]}$ be a Ricci flow satisfying  \eqref{eqn: scalar lower bound flow5} and \eqref{eqn: entropy lower bound flow5}.
 Then for any $x_0 \in M$ and $\eta \leq \e$, there exist $\hat{t} = \hat{t}(n, \e, \theta, \eta)\in (0,1]$ and  an exceptional set $E \subset B_{g(1)}(x_0,2)$  such that the following holds.
\begin{enumerate}
	\item For all $x\in B_{g(1)}(x_0,2)\setminus E$, we have the improved  integral Ricci curvature estimate
	\begin{equation}\label{eqn: eta integral ricci a}
	\int_0^{\hat{t}} \left(\frac{s}{\hat{t}}\right)^{-\BETA/2} \fint_{\uB_{\hat{t}}(x)} |\Ric_{g(s)}(y)| \,d\vol_{g(s)}(y)\,ds \leq  \eta^2 .
\end{equation}
\item There is a finite set $\{y_\ell\}_{\ell =1}^N \subset B_{g(1)}(x_0,2)$ such that
\begin{equation}\label{eqn: content bound EetaR}
E\subset \bigcup_{\ell=1}^N B_{g(\hat t)}(y_\ell ,2\hat t^{1/2}), \qquad N\leq \eta \hat t^{-n/2}.
\end{equation}

\end{enumerate}	
\end{lemma}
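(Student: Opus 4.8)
The plan is to take for $E$ the set where the desired estimate fails, and to control its size by covering it with a Vitali family whose cardinality is bounded using the global Ricci integral bound of Theorem~\ref{thm: integral bounds for Ricci curvature}. The key mechanism is that replacing the weight $s^{-\BETA}$ by the less singular weight $s^{-\BETA/2}$ and restricting the time interval to $(0,\hat t)$ gains a factor $\hat t^{\BETA/2}$ of smallness, which (after absorbing a volume-distortion loss) can be made smaller than $\eta$ by choosing $\hat t$ small. First I would fix $\ETA=\ETA(n,\BETA)$ small and then $\delta=\delta(n,\BETA,\e)$ small enough that the scale-invariant curvature bounds \eqref{eqn: tiny curvature bounds5}, the almost-Euclidean charts of Lemma~\ref{rmk: good charts under regularity scale}, and the conclusion \eqref{eqn: new conclusion} of Theorem~\ref{thm: integral bounds for Ricci curvature} (with exponent $\BETA$, error $\e$, applied at $t=1$ and $x=x_0$) all hold. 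For $x\in B_{g(1)}(x_0,2)$ set
\[
Q_{\hat t}(x) := \int_0^{\hat t}\Big(\frac{s}{\hat t}\Big)^{-\BETA/2}\fint_{\uB_{\hat t}(x)}|\Ric_{g(s)}|\,d\vol_{g(s)}\,ds,
\]
and let $E=\{x\in B_{g(1)}(x_0,2): Q_{\hat t}(x)>\eta^2\}$. Conclusion (1) then holds by definition of $E$, for any $\hat t\le 1/2^5$.

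For conclusion (2) I would choose a maximal family $\{B_{g(\hat t)}(y_\ell,\hat t^{1/2})\}_{\ell=1}^N$ of pairwise disjoint balls with $y_\ell\in E$; by the usual argument (triangle inequality in the fixed metric $g(\hat t)$) we get $E\subset\bigcup_\ell B_{g(\hat t)}(y_\ell,2\hat t^{1/2})$, the covering claimed in \eqref{eqn: content bound EetaR}. It remains to bound $N$. Since $Q_{\hat t}(y_\ell)>\eta^2$ for each $\ell$, we have $N\eta^2<\sum_\ell Q_{\hat t}(y_\ell)$, and I would estimate the right-hand side in four steps. \emph{(i)} For $s\le\hat t$, combining \eqref{eqn: volume forms} with Lemma~\ref{rmk: good charts under regularity scale}\eqref{eqn: under reg scale euclidean volumes} gives $\vol_{g(s)}(\uB_{\hat t}(y_\ell))\ge c(n)\hat t^{n/2}$, turning each $\fint_{\uB_{\hat t}(y_\ell)}$ into $\le (c(n)\hat t^{n/2})^{-1}\int_{\uB_{\hat t}(y_\ell)}$. \emph{(ii)} Since the radius-$\hat t^{1/2}$ balls are disjoint and the geometry is almost Euclidean at scale $\hat t^{1/2}$ (Lemma~\ref{rmk: good charts under regularity scale}), the enlarged balls $\uB_{\hat t}(y_\ell)=B_{g(\hat t)}(y_\ell,4\hat t^{1/2})$ have overlap bounded by $C(n)$, and by \eqref{eqn: basic containment} of Lemma~\ref{lem: bad bound} they are all contained in $B_{g(1)}(x_0,4)=\uB_1(x_0)$; hence $\sum_\ell\int_{\uB_{\hat t}(y_\ell)}|\Ric_{g(s)}|\,d\vol_{g(s)}\le C(n)\int_{\uB_1(x_0)}|\Ric_{g(s)}|\,d\vol_{g(s)}$. \emph{(iii)} Integrating $\pa_t\log d\vol_{g(t)}=-R_{g(t)}$ and using $|R_{g(t)}|\le C_n\ETA/t$ from \eqref{eqn: tiny curvature bounds5} yields $d\vol_{g(s)}\le s^{-C_n\ETA}d\vol_{g(1)}$, so together with $\vol_{g(1)}(\uB_1(x_0))\le C(n)$ we get $\int_{\uB_1(x_0)}|\Ric_{g(s)}|\,d\vol_{g(s)}\le C(n)s^{-C_n\ETA}\fint_{\uB_1(x_0)}|\Ric_{g(s)}|\,d\vol_{g(s)}$. \emph{(iv)} Writing $(s/\hat t)^{-\BETA/2}=\hat t^{\BETA/2}s^{-\BETA/2}$ and $s^{-\BETA/2-C_n\ETA}=s^{\BETA/2-C_n\ETA}s^{-\BETA}\le\hat t^{\BETA/2-C_n\ETA}s^{-\BETA}$ for $s\le\hat t$ (valid once $\ETA$ is small enough that $\BETA/2-C_n\ETA>0$), and then applying \eqref{eqn: new conclusion} at $t=1$, $x=x_0$, gives
\[
N\eta^2 < \sum_\ell Q_{\hat t}(y_\ell)\le C(n)\,\hat t^{-n/2}\,\hat t^{\BETA-C_n\ETA}\int_0^1 s^{-\BETA}\fint_{\uB_1(x_0)}|\Ric_{g(s)}|\,d\vol_{g(s)}\,ds \le C(n)\,\hat t^{-n/2}\,\hat t^{\BETA-C_n\ETA}\,\e^2.
\]

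Finally I would fix the parameters: choose $\ETA=\ETA(n,\BETA)$ small enough that $\BETA/2-C_n\ETA>0$ and \eqref{eqn: tiny curvature bounds5} is available, fix $\delta=\delta(n,\BETA,\e)$ accordingly, and then pick $\hat t=\hat t(n,\BETA,\e,\eta)\in(0,1/2^5]$ so small that $C(n)\hat t^{\BETA-C_n\ETA}\e^2\le\eta^3$; this yields $N<\eta\,\hat t^{-n/2}$ and in particular $N<\infty$, finishing (2). I expect the principal point requiring care — rather than a true obstacle — to be the bookkeeping in the displayed chain above: ensuring that the weight deficit $\BETA-\BETA/2=\BETA/2$ is enough to absorb the loss $C_n\ETA$ from the only almost-monotone volume form while still leaving a positive power of $\hat t$, and verifying that each ball containment and each volume comparison across time slices is legitimate at the dyadic scale $\hat t$, all of which rely on the scale-invariant curvature estimates provided by Theorem~\ref{prop: uniform existence time and small curvature estimates}.
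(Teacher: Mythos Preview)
Your proof is correct and rests on the same core mechanism as the paper's: the gain of a factor $\hat t^{\BETA/2}$ coming from the passage from weight $s^{-\BETA}$ to $s^{-\BETA/2}$, which is then converted into the content bound $N\le\eta\hat t^{-n/2}$ via the basic integral estimate of Theorem~\ref{thm: integral bounds for Ricci curvature}. The organization, however, is genuinely different. The paper first proves an \emph{integrated} inequality $\int_B Q_{\hat t}(x)\,d\vol_{g(0)}(x)\le C\hat t^{\BETA}$ via a Fubini/maximal-function swap (your steps (i)--(ii) in continuous form), defines $E$ with threshold $\hat t^{\BETA/2}$, and then applies Chebyshev to bound the $g(0)$-volume of a $\hat t^{1/2}$-neighborhood of $E$ before covering. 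You instead define $E$ with threshold $\eta^2$ directly, take a Vitali family in $E$, and bound $N$ by summing $Q_{\hat t}$ over the discrete centers using bounded overlap --- the discrete dual of the paper's approach. A second minor difference: the paper invokes Corollary~\ref{corollary: volume bounds} to get the clean $(1+\e)$ volume comparison across time slices, whereas you use the cruder bound $d\vol_{g(s)}\le s^{-C_n\ETA}d\vol_{g(1)}$ from the scale-invariant curvature estimate and absorb the loss into the weight. Both routes work; yours is marginally more self-contained (it does not need Corollary~\ref{corollary: volume bounds}) and avoids the separate threshold bookkeeping, while the paper's Chebyshev formulation makes the ``small-measure'' nature of $E$ more explicit.
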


\begin{proof} Fix any $\e \in (0,1/2)$.
Let $\delta =\delta(n,\e, \BETA)$ be chosen sufficiently small so that we may apply Theorem~\ref{thm: integral bounds for Ricci curvature} with $t=\e=1$ and Lemma~\ref{lem: bad bound} and Lemma~\ref{rmk: good charts under regularity scale} with $\lambda$ to be determined in the course of the proof.
Throughout the proof, we use the shorthand  $B= B_{g(1)}(x_0,2)$.
\\

{\it Step 1:}
We first claim that 
	\begin{equation}\label{eqn: integral estimate improved a}
		\int_{B} \int_0^{t} \left(\frac{s}{t}\right)^{-\BETA/2} \fint_{\uB_t(x)} |\Ric_{g(s)}(y)| \,d\vol_{g(s)}(y)\,ds\,d\vol_{g(0)}(x) \leq 2 t^{\theta}
	\end{equation}
for any $t\leq \hat{t}$, provided $\hat{t}$ is taken small enough. 
\\

{\it Step 1a:} Recall from Corollary~\ref{corollary: volume bounds} and Lemma~\ref{rmk: good charts under regularity scale} that
\begin{equation}
\vol_{g(s)}(B_{g(1)}(x_0,4)) \le (1+\e)\vol_{g(1)}(B_{g(1)}(x_0,4)) \leq (1+\e)4^n \om_n.
\end{equation}
This together with Theorem~\ref{thm: integral bounds for Ricci curvature} (taking $t=\e=1$) implies that for any $t\leq 1,$ 
 we have
\begin{equation}\label{eqn: basic improved}
\begin{split}
	& \int_0^{t} s^{-\BETA/2} \int_{\uB_1(x_0)} |\Ric_{g(s)}(y)|\,d\vol_{g(s)}(y)\\
& \leq t^{\BETA/2} \int_0^{t} s^{-\BETA}\int_{\uB_1(x_0)} |\Ric_{g(s)}(y)|\,d\vol_{g(s)}(y)\leq (1+\e)4^n\om_n t^{\BETA/2}.
\end{split}	
\end{equation}

{\it Step 1b:} A basic maximal function argument shows that  
 \begin{equation}\label{eqn: average bound}
 \begin{split}
 	\int_B\fint_{\uB_t(x)} |\Ric_{g(s)}|& \,d\vol_{g(s)}(y)\,d\vol_{g(0)}(x) \leq  (1+2\e) \int_{B_{g(1)}(x,4)}  |\Ric_{g(s)}(y)|\,d\vol_{g(t)}(y).
 \end{split}
 \end{equation}
To see this,  take $\hat{t}$ small enough such that $4\hat{t}^{1/2}\leq 1$. By the proof of Corollary~\ref{corollary: volume bounds} and Lemma~\ref{rmk: good charts under regularity scale}, we have
\begin{equation}
(1-\e)\om_n (4t)^{n/2} \leq \vol_{g(s)}(\uB_t(x))\le (1+\e)\om_n (4t)^{n/2}\end{equation}
 for any $s\in[0,t]$.  
Furthermore, by Lemma~\ref{lem: bad bound}, for any $x \in B$, we have  $\uB_t(x) \subset \uB_1(x_0)$. With these observations in hand, we see that for fixed $s\in(0,t)$,
\begin{equation}\begin{split}
	\int_B\fint_{\uB_t(x)} &|\Ric_{g(s)}| \,d\vol_{g(s)}(y)\,d\vol_{g(0)}(x)\\
	& \leq \frac{1+\e}{\om_n t^{n/2}} \int_B \int_{\uB_1(x_0)}\chi_{\uB_t(x)}(y) |\Ric_{g(s)}(y)| \,d\vol_{g(s)}(y) \,d\vol_{g(0)}(x)\\
	& \leq\frac{1+\e}{\om_n t^{n/2}} \int_{B_{g(1)}(x,2)} \left( \int_B \chi_{\uB_t(y)}(x)\,d\vol_{g(0)}(x)\right) |\Ric_{g(s)}|(y) \,d\vol_{g(s)}(y)\\
	& \leq (1+2\e) \int_{B_{g(1)}(x,2)}  |\Ric_{g(s)}(y)|\,d\vol_{g(t)}(y).
\end{split}\end{equation}
Here $\chi_{\uB_t(x)}$ is the indicator function of $\uB_t(x)$. This establishes \eqref{eqn: average bound}.

{\it Step 1c:}
Multiplying \eqref{eqn: average bound} by $s^{-\theta/2}$ and integrating with respect to $s$ and then combining the subsequent estimate with \eqref{eqn: basic improved}, we establish that 
	\begin{equation}\label{eqn: eta integral ricci proof}
	\int_0^{t} s^{-\BETA/2} \fint_{ \uB(t_\eta)} |\Ric_{g(s)}(y)| \,d\vol_{g(s)}(y)\,ds \leq  t^{\BETA/2}
\end{equation}
 holds for $t\leq \hat{t}.$ Multiplying both sides of \eqref{eqn: eta integral ricci proof} by $t^{\theta/2}$, we obtain \eqref{eqn: integral estimate improved a}.
\medskip

{\it Step 2:}
 Now, let 
\begin{equation}
	E = \left\{ x\in B : \int_0^{\hat t} \left(\frac{s}{\hat t}\right)^{-\BETA/2} \fint_{\uB_t(x)} |\Ric_{g(s)}(y)| \, d\vol_{g(s)}(y)\,ds \geq \hat t^{\theta/2}\right\}.
\end{equation}
By definition, \eqref{eqn: eta integral ricci a} holds on $B\setminus E$, so it remains to show the content bound \eqref{eqn: content bound EetaR}. Let 
\begin{equation}
	B_{g(\hat t)}(E, \hat t^{1/2}) = \bigcup_{x\in E} B_{g(\hat t)}(x,\hat  t^{1/2}).
\end{equation}
We see that every $x \in B_{g(\hat t)}(E, t^{1/2}) $ satisfies
\begin{equation}\label{eqn: improved integral bound b}
	\int_0^{\hat t} \left(\frac{s}{\hat t}\right)^{-\BETA/2} \fint_{B_{g(\hat t)}(x,8 \hat t^{1/2})} |\Ric_{g(s)}(y)| \,d\vol_{g(s)}(y)\,ds \geq c_n\hat t^{\BETA/2}
\end{equation}
Thus, by applying Chebyshev's inequality to \eqref{eqn: integral estimate improved a}, we find that 
\begin{equation}
\vol_{g(0)}(B_{g(\hat t)}(E,\hat  t^{1/2})) \leq C_n \hat t^{\theta/2}.
\end{equation}
Take $\{y_\ell\}_{\ell=1}^N$ to be a maximal $\hat t^{1/2}/4$-dense subset of $E$ with respect to $g(\hat t)$. We conclude that \eqref{eqn: content bound EetaR} holds by taking $\hat{t}$ sufficiently small depending on $\eta$.
\end{proof}

We now proceed as in Section~\ref{subsec: good bad} to decompose a ball $\uB_t$ satisfying the improved Ricci integral estimate \eqref{eqn: eta integral ricci a}.
For a ball $\uB$,  we define the  refined stopping time $t(x)$ for each $x \in  B_{g(1)}(x_0, 2)$ by
\begin{equation}\label{eqn: stopping time definition-2}
\begin{split}
t(x) = \inf \Big\{ t'\leq 1/200 \, :\, \fint_{t/4}^t \fint_{{\uB}_t(x)}& |\Ric_{g(s)}| \, d\vol_{g(s)} \, ds  \leq t^{\BETA/4-1}\, \hat{t}^{\theta/4} \quad \forall t \in [t' , 1/200]\Big\}.	
\end{split}
\end{equation}
As in Section~\ref{subsec: good bad}, provided we take $t_0 < 200^{-2}$, if a ball $\uB$ satisfies \eqref{eqn: eta integral ricci a}, then
\begin{align}\label{eqn: condition at 200}
\fint_{1/800}^{1/200} &\fint_{{\uB}_{1/200}(x)}|\Ric_{{g}(t)}| \, d\vol_{g(t)}\,dt  \leq 200^{\BETA/4-1} \, \hat{t}^{\BETA/4},
\end{align}
so the stopping condition holds at $t'=1/200$. 

For a  ball $\uB_{t}(x_0)$ with $t<1$, we define the  stopping time $t(x)$  by
\begin{equation}
	t(x) = t\, \tilde{t}(x)
\end{equation}
for each $x \in  B_{g(t)}(x_0, 2t^{1/2})$, where $\tilde{t}(x)$ is the stopping time defined in \eqref{eqn: stopping time definition-2} applied to the rescaled flow $	\tilde{g}(s) = t^{-1} g(t\,s).$
The refined good and bad sets on $\uB_{t}(x_0)$, respectively, are defined by
\begin{equation}
\label{eqn: good and bad set def ref}
\begin{split}
\hat{\mathcal{G}}(\uB_{t}(x_0)) & = \left\{ x \in B_{g(t)}(x_0, 2t^{1/2}) \ :\ t(x) = 0\right\},\\
\hat{\mathcal{A}}(\uB_{t}(x_0)) &  = \left 	\{ x \in B_{g(t)}(x_0, 2t^{1/2})\ :\ t(x) > 0\right\}.	
\end{split}	
\end{equation}
In the following proposition, we establish estimates on the good and bad sets. This is exactly Proposition~\ref{prop: good and bad} from Section~\ref{sec: decomposition} with $t_0^{\BETA/4}$ in place of $\e$ and $\theta/2$ in place of $\theta$; the proof is identical and thus omitted. Again, when convenient, we adopt the shorthand $t_x = t(x)$.

\begin{proposition}\label{prop: good and bad refined} Fix $n\geq 2$, $\theta\in (0,1/2)$, $\eta >0$, $\hat{t} \in (0,1]$, and $x_0 \in M$. Suppose that the improved Ricci estimate \eqref{eqn: eta integral ricci a} holds on $\uB_{\hat{t}}(x_0)$. Then the following holds.
\begin{enumerate}
	\item\label{item: goodbad good refined} For any $x \in \mathcal{G}(\uB_{\hat{t}}(x_0))$
	and for any $s,s' \in [0,\hat{t}]$, the metrics $g(s)$ and $g(s')$ at $x$ satisfy
	\begin{equation}\label{eqn: 0 to 1 refined}
		(1-\eta ) g(s) \leq g(s') \leq (1+\eta ) g(s).
	\end{equation}
\item\label{item: goodbad bad refined}
 Suppose $x \in \mathcal{A}(\uB_{\hat{t}}(x_0))$ and fix $s_0\in [ t_x,\hat{t}]$. 
Then  for all $s, s' \in [s_0, \hat{t}]$ and $y \in B_{g(s_0)}(x,2s_0^{1/2})$, the metrics $g(s)$ and $g(s')$ at $y$ satisfy 
	\begin{equation}\label{eqn: bound at stopping time refined}
		(1-\eta )g(s) \leq g(s') \leq (1+\eta)g(s).
	\end{equation}

\item\label{item: goodbad content}  There is a countable collection $\mathcal{C} =\mathcal{C} (\uB_{\hat{t}}(x_0))\subseteq \mathcal{A}(\uB_{\hat{t}}(x_0))$ such that  $(\mathcal{C} , t_y)$ is a covering pair for $\mathcal{A}(\uB_{\hat{t}}(x_0))$ in the sense of Definition~\ref{def: covering pair}, where $t_y=t(y)$ is the stopping time and 
	\begin{equation}\label{eqn: content bound eta}
		\sum_{y \in \mathcal{C}} {t}_y^{(n/2-\BETA/4)} \leq  \hat{t}^{n/2} \,.
	\end{equation}
	In particular,
		\begin{equation}\label{eqn: content bound b}
		\sum_{y \in \mathcal{C}} {t}_y^{n/2} \leq \eta  \hat{t}^{n/2} \,.
	\end{equation}
\end{enumerate}
	\end{proposition}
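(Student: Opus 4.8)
The plan is to run the proof of Proposition~\ref{prop: good and bad} essentially verbatim, the only changes being that the role played there by the integral Ricci estimate \eqref{eqn: new conclusion} is now played by the improved estimate \eqref{eqn: eta integral ricci a}, that the refined stopping time \eqref{eqn: stopping time definition-2} carries the exponent $\BETA/4$ (rather than $\BETA/2$) and the prefactor $\hat t^{\theta/4}$ (rather than $\e$), and that at the very end one exploits $t_y\leq\hat t/200$ together with the choice $\hat t=\hat t(n,\e,\theta,\eta)$ small. After parabolic rescaling one may assume $\hat t=1$ (using that $\nu$ is monotone and that \eqref{eqn: eta integral ricci a} is scale-compatible), and by Lemma~\ref{lem: bad bound} it suffices to compare $g(s)$ and $g(s')$ for $s,s'\in[s_0,1/200]$. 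As in Proposition~\ref{prop: good and bad}, part \eqref{item: goodbad good refined} is the special case $s_0=0$ of part \eqref{item: goodbad bad refined} (since $t_x=0$ for $x\in\mathcal G$), so it is enough to prove \eqref{item: goodbad bad refined}.

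For part \eqref{item: goodbad bad refined}, I would first feed the refined stopping condition into the pointwise Ricci estimates of Proposition~\ref{prop: pointwise estimates} to obtain, after a dyadic decomposition of $[s_0,1/200]$,
\begin{equation}
|\Ric_{g(t)}|\leq C\,\hat t^{\theta/4}\,t^{\BETA/4-1}\qquad\text{on }B_{g(t)}(x,3t^{1/2})\times\{t\}\text{ for all }t\in[s_0,1/200],
\end{equation}
and then upgrade this via the ball containment \eqref{eqn: bad containment of balls} to the same bound on $B_{g(s_0)}(x,2s_0^{1/2})\times[s_0,1/200]$. Fixing $y$ in this ball and a tangent vector $v$, integrating $\partial_t\log g(t)(v,v)=-2\Ric_{g(t)}(v,v)/g(t)(v,v)$ from $s$ to $s'$ and using $\BETA/4-1>-1$ (so that $\int_s^{s'}t^{\BETA/4-1}\,dt$ converges and is bounded by a dimensional constant) gives $(1-C\hat t^{\theta/4})g(s)\leq g(s')\leq (1+C\hat t^{\theta/4})g(s)$ at $y$; choosing $\hat t$ small enough that $C\hat t^{\theta/4}\leq\eta$ — a choice already built into Lemma~\ref{lem: improved ricci integral} — yields \eqref{eqn: bound at stopping time refined}, and Lemma~\ref{lem: bad bound} then extends the comparison to all $s,s'\in[0,\hat t]$.

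For part (3), I would apply the Vitali-type Lemma~\ref{lem: vitali} to $\mathcal A(\uB_{\hat t}(x_0))$ with radius map $y\mapsto t_y=t(y)$, producing a covering pair $(\mathcal C,t_y)$ whose balls $\uB_{t_y}(y)$ are pairwise disjoint and contained in $\uB_{\hat t}(x_0)$. By continuity of the time-average, the refined stopping condition holds with equality at $t_y$; rearranging and inserting the Bishop-type volume lower bound $\vol_{g(s)}(\uB_{t_y}(y))\geq c\,t_y^{n/2}$ from \eqref{eqn: under reg scale euclidean volumes} and \eqref{eqn: volume forms} gives $t_y^{n/2-\BETA/4}\leq C\hat t^{-\theta/4}\int_{t_y/4}^{t_y}s^{-\BETA/4}\int_{\uB_{t_y}(y)}|\Ric_{g(s)}|\,d\vol_{g(s)}\,ds$. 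Summing over $y\in\mathcal C$, using disjointness and containment in $\uB_{\hat t}(x_0)$, and invoking the improved Ricci estimate \eqref{eqn: eta integral ricci a} bounds the total by $\hat t^{n/2}$, which is \eqref{eqn: content bound eta}. Finally, since $t_y\leq\hat t/200\leq 1$ we have $t_y^{n/2}\leq(\hat t/200)^{\BETA/4}t_y^{n/2-\BETA/4}$, so $\sum_y t_y^{n/2}\leq(\hat t/200)^{\BETA/4}\hat t^{n/2}\leq\eta\,\hat t^{n/2}$ once $\hat t\leq 200\,\eta^{4/\BETA}$, giving \eqref{eqn: content bound b}. The only real work is parameter bookkeeping — keeping track of how the exponents $\BETA/4$, $\theta/4$, the scale $\hat t$, and the targets $\e,\eta$ interact — and I expect this, rather than any new geometric input, to be the main point requiring care; there is none beyond what already appears in Proposition~\ref{prop: good and bad}.
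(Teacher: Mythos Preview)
Your proposal is correct and follows exactly the paper's approach—the paper itself says the proof is identical to that of Proposition~\ref{prop: good and bad} with $\hat t^{\theta/4}$ replacing $\e$ and $\theta/2$ replacing $\theta$, and omits it. One minor bookkeeping slip: in your content-bound step the time weight should be $s^{-\theta/2}$ (matching the exponent in the improved Ricci estimate~\eqref{eqn: eta integral ricci a}), not $s^{-\theta/4}$; with $s^{-\theta/4}$ the rearranged inequality $t_y^{n/2-\theta/4}\leq C\hat t^{-\theta/4}\int_{t_y/4}^{t_y} s^{-\theta/4}\int_{\uB_{t_y}(y)}|\Ric|$ does not actually follow from the stopping condition (the powers of $t_y$ fail to match), whereas with $s^{-\theta/2}$ it does, and the sum over $y$ then combines with~\eqref{eqn: eta integral ricci a} to give exactly $\hat t^{n/2}$.
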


Now, we prove Theorem~\ref{thm: decomposition theorem} in the case when $\eta <\e$. The proof by induction is completely analogous to the proof when $\eta<\e ,$ with the only modification coming from the first step of the iteration.
\begin{proof}[Proof of Theorem~\ref{thm: decomposition theorem} when $\eta <\e$]
Let $\delta$ be chosen according to Lemma~\ref{lem: improved ricci integral}.  
 We inductively define sets
$\mathcal{G}^{k}\subseteq \tilde{\mathcal{A}}^{k-1}$, $\tilde{\mathcal{A}}^{k} \subseteq \tilde{\mathcal{A}}^{k-1}$ and $\mathcal{C}^{k}\subseteq \tilde{\mathcal{A}}^{k}$ for each $k \in \mathbb{N}$ satisfying the following properties:
\begin{enumerate}
	\item\label{item: ind claim1-new}  For all $x\in \mathcal{G}^{k}$, we have 
\begin{align}\label{eqn: induction good bound-new}
(1-\eta)(1-\e)^{k-1} g(s')\leq g(s) \leq (1+\eta)(1+\e)^{k-1}g(s')
\end{align}
for all $s, s' \in [0,\hat{t}]$.
\item\label{item: ind claim2-new} For each $x \in \mathcal{C}^k$ we have a mapping $y\mapsto  t_y \in (0,200^{-k})$ such that $(\mathcal{C}^k, t_y)$ is a covering pair for $\tilde{\mathcal{A}}^k$ in $\uB$ as in Definition~\ref{def: covering pair} and such that
 \begin{equation}\label{eqn: induction content bound-new}
	\sum_{y \in \mathcal{C}^k} t_y^{n/2} \leq \hat{t}^{n/2}\eta  \e^{k-2}.
\end{equation} 
\item\label{item: ind claim3-new}
Furthermore, if $y \in \mathcal{C}^k,$ then 
\begin{equation}\label{eqn: bbb-new}
	(1-\eta)(1-\e)^{k-1} g(s') \leq g(s) \leq (1+\eta)(1+\e)^{k-1} g(s')
\end{equation} 
for all $x \in B_{g(t_y)} (y, 2t_y^{1/2})$ and for all $s,s' \in [t_y, \hat{t}]$.
\end{enumerate}
As before, we suppress in the notation the dependence of $t_y$ on $k$ for $y\in\mathcal{C}^k.$ 
The proof of the inductive step is identical to the that in the case when $\hat{t}=1$, so we need only to establish the base case. 
Let $\tilde{\mathcal{A}}^0=\uB$. 

Let $E$ be chosen according to Lemma~\ref{lem: improved ricci integral}; by Lemma~\ref{lem: improved ricci integral}(2) we see that
\begin{equation}\label{eqn: E}
E\subset \bigcup_{\ell=1}^N \uB_{\hat{t}}(x_\ell), \qquad N\leq \eta \hat{t}^{-n/2}.
\end{equation}

Next, consider a maximal $\hat{t}^{1/2}/4$-dense set $\{x_i\}_{i=1}^{N'}$ in $\uB\setminus E$ with respect to $g(\hat{t})$. In this way,
 \begin{equation}
 	\uB\setminus E \subseteq \bigcup_{i=1}^{N'}B_{g(\hat{t})}(x_i,\hat{t}^{1/2}),
 \end{equation}
 and thanks to Corollary~\ref{corollary: volume bounds}, we have that 
  \begin{equation}\label{eqn: N' bound}
  	N'\leq C_n \hat{t}^{-n/2}\,.
  \end{equation}
Since \eqref{eqn: eta integral ricci a} holds for each $i=1,\dots, N'$,we apply Proposition~\ref{prop: good and bad refined} to decompose $B_{g(\hat{t})}(x_i,4\hat{t}^{1/2})$ for each $i=1,\dots, N'$. Now, we set
\begin{align}
	\mathcal{G}^1=  \bigcup_{i=1}^{N'} \hat{\mathcal{G}}(\uB_{\hat{t}(x_i)}), \qquad \tilde{\mathcal{A}}^1 = E \cup \bigcup_{\ell=1}^{N'} \hat{\mathcal{A}}(\uB_{\hat{t}(x_\ell)})
\end{align}
as defined in \eqref{eqn: good and bad set def ref}. 
For each $x_\ell$ for $\ell = 1, \dots, N$, we define $t_{x_\ell} = \hat{t}^{1/2}$, and  set 
\begin{equation}
	\mathcal{C}^1 = \bigcup_{\ell=1}^{N} \{ x_\ell\} \cup \bigcup_{\ell=1}^{N'} \hat{\mathcal{C}}(\uB_{\hat{t}(x_\ell)})\,.
\end{equation}
Then 
properties \eqref{item: ind claim1-new}--\eqref{item: ind claim3-new}  follow directly from Proposition~\ref{prop: good and bad} along with \eqref{eqn: E} and \eqref{eqn: N' bound}, since
\begin{equation}
\sum_{\mathcal{C}^1} t_y^{n/2} = N \hat{t}^{n/2} + \eta N'  \hat{t}^{n/2 } \leq C_n\eta\, .
\end{equation}
We have thus inductively defined the sets $\mathcal{G}^k,\tilde{\mathcal{A}}^k$, and $\mathcal{C}^k$. The remainder of the proof is identical to the proof in the case when $\eta =\e$ shown in the previous subsection. This completes the proof of Theorem~\ref{thm: decomposition theorem}.
\end{proof}

\vspace{.4cm}



\section{$L^{\PP}$ bounds for the metric coefficients}\label{sec: metric estimates}
In this section, we prove Theorem~\ref{claim1}, which we restate below as Theorem~\ref{claim7} for the convenience of the reader. 

 \begin{theorem}[Theorem~\ref{claim1} restated]\label{claim7} 
Fix $n\geq 2$, ${\PP}\in [1, \infty)$, and $\e>0$. 
There exists $\delta=\delta(n,{\PP},\e)>0$ such that the following holds. 
Let $(M,g)$ be a complete Riemannian $n$-manifold with bounded curvature satisfying 
	\begin{align}
		R\geq -\delta, &\qquad \qquad \nu(g, 2) \geq -\delta.
	\end{align}
	 Then for any $x_0 \in M$, 
	 there is an open set $\Omega\subset M$ containing $x_0$ and a smooth diffeomorphism 
	 $\psi: \Omega \to B(0,1) \subset \R^n$ with $\psi(x_0)=0$ 	  satisfying 
\begin{align}\label{eqn: w1p 1 7}
  \fint_{B(0,1)} |(\psi^{-1})^*g -g_{euc} |^{\PP} \, dy \leq \e, \qquad
 \fint_{\Omega} |\psi^* g_{euc} - g|^{\PP} \, d\vol_g \leq \e
 \end{align}	
{Furthermore, for any $\kappa>1$ and $q_0 \in [\kappa,\infty)$,
we may choose $\delta$ additionally small depending on $q_0 ,\kappa$ such that for any $f \in W^{1,q}(B(0,1))$ with $q \in [\kappa, q_0]$, we have}
	 \begin{align}
\label{eqn: Lp norms small error pf}	 	
(1-\e)\| \psi^* f\|_{L^{q/\kappa}(\Om)} &\leq \| f\|_{L^q(B(0,1))} \leq (1+\e) \| \psi^* f\|_{L^{\kappa q}(\Om)},\\
\label{eqn: gradient small error pf}
(1-\e)\| \na \psi^* f\|_{L^{q/\kappa}(\Om)} &\leq \|\nabla f\|_{L^q(B(0,1))} \leq (1+\e) \|\na \psi^* f\|_{L^{\kappa q}(\Om)} 	. 	
	 \end{align}
\end{theorem}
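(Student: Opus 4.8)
\textbf{Plan for the proof of Theorem~\ref{claim7}.} The proof will run the Ricci flow $(M,g(t))_{t\in(0,1]}$ from $g(0)=g$, which exists and satisfies the scale-invariant curvature bounds \eqref{eqn: tiny curvature bounds 0} by Theorem~\ref{prop: uniform existence time and small curvature estimates} for $\delta$ small. The strategy is to compare $g=g(0)$ with $g(1)$ using the Decomposition Theorem~\ref{thm: decomposition theorem}, and to compare $g(1)$ with $g_{euc}$ using Lemma~\ref{rmk: good charts under regularity scale}. Concretely, Lemma~\ref{rmk: good charts under regularity scale} (with $t=1$) supplies a diffeomorphism $\psi:\Omega\to B(0,1)$ with $\psi(x_0)=0$ such that $(1-\e')g_{euc}\le \psi^*g(1)\le (1+\e')g_{euc}$ on $\Omega$; one then has to upgrade the pointwise closeness of $g(0)$ and $g(1)$ on a large-measure set to the $L^\PP$ bounds \eqref{eqn: w1p 1 7}.

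\textbf{Step 1: the $L^\PP$ metric estimates.} Apply Theorem~\ref{thm: decomposition theorem} on $B_{g(1)}(x_0,2)$ with $\eta=\e$, so $\hat t=1$, obtaining the decomposition $B_{g(1)}(x_0,2)=\bigcup_{k\ge1}\mathcal G^k\cup\mathcal A$ with $\vol_{g(0)}(\mathcal A)=0$, with $(1-\e)^{k-1}g(0)\le g(1)\le (1+\e)^{k-1}g(0)$ on $\mathcal G^k$ (taking $\eta=\e$, $s=0$, $t=1$), and with $\vol_{g(0)}(\mathcal G^k)\le (1+\e)^k\e^{k-1}$. On $\mathcal G^k$ we estimate, using the triangle inequality in the bundle of symmetric $2$-tensors, $|\psi^*g_{euc}-g(0)|_{g(0)}\le |\psi^*g_{euc}-g(1)|_{g(0)}+|g(1)-g(0)|_{g(0)}$; the first term is controlled because $\psi^*g(1)$ is $\e'$-close to $g_{euc}$ and $g(0)$ is $(1+\e)^{k-1}$-close to $g(1)$ on $\mathcal G^k$, giving $|\psi^*g_{euc}-g(0)|_{g(0)}\le C(n)\big((1+\e)^{k-1}-1+\e'\big)$ there. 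Hence
\begin{equation*}
\fint_{\Omega}|\psi^*g_{euc}-g(0)|^\PP\,d\vol_{g(0)}\ \le\ \frac{1}{\vol_{g(0)}(\Omega)}\sum_{k\ge1}\vol_{g(0)}(\mathcal G^k)\,\big(C(n)((1+\e)^{k-1}-1+\e')\big)^\PP.
\end{equation*}
Since $\vol_{g(0)}(\mathcal G^1)\to\om_n$ and the contribution there is $\le (C(n)\e')^\PP$, while for $k\ge 2$ the geometric decay $\vol_{g(0)}(\mathcal G^k)\le(1+\e)^k\e^{k-1}$ beats the polynomial growth $((1+\e)^{k-1}-1)^\PP\le (Ck\e)^\PP$, the whole sum is $\le C(n,\PP)(\e'+\e)^{\min\{1,\PP\}}$, which can be made $\le\e$ by choosing $\e,\e'$ (hence $\delta$) small depending on $n,\PP,\e$. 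The reverse comparison $\fint_{B(0,1)}|(\psi^{-1})^*g-g_{euc}|^\PP\,dy\le\e$ follows symmetrically, pushing forward via $\psi$, using that $\psi$ has bounded Jacobian (by the volume statement of Lemma~\ref{rmk: good charts under regularity scale}) so that integration against $dy$ and against $d\vol_{g(0)}$ differ by a bounded factor, and that $|(\psi^{-1})^*g-g_{euc}|_{g_{euc}}$ and $|\psi^*g_{euc}-g|_g$ are comparable pointwise on the (large) set where $g$ and $g_{euc}$ are themselves comparable, the exceptional set of measure zero being harmless.

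\textbf{Step 2: the $L^q$ function and gradient estimates.} Here one wants, for $f\in W^{1,q}(B(0,1))$ with $q\in[\kappa,q_0]$, that $\|\psi^*f\|_{L^{q/\kappa}(\Omega)}$ and $\|\psi^*f\|_{L^{\kappa q}(\Omega)}$ sandwich $\|f\|_{L^q(B(0,1))}$, and similarly for gradients. The point is that on the good set $\mathcal G:=\bigcup_{k\le K}\mathcal G^k$ (for $K$ large) the metric $g(0)$ is uniformly within $(1\pm\e)$ of $g_{euc}$ in charts, so on $\mathcal G$ one has $\|\psi^*f\|_{L^q(\mathcal G,d\vol_{g(0)})}$ pointwise comparable, up to $(1+\e)$ factors, to $\|f\|_{L^q(\psi(\mathcal G),dy)}$ and likewise $|\na(\psi^*f)|_{g(0)}$ comparable to $|\na f|$; on the complement $\mathcal A^K$, which has small $g(0)$-measure $\le C\e^{K-1}$, one loses control of the metric but gains it back through Hölder's inequality at the cost of trading $L^q$ for $L^{q/\kappa}$ on one side and $L^{\kappa q}$ on the other: e.g. $\int_{\mathcal A^K}|\psi^*f|^{q/\kappa}\le \vol_{g(0)}(\mathcal A^K)^{1-1/\kappa}\big(\int_{\mathcal A^K}|\psi^*f|^{q}\big)^{1/\kappa}$, and the small-measure factor absorbs the unboundedness. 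One must also invoke the content bound in Theorem~\ref{thm: decomposition theorem}\eqref{item: decomp Ak bounds} to pass from $d\vol_{g(0)}$-measure to Euclidean measure of $\psi(\mathcal A^K)$ via the volume comparison of Lemma~\ref{rmk: good charts under regularity scale} on the covering balls. Choosing first $q_0,\kappa$, then $K=K(\kappa,q_0,\e)$, then $\e$ and $\delta$ small accordingly yields \eqref{eqn: Lp norms small error pf}--\eqref{eqn: gradient small error pf}.

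\textbf{Main obstacle.} The delicate point is Step~2: controlling the "bad" region $\mathcal A^K$ when testing against functions $f$ that may be very large precisely there. The balance is between the geometric smallness $\vol_{g(0)}(\mathcal A^K)\lesssim\e^{K-1}$ and the exponent loss built into the $L^{q/\kappa}$/$L^{\kappa q}$ formulation — one has to check that Hölder's inequality with these exponents genuinely converts small measure into a small multiplicative error uniformly for all $q\in[\kappa,q_0]$, which forces the order of quantifiers ($q_0,\kappa$ fixed first, then $\delta$) exactly as stated in the theorem. A secondary subtlety is ensuring the gradient comparison is compatible with the chart: on $\mathcal G$ one must know that $g(0)$ and $g_{euc}$ are close \emph{as matrices} (not merely that their volume forms agree), which is exactly what the pointwise bounds in Theorem~\ref{thm: decomposition theorem}\eqref{item: decomp G bounds} combined with Lemma~\ref{rmk: good charts under regularity scale} provide, so the two steps fit together cleanly.
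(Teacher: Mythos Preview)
Your Step~1 is essentially the paper's argument: run the Ricci flow, take $\Omega=B_{g(1)}(x_0,1)$ with the chart from Lemma~\ref{rmk: good charts under regularity scale}, apply the Decomposition Theorem with $\eta=\e$, and sum the contributions of the $\mathcal G^k$. The paper organizes the sum via the layer-cake formula rather than a direct triangle-inequality split, but the content is the same.

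Step~2, however, has a real gap. Your plan is to split $\Omega=\mathcal G\cup\mathcal A^K$ with $\mathcal G=\bigcup_{k\le K}\mathcal G^k$, handle $\mathcal G$ by metric closeness, and on $\mathcal A^K$ invoke H\"older:
\[
\int_{\mathcal A^K}|\psi^*f|^{q/\kappa}\,d\vol_{g(0)}\le \vol_{g(0)}(\mathcal A^K)^{1-1/\kappa}\Big(\int_{\mathcal A^K}|\psi^*f|^{q}\,d\vol_{g(0)}\Big)^{1/\kappa}.
\]
The small prefactor is fine, but the remaining integral is with respect to $d\vol_{g(0)}$, and on $\mathcal A^K$ you have \emph{no} pointwise comparison between $d\vol_{g(0)}$ and $dy$ (nor between $|\nabla_{g(0)}\cdot|$ and $|\partial\cdot|$). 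So that integral cannot be bounded by any Euclidean $L^q$ norm of $f$, and symmetrically the Euclidean integral $\int_{\psi(\mathcal A^K)}|f|^{\kappa q}\,dy$ cannot be bounded by $\|\psi^*f\|_{L^{\kappa q}(\Omega,g(0))}$. The content bound on $\mathcal A^K$ controls the \emph{measure} of the bad set in either volume form, but it does not let you convert one volume form to the other on that set; the ``unboundedness'' you hope the small factor absorbs is the wrong kind.

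The paper avoids this by never introducing a genuinely uncontrolled region: it applies H\"older on \emph{each} $\mathcal G^k$ separately, after first using the known metric distortion $(1\pm\sigma)^{k}$ on $\mathcal G^k$ to change volume forms. This gives
\[
\|f\|_{L^q(B(0,1))}^q\le \|\psi^*f\|_{L^{\kappa q}(\Omega)}^q\sum_{k\ge1}(1+\sigma)^{c(n)k}\,\vol_{g}(\mathcal G^k)^{1/\kappa'}\le \|\psi^*f\|_{L^{\kappa q}(\Omega)}^q\sum_{k\ge1}(1+\sigma)^{c(n)k}\sigma^{(k-1)/\kappa'},
\]
and the exponential volume decay $\sigma^{(k-1)/\kappa'}$ beats the exponential distortion growth $(1+\sigma)^{c(n)k}$, making the sum $\le 1+\e$ for $\sigma=\sigma(\e,\kappa,q_0)$ small. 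The gradient estimate \eqref{eqn: gradient small error pf} is identical, with an extra factor $(1+\sigma)^{qk}$ from $|\nabla_{g_{euc}}f|\le(1+\sigma)^{k}|\nabla_{g}\psi^*f|$ on $\mathcal G^k$, which is again absorbed. The moral: the decomposition must be summed over all $k$, not truncated --- the point of Theorem~\ref{thm: decomposition theorem} is precisely that on $\mathcal G^k$ the distortion is large but \emph{quantified}, and that quantification is what makes the H\"older step close.
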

\medskip

	

\begin{remark}
	{\rm
	The estimates in \eqref{eqn: w1p 1 7} are equivalent to 
			 \begin{align}\label{eqn: w1p 1 intro}
	\fint_{\Omega} \|d \psi\|_\infty^{\PP}\,d\vol_g \leq 1+\e,
\quad	 \quad		
	 	\fint_{B(0,1)}  \|d\psi^{-1}\|_\infty^{\PP}\,dy \leq 1+\e.
 \end{align}	

Here, given a map $u:(M,g)\to (N,h)$, the notation $\|d u\|_\infty(x)$ indicates 
	the operator norm of the linear map
	 $d u_x: (T_x M,g)\to (T_{u(x)}M, h)$.  Given a bilinear form $B: TM\times T M\to \R$, we let $\|B\|_\infty = \sup \|B(v,\cdot)\|_\infty/\|v\|_g$.
	}
\end{remark}	 

\begin{remark}\label{rmk: R instead of 1}
	{\rm
	{
	It will be apparent in the proof that, given any number $R\geq 1,$ by  choosing $\delta>0$ to additionally small depending on $R$, we may obtain the conclusion of Theorem~\ref{claim7} with $B(0,R)$ replacing $B(0,1)$.
	}}
\end{remark}
	 
\begin{proof}

 Let us begin with some initial observations. Without loss of generality we may assume that $\e\leq \e(n, {\PP})$, where $\e(n, {\PP})$ will be determined in the proof.
	By Theorem~\ref{prop: uniform existence time and small curvature estimates}, the Ricci flow $g(t)$ with $g(0)=g$ exists for $t\in [0,1]$ and $|\Rm_{g(t)}|\leq \ETA/t$ for all $x\in M$ and $t\in (0,1]$.  Here $\ETA$ may be taken as small as needed by choosing $\delta$ sufficiently small. We let 
	\begin{equation}
		\Omega = B_{g(1)}(x_0, 1)\, .
	\end{equation}
By Lemma~\ref{rmk: good charts under regularity scale}, we obtain a smooth diffeomorphism $\psi:B_{g(1)}(x_0,2)\to U\subset \R^n$, with inverse $\phi = \psi^{-1}$ such that $\psi(x_0)=0$, $\psi(\Omega) = B(0,1)$,  and
\begin{equation}
			\label{eqn: metric bounds 2}
		(1-\e/2) g_{euc}  \leq   \phi^*g(1)\leq (1+\e/2)g_{euc} \, ,
\end{equation}
for all $x\in U$. This holds as long as $\ETA$ (and hence $\delta)$ has been chosen to be sufficiently small depending on $\e$ and $n$.\\

Now, let 
\begin{equation}
	B_{g(1)}(x_0,2)=\bigcup_{k=1}^\infty \mathcal{G}^k \cup\mathcal{A}\, ,
\end{equation}
 be the decomposition provided by the Decomposition Theorem~\ref{thm: decomposition theorem} with $\e = \eta $. From \eqref{eqn: metric bounds 2} and Theorem~\ref{thm: decomposition theorem}\eqref{item: decomp G bounds},  for every $x\in \phi^*\mathcal{G}^k$, we have
 \begin{align}
(1-\e)^{k+1} g_{euc} \leq  \phi^*g \leq (1+\e)^{k+1} g_{euc}	\, .
\end{align}
Therefore, we have 
\begin{equation}\label{eqn: bb}
	|\phi^* g -g_{euc}|_{g_{euc}} \leq (1+\e)^{k+1}-1 \leq \e(k+1)(1+\e)^k\, ,
\end{equation}
for all  $x\in \phi^*\mathcal{G}^k$, and likewise
\begin{equation}
|g -\psi^* g_{euc}|_{g}  \leq \e(k+1)(1+\e)^k\, ,
\end{equation}
for all $x \in \mathcal{G}^k$.
%
%
%
Furthermore, $\vol_g(\mathcal{A})=\vol_{euc}(\phi^*\mathcal{A})=0$ and for all $k\geq 2,$ we have
\begin{align}\label{eqn: G bound}
\vol_{g}(\mathcal{G}^k)&\leq (1+\e)^k \e^{k-1},\\
\label{eqn: pullback G bound}\vol_{{euc}}(\phi^*\mathcal{G}^k)&\leq (1+\e)^k \e^{k-1}.	
\end{align}



We begin by proving the first estimate in \eqref{eqn: w1p 1 7}. 
Set 
\begin{equation}
	\mu(r) =\vol_{euc}(\{y \in B(0,1) \ : \ |\phi^*g -g_{euc}|_{g_{euc}} \geq r\})
\end{equation}
and $r_k = \e(k+1)(1+\e)^k$. 
Note that  $\mu(0) \leq \om_n$, while  \eqref{eqn: bb} and 
 \eqref{eqn: pullback G bound} ensure that 
 \begin{equation}\label{eqn: mu bound phi}
 \begin{split}
\mu(r_k) \leq \vol_{euc}(B(0,1)\setminus\cup_{\ell=1}^k \phi^*\mathcal{G}^\ell) &\leq \sum_{\ell=k+1}^\infty \vol_{euc}(\phi^*\mathcal{G}^\ell) \\
&\leq \sum_{\ell=k+1}^\infty (1+\e)^\ell\e^{\ell-1} \leq C\e^k,
 \end{split}	
 \end{equation}
where $C$ is a universal constant. We apply the layer cake formula 
to find that
\begin{equation}
\begin{split}
\int_{B(0,1)}   |\phi^*g -g_{euc}|^{\PP}\,dy & = {\PP} \int_0^\infty r^{{\PP}-1}\mu(r)\,dr \\
 &\leq\mu(0) {\PP} \int_0^{r_0} r^{{\PP}-1}\,d r+ {\PP} \sum_{k=0}^\infty \mu(r_k) \int_{r_k}^{r_{k+1}} r^{{\PP}-1}\,dr  \\
& \leq \e  \mu(0)+  \sum_{k=0}^\infty \mu(r_k)r_{k+1}^{\PP} \\
&\leq \e \omega_n+C\e^{\PP}\sum_{k=0}^\infty \e^{k}(k+2)^{\PP}(1+\e)^{\PP(k+1)} \\
&\leq C\e \omega_n\, ,
\end{split}
\end{equation}
for some $C= C(n, P)$ provided that $\e$ is small enough depending on $P$. Further decreasing $\delta$, we may replace $\e$ by $\e / C$. Dividing through by $\om_n,$ we establish the first estimate in \eqref{eqn: w1p 1 7}. The second estimate is entirely analogous, with the only additional point to note being that $\vol_g( \Omega) \leq (1+\e) \om_n$ thanks to \eqref{eqn: bb} and \eqref{eqn: G bound}.
%



{

We now show \eqref{eqn: Lp norms small error pf}. Let $\sigma = \sigma(\e, \kappa,q)$ be chosen later in the proof and let $\delta = \delta(n,\sigma)$ be sufficiently small to apply the Decomposition Theorem~\ref{thm: decomposition theorem} with $\sigma
$ in place of $\e$.  Fix any $f \in W^{1,q}(B(0,1))$. From \eqref{eqn: bb} and $\vol_{euc}(\phi^*\mathcal{A}) =0$, we find that 
\begin{equation}
\begin{split}
	\| f\|_{L^q(B(0,1))}^q
	&\leq  \sum_{k=1}^\infty \int_{\phi^*\mathcal{G}^k\cap B(0,1)} |f|^{ q} \,dx \\
	& \leq   \sum_{k=1}^\infty (1+\sigma)^{nk/2+1}\int_{\mathcal{G}^k \cap \Omega} |\psi^*f|^{ q} \,d\vol_{g}.
\end{split}
\end{equation}
	Next, for each $k$, 
	we apply H\"{o}lder's inequality with $\kappa$ and $\kappa'=\kappa/(\kappa-1)$ 
	and apply \eqref{eqn: G bound} to find that 
	\begin{equation}\begin{split}
	\| f\|_{L^q(B(0,1))}^q	
	& \leq \sum_{k=1}^\infty (1+\sigma)^{nk/2+1}\vol_{g}(\mathcal{G}^k)^{1/\kappa'} \left(\int_{\mathcal{G}^k} |\psi^*f|^{{\kappa q}} \,d\vol_g\right)^{1/\kappa}\\
	& \leq \| \psi^* f\|_{L^{{\kappa q}}(\Omega)}^q \sum_{k=1}^\infty (1+\sigma)^{(n+1/\kappa')k}\sigma^{k/\kappa'}.
\end{split}\end{equation}
 Now, by choosing $\sigma = \sigma(\e, \kappa)$ sufficiently small, the sum on the right-hand side is bounded above by $1+\e$, and thus by $(1+\e)^q$ for any $q\geq 1$; after taking the $1/q$ power of both sides, we have shown that 
\begin{equation}
	\| f\|_{L^q(B(0,1))} \leq (1+\e) \| \psi^* f\|_{L^{{\kappa q}}(\Omega)}.
\end{equation}
The proof of the other inequality in \eqref{eqn: Lp norms small error pf} is completely analogous.

The proof of \eqref{eqn: gradient small error pf} is similar. 
From \eqref{eqn: bb}, we have
\begin{equation}
\begin{split}
\int_{B(0,1)}|\na_{euc} f|_{euc}^{q}\,dx 
&\leq  \sum_{k=1}^\infty  \int_{\phi^*\mathcal{G}^k\cap B(0,1)}|\na_{euc} f|_{euc}^{q}\,dx\\
	&\leq \sum_{k=1}^\infty(1+\sigma)^{(q+n/2)k+1 } \int_{\mathcal{G}^k} |\nabla_{g}\psi^*f|^{q} d\vol_{g}\\
	&\leq\left(\int_{\Omega}|\nabla_{g}\psi^* f|^{{\kappa q}} d\vol_{g}\right)^{1/\kappa}  \sum_{k=1}^\infty (1+\sigma)^{(q+n/2)k+1}\vol_{g}(\mathcal{G}^k)^{1/\kappa'} \\
	&\leq \left(\int_{\Omega}|\nabla_{g}\psi^* f|^{{\kappa q}} d\vol_{g}\right)^{1/\kappa} \sum_{k=1}^\infty (1+\sigma)^{(q+n)k}\sigma^{k/\kappa'}  \\
	&  \leq (1+\e)  \|\nabla_{g}\psi^* f\|_{L^{{\kappa q}}(\Omega)}^q.
\end{split}\end{equation}
where again the final inequality holds provided that $\sigma$ is sufficiently small depending on $q_0,\e$ and $\kappa$. The other inequality in \eqref{eqn: gradient small error pf} is analogous. This completes the proof.}
\end{proof}

As a consequence of the proof of Theorem~\ref{claim7}, we obtain the following estimate for the Ricci flow.  In short, it tells us that at in least in $L^P$ the metrics $g(t)$ converge as $t\to 0$.  

\begin{corollary}\label{cor: useful for limit}
	Fix $n\geq 2$, ${\PP}\in [1, \infty)$, and $\e>0$. 
There exists $\delta=\delta(n,{\PP},\e)>0$ such that the following holds. 
Let $(M,g)$ be a complete Riemannian $n$-manifold with bounded curvature satisfying 
	\begin{align}
		R\geq -\delta, &\qquad \qquad \nu(g, 2) \geq -\delta.
	\end{align}
For all $\eta\in (0,\e)$, there exists $\hat{t}_\eta = \hat{t}_\eta(n, P, \e, \eta)$ such that for any $x_0 \in M$, $s,t \in [0,\hat{t}_\eta]$. 
	 \begin{align}\label{eqn: lp eta}
  \fint_{B_{g(1)}(x_0,1)} |g(s) -g(t) |_{g(t_\eta)}^{\PP} \, d\vol_{g(t_\eta)} \leq \eta
 \end{align}	
Moreover, for any $\kappa>1$, we may choose $\delta$ additionally depending on $\kappa$ such that 
for all $s,t\in [0,\hat{t}_\eta]$
\begin{equation}\label{eqn: lp est wrt g1}
	  \fint_{B_{g(1)}(x_0,1)} |g(s) -g(t) |_{g(1)}^{\PP/\kappa} \, d\vol_{g(1)} \leq \eta.
\end{equation}
\end{corollary}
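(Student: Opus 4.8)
\textbf{Proof proposal for Corollary~\ref{cor: useful for limit}.}

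The plan is to extract both estimates directly from the Decomposition Theorem~\ref{thm: decomposition theorem}, using the refinement with $\eta < \e$ that controls the metrics $g(s)$ versus $g(t)$ for $s,t$ in a small time interval $[0,\hat t]$. First I would fix a parameter $\sigma = \sigma(n,P,\eta,\kappa)$ to be chosen at the end, and choose $\delta = \delta(n,\sigma)$ small enough to invoke Theorem~\ref{thm: decomposition theorem} with the role of $\eta$ there played by $\sigma$ and with, say, $\e=\e(n)$ a fixed small dimensional constant (e.g.\ $\e = 1/2$). By Theorem~\ref{prop: uniform existence time and small curvature estimates} and Lemma~\ref{rmk: good charts under regularity scale}, the flow exists on $[0,1]$, satisfies $|\Rm_{g(t)}|\le\ETA/t$, and the metrics $g(1)$ and $g_{euc}$ are $(1\pm\sigma)$-close in the good charts, so in particular $\vol_{g(1)}(B_{g(1)}(x_0,1)) \geq c_n$ and $\vol_{g(t_\eta)}(B_{g(1)}(x_0,1))\geq c_n$. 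Set $\hat t_\eta = \hat t(n,\e,\theta,\sigma)$ to be the time $\hat t$ provided by Theorem~\ref{thm: decomposition theorem}, with $\theta \in (0,1/2)$ a fixed choice.

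Next I would unwind the geometric meaning of Theorem~\ref{thm: decomposition theorem}\eqref{item: decomp G bounds}. For $x\in\mathcal G^k$ and $s,t\in(0,\hat t_\eta]$ we have $(1-\sigma)(1-\e)^{k-1}g(t)\le g(s)\le(1-\sigma)(1+\e)^{k-1}g(t)$, hence pointwise
\begin{equation}
|g(s)-g(t)|_{g(t)} \leq (1+\e)^{k-1}-1+\sigma \leq C\, \e^{\,k-1}\,k,
\end{equation}
or more crudely $|g(s)-g(t)|_{g(t)}\le C(\e)^k k$ for $x\in\mathcal G^k$, together with the volume decay $\vol_{g(0)}(\mathcal G^k)\le(1+\e)^k\sigma\e^{k-2}$ for $k\ge2$ from \eqref{item: decomp volume bounds G}, and $\vol_{g(0)}(\mathcal A)=0$ from \eqref{item: decomp A vol}. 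Since the volume form is almost monotone, $\vol_{g(t_\eta)}$ of each $\mathcal G^k$ is comparable (within $(1\pm2\delta)$) to $\vol_{g(0)}$. Now I would run the same layer-cake / geometric series computation as in the proof of Theorem~\ref{claim7}: writing $\mu(r)=\vol_{g(t_\eta)}(\{x\in B_{g(1)}(x_0,1): |g(s)-g(t)|_{g(t_\eta)}\ge r\})$, using that on $\mathcal G^k$ one has the pointwise bound above and additionally $|g(t)-g(t_\eta)|_{g(t_\eta)}\le C(\e)^k k$ to pass the tensor norm from $g(t)$ to $g(t_\eta)$, one gets
\begin{equation}
\fint_{B_{g(1)}(x_0,1)}|g(s)-g(t)|_{g(t_\eta)}^P\,d\vol_{g(t_\eta)} \leq C(n,P)\Big(\e + \sigma\sum_{k\ge1}(C\e)^{kP}k^P\e^{k}\Big).
\end{equation}
Here I must be slightly careful: the first good set $\mathcal G^1$ contributes only the error $(1+\e)^0-1+\sigma = \sigma$ (not a fixed $\e$), so the whole right side is bounded by $C(n,P)\sigma$ once $\e$ is fixed; hence choosing $\sigma$ small depending on $n,P,\eta$ gives $\le\eta$. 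This proves \eqref{eqn: lp eta}.

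For \eqref{eqn: lp est wrt g1} I would argue identically, but now measure the tensor norm with respect to $g(1)$ and integrate $d\vol_{g(1)}$. The subtlety is that $g(1)$ is not one of the "good" slices in the decomposition (which compares $g(s),g(t)$ for $s,t\le\hat t_\eta$), so the transition factor from $g(t)$-norm to $g(1)$-norm on $\mathcal G^k$ is only bounded by the scale-invariant estimate \eqref{eqn: bad bound general} of Lemma~\ref{lem: bad bound}: $|g(t)-g(1)|_{g(1)}\le C$ (a constant, not small). Raising the tensor norm comparison to a power $\le P/\kappa < P$ is exactly what buys back the needed room: on $\mathcal G^k$ we bound $|g(s)-g(t)|_{g(1)}^{P/\kappa}\le C^{P/\kappa}|g(s)-g(t)|_{g(t)}^{P/\kappa}\le C^{P/\kappa}(C\e^k k)^{P/\kappa}$, and then $\sum_k(C\e^{k}k)^{P/\kappa}\vol(\mathcal G^k) \le C\sigma\sum_k(C\e)^{kP/\kappa}k^{P/\kappa}\e^k<\infty$, again dominated by $C\sigma$. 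Choosing $\sigma$ small depending additionally on $\kappa$ and $P$ completes the proof. The main obstacle — and the only place one has to think — is this last point: the metric $g(1)$ is at a fixed (order one) distance from the small-time slices, so the estimate can only hold after lowering the exponent from $P$ to $P/\kappa$, and one has to check that the geometric series in $k$ still converges with that reduced exponent; it does, because the volume decay $\e^k$ of $\mathcal G^k$ dominates any polynomial-times-geometric factor $(C\e)^{kP/\kappa}k^{P/\kappa}$ once $\e<1$, with the overall scale set by the free parameter $\sigma$.
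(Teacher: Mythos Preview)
Your approach to \eqref{eqn: lp eta} is the same as the paper's---run the layer-cake/geometric-series argument of Theorem~\ref{claim7} using the $\eta$-refinement of the Decomposition Theorem---but there is a computational slip that matters. The inequality $(1+\e)^{k-1}-1+\sigma\le C\e^{\,k-1}k$ is false; the correct bound (cf.\ \eqref{eqn: bb}) is $(1+\e)^{k-1}-1\le \e(k-1)(1+\e)^{k-2}$, which is of order $\e k$, not $\e^{k-1}k$. With the correct bound the $k$th summand contributes roughly $(\e k(1+\e)^k)^{P}\cdot\sigma\e^{k-2}(1+\e)^k$, and this is summable only when $\e(1+\e)^{P+1}<1$, i.e.\ $\e$ must be chosen small depending on $P$ (and on $\kappa$ for the second estimate). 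Your choice ``$\e=\e(n)$, e.g.\ $\e=1/2$'' does not work.

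For \eqref{eqn: lp est wrt g1} there is a more serious gap. You convert the $g(t)$-norm to the $g(1)$-norm by quoting Lemma~\ref{lem: bad bound} and asserting ``$|g(t)-g(1)|_{g(1)}\le C$, a constant.'' But Lemma~\ref{lem: bad bound} gives only $g(t)\le t^{-\beta}g(1)$, so the norm-conversion factor is $t^{-\beta}$, which is \emph{not} uniformly bounded as $t\to0$ in $[0,\hat t_\eta]$. Even if you first pass through $g(\hat t_\eta)$ (using that $g(t)\sim(1\pm\e)^k g(\hat t_\eta)$ on $\mathcal G^k$), the residual factor $\hat t_\eta^{-\beta}$ depends on $\hat t_\eta$, which in turn depends on your parameter $\sigma$, which you want to choose depending on $\eta$: a circularity you do not address. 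The paper avoids this entirely by ``repeating the proof of \eqref{eqn: gradient small error pf}'': one uses the $\hat t=1$ decomposition (where $g(\tau)$ and $g(1)$ are $(1\pm\sigma)^k$-close on $\mathcal G^k$ for \emph{all} $\tau\in[0,1]$), and then applies H\"older's inequality on each $\mathcal G^k$ to separate $\big(\int |g(s)-g(t)|_{g(\hat t_\eta)}^{P}\,d\vol_{g(\hat t_\eta)}\big)^{1/\kappa}$---small by part one---from $\vol(\mathcal G^k)^{1/\kappa'}$, which carries the geometric decay. The drop from exponent $P$ to $P/\kappa$ is precisely what makes that H\"older split possible; it is not, as you suggest, a device to absorb a fixed pointwise conversion constant.
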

\begin{remark}{\rm
Upgrading this $L^P$ coefficient convergence to $d_p$ convergence is morally, though not explicitly stated, in Section \ref{s:global_convergence}.  This is in fact significantly harder, as one needs to understand how the analysis on $g(s)$ varies, not just the metric coefficients.
}
\end{remark}

\begin{proof}
	The proof of \eqref{eqn: lp eta} is identical to the proof of \eqref{eqn: w1p 1 7}, but now we apply the Decomposition Theorem~\ref{thm: decomposition theorem} with $\eta $ in place of $\e$. Then, the proof of \eqref{eqn: lp est wrt g1} follows precisely be repeating the proof of \eqref{eqn: gradient small error pf}.
\end{proof}

\vspace{.4cm}

\section{Epsilon regularity}\label{sec: proof of main theorem}

In this section, we prove the epsilon regularity theorem, Theorem~\ref{thm: main thm, Lp def new}, and  the uniform $L^\infty$ Sobolev embedding, Theorem~\ref{Sob-embedding}. The main tool is Theorem~\ref{claim7}, established in the previous section.

\subsection{Preliminary lemmas} Let us first establish two preliminary lemmas that will be needed in the proof of Theorem~\ref{thm: main thm, Lp def new}.
The first lemma allows us to localize the $d_p$ distance in Euclidean space. We use the notation $d_{q,B(0,R)}$ to denote $d_{q, g_{euc}, B(0,R)}$, that is,
\begin{equation}
d_{q, B(0,R)} (x, y) = \sup \Big\{ |f(x) - f(y) |  \ : \ \int_{B(0,R)}|\na f|^q \, dx \leq 1\Big\}.
\end{equation}

Our first lemma tells us that on Euclidean space we may localize the $d_p$ distance:

\begin{lemma}\label{lem: localizing dp}
For all $\e>0$ and $p\geq n+1$ there exists $R=R(n,\e,p)$ such that for all 
	{$q \geq p-1/2$} and for all $x \in \B_{q,g_{euc}}(0,1)$, we have
\begin{equation}\label{eqn: localizing dp}
	|d_{q, B(0,R)}(x,0) - d_{q, g_{euc}}(x,0)| \leq \e.
\end{equation}
\end{lemma}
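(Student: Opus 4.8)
The plan is to compare the extremal problems defining $d_{q,B(0,R)}$ and $d_{q,g_{euc}}$ directly. One inequality is essentially free: any $f$ admissible for the global problem (with $\int_{\R^n}|\na f|^q\,dx\le 1$) is, by restriction, admissible for the problem on $B(0,R)$ (with $\int_{B(0,R)}|\na f|^q\,dx\le 1$), so $d_{q,B(0,R)}(x,0)\ge d_{q,g_{euc}}(x,0)$; hence only the upper bound $d_{q,B(0,R)}(x,0)\le d_{q,g_{euc}}(x,0)+\e$ requires work. So the task reduces to showing: if $f$ is a near-extremal competitor for $d_{q,B(0,R)}(x,0)$, then we can modify/extend it to a competitor for the global problem whose value at $x$ (relative to $0$) has decreased by at most $\e$, once $R$ is taken large enough.

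The key point is quantitative control on the Euclidean Morrey--Sobolev extremal problem. Recall from the Euclidean example in the paper that $d_{q,g_{euc}}(x,0)=S_{n,q}|x|^{1-n/q}$ for an explicit normalizing constant $S_{n,q}$, and that the sup is nearly attained by radial, monotone test functions. First I would show that near-extremal competitors on $B(0,R)$ must concentrate their gradient near $x$ and $0$: precisely, for any competitor $f$ on $B(0,R)$ with $f(x)-f(0)\ge d_{q,B(0,R)}(x,0)-\e/2$, the quantity $\int_{B(0,R)\setminus B(0,\rho)}|\na f|^q\,dx$ is small for $\rho$ of size comparable to $|x|$, with smallness improving as $R\to\infty$ (uniformly over $x\in\B_{q,g_{euc}}(0,1)$, which by \eqref{eqn: uniform ball containment} forces $|x|$ bounded above and $x\ne 0$ forces a scale-invariant lower bound is \emph{not} available, so one must handle $|x|\to 0$ separately — see below). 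The mechanism: if a definite fraction of the $L^q$-mass of $|\na f|$ lived at scale $\gg|x|$, one could truncate $f$ outside a large ball, renormalize, and produce a \emph{strictly better} competitor for the global problem than $S_{n,q}|x|^{1-n/q}$ allows, a contradiction. Thus I would argue by contradiction/compactness: if the lemma failed, we'd get $R_i\to\infty$, points $x_i\in\B_{q_i,g_{euc}}(0,1)$, exponents $q_i\ge p-1/2$, and competitors $f_i$ violating \eqref{eqn: localizing dp} by $\ge\e$; after rescaling so $|x_i|=1$ (using the scaling $d_{q,B(0,R)}(x,0)=|x|^{1-n/q}d_{q,B(0,R/|x|)}(x/|x|,0)$ and noting $R_i/|x_i|\to\infty$ since $|x_i|$ is bounded above) and passing to a subsequential weak limit of the $\na f_i$ in $L^{q_\infty}_{loc}$, the limit would be an admissible competitor on all of $\R^n$ achieving a value $>S_{n,q_\infty}$, contradicting the Euclidean computation. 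The lower bound $|x_i|^{1-n/q_i}$ on the rescaled target value and uniform integrability come from the Morrey--Sobolev inequality itself, keeping everything bounded.

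The main obstacle I anticipate is the uniformity in the exponent $q$: the lemma must hold for \emph{all} $q\ge p-1/2$ with a single $R$, and the normalizing constants $S_{n,q}$, the Morrey embedding constants, and the concentration estimates all depend on $q$. The saving grace is that $q$ ranges over $[p-1/2,\infty)$, a set on which these constants are well-behaved: $S_{n,q}\to 1$ and the Morrey constant stays bounded as $q\to\infty$, and for $q$ in any compact subinterval everything is continuous, so a compactness argument in the pair $(x,q)$ (with $q=\infty$ adjoined as a limit point, where $d_{\infty}$ is just the geodesic distance and localization is trivial) closes the gap. A secondary technical point is the regime $|x|\to 0$: here one cannot rescale to $|x|=1$ without shrinking $R/|x|$, but in fact when $|x|$ is small the relevant competitors are supported, after the concentration estimate, in a ball of radius $O(|x|)\ll R$, so \eqref{eqn: localizing dp} is automatic — I would dispatch this case first by a direct truncation argument and then run the compactness argument only on the region $|x|\ge c(n,p)>0$. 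Finally I would record that, having fixed such an $R$, the extension step (extending a competitor on $B(0,R)$ to $\R^n$ by a constant outside, which only decreases the gradient $L^q$-norm and hence keeps admissibility after the concentration estimate controls the boundary mismatch) completes the bound $d_{q,B(0,R)}(x,0)\le d_{q,g_{euc}}(x,0)+\e$.
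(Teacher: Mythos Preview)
Your approach is a genuine alternative but far more elaborate than necessary. The paper's proof is a one-line cutoff argument: given a near-extremal $f$ on $B(0,R)$ with $f(0)=0$ and $d_{q,B(0,R)}(x,0)\le 1+\e$, first replace $f$ by $\min\{|f|,2\}$ (this does not change $f(x)$ or $f(0)$ since the relevant values lie in $[0,1+\e]$), then set $\hat f=(1-\e)f\psi$ where $\psi$ is a cutoff equal to $1$ on a fixed ball containing $x$ and vanishing outside $B(0,R)$ with $|\na\psi|\le CR^{-1}$. The truncation is the whole trick: once $|f|\le 2$ pointwise, the cross term satisfies $\|f\na\psi\|_{L^q}\le 2\|\na\psi\|_{L^q}\le CR^{n/q-1}$, which is $\le\e/2$ for $R$ large \emph{uniformly} over $q\ge p-1/2$ simply because $n/q-1\le n/(p-\tfrac12)-1<0$. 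No concentration, no compactness, no case analysis in $|x|$ or $q$ is needed.

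Your concentration--compactness scheme can presumably be pushed through (the blow-up argument with $|x_i|=1$ and Arzel\`a--Ascoli via Morrey--Sobolev is sound, and the $q_i\to\infty$ endpoint does close as you sketch), but note that your preliminary ``concentration'' step as stated is nearly circular: to argue that a non-concentrating competitor yields a strictly better global competitor you would truncate/extend, and controlling that truncation error is exactly the lemma you are proving. The compactness argument in your step 3 avoids this circularity, so that is the part of your plan that actually works; the earlier concentration heuristic should be dropped. In any case, the paper's height-$2$ truncation renders all of this machinery unnecessary and gives the uniformity in $q$ for free.
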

\begin{proof}
One inequality in \eqref{eqn: localizing dp} follows immediately from the definition: for any  $x \in B(0,R)$,  we have
	\begin{equation}\label{eqn: local dp one direction}
	d_{q,g_{euc}}(x,0) \leq d_{q, B(0,R)}(x,0).
	\end{equation}

Next, we show that for any $x$ such that $d_{q, B(0,R)}(x,0) \leq 1+\e$, we have 
\begin{equation}\label{eqn: local dp other direction}
	d_{q, B(0,R)}(x,0) \leq d_{q,g_{euc}}(x,0) + \e.
\end{equation}
Note that this immediately implies that \eqref{eqn: local dp other direction} holds for any $x \in \B_{q,g_{euc}}(0,1)$ and thus together with \eqref{eqn: local dp one direction} will complete the proof.
  Let $f$ be a function such that 
	\begin{equation}\label{eqn: 3 properties}
		\int_{B(0,R)} |\na f|^q \,dx \leq 1, \qquad f(0)=0, \qquad d_{q, B(0,R)}(x,0) <f(x) +\e.
	\end{equation}
We will modify $f$  to produce an admissible test function for $d_{p,g_{euc}}(x,0)$.
 Without loss of generality, we may replace $f$ with $\min\{|f|,2\}$, which also satisfies \eqref{eqn: 3 properties}. Now, consider a cutoff function $\psi$ such that $0 \leq \psi \leq 1$ and 
 \begin{equation}
 	\psi =\begin{cases}
 				1 &\text{ in } B(0,2)\\
 				 0 &\text{ in } \R^n \setminus B(0,R)
 	\end{cases} ,   \qquad |\na \psi | \leq CR^{-1}
 \end{equation}
 for a dimensional constant $C$. By choosing $R$ sufficiently large depending on $n, \e$ and $p$, we have
 	$\| \na \psi \|_{L^q(\R^n) } \leq CR^{n/q-1} \leq \e/2$ for any $q\geq p-1/2.$
So, letting $\hat{f} = (1-\e)f\psi$, we have 
 \begin{equation}
 \hat f(0) = 0, \qquad \hat f(x) \geq  (1-2\e) d_{q, B(0,R)}(x,0),
 \end{equation}
 and 
\begin{equation}\begin{split}
	\| \na \hat{f}\|_{L^q(\R^n)} &\leq (1-\e)\| \psi \na f\|_{L^q(\R^n)} + (1-\e)\| f \na \psi\|_{L^q(\R^n)}\\
	& \leq (1-\e)\|\na f\|_{L^q(B(0,R))} +2 (1-\e) \| \na \psi\|_{L^q(\R^n)}\\
	& \leq  1- \e + \e = 1.
	\end{split}
\end{equation}
In particular, $\hat{f}$ is an admissible test function for $d_{q,g_{euc}}(x,0)$ and so 
\begin{equation}
	d_{q,g_{euc}}(x,0) \geq (1-2\e) d_{q, B(0,R)}(x,0).
\end{equation}
After replacing $\e$ with $\e/2$, this completes the proof of \eqref{eqn: local dp other direction} and thus of the lemma.
\end{proof}
\begin{remark}\label{rmk: localize f}
	{\rm In the course of proving of Lemma~\ref{lem: localizing dp}, we have established the following statement under the hypotheses of Lemma~\ref{lem: localizing dp}. 
For any $x\in \R^n$ such that $d_{q,B(0,R)}(x,0)\leq 2,$ we may find a function $\hat{f} \in W^{1,q}_0(B(0,R))$ such that $\| \na f\|_{L^q(\R^n)}\leq 1$, $
 \hat f(0) = 0$ and $\hat f(x) \geq  (1-2\e) d_{q, B(0,R)}(x,0).$
	}
\end{remark}

The next lemma also deals with the $d_p$ distance on Euclidean space, and establishes uniform continuity of $d_{q,U}$ with respect to $q$. Again, here the notation $d_{q, U}$ is used to denote $d_{q, g_{euc}, U}$ for a set $U \subset \R^n$.
{ 
\begin{lemma}\label{p-continuity} Fix $p>n$ and let $U\subset \mathbb{R}^n$ be a bounded open set with smooth boundary. Then $d_{q,U}(x,y)$ converges to $d_{p,U}(x,y)$ uniformly on precompact set $V$ with $\overline{V} \Subset U$ as $q\rightarrow p$.
\end{lemma}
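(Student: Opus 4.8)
The plan is to establish the uniform convergence via an explicit quantitative comparison between the extremal problems defining $d_{q,U}$ and $d_{p,U}$, using the fact that on a fixed bounded smooth domain $U$, $L^q(U)$-norms depend continuously on $q$ and the Morrey--Sobolev embedding $W^{1,q}(U) \hookrightarrow C^{0}(U)$ holds with uniformly controlled constant for $q$ in a neighborhood of $p$. First I would fix $p>n$ and restrict attention to $q$ in a small interval $[p-\sigma, p+\sigma]$ with $\sigma$ chosen so that $p-\sigma > n$; this guarantees all the relevant Sobolev embeddings are valid and, crucially, that the extremal functions are Hölder continuous up to the boundary with a modulus of continuity controlled uniformly in $q$.

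The key estimate is a two-sided comparison: for $f \in W^{1,p}(U)$ with $\int_U |\nabla f|^p \le 1$, Hölder's inequality on the finite-measure set $U$ gives $\int_U |\nabla f|^q \le |U|^{1 - q/p}$ when $q \le p$ (and a similar bound the other direction when $q > p$, after first truncating gradients or using the reverse interpolation via an intermediate exponent). Thus a competitor for $d_{p,U}$, suitably rescaled by a factor $|U|^{(1/q - 1/p)}$ or its analogue, becomes a competitor for $d_{q,U}$, yielding $d_{q,U}(x,y) \ge c(q,p)\, d_{p,U}(x,y)$ with $c(q,p) \to 1$ as $q \to p$; the reverse inequality follows symmetrically. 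The rescaling constants are explicit and independent of $x,y$, so this already gives uniform convergence of $d_{q,U}$ to $d_{p,U}$ on all of $U\times U$ — in fact the restriction to $\overline V \Subset U$ is needed only to ensure $d_{p,U}(x,y)$ stays bounded (which is automatic for $x,y$ in a compact subset), not for the argument itself. I would carry this out by: (i) fixing the interval of exponents and recording the uniform Sobolev/Morrey bound; (ii) proving $d_{q,U} \le (1+\omega(|q-p|))\, d_{p,U}$ via Hölder applied to admissible competitors; (iii) proving the reverse inequality the same way; (iv) combining to get $|d_{q,U}(x,y) - d_{p,U}(x,y)| \le \omega(|q-p|)\, d_{p,U}(x,y) \le C_V\,\omega(|q-p|)$ for $x,y \in V$.

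The main obstacle is the direction $q > p$: Hölder's inequality does not directly bound $\|\nabla f\|_{L^q(U)}$ by $\|\nabla f\|_{L^p(U)}$ on a finite-measure domain. To handle this I would pass through the extremal function itself: for fixed $x,y$ the supremum in $d_{p,U}(x,y)$ is attained (by reflexivity and the direct method, since $p>1$) by a $p$-harmonic-type function $f_p$ whose gradient is known to lie in $L^{q}(U)$ for $q$ slightly above $p$ by higher-integrability / Gehring-type estimates for the $p$-Laplacian on smooth domains, with the gain in integrability exponent and the norm bound depending only on $n,p,U$. Alternatively — and this is cleaner — one can avoid the extremal-function regularity altogether by observing that it suffices to prove one-sided semicontinuity in each direction: $\limsup_{q\to p} d_{q,U} \le d_{p,U}$ uses only $q\le p$-type Hölder on competitors for $d_{p,U}$ after truncating them to have bounded gradient, while $\liminf_{q\to p} d_{q,U} \ge d_{p,U}$ uses a competitor for $d_{q,U}$ together with weak-$L^p$ compactness of a sequence $f_{q_j}$ with $\int |\nabla f_{q_j}|^{q_j}\le 1$ (extracting a weak limit in $W^{1,p}_{\mathrm{loc}}$, using lower semicontinuity of the $L^p$ norm of the gradient and uniform equicontinuity from Morrey to pass the pointwise values $f_{q_j}(x) - f_{q_j}(y)$ to the limit). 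Equicontinuity is what makes the limiting procedure legitimate on $\overline V$, and is where the precompactness hypothesis is genuinely used. I expect the compactness argument to be the delicate point, since one must confirm the weak limit is admissible for $d_{p,U}$, i.e. that $\int_U |\nabla f|^p \le 1$, which follows from Fatou applied to $|\nabla f_{q_j}|^{q_j}$ together with $q_j \to p$ and weak lower semicontinuity — a short but careful argument.
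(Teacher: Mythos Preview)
Your proposal is essentially correct and uses the same two ingredients as the paper: smooth/bounded-gradient approximation of a near-maximizer for $d_{p,U}$ to produce competitors for $d_{q,U}$, and weak compactness plus Morrey equicontinuity to pass near-maximizers for $d_{q_j,U}$ to a limiting competitor for $d_{p,U}$. The paper organizes the argument by splitting into $q\to p^-$ and $q\to p^+$ (H\"older handles the easy inequality in each case, compactness handles the hard direction for $q<p$, $C^1$-approximation handles the hard direction for $q>p$); your organization instead proves $\liminf$ and $\limsup$ separately for both sides at once, which is slightly cleaner. The Gehring alternative you mention would also work (and is in fact used later in the paper, in Lemma~\ref{improved-estimate-GehringLemma}), but is heavier than necessary here.

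There is, however, a genuine slip in your write-up: you have the two methods assigned to the wrong inequalities. Truncating a competitor $f$ for $d_{p,U}$ to bounded gradient and observing $\|\nabla f\|_{L^q}\to\|\nabla f\|_{L^p}$ makes $f$ (rescaled) admissible for $d_{q,U}$, which yields $d_{q,U}(x,y)\geq (1-\omega)\,d_{p,U}(x,y)$, i.e.\ $\liminf_{q\to p} d_{q,U}\geq d_{p,U}$ --- not $\limsup\leq$. Conversely, extracting a weak limit $f_\infty$ from near-maximizers $f_{q_j}$ of $d_{q_j,U}$ and checking $\int_U|\nabla f_\infty|^p\leq 1$ shows $f_\infty$ is admissible for $d_{p,U}$, hence $d_{p,U}(x,y)\geq |f_\infty(x)-f_\infty(y)|=\lim |f_{q_j}(x)-f_{q_j}(y)|$, which gives $\limsup_{q\to p} d_{q,U}\leq d_{p,U}$ --- not $\liminf\geq$. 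Once you swap these labels the argument is complete; your remark that the admissibility of the weak limit (i.e.\ $\int_U|\nabla f_\infty|^p\le 1$) is the delicate step, handled by Fatou/lower semicontinuity as $q_j\to p$, is exactly right and matches what the paper does.
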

\begin{proof}
We first consider the case $q\rightarrow p^-$.
By definition, for $q<p$, we have
\begin{equation}\label{eqn: one direction free}
d_{p,U}(x,y)\leq |U|^{\frac{1}{q}-\frac{1}{p}}d_{q,U}(x,y).
\end{equation}
In particular, for any $\e>0$ we have $ d_{p,U}(x,y) \leq (1+\e)  d_{q,U}(x,y)$ for $q<p$ sufficiently close to $p$. 
 Suppose by way of contradiction that there are  $\e_0>0$, $x_i,y_i\in V$ and $p_i\rightarrow p^-$ such that for all $i>>1$,
\begin{equation}\label{eqn: contradiction assumption}
d_{p,U}(x_i,y_i)+2\e_0\leq d_{p_i,U}(x_i,y_i).
\end{equation}
For each $i$, let $f_i \in W^{1,p}(U)\cap C^0_{loc}(U)$ be a function such that $f_i(y_i)=0$, $\int_U |\nabla f_i|^{p_i}\leq 1$, and 
\begin{equation}\label{eqn: almost sup}
 d_{p_i,U}(x_i,y_i)<|f_i(x_i)-f_i(y_i)|+\e_0.
 \end{equation}

After passing to a subsequence, $x_i,y_i\rightarrow x_\infty, y_\infty\in \bar V$ by compactness. It is easily verified that $d_{p,U}(x_i,y_i)\to d_{p, U }(x_\infty, y_\infty)$ as $ i\to \infty$. For any fixed $q<p$, then for $i$ sufficiently large we may apply H\"older's inequality to find
{
\begin{equation}\int_U |\nabla f_i|^q \, dx \leq |U|^{1- q/p_i}.\end{equation}
}
In particular, we see that $\| \na f_i \|_{L^q(U)}$ is bounded uniformly in $i$. Moreover, by the Morrey-Sobolev inequality, $\{f_i\}$ is uniformly H\"{o}lder continuous.  We therefore see that  $f_i\rightharpoonup f_\infty$  in $W^{1,q}(U)$ with $\int_U |\na f_\infty|^q \,dx \leq 1 $ for all $q<p$, and $f_i \to f_\infty $ uniformly. Letting $q$ tend to $p$, we see that $\int_U |\nabla f_\infty|^p\leq 1$, and so $f_\infty$ is an admissible test function for $d_{p,U}(x_\infty, y_\infty)$. Thus, passing to the limit and using \eqref{eqn: contradiction assumption} and \eqref{eqn: almost sup}, we reach a contradiction, since
\begin{equation}
\begin{split}
d_{p,U}(x_\infty, y_\infty) + \e_0 \leq 
 \lim_{i\rightarrow +\infty}|f_i(x_i)-f_i(y_i)|
&= |f_\infty(x_\infty)-f_\infty(y_\infty)|\\
&\leq d_{p,U}(x_\infty,y_\infty).
\end{split}
\end{equation}

Now we consider the case when $q\rightarrow {p}^+$. Again by \eqref{eqn: one direction free}, we see that for any $\e>0$, we have $d_{p,U}(x,y) \geq (1-\e) d_{q,U}(x,y)$ for $q>p$ sufficiently close to $p$.
Suppose by way of contradiction that there exist $p_i>p,\e_0>0$ and $p_i\rightarrow p^+$, $x_i,y_i\in V$ such that for all $i$,
\begin{equation}\label{eqn: contra 2}
d_{p_i,U}(x_i,y_i)+\e_0<d_{p,U}(x_i,y_i).
\end{equation}
We may assume $x_i,y_i\rightarrow x_\infty,y_\infty\in \bar V$. Let $\sigma$ be a small constant to be determined later.  By smooth approximation, let $f \in C^1(\overline{U})$  function such that $\int_U |\nabla f|^p\leq 1+\sigma\e_0$ and 
\begin{equation}\label{eqn: dpbound}
 d_{p,U}(x_\infty,y_\infty)\leq |f(x_\infty)-f(y_\infty)|+\sigma\e_0.
\end{equation}
We let $\Lambda = \sup_{\bar{U}} |\nabla f|.$
 For $i$ sufficiently large, we have 
\begin{equation}
\begin{split}
\left( \int_U |\nabla f|^{p_i}\, dx\right)^{1/p_i} &\leq  \Lambda^{(p_i-p)/p_i}\left(\int_U |\nabla f|^p \right)^{1/p_i} \leq (1+\sigma\e_0).
\end{split}
\end{equation}
So,  $f/(1+\sigma \e_0)$ is an admissible test function for $d_{p_i, U }(x_i, y_i)$.  We thus see that 
\begin{equation}\label{eqn: fxi bound}
 |f(x_i) - f(y_i)| \leq (1+\sigma \e_0)d_{p_i, U}(x_i, y_i) .
\end{equation}
So, by \eqref{eqn: contra 2}, \eqref{eqn: dpbound},  and \eqref{eqn: fxi bound} respectively, we find
\begin{equation}\begin{split}
	\limsup_{i\to \infty } d_{p_i, U}(x_i, y_i) + \e_0 &  \leq d_{p,U}(x_\infty, y_\infty ) \\
	& \leq \lim_{i \to \infty} |f(x_i) - f(y_i)| + \sigma\e_0 \\
	& \leq(1+\sigma \e_0)  \liminf_{i \to \infty} d_{p_i, U}(x_i, y_i) + \sigma \e_0.
	\end{split}
\end{equation}
We reach a contradiction by choosing $\sigma$ small enough.
\end{proof}

\subsection{Proof of Theorem~\ref{thm: main thm, Lp def new}}
In this section, we prove Theorem~\ref{thm: main thm, Lp def new}, which we restate below as Theorem~\ref{thm: main thm, Lp def proof} for the convenience of the reader.

\begin{theorem}[Theorem~\ref{thm: main thm, Lp def new} Restated] \label{thm: main thm, Lp def proof}
{Fix $n\geq 2$. For any $\e>0$ and $p\geq n+1$, there exists $\delta = \delta(n,\e,p)$ such that the following holds.}
 Let $(M^n,g)$ be a complete Riemannian manifold with bounded curvature satisfying 
	\begin{align}
		R\geq -\delta, &\qquad \qquad \nu(g, 2) \geq -\delta.
	\end{align}
 Then for all $x\in M$, 
 \begin{equation}\label{eqn: GHclose}
	d_{GH}\left( (\B_{p,g}(x, 1), d_{p,g}) ,\,  (\B_{p,g_{euc}}(0, 1), d_{p,g_{euc}})\right) < \e. 
\end{equation}
Moreover,  
\begin{equation}\label{eqn: volumes close p}
(1-\e) |\B_{p,g_{euc}}(0, r) | \leq \vol_{g} (\B_{p,g}(x_0, r)) \leq (1+\e) |\B_{p,g_{euc}}(0, r) | 
\end{equation}
for all $0<r<1$ where $|\cdot|$ denotes the Euclidean volume.  In particular, the measure $d\vol_g$ on the metric measure space $(M,d_{p,g},d\vol_g)$ is a doubling measure for all scales $r\leq 1$.
\end{theorem}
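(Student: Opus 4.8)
The plan is to use Theorem~\ref{claim7} (the $L^p$ estimates for the metric coefficients) to produce, for each $x \in M$, a diffeomorphism $\psi: \Omega \to B(0,1)$ whose inverse $\phi = \psi^{-1}$ is $W^{1,p}$-close to an isometry in the quantitative sense of \eqref{eqn: w1p 1 7}, and then to show that such a map distorts the $d_p$ distance and the volume measure by an arbitrarily small amount. The key conceptual point is that $d_p$ is defined purely through the Sobolev space $W^{1,p}$, so a map that transplants $W^{1,p}$ functions with small multiplicative error (as quantified by \eqref{eqn: Lp norms small error pf}--\eqref{eqn: gradient small error pf}) must transplant $d_p$ balls with small error. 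First I would fix $\e$ and $p \geq n+1$, pick $\kappa = \kappa(\e)$ close to $1$, and invoke Theorem~\ref{claim7} with $\kappa$ and with a target exponent $q_0$ large enough to accommodate all the $L^q$-norms that appear (one should take $q_0 = \kappa p$ or slightly larger); Remark~\ref{rmk: R instead of 1} lets me enlarge the domain ball to $B(0,R)$ with $R = R(n,\e,p)$ chosen according to the localization Lemma~\ref{lem: localizing dp}.

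\textbf{Comparing the $d_p$ distances.} Fix $x, y \in \B_{p,g}(x_0, 1)$. The localization Lemma~\ref{lem: localizing dp} says that on Euclidean space $d_{q, B(0,R)}$ agrees with $d_{q, g_{euc}}$ up to $\e$ for $q \geq p - 1/2$ on the relevant ball, and Lemma~\ref{p-continuity} says $d_{q,U}$ depends continuously on $q$; so it suffices to compare the \emph{intrinsic} $d_{p}$ distance on $\Omega$ with the $d_{q, B(0,R)}$ distance on $B(0,R)$ via $\psi$, for $q$ ranging in a small window $[p/\kappa, \kappa p]$. Given a test function $f$ on $B(0,R)$ with $\int |\na f|^p \le 1$, the pullback $\psi^* f$ on $\Omega$ satisfies $\|\na \psi^* f\|_{L^{p/\kappa}(\Omega)} \le (1+\e)$ by \eqref{eqn: gradient small error pf}, so after rescaling by $(1+\e)$ it is admissible for $d_{p/\kappa, \Omega}$, and $|\psi^* f(\psi^{-1}x') - \psi^* f(\psi^{-1}y')| = |f(x') - f(y')|$; this gives one direction of the comparison. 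The reverse direction is symmetric using the other inequalities in \eqref{eqn: Lp norms small error pf}--\eqref{eqn: gradient small error pf} and Remark~\ref{rmk: localize f} to localize the test function on $\Omega$ into $B(0,R)$. Chaining these estimates with Lemmas~\ref{lem: localizing dp} and \ref{p-continuity}, I obtain
\begin{equation}
|d_{p,g}(x,y) - d_{p,g_{euc}}(\psi(x),\psi(y))| \le C\e
\end{equation}
for all $x,y$ in the relevant balls, provided $\kappa$ is close enough to $1$; this is exactly a $C\e$-Gromov--Hausdorff approximation, establishing \eqref{eqn: GHclose} after relabeling $\e$. One must also check that $\psi$ maps $\B_{p,g}(x_0,1)$ into a set comparable to $\B_{p,g_{euc}}(0,1)$ and is essentially surjective onto it, which follows from the same distance comparison applied at $x_0$.

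\textbf{Comparing volumes.} For \eqref{eqn: volumes close p}, the distance comparison just proven gives $\psi(\B_{p,g}(x_0,r)) \subset \B_{p,g_{euc}}(0, r+C\e)$ and conversely $\B_{p,g_{euc}}(0, r - C\e) \subset \psi(\B_{p,g}(x_0,r))$ for all $r \in (0,1]$ (here using that Euclidean $p$-balls are actual metric balls $B(0, S r^{1-n/p})$ whose Euclidean volume is continuous in $r$). Then $\vol_g(\B_{p,g}(x_0,r)) = \int_{\psi(\B_{p,g}(x_0,r))} |\det d\phi| \, dy$, and since the first estimate in \eqref{eqn: w1p 1 7} forces $|\phi^* g - g_{euc}|$ small in $L^p$ (hence $|\det d\phi - 1|$ small in $L^{p/n}$, using $p \geq n+1 > n$), the Jacobian factor is within $\e$ of $1$ in an integrated sense; combined with $|\,|\B_{p,g_{euc}}(0,r+C\e)| - |\B_{p,g_{euc}}(0,r)|\,| \le C\e$, this yields \eqref{eqn: volumes close p} after adjusting constants. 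The doubling statement is then immediate: for $r \le 1$, $\vol_g(\B_{p,g}(x_0,2r)) \le (1+\e)|\B_{p,g_{euc}}(0,2r)| = (1+\e) 2^{\frac{np}{p-n}} |\B_{p,g_{euc}}(0,r)| \le \frac{1+\e}{1-\e} 2^{\frac{np}{p-n}} \vol_g(\B_{p,g}(x_0,r))$, a bound depending only on $n$ and $p$.

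\textbf{Main obstacle.} The routine parts are the Jacobian/volume bookkeeping. The genuinely delicate step is controlling the $d_p$ distance \emph{both ways} simultaneously: the pullback $\psi^* f$ of a global test function on $B(0,R)$ need not be compactly supported inside $\Omega$, and conversely a test function on $\Omega$ achieving the sup in $d_{p,g}$ need not extend to $B(0,R)$ without increasing its gradient norm. Handling this requires the combination of the localization Lemma~\ref{lem: localizing dp}, Remark~\ref{rmk: localize f}, and the $\kappa$-shifted $L^q$ comparison \eqref{eqn: Lp norms small error pf}--\eqref{eqn: gradient small error pf} (the $\kappa$-loss in exponents being precisely what allows the small multiplicative error to survive when we do not have pointwise metric comparison but only $L^p$ control), and closing this loop carefully — tracking how the various $\e$'s and $\kappa$'s compose — is where the real work lies.
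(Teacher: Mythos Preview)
Your strategy is exactly the paper's: produce the $W^{1,p}$ chart $\psi:\Omega\to B(0,R)$ from Theorem~\ref{claim7} (with Remark~\ref{rmk: R instead of 1}), transplant test functions both ways, absorb the exponent drift $p\leftrightarrow p/\kappa,\kappa p$ via Lemmas~\ref{lem: localizing dp} and \ref{p-continuity}, and read off the Gromov--Hausdorff and volume closeness. You have also correctly identified the delicate point as the two-way localization of test functions.

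There is one detail in your sample direction that does not close. You start with $f$ on $B(0,R)$ at exponent $p$ and land with $\psi^*f$ at exponent $p/\kappa$ on $\Omega$, i.e.\ you have placed the $\kappa$-shift on the $M$ side. But Lemma~\ref{p-continuity} is proved only for Euclidean domains, so you have no tool to pass from $d_{p/\kappa,g}$ back to $d_{p,g}$ on $M$; nor does the inequality $d_{p,g,\Omega}\geq d_{p,g,M}$ go the right way. The paper keeps the exponent on the $M$ side always equal to $p$ and puts the shift on the Euclidean side: in one direction take a near-maximizer $f\in W^{1,p}(M)$ for $d_{p,g}$, push forward, and use \eqref{eqn: gradient small error pf} to get admissibility for $d_{\kappa p,B(0,R)}$; in the other direction take a near-maximizer $h$ for $d_{p/\kappa,B(0,R)}$, cut it off inside $B(0,R)$ via Remark~\ref{rmk: localize f}, pull back and extend by zero to $M$, and use \eqref{eqn: gradient small error pf} to get admissibility for $d_{p,g}$ on $M$. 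Then \eqref{dp-euc-close} compares both $d_{\kappa p,B(0,R)}$ and $d_{p/\kappa,B(0,R)}$ to $d_{p,g_{euc}}$. This is the same mechanism you describe, with the shift moved to the side where Lemma~\ref{p-continuity} applies.

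For \eqref{eqn: volumes close p}, your additive estimate $\big|\,|\B_{p,g_{euc}}(0,r+C\e)|-|\B_{p,g_{euc}}(0,r)|\,\big|\le C\e$ is not multiplicatively small when $r\ll\e$. The paper obtains the bound at every scale $r\in(0,1]$ by rescaling the metric and rerunning the argument at unit scale (together with the Decomposition Theorem~\ref{thm: decomposition theorem} for the Jacobian factor), which yields the uniform multiplicative control you want.
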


\begin{proof}[Proof of Theorem~\ref{thm: main thm, Lp def proof}]

Let $\e'>0$  be a fixed number to be specified later in the proof.
To begin, notice that we may choose $R=R(n,p/2,\e')\geq 2$ according to Lemma~\ref{lem: localizing dp}  and $\kappa =\kappa(n,p,\e', R)>1$ sufficiently close to $1$ according to Lemma \ref{p-continuity} so that for any $z,y\in \B_{p,g_{euc}}(0,2)$, we have
\begin{equation}\label{dp-euc-close}
\begin{split}
|d_{{p/\kappa},B(0,R)}(z,y)-d_{p,g_{euc}}(z,y)|<\e';\\
|d_{\kappa p,B(0,R)}(z,y)-d_{p,g_{euc}}(z,y)|<\e'.
\end{split}
\end{equation}
Up to possibly increasing $R$ depending on $p$ and $n$, we have $\B_{p,g_{euc}}(0,2)\subset B(0,R/2)$ by \eqref{eqn: uniform ball containment}.

Now, choose $\delta=\delta(n,p,\e',\kappa,R) = \delta(n,p,\e')$ sufficiently small such that we may apply Theorem~\ref{claim7} with $\e=\e'$ and with $R$ as above (see Remark~\ref{rmk: R instead of 1}), obtaining a diffeomorphism $\psi: \Omega' \to B(0,R)$ satisfying the properties of Theorem~\ref{claim7}. 
We claim that for any $z,y\in \B_{p,g_{euc}}(0,2)$, we have
 \begin{equation}\label{d_p-comp-diff}
 |d_{p,g}(\psi^{-1}(z),\psi^{-1}(y))- d_{{p},g_{euc}}(z,y)|<\e,
 \end{equation}
and so in particular, the diffeomorphism $\psi^{-1}$ is an $\e$-Gromov-Hausdorff approximation between $(\B_{p,g_{euc}}(0,2), d_{p,euc})$ and $(\Omega , d_{p,g})$, where we define $ \Omega = \psi^{-1}( \B_{p,g_{euc}}(0,2))\subset M$. 

Fix $z,y\in \B_{p,g_{euc}}(0,2)$ and set $y_0=\psi^{-1}(y)$, $z_0 = \psi^{-1}(z) \in \Omega $ for brevity of notation. Thanks to \eqref{dp-euc-close},  in order to prove \eqref{d_p-comp-diff} it suffices to show that
\begin{align}
\label{eqn: compare pk b}	 d_{p,g}(z_0,y_0) & \leq (1+3\e')\,d_{{\kappa p},B(0,R)}(z,y), \\
\label{eqn: compare pk a}	 d_{p,g}(z_0,y_0)& \geq 
(1-3\e')
d_{p/\kappa,B(0,R)}(z,y) .
\end{align}
We begin by showing \eqref{eqn: compare pk b}. 
Let $f\in W^{1,p}(M)\cap C^0(M)$ be a function such that $\int_M |\nabla f|^pd\vol_g\leq 1$ and 
\begin{equation}
 d_{p,g}(z_0,y_0) \leq  (1+\e')|f(z_0)-f(y_0)| .
\end{equation}
 Thanks to Theorem~\ref{claim7}, we find that 
\begin{equation}\begin{split}
\|\na \psi^*f \|_{L^{\kappa p}(B(0,R))}& \leq (1+\e')\|\na f \|_{L^p(\Omega)}\\
&\leq (1+\e')\|\na f \|_{L^p(M)}\leq 1+\e'.\end{split}
\end{equation}
So, $h= \psi^*f/(1+\e')$ is an admissible test function for $d_{\kappa p,B(0,R)}(y,z)$ and thus $d_{\kappa p,B(0,R)}(y,z) \geq |h(z) - h(y)|.$ Furthermore, 
\begin{equation}
|h(z)-h(y)| \geq (1+\e ')^{-1}|f(z_0) - f(y_0)| \geq (1+\e')^{-2}d_{p,g}(z_0,y_0) .
\end{equation} This establishes \eqref{eqn: compare pk b}.

The proof of \eqref{eqn: compare pk a} is similar.  Fix a function $h\in W^{1,p/\kappa}(B(0,R))\cap C^0(B(0,R))$  such that $\int_{B(0,R)}|\na h|^{p/\kappa}dx\leq 1$ and 
\begin{equation}
d_{p/\kappa,B(0,R)}(z_0,y_0)< (1+\e')|h(z_0)-h(y_0)|.
\end{equation}
By Remark~\ref{rmk: localize f}, we may assume $h$ to be vanishing outside $B(0,R)$ and hence $f=\psi^* h$ can be extended to a function on $M$.
By Theorem~\ref{claim7}, we have
\begin{equation}
\|\na f\|_{L^p(M)} \leq (1+\e') \|\na h\|_{L^{p/\kappa}(B(0,R))}\leq 1+\e'.
\end{equation}
Hence, $f/(1+\e')$ is an admissible test function for $d_{p,g}(y,z)$, and so
\begin{equation}
(1+\e')d_{p,g}(y,z)\geq |f(y)-f(z)| =|h(y_0)-h(z_0)| .
\end{equation}
This establishes \eqref{eqn: compare pk a}, and hence \eqref{d_p-comp-diff}.
From \eqref{d_p-comp-diff}, it follows immediately that 
 \begin{equation}\label{eqn: containment p balls}
\psi^{-1}(\B_{p,g_{euc}}(0, 1-8\e'))\subseteq \B_{p,g}(x, 1)\subseteq\psi^{-1}(\B_{p,g_{euc}}(0, 1+8\e'))
 \end{equation}
 and hence \eqref{eqn: GHclose} holds taking $\e' = \e/8$.
Finally, \eqref{eqn: volumes close p} is now an immediate consequence of \eqref{eqn: containment p balls}, Theorem~\ref{thm: decomposition theorem} and rescaling argument. This completes the proof of Theorem~\ref{thm: main thm, Lp def proof}.
 \end{proof}

 %
 %
 %
 %
 %
 %
 %
 

\subsection{Proof of Theorem~\ref{Sob-embedding}}
Now we prove the $L^\infty$ Sobolev inequality on manifolds with small entropy and $R_-$.  We restate Theorem~\ref{Sob-embedding} below as Theorem~\ref{Sob-embedding S7}

\begin{theorem}[$L^\infty$ Sobolev Embedding]\label{Sob-embedding S7}
	
Fix $p,q_0\geq n+1$, then there exists $\delta= \delta(n,p,q_0)>0$   such that the following holds.
  Let  $(M^n,g)$ be a complete Riemannian manifold with bounded curvature satisfying 
		\begin{align}
		R\geq -\delta, 	\qquad \qquad 	\nu(g, 2) \geq -\delta \,.
	\end{align}
Then for all  $x_0 \in M$ and $q \in (n,q_0)$, there exists $C(n,q)>0$ such that for all $f \in W_0^{1,q}(\B_{p,g}(x_0, 1))$, we have 
	\begin{equation}\label{So-em}
		\| f\|_{L^\infty(\B_{p,g}(x_0,1))} \leq  C_{n,q} \| \na f\|_{L^q(\B_{p,g}(x_0,1))}.
	\end{equation}
	{
	For all $f\in W^{1,q}(\B_{p,g}(x_0,1))$ and {$x,y\in \B_{p,g}(x_0,1)$}, we have 
	\begin{equation}\label{So-em-2}
	|f(x)-f(y)|\leq C_{n,q}\|\nabla f\|_{L^q(B_{p,g}(x_0,1))}.
	\end{equation}
	 }
	{For all $f \in W^{1,q}(M)$, we have 
	\begin{equation}\label{So-em global}
\|f\|_{L^\infty(M)}\leq  C_{n,q} \left(\|\nabla f\|_{L^{q}(M)}+ \|f\|_{L^{q}(M)}\right).
\end{equation}}
In terms of the $d_p$ distance we can upgrade \eqref{So-em-2} to a H\"older embedding: 
there exists $\alpha= \alpha(n,q) \in (0,1)$
	\begin{equation}\label{So-em-2-section 7}
	|f(x)-f(y)|\leq C_{n,q, p} d_p(x,y)^{\alpha}\|\nabla f\|_{L^q(\B_{p,g}(x_0,1))}\, 
	\end{equation}
for all $x,y\in \B_{p,g}(x_0,1)$.
\end{theorem}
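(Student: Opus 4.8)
The plan is to transfer the Euclidean Morrey--Sobolev inequalities through the chart $\psi$ provided by Theorem~\ref{claim7}, using the $L^p$-equivalence of norms and gradients, and then re-express the Euclidean distance in terms of $d_p$. First I would fix $\e'>0$ small (to be chosen) and apply Theorem~\ref{claim7} (in the form of Remark~\ref{rmk: R instead of 1}) with $R=R(n,p,\e')$ large enough that $\B_{p,g_{euc}}(0,2)\subset B(0,R/2)$, obtaining a diffeomorphism $\psi:\Omega'\to B(0,R)$, together with a constant $\kappa>1$ close to $1$, so that \eqref{eqn: Lp norms small error pf} and \eqref{eqn: gradient small error pf} hold for exponents $q\in[\kappa,\kappa q_0]$, say. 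I would also invoke the containment \eqref{eqn: containment p balls} from the proof of Theorem~\ref{thm: main thm, Lp def proof}, so that $\psi^{-1}(\B_{p,g_{euc}}(0,1-8\e'))\subseteq\B_{p,g}(x_0,1)\subseteq\psi^{-1}(\B_{p,g_{euc}}(0,1+8\e'))$.

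For the local estimate \eqref{So-em}: given $f\in W_0^{1,q}(\B_{p,g}(x_0,1))$, push forward to $h=(\psi^{-1})^*f=f\circ\psi^{-1}$, which is supported in $\B_{p,g_{euc}}(0,1+8\e')\subset B(0,C_n)$ by \eqref{eqn: uniform ball containment}. By \eqref{eqn: gradient small error pf}, $\|\na h\|_{L^{q/\kappa}(B(0,R))}\leq(1+\e')\|\na f\|_{L^q(\Omega)}$; since $q/\kappa>n$ once $\kappa$ is close enough to $1$, the classical Morrey--Sobolev inequality on the ball $B(0,C_n)$ gives $\|h\|_{L^\infty}\leq C(n,q)\|\na h\|_{L^{q/\kappa}}$, and $\|f\|_{L^\infty(\B_{p,g}(x_0,1))}=\|h\|_{L^\infty}$. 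This yields \eqref{So-em} (after renaming $C$). The pointwise oscillation bound \eqref{So-em-2} follows the same way from the classical Morrey estimate $|h(x)-h(y)|\leq C(n,q)|x-y|^{1-n/q}\|\na h\|_{L^{q/\kappa}}$, combined with $|x-y|\leq C_n$ on the relevant ball. The global estimate \eqref{So-em global} is obtained by applying \eqref{So-em-2} inside every unit $d_p$-ball: cover $M$ by such balls, use \eqref{So-em-2} on each to bound oscillation, and combine with $\|f\|_{L^q}$ to control the average; alternatively one notes that each point of $M$ lies in some $\B_{p,g}(x_0,1)$ and runs the local argument with the extra $\|f\|_{L^q}$ term coming from not having compact support, exactly as in the classical proof of $W^{1,q}\hookrightarrow L^\infty$ on a ball.

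The Hölder refinement \eqref{So-em-2-section 7} is where the actual new content lies, and it is the step I expect to be the main obstacle. The point is to replace the Euclidean factor $|x-y|^{1-n/q}$ by $d_p(x,y)^\alpha$. From the classical Morrey estimate applied to $h=(\psi^{-1})^*f$ we get $|f(x)-f(y)|\leq C|\psi(x)-\psi(y)|^{1-n/q}\|\na f\|_{L^q}$ for $x,y\in\B_{p,g}(x_0,1)$. It therefore suffices to bound $|\psi(x)-\psi(y)|$ by a power of $d_{p,g}(x,y)$. On Euclidean space $d_{p,g_{euc}}(z,w)=S|z-w|^{1-n/p}$, so $|z-w|=(d_{p,g_{euc}}(z,w)/S)^{p/(p-n)}$; combined with the comparison $|d_{p,g}(\psi^{-1}(z),\psi^{-1}(w))-d_{p,g_{euc}}(z,w)|<\e'$ from \eqref{d_p-comp-diff}, and the fact that on a ball of bounded $d_p$-diameter this comparison can be upgraded to a two-sided multiplicative estimate $d_{p,g_{euc}}(z,w)\leq C\,d_{p,g}(\psi^{-1}(z),\psi^{-1}(w))^{\beta}$ for some $\beta\in(0,1)$ and small $d_p$ distances (here one must be a little careful: the additive error $\e'$ is harmless only after noting that for $d_{p,g}(x,y)$ bounded below the claimed inequality is trivial by compactness, so one reduces to $d_{p,g}(x,y)$ small, where the rescaled picture gives a genuine Hölder comparison). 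Chaining these gives $|\psi(x)-\psi(y)|\leq C\,d_{p,g}(x,y)^{\gamma}$ for some $\gamma=\gamma(n,p)>0$, and hence $|f(x)-f(y)|\leq C_{n,q,p}\,d_{p,g}(x,y)^{\gamma(1-n/q)}\|\na f\|_{L^q}$, which is \eqref{So-em-2-section 7} with $\alpha=\gamma(1-n/q)\in(0,1)$. The delicate point throughout is keeping track of which estimates are only valid at small scales and invoking the rescaling/compactness reduction to handle the rest; the choice of $\e'$ and $\kappa$ must be made last, after all the constants $R,\gamma$ are fixed.
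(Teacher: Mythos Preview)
Your treatment of \eqref{So-em}, \eqref{So-em-2}, and \eqref{So-em global} is essentially the paper's argument: push forward through the chart $\psi$, use the gradient comparison \eqref{eqn: gradient small error pf} to transfer to an $L^{q/\kappa}$ Euclidean gradient, and invoke the classical Morrey--Sobolev inequality (for the global statement the paper, like your ``alternative'', multiplies by a cutoff in the chart and applies Morrey--Sobolev to $h\varphi$).

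The gap is in the H\"older refinement \eqref{So-em-2-section 7}. You want the bound $|\psi(x)-\psi(y)|\leq C\,d_{p,g}(x,y)^{\gamma}$ for the \emph{fixed} chart $\psi$, and you try to extract it from the additive comparison \eqref{d_p-comp-diff} by a rescaling/compactness reduction. This does not work as stated: rescaling the metric changes the chart (the $\psi$ comes from the time-$1$ slice of the Ricci flow, and under $g\mapsto r^{-2}g$ this becomes a different time slice), so a comparison obtained after rescaling says nothing about the original $|\psi(x)-\psi(y)|$. The additive error $\e'$ is genuinely fixed once $\delta$ is fixed, and there is no compactness available here that turns it into a multiplicative bound at small scales for the same $\psi$.

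The paper bypasses this entirely by a direct test-function argument. Given $x,y\in\B_{p,g}(x_0,1)$, set
\[
\varphi(z)=\min\bigl\{\,|\psi(x)-\psi(z)|,\ 2(2S^{-1})^{p/(p-n)}\bigr\},
\]
which on the Euclidean side is $1$-Lipschitz with gradient supported in a ball of dimensional radius contained in $\tilde\Omega$. By \eqref{eqn: gradient small error pf} (applied with exponent $\kappa p$), $\|\nabla_g\varphi\|_{L^p(M)}\leq C(n,p)$, so a dimensional multiple of $\varphi$ is admissible in the definition of $d_{p,g}(x,y)$. Evaluating at $z=y$ gives the \emph{linear} bound $|\psi(x)-\psi(y)|\leq C(n,p)\,d_{p,g}(x,y)$, and plugging this into the Euclidean H\"older Morrey estimate yields \eqref{So-em-2-section 7} with $\alpha=1-n\kappa/q$. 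This is both sharper (linear rather than H\"older in the intermediate step) and avoids any rescaling.
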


\begin{proof}[Proof of Theorem~\ref{Sob-embedding S7}]
Fix any $\e \leq 1/4$,
 and fix $\delta =\delta (n,p, \e)$ according to Theorem~\ref{thm: main thm, Lp def proof}. As in the proof of Theorem~\ref{thm: main thm, Lp def proof}, 
 we obtain a diffeomorphism 
   {$\psi:\B_{p,g}(x_0,5+\e)\rightarrow \tilde\Omega$ where $\B_{p,euc}(0,5)\subset \tilde\Omega\subset \B_{p,euc}(0,5+2\e)$}
 satisfying the properties of Theorem~\ref{claim7}.
  Recall from \eqref{eqn: uniform ball containment} that there exists $R= R(n)$ such that
  {$\B_{p,euc}(0,6)\subset B(0,R)$} for all $p\geq n+1$. 

Let $f\in W^{1,q}_0(\mathcal{B}_{p,g}(x_0,1))$ and extend $f$ by zero to be defined in all of $\tilde \Omega.$ 
Let $h=\psi_*f$ and then naturally extend $h$ by zero to be defined on $B(0,R)$.
  By \eqref{eqn: gradient small error pf} of Theorem~\ref{claim7} and Remark~\ref{rmk: R instead of 1}, for any $q \in (\kappa n, q_0)$, we have 
\begin{equation}\label{eqn: Linfty Sob grad error}
	 {\| \nabla h\|_{L^{q/\kappa}(B(0,R))}}\leq (1+\e) \|\nabla f\|_{L^q(\mathcal{B}_{p,g}(x_0,1))}.
\end{equation}	
So, applying the Morrey-Sobolev embedding on $B(0,R)$ followed by \eqref{eqn: Linfty Sob grad error}, we have 
\begin{equation}
\begin{split}
\|f\|_{L^\infty(\mathcal{B}_{p,g}(x_0,1))}=\|h\|_{L^\infty(B(0,R))}
&\leq \tilde C_{n,q} \|\na h\|_{L^{q/\kappa}(B(0,R))}\\
& \leq   C_{n,q} \|\nabla f\|_{L^q(\mathcal{B}_{p,g}(x_0,1))}.
\end{split}
\end{equation}
This completes the proof of \eqref{So-em}. 

{
To prove \eqref{So-em-2} consider any function $f \in W^{1,q}(\B_{p,g}(x_0,1))$ and  let $h=\psi_*f$ be the function defined on $\tilde\Omega' = \psi(\B_{p,g}(x_0,1))$. By  the Morrey-Sobolev inequality on Euclidean space, for all $x,y\in \B_{p,g}(x_0,1)$,
\begin{equation}\label{e1}
\begin{split}
|f(x)-f(y)| =| h(\psi(x)) - h(\psi(y))|&\leq C_{n,q} \|\nabla h \|_{L^{q/\kappa}(\tilde\Omega' )}\\
&\leq C_{n,q}\|\nabla f\|_{L^{q}(\B_{p,g}(x_0,1))}.
\end{split}
\end{equation}
}
In fact, we may apply the H\"{o}lder Morrey-Sobolev embedding on $\tilde{\Omega}'$ in \eqref{e1} above to see that 
\begin{equation}
\begin{split}
|f(x)-f(y)| =| h(\psi(x)) - h(\psi(y))|&\leq C_{n,q} |\psi(x) - \psi(y)|^{1- n\kappa/q} \|\nabla h \|_{L^{q/\kappa}(\tilde\Omega' )}\\
&\leq C_{n,q}  |\psi(x) - \psi(y)|^{1- n\kappa/q}\|\nabla f\|_{L^{q}(\B_{p,g}(x_0,1))}.
\end{split}
\end{equation}
Hence it suffices to show that  $|\psi(x)-\psi(y)|\leq Cd_{p,g}(x,y)$ for some $C(n,p)>0$ for $x,y\in \B_{p,g}(x_0,1)$. Since $\tilde \Omega\supset \B_{p,euc}(5)$ and by the proof of Theorem~\ref{thm: main thm, Lp def proof}, $\psi(x),\psi(y)\in \B_{p,euc}(2)=B(0,(2S^{-1})^\frac{p}{p-n})$ where $S=S(n,p)$ is given by \eqref{eqn: Euclidean ball and p ball}. Consider the test function $\varphi(z)=\min\{|\psi(x)-\psi(z)|, 2(2S^{-1})^\frac{p}{p-n} \}$ which is compactly supported on $\tilde\Omega$, by   By \eqref{eqn: gradient small error pf} of Theorem~\ref{claim7} and Remark~\ref{rmk: R instead of 1}, we see that $C\varphi$ is admissible function for $d_{p,g}(x,y)$ for some $C(n,p)>0$. The claim follows.

To prove \eqref{So-em global}, let $f\in W^{1,q}(M)$. Fix $x_0\in M$ and let $\psi$ be the diffeomorphism obtained above such that $\psi(x_0)=0$. Let $h=\psi_* f$ be the push-forward of $f$, which is defined on $\tilde\Omega$. Let $\varphi$ be a cutoff function so that $\varphi\equiv1$ on $\B_{p,euc}(0,\frac12)$, $\varphi$ vanishes outside $\B_{p,euc}(0,1)$, and $|\partial\varphi|\leq C_{n}$ (recall  \eqref{eqn: Euclidean ball and p ball}).
 Then by the Morrey-Sobolev inequality on Euclidean space  followed by Theorem~\ref{claim7}, we have 
\begin{equation}
\begin{split}
|f(x_0)|\leq \|h\varphi \|_{L^\infty(\B_{p,euc}(0,1))} 
&\leq  C_{n,q} \|\nabla (h\varphi)\|_{L^{q/\kappa}(\B_{p,euc}(0,1))}\\
&\leq C_{n,q} \left(\|\nabla h\|_{L^{q/\kappa}(\B_{p,euc}(0,1))}+ \|h\|_{L^{q/\kappa}(\B_{p,euc}(0,1))}\right)\\
&\leq C_{n,q} \left(\|\nabla f\|_{L^{q}(M)}+ \|f\|_{L^{q}(M)}\right).
\end{split}
\end{equation}
Since $x_0$ is arbitrarily chosen, this completes the proof.
\end{proof}




\section{Global convergence theorem} \label{s:global_convergence}
In this section we prove Theorem~\ref{global-thm}, which we restate below as Theorem~\ref{global-thm proof}.

{
\begin{theorem}[Theorem \ref{global-thm} restated]\label{global-thm proof} 
For $p\geq n+1$, there exists $\delta=\delta(n,p)>0$ such that if $\{(M_i,g_i,x_i)\}$ is a sequence of complete pointed Riemannian manifolds with bounded curvature such that 
\begin{equation}
R_{g_i}\geq -\delta \quad\text{and}\quad \nu(g_i,2)\geq -\delta,
\end{equation}
then there exists a pointed rectifiable Riemannian space $(M_\infty, g_\infty, x_\infty)$, with  $M_\infty$ topologically a smooth manifold, such that the following holds.
\begin{enumerate}
	\item We have $d_p((M_i, g_i, x_i),  (M_\infty, g_\infty, x_\infty))\to 0$ in the sense of Definition~\ref{def: dp convergence}.
	\item  The space $(X_\infty, g_\infty, x_\infty)$ is  $W^{1,p}$-rectifiably complete and $d_p$-rectifiably complete in the sense of Definitions~\ref{def: rect completeness} and \ref{def: dp RC}  respectively.
\end{enumerate}
\end{theorem}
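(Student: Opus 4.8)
\textbf{Proof strategy for Theorem~\ref{global-thm proof}.}

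The plan is to build the limit space $M_\infty$ from the Ricci flows associated to the $M_i$. First I would invoke Theorem~\ref{prop: uniform existence time and small curvature estimates}: for $\delta$ small, each $(M_i,g_i)$ flows to a Ricci flow $(M_i,g_i(t))_{t\in(0,1]}$ with $g_i(0)=g_i$ and scale-invariant bounds $|\Rm_{g_i(t)}|\le \ETA/t$, with $\ETA$ as small as we like. By Perelman's no local collapsing \eqref{eqn: no local collapsing} together with these curvature bounds, the time-$1$ slices $(M_i,g_i(1),x_i)$ are noncollapsed with bounded geometry, so after passing to a subsequence they converge in the pointed $C^\infty$ Cheeger--Gromov sense to a smooth complete pointed manifold $(M_\infty,h_\infty,x_\infty)$; this fixes the underlying \emph{smooth manifold} $M_\infty$ and the comparison metric $h_\infty = g_\infty(1)$. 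Using the diffeomorphisms $F_i: \Omega_i \to M_\infty$ realizing the Cheeger--Gromov convergence and the charts from Lemma~\ref{rmk: good charts under regularity scale} (so that on the $g_i(1)$-balls of radius $16$ the metric $g_i(1)$ is $(1+\e)$-close to Euclidean), I would define a rectifiable atlas on $M_\infty$ whose charts are built from the Cheeger--Gromov charts at time $1$, and define the rectifiable Riemannian metric $g_\infty$ in these charts as the a.e.\ limit of the pulled-back initial metrics $(F_i^{-1})^* g_i(0)$ — the existence of this limit (in $L^p_{loc}$, up to subsequence) is exactly what Corollary~\ref{cor: useful for limit} and Theorem~\ref{claim7} provide, since those give uniform $L^p$ control of $g_i(0)$ against $g_i(1)$ and of $g_i(s)$ against $g_i(t)$. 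The measure $m$ on $M_\infty$ is $\sqrt{\det g_\infty}\,dx$ in charts; that this is a legitimate topological measure space with a rectifiable atlas follows by checking the four conditions of Definition~\ref{def: rect RM}, the Lebesgue-density and bi-Lipschitz conditions being inherited from the Decomposition Theorem~\ref{thm: decomposition theorem} (the good sets $\mathcal{G}^k$ exhaust up to measure zero and carry two-sided metric bounds).

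For part (1), the pointed $d_p$ convergence, I would first treat the compact case and then assemble it via the exhaustion $Cov(x,N)$ of Definition~\ref{def: pointed dp}. The key local input is Theorem~\ref{claim7} (equivalently Theorem~\ref{claim1}): on an $\epsilon$-regularity ball, the $W^{1,p}$ charts $\psi_i$ satisfy the two-sided $L^p$ comparison of metric coefficients and the $L^{p/\kappa}$--$L^{\kappa p}$ sandwich estimates \eqref{eqn: Lp norms small error pf}--\eqref{eqn: gradient small error pf}. As in the proof of Theorem~\ref{thm: main thm, Lp def proof}, these estimates together with Lemma~\ref{lem: localizing dp} (localizing $d_p$ on Euclidean space) and Lemma~\ref{p-continuity} ($p$-continuity of the localized $d_p$) show that the charts $\psi_i^{-1}\circ\psi_\infty$ are $\e$-Gromov--Hausdorff approximations between the relative $d_p$-metrics $d_{p,g_i,\Omega_i}$ and $d_{p,g_\infty,\Omega}$ on corresponding compact pieces, and that the volumes of $d_p$-balls at scales in $[\e,1]$ match up to $1+\e$ via Theorem~\ref{thm: decomposition theorem} and the volume comparison of Corollary~\ref{corollary: volume bounds}. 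One then glues finitely many such local comparisons over a cover of a compact set $\Omega$ with $Cov(x_\infty,N)\subset\Omega\subset Cov(x_\infty,N')$; the two covering lemmas preceding Definition~\ref{def: pointed dp} guarantee these sets are precompact and exhaust $M_\infty$, so this yields convergence in the sense of Definition~\ref{def: dp convergence}. The subtlety flagged in the remark after Definition~\ref{def: pointed dp} — that $d_p$ is not local, so part (3) of that definition must use the \emph{intrinsic} relative distances $d_{p,g_i,\Omega_i}$ — is handled precisely because Lemma~\ref{lem: localizing dp} tells us the difference between the intrinsic and ambient $d_p$ is $O(\e)$ once the Euclidean domain is large enough.

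For part (2), $W^{1,p}$-rectifiable completeness follows from Theorem~\ref{claim7} applied in the limit: the sandwich estimates \eqref{eqn: Lp norms small error pf}--\eqref{eqn: gradient small error pf} pass to the limit $i\to\infty$ (using Proposition~\ref{prop: basic prop w1p}, in particular the closedness/lower-semicontinuity of $p$-weak upper gradients under weak $L^p$ convergence) to show that for $u\in W^{1,p}(M_\infty)$ the coordinate gradient in charts agrees $m$-a.e.\ with the minimal $p$-weak upper gradient, and that $W^{1,p}(M_\infty)$ is dense in $L^p(M_\infty)$ — this is condition (b) and (a) of Definition~\ref{def: rect completeness}. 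Finally $d_p$-rectifiable completeness (Definition~\ref{def: dp RC}): by the $\epsilon$-regularity Theorem~\ref{thm: main thm, Lp def proof}, on each unit $d_p$-ball $(M_\infty,d_p)$ is $\e$-close to a Euclidean $d_p$-ball, which is a genuine metric ball generating the Euclidean topology; patching this over $M_\infty$ shows $d_p$ is a metric inducing the manifold topology. \textbf{The main obstacle} I anticipate is constructing the limiting rectifiable \emph{atlas} coherently — one must show the a.e.-limit metric $g_\infty$ built in time-$1$ Cheeger--Gromov charts is independent of choices and that the transition maps are bi-Lipschitz on full-density sets; this is where the geometric-measure-theoretic bookkeeping of the Decomposition Theorem (the geometrically decaying bad sets $\mathcal{A}^k$ and the content bounds \eqref{eqn: content bound 1}) has to be combined carefully with the Cheeger--Gromov convergence at time $1$, rather than any single estimate doing the work.
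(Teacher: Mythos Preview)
Your construction of the limit space $(M_\infty,g_\infty,x_\infty)$ via Ricci flow and Cheeger--Gromov compactness at time $1$, with the rectifiable atlas coming from the Decomposition Theorem, is essentially the paper's Proposition~\ref{constructon-limit-metric}, and your outline for $W^{1,p}$- and $d_p$-rectifiable completeness is in the right spirit (though the paper's argument for Proposition~\ref{prop: rc} is more delicate, going through modulus of curve families and Lemma~\ref{lem: 5.7 Hiwash}).

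The genuine gap is in your argument for the pointed $d_p$ convergence, part (1). You propose to mimic the proof of Theorem~\ref{thm: main thm, Lp def proof}, using the sandwich estimates \eqref{eqn: Lp norms small error pf}--\eqref{eqn: gradient small error pf} together with Lemma~\ref{p-continuity}. That strategy works in Theorem~\ref{thm: main thm, Lp def proof} because one side of the comparison is \emph{Euclidean}: the $\kappa$-loss in the exponent (transferring $L^p$ to $L^{p/\kappa}$ or $L^{\kappa p}$) is absorbed by Lemma~\ref{p-continuity}, which is a statement about $d_{q,U}$ on a fixed smooth Euclidean domain. Here you must compare $d_{p,g_i,\Omega_i}$ directly to $d_{p,g_\infty,\Omega}$, and neither side is Euclidean; there is no analogue of Lemma~\ref{p-continuity} available for the singular limit metric $g_\infty$, nor uniformly for the $g_i$. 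Routing through Euclidean via the $\epsilon$-regularity theorem does not help either: that only gives that both $d_{p,g_i}$ and $d_{p,g_\infty}$ are $\e$-close to the Euclidean $d_p$ with $\e=\e(\delta)$ \emph{fixed}, which yields $2\e$-closeness of $g_i$ to $g_\infty$ but not convergence as $i\to\infty$.

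The paper closes this gap with two new ingredients you do not mention. First (Lemma~\ref{existence-minimizer}), the supremum in $d_{p,g,\Omega}(x,y)$ is attained by some $f$. Second, and this is the key point, that maximizer $f$ satisfies a reverse H\"older inequality (it is essentially $p$-harmonic away from $x,y$), so Gehring's lemma on the doubling metric measure space $(M,d_{p,g},d\vol_g)$ gives a uniform $W^{1,\kappa p}$ bound on $f$ (Lemma~\ref{improved-estimate-GehringLemma}), with $\kappa=\kappa(n,p)>1$ and constants independent of $i$. With this higher integrability in hand, one writes $|\nabla^{g_i}f|^p\,d\vol_{g_i}=(1+\mathcal{E}_i)|\nabla^{g_\infty}f|^p\,d\vol_{g_\infty}$ with $\mathcal{E}_i\to 0$ in $L^{\kappa'}$ by the $L^P$ metric convergence, and H\"older against $\|\nabla^{g_\infty}f\|_{L^{\kappa p}}\le C_1$ shows $f/(1+\e)$ is admissible for $d_{p,g_i,\Omega}$ once $i$ is large. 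This is how Proposition~\ref{prop: dp convergence global but local} obtains $d_{p,g_i,\Omega}\to d_{p,g_\infty,\Omega}$ with error tending to zero, rather than merely bounded by $\e(\delta)$.
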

}
This section is organized in the following way. In Section~\ref{sec: construct limit for limit theorem}, we construct the rectifiable Riemannian space that will ultimately be shown to be the pointed $d_p$ limit in Theorem~\ref{global-thm proof}. Then, in Section~\ref{sec: RC}, we show that this limit is $W^{1,p}$-rectifiably complete. In Section~\ref{sec: dp global}, we establish the pointed $d_p$ convergence of the sequence, and finally in Section~\ref{sec: proof global} we put these pieces together to conclude the proof of Theorem~\ref{global-thm proof}.

\subsection{Constructing the limit space}\label{sec: construct limit for limit theorem}

We first obtain the  rectifiable Riemannian space $(M_\infty, g_\infty, x_\infty)$ that will ultimately be the pointed $d_p$ limit and establish the integral convergence of the metric tensors to this limit.
\begin{proposition}\label{constructon-limit-metric}
Fix $P \geq n+1$. There exists  $\delta=\delta(n, P)>0$ such that the following holds. Suppose $\{(M^n_i,g_i,x_i)\}$ is a sequence of complete pointed Riemannian manifolds with bounded curvature satisfying
\begin{equation}
R_{g_i}\geq -\delta,\qquad \quad \nu(g_i,2)\geq -\delta.
\end{equation}
Then there exists a pointed rectifiable Riemannian space $(M_\infty, g_\infty, x_\infty)$, with  $M_\infty$ topologically a smooth manifold, 
such that up to a subsequence, the following holds.
For any compact subset $\Omega\Subset M_\infty$, 
 we can find subsets $\Omega_i\Subset M_i$  and   diffeomorphisms $\psi_{i, \Omega} : \Omega \to \Omega_i$ such that $\psi_{i,\Omega}(x_\infty)=x_i$ and 
	\begin{equation}\label{eqn: LP conv limit}
	\| \psi_{i,\Omega}^* g_i - g\|_{L^q(\Omega)} \to 0
	\end{equation}
	for any $q \in [1,P]$.
\end{proposition}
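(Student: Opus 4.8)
The strategy is to use the Ricci flow and Theorem~\ref{claim7} (equivalently Theorem~\ref{claim1}) to produce a fixed smooth manifold structure on which the metrics $g_i$ can be compared, then pass to a limit along an exhaustion. First I would run the Ricci flow $g_i(t)$ starting from each $g_i(0) = g_i$; by Theorem~\ref{prop: uniform existence time and small curvature estimates}, for $\delta$ small the flows exist on $[0,1]$ with scale-invariant bounds $|\Rm_{g_i(t)}| \le \ETA/t$, and by Remark~\ref{rmk: derivs} all higher derivatives are controlled. By Perelman's no local collapsing \eqref{eqn: no local collapsing} together with these curvature bounds, the time-one slices $(M_i, g_i(1), x_i)$ have uniformly bounded geometry, so by Hamilton's compactness theorem a subsequence converges in the pointed $C^\infty$ Cheeger--Gromov sense to a smooth complete pointed manifold $(M_\infty, g_\infty(1), x_\infty)$. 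This $M_\infty$ will be the underlying smooth manifold; the rectifiable Riemannian metric $g_\infty$ will be (the coordinate expression of) an $L^P$ limit of the pulled-back $g_i = g_i(0)$.

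Next I would fix an exhaustion $\Omega_1 \Subset \Omega_2 \Subset \cdots$ of $M_\infty$ by precompact open sets with smooth boundary, and let $F_{i,\Omega}: \Omega \to M_i$ be the Cheeger--Gromov diffeomorphisms onto their images $\Omega_i \Subset M_i$ (so $F_{i,\Omega}^* g_i(1) \to g_\infty(1)$ smoothly on $\Omega$). The key point is to control $F_{i,\Omega}^* g_i = F_{i,\Omega}^* g_i(0)$ in $L^P(\Omega)$. For this I invoke Corollary~\ref{cor: useful for limit} (which is the relevant output of the Decomposition Theorem~\ref{thm: decomposition theorem}), applied to each $(M_i, g_i)$: given $\eta>0$ there is $\hat t_\eta>0$ with
\begin{equation}
\fint_{B_{g_i(1)}(y, 1)} |g_i(s) - g_i(t)|_{g_i(1)}^{P/\kappa} \, d\vol_{g_i(1)} \le \eta
\end{equation}
for all $s, t \in [0, \hat t_\eta]$ and all $y$, where we may take $\kappa>1$ as close to $1$ as we like. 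Combining the $\hat t_\eta$-scale estimate with the smooth convergence $g_i(\hat t_\eta) \to g_\infty(\hat t_\eta)$ on compact sets (again Hamilton compactness at time $\hat t_\eta$, which we can arrange to be compatible with the time-one convergence), a diagonal argument over a sequence $\eta_j \to 0$ shows that $F_{i,\Omega}^* g_i(0)$ is Cauchy in $L^{P}(\Omega)$ (after adjusting $\kappa$); call the limit $g_\infty$ on $\Omega$. One checks these $L^P$-limits are consistent on overlaps of the exhaustion, so $g_\infty$ is a well-defined (possibly degenerate) symmetric positive-semidefinite $L^P$ tensor on $M_\infty$, and $\sqrt{\det g_\infty}\,dx$ defines the limiting measure $m$; the bounds $\|g_i(t)\|, \|g_i(t)^{-1}\| \le C_a$ on each good set from Theorem~\ref{thm: decomposition theorem}(2) show $g_\infty$ is a bona fide rectifiable Riemannian metric with respect to the atlas given by the good sets of the decomposition, i.e.\ $(M_\infty, g_\infty, x_\infty)$ is a pointed rectifiable Riemannian space with $M_\infty$ topologically (indeed smoothly) a manifold. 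Setting $\psi_{i,\Omega} = F_{i,\Omega}$ then gives \eqref{eqn: LP conv limit} for $q = P$, and for $q \in [1,P)$ it follows by H\"older's inequality since $\Omega$ has finite measure (uniformly, by \eqref{eqn: volume forms} and Lemma~\ref{rmk: good charts under regularity scale}).

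\textbf{Main obstacle.} The delicate point is not the existence of the smooth limit $M_\infty$ — that is standard Ricci flow compactness — but rather showing that the $L^P$ limit $g_\infty$ is genuinely independent of all the choices (the subsequence, the exhaustion, the comparison time $\hat t_\eta$) and that the convergence in \eqref{eqn: LP conv limit} holds with the \emph{same} diffeomorphisms $\psi_{i,\Omega}$ that witness smooth convergence at time $1$. This requires carefully synchronizing the Hamilton compactness at the two scales $t=1$ and $t = \hat t_\eta$: one wants diffeomorphisms that work simultaneously for all time slices, which is available because the flows converge smoothly as flows on $(0,1]$ (the rescaled curvature bounds hold uniformly), so the Cheeger--Gromov diffeomorphisms can be chosen once and for all. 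The other technical care is in the diagonalization: because $\kappa>1$ is needed in Corollary~\ref{cor: useful for limit} and degrades the exponent from $P$ to $P/\kappa$, one should run the argument at a slightly larger exponent $P' > P$ from the outset (legitimate since the hypotheses only require $\delta$ small depending on the exponent), so that after the loss one still controls the $L^P$ norm; alternatively interpolate between the uniform $L^{P'}$ bound and $L^1$ Cauchy-ness. I expect this bookkeeping to be the bulk of the work, with everything else following from the cited results.
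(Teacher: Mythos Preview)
Your proposal is essentially correct and follows the same strategy as the paper: run Ricci flow, apply Hamilton compactness to extract a smooth limiting flow $(M_\infty, g_\infty(t), x_\infty)$ on $(0,1]$, and use Corollary~\ref{cor: useful for limit} with the exponent-doubling trick you anticipated (the paper takes $2P$ in place of $P$ and $\kappa=2$) to control the time-zero metrics in $L^P$. The one streamlining the paper makes is in the order of limits: rather than showing $\psi_{i,\Omega}^* g_i(0)$ is Cauchy in $i$, it first passes $i\to\infty$ in the estimate \eqref{eqn: lp est wrt g1} at positive times (legitimate since $g_i(t)\to g_\infty(t)$ smoothly for $t>0$) to obtain $\int_\Omega |g_\infty(s)-g_\infty(t)|_{g_\infty(1)}^P\,d\vol_{g_\infty(1)}\le\eta$ for $s,t\in(0,\hat t_\eta)$, and then defines $g_\infty=\lim_{t\to 0^+}g_\infty(t)$ in $L^P(\Omega,g_\infty(1))$. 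This decouples the two limits and sidesteps the double-limit bookkeeping you flagged as the main obstacle. One point to correct: your description of the rectifiable atlas via ``the good sets of the decomposition'' does not work as stated, since those sets are $i$-dependent and live on $M_i$, not $M_\infty$. The paper instead takes $g_\infty(1)$-coordinate charts $\phi_j:U_j\to M_\infty$ from Lemma~\ref{rmk: good charts under regularity scale} and builds the atlas from the sublevel sets $U_{a,j}=\{x\in U_j:|\phi_j^*g_\infty-g_{euc}|_{euc}\le a\}$; the $L^P$ bound $\int_{U_j}|\phi_j^*g_\infty-g_{euc}|^P\,dx\le C$ ensures these cover $M_\infty$ up to measure zero.
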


\begin{proof}
We proceed in several steps. The first three steps involve constructing the pointed rectifiable Riemannian space $(M_\infty,g_\infty, x_\infty)$ that will later be shown to be the $d_p$ limit of $(M_i, g_i, x_i)$, while in the fourth step we establish the  convergence of the metric tensors \eqref{eqn: LP conv limit}.
\\

{\it Step 1: Constructing the smooth pointed topological manifold $(M_\infty, x_\infty)$.}
Fix $\lambda>0$ to be specified later in the proof. By Theorem~\ref{prop: uniform existence time and small curvature estimates} and Perelman's no local collapsing theorem \eqref{eqn: no local collapsing}, if $\delta =\delta(n, \lambda)$ is taken sufficiently small, then there is a sequence of complete Ricci flow solutions $\{(M_i, g_i(t))\}_{t\in [0,1]}$ such that $g_i(0)=g_i$ and 
\begin{equation}\label{eqn: curv and IR bounds}
\left\{ 
\begin{array}{ll}
|\Rm(g_i(t))|\leq \lambda t^{-1};\\
\mathrm{inj}(g_i(t))\geq c_n\sqrt{t}.
\end{array}
\right.
\end{equation}
By Hamilton's compactness \cite{HamiltonCptness}, after passing to subsequence, $(M_i,g_i(t),x_i)\rightarrow (M_\infty,g_\infty(t),x_\infty)$ in the pointed $C^\infty$-Cheeger-Gromov sense so that $g_\infty(t)$ is a solution to the Ricci flow on $M_\infty\times (0,1]$ also satisfying \eqref{eqn: curv and IR bounds}.
\\

{\it Step 2: Constructing the rectifiable Riemannian metric $g_\infty$.}
{We now construct a rectifiable Riemannian metric as an $L^P$ limit of $g_\infty(t)$ as $t$ tends to zero.

From the previous step, for any pre-compact set $\Omega\Subset M_\infty$ containing $x_0$,  we can find a sequence of maps $\psi_{i,\Omega}:\Omega\rightarrow M_i$, each a  diffeomorphism onto its image, such that $\psi_{i,\Omega}(x_\infty)=x_i$ and $\psi_i^*g_i(t)\rightarrow g_\infty(t)$  smoothly on any compact subsets of $M_\infty \times (0,1]$.
For notational convenience, we will use $g_i$ and $g_i(t)$ to denote $\psi_{i,\Omega}^*g_i$ and $\psi_{i,\Omega}^*g_i(t)$ respectively. By way of a covering argument, it suffices to consider the case $\Omega=B_{g_\infty(1)}(x_\infty,1)$.

 Fix any $\eta>0$. By \eqref{eqn: lp est wrt g1} of Corollary~\ref{cor: useful for limit} (taking $\e =1/10$, $\kappa =2 $ and  $2\PP$ in place of $\PP$), we may find $t_\eta = t_\eta(n, P, \eta) \in (0,1)$ such that for all $s,t \in (0,t_\eta)$, we have 
\begin{equation}\label{eqn: limit metric main estimate existence}
	\int_\Omega |g_i(s) - g_i(t)|_{g_i(1)}^P \, d\vol_{g_i(1)} \leq \eta\, .
\end{equation}
So, passing to the smooth limit as $i \to \infty$, we find that 
\begin{equation}
\int_\Omega |g_\infty(s) - g_\infty(t)|_{g_\infty(1)}^P \, d\vol_{g_\infty(1)} \leq \eta
\end{equation}
for all $s,t \in (0,t_\eta)$. Then, we see that $g_\infty(t)$ is a Cauchy sequence and hence $g_\infty=\lim_{t\rightarrow 0^+}g_\infty(t)$ exists in $L^P(\Omega,g_\infty(1))$. Next, by letting $\Omega_j\Subset M_\infty$ be an exhaustion of $M_\infty,$ we define $g_\infty$ on all of $M_\infty$.
}
\\

{\it Step 3: Verifying that $(M_\infty, g_\infty, x_\infty)$ is a rectifiable Riemannian space.}
Now we claim that $(M_\infty,g_\infty, x_\infty)$ is a rectifiable Riemannian space. 
{Let $m = d\vol_g$. To construct a rectifiable atlas for $(M_\infty, g_\infty, x_\infty)$, let $\{x_j\}\subset M_\infty$ be a collection of points such that $\{ B_{g_\infty(1)}(x_j, 1/2)\}$ covers $M_\infty$ and $\{B_{g_{\infty}(1)}(x_j, 1/4)\}$ are pairwise disjoint. By taking $\lambda>0$ sufficiently small in Step 1, we may apply Lemma~\ref{rmk: good charts under regularity scale} and \eqref{eqn: limit metric main estimate existence} to obtain charts $\phi_j  : U_j \to B_{g_\infty(1)}(x_j ,2)$ where $U_j \subset \R^n$ such that 
\begin{equation}\label{eqn: close to euc limit}
\int_{U_j} | \phi^*_j g_\infty - g_{euc}|_{g_{euc}}^P \, dx \leq C.
\end{equation}
Let $U_{a,j} = \{ x \in U_j : | \phi^*_j g_\infty - g_{euc}|_{g_{euc}} \leq a\}$ and let $\phi_{a,i}:U_{a,j} \to M_\infty$ be defined by $\phi_j|_{U_{a,j}}.$  We easily check from \eqref{eqn: close to euc limit} that $\{ (U_{a,j}, \phi_{a,j})\}_{a,i \in \mathbb{N}}$ is a rectifiable atlas for $(M_\infty, x_\infty, m)$ and that $g_\infty$ is a rectifiable Riemannian metric with respect to this rectifiable atlas.
}
\\

{\it Step 4: Convergence.}
Finally, the $L^P$ convergence \eqref{eqn: LP conv limit}  of $g_i\rightarrow g_\infty$ follows from the $L^P$ convergence in \eqref{eqn: limit metric main estimate existence}
and a diagonal subsequence argument. 
This completes the proof of the proposition.
\end{proof}

\smallskip

\begin{remark}\label{remark-on L^p}{\rm
From the proofs of Proposition~\ref{constructon-limit-metric} and Theorem~\ref{claim7}, we may immediately deduce the following properties of the limit space $(M_\infty, g_\infty, x_\infty)$ constructed in Proposition~\ref{constructon-limit-metric}. 
Given any $x \in M_\infty$, we may apply  Lemma~\ref{rmk: good charts under regularity scale} to obtain a diffeomorphism $\phi : B(0,2) \to \Omega'\subset M_\infty$ such that $\phi(0)=x$ and $\Omega: = \phi(B(0,1))$ satisfies  $B_{g_\infty(1)}(x,1-\e )\subset \Omega \subset B_{g_\infty(1)}(x,1-\e ) $.  In a slight abuse of notation, we identify $\phi^* g$ and $\phi^*g_k$ with $g$ and $g_k$ respectively.  We have  the following estimates:
 \begin{equation}\label{eqn: basic LP estimates RC}\begin{split}
 	\int_B |g_\infty -g_k |_{euc}^P \to 0, 
  & 	\qquad \int_B |g_\infty^{-1} -g_k^{-1}|_{euc}^P  \to 0,\\
 \int_B |g_\infty g^{-1}_k-\text{Id}|_{euc}^P \to 0, & 
 	  \qquad \int_B |g_k g^{-1} -\text{Id} |_{euc}^P \to 0.  
 	   \end{split}
 \end{equation}
 Here the measure of integration can be taken to be $dx, d\vol_g,$ or $d \vol_{g_k}$. Furthermore, for any fixed $p\geq n+1$ and for $\kappa^2 =(p+n)/2n>1$, we may choose $\delta$ in Proposition~\ref{constructon-limit-metric} additionally depending on $p$ and $\kappa$ such that 
\begin{equation}
(1-\e)\| f\|_{L^{p/\kappa}(B(0,1),g_{euc})} \leq  \| f\|_{L^p(B(0,1),g_\infty)}\leq (1+\e)\| f\|_{L^{\kappa p}(B(0,1),g_{euc})}.
\end{equation}
Similarly, we may replace $g_{euc}$ with $g_k$ above.
}
\end{remark}

\subsection{$W^{1,p}$-rectifiable completeness of the limit space}\label{sec: RC}
In this section, we prove that the limiting rectifiable Riemannian space $(M_\infty, g_\infty, x_\infty)$ obtained in Proposition~\ref{constructon-limit-metric} is $W^{1,p}$-rectifiably complete as in Definition~\ref{def: rect completeness}. Given $1< p<\infty$ fixed, we let $W^{1,p}(M_\infty,g_\infty)$  denote the Sobolev space as defined in Section~\ref{sec: w1p structure}. For any function $u \in W^{1,p}(M_\infty,g_\infty)$,  we let $G_{M_\infty,g_\infty,u}$ denote the least $p$-weak upper gradient of $u$, whose existence is guaranteed by Proposition~\ref{prop: basic prop w1p}. We will show that the function $u_a = \phi^*_a u$ is  differentiable a.e. in $U_a$, and thus we may let $|\na_{g_\infty} u| :X\to \R$ be the function defined in charts by 
 \begin{equation}\label{eqn: gradient coords}
|\na_{g_{\infty}} u| (\phi_a(x)) = \left( g_{\infty}^{ij} \pa_i u_a(x) \pa_j u_a(x)\right)^{1/2}
 \end{equation}
 for a.e. $x \in U_a$. 
\begin{proposition}\label{prop: rect completeness} Fix $p \geq n+1$. By choosing $P=P(p,n)$ sufficiently large and thus $\delta = \delta(n,p)$ sufficiently small in Proposition~\ref{constructon-limit-metric},  the limiting rectifiable Riemannian space $(M_\infty, g_\infty, x_\infty)$  constructed in Proposition~\ref{constructon-limit-metric} is $W^{1,p}$-rectifiably complete in the sense of Definition~\ref{def: rect completeness}. That is,
\begin{enumerate}
	\item[(a)] $W^{1,p}(M_\infty, g_\infty)$ is dense in $L^p(M_\infty, g_\infty)$;
\item[(b)]  For any $u \in W^{1,p}(M_\infty, g_\infty)$, the function $u_a = \phi^*_a u $ is weakly differentiable in $U_a$, and thus the function  $|\na_{g_\infty} u|$ in \eqref{eqn: gradient coords} is defined for   $m$-a.e. $x \in M_\infty$. Moreover,  the least weak upper gradient satisfies $G_{M_\infty,g_\infty,u}=|\na_{g_\infty} u|$ for   $m$-a.e. $x \in M_\infty$.
\end{enumerate}
\end{proposition}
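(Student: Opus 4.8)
The strategy is to transfer the analytic structure from the approximating smooth manifolds $(M_i,g_i)$ to the limit by comparing the Sobolev/upper-gradient machinery with the coordinate-chart picture. For part (a), density of $W^{1,p}$ in $L^p$, I would note that since $M_\infty$ is topologically a smooth manifold, one can cover $M_\infty$ by the rectifiable charts $(U_{a,j},\phi_{a,j})$ from Step 3 of Proposition~\ref{constructon-limit-metric} and use a partition of unity subordinate to the charts $\phi_j(B_{g_\infty(1)}(x_j,1/2))$: a function in $L^p$ can be approximated by a function supported in finitely many charts, and within each chart the measure $\phi_a^* m = \sqrt{\det g_a}\,dx$ is comparable to Lebesgue measure (with bounded density and bounded inverse density, by the chart condition $\|g_a\|+\|g_a^{-1}\|\le C_a$), so smooth compactly supported functions in coordinates are dense in $L^p$ of the chart, and these pull back to Lipschitz functions on $M_\infty$, which lie in $W^{1,p}$ by the upper-gradient criterion. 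Hence $W^{1,p}(M_\infty,g_\infty)$ is dense in $L^p(M_\infty,g_\infty)$.

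\textbf{The main point, part (b).} The heart of the matter is showing that the abstract least $p$-weak upper gradient $G_{M_\infty,g_\infty,u}$ coincides $m$-a.e. with the coordinate gradient $|\nabla_{g_\infty}u|$. One inequality is relatively soft: working in a fixed chart $\phi=\phi_a:B(0,1)\to\Omega\subset M_\infty$ with the $L^P$ estimates of Remark~\ref{remark-on L^p} in force, a function $u$ with a $p$-weak upper gradient in $L^p$ restricts along $p$-a.e.\ coordinate line to an absolutely continuous function (this is the standard ACL characterization, transported to our setting via the curve families $\gamma(t)=(x',\cdot)$ which have positive $\mathrm{Mod}_p$), so $u_a$ is weakly differentiable, is differentiable a.e., and $|\nabla_{g_\infty}u|$ is \emph{an} upper gradient along a.e.\ such line; a Fuglede-type argument as in \cite[Section 6-7]{Hiwash03} then upgrades this to show $|\nabla_{g_\infty}u|$ is a $p$-weak upper gradient, giving $G_{M_\infty,g_\infty,u}\le |\nabla_{g_\infty}u|$ $m$-a.e. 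For the reverse inequality, the plan is to compare with the smooth approximants: given $u\in W^{1,p}(M_\infty,g_\infty)$ supported in a chart, pull it back via the diffeomorphisms $\psi_{i,\Omega}$ of Proposition~\ref{constructon-limit-metric} to functions on $M_i$; since $\psi_{i,\Omega}^*g_i\to g_\infty$ in $L^P$ with $P$ large (Remark~\ref{remark-on L^p} with the exponent bookkeeping $\kappa^2=(p+n)/2n$), the norms $\|\nabla_{g_i}u\|_{L^p}$ (computed in coordinates against $g_i$) converge to $\|\,|\nabla_{g_\infty}u|\,\|_{L^p}$. On each smooth $M_i$ the least $p$-weak upper gradient equals the Riemannian gradient norm by Proposition~\ref{prop: basic prop w1p}(5). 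Then I would use the closedness/lower-semicontinuity of the $p$-weak-upper-gradient relation under weak $L^p$ convergence, Proposition~\ref{prop: basic prop w1p}(1)-(2), together with the fact (from the $L^P$ convergence of metrics, which forces convergence of lengths of absolutely continuous curves and of the measure on curve families) that $p$-weak upper gradients pass to the limit: the weak $L^p$ limit of the $g_i$-gradient norms is a $p$-weak upper gradient of $u$ on $(M_\infty,g_\infty)$, and its $L^p$ norm equals $\|\,|\nabla_{g_\infty}u|\,\|_{L^p}$. Minimality of $G_{M_\infty,g_\infty,u}$ then forces $G_{M_\infty,g_\infty,u}=|\nabla_{g_\infty}u|$ $m$-a.e.

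\textbf{Expected obstacle.} The delicate step is the passage of $p$-weak upper gradients through the $d_p$/$L^P$ limit: a priori the curve family exceptional sets on $M_i$ and on $M_\infty$ are different, and one must ensure that the convergence $\psi_{i,\Omega}^*g_i\to g_\infty$ in $L^P$ is strong enough that, modulo a $p$-null family of curves, the length integrals $\int_\gamma|\nabla_{g_i}u|$ converge to $\int_\gamma|\nabla_{g_\infty}u|$ — here is exactly where the choice of $P$ much larger than $p$ (and the exponent-shift estimates $\|f\|_{L^p(g_\infty)}\le(1+\e)\|f\|_{L^{\kappa p}(g_{euc})}$) is used, to absorb the discrepancy between integrating against $g_i$-arclength versus $g_\infty$-arclength. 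I would handle this by a Fubini/Fuglede argument: on the chart, for $p$-a.e.\ coordinate line the restriction of $|g_i-g_\infty|$ to that line converges in $L^1$, hence the arclength reparametrizations converge, and then Lemma~\ref{lem: 5.7 Hiwash} (convergence of $u_k$ along $p$-a.e.\ curve) applied chart-by-chart closes the gap. Once (b) is established on functions supported in a single chart, a partition-of-unity/locality argument (using that $|\nabla_{g_\infty}(\chi u)|$ is controlled by $|\nabla_{g_\infty}u|$ and $|\nabla_{g_\infty}\chi|$ for Lipschitz $\chi$) extends the conclusion to all of $W^{1,p}(M_\infty,g_\infty)$, completing the proof that $(M_\infty,g_\infty,x_\infty)$ is $W^{1,p}$-rectifiably complete.
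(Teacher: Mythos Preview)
Your plan for part (a) and the ``easy'' direction $G_{M_\infty,g_\infty,u}\le |\nabla_{g_\infty}u|$ is reasonable and close to what the paper does. The genuine gap is in your argument for the reverse inequality $|\nabla_{g_\infty}u|\le G_{M_\infty,g_\infty,u}$. Your proposed route---pass the smooth gradients $|\nabla_{g_i}u|$ to the limit via closedness/lower-semicontinuity and observe that the limit has $L^p$ norm equal to $\|\,|\nabla_{g_\infty}u|\,\|_{L^p}$---only shows, once again, that $|\nabla_{g_\infty}u|$ is \emph{a} $p$-weak upper gradient. Minimality of $G_{M_\infty,g_\infty,u}$ then gives $G_{M_\infty,g_\infty,u}\le|\nabla_{g_\infty}u|$, the same direction you already had. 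Nothing in this argument touches an \emph{arbitrary} competitor $H$, so it cannot establish that $|\nabla_{g_\infty}u|$ is least. (Relatedly, Proposition~\ref{prop: basic prop w1p}(1)--(2) are stated for a fixed space; you are invoking them across a varying sequence $(M_i,g_i)$, which would itself need justification.)

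The paper's proof of Proposition~\ref{prop: rc} handles this direction by transferring in the \emph{opposite} sense: given any $p$-weak upper gradient $H$ of $u$ with respect to $g_\infty$, one sets $H_k=H\,|g_\infty g_k^{-1}|_{euc}^{1/2}$ and shows (via a $\mathrm{Mod}_p$ comparison lemma, at the cost of dropping the exponent from $p$ to $p/\kappa$) that $H_k$ is a weak upper gradient of $u$ with respect to the smooth metric $g_k$. On the smooth side one knows $|\nabla_{g_k}u|\le H_k$ a.e.; since $H_k\to H$ and $|\nabla_{g_k}u|\to|\nabla_{g_\infty}u|$ a.e., one concludes $|\nabla_{g_\infty}u|\le H$. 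This is done first for smooth $u$ and then extended to general $u\in W^{1,p}(B,g_\infty)$ by approximation, using the chain of embeddings $W^{1,\kappa p}(B,g_{euc})\subset W^{1,p}(B,g_\infty)\subset W^{1,p/\kappa}(B,g_{euc})$ (which also supplies weak differentiability without needing an ACL argument). The exponent loss $p\mapsto p/\kappa$ in the $\mathrm{Mod}_p$ comparison is exactly where the large $P$ is spent, and a further lemma is needed to upgrade the resulting $p/\kappa$-weak upper gradient back to a $p$-weak one. Your outline does not contain this backwards-transfer mechanism, and without it the minimality claim does not follow.
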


In order to prove Proposition~\ref{prop: rect completeness}, we will first establish a localized version of Proposition~\ref{prop: rect completeness}(b) in  Section~\ref{sec: local w1p RC} below (Proposition~\ref{prop: rc}). Using this proposition, we will then prove Proposition~\ref{prop: rect completeness} in Section~\ref{sec: w1p RC pf}.

 \subsubsection{The local estimate}\label{sec: local w1p RC}
 In this section, we will prove the following local version of Proposition~\ref{prop: rect completeness}(b). In order to alleviate notation, we let $g = g_\infty.$ 
 Fix $x \in M_\infty $ and let $\phi: B(0,2) \to \Omega' $ with $\phi(0)=x$ be as in Remark~\ref{remark-on L^p}. As in Remark~\ref{remark-on L^p}, we slightly abuse notation by identifying $\phi^* g$ and $\phi^*g_k$ with $g$ and $g_k$ respectively.
We set $B= B(0,1)$ and denote by $W^{1,p}(B,g)$  the  $W^{1,p}$  space as defined in Section~\ref{sec: w1p structure} with respect to the metric $g$ on the space $B$. We let $G_{B, g,u}$ denote the least $p$-weak upper gradient of $u$ in $W^{1,p}(B,g)$ and $|\dot \g|_g = g(\dot \g , \dot \g)^{1/2}.$  We will use the analogous notation for $g_k$ and $g_{euc}$.
 \begin{proposition}\label{prop: rc} Fix $p \geq n+1$. By choosing  $P=P(n,p)$ sufficiently large and thus $\delta = \delta(n, p)$ sufficiently small in Proposition~\ref{constructon-limit-metric},   any $u \in W^{1,p}(B,g)$ is differentiable $m$-a.e. and we have  $ G_{B, g,u}= |\na_g u| $ for $m$-a.e. $x \in B$.
\end{proposition}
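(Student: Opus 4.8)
The strategy is to use the strong $L^P$ convergence $g_k \to g$ on $B$ (with $P$ chosen very large relative to $p$, coming from Remark~\ref{remark-on L^p} and Proposition~\ref{constructon-limit-metric}) to transfer the Sobolev calculus from the smooth manifolds $(B,g_k)$ — where $W^{1,p}$, the gradient $|\nabla_{g_k} u|$, and the least $p$-weak upper gradient all agree by Proposition~\ref{prop: basic prop w1p}(5) — to the rectifiable Riemannian space $(B,g)$. The proof splits naturally into the two inequalities $G_{B,g,u}\le |\nabla_g u|$ and $G_{B,g,u}\ge |\nabla_g u|$, together with the a.e. differentiability of $u$ in the Euclidean chart.

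First I would establish that every $u\in W^{1,p}(B,g)$ lies in $W^{1,q}(B,g_{euc})$ for a suitable $q\in(n,p)$, and in particular is a.e. (weakly) differentiable in the chart. The point is that, by the comparability of $g$ and $g_{euc}$ away from a set where $|g-g_{euc}|$ is large and the layer-cake estimate from \eqref{eqn: close to euc limit}, a $p$-weak upper gradient $G_{B,g,u}\in L^p(B,g)$ controls the Euclidean gradient in $L^q$: on the region $\{|g-g_{euc}|\le \Lambda\}$ one has $|\nabla_{euc}u|\le C(\Lambda)G_{B,g,u}$ along $p$-a.e. segment, and the bad region has small measure, so Hölder with the gap between $q$ and $p$ (hence the need for $P$, equivalently $p/q$, large) absorbs it. Since $p$-a.e. (in the $g$ sense) absolutely continuous curve is still in a full-measure family for the $q$-modulus on Euclidean space — because $g$-null curve families and $g_{euc}$-null curve families coincide when the densities are mutually $L^{P}$-comparable with $P$ large, by Fuglede-type estimates — we conclude $u\in W^{1,q}(B,g_{euc})$ and hence $u_a$ is differentiable a.e., so $|\nabla_g u|$ in \eqref{eqn: gradient coords} makes sense a.e.

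Next, for the inequality $G_{B,g,u}\le |\nabla_g u|$ $m$-a.e.: knowing $u\in W^{1,q}(B,g_{euc})$, I would approximate $u$ by smooth functions $u_\ell\to u$ in $W^{1,q}(B,g_{euc})$; for each smooth $u_\ell$, $|\nabla_{g}u_\ell|$ (computed in the chart) is an upper gradient of $u_\ell$ for the $g$-structure, by the chain rule along absolutely continuous curves and Lemma~\ref{lem: basic prop rect curves}. By the $L^P$-comparability of $g$ and $g_{euc}$, $|\nabla_g u_\ell|$ converges in $L^{p}(B,g)$-sense (again using $P\gg p$ and Hölder to upgrade the $L^q_{euc}$ convergence of $|\nabla_{euc}u_\ell|$ to $L^p_g$ convergence of $|\nabla_g u_\ell|$) to $|\nabla_g u|$. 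Then Proposition~\ref{prop: basic prop w1p}(1) (closedness of weak upper gradients under $L^p$ convergence) yields that $|\nabla_g u|$ is a $p$-weak upper gradient of $u$, whence $G_{B,g,u}\le |\nabla_g u|$ a.e.

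Finally, for the reverse inequality $G_{B,g,u}\ge |\nabla_g u|$: this is the step I expect to be the main obstacle, since it requires showing the abstract least upper gradient is \emph{no smaller} than the coordinate gradient, and one cannot simply differentiate $u$ because $u$ is only Sobolev. The plan is the standard metric-space argument adapted to our setting: fix a Lebesgue density point $x_0$ of the chart, rescale, and use that for a.e. line segment $\gamma$ through $x_0$ in the direction realizing $|\nabla_g u|(x_0)$, the defining inequality $|u(\gamma(b))-u(\gamma(a))|\le \int_\gamma G_{B,g,u}$ holds; averaging over a thin tube of such segments and using the Lebesgue differentiation theorem for $|\nabla_g u|$ and $G_{B,g,u}$ forces $|\nabla_g u|(x_0)\le G_{B,g,u}(x_0)$. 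The subtlety is that the metric coefficients $g$ are merely $L^P$ and only a.e. continuous, so one must restrict to the (full-measure) set where $g$ has an $L^P$-Lebesgue point and is approximately continuous, and where $u$ is approximately differentiable, and check that the curve family one integrates over is not $p$-exceptional — which again comes down to the $L^P$-control on $g$ with $P$ large. Once both inequalities hold a.e., Proposition~\ref{prop: rc} follows.
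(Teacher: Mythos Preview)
Your overall architecture matches the paper: embed $W^{1,p}(B,g)\hookrightarrow W^{1,q}(B,g_{euc})$ for some $q\in(n,p)$ to get a.e.\ differentiability, then argue both inequalities between $G_{B,g,u}$ and $|\nabla_g u|$ via smooth approximation. However, there is a genuine gap in your argument for the direction $G_{B,g,u}\le |\nabla_g u|$. You approximate $u$ by smooth $u_\ell$ in $W^{1,q}(B,g_{euc})$ with $q<p$, and then claim that ``using $P\gg p$ and H\"older'' you can \emph{upgrade} the $L^q_{euc}$ convergence of $\partial u_\ell$ to $L^p_g$ convergence of $|\nabla_g u_\ell|$. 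This goes the wrong way: pairing an $L^q$-convergent sequence with an $L^P$ weight via H\"older only yields $L^r$ convergence with $r<q<p$, never $r>q$. No choice of $P$ fixes this, because the only available control on $\partial u$ is in $L^{p/\kappa}_{euc}$ (this is exactly Lemma~\ref{lem: sobolev sobolev}), and one cannot recover $L^p$. Consequently you cannot invoke closedness at the $p$-level. The paper's remedy is to accept the downgrade: one shows (via curve-integral convergence, Lemma~\ref{lem: 5.7 Hiwash}) only that $|\nabla_g u|$ is a $q$-weak upper gradient for $q=p/\kappa^2$, and then upgrades the \emph{conclusion} rather than the convergence via Lemma~\ref{lem: pq upper gradient}: a $q$-weak upper gradient that lies in $L^p$ is automatically a $p$-weak upper gradient (the $G+F/k$ trick). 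This lemma is the missing ingredient in your plan.

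For the reverse inequality $|\nabla_g u|\le G_{B,g,u}$, your Lebesgue-point argument along line segments is a legitimate alternative to the paper's method and can be made rigorous (Fubini for segment families, a countable dense set of directions, and Lemma~\ref{lem: same rect} to compare $g$- and $g_{euc}$-moduli). The paper instead argues structurally: for any $p$-weak upper gradient $H$ with respect to $g$, the function $H_k=H\,|g\,g_k^{-1}|_{euc}^{1/2}$ is a $p/\kappa$-weak upper gradient with respect to the smooth $g_k$, hence $H_k\ge |\nabla_{g_k}u|$ by minimality on the smooth space; passing $k\to\infty$ gives $H\ge |\nabla_g u|$ a.e. For general $u\in W^{1,p}(B,g)$ this is first established for the smooth approximants $u_i$ and then transferred to $u$ via the triangle-inequality trick $H+|\nabla_g(u_i-u)|\ge |\nabla_g u_i|$. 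Either route works for this direction; the actual error in your plan lies in the other inequality.
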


In preparation for the proof of Proposition~\ref{prop: rc}, we first prove some preliminary lemmas. 
  Let $\mathfrak{M}_{g}$ and $\mathfrak{M}_{euc}$ denote  the collection of all absolutely continuous curves with respect to $g$ and $g_{euc}$ respectively in $B$. 
 Note that $\mathfrak{M}_{euc}$ is also the collection of $g_k$ absolutely continuous curves in $B$ for every $k$. 
  \begin{lemma}\label{lem: same rect} Fix $P$ as in Proposition~\ref{constructon-limit-metric}. For any $1\leq p \leq P/2$, we have  $\text{Mod}_{g,p}(\mathfrak{M}_g \setminus \mathfrak{M}_{euc}) = 0$ and $\text{Mod}_{g_{euc}, p }(\mathfrak{M}_{euc}\setminus \mathfrak{M}_g) = 0.$
 \end{lemma}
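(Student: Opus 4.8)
\textbf{Proof proposal for Lemma~\ref{lem: same rect}.}

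The plan is to show that the two families of curves differ only by a $\text{Mod}_p$-null set, and the key input is the quantitative $L^P$-closeness of $g$ to $g_{euc}$ recorded in \eqref{eqn: basic LP estimates RC} together with the fact that a continuous curve is $g$-absolutely continuous if and only if it is $g_{euc}$-absolutely continuous \emph{provided} $|\dot\gamma|_g$ and $|\dot\gamma|_{euc}$ are comparable along the curve. Since $g$ and $g_{euc}$ are both honest (measurable, a.e.\ positive definite) metrics on $B$, the only obstruction to a curve lying in both $\mathfrak{M}_g$ and $\mathfrak{M}_{euc}$ is that the curve spends a positive amount of its length in the region where the ratio of eigenvalues of $g$ relative to $g_{euc}$ is very large or very small; on the set where $g$ and $g_{euc}$ are comparable the notions of absolute continuity coincide (this uses that $|s_i - t_i|$ small forces $|\gamma(s_i)-\gamma(t_i)|_{euc}$ small, hence $|\gamma(s_i)-\gamma(t_i)|_g$ small, and vice versa, and that condition (a) of Definition~\ref{def: rect curve} is vacuous here since the rectifiable atlas of $B$ is the single identity chart, so there is no singular part).

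First I would introduce the ``bad set'' $E_\Lambda = \{ x \in B : |g - g_{euc}|_{euc}(x) + |g^{-1} - g_{euc}^{-1}|_{euc}(x) > \Lambda \}$. By \eqref{eqn: basic LP estimates RC} and Chebyshev, $m(E_\Lambda) + |E_\Lambda| \to 0$ as $\Lambda \to \infty$, and in fact $\int_B (|g-g_{euc}|_{euc} + |g^{-1}-g_{euc}^{-1}|_{euc})^P \, dx =: A < \infty$. Next I would observe that any curve $\gamma \in \mathfrak{M}_g \setminus \mathfrak{M}_{euc}$ (or in $\mathfrak{M}_{euc}\setminus\mathfrak{M}_g$) must have $\gamma^*(E_\Lambda)$ of positive measure for every $\Lambda$ — otherwise the absolute continuity in one metric, combined with the uniform comparability $\Lambda^{-1} g_{euc} \le g \le \Lambda g_{euc}$ off $E_\Lambda$ and the fact that the portion of the curve inside $E_\Lambda$ can be made to contribute arbitrarily little (it is a null set of parameter), would transfer absolute continuity to the other metric. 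Hence $\gamma$ meets $\bigcap_\Lambda \{$curves spending positive time in $E_\Lambda\}$; more precisely, writing $h_\Lambda = (|g-g_{euc}|_{euc}+|g^{-1}-g_{euc}^{-1}|_{euc})\,\chi_{E_\Lambda}$, one has $\int_\gamma h_\Lambda\, > 0$ for each $\Lambda$ but the relevant test function must be chosen to blow up. The standard Fuglede-type trick is: since $\|h_{\Lambda_j}\|_{L^P(B)} \to 0$ along a sequence $\Lambda_j \uparrow \infty$, by passing to a subsequence we get $\sum_j \|h_{\Lambda_j}\|_{L^p} < \infty$ (using $p \le P/2 < P$, so the $L^p$ norms are controlled by $L^P$ norms on the bounded set $B$ and decay summably after thinning), hence $f := \sum_j h_{\Lambda_j} \in L^p(B,m)$ — here I use the measure equivalence $m = \sqrt{\det g}\,dx$ with $\sqrt{\det g} \in L^{P/2}$-controlled, so $\|f\|_{L^p(B,m)} \lesssim \|f\|_{L^{p'}(B,dx)}$ for suitable $p' \le P/2$. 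For any $\gamma \in \mathfrak{M}_g \setminus \mathfrak{M}_{euc}$, I would argue $\int_\gamma f = +\infty$: indeed $\gamma$ fails to be $g_{euc}$-absolutely continuous precisely because its $g$-length concentrates, relative to $g_{euc}$-length, on $\bigcap_j E_{\Lambda_j}$ up to parameter sets of measure zero, which forces $\int_\gamma h_{\Lambda_j} \ge c > 0$ uniformly in $j$ (the curve ``never escapes'' the bad sets in an $L^1$-along-$\gamma$ sense), so $\int_\gamma f = \sum_j \int_\gamma h_{\Lambda_j} = +\infty$. This exhibits the required $L^p$ function certifying $\text{Mod}_{g,p}(\mathfrak{M}_g \setminus \mathfrak{M}_{euc}) = 0$, and the symmetric argument with the roles of $g$ and $g_{euc}$ swapped (using $g^{-1}-g_{euc}^{-1}$ to control lengths in the other direction, and $m \asymp dx$ in $L^{P/2}$-sense for the modulus computation in the $g_{euc}$ metric) gives $\text{Mod}_{g_{euc},p}(\mathfrak{M}_{euc}\setminus \mathfrak{M}_g)=0$.

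The main obstacle I anticipate is making rigorous the claim that a curve which is absolutely continuous in one metric but not the other must have $\int_\gamma h_{\Lambda_j}$ bounded \emph{below} uniformly in $j$ — i.e.\ quantifying ``the length concentrates on the bad set'' in a way that survives the limit $\Lambda_j\to\infty$. The clean way is a contrapositive/dichotomy: if $\int_\gamma h_{\Lambda_j} \to 0$ for some subsequence, then for large $j$ the $g$-length of $\gamma$ restricted to $E_{\Lambda_j}$ is small (using that on $E_{\Lambda_j}$, $|\dot\gamma|_g \le $ (eigenvalue bound)$\cdot |\dot\gamma|_{euc}$ and the integrand $h_{\Lambda_j}\ge \Lambda_j - 1$ there, so smallness of $\int_\gamma h_{\Lambda_j}$ forces $\int_{\gamma^*(E_{\Lambda_j})} |\dot\gamma|_{euc}$ small, and then $\int_{\gamma^*(E_{\Lambda_j})}|\dot\gamma|_g$ small), while off $E_{\Lambda_j}$ the comparability $\Lambda_j^{-1}g_{euc}\le g\le \Lambda_j g_{euc}$ together with absolute continuity in $g$ transfers to a modulus-of-absolute-continuity estimate in $g_{euc}$ on that portion — contradiction with $\gamma\notin\mathfrak{M}_{euc}$. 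One has to be slightly careful that the comparability constant $\Lambda_j$ degenerates, but this is harmless because we only need, for \emph{each fixed} small target $\varepsilon$, \emph{some} $j$ for which the argument closes, and the off-$E_{\Lambda_j}$ comparability for that fixed $j$ has a fixed finite constant. This reduces everything to the two honest ingredients: the $L^P$ decay of $h_\Lambda$ from \eqref{eqn: basic LP estimates RC}, and the elementary equivalence of absolute continuity under a fixed bi-Lipschitz (i.e.\ uniformly comparable) change of metric.
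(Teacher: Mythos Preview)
Your approach is considerably more elaborate than necessary, and the dichotomy in your last paragraph does not close. The paper dispatches the lemma in three lines with a single explicit test function.

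Here is the paper's argument for the first claim. Set $F(x) = |g^{-1}(x)|_{euc}^{1/2}$; by \eqref{eqn: basic LP estimates RC} this lies in $L^p(B,g)$ for every $1 \le p \le P/2$. For any $\gamma \in \mathfrak{M}_g \setminus \mathfrak{M}_{euc}$ one has the pointwise bound $|\dot\gamma|_{euc} \le |g^{-1}|_{euc}^{1/2}\,|\dot\gamma|_g$ (this is just the operator-norm inequality for the linear map $(T_xB,g)\to(T_xB,g_{euc})$), so
\[
\int_\gamma F \;=\; \int_\alpha^\beta |g^{-1}(\gamma)|_{euc}^{1/2}\,|\dot\gamma|_g\,dt \;\ge\; \int_\alpha^\beta |\dot\gamma|_{euc}\,dt \;=\; +\infty,
\]
the last equality holding because $\gamma$ (which, being in $\mathfrak{M}_g$, is differentiable a.e.\ by Lemma~\ref{lem: basic prop rect curves}) fails Euclidean absolute continuity. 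Thus this single $F$ certifies $\text{Mod}_{g,p}(\mathfrak{M}_g \setminus \mathfrak{M}_{euc})=0$. The other claim is symmetric with $F=|g|_{euc}^{1/2}$.

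The gap in your dichotomy is this: you assert that on $E_{\Lambda_j}$ one has an inequality $|\dot\gamma|_g \le (\text{eigenvalue bound})\cdot |\dot\gamma|_{euc}$, but this is false precisely on $E_{\Lambda_j}$ --- the bad set is by definition where $g$ and $g_{euc}$ fail to be comparable, so no such bound is available there. Consequently, smallness of $\int_\gamma h_{\Lambda_j} = \int h_{\Lambda_j}(\gamma)\,|\dot\gamma|_g\,dt$ only forces $\int_{\gamma^*(E_{\Lambda_j})}|\dot\gamma|_g$ small, which says nothing about the Euclidean behaviour of $\gamma$ on that set and cannot produce a contradiction with $\gamma \notin \mathfrak{M}_{euc}$. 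Moreover, even if the dichotomy held, ``does not tend to zero along any subsequence'' is not the same as the uniform lower bound $\int_\gamma h_{\Lambda_j}\ge c>0$ that you actually need for $\int_\gamma f = \sum_j \int_\gamma h_{\Lambda_j} = +\infty$. The entire bad-set and Fuglede-summation apparatus can be discarded in favour of the single function $|g^{-1}|_{euc}^{1/2}$.
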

\begin{proof} Consider the family $\Gamma \subset \mathfrak{M}_g$ of curves $\g:[a,b] \to B(0,1)$ such that 
$
\int_a^b \left| g^{-1}(\gamma)\right|_{euc}^{1/2} |\dot \gamma|_g \, dt = +\infty.
$
Because  $|g^{-1}|_{euc}^{1/2} \in L^{P/2}(B,g)$ by \eqref{eqn: basic LP estimates RC}, it follows from the definition of $\text{Mod}_p$ that   $\text{Mod}_{g,P/2}(\Gamma)= 0$.
Now, notice that $\mathfrak{M}_g \setminus \mathfrak{M}_{euc}\subset \Gamma$. Indeed, for any $\g \in \mathfrak{M}_g \setminus \mathfrak{M}_{euc}$, we apply the Cauchy-Schwarz inequality to find that 
$
	  \int_a^b |g^{-1}(\gamma)|_{euc}^{1/2} |\dot \g|_g \,dt  \geq  \int_a^b |\dot \gamma(t)|_{euc}\,dt  = +\infty,
$
and so $\g \in \Gamma$.
It follows that $\text{Mod}_{g,p}(\mathfrak{M}_g \setminus \mathfrak{M}_{euc}) = 0$. The other claim is proven analogously.
\end{proof}
  In view of Lemma~\ref{lem: same rect},  we will henceforth let $\mathfrak{M} = \mathfrak{M}_g \cap \mathfrak{M}_{euc}$ and will restrict our attention to curves in $\mathfrak{M}$.
 
\begin{lemma} Fix $p\geq n+1$ and let $\kappa^2 =(p+n)/2n>1$. By choosing  $P=P(n,p)$ sufficiently large and thus $\delta = \delta(n, p)$ sufficiently small in Proposition~\ref{construction}, for any family of curves
 $\Gamma \subset \mathfrak{M}$ with $\text{Mod}_{g, p}(\Gamma) = 0$, we have $\text{Mod}_{g_k, p/\kappa}(\Gamma) = \text{Mod}_{g_{euc}, p/\kappa}(\Gamma) =0$.
\end{lemma}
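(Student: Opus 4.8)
The claim is that if $\Gamma \subset \mathfrak{M}$ is a family of curves with $\mathrm{Mod}_{g,p}(\Gamma) = 0$, then also $\mathrm{Mod}_{g_k, p/\kappa}(\Gamma) = \mathrm{Mod}_{g_{euc}, p/\kappa}(\Gamma) = 0$. The plan is to exploit the $L^P$ closeness of $g_\infty$, $g_k$ and $g_{euc}$ recorded in \eqref{eqn: basic LP estimates RC} together with the admissible-function characterization of the modulus: $\mathrm{Mod}_{g,p}(\Gamma)=0$ precisely when there is a nonnegative Borel function $\rho \in L^p(B,g)$ with $\int_\gamma \rho\,|\dot\gamma|_g\,dt = +\infty$ for all $\gamma \in \Gamma$. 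The task is then to manufacture, from $\rho$, a new weight $\tilde\rho$ in $L^{p/\kappa}$ with respect to $g_k$ (or $g_{euc}$) whose line integrals along curves in $\Gamma$ (now measured with $|\dot\gamma|_{g_k}$) still diverge.

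First I would fix $\gamma \in \Gamma$ and compare the two length elements: since $g_k$ and $g_\infty$ are comparable positive-definite matrices, we have $|\dot\gamma|_{g} \le \|g\,g_k^{-1}\|_{euc}^{1/2}\,|\dot\gamma|_{g_k}$ pointwise, and more usefully $|\dot\gamma|_{g_k} \ge c\,|g_k^{-1}g|_{euc}^{-1/2}|\dot\gamma|_g$ for a suitable bound. The cleanest route is to write, for $\gamma$ parametrized so that everything is defined a.e.,
\begin{equation}
\int_\gamma \rho\,|\dot\gamma|_g\,dt = \int_a^b \rho(\gamma)\,\frac{|\dot\gamma|_g}{|\dot\gamma|_{g_k}}\,|\dot\gamma|_{g_k}\,dt \le \int_a^b \rho(\gamma)\,\Lambda_k(\gamma)^{1/2}\,|\dot\gamma|_{g_k}\,dt,
\end{equation}
where $\Lambda_k = \|g\,g_k^{-1}\|_{euc}$ is the operator norm ratio, so that setting $\tilde\rho_k := \rho\,\Lambda_k^{1/2}$ gives $\int_\gamma \tilde\rho_k\,|\dot\gamma|_{g_k}\,dt = +\infty$ for every $\gamma\in\Gamma$. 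It then remains to check $\tilde\rho_k \in L^{p/\kappa}(B,g_k)$. By Hölder's inequality with exponents chosen appropriately (this is where $\kappa^2 = (p+n)/(2n)$ enters, via the comparisons in Remark~\ref{remark-on L^p} relating $L^{p/\kappa}(g_k)$ norms to $L^p(g_{euc})$ norms, and where $P$ must be taken large enough that $\Lambda_k^{1/2} \in L^s$ for the required large $s$), we bound $\|\tilde\rho_k\|_{L^{p/\kappa}(B,g_k)} \le \|\rho\|_{L^{p}(B,g_k)}\,\|\Lambda_k^{1/2}\|_{L^{s}(B,g_k)}$ with $\tfrac{1}{p/\kappa} = \tfrac1p + \tfrac1s$; since $\|\rho\|_{L^p(B,g_k)}$ is comparable to $\|\rho\|_{L^p(B,g)} < \infty$ (by comparability of the volume forms, again from \eqref{eqn: basic LP estimates RC}) and $\|\Lambda_k^{1/2}\|_{L^s}$ is finite by choosing $P = P(n,p)$ large, we conclude $\tilde\rho_k$ is an admissible weight witnessing $\mathrm{Mod}_{g_k,p/\kappa}(\Gamma)=0$. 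The argument for $g_{euc}$ is identical with $\Lambda_{euc} = \|g\,g_{euc}^{-1}\|_{euc}$ in place of $\Lambda_k$, using $|g\,g_{euc}^{-1}-\mathrm{Id}|_{euc}\in L^P$.

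The main obstacle I anticipate is purely bookkeeping in the exponents: one must verify that the single choice $\kappa^2 = (p+n)/(2n)$ is compatible with all the Hölder splittings needed both here and in the subsequent lemmas (the factor $\Lambda_k^{1/2}$ must lie in a space whose exponent, combined with $p$ via the harmonic-type relation, yields exactly $p/\kappa$), and that a finite $P = P(n,p)$ — rather than $P \to \infty$ — suffices. This is genuinely a constant-chasing task: one picks $s$ so that $\kappa/p = 1/p + 1/s$, i.e.\ $s = p/(\kappa - 1)$, and then demands $P/2 \ge s$ (recalling $|g^{-1}|_{euc}^{1/2}$ and the various difference quantities lie in $L^{P}$ or $L^{P/2}$), which is met by taking $P$ large depending only on $n$ and $p$. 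A secondary (minor) point is to ensure the pointwise inequality $|\dot\gamma|_g \le \Lambda_k^{1/2}|\dot\gamma|_{g_k}$ holds a.e.\ along $\gamma$, which follows since for a.e.\ $t$ the vector $\dot\gamma(t)$ exists (Lemma~\ref{lem: basic prop rect curves}(1), as $\gamma \in \mathfrak{M}$) and $g(v,v) \le \|g\,g_k^{-1}\|_{euc}\,g_k(v,v)$ for all $v$ by diagonalizing $g_k$. Once these are in place the conclusion is immediate, and no further analysis of the limit space is required.
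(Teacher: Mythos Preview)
Your proposal is correct and matches the paper's argument essentially step for step: the paper also takes the witness $F\in L^p(B,g)$ for $\mathrm{Mod}_{g,p}(\Gamma)=0$, multiplies it by the metric-comparison factor $|g_k g^{-1}|_{euc}^{1/2}$ (your $\Lambda_k^{1/2}$) to obtain a function whose line integral along every $\gamma\in\Gamma$ diverges with respect to $|\dot\gamma|_{g_k}$, and then invokes Remark~\ref{remark-on L^p} (rather than writing out the H\"older splitting explicitly) to place this product in $L^{p/\kappa}(B,g_k)$. The only cosmetic difference is that the paper absorbs all of your exponent bookkeeping into the single citation of that remark, while you spell out the H\"older exponents and the required size of $P$.
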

\begin{proof}
Let $\Gamma\subset\mathfrak{M}$ be a family of curves with $\text{Mod}_{g,p}(\Gamma)=0$. By definition, there exists $F\geq0 $ such that $F\in L^p(B,g)$ and $\int_a^b F(\g )|\dot \g|_g \,dt = + \infty$
for all $g \in \Gamma.$ For each $k$, consider the function $F_k = F |g_k g^{-1}|_{euc}^{1/2}$. By Remark~\ref{remark-on L^p}, we have $F_k \in L^{p/\kappa}(B,g_k)$ provided $P$ is chosen sufficiently large depending on $p$ and $n$. Then 
$
\int_a^b F_k(\g )|\dot \g|_{g_k} \,dt \geq \int_a^b F(\g )|\dot \g|_g \,dt = + \infty,
$
and so $\text{Mod}_{g_k, p/\kappa}(\Gamma)=0$. The same proof holds for $g_{euc}$ letting $F_{euc} = F |g^{-1}|_{euc}^{1/2}.$
\end{proof}

\begin{lemma}\label{lem: sobolev sobolev} Fix $p\geq n+1$ and let $\kappa^2 =(p+n)/2n>1$. By choosing  $P=P(n,p)$ sufficiently large and thus $\delta = \delta(n, p)$ sufficiently small in Proposition~\ref{construction},  we have $W^{1,\kappa p}(B,g_{euc}) \subset  W^{1,p}(B,g)\subset W^{1,p/\kappa}(B, g_{euc}).$
\end{lemma}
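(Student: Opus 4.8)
\textbf{Proof plan for Lemma~\ref{lem: sobolev sobolev}.}
The plan is to prove the two inclusions $W^{1,\kappa p}(B,g_{euc}) \subset W^{1,p}(B,g)$ and $W^{1,p}(B,g)\subset W^{1,p/\kappa}(B,g_{euc})$ directly from the definitions of the Sobolev spaces via $p$-weak upper gradients, using the pointwise comparison of the metrics and the $L^P$-integrability estimates of Remark~\ref{remark-on L^p}. The key inputs are: (i) the modulus comparison lemmas just proven, which say that a family of curves that is $\mathrm{Mod}_{g,p}$-negligible is also $\mathrm{Mod}_{g_{euc},p/\kappa}$-negligible, and vice versa with $\kappa p$ in place of $p$; (ii) the fact that $|g^{-1}|_{euc}^{1/2}, |g|_{euc}^{1/2} \in L^P(B)$ with $P$ as large as we wish by choosing $\delta$ small, so that products of an $L^q$ function with these factors lose only an arbitrarily small amount of integrability; and (iii) the Hölder-type norm comparison at the end of Remark~\ref{remark-on L^p}, which relates $L^p$-norms with respect to $g$ and $L^{\kappa p}$ or $L^{p/\kappa}$ norms with respect to $g_{euc}$.

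First I would establish $W^{1,\kappa p}(B,g_{euc}) \subset W^{1,p}(B,g)$. Take $u \in W^{1,\kappa p}(B,g_{euc})$ with a $(\kappa p)$-weak upper gradient $G \in L^{\kappa p}(B,g_{euc})$. For $g_{euc}$-a.e. absolutely continuous curve $\gamma$ (equivalently, for $\gamma \in \mathfrak{M}$ outside a $\mathrm{Mod}_{g_{euc},\kappa p}$-null family), we have $|u(\gamma(a))-u(\gamma(b))| \le \int_a^b G(\gamma)|\dot\gamma|_{euc}\,dt$. Now $|\dot\gamma|_{euc} = |g_{euc}(\dot\gamma,\dot\gamma)|^{1/2} \le |g^{-1}g_{euc}|_{euc}^{1/2}|\dot\gamma|_{g}$ pointwise (bounding the $g_{euc}$-length of a vector by the $g$-length times the operator norm of the change of metric), so $\tilde G := G\,|g^{-1}g_{euc}|_{euc}^{1/2}$ is an upper gradient along such curves. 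By the modulus comparison, the exceptional family is also $\mathrm{Mod}_{g,p}$-null (after translating between $\kappa p$ and $p$-moduli via the lemma applied in the appropriate direction), so $\tilde G$ is a $p$-weak upper gradient of $u$ for $g$. Finally, $\tilde G \in L^p(B,g)$: by Hölder's inequality with an exponent pair chosen so that $1/p = 1/(\kappa p) + 1/P'$ for suitable $P'$, we bound $\|\tilde G\|_{L^p(B,g)} \le \|G\|_{L^{\kappa p}(B,g)}\,\||g^{-1}g_{euc}|_{euc}^{1/2}\|_{L^{P'}(B,g)}$, and the second factor is finite since $|g^{-1}g_{euc}|_{euc}^{1/2} \in L^P$ with $P \ge P'$ by the choice of $P$; moreover $\|G\|_{L^{\kappa p}(B,g)}$ is comparable to $\|G\|_{L^{\kappa p}(B,g_{euc})}$ up to another such Hölder step or via the norm comparison in Remark~\ref{remark-on L^p}. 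Together with the elementary bound $\|u\|_{L^p(B,g)} \le C\|u\|_{L^{\kappa p}(B,g_{euc})}$ (again Hölder plus $\sqrt{\det g} \in L^1$), this gives $u \in W^{1,p}(B,g)$.

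The second inclusion $W^{1,p}(B,g) \subset W^{1,p/\kappa}(B,g_{euc})$ is entirely symmetric: given $u \in W^{1,p}(B,g)$ with least $p$-weak upper gradient $G_{B,g,u} \in L^p(B,g)$, set $\hat G := G_{B,g,u}\,|g_{euc}^{-1}g|_{euc}^{1/2}$, note $|\dot\gamma|_{g} \le |g_{euc}^{-1}g|_{euc}^{1/2}|\dot\gamma|_{euc}$, use the modulus comparison to transfer the exceptional null family from $\mathrm{Mod}_{g,p}$ to $\mathrm{Mod}_{g_{euc},p/\kappa}$, and apply Hölder with $1/(p/\kappa) = 1/p + 1/P''$ for $P''$ chosen with $P \ge P''$ to conclude $\hat G \in L^{p/\kappa}(B,g_{euc})$; the $L^{p/\kappa}$ integrability of $u$ itself follows similarly. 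In both directions the same threshold $P = P(n,p)$ works, chosen large enough that the required auxiliary exponents $P', P''$ and the $\kappa$ appearing in the modulus lemmas (namely $\kappa^2 = (p+n)/2n$) are accommodated; correspondingly $\delta = \delta(n,p)$ is taken small enough in Proposition~\ref{constructon-limit-metric} to guarantee $|g^{\pm 1}|_{euc}^{1/2} \in L^P$ with the stated bounds. The only mildly delicate point — and the one I would be most careful about — is making sure the modulus-comparison lemmas are invoked with the correct pairing of exponents (the $p$ versus $\kappa p$ versus $p/\kappa$ bookkeeping), so that a $p$-weak upper gradient for $g$ really does descend to a genuine $(p/\kappa)$-weak upper gradient for $g_{euc}$ and conversely; everything else is a routine Hölder estimate. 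No single step is a serious obstacle, since all the analytic content has already been isolated in the preceding lemmas and in Remark~\ref{remark-on L^p}.
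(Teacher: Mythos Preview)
Your proposal is correct and follows essentially the same approach as the paper: transfer the weak upper gradient via the pointwise comparison $|\dot\gamma|_g \le |g|_{euc}^{1/2}|\dot\gamma|_{euc}$ (and its analogue in the other direction), invoke the modulus-comparison lemma to handle the exceptional null family, and then use the integrability estimates of Remark~\ref{remark-on L^p} (equivalently, H\"older with the $L^P$ bounds on $|g^{\pm1}|_{euc}^{1/2}$) to place the new upper gradient in the right Lebesgue space. The paper carries out only the inclusion $W^{1,p}(B,g)\subset W^{1,p/\kappa}(B,g_{euc})$ explicitly and declares the other ``analogous,'' so your more detailed bookkeeping for both directions is if anything more complete; the one point you correctly flagged as delicate---that the modulus lemma must be used in the symmetric direction $\text{Mod}_{g_{euc},\kappa p}(\Gamma)=0 \Rightarrow \text{Mod}_{g,p}(\Gamma)=0$, which is not separately stated---is handled in the paper the same way, namely by implicit symmetry of the argument.
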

\begin{proof} We prove the second inclusion; the proof of the first inclusion is analogous. Fix $u \in W^{1,p}(B, g)$, so by definition we have 
	\begin{align}
		| u(\gamma(a)) - u(\gamma(b))| & \leq \int_a^b G_{B,g,u}(\g) |\dot \gamma|_{g} \,dt  \leq \int_a^b G_{B,g,u}(\g) |g|_{euc}^{1/2} |\dot \gamma|_{g_{euc}} \,dt 
	\end{align}
for all $\g \in \mathfrak{M}\setminus \Gamma$, where $\Gamma \subset \mathfrak{M}$ is a family of curves such that $\text{Mod}_{g,p}(\Gamma) = 0$.	 
Letting $H= G_{B,g,u}|g|_{euc}^{1/2}$, we directly see that  $H$ satisfies the weak upper gradient condition for $u$ with respect to $g_{euc}$. Moreover,  by Lemma~\ref{lem: same rect}, we see that $\text{Mod}_{g_{euc}, p/\kappa} (\Gamma) = 0$. This implies that $H$ is a $p/\kappa$-weak upper gradient for $u$ with respect to $g_{euc}$. 
Furthermore, we deduce from Remark~\ref{remark-on L^p} that $H \in L^{p/\kappa}(B, g_{euc})$, and so $u \in W^{1,p/\kappa}(B, g_{euc})$. 
\end{proof}
Note that in Lemma~\ref{lem: sobolev sobolev}, we have used the Newtonian space definition of $W^{1,p/\kappa}(B,g_{euc})$. It is known (see \cite[Theorem 7.13]{Hiwash03}) that this space agrees with the typical definitions of Sobolev spaces on Euclidean space.

\begin{lemma}\label{lem: pq upper gradient} Fix $1 \leq q \leq p <\infty$.
	Let $u \in W^{1,p}(B,g)$ and let $G$ be a $q$-weak upper gradient of $u$. Then $G$ is a $p$-weak upper gradient of $u$. 
\end{lemma}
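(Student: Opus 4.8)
\textbf{Proof proposal for Lemma~\ref{lem: pq upper gradient}.}

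The statement says: if $u\in W^{1,p}(B,g)$, $1\le q\le p<\infty$, and $G$ is a $q$-weak upper gradient of $u$, then $G$ is also a $p$-weak upper gradient of $u$. The key point is that the family $\Gamma$ of curves along which the upper gradient inequality $|u(\gamma(a))-u(\gamma(b))|\le \int_\gamma G$ fails has $\text{Mod}_q(\Gamma)=0$, and we must upgrade this to $\text{Mod}_p(\Gamma)=0$. So the whole content of the lemma reduces to the monotonicity statement: \emph{on a space of finite measure, $\text{Mod}_q(\Gamma)=0$ implies $\text{Mod}_p(\Gamma)=0$ whenever $q\le p$}. (Here $B$ has finite $m$-measure since it is a bounded chart domain with $\phi_a^*m$ absolutely continuous with respect to Lebesgue measure and the density controlled by $\sqrt{\det g}$, which is integrable thanks to the $L^P$ bounds on the metric coefficients from Remark~\ref{remark-on L^p}.)

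The plan is as follows. First I would recall the definition: $\text{Mod}_q(\Gamma)=0$ means there is a nonnegative Borel function $f\in L^q(B)$ with $\int_\gamma f=+\infty$ for every $\gamma\in\Gamma$. Given such an $f$, I want to produce $\tilde f\in L^p(B)$ with the same property. The natural choice, when $q\le p$ and $m(B)<\infty$, is to \emph{truncate and rescale}: one cannot simply take $\tilde f = f$ because $L^q\not\subset L^p$ in general; but one can exploit that the curve integral being infinite is insensitive to how we redistribute mass along the function. A clean approach: decompose the level sets of $f$ and build $\tilde f$ as a weighted sum $\tilde f=\sum_{j\ge 0} 2^{-j} f\,\chi_{\{2^j\le f<2^{j+1}\}}\cdot c_j$ for suitable constants — actually the standard trick is simpler. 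Write $E_j=\{x: 2^j\le f(x)<2^{j+1}\}$ for $j\ge 0$ and $E_{-1}=\{f<1\}$. Since $f\in L^q$, $\sum_j 2^{jq} m(E_j)<\infty$. Define $\tilde f = \sum_{j\ge 0} a_j 2^j \chi_{E_j}$ where $a_j\in(0,1]$ is chosen so that $a_j\to 0$ slowly enough that $\int_\gamma \tilde f=+\infty$ still holds for $\gamma\in\Gamma$, yet fast enough that $\sum_j a_j^p 2^{jp} m(E_j)<\infty$. The point is that $\int_\gamma f=+\infty$ forces the "tail" of $f$ along $\gamma$ to contribute infinitely, and multiplying by a sequence $a_j$ decaying to zero while $\sum a_j=\infty$ (e.g. a suitable subsequence argument) preserves divergence; simultaneously, choosing $a_j$ with $a_j 2^{j(p-q)}\to 0$ and $a_j$ summable-against-the-measure gives $L^p$ membership. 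Concretely, since $q\le p$, setting $a_j = \min\{1, 2^{-j(p-q)/p}\}$ and then further dampening on the finitely-or-countably many troublesome indices handles both requirements. This is the one genuinely technical step.

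In fact, I expect the cleanest route is to invoke the literature directly: the monotonicity $\text{Mod}_q(\Gamma)=0\Rightarrow\text{Mod}_p(\Gamma)=0$ for $q\le p$ on finite-measure spaces is standard and is, for instance, part of the basic theory in \cite[Section 5]{Hiwash03} (the same reference already used repeatedly in this section for the Sobolev-space machinery on metric measure spaces). Since the notion of $\text{Mod}_p$ and families of $p$-exceptional curves in our rectifiable setting was set up verbatim as in that reference (see the discussion preceding Lemma~\ref{lem: 5.7 Hiwash}), the argument there carries over with no modification. So the proof would be: observe $m(B)<\infty$; given $f\in L^q(B)$ witnessing $\text{Mod}_q(\Gamma)=0$, apply the truncation construction (or cite \cite[Section 5]{Hiwash03}) to obtain $\tilde f\in L^p(B)$ with $\int_\gamma \tilde f=+\infty$ for all $\gamma\in\Gamma$; conclude $\text{Mod}_p(\Gamma)=0$; hence the curve family on which $G$ fails to be an upper gradient for $u$ is $p$-exceptional, so $G$ is a $p$-weak upper gradient. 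The main obstacle, such as it is, is purely bookkeeping in the truncation construction; there is no conceptual difficulty, and the finiteness of $m(B)$ is the only structural input needed.
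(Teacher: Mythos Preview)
Your reduction to the claim ``$\text{Mod}_q(\Gamma)=0 \Rightarrow \text{Mod}_p(\Gamma)=0$ for $q\le p$ on finite-measure spaces'' is the natural reduction, but the claim is false: the monotonicity of modulus goes the \emph{other} way. On a finite-measure space H\"older gives $L^p\subset L^q$ for $q\le p$, so any $L^p$ witness for $\text{Mod}_p(\Gamma)=0$ is automatically an $L^q$ witness for $\text{Mod}_q(\Gamma)=0$; hence $\text{Mod}_p(\Gamma)=0 \Rightarrow \text{Mod}_q(\Gamma)=0$, not the reverse. Equivalently, $q$-exceptional families form a strictly \emph{larger} class than $p$-exceptional ones, which is precisely why the lemma has content. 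A concrete counterexample to your direction: let $B$ be the unit disk in $\R^2$ and $\Gamma$ the family of radial segments from the origin to $\partial B$. The function $f(x)=|x|^{-1}$ lies in $L^q(B)$ for every $q<2$ and has $\int_\gamma f=\infty$ on every radial segment, so $\text{Mod}_q(\Gamma)=0$ for $q<2$. But for $p>2$, writing $h=(h\,r^{1/p})\cdot r^{-1/p}$ and applying H\"older in polar coordinates shows that any $h\in L^p(B)$ satisfies $\int_0^1 h(r,\theta)\,dr<\infty$ for a.e.\ $\theta$, so no $L^p$ witness exists and $\text{Mod}_p(\Gamma)\neq 0$. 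Your truncation construction breaks for exactly this reason: the divergence of $\int_\gamma f$ can live entirely on the high level sets $E_j$ for large $j$, and any dampening $a_j\to 0$ fast enough to force $\tilde f\in L^p$ kills the divergence on such curves. (What appears in \cite[Section~5]{Hiwash03} is the correct, opposite monotonicity.)

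The paper's proof does not attempt to shrink the exceptional family at all. Instead it uses the hypothesis $u\in W^{1,p}(B,g)$ --- which your argument never invokes --- to obtain the minimal $p$-weak upper gradient $G_{B,g,u}\in L^p$. Then, following the trick of \cite[Lemma~6.3]{Hiwash03}: if $F\in L^q$ witnesses $\text{Mod}_q(\Gamma)=0$, then $G_k:=G+F/k$ is a \emph{genuine} upper gradient of $u$ (on $\Gamma$ the curve integral is $+\infty$; off $\Gamma$ already $G$ suffices), hence in particular a $p$-weak upper gradient, hence $G_k\ge G_{B,g,u}$ a.e.\ by minimality. Since $F<\infty$ a.e., sending $k\to\infty$ gives $G\ge G_{B,g,u}$ a.e., and any function pointwise dominating a $p$-weak upper gradient is itself one.
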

\begin{proof}
As usual, let $G_{B,g,u}$ denote the least $p$-weak upper gradient of $u$. We will show that $G \geq G_{B,g,u}$ for $m$-a.e. $x \in B$, which implies directly that $G$ is a $p$-weak upper gradient.
To this end, we employ a trick from \cite[Lemma 6.3]{Hiwash03}. 
Since $G$ is a $q$-weak upper gradient for $u$, we know that the weak upper gradient condition is satisfied for $G$ for all $\g \in \mathfrak{M}\setminus \Gamma$, where $\text{Mod}_{g, q}(\Gamma)=0$.
	 By definition, there exists a function $F \in L^q(B,g)$ such that $\int_a^b F(\g) |\dot\g|_g \,dt =+\infty$ for all $g \in \Gamma$. Therefore, the function $G_k = G + F/ k$ is an upper gradient of $u$, and in particular, a $p$-weak upper gradient for $u$. Therefore, $G_k \geq G_{B,g,u}$ for $m$-a.e. $x \in B.$ Now, since $F$ is finite $m$-a.e., we have $G_k \to G$ and thus $G \geq G_{B,g,u}$ for $m$-a.e. $x \in B$.
\end{proof}

We are now ready to prove Proposition~\ref{prop: rc}.
\begin{proof}[Proof of Proposition~\ref{prop: rc}]
We consider two cases: first, the case when $u$ is a smooth function in $B$, and then the general case when $u \in W^{1,p}(B,g)$. In each case, we proceed in two steps: first showing that $|\na_g u|$ is a $p$-weak upper gradient of $u$ with respect to $g$, and then showing that it is the least $p$-weak upper gradient of $u$ with respect to $g$.
\\

{\bf Case 1: $u \in C^\infty(B, g_{euc})$.}

{\it Step 1: $|\na_g u|$ is a $p$-weak upper gradient for $u$ with respect to $g$.}
The main estimate toward showing Step 1 is the following.
Up to passing to a subsequence, we have 
		\begin{equation}\label{eqn: conv on curves}
			\int_a^b |\na_{g_k} u| (\g) |\dot \g|_{g_k} \,dt \to  \int_a^b |\na_g u|(\g) |\dot \g|_{g} \,dt 
		\end{equation}
		for every curve $\g: [a,b] \to B$ in $\mathfrak{M}\setminus \Gamma$, where $\Gamma$ is a family of curves with $\text{Mod}_{g,p}(\Gamma)=0$. Once we establish this fact, it will easily follow that $|\na_g u|$ is a $p$-weak upper gradient for $u$ with respect to $g$.
 	Indeed, since $g_k$ is a smooth metric for each $k$, we know that $|\na_{g_k} u |$ is the least upper gradient for $u$ with respect to $g_k.$ So, for any $\g \in \mathfrak{M}\setminus \Gamma$, we have 
\begin{align}
	| u (\gamma(a)) - u(\gamma(b))| & \leq \int_a^b |\na_{g_k} u | (\gamma) |\dot \gamma|_{g_k} \,dt \to  \int_a^b |\na_{g} u | (\gamma) |\dot \gamma|_{g} \,dt
\end{align}
as $k \to \infty$. Since $\text{Mod}_{g,p}(\Gamma)=0$, it follows that  $|\na_{g} u |$ is a $p$-weak upper gradient for $u$ with respect to $g$.

We employ Lemma~\ref{lem: 5.7 Hiwash} in order to prove \eqref{eqn: conv on curves}. More specifically, for any curve $\g \in \mathfrak{M}$, we have $| \int_a^b |\na_{g_k} u|(\gamma) |\dot \gamma|_{g_k} \, dt - \int_a^b |\na_{g} u|(\gamma) |\dot \gamma|_{g} \, dt | \leq I + II$, where 
	\begin{equation}
	I = \left| \int_a^b (|\na_{g_k} u|-|\na_{g} u|)(\g) |\dot \gamma|_{g_k } \, dt\right|, \qquad II = \left| \int_a^b |\na_{g} u|(\g) \left( |\dot \g|_{g_k}  - |\dot \gamma|_{g}\right) \,dt \right|. 
	\end{equation}
	We claim that terms $I$ and $II$ tend to zero as $k \to \infty$ for all $\g \in \mathfrak{M}\setminus \Gamma$, where $\text{Mod}_{g, p}(\Gamma) =0$. 
	Indeed, consider the sequence of functions $F_k= \left||\na_{g_k} u| - |\na_{g} u|\right||g_{k} \, g^{-1}|_{euc}^{1/2}$, and notice that $ I \leq \int_a^b F_k (\gamma) |\dot \g|_g \,dt$.
Thanks to \eqref{eqn: basic LP estimates RC}, we see that $F_k \to 0$ in $L^p(B, g)$, provided $P$ is large enough depending on $p$.
Then, Lemma~\ref{lem: 5.7 Hiwash} implies that, after passing to a subsequence, $I \to 0$ for all $\g \in \mathfrak{M}\setminus\Gamma_{I}$ where $\Gamma_{I}\subset \mathfrak{M}$ is a family of curves with $\text{Mod}_{g,p}(\Gamma_{I})=0$.

We argue in a similar fashion for term $II$. Consider the sequences $\tilde{F_k} = |\na_{g} u| \big( |g_k g^{-1}|_{euc}^{1/2} - 1\big) $ and $\hat{F}_k =  |\na_{g} u| \big(1 - |g g_k^{-1}|_{euc}^{-1/2}\big)$. Since $|\na_{g} u| \geq0$, we have $	\int_a^b |\na_{g} u|(\gamma) \left( |\dot \g|_{g_k} - |\dot \g |_{g} \right) \,dt 
\leq 	 \int_a^b \tilde{F}_k (\gamma) |\dot \g |_{g}  \, dt$
 and $
	\int_a^b |\na_{g} u|(\gamma) \left(  |\dot \g |_{g} -|\dot \g|_{g_k}  \right) \,dt 
	 \leq \int_a^b \hat{F}_k (\gamma) |\dot \g |_{g}  \, dt .
$
Furthermore, thanks to \eqref{eqn: basic LP estimates RC}, we see that $\tilde{F}_k, \hat{F}_k\to 0$ in $L^p(B,g)$ provided that $P$ is chosen sufficiently large depending on $p$. So, by Lemma~\ref{lem: 5.7 Hiwash}, we see that, after passing to a further subsequence, we have $II \to 0$ for all $\g \in \mathfrak{M}\setminus \Gamma_{II}$ where $\text{Mod}_{g,p}(\Gamma_{II})=0$. Letting $\Gamma = \Gamma_{I} \cup \Gamma_{II}$, we conclude that \eqref{eqn: conv on curves} holds. This completes Step 1.\\
\medskip

{\it Step 2: $|\na_g u|$ is the least $p$-weak upper gradient of $u$ with respect to $W^{1,p}(B,g)$.} Let $H \in L^p(B, g)$ be any $p$-weak upper gradient of $u$ with respect to $g$. By definition,  we have
\begin{align}
	|u(\g(a)) - u(\g(b))| & \leq \int_a^b H(\gamma) |\dot \g |_g \,dt  \leq \int_a^b H |g \,g_k^{-1}|_{euc}^{1/2} |\dot \g |_{g_k } \, dt
\end{align}
for all $\g \in \mathfrak{M}\setminus \Gamma$ where $\Gamma$ is a family of curves such that $\text{Mod}_{g,p}(\Gamma) =0$,
In particular, letting  $H_k = H |g \,g_k^{-1}|_{euc}^{1/2}$, we see immediately that $H_k$ satisfies the upper gradient condition for every $\g \in \mathfrak{M}\setminus \Gamma$. By Lemma~\ref{lem: same rect}, we see that $\text{Mod}_{g_k, p/\kappa}(\Gamma ) = 0$, and thus $H_k$ is a $p/\kappa$-weak upper gradient for $u$ with respect to $g_k$. 
Moreover, it follows from Remark~\ref{remark-on L^p} that $H_k \in L^{p/\kappa}(B, g)\cap L^{p/\kappa}(B, g_k)$ and that $H_k \to H $ in $L^{p/\kappa}(B, g)$. In particular,  	$H_k \to H $ for $m$-a.e. $x \in B$.

 Similarly, since  $u \in C^{\infty}(B,g_{euc})$, we deduce from \eqref{eqn: basic LP estimates RC} that $|\na_{g_k} u| \to |\na_g u|$ in $L^p(B,g)$, and therefore $m$-a.e.
 Since $|\na_{g_k} u|$ is the least $q$-weak upper gradient for $u$ with respect to $g_k$ for any $q$, we have that $|\na_{g_k} u| \leq H_k$ $m$-a.e. Passing $k\to \infty$, it follows that $|\na_{g} u|\leq H$, and so $|\na_{g} u|$ is the least $p$-weak upper gradient for $u$ with respect to $g$.\\
 \medskip
%
%
%
%

{\bf Case 2: $u \in W^{1,p}(B,g)$.}

{\it Step 1: $|\na_g u |$ is a $p$-weak upper gradient of $u$ in $W^{1,p}(B,g)$.}
 By Lemma~\ref{lem: sobolev sobolev}, we know that $u \in W^{1,p/\kappa}(B,g_{euc})$, and thus we may find a sequence $u_i \in C^\infty(B,g_{euc})$ such that $u_i \to u$ in $W^{1,p/\kappa}(B,g_{euc})$. 
So,  using Remark~\ref{remark-on L^p}, we see that $|\na_g u_i| \to |\na_g u|$ in $L^q(B,g)$ for $q =p/\kappa^2$. Therefore, applying Lemma~\ref{lem: 5.7 Hiwash} once again, after passing to a subsequence, we have 
\begin{equation}
\int_a^b |\na_g u_i | (\g) |\dot \g|_g\,dt \to \int_a^b |\na_g u|(\g) |\dot \g|_g \,dt 
\end{equation}
as $i \to \infty$ for all $\g \in \mathfrak{M} \setminus \Gamma_0$, where $\text{Mod}_{g, q}(\Gamma_0)=0.$ Moreover, applying Case 1 to each smooth function $u_i$, we  know that $|\na_g u_i|$ is a $q$-weak upper gradient for $u_i$  and thus 
\begin{equation}\label{eqn: i pweak}
|	u_i(\g (a)) - u_i (\g(b))|  \leq \int_a^b |\na_{g} u_i| (\g)  |\dot \g|_g\,dt
\end{equation}
for all $\g \in \mathfrak{M} \setminus \Gamma_i$, where $\Gamma_i $ is a family of curves such that $\text{Mod}_{g, q}(\Gamma_i)=0.$
Now, since $p/\kappa>n$, we have that $u_i \to u$ uniformly by the Morrey-Sobolev inequality on Euclidean space. 
 So, letting $\Gamma = \Gamma_0 \cup \bigcup \Gamma_i$, we see that $\text{Mod}_{g,q}(\Gamma)=0$, and letting $i \to \infty$ in \eqref{eqn: i pweak}, we find that 
$
|	u(\g (a)) - u(\g(b))|  \leq \int_a^b |\na_g u|(\g)  |\dot \g|_g\,dt
$
for all $\g \in \mathfrak{M}\setminus \Gamma$.
This proves that $|\na_g u|$ is a $q$-weak upper gradient of $u$ with respect to $g$. Applying Lemma~\ref{lem: pq upper gradient}, we see that $|\na_g u|$ is also $p$-weak upper gradient with respect to $g$, completing Step 1.\\

{\it Step 2: $|\na_g u |$ is the least $p$-weak upper gradient of $u$ in $W^{1,p}(B,g)$.} 
 Let $v_i \equiv u_i - u \in W^{1,p}(B,g)$.  As noted in the previous step, $v_i\to 0$ in $W^{1,p/\kappa}(B, g_{euc})$, so in particular $|\na_g v_i| \to 0$ $m$-a.e. Moreover, applying the previous step, $|\na_g v_i|$ is a $p$-weak upper gradient of $v_i$ in $W^{1,p}(B,g)$. 
 
 Now, consider any $p$-weak upper gradient $H \in L^p(B,g)$ for $u$ with respect to $g$ and let $H_i = H+ |\na_g v_i|$, which converges pointwise $m$-a.e. to $H$.  Applying the triangle inequality, we see that $H_i$ is a $p$-weak upper gradient for $u_i$. Moreover, from Case 1, we know that $|\na_g u_i|$ is the least $p$-weak upper gradient for $u_i$ with respect to $g$, and hence 
 \begin{equation}\label{eqn: 22}
 H_i \geq |\na_g u_i| 
 \end{equation} for $m$-a.e. $x \in B(0,1)$. Since $|\na_g u_i| \to |\na_g u|$ in $L^q(B,g_{euc})$, after passing to further subsequences the sequence also converges pointwise a.e, and thus $m$-a.e. Thus sending $i\to \infty$ in \eqref{eqn: 22}, we see that $|\na_g u|$ is the least $p$-weak upper gradient for $u$ with respect to $g$. This concludes the proof of Step 2 and thus of the proposition.
\end{proof}
\subsubsection{Proof of Proposition~\ref{prop: rect completeness}}\label{sec: w1p RC pf}

We are now ready to prove Proposition~\ref{prop: rect completeness}.
\begin{proof}[Proof of Proposition~\ref{prop: rect completeness}]
We first prove part (b). Fix $u \in W^{1,p}(M_\infty, g_\infty)$. By Proposition~\ref{prop: rc}, we know that the pull-back of $u$ is weakly differentiable in charts and that $|\na_{g_\infty}u|$ is defined for a.e $x \in M_\infty$. To show that $|\na_{g_\infty}| = G_{M_\infty, g_\infty, u}$, we proceed in two steps, first showing that $|\na_{g_\infty} u|$ is a $p$-weak upper gradient of $u$  and then showing that it is the least $p$-weak upper gradient.

  We claim that $|\na_{g_\infty} u|$ is a $p$-weak upper gradient of $u$ with respect to $W^{1,p}(M_\infty, g_\infty)$. To this end, let $\{x_j\}\subset M_\infty$ be a collection of points such that $\{ B_{g_\infty(1)}(x_j, 1/2)\}$ covers $M_\infty$ and $\{B_{g_{\infty}(1)}(x_j, 1/4)\}$ are pairwise disjoint. For each $j$, let $\phi_j: B(0,2)\to \Omega_j \subset B_{g_\infty(1)}(x_j, 1/2)$ be as in Remark~\ref{remark-on L^p}.

 Consider any absolutely continuous curve $\g: [a,b] \to M_\infty$. The continuous image under $\g$ of the compact set $[a,b]$ is compact, and thus the image of $\g$ in $M_\infty$ intersects finitely many of the $\Omega_j$. Take a finite partition $a=a_0 < a_1 < \dots < a_N = b$ of the interval $[a,b]$  such that the image of $[a_i, a_{i+1}]$ is contained entirely in $\Omega_j$ for one $j$. 
   Then, applying the triangle inequality and Proposition~\ref{prop: rc}, we see that 
\begin{equation}\begin{split}
|u(\g(a)) - u(\g(b))|&  \leq \sum_{i=1}^N |u(\g(a_{i-1}))- u(\g (a_i)) |  \\
& \leq \sum_{i=1}^N \int_{a_{i-1}}^{a_i} |\na_{g_\infty} u| |\dot \g |_{g_\infty} \,dt = \int_a^b  |\na_{g_\infty} u| |\dot \g |_{g_\infty} \,dt .
\end{split}\end{equation}
It follows that $|\na_{g_\infty} u|$ is a $p$-weak upper gradient for $u$ with respect to $W^{1,p}(M_\infty,g_\infty)$. 

Now we show that $|\na_{g_\infty} u|$ is the least $p$-weak upper gradient of $u$ with respect to $W^{1,p}(M_\infty, g_\infty)$, thus proving (b). Notice that, for each $j$, the restriction of $G_{M_\infty,g_\infty,u}$ to $\Omega_j$ is a $p$-weak upper gradient for $u$ with respect to $W^{1,p}(\Omega_j,g_\infty)$. So, since we know from Proposition~\ref{prop: rc} that $|\na_{g_\infty}u|$ is the least $p$-weak upper gradient of $u$ with respect to $W^{1,p}(\Omega_j, g_\infty)$, it follows that $|\na_{g_\infty} u| \leq G_{M_\infty,g_\infty, u}$ for $m$-a.e. $x \in \Omega_j$. Since the collection $\{\Omega_j\}$ covers $M_\infty,$  we thus see that $|\na_{g_\infty} u| \leq G_{M_\infty,g_\infty, u}$ for $m$-a.e. $x \in M_\infty.$ This completes the proof of (b).

Finally, we prove (a). Fix any $v \in L^p(M_\infty, g_\infty)$ and $\e>0$. We wish to show that there exists $u \in W^{1,p}(M_\infty, g_\infty)$ such that $\| v-u\|_{L^p(M_\infty, g_\infty)}\leq \e.$ It is apparent that bounded and compactly supported functions are dense in $L^p(M_\infty, g_\infty)$, and thus we may assume without loss of generality that $v$ is bounded and compactly supported. 
Let $\{x_j\}_{j=1}^N\subset M_\infty$ be a finite collection such that the support of $v$ is contained in $\cup_{j=1}^N \Omega_j$, where the $\Omega_j$ are defined as in the previous step. Let $\{\psi_j\}$ be a partition of unity subordinate to $\{\Omega_j\}_{j=1}^N$ such that $\phi_j^*\psi_j$ is a smooth function in $B(0,1)$. Since $u$ is bounded, for each $j$ we  have $\phi_j^* u  \in L^{\kappa p}(B(0,1), g_{euc})$. So, we may find a smooth function $\tilde{v}_j $ such that $\| \phi_j^*u - \tilde{v}_j\|_{L^{\kappa p }(B, g_{euc} )} \leq \e/2N.$ Thus, by Remark~\ref{remark-on L^p}, we have $\| u -v_j \|_{L^p(\Omega_j, g_\infty)} = \| \phi_j^*u - \tilde{v}_j\|_{L^p(B, g)} \leq \e/N.$ Here we let $v_j= \phi_* \tilde{v}_j.$ Finally, we let $v = \sum_{j=1}^N \psi_j v_j$. Thanks to Lemma~\ref{lem: sobolev sobolev} and part (b) above, we see that $v \in W^{1,p}(M_\infty, g_\infty)$.  Finally,
\begin{equation}
\| u-v\|_{L^p(M_\infty, g_\infty) } \leq \sum_{j=1}^N \| u -v_j \|_{L^p(\Omega_j, g_\infty)} \leq \e.
\end{equation}
This completes the proof of (a) and thus of the proposition. 
\end{proof}

\subsection{Convergence in $d_p$}\label{sec: dp global}

In this section, we establish the following proposition, which is Proposition~\ref{global-thm proof}(1).
\begin{proposition}\label{prop: dp convergence global} Fix $p\geq  n+1$. We may choose $P= P(n,p)$ sufficiently large and thus $\delta =\delta(n,p)$ sufficiently small in Proposition~\ref{constructon-limit-metric} such that if $(M_i, g_i, x_i)$ and $(M_\infty, g_\infty , x_\infty)$ are as in Proposition~\ref{constructon-limit-metric}, then, after passing to subsequence, 
\begin{equation}\label{eqn: dp convergence limit space}
	d_p ((M_i, g_i, x_i), (M_\infty, g_\infty , x_\infty)) \to 0.
\end{equation} 
\end{proposition}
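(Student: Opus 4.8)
The strategy is to leverage the $L^p$-convergence of the metric tensors already established in Proposition~\ref{constructon-limit-metric}, together with the $\epsilon$-regularity Theorem~\ref{thm: main thm, Lp def proof} and the estimates of Theorem~\ref{claim7}, to promote the pointwise-in-charts integral control on the metric coefficients to genuine $d_p$-convergence of the exhausting sets $Cov(x,N)$. The essential point is that $d_p$ is \emph{not} a local quantity, so one must be careful to compare the \emph{relative} $d_p$ distances $d_{p,g_i,\Omega_i}$ and $d_{p,g_\infty,\Omega}$ on matching compact pieces, as demanded by Definition~\ref{def: pointed dp} and the remark following it.

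\textbf{Step 1: Reduction to compact pieces.} Fix $N\in\mathbb{N}$. Using the two lemmas preceding Definition~\ref{def: pointed dp}, the set $Cov(x_\infty,N)$ is precompact in $M_\infty$; enlarge it slightly to a compact connected set $\Omega$ with smooth boundary (in the smooth manifold topology of $M_\infty$) so that $Cov(x_\infty,N)\subset\Omega\subset Cov(x_\infty,N')$ for some $N'\geq N$. Apply Proposition~\ref{constructon-limit-metric} with this $\Omega$: for $i$ large we obtain $\Omega_i\Subset M_i$ and diffeomorphisms $\psi_{i,\Omega}\colon\Omega\to\Omega_i$ with $\psi_{i,\Omega}(x_\infty)=x_i$ and $\|\psi_{i,\Omega}^*g_i-g_\infty\|_{L^q(\Omega)}\to 0$ for every $q\in[1,P]$. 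We need to (a) check that $\Omega_i$ sandwiches $Cov_i(x_i,N)$ and $Cov_i(x_i,N')$ in the required way, which follows because the $\epsilon$-regularity Theorem~\ref{thm: main thm, Lp def proof} gives uniform two-sided control on the $d_p$-balls $\mathcal B_{p,g_i}$ in terms of Euclidean $d_p$-balls at all small scales (so the covering structure is stable under the convergence), and (b) prove that $d_p((\Omega_i,g_i),(\Omega,g_\infty))\to 0$ in the sense of Definition~\ref{def: dp convergence}.

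\textbf{Step 2: Convergence of relative $d_p$ distances.} This is the heart of the matter. For fixed $z,w\in\Omega$ we must show $d_{p,g_i,\Omega_i}(\psi_{i,\Omega}(z),\psi_{i,\Omega}(w))\to d_{p,g_\infty,\Omega}(z,w)$, uniformly on compact subsets. Identifying everything via $\psi_{i,\Omega}$, this is a statement about the functionals $f\mapsto\int_\Omega|\nabla f|^p_{g_i}\,d\vol_{g_i}$ versus $f\mapsto\int_\Omega|\nabla f|^p_{g_\infty}\,d\vol_{g_\infty}$. The upper bound direction: given an admissible test function $f$ for $d_{p,g_\infty,\Omega}$, one wants $\int_\Omega|\nabla f|_{g_i}^p\,d\vol_{g_i}\leq 1+o(1)$; this is not immediate because $f$ need only be in $W^{1,p}(\Omega,g_\infty)$, not $W^{1,p+\epsilon}$, and $L^p$-convergence of coefficients does not by itself give convergence of $L^p$ norms of gradients. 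The fix is to run exactly the $\kappa$-loss argument of Theorem~\ref{claim7}/Remark~\ref{remark-on L^p}: choosing $P$ large relative to $p$, the $L^P$-convergence of $g_i g_\infty^{-1}\to\mathrm{Id}$ and of the volume densities lets one compare $\|\nabla f\|_{L^p(\Omega,g_i)}$ with $\|\nabla f\|_{L^{\kappa p}(\Omega,g_\infty)}$ and $\|\nabla f\|_{L^{p/\kappa}(\Omega,g_\infty)}$ up to factors $1\pm\epsilon$; combined with Lemma~\ref{p-continuity} (continuity of $d_{q}$ in $q$) and a density/truncation argument using Remark~\ref{rmk: localize f} to first replace $f$ by a compactly supported bounded function, one gets both inequalities. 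The lower bound direction is symmetric, using the reverse inclusions from Lemma~\ref{lem: sobolev sobolev} and the fact, from Proposition~\ref{prop: rect completeness}, that on $(M_\infty,g_\infty)$ the $p$-weak upper gradient agrees a.e.\ with the coordinate gradient, so test functions on $\Omega$ for $d_{p,g_\infty,\Omega}$ really are controlled by their coordinate derivatives.

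\textbf{Step 3: Density and volumes; conclusion.} Having the uniform convergence of relative $d_p$ distances on $\Omega$, choose a finite $\epsilon$-net $\{z_j\}\subset\Omega$ for $d_{p,g_\infty,\Omega}$; its image under $\psi_{i,\Omega}$ is an $\epsilon(1+o(1))$-net for $d_{p,g_i,\Omega_i}$ by Step 2, and the pairwise distances match up to $\epsilon$. For the volume condition \eqref{eqn: dp conv vol}, one needs $\vol_{g_i}(\mathcal B_{p,g_i,\Omega_i}(\psi_{i,\Omega}(z_j),r))/\vol_{g_\infty}(\mathcal B_{p,g_\infty,\Omega}(z_j,r))\to1$ uniformly for $r\in[\epsilon,1]$; the numerator and denominator are, by Step 2, volumes of nearly-coinciding open sets, and since $d\vol_{g_i}\to d\vol_{g_\infty}$ in $L^1_{loc}$ (a consequence of $L^P$-convergence of the metric coefficients) together with the doubling property from Theorem~\ref{thm: main thm, Lp def proof} (which prevents mass concentration on the thin shells where the balls differ), the ratio converges to $1$. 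This verifies $d_p((\Omega_i,g_i),(\Omega,g_\infty))\to0$, hence, by Definition~\ref{def: pointed dp}, the pointed convergence \eqref{eqn: dp convergence limit space} after a diagonal subsequence over $N$.

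\textbf{Main obstacle.} The delicate step is Step 2, specifically handling test functions of borderline regularity $W^{1,p}$ (rather than $W^{1,p+\epsilon}$): $L^p$-convergence of metric coefficients is simply too weak to pass to the limit in $\int|\nabla f|^p$ directly. Everything hinges on the freedom to take $P=P(n,p)$ as large as we like in Proposition~\ref{constructon-limit-metric}, which buys the $\kappa$-interpolation room exploited throughout Section~\ref{sec: RC}; combined with Lemma~\ref{p-continuity} this is precisely what makes the argument close, and it is why the statement only asserts $d_p$-convergence for the specific $p$ (and nearby), not for all large $q$.
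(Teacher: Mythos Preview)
Your overall architecture (Steps 1 and 3: reduce to compact pieces via $Cov(x,N)$, then verify nets and volume ratios) matches the paper's, and your identification of the ``main obstacle'' is exactly right: the $\kappa$-loss inequalities from Remark~\ref{remark-on L^p} only compare $\|\nabla f\|_{L^p(g_i)}$ with $\|\nabla f\|_{L^{\kappa p}(g_\infty)}$, and a generic test function for $d_{p,g_\infty,\Omega}$ has no a~priori $L^{\kappa p}$ gradient bound.

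However, your proposed fix in Step~2 does not close this gap. Truncation and density (Remark~\ref{rmk: localize f}) make $f$ bounded and compactly supported but do nothing for $\|\nabla f\|_{L^{\kappa p}}$. And Lemma~\ref{p-continuity} is proved only for smooth Euclidean domains; the $q\to p^+$ direction there relies on approximating the near-maximizer by a $C^1(\overline U)$ function with $\sup|\nabla f|\leq\Lambda$, which is not available on the rectifiable limit $(M_\infty,g_\infty)$, and the $q\to p^-$ direction on the $M_i$ side would need the continuity to hold \emph{uniformly in $i$}, which you have not established. So as written, neither inequality in Step~2 goes through.

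The paper resolves this with a genuinely different idea that you are missing: rather than work with arbitrary test functions, it works with the \emph{maximizer} $f$ of $d_{p,g,\Omega}(x,y)$ (whose existence is Lemma~\ref{existence-minimizer}). This $f$ satisfies the Euler--Lagrange equation of a $p$-energy minimization (it is $p$-harmonic away from $x,y$), so a Caccioppoli estimate combined with the Sobolev inequality of Theorem~\ref{Sob-embedding S7} yields a reverse H\"older inequality on $d_p$-balls; since $(M,d_{p,g},d\vol_g)$ is doubling by Theorem~\ref{thm: main thm, Lp def proof}, Gehring's lemma then gives automatic higher integrability $\|\nabla f\|_{L^{\kappa p}(\Omega)}\leq C$ with $\kappa=\kappa(n,p)>1$ and $C$ uniform in $i$ (Lemma~\ref{improved-estimate-GehringLemma}, which also holds on the limit by Remark~\ref{rmk: limit space}). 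With this uniform $L^{\kappa p}$ bound in hand, the $\kappa$-interpolation and the $L^P$ convergence of the metric coefficients immediately give $\|\nabla^{g_i}f\|_{L^p(\Omega)}\leq 1+\epsilon$, and the argument closes (using continuity of $d_{p,g,\Omega}$ in the \emph{domain} $\Omega$, not in the exponent, to pass from the slightly enlarged $\tilde\Omega$ back to $\Omega$). This Gehring step is the key missing ingredient in your proposal.
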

\vspace{.1cm}

In order to prove Proposition~\ref{prop: dp convergence global}, we will   first  show the following local version.
\begin{proposition}\label{prop: dp convergence global but local} Fix $p\geq  n+1$. We may choose $P= P(n,p)$ sufficiently large and thus $\delta =\delta(n,p)$ sufficiently small in Proposition~\ref{constructon-limit-metric} such that if $(M_i, g_i, x_i)$ and $(M_\infty, g_\infty , x_\infty)$ are as in Proposition~\ref{constructon-limit-metric}, the following holds. For any compact set $\Omega\Subset M_\infty$, after passing to subsequence,  we can find $\Omega_i\Subset M_i$ such that 
\begin{equation}
d_{p}\left((\Omega_i,g_i),(\Omega,g_\infty) \right)\rightarrow 0	
\end{equation}
 as $i\rightarrow +\infty$  in the sense of Definition~\ref{def: dp convergence}. 
\end{proposition}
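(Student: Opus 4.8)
\textbf{Proof plan for Proposition~\ref{prop: dp convergence global but local}.}
The plan is to reduce the $d_p$-convergence on $\Omega$ to the $L^P$-convergence of the metric tensors from Proposition~\ref{constructon-limit-metric} together with the $W^{1,p}$-rectifiable completeness of $(M_\infty,g_\infty)$ from Proposition~\ref{prop: rect completeness}. By a covering argument we may assume $\Omega \Subset B_{g_\infty(1)}(x_\infty,1)$, and we work in the coordinate charts supplied by Proposition~\ref{constructon-limit-metric}, identifying $\Omega$ with a fixed region in $\R^n$ and $\psi_{i,\Omega}^* g_i$ with metrics $g_i$ on it, so that $\|g_i - g_\infty\|_{L^q(\Omega)}\to 0$ for every $q\le P$, along with the derived estimates $\int_\Omega |g_ig_\infty^{-1}-\mathrm{Id}|^P\to 0$ etc. from Remark~\ref{remark-on L^p}. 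Since the $d_p$-distances $d_{p,g_i,\Omega_i}$ and $d_{p,g_\infty,\Omega}$ are computed from the relative $W^{1,p}$ norms on $\Omega_i$ and $\Omega$ respectively, the task is to show that these distance functions (and the associated volumes of $d_p$-balls) are close in the measured Gromov--Hausdorff sense of Definition~\ref{def: dp convergence}.

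The main step is a two-sided comparison of the test-function classes. Given $x,y\in \Omega$, I would show
\[
\bigl| d_{p,g_i,\Omega}(x,y) - d_{p,g_\infty,\Omega}(x,y)\bigr| \to 0
\]
uniformly on compact subsets of $\Omega\times\Omega$. For one direction, take an admissible test function $f$ for $d_{p,g_\infty,\Omega}(x,y)$, i.e.\ $\int_\Omega |\na f|_{g_\infty}^p \, d\vol_{g_\infty}\le 1$ with $f\in W^{1,p}_{loc}\cap C^0_{loc}$; as in the proof of Theorem~\ref{claim7} and Theorem~\ref{thm: main thm, Lp def proof}, one uses H\"older's inequality together with the pointwise bound $|\na f|_{g_i}^p\,d\vol_{g_i} \le (1+|g_ig_\infty^{-1}-\mathrm{Id}|)^{C}\,|\na f|_{g_\infty}^p\,d\vol_{g_\infty}$ and the $L^P$ smallness of $|g_ig_\infty^{-1}-\mathrm{Id}|$ with $P\gg p$ to conclude that $f/(1+\e_i)$ is admissible for $d_{p,g_i}$ with $\e_i\to 0$, whence $d_{p,g_i}(x,y)\ge (1+\e_i)^{-1}|f(x)-f(y)|$; taking the supremum over $f$ gives $d_{p,g_i,\Omega}(x,y)\ge (1+\e_i)^{-1}d_{p,g_\infty,\Omega}(x,y)$. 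Symmetrically one gets the reverse inequality, using that $W^{1,p}(M_\infty,g_\infty)$ is dense in $L^p$ and the least $p$-weak upper gradient is the coordinate gradient (Proposition~\ref{prop: rect completeness}), so that any admissible $f$ for $d_{p,g_i}$ can be compared against $d_{p,g_\infty}$ via the same $L^P$ estimates. The uniformity in $x,y$ follows because the constants $\e_i$ depend only on the $L^P$ norms $\|g_ig_\infty^{-1}-\mathrm{Id}\|_{L^P(\Omega)}$, not on the points. One also needs $d_p$-completeness of $(\Omega,g_\infty)$ (which follows from Proposition~\ref{prop: rect completeness} together with Remark~\ref{rmk: tautological sob} and the Morrey--Sobolev estimate of Theorem~\ref{Sob-embedding S7}) to guarantee that $d_{p,g_\infty,\Omega}$ is an honest metric generating the topology, so that compact $\e$-nets exist; the corresponding nets on $\Omega_i$ are their images under the diffeomorphisms $\psi_{i,\Omega}$, and the distance comparison above shows they are $\e$-dense and have $\e$-close mutual distances.

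Finally, for the volume condition \eqref{eqn: dp conv vol} in Definition~\ref{def: dp convergence}, I would combine the distance comparison (which gives $\B_{p,g_\infty}(x,r(1-\e_i))\subseteq \psi_{i,\Omega}^{-1}(\B_{p,g_i}(\psi_{i,\Omega}(x),r))\subseteq \B_{p,g_\infty}(x,r(1+\e_i))$, as in \eqref{eqn: containment p balls}) with the fact that $\vol_{g_i} = \sqrt{\det g_i}\,dx \to \sqrt{\det g_\infty}\,dx = \vol_{g_\infty}$ in $L^1(\Omega)$ (a consequence of the $L^P$ convergence), plus the volume regularity of $d_p$-balls at scales $r\in[\e,1]$ coming from Theorem~\ref{thm: main thm, Lp def proof} applied along the sequence (the metrics $g_i$ satisfy the hypotheses uniformly). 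This yields $\vol_{g_i}(\B_{p,g_i}(x_i,r))/\vol_{g_\infty}(\B_{p,g_\infty}(x_\infty,r))\to 1$ uniformly for $r\in[\e,1]$, completing the verification of Definition~\ref{def: dp convergence}.

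\textbf{Main obstacle.} The delicate point is the reverse comparison $d_{p,g_\infty}\ge (1+o(1))d_{p,g_i}$: here one must pass from an admissible test function for the smooth metric $g_i$ to one for the possibly-degenerate limiting rectifiable structure, and the argument genuinely uses that on $(M_\infty,g_\infty)$ the Sobolev gradient \emph{is} the coordinate gradient (Proposition~\ref{prop: rect completeness}(b)), so that the $L^P$ closeness of metric coefficients translates into closeness of the $d_p$ functionals; without rectifiable completeness the limiting $d_p$ could collapse (cf.\ Example~\ref{example: dp}) and the comparison would fail. Controlling the global-versus-local nature of $d_p$ — ensuring the relative distances $d_{p,g,\Omega}$ behave well as $\Omega$ exhausts, which is precisely why Definition~\ref{def: pointed dp} is phrased via the sets $Cov(x,N)$ — is the technical heart of upgrading Proposition~\ref{prop: dp convergence global but local} to Proposition~\ref{prop: dp convergence global}.
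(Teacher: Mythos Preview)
Your overall architecture is right, but there is a genuine gap in the ``main step'' that the paper works hard to close and that you gloss over. When you write
\[
\int_\Omega |\na f|_{g_i}^p\,d\vol_{g_i}\ \le\ \int_\Omega (1+\mathcal E_i)\,|\na f|_{g_\infty}^p\,d\vol_{g_\infty},\qquad \mathcal E_i\to 0\ \text{in }L^{P/(n+p)},
\]
and then invoke H\"older to absorb the error, you implicitly need $|\na f|_{g_\infty}^p\in L^\kappa(\Omega)$ for some $\kappa>1$ with a bound \emph{independent of $f$}. For an arbitrary admissible test function with $\int|\na f|_{g_\infty}^p\le 1$ this is simply false; the energy can concentrate and $\||\na f|_{g_\infty}\|_{L^{\kappa p}}$ can be arbitrarily large. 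So your sentence ``the constants $\e_i$ depend only on the $L^P$ norms $\|g_ig_\infty^{-1}-\mathrm{Id}\|_{L^P(\Omega)}$, not on the points'' is exactly where the argument breaks. The reference to Theorem~\ref{claim7} does not save this: there the comparison is $L^p\leftrightarrow L^{\kappa p}$ (not $L^p\leftrightarrow L^p$), and in Theorem~\ref{thm: main thm, Lp def proof} the resulting $\kappa$-mismatch is repaired by Lemma~\ref{p-continuity} on a \emph{fixed Euclidean} ball --- a device unavailable here since neither $(\Omega,g_i)$ nor $(\Omega,g_\infty)$ is a fixed model space.

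The paper's remedy is to restrict to \emph{extremal} test functions and prove higher integrability for those. Lemma~\ref{existence-minimizer} shows the supremum in $d_{p,g,\Omega}(x,y)$ is achieved by some $f$; the Euler--Lagrange equation \eqref{p-harmonic} makes $f$ $p$-harmonic away from $\{x,y\}$, whence a Caccioppoli estimate on $d_p$-balls yields a reverse H\"older inequality \eqref{reverse-Holder}. Since $(M,d_{p,g},d\vol_g)$ is doubling (Theorem~\ref{thm: main thm, Lp def proof}), Gehring's lemma upgrades this to $\|\na f\|_{L^{\kappa p}}\le C_0$ with $\kappa,C_0$ depending only on $n,p$ and the domains --- this is Lemma~\ref{improved-estimate-GehringLemma}, and it is the key new ingredient. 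Only then does the H\"older step you sketched go through, and uniformly in $i$ because $C_0$ is uniform. A secondary point you omit: since Gehring only yields the $L^{\kappa p}$ bound on an \emph{interior} region, the paper runs the argument on nested domains $\bar\Omega\subsetneq\Omega\subsetneq\tilde\Omega$, proving $d_{p,g_\infty,\tilde\Omega}\le(1+\e)d_{p,g_i,\Omega}$ and $d_{p,g_i,\Omega}\le(1+\e)d_{p,g_\infty,\bar\Omega}$, then invokes continuity of $d_{p,g_\infty,\cdot}$ in the domain (cf.\ Lemma~\ref{p-continuity}) to close up. Your Step~2 and the volume argument are essentially fine once Step~1 is fixed.
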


Before proving the propositions, we establish two lemmas that will be needed in the proofs. 
First, we show that the supremum in the definition of $d_{p,g,\Omega}(x,y)$ is achieved.
\begin{lemma}\label{existence-minimizer}
For $p\geq n+1$, there exists $\delta(n,p)>0$ such that the following holds. Let $(M^n,g)$ be a complete Riemannian manifold with bounded curvature such that
$$R_{g}\geq -\delta, \quad \quad \nu(g,2)\geq -\delta.$$
Then for any bounded subset $\Omega\Subset M$ and $x,y\in \Omega$, there exists a function $f \in W^{1,p}(\Omega)$ such that $d_{p,g,\Omega}(x,y) = |f(x)-f(y)|$ and $\int_{\Omega} |\na f|^p \, d\vol_{g} =1$.
\end{lemma}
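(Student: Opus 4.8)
The plan is to obtain $f$ as a weak limit of a maximizing sequence, using the Morrey--Sobolev embedding $W^{1,p}(\Omega)\hookrightarrow C^{0,\alpha}(\overline\Omega)$ valid since $p>n$, which is precisely the content of Theorem~\ref{Sob-embedding} (or, for a bounded smooth domain, the classical Morrey estimate after covering $\Omega$ by the good charts $\psi$ from Theorem~\ref{claim7}). First I would fix $x,y\in\Omega$ and take a sequence $f_k\in W^{1,p}_{loc}(\Omega)\cap C^0_{loc}(\Omega)$ with $\int_\Omega|\nabla f_k|^p\,d\vol_g\le 1$ and $|f_k(x)-f_k(y)|\to d_{p,g,\Omega}(x,y)$. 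Normalizing by subtracting $f_k(y)$ (which changes nothing), we may assume $f_k(y)=0$. By the Morrey--Sobolev inequality on $\Omega$ — applicable because $\Omega$ is bounded and, shrinking slightly, can be taken with the requisite regularity, or else covered by finitely many $d_p$-balls on which Theorem~\ref{Sob-embedding} applies — we get $\|f_k\|_{C^{0,\alpha}(\overline\Omega)}\le C(n,p,\Omega)$ uniformly. Hence $\{f_k\}$ is bounded in $W^{1,p}(\Omega)$ and equicontinuous.

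Next I would pass to a subsequence so that $f_k\rightharpoonup f$ weakly in $W^{1,p}(\Omega)$ and $f_k\to f$ uniformly on $\overline\Omega$ (Arzelà--Ascoli). Uniform convergence gives $|f(x)-f(y)| = \lim_k|f_k(x)-f_k(y)| = d_{p,g,\Omega}(x,y)$, and weak lower semicontinuity of the $L^p$ norm of the gradient gives $\int_\Omega|\nabla f|^p\,d\vol_g\le \liminf_k\int_\Omega|\nabla f_k|^p\,d\vol_g\le 1$. Since $f\in W^{1,p}(\Omega)\subset W^{1,p}_{loc}(\Omega)\cap C^0_{loc}(\Omega)$, it is an admissible competitor, so in fact $|f(x)-f(y)| = d_{p,g,\Omega}(x,y)$ means $f$ achieves the supremum. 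Finally, to get the normalization $\int_\Omega|\nabla f|^p\,d\vol_g = 1$: if $x\ne y$ then $d_{p,g,\Omega}(x,y)>0$, so $f$ is nonconstant and $\int_\Omega|\nabla f|^p\,d\vol_g =: a\in(0,1]$; rescaling $\tilde f = a^{-1/p}f$ gives $\int_\Omega|\nabla \tilde f|^p = 1$ and $|\tilde f(x)-\tilde f(y)| = a^{-1/p}d_{p,g,\Omega}(x,y)\ge d_{p,g,\Omega}(x,y)$, which forces $a=1$ by maximality (and the case $x=y$ is trivial, taking any $f$ with unit gradient energy).

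The main obstacle is technical rather than conceptual: justifying the uniform Morrey estimate on $\Omega$ itself. Since $d_p$ is a nonlocal object and $\Omega$ is an arbitrary bounded subset of $M$, one cannot simply invoke a domain Sobolev inequality. The clean route is to enlarge $\Omega$ slightly to an open set $\Omega'$ with $\Omega\Subset\Omega'$, cover $\overline\Omega$ by finitely many of the Euclidean-bi-Lipschitz charts $\psi$ supplied by Theorem~\ref{claim7}, and chain the local Morrey--Hölder estimates along a path from $y$ to a given point; the hypotheses $R\ge-\delta$, $\nu(g,2)\ge-\delta$ are exactly what make these charts available with uniform constants. Alternatively, since $\Omega$ is bounded, it is contained in some $d_p$-ball $\mathcal B_{p,g}(x_0,r)$ and after rescaling one applies \eqref{So-em-2-intro} directly. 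Either way the estimate $|f_k(x')-f_k(y)|\le C\|\nabla f_k\|_{L^p}\le C$ holds, which is all that the compactness argument requires.
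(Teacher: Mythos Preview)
Your proposal is correct and follows essentially the same route as the paper: take a maximizing sequence normalized at one of the two points, use the Morrey--Sobolev embedding of Theorem~\ref{Sob-embedding} to get uniform $L^\infty$ and equicontinuity bounds, pass to a weak $W^{1,p}$ limit and a uniform limit via Arzel\`a--Ascoli, and conclude $\int_\Omega|\nabla f|^p=1$ by the rescaling argument. The paper handles your ``main obstacle'' simply by invoking Theorem~\ref{Sob-embedding} (the $L^\infty$ bound on $M$ and, after rescaling the metric, the uniform modulus of continuity), which is equivalent to your covering/chaining argument.
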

\begin{proof}
Consider a maximizing sequence $f_i \in W^{1,p}(\Omega)$ with $\int_{\Omega}|\na f_i|^p \,d \vol_g \leq 1$, $f_i(x) =0$, and $f_i(y) \to d_{p,g,\Omega}(x,y).$ 
Choosing $\delta = \delta(n,p)$ according to Theorem~\ref{Sob-embedding S7}, we may apply the Theorem~\ref{Sob-embedding S7} to see that $|f_i | \leq C =C(n,p, g, \Omega)$ on $\Omega$. In particular, the sequence $f_i$ is uniformly bounded in $W^{1,p}(\Omega)$. Hence after passing to a subsequence, $f_i$ converges weakly in $W^{1,p}(\Omega)$ to a function $f \in W^{1,p}(\Omega)$ with $\int_{\Omega} |\na f|^p\,d\vol_g \leq 1.$ Moreover, by applying Theorem~\ref{Sob-embedding S7} to rescalings of the metric, we see that each $f_i$ is continuous with a modulus of continuity that is uniform in $i$. So, by the Arzel\`{a}-Ascoli theorem, $f_i$ converges uniformly to $f$. In particular, $f(x) =0$ and $f(y) = d_{p,g,\Omega}(x,y)$. Finally, note that  $\int_{\Omega} |\na f|^p d\vol_g=1$, otherwise a multiple $\kappa f$ for $\kappa>1$ would be an admissible test function for $d_{p,g,\Omega}(x,y)$. This completes the proof.
\end{proof}

Next, we use Gehring's lemma and the doubling property of the $d_p$ metrics to show that a function $f\in W^{1,p}(\Omega)$ achieving the supremum in  $d_{p,g,\Omega}(x,y)$ enjoys higher integrability properties.
\begin{lemma}\label{improved-estimate-GehringLemma}
Fix $p\geq n+1$. There exist constants $\delta(n,p)>0$, $\kappa(n,p)>1$, and $C_0(n,p)>0$  such that the following holds. Let $(M,g)$ be a complete Riemannian manifold with bounded curvature and 
$$R_{g}\geq -\delta,\quad \qquad \nu(g,2)\geq -\delta.$$
Fix $\Omega \Subset M$ and  $x,y\in \Omega$, and let $f\in W^{1,p}(\Omega)$ be a function achieving the supremum in $d_{p,g,\Omega}(x,y)$. Then for all $\B_{p,g}(z,4r)\subset \Omega$ such that  $r<\frac{1}{10}\min\{d_{p,g}(x,y),1\}$, we have
\begin{equation}
\bigg(\fint_{\B_{p,g}(z,r)}|\nabla f|^{ p\kappa } d\vol_g \bigg)^{\frac{1}{ p\kappa }}\leq C_0 \bigg(\fint_{\B_{p,g}(z,4r)}|\nabla f|^{ p} d\vol_g \bigg)^{\frac{1}{ p}}.
\end{equation}
\end{lemma}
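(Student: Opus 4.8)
The plan is to verify the two hypotheses of Gehring's reverse H\"older lemma for the function $|\nabla f|^p$ on the metric measure space $(\Omega, d_{p,g}, d\vol_g)$: namely that balls are doubling, and that a weak reverse H\"older inequality of the form
\begin{equation}\label{eqn: plan weak RH}
\fint_{\B_{p,g}(z,r)} |\nabla f|^p \, d\vol_g \leq C \Big( \fint_{\B_{p,g}(z,2r)} |\nabla f|^{p\sigma}\, d\vol_g\Big)^{1/\sigma}
\end{equation}
holds for some $\sigma \in (0,1)$ and all $\B_{p,g}(z,2r) \subset \Omega$ with $r$ small relative to $\min\{d_{p,g}(x,y),1\}$. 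Given these, Gehring's lemma (in the metric measure space form, see e.g. the version valid on doubling spaces) produces $\kappa>1$ and $C_0$ depending only on the structural constants, yielding the claimed self-improving estimate on $\B_{p,g}(z,r)$ versus $\B_{p,g}(z,4r)$ (the passage from $2r$ to $4r$ is absorbed into the statement of Gehring's lemma).

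First I would record the doubling property: by Theorem~\ref{thm: main thm, Lp def proof} (the $\epsilon$-regularity theorem), the measure $d\vol_g$ on $(M, d_{p,g}, d\vol_g)$ is doubling for all scales $r \leq 1$, with a dimensional doubling constant, so this hypothesis is immediate once we restrict to $r<1/10$. Next, the key point is to establish the Caccioppoli-type inequality \eqref{eqn: plan weak RH}. The mechanism is that $f$, being a maximizer of the Rayleigh-type quotient defining $d_{p,g,\Omega}(x,y)$, must be $p$-harmonic away from $x$ and $y$: for any test function $\varphi \in W^{1,p}_0(\Omega \setminus \{x,y\})$, the variation $f + t\varphi$ is (after renormalizing the $L^p$ norm of the gradient) an admissible competitor, and since $\varphi(x)=\varphi(y)=0$ it does not change $|f(x)-f(y)|$; optimality then forces $\int_\Omega |\nabla f|^{p-2}\langle \nabla f, \nabla \varphi\rangle \, d\vol_g = 0$. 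Thus $f$ is a weak solution of the $p$-Laplace equation $\mathrm{div}(|\nabla f|^{p-2}\nabla f) = 0$ on $\Omega \setminus \{x,y\}$, and on any ball $\B_{p,g}(z,2r)$ with $4r < d_{p,g}(x,y)$ this ball avoids $\{x,y\}$ by the triangle inequality (here is exactly where the constraint $r < \tfrac{1}{10}\min\{d_{p,g}(x,y),1\}$ is used). Then the standard Caccioppoli inequality for $p$-harmonic functions — using a cutoff $\eta$ supported in $\B_{p,g}(z,2r)$, equal to $1$ on $\B_{p,g}(z,r)$, with $|\nabla \eta|\lesssim r^{-1}$, which exists in this geometry because the charts of Theorem~\ref{claim7} give uniform control — yields $\int_{\B_{p,g}(z,r)}|\nabla f|^p \lesssim r^{-p}\int_{\B_{p,g}(z,2r)} |f - c|^p$ for any constant $c$. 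Taking $c$ to be the average of $f$ and applying the $(p\sigma, p)$-Poincar\'e inequality valid on these doubling spaces (with $\sigma < 1$), together with the volume doubling to convert integrals to averages, produces precisely \eqref{eqn: plan weak RH}.

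I expect the main obstacle to be making the Euler--Lagrange / Caccioppoli argument fully rigorous in this weak regularity setting. Concretely: (i) justifying that the maximizer $f$ from Lemma~\ref{existence-minimizer}, which a priori is only in $W^{1,p}(\Omega)$, genuinely satisfies the $p$-Laplace equation in the weak sense away from $\{x,y\}$ — this requires checking that the normalization constraint $\int |\nabla f|^p = 1$ can be removed by scaling without affecting the variational characterization, and that the two point evaluations $\delta_x, \delta_y$ act continuously on $W^{1,p}$ (which is exactly the Morrey--Sobolev embedding $W^{1,q}\hookrightarrow L^\infty$ from Theorem~\ref{Sob-embedding S7}, so point values make sense); (ii) producing good cutoff functions adapted to $d_p$-balls with the right gradient bound — this follows by pulling back Euclidean cutoffs through the $W^{1,p}$-charts of Theorem~\ref{claim7} and using the comparison \eqref{eqn: gradient small error pf} between $L^q$ norms of gradients, but one must track that $\B_{p,g}$-balls sit comfortably inside coordinate charts, which is where the smallness $r<1/10$ and the ball containment \eqref{eqn: uniform ball containment} enter; and (iii) invoking the correct version of Gehring's lemma on metric measure spaces — the cleanest route is to cite the standard statement (a weak reverse H\"older inequality plus doubling implies a strong reverse H\"older inequality with improved exponent), whose proof is a Calder\'on--Zygmund stopping-time argument that works verbatim in doubling spaces. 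The Poincar\'e inequality needed in step (iii) above is itself a consequence of the structure: it transfers from the Euclidean Poincar\'e inequality through the bi-Lipschitz-on-most-of-the-space charts, again using Theorem~\ref{claim7}, and I would state it as a short preliminary lemma before the main argument.
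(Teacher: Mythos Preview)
Your overall strategy---derive a weak Euler--Lagrange equation for the maximizer, establish a Caccioppoli/reverse-H\"older estimate, and invoke Gehring's lemma on the doubling space $(M,d_{p,g},d\vol_g)$---matches the paper's, and your identification of the key ingredients (the $\epsilon$-regularity theorem for doubling, the Sobolev embedding for point evaluations, the chart estimates for cutoffs) is on target.

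There is, however, a genuine gap in your argument. You claim that ``on any ball $\B_{p,g}(z,2r)$ with $4r < d_{p,g}(x,y)$ this ball avoids $\{x,y\}$ by the triangle inequality,'' and conclude that $f$ is $p$-harmonic there. This is false: the triangle inequality only prevents the ball from containing \emph{both} $x$ and $y$; it may well contain one of them (take $z=x$). When, say, $x \in \B_{p,g}(z,2r)$, $f$ is not $p$-harmonic in the usual weak sense on that ball---the Euler--Lagrange equation \eqref{p-harmonic} only says $\int_\Omega |\nabla f|^{p-2}\langle \nabla f, \nabla h\rangle\,d\vol_g = 0$ for test functions $h$ with $h(x)=h(y)$, not for all $h \in W^{1,p}_0(\B_{p,g}(z,2r))$. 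So your Caccioppoli step, which plugs in $h = \eta^p(f-c)$ for an arbitrary constant $c$, is not justified as written.

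The paper resolves this by a case split. If $x \in \B_{p,g}(z,2r)$ and $y \notin$, one normalizes $f(x)=0$ and takes $h = f\phi^p$; then $h(x)=0$ because $f(x)=0$ and $h(y)=0$ because $\phi$ vanishes there, so $h$ is admissible. The Caccioppoli inequality then yields a bound in terms of $\|f\|_{L^\infty(\B_{p,g}(z,2r))}$ rather than $\|f-\bar f\|_{L^\infty}$, and this is controlled by the Morrey--Sobolev inequality of Theorem~\ref{Sob-embedding S7} (applied to the rescaled metric so that $\B_{p,g}(z,4r)$ becomes a unit ball), using that $f$ vanishes at the interior point $x$. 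The case $y \in \B_{p,g}(z,2r)$ is symmetric, and only when neither point lies in the ball does your mean-subtraction argument apply verbatim. Once this case analysis is in place, the rest of your outline---including the appeal to the metric-measure-space Gehring lemma---goes through as in the paper.
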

\begin{proof}

Let $x,y\in \Omega\Subset M$ and $ f\in W^{1,p}(\Omega)$ be a function such that $\int_\Omega|\nabla f|^p d\vol_g = 1$ and $ d_{p,g,\Omega}(x,y)=|f(x)-f(y)|.$ We may assume without loss of generality that $f(x)=0$. Fix $r<\frac{1}{10}\min\{d_{p,g}(x,y),1\}$ and $z\in M$  such that $\B_{p,g}(z,4r)\subset \Omega$. 

{\it Step 1:} Fix any $q \in (n,p)$. We claim that there exists  $C_{n,p,q}>0$ such that 
\begin{equation}\label{reverse-Holder}
\bigg(\fint_{B_{p,g}(z,r)} |\nabla f|^{p} d\vol_g \bigg)^{1/p}\leq C_{n,p,q}\bigg(\fint_{B_{p,g}(z,4r)} |\na f|^qd\vol_g \bigg)^{1/q}.
\end{equation}

To see this, notice that by the definition of $d_{p,g,\Omega}$,  the function $f$ satisfies 
\begin{equation}\label{eqn: min pb}
\int_\Omega |\nabla f|^pd\vol_g =\inf\left\{ \int_\Omega |\nabla h|^p d\vol_g \ : \ h\in W^{1,p}(\Omega),\ |h(x)-h(y)|=d_{p,g,\Omega}(x,y)\right\}.
\end{equation}
So, computing the Euler-Lagrange equation associated to \eqref{eqn: min pb}, we see that  for  any $h\in W^{1,p}(\Omega)$ satisfying $h(x)=h(y)$, we have
\begin{equation}\label{p-harmonic}
\int_\Omega |\nabla f|^{p-2} \langle \nabla f,\nabla h \rangle d\vol_g=0.
\end{equation}
 
We first consider the case when $x\in \B_{p,g}(z,2r)$ and $y\notin \B_{p,g}(z,2r) $.
 Let $\phi$ be a cutoff function on $M$ such that $\phi\equiv 1$ on $\B_{p,g}(z,r)$ and vanishes outside $\B_{p,g}(z,2r)$. We will make the construction more precise below. 
Since  $f(x)=0$, the function $h=f\phi^p$ satisfies $h(x)=h(y)=0$. Therefore, choosing $h=f\phi^p$ as a test function in \eqref{p-harmonic} and applying Young's inequality, we find that
\begin{equation}\label{eqn: energy est a}
\begin{split}
\int_\Omega |\na f|^p \phi^p\, d\vol_g &\leq p\int_\Omega |\na f|^{p-1}\phi^{p-1}\, |\na \phi| f \, d\vol_g \\
&\leq \e \int_\Omega |\na f|^p \phi^p\,d\vol_g + \frac{p-1}{\e} \|f \|_{L^\infty(\B_{p,g}(z,2r))}^p\int_\Omega |\na \phi|^p \,d\vol_g.
\end{split}
\end{equation}
Consequently, absorbing the first term on the right-hand side of \eqref{eqn: energy est a} and using the volume estimate \eqref{eqn: volumes close p}, we find that 
\begin{equation}\label{eqn: energy}
\bigg(\fint_{\B_{p,g}(z,r)} |\nabla f|^{p} d\vol_g\bigg)^{1/p}\leq C \|f\|_{L^\infty(\B_{p,g}(z,2r))} \bigg(\fint_{\B_{p,g}(z,2r)} |\nabla \phi|^p d\vol_g\bigg)^{1/p}
\end{equation}
where $C=C(n,p)$.
By applying  the Sobolev inequality of Lemma~\ref{Sob-embedding S7} and the volume estimate \eqref{eqn: volumes close p} to the rescaled metric  $\tilde g=\lambda^2 g$ where $4r\lambda^{1-\frac{n}{p}}=1$ so that $\B_{p,\tilde g}(z,1)=\B_{p,g}(z,4r)$, we find that
\begin{equation}\label{Sobolev-Gehring}
\begin{split}
\|f\|_{L^\infty(\B_{p,g}(z,2r))}\leq C_{n,p,q} r^\frac{p}{p-n} \bigg(\fint_{\B_{p,g}(z,4r)} |\na f|^qd\vol_g\bigg)^{1/q}.
\end{split}
\end{equation}

On the other hand, let us now construct a good cutoff function $\phi$.  Begin by constructing a cutoff function $\Phi$ on Euclidean space such that $\Phi=1$ on $\B_{p,euc}(0,\frac23)$, vanishes outside $\B_{p,euc}(0,\frac45)$ and $|\partial \Phi|^{10p}\leq C_{n,p}\Phi^{10p-1}$. Let $\phi$ be its pull-back along the diffeomorphism obtained from $\tilde g$.  A similar argument as in Theorem~\ref{Sob-embedding S7} using Theorem~\ref{thm: decomposition theorem} shows that
 \begin{equation}\label{eqn: cutoff bd}
 \bigg(\fint_{\B_{p,g}(z,2r)} |\nabla \phi|^p d\vol_g\bigg)^{1/p}\leq C_{n,p}r^\frac{-p}{p-n}.
 \end{equation}
By combining \eqref{eqn: energy}, \eqref{Sobolev-Gehring}, and \eqref{eqn: cutoff bd}, we conclude that  \eqref{reverse-Holder} holds in this case.

Next, consider the case when $y\in \B_{p,g}(z,2r)$, and so $x\notin \B_{p,g}(z,2r)$. Applying the same argument to the function $\tilde f=d_{p,g,\Omega}(x,y)-f$, we deduce the same inequality \eqref{reverse-Holder} because $|\nabla f|=|\na\tilde f|$. Finally, if $x,y\notin \B_{p,g}(z,2r)$, we consider $\tilde f=f-\bar f$ where $\bar f\in \mathbb{R}$ so that $\int_{\B_{p,g}(z,2r)} \tilde f d\vol_g=0$. Hence, we still have \eqref{Sobolev-Gehring} and thus the proof above can be carried over without any change. We thus have \eqref{reverse-Holder} in this case as well.

{\it Step 2:} 
Since $(M,d_{p,g},d\vol_g)$ is a metric measure space and the measure $d\vol_g$ is a doubling measure with respect to $d_{p,g}$ for scales $r \leq 1$, by choosing $q=(n+p)/2$ we may apply the form of Gehring's lemma in \cite[Theorem 3.1]{Outi2007} to see that there is $\tilde p>p$ and $C_0>1$ depending only on $n,p$ such that for all $\B_{p,g}(z,4r)\subset \Omega$ where $r<\frac{1}{10}\min\{d_{p,g}(x,y),1\}$, we have 
\begin{equation}\label{reverse-Holder-2}
\left(\fint_{\B_{p,g}(z,r)}|\nabla f|^{\tilde p} d\vol \right)^{\frac{1}{\tilde p}}\leq C_0 \left(\fint_{\B_{p,g}(z,4r)}|\nabla f|^{ p} d\vol \right)^{\frac{1}{ p}}.
\end{equation}

Note that the reverse H\"older inequality assumption on \cite[Theorem 3.1]{Outi2007} is stated on balls of same radius. It is clear from the proof that 
{\eqref{reverse-Holder}} suffices, see also the classical Gehring's Lemma \cite{gehring1973lp} on Euclidean space. Letting $\kappa = \tilde{p}/p$ completes the proof.
\end{proof}

\begin{remark}\label{rmk: limit space}
{\rm 
Assume that $P= P(n,p)$ is taken sufficiently large and thus $\delta = \delta(n,p)$ is taken sufficiently small in Proposition~\ref{constructon-limit-metric} so that we may apply Proposition~\ref{prop: rect completeness} to the limit space  $(M_\infty, g_\infty, x_\infty)$ constructed in Proposition~\ref{constructon-limit-metric}. Since $(M_\infty, g_\infty, x_\infty)$ is $W^{1,p}$-rectifiably complete, we see from the proof of Proposition~\ref{constructon-limit-metric} that the $\e$-regularity theorem, Theorem~\ref{thm: main thm, Lp def proof},  and  Sobolev inequalities of Theorem~\ref{Sob-embedding S7} pass to the limit  $(M_\infty, g_\infty, x_\infty)$.
In particular, the proofs and conclusions of Lemmas~\ref{existence-minimizer} and \ref{improved-estimate-GehringLemma} also hold for $(M_\infty, g_\infty, x_\infty)$.}
\end{remark}


We are now ready to prove Proposition~\ref{prop: dp convergence global but local}.
\begin{proof}[Proof of Proposition~\ref{prop: dp convergence global but local}]Assume that $P =P(n,p)$ in Proposition~\ref{constructon-limit-metric} is taken large enough to apply Proposition~\ref{prop: rect completeness} and thus Remark~\ref{rmk: limit space}.
Fix $\Omega\Subset M_\infty$ and choose $\bar\Omega\subsetneq \Omega\subsetneq \tilde \Omega\Subset M_\infty$. 
From the Cheeger-Gromov convergence established in as in Step 1 of the proof of Proposition~\ref{constructon-limit-metric}, we may assume that $g_i$ are all defined on $\tilde\Omega$ via  the diffeomorphisms $\psi_i : \tilde{\Omega} \to \tilde{\Omega}_i \subset M_i$. 

{\it Step 1:} We claim that $d_{p,g_i,\Omega}(x,y)\rightarrow d_{p,g_\infty,\Omega}(x,y)$ for every $x,y\in \Omega$. 

Fix $x,y\in \Omega$.
By Lemma~\ref{existence-minimizer} and Remark~\ref{rmk: limit space}, we may find $f \in W^{1,p}(\tilde{\Omega}, g_\infty)$ such that $d_{p,g_\infty,\tilde\Omega}(x,y)=|f(x)-f(y)|$ and $\int_{\tilde\Omega}|\nabla^{g_\infty} f|^pd\vol_{g_\infty}=1$. Furthermore, applying Lemma~\ref{improved-estimate-GehringLemma} (see Remark~\ref{rmk: limit space}) to a covering of $\tilde{\Omega}$ by $g_\infty,p$-balls, we find that 
\begin{equation}\label{eqn: Omega tilde est}
	\left( \int_{\Omega} |\na^{g_\infty} f|^{\kappa p} \, d\vol_{g_\infty} \right)^{1/\kappa p} \leq C_1,
\end{equation}
where $C_1$ depends on $n,p,\Omega,\tilde\Omega$, and $d_{p,g_\infty}(x,y)$,  and   where $\kappa =\kappa(n, p) >1$ is the constant obtained in Lemma~\ref{improved-estimate-GehringLemma}.
 Here we have used the fact that the topologies induced by $d_{g_\infty(1)}$ and $d_{p,g_\infty}$ are equivalent. This can be seen by taking a multiple of $ d_{g_\infty(1)}$ as a test function for $d_{p,g_\infty}$ together with  \eqref{eqn: close to euc limit} and Morrey-Sobolev inequality.
 
Fix $\e>0$. We aim to show that  $f/(1+\e)$ is an admissible test function for  $d_{p,g_i,\Omega}(x,y)$ for $i$ sufficiently large. 
By Proposition~\ref{constructon-limit-metric} and the $W^{1,p}$-rectifiable completeness of the limit space, we see that $|\na^{g_i} f |^p = (1+\mathcal{E}_{1,i}) |\na^{g_\infty} f|^p $ and  $d \vol_{g_i} = (1+\mathcal{E}_{2,i})d\vol_{g_\infty}$, where $\mathcal{E}_{1,i}, \mathcal{E}_{2,i}  : \Omega \to \R$ are errors such that $\mathcal{E}_{1,i}, \mathcal{E}_{2,i} \to 0$ in $L^q(\Omega, g_\infty)$ for any $q \leq P/(n+p)$. Therefore, provided we choose $P =P(n,p)$ large enough so that the  H\"{o}lder conjugate $\kappa'$ of $\kappa$ is less than $P/2(n+p)$, we use H\"{o}lder's inequality and \eqref{eqn: Omega tilde est} to see that  
 \begin{equation}
 	\begin{split}
 		\left( \int_\Omega |\na^{g_i} f|^p \,d\vol_{g_i} \right)^{1/p}& = \left( \int_\Omega |\na^{g_\infty} f|^p (1 + \mathcal{E}_{1,i} + \mathcal{E}_{2,i} + \mathcal{E}_{1,i}\mathcal{E}_{2,i}) \,d\vol_{g_\infty} \right)^{1/p}\\
 		& \leq 1 +  \left( \int_\Omega |\na^{g_\infty} f|^p (\mathcal{E}_{1,i} + \mathcal{E}_{2,i} + \mathcal{E}_{1,i}\mathcal{E}_{2,i}) \,d\vol_{g_\infty} \right)^{1/p}\\
 		& \leq 1 + \| \na^{g_\infty} f\|_{L^{\kappa p}(\Omega)} \| \mathcal{E}_{1,i} + \mathcal{E}_{2,i} + \mathcal{E}_{1,i}\mathcal{E}_{2,i}\|_{L^{\kappa'}(\Omega)} \leq 1 + \e,
 	\end{split}
 \end{equation}
where the final inequality holds for $i$ sufficiently large and makes use of \eqref{eqn: Omega tilde est}. 
Therefore, $f/(1+\e)$ is an admissible test function for $d_{p,g_i, \Omega}(x,y)$ and consequently
\begin{equation}
d_{p,g_\infty,\tilde\Omega}(x,y)\leq (1+\e)d_{p,g_i,\Omega}(x,y)
\end{equation}
for $i$ sufficiently large. Letting $i\rightarrow +\infty$ and then $\e \to 0$, we find that 
\begin{equation}
	d_{p,g_\infty,\tilde\Omega}(x,y)\leq \liminf_{i\rightarrow +\infty}d_{p,g_i,\Omega}(x,y).
\end{equation}
 Then, an  argument analogous to  the proof of Lemma~\ref{p-continuity} shows that $d_{p,g,\Omega}$ is continuous with respect to domain and hence we may let $\tilde\Omega $ tend to $ \Omega$ to conclude that 
\begin{equation}
d_{p,g_\infty,\Omega}(x,y)\leq \liminf_{i\rightarrow +\infty}d_{p,g_i,\Omega}(x,y).
\end{equation}

We now apply the analogous argument to $x,y \in \bar \Omega$ with the roles of $g_\infty$ and $g_i$ swapped, making use of the crucial fact that the upper bound in Lemma~\ref{improved-estimate-GehringLemma} and thus \eqref{eqn: Omega tilde est} are uniform in $i$.  We find that for any $\e>0$,
$d_{p,g_i,\Omega}(x,y)\leq (1+\e)d_{p,g_\infty,\bar \Omega}(x,y) $
for $i$ sufficiently large and hence
\begin{equation}
\limsup_{i\rightarrow +\infty}d_{p,g_i,\Omega}(x,y)\leq d_{p,g_\infty,\Omega}(x,y).
\end{equation}
This completes the proof of Step 1.
\\

{\it Step 2:} We claim that
\begin{equation}
	d_{GH}((\Omega, d_{p, g_i, \Omega}), (\Omega, d_{p,g_\infty, \Omega})) \to 0
\end{equation}
and that the volumes  of $p$-balls converge, thereby establishing the proposition. Indeed, fix any $\e>0$. Letting $g_i(t)$ and $g_\infty(t)$ be the Ricci flows as in the proof of Proposition~\ref{constructon-limit-metric}. From the smooth convergence of $g_i(1)$ to $g_\infty(1)$, we see that there exists $N\in \mathbb{N}$ such that $\Omega$ can be covered by $\{B_{g_i(1)}(z_j,\e)\}_{j=1}^N$ for $i$ sufficiently large. 
For each $i$ and $z,w\in \Omega$, let $f\in W^{1,p}(\Omega)$ be a maximizer of $d_{p,g_i,\Omega}(z,w)$, whose existence is guaranteed  by Lemma~\ref{existence-minimizer}. Since $g_i(1)$ has uniformly bounded geometry, we apply Morrey-Sobolev inequality and estimate \eqref{eqn: gradient small error} of Theorem~\ref{claim7} to $f$ to see that 
\begin{equation}
d_{p,g_i,\Omega}(z,w)\leq C_0(n,p,\Omega)d_{g_i(1)}(z,w)^{1-\frac{n}p}
\end{equation}
 for all $z,w\in \Omega$ and $i$ sufficiently large. 

In particular, $\Omega$ can be covered by $\{\B_{i,\Omega}(z_j,C_0\e^{1-\frac{n}{p}})\}_{j=1}^N$ 
where $\B_{i,\Omega}$ is the ball with respect to $d_{p,g_i,\Omega}$ and $N$ is independent of $i\rightarrow +\infty$. Together with $d_{p,g_i,\Omega}(z_j,z_k)\rightarrow d_{p,g_\infty,\Omega}(z_j,z_k)$ for each pair of $z_j,z_k$, this proves the Gromov-Hausdorff convergence. The volume convergence in Definiton~\ref{def: dp convergence}  follows from this Gromov-Hausdorff convergence together with the $L^P$ convergence of the metric coefficients on Proposition~\ref{constructon-limit-metric}.
 This completes the proof of the proposition.
\end{proof}

Finally, we use Proposition~\ref{prop: dp convergence global but local} to establish Proposition~\ref{prop: dp convergence global}.
\begin{proof}[Proof of Proposition~\ref{prop: dp convergence global}]
	Let $P(n,p)$ and thus $\delta(n,p)$ be as in Proposition~\ref{prop: dp convergence global but local}. Note that the largest radii less than or equal to $1$ such that $\B_{p,g_\infty}(y, 4r) \Subset M_\infty $ for $y\in M_\infty$ and $\B_{p,g_i}(y, 4r) \Subset M_i$ for $y\in M_i$ respectively, are both equal to $1$ thanks to the $\e$-regularity Theorem~\ref{thm: main thm, Lp def new} and Remark~\ref{rmk: limit space}. Moreover, again by Theorem~\ref{thm: main thm, Lp def new} and Remark~\ref{rmk: limit space}  and \eqref{eqn: Euclidean ball and p ball}, there exist $r < R$ depending on $n,p$ such that for any $y \in M_\infty$ and $y_i \in M_i$, we have 
	\begin{equation}\label{eqn:  ball containment  limit}
		\begin{split}
	B_{g_\infty(1)}(y ,r ) &\subset \B_{p,g_\infty} (y, 1) \subset B_{g_\infty(1)}(y ,R )\\
		B_{g_i(1)}(y_i  ,r ) &\subset 	\B_{p,g_i} (y_i, 1) \subset B_{g_i(1)}(y_i  ,R ),
		\end{split}
	\end{equation}
	 where $g_\infty(t)$ and $g_i(t)$ are the Ricci flows as in the proof of Proposition~\ref{constructon-limit-metric}. Consequently, from the second containments in \eqref{eqn:  ball containment  limit}, we see that for any $N \in \mathbb{N}$
	\begin{equation}\label{eqn: cov1}
	\begin{split}
			Cov_{g_\infty}(x_\infty ,  N ) &\subset B_{g_\infty(1)}(x_\infty, 2NR), \\
			 Cov_{g_i}(x_i ,  N ) &\subset B_{g_i(1)}(x_i, 2NR).
	\end{split}
	\end{equation}
	
So, for any $N \in \mathbb{N}$, choose $\Omega\subset M_\infty$ to be a compact set such that 
\begin{equation}\label{eqn: cov2}
	B_{g_\infty(1)}(x_\infty, 2NR+1) \subset \Omega \subset B_{g_\infty(1)}(x_\infty, 2NR+2).
\end{equation}
 By the Cheeger-Gromov convergence used in Proposition~\ref{constructon-limit-metric} to obtain the map $\psi : \Omega \to M_i$ and the set $\Omega_i := \psi (\Omega)\subset M_i$, we see that 
 \begin{equation}\label{eqn: cov3}
 B_{g_i(1)}(x_i, 2NR) \subset	\Omega_i\subset B_{g_i(1)}(x_i, 2NR+3)
 \end{equation}
  for $i$ sufficiently large. Therefore, combining \eqref{eqn: cov1}, \eqref{eqn: cov2}, and \eqref{eqn: cov3},  we find that 
\begin{equation}\begin{split}
	Cov_{g_\infty}(x_\infty ,  N ) \subset 	\Omega ,\\ 
	Cov_{g_i}(x_i ,  N ) \subset \Omega_i.
\end{split}
\end{equation}
Now, recall that the metrics $g_\infty(1)$ and $g_i(1)$ satisfy a uniform curvature bound.  Therefore, by volume comparison, there exists $N'  \in \mathbb{N}$ depending only on $N, n, $ and $p$ such that 
\begin{equation}\begin{split}
 B_{g_\infty(1)}(x_\infty, 2NR+2) & \subset \bigcup_{a = 1}^{N'} B_{g_\infty(1)}(y_a, r),\\
 B_{g_i(1)}(x_i, 2NR+3) & \subset \bigcup_{a = 1}^{N'} B_{g_i(1)}(y_{a,i}, r)
 \end{split}
\end{equation} 
for some $\{ y_a\}_{a=1}^{N'} \subset M_\infty$ and $\{y_{a,i}\}_{a=1}^{N'} \subset M_i$, where $r, R$ are as in \eqref{eqn:  ball containment  limit} and depend only on $n,p$. So, applying the first containment of \eqref{eqn:  ball containment  limit} to each ball above, we find that 
\begin{equation}\begin{split}
 B_{g_\infty(1)}(x_\infty, 2NR+2) & \subset Cov_{g_\infty}(x_\infty, N'),\\
 B_{g_i(1)}(x_i, 2NR+3) & \subset Cov_{g_i}(x_i, N').
 \end{split}
\end{equation} 
Together with \eqref{eqn: cov2} and \eqref{eqn: cov3}, this shows that 
\begin{equation}\begin{split}
 \Omega & \subset Cov_{g_\infty}(x_\infty, N'),\\
\Omega_i & \subset Cov_{g_i}(x_i, N').
 \end{split}
\end{equation} 
 
Now, having obtained the appropriate sets $\Omega$, $\Omega_i$ for each $N \in \mathbb{N}$, we may applying Proposition~\ref{prop: dp convergence global but local} to  complete the proof.
\end{proof}

\subsection{Proof of Theorem~\ref{global-thm proof}}\label{sec: proof global}
We finally prove Theorem~\ref{global-thm proof}.
\begin{proof}[Proof of Theorem~\ref{global-thm proof}]
Fix $p\geq n+1.$ 
Choose $P= P(n,p)$ sufficiently large according to Propositon~\ref{prop: rc} and Proposition~\ref{prop: dp convergence global}. 
Now let $\delta = \delta(n,P) =\delta(n,p)$ sufficiently small to apply Propostion~\ref{constructon-limit-metric}.
 
By Proposition~\ref{constructon-limit-metric}, we establish we obtain the space $(M_\infty, g_\infty, x_\infty)$, and  applying Proposition~\ref{prop: dp convergence global} implies the pointed $d_p$ convergence of Theorem~\ref{global-thm}(1). Proposition~\ref{prop: rc} yields the first claim in (2), that is, that the limit space $(M_\infty, g_\infty, x_\infty)$ is $W^{1,p}$-rectifiably complete. Finally, we show that $(M_\infty, g_\infty, x_\infty)$ is $d_p$-rectifiably complete, that is, that the topology generated by $d_{p,g_\infty}$ agrees with the topology of $M_\infty.$ Indeed, this follows from the observation that Propositions~\ref{constructon-limit-metric} and \ref{prop: rc} imply that the topology generated by $d_{p, M_\infty, g_\infty}$ agrees with the topology generated by $d_{p, M_\infty, g_{\infty}(1)}$, which in turn agrees with the topology of a smooth manifold $M_\infty.$
\end{proof}

\begin{remark}
	\rm{
Let $(M_\infty ,g_\infty, x_\infty)$ be the limit rectifiable Riemannian space obtained in Theorem~\ref{global-thm proof}. We have shown that for any suitable compact set $\Omega\subset M_\infty$, we have the convergence along the sequence of the relative $d_p$ distances on $M_i$ to $d_{p, g_\infty, \Omega}$. 
	It is worth noting that, for any $x,y \in M_\infty$ and for any exhasution $\{\Omega_a\}$ of $M_\infty$ by compact sets,  we have 
	\begin{equation}\label{eqn: lim rel}
		\lim_{a \to \infty } d_{p,g_\infty,\Omega_a}(x,y) = d_{p, g_\infty, M}(x,y).
	\end{equation} 
	To see this, we first see directly from the definition that the relative $d_p$ distance is monotone decreasing with respect to set inclusion, so the limit on the left-hand side of \eqref{eqn: lim rel} exists and 
	\begin{equation}
		\lim_{a \to \infty } d_{p,g_\infty,\Omega_a}(x,y) \geq  d_{p, g_\infty, M}(x,y).
	\end{equation} 
	On the other hand, for each $a \in \mathbb{N}$ consider a function $f_a \in W^{1,p}(\Omega_a)$ achieving the supremum in $d_{p,g_\infty,\Omega_a}(x,y)$ (recall Lemma~\ref{existence-minimizer} and Remark~\ref{rmk: limit space}). Then there exists $f \in W^{1,p}(M_\infty)$ such that on every compact set $\Omega \subset M_\infty,$ $f_a  \to f$ weakly in $W^{1,p}(\Omega)$ and uniformly. Thus, $\int_{M_\infty} |\na f|^p \,d\vol_g \leq 1$ and so $f$ is an admissible test function for $d_{p, g_\infty, M}(x,y)$. Moreover,
	\begin{equation}
		|f(x) - f(y) | = \lim_{a \to \infty} |f_a(x) -f_a(y)| = \lim_{a\to \infty} d_{p,g_\infty,\Omega_a}(x,y).
	\end{equation}
	We therefore establish the opposite inequality in \eqref{eqn: lim rel} and conclude.
	}
\end{remark}

\subsection{Proof of Theorem~\ref{thm: flat torus}}
We now prove Theorem~\ref{thm: flat torus}, which provides a form of stability for rigidity of the flat metric as the only metric on a torus with nonnegative scalar curvature.
\begin{proof}[Proof of Theorem~\ref{thm: flat torus}]
	Fix $\delta = \delta(n,p)$ according to Theorem~\ref{global-thm}. Any compact Riemannian manifold with $\nu(g, 2) \geq -\delta$ has volume bounded below a constant $C=C(\delta)$, so choose $V_0 > C$ so that the hypotheses of the theorem are not vacuous. 
	
	 Consider a sequence of tori $(M_i ,g_i)$ with $\nu(g_i, 2) \geq -\delta, \vol_{g_i}(M_i) \leq V_0$, and $R_{g_i} \geq -1/i$. 
	 By Theorem~\ref{global-thm}, up to a subsequence, $(M_i, g_i)$ converges in the $d_p$ sense to a rectifiable Riemannian space $(M_\infty, g_\infty)$, which is constructed in Proposition~\ref{constructon-limit-metric}. Moreover, $M_\infty$ is diffeomorphic to tori.
	 
	 By the proof of Proposition~\ref{constructon-limit-metric}, the Ricci flows $(M_i, g_i(t))_{t \in [0,1]}$ exist and, at  each time slice has uniformly bounded geometry, $\vol_{g_i(t)} ( M_i)  \leq 2V_0$, and $R_{g_i(t) } \geq -1/i$. By Hamilton's compactness \cite{HamiltonCptness}, after passing to subsequence, $(M_i,g_i(t))\rightarrow (M_\infty,g_\infty(t))$ in the pointed $C^\infty$-Cheeger-Gromov sense so that $g_\infty(t)$ is a solution to the Ricci flow on $M_\infty\times (0,1]$  with $R_{g_\infty(t)} \geq 0$ for all $t \in (0,1]$. Moreover, $M_\infty$ is diffeomorphic to a torus. So, by Schoen-Yau \cite{Schoen_1979} and Gromov-Lawson \cite{GromovLawson}, we see that $g_\infty(t)$ is a flat metric on the torus for each $t$. Since $(M_\infty, g_\infty(t))_{t \in (0,1]}$ is a Ricci flow, it follows that each $g_\infty(t)$ is the same flat metric. 
	 
	 Furthermore, from the proof of Proposition~\ref{constructon-limit-metric},  we know that the metric coefficients of $g_\infty(t)$ converge in $L^p$ to $g_\infty$, the limiting rectifiable Riemannian metric. It follows that $g_\infty$ is the flat metric on $M_\infty$.
\end{proof}

\medskip

\section{Examples}	\label{examples-entropy}

In this section, we will construct various examples of sequences of complete Riemannian manifolds $(M_i,g_i)$ with bounded curvature that satisfy the almost non-negative entropy and scalar curvature assumptions of our main theorems. In each example, the $d_p$  limits of our spaces will be either Euclidean space or a flat torus, and these limits do not agree with their Gromov-Hausdorff  and Intrinsic Flat limits.
\subsection{The basic building block: a two-parameter family of  metrics}
We begin by constructing a two-parameter family of metrics on $\mathbb{R}^{n+1}$ for $n\geq 3$ that serve as the basic building block for constructing all of our examples. Let $h$ denote the standard metric on $\mathbb{S}^{n-1}$. We  
define the two-parameter family of metrics $g_{\delta,\e}$ on $M=\mathbb{R}_+\times \mathbb{S}^{n-1}\times \mathbb{R}$ by 
\begin{equation}\label{eqn: metric 2}
g_{\delta, \e}=dr^2+f_{\delta,\e}^2(r) h+\varphi_{\delta,\e}^2(r)dx^2\, .
\end{equation}
The warping factor $f$ will be used to identify $\R^+\times \mathbb{S}^{n-1}$ topologically with $\mathbb{R}^n$, however geometrically this will be done in a way to add a large amount of positive curvature to the space.  The warping factor $\varphi$ will be constructed so that it will {\it slowly} degenerate as $r\to 0$.  If this degeneration is sufficiently slow we will see that we can preserve the positive scalar curvature, and, much more challenging, the lower entropy as well.  If $\varphi(0)=0$, then this would imply that the line $\{0^n\}\times \R$ has a fully degenerate metric $g$ along it, in particular $d_g((0^n,s),(0^n,t))=0$ for any two points along the line $\{0^n\}\times \R$.  The parameters $\epsilon,\delta>0$ are built so that we may approach such a degenerate limit smoothly and in different ways, depending on our end goal.\\

The functions $f_{\delta,\e}$ and $\varphi_{\delta,\e}$ are now precisely defined in the following way.  
\\ 

\noindent {\it Definition of $\varphi_{\delta,\e}$:}
For $\e>0$, let $\phi_\e:\R_+\to \R_+$ be the function such that 
\begin{equation}
\phi_\e(r)=
\left\{
\begin{array}{ll}
\e  \qquad &\text{ for }  r\leq \frac{1}{2}\e;\\
\psi_1  \qquad &\text{ for }  \frac{1}{2}\e\leq r\leq 2\e;\\
r  \qquad &\text{ for }  2\e\leq r\leq \frac{1}{2};\\
\psi_2 \qquad &\text{ for }  \frac12\leq r\leq 2;\\
1   \qquad &\text{ for }  r\geq  2;
\end{array}
\right.
\end{equation}
where $\psi_i(r)$ are smooth non-decreasing functions with $\psi_2''\leq 0$, 
\begin{align}\label{construction}
|\psi_1^{(k)}|\leq 8\e^{-k+1}\quad\text{and}\quad  |\psi_2^{(k)}|\leq 4^k.
\end{align}
We then let $\varphi_{\delta, \e} :\R_+\to \R_+$ be defined by
\begin{equation}
	\varphi_{\delta,\e}(r) = \phi_{\e}(r)^\delta.
\end{equation}
Observe that $\varphi_{\delta, \e}$ satisfies the properties
\begin{equation}\label{eqn: key derivative bounds for phi}
	\left| \frac{\varphi_{\delta,\e}'(r)}{\varphi_{\delta,\e}(r)} \right| \leq \frac{50 \delta}{r}, \qquad \left| \frac{\vphi_{\delta,\e}''(r)}{\vphi_{\delta,\e}(r)}\right| \leq \frac{50\delta}{r^2}, \qquad \left|\frac{\varphi^{(k)}_{\delta,\e}(r)}{\varphi_{\delta,\e}(r)}\right|  \leq  \frac{ C_k \delta}{r^k}\,.
\end{equation}
\begin{remark}\label{general-singularity}
	{\rm As we have defined the two parameter family of metrics, when $\e=0$, then the corresponding metric $g_{\delta,\e}$ vanishes at $r=0$. In fact, we can modify the construction so that $g_{\delta,\e}|_{r=0}$ agrees with any prescribed singular metric $l(x)dx^2$ along $r=0$. More precisely, given fixed $\delta,\e>0$ and a smooth function $l:\mathbb{R}\rightarrow [0,+\infty)$, we could replace $\varphi_{\delta,\e}$ by the function $\varphi_{\delta,\e}(r,x)=\left[1-\phi_\e(r)^\delta\right]l(x)+\phi_\e(r)^\delta$
 so that $\varphi_{\delta,\e}(r_0,x_0)\rightarrow 1$ when $r_0>0$ and $\varphi_{\delta,\e}(0,x_0)\rightarrow l(x_0)$ point-wise as $\delta\rightarrow 0$. 
	}
\end{remark}
 
\noindent {\it Definition of $f_{\delta,\e}$.}
Let $\zeta:\R_+ \to \R_+$ be a smooth non-increasing cutoff function such that $\zeta(x)\equiv 1$ on $[0,1/2]$,  vanishes on $[1, \infty)$ and satisfies $|\zeta'|^2+|\zeta''|\leq 100$.

 Define $\tilde f_{\delta,\e}$ to be the solution of the following ODE: 
\begin{equation}\label{eqn: ODE}
\begin{cases}
\tilde f_{\delta,\e}'&=\left[1-10^4n\delta(1-\zeta(\frac{r}{100\e}))\right];\\
\tilde{f}_{\delta,\e}(0)&=0.
\end{cases}
\end{equation}
In this way, the corresponding metric $dr^2+f^2_{\delta,\e}h$ coincides with the Euclidean metric on $\mathbb{R}^n$ for $r$ sufficiently small.
Finally, we define 
\begin{equation}\label{def-f}
f_{\delta,\e}(r)  = \zeta(r/4) \tilde f_{\delta,\e}(r) + (1-\zeta( r/4)) r,
\end{equation}
so that $f_{\delta,\e}$ is equal to the  solution to the ODE for $r \leq 2$,  the function $r$ for $r \geq 4$, and interpolates smoothly in between.
\medskip

Crucially, this two-parameter family of metrics satisfies a lower bound on entropy and scalar curvature that is uniform for all $\e$ and $\delta$ sufficiently small.  Geometrically, what is happening is that the warping factor is changing so slowly that even though the actual metric geometry may be behaving very poorly, in some weaker sense ($d_p$ sense! though we will not directly appeal to this) the geometry looks very Euclidean at all points and scales.  This sense of closeness to Euclidean space will be good enough to force the small lower entropy bound on the example.

\begin{theorem}\label{main example singular}
	Fix $n\geq 3$, $\eta>0$ and $L>0$. There exist $\e_0>0$ and $\delta_0>0$ depending on $n,\eta$ and $L$ such that the following holds. For all $\e \leq \e_0$ and $\delta \leq \delta_0,$ the metric $g_{\delta, \e}$ defined in \eqref{eqn: metric 2} satisfies
	\begin{align}
\label{eqn: scalar LB example}		R_{g_{\delta,\e}}& \geq -\eta,\\
		\nu(g_{\delta,\e}, L) & \geq -\eta.
	\end{align}
\end{theorem}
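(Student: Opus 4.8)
The plan is to verify the scalar curvature lower bound by an explicit computation with the warped product formula, and then derive the entropy lower bound from the scalar curvature bound together with a "uniform Euclidean-like" comparison at all scales. First I would recall that for a triply warped metric $g=dr^2+f^2 h+\varphi^2 dx^2$ on $\R_+\times \mathbb S^{n-1}\times\R$, the scalar curvature is a universal expression in $f,\varphi$ and their first two derivatives:
\begin{equation}
R_{g_{\delta,\e}} = (n-1)(n-2)\frac{1-(f')^2}{f^2} - 2(n-1)\frac{f''}{f} - 2\frac{\varphi''}{\varphi} - 2(n-1)\frac{f'\varphi'}{f\varphi}.
\end{equation}
On the region $r\le 2$, the function $f=\tilde f_{\delta,\e}$ solves \eqref{eqn: ODE}, so $f'=1-10^4 n\delta(1-\zeta(r/100\e))$; in particular $f'\equiv 1$ for $r\le 50\e$ (where $\zeta(r/100\e)=1$), $|1-f'|\le 10^4 n\delta$ everywhere, and $|f''|\le 10^4 n\delta\cdot |\zeta'|/(100\e)\le C_n\delta/\e$ supported on $r\in[50\e,100\e]$. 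The key point is that $f'' \le 0$ on that region (since $\zeta$ is non-increasing, $1-\zeta$ is non-decreasing, so $f'$ is non-increasing, i.e. $f''\le 0$), which makes the $-2(n-1)f''/f$ term \emph{nonnegative}; this is the "adding positive curvature" mechanism. Moreover $\tilde f_{\delta,\e}(r)\ge (1-10^4 n\delta)r$ so that $f^2\gtrsim r^2$ and the terms with $f^2$ in the denominator are controlled. Using \eqref{eqn: key derivative bounds for phi}, the $\varphi$-terms are bounded by $C_n\delta/r^2$, but these are exactly cancelled in sign or dominated: on $r\le 50\e$ we have $f'=1$ so $(n-1)(n-2)(1-(f')^2)/f^2=0$, and one checks $-2\varphi''/\varphi-2(n-1)f'\varphi'/(f\varphi)\ge -C_n\delta/r^2$ only — which is \emph{not} bounded below uniformly. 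The resolution is that the $\varphi$ singular terms must be absorbed by the positive contribution; this requires a more careful grouping, writing the dangerous terms as a perfect combination. Concretely, I would group
\begin{equation}
-2\frac{\varphi''}{\varphi} - 2(n-1)\frac{f'\varphi'}{f\varphi} = -2\frac{(f^{n-1}\varphi)'/(f^{n-1}\varphi) \text{-type expression}}{\cdot}
\end{equation}
and use that $\varphi=\phi_\e^\delta$ with $\phi_\e$ piecewise equal to a constant or to $r$, so that on each piece $\varphi'/\varphi$ and $\varphi''/\varphi$ have a definite sign pattern: on $r\le\e/2$ and $r\ge 2$, $\varphi$ is constant so these terms vanish; on $2\e\le r\le 1/2$, $\varphi=r^\delta$, giving $\varphi'/\varphi=\delta/r$ and $\varphi''/\varphi=\delta(\delta-1)/r^2<0$, so $-2\varphi''/\varphi = 2\delta(1-\delta)/r^2 > 0$ actually \emph{helps}; only the transition regions $[\e/2,2\e]$ and $[1/2,2]$ contribute genuinely negative terms, and there $r$ is comparable to $\e$ or to $1$ respectively, so $C_k\delta/r^k$ is bounded by $C\delta/\e^k$ on a region of size $\sim\e$ (for the first) and by $C\delta$ (for the second). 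For the first transition region I would need to also use that $f'\equiv 1$ there when $\e$ is small (since $50\e > 2\e$, so on $[\e/2,2\e]$ we have $\zeta(r/100\e)=1$ hence $f'=1$, $f''=0$), which kills the $f$-dependent negative terms, and then scale-invariance: setting $r=\e\rho$, the metric $dr^2+f^2h$ is exactly Euclidean on $[\e/2,2\e]$, and the $\varphi$-terms contribute $\e^{-2}\times(\text{bounded function of }\rho\text{ times }\delta)$. Hmm — this gives $R\ge -C\delta/\e^2$, which is \emph{not} uniform in $\e$. So I would instead argue that on $[\e/2,2\e]$ one has genuine cancellation: with $f=r$ exactly (Euclidean) and $\varphi=\psi_1^\delta$ with $\psi_1$ interpolating between the constant $\e$ and the function $r$, the scalar curvature is $R=-2\varphi''/\varphi - 2(n-1)\varphi'/(r\varphi)$, and since $\psi_1$ can be taken with $\psi_1''$ of one sign on sub-intervals (e.g. convex then concave, matching a constant to $r$), the dominant term $-2\varphi''/\varphi$ has a sign. \textbf{This is the main obstacle}: showing the transition regions do not destroy the scalar lower bound, which forces one to choose $\psi_1,\psi_2$ with carefully controlled convexity (not just the crude bounds \eqref{construction}), so that the genuinely negative part of $R$ is $O(\delta)\cdot O(\e^{-2})$ but integrated/pointwise-controlled — or, more likely, the authors exploit that $\varphi^\delta$ with $\delta\to 0$ makes $\varphi\to 1$ and $\varphi',\varphi''\to 0$ pointwise, so for \emph{fixed} $\e$ the scalar curvature converges to the Euclidean one, and then a two-step limit ($\delta\to 0$ first, quantitatively) gives uniformity. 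I would pursue the direct approach: fix $\e$, show $R_{g_{\delta,\e}}\ge -C(n,\e)\delta$ by the pointwise computation (the $C(n,\e)$ blowup in $\e$ is harmless since $\delta_0$ may depend on $\e_0$... but wait, $\e_0$ depends on $\eta$ and $\delta_0$ on $n,\eta,L$ — re-reading the statement, both $\e_0$ and $\delta_0$ may depend on $n,\eta,L$, so we are free to choose $\delta_0\ll\e_0$), hence $R_{g_{\delta,\e}}\ge -C(n,\e_0)\delta_0\ge -\eta$.

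For the entropy bound $\nu(g_{\delta,\e},L)\ge -\eta$, I would use the characterization of $\mu(g,\tau)$ as the optimal log-Sobolev constant and the fact — which I would establish directly — that $(M,g_{\delta,\e})$ supports a log-Sobolev inequality with almost-Euclidean constant. The mechanism is that $g_{\delta,\e}$ is, at every point and every scale, bi-Lipschitz to the Euclidean metric with constant $(1+C\delta)$ \emph{on the factor directions}, but degenerate in the $x$-direction near $r=0$; however the log-Sobolev / entropy functional only sees the volume-weighted Dirichlet energy, and since $\varphi=\phi_\e^\delta\ge\e^\delta$ is bounded below away from zero (for fixed $\e>0$ and $\delta$ small, $\e^\delta\to 1$), the metric $g_{\delta,\e}$ is actually \emph{globally} bi-Lipschitz to a fixed smooth complete metric $g_{0,\e}$ (the one with $\varphi\equiv 1$, which after the $f$-surgery is a smooth metric on $\R^{n+1}$ that is Euclidean outside a compact set and has $\nu\ge -\eta/2$ for $\e$ small by Remark \ref{rmk: hyp under rescaling} and continuity of $\nu$ under $C^2$-perturbation, since $g_{0,\e}\to g_{euc}$ as $\e\to 0$ on the $\R^n$ factor). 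Then, because $\mu(g,\tau)$ depends on $g$ only through $|\na f|^2$ and $d\vol_g$ in the $\W$-functional, and $g_{\delta,\e}=(1+o(1))g_{0,\e}$ in the bi-Lipschitz sense with $o(1)\to 0$ as $\delta\to 0$ uniformly (for fixed $\e$), the entropy $\mu(g_{\delta,\e},\tau)$ is within $o(1)$ of $\mu(g_{0,\e},\tau)$ for all $\tau\in(0,L]$; combined with the scalar lower bound $R_{g_{\delta,\e}}\ge-\eta$ controlling the $\tau R$ term in $\W$, one gets $\nu(g_{\delta,\e},L)\ge\nu(g_{0,\e},L)-o(1)-C\eta L\ge -\eta$ after choosing $\e_0,\delta_0$ appropriately. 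I expect the entropy estimate to be cleaner than the scalar estimate once the bi-Lipschitz-to-a-fixed-metric observation is in place; the genuinely hard analytic work is entirely in the scalar curvature computation through the transition regions, and in making the constants' dependencies ($\e_0$ then $\delta_0\ll\e_0$) consistent with the quantifier order in the statement.
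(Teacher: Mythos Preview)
Your proposal has a genuine gap, and it is the same gap in both halves: you have mis-read the quantifier structure of the statement. The theorem asks for $\e_0,\delta_0$ depending only on $n,\eta,L$ such that the bounds hold for \emph{every} pair $(\e,\delta)$ with $\e\le\e_0$ and $\delta\le\delta_0$. In particular the bounds must survive sending $\e\to 0$ with $\delta$ \emph{fixed}. Your scalar argument ends with ``$R_{g_{\delta,\e}}\ge -C(n,\e)\delta$, hence $R_{g_{\delta,\e}}\ge -C(n,\e_0)\delta_0$''; but $C(n,\e)\sim\e^{-2}$, and since $\e\le\e_0$ means $\e$ can be arbitrarily small, you have $C(n,\e)\ge C(n,\e_0)$, not $\le$. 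The inequality goes the wrong way and no choice of $\delta_0$ depending on $\e_0$ rescues it. Exactly the same happens in your entropy argument: the bi-Lipschitz constant between $g_{\delta,\e}$ and the Euclidean metric is governed by $\varphi_{\delta,\e}\ge\e^\delta$, which tends to $0$ as $\e\to 0$ with $\delta>0$ fixed, so the comparison to a fixed reference metric collapses.

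For the scalar bound, the mechanism you dismissed is in fact the whole point. The cone warping $f_{\delta,\e}$ is built with the deliberately large constant $10^4 n$ so that, once the cone is active, the positive term $\mathbf{I}+\mathbf{II}\ge 10^4 n\,\delta/r^2$ dominates the negative $\varphi$-contributions $\mathbf{III}+\mathbf{IV}\ge -C_n\delta/r^2$ \emph{pointwise}, with both sides scaling identically as $\delta/r^2$. This yields $R\ge 0$ on the entire region where $\varphi$ is nonconstant, uniformly in $\e$. (The scale at which the cone becomes active is arranged so that it kicks in \emph{before} $\varphi$ starts varying; in the paper's Case~2 this is recorded as $\sigma\equiv 10^4 n\delta$ throughout $[\e/2,2]$.) The role of $n\ge 3$ enters here: the combination $\mathbf{I}+\mathbf{II}$ produces the needed positivity only when the spherical cross-section has dimension $n-1\ge 2$.

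For the entropy bound, the paper does \emph{not} use a bi-Lipschitz comparison at all---precisely because no such uniform comparison exists. The argument is by contradiction and compactness: assuming $\mu(\tau_i^{-1}g_{\delta_i,\e_i},1)=-\eta$ along some sequence, one takes the minimizer $u_i$ of the entropy, pulls back by a carefully chosen diffeomorphism $\Phi_i$ that rescales the degenerating $x$-direction by $\varphi(\bar r_i+\tau_i^{1/2})^{-1}$ (so the pullback metric converges to $g_{euc}$ in $C^\infty_{loc}$ away from a single line $\tilde\ell_\infty$, with only a mild $L^p$-controlled degeneration near it), proves a uniform mean-value inequality for $v_i=\Phi_i^*u_i$ via a Moser iteration adapted to this mildly singular limit, and then passes to a nontrivial $W^{1,2}$ limit $v_\infty$ solving the Euler--Lagrange equation on $\R^{n+1}$ with $\mu_\infty=-\eta<0$. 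A direct characterization lemma (bounded $W^{1,2}$ subsolutions on Euclidean space force $\mu=0$ and Gaussian profile) then gives the contradiction. This is substantially more work than the scalar computation, the opposite of what you anticipated.
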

The scalar curvature lower bound and entropy lower bound of Theorem~\ref{main example singular} will be established in Sections~\ref{subsec: scalar lower bound} and \ref{subsec: entropy lower bound} respectively.

\begin{remark}
	{\rm 
	Note that the metrics $g_{\delta, \e}$ are defined on an $n+1$ dimensional space, so fixing $n\geq 3$ means that our examples are of dimension $4$ or higher.
	}
\end{remark}

Let us again discuss the examples geometrically, this time with more of a focus on how each parameter behaves in the construction. One can think of the metric $g_{\delta, \e}$ defined in \eqref{eqn: metric 2} in the following way. The portion $dr^2+f_{\delta,\e}^2(r) h$ of the metric $g_{\delta, \e}$ agrees with the Euclidean metric on $\R^n$ far from $0 \in \R^{n}$, while in a neighborhood of $0 \in \R^n,$ it is a smoothed-out cone metric on $\R^n$ with cone angle proportional to $\delta.$ The parameter $\e$ governs the scale at which this cone metric is smoothed out. This component can roughly be thought of as Euclidean $\R^n$, although taking the smoothed cone in place of $\R^n$ provides a crucial positive scalar curvature contribution in order to guarantee that \eqref{eqn: scalar LB example} holds as long as $n\geq 3$. The component $\varphi_{\delta,\e}^2(r)dx^2$ adds a fiber at each point on $(\R^n, dr^2+f_{\delta,\e}^2(r) h)$.  Away from $0 \in \R^n$, these fibers are Euclidean, but for $r$ small, the fibers become increasingly degenerate.

 If we choose $\e=\delta$ and let $\e\rightarrow 0$, then the metric tensors converge smoothly to the Euclidean  metric $g_\infty=\lim_{\e\rightarrow 0}g_{\e,\e}=\sum_{i=1}^{n+1} (dx^i)^2$.
However, if $\e$ is relatively small compared to $\delta$, then the limiting metric will be Euclidean away from a ray $\ell=\{x:x_i=0 \;\text{for}\;i=1,\dots,n\}$ in $\mathbb{R}^{n+1}$ and $\ell$ will be collapsed to a point along the sequence. For instance, if we choose $\delta=(-\log \e)^{-1/2}$ and let $\e\rightarrow 0$, then in a pointwise sense,  $\lim_{\e\rightarrow 0}g_{\delta(\e),\e}=\sum_{i=1}^n (dx^i)^2+(1-\chi_{r=0})(dx^{n+1})^2$. 
In both of these two examples, the constructed sequence converges to the Euclidean metric in $L^p_{loc}$ for all $p>1$, while in the latter case the Gromov-Hausdorff limit is very different; see Example~\ref{example: one line on rn} below. This will correspond to our general $d_p$ $\e$-regularity theorem when the entropy and scalar curvature have lower bound converging to $0$.

\subsection{Examples constructed from the main building block}

In the following, we will make use of the metrics $g_{\delta, \e}$ with $\delta=(-\log \e)^{-1/2}$  to produce sequences of metrics whose $d_p$ limits and Gromov-Hausdorff limits are entirely different. First, we go into greater depth concerning the basic metric $g_{\delta, \e}$ with $\delta=(-\log \e)^{-1/2}$. We will take $n=3$ since this is the borderline case. The case of $n\geq 3$ can be constructed similarly.

\vspace{2mm}

\begin{example}[Collapsing along a line in Euclidean space]\label{example: one line on rn}
{\rm Let $n\geq 3$.
By choosing  $\delta=(-\log \e)^{-1/2}$ in \eqref{eqn: metric 2}, we obtain a sequence of metrics which degenerate along a ray in $\mathbb{R}^{n+1}$ and remain the flat metric away from it. In the Gromov-Hausdorff limit, the ray collapses to a point. On the other hand, from construction, it is easy to see that $g_\e$ converges to the Euclidean metric in $L^p_{loc}(\mathbb{R}^{n+1})$ for all $p>1$. Theorem~\ref{global-thm} (and the proof of Proposition~\ref{constructon-limit-metric}), the pointed $d_p$ limit is the flat Euclidean space.  In particular, notice that Theorem~\ref{thm: main thm, Lp def new} implies that $\vol_{g_\e}\left(\mathcal{B}_{p,g_\e}(0,1) \right)\rightarrow \vol_{g_{euc}}(\mathcal{B}_{p,euc}(0,1))$ as $\e\rightarrow 0$, while  the volumes of metric balls are tending to infinity: 
\begin{equation}\label{eqn: vol blowup}
\vol_{g_\e}\left( B_{g_\e}(0,1)\right)\rightarrow +\infty\,.
\end{equation}
 Indeed,  for $\e$ sufficiently small, $B_{g_\e}(0,1)$ contains the Euclidean strip $\{(x,y)\in \mathbb{R}^{n}\times \mathbb{R}: \frac{1}{4}\leq |x|\leq \frac{1}{2},|y|\leq \frac{1}{2\e}\}$. Since $g_\e$ converges smoothly uniformly to the Euclidean metric away from $|x|=0$, we see that \eqref{eqn: vol blowup} holds. 
 }
\end{example}

We see from the above example that the metric degeneration which causes the metric collapse occurs along a line in $\R^4$.  More generally, we conjecture that this metric collapsing can occur only along co-dimension $3$ subsets along converging sequences

\vspace{2mm}

\begin{example}[$d_p$ convergence does not hold for all $p$]\label{example: delta fixed}
	{\rm In contrast to Example~\ref{example: one line on rn}, in this example we only pass $\e\rightarrow 0$ but fix $\delta>0$  small in the construction of \eqref{eqn: metric 2}. The corresponding sequence of metrics converges pointwise, and in $L^p_{loc}$ for $p$ less than some $p_1(\delta)$,   to $g_\infty=g_{cone}+r^\delta dx^2$, which degenerates at $r=0$. By Theorem~\ref{global-thm} (and the proof of Proposition~\ref{constructon-limit-metric}), the sequence converges in the pointed $d_p$ sense to $(\R^{n+1}, g_\infty, 0^n)$ for $p\in [n+2, p_0(\delta)]$. However, this $d_p$ convergence to  $(\R^{n+1}, g_\infty, 0^n)$ does not hold  for all $p\in [n+2, \infty)$. Indeed, for $p$ sufficiently large, the metric space $(\R^{n+1},d_{p,g_\infty}, 0^n)$ is topologically distinct from the underlying topology on $\R^{n+1}$, and in particular is not $d_p$-complete. 
	This illustrates that $\delta$ must be taken to depend on $p$ in our $\e$-regularity theorems, and if we only assume a lower bound on the entropy lower bound and scalar curvature along the sequence, then the limiting rectifiable Riemannian metric $g_\infty$ may have an inverse that is only bounded in $L^{p_0}_{loc}$ for some $p_0(\delta)>1$ but not all $p>1$. 
	}
\end{example}

\vspace{2mm}

\begin{example}[Collapsing lines in Euclidean space]\label{example: lines on rn}
{\rm
In this example, we use the building block of Example~\ref{example: one line on rn} to construct a sequence of metrics on $\mathbb{R}^{n+1}$ for $n\geq 3$ whose Gromov-Hausdorff limit is the taxicab metric, while the $d_p$ limit is the flat metric on $\mathbb{R}^{n+1}$. The basic idea of the construction is to cut off the building block of Example~\ref{example: one line on rn} to obtain a degenerating metric on a tubular neighborhood of a line in Euclidean space, and to glue this metric into tubular neighborhoods of an increasing dense collection
 of lines in $\R^{n+1}.$

Let us now go into the details of this construction. Let $n\geq 3$. First, we obtain a collection of disjoint strips (i.e.  tubular neighborhoods around lines) $\{\mathcal{S}_{i,j}(r_0)\}$ for $i \in \mathbb{N}$ and $j\in \{1,\dots ,n+1\}$ in the following way. Define the projection $\pi_1:\mathbb{R}^{n+1}\rightarrow \mathbb{R}^{n}$ by $\pi_1(x)=(x_2,\dots,x_{n+1})$, and let  $\pi_j$ be defined analogously for each $j=1,\dots ,n+1$.  Next, for each $j=1,\dots,  n+1$ and $(k_1, \dots, k_{n})\in \mathbb{Z}^n$, we consider the collection of  points $\{z_{k_1,\dots , k_{n},j}\}\subset \mathbb{R}^{n} = \pi_j(\R^{n+1})$ with coordinates given by 
\begin{equation}\label{z}
z_{k_1,\dots , k_{n},j} = ( (100n+ 10j) r_0 k_1, \dots , (100n+ 10j) r_0 k_{n})\, . 
\end{equation}
Up to re-indexing the countable set of  $(k_1,\dots ,k_{n})\in \mathbb{Z}^{n}$ by $i \in \mathbb{N}$, we let $\{z_{i,j}\}=\{z_{k_1,\dots, k_{n},j}\}$. Now, let $B_{\mathbb{R}^{n}}(z,r)$ denote the Euclidean ball in $\mathbb{R}^{n}$  and define the strip $S_{i,j}(r_0)$ of radius $r_0$ around the line  $\{\pi_j(x) = z_{i,j}\}$ by
\begin{equation}
\mathcal{S}_{i,j}(r_0)=\pi_j^{-1}\left(B_{\mathbb{R}^{n}}(z_{i,j},r_0) \right).
\end{equation}

It is easy to check that the collection of  strips  $\{ \mathcal{S}_{i,j}(r_0)\}$ are $200n r_0$ dense, in the sense that for any $x \in \R^{n+1},$ there exists $\mathcal{S}_{i,j}$ such that $dist_{g_{euc}}(x, S_{i,j}) \leq 200n r_0$, and that   these strips are pairwise disjoint.  

Now, with the collection $\{\mathcal{S}_{i,j}(r_0)\}$ of disjoint strips in hand, and for any $r_0>0$ fixed, we use \eqref{eqn: metric 2} to define a metric $g_{r_0}$ on each strip in the following way. Up to a rigid motion of $\R^{n+1}$, it suffices to define $g_{r_0}$ on the strip $\mathcal{S}=\pi_{n+1}^{-1}( B_{\R^{n}}(0, r_0))$. Let $\delta = (-\log \e)^{-1/2}$, and take $\e$ depending on $r_0$ to be sufficiently small so that $R_{g_{\delta, \e}} \geq - r_0^3$ and $\nu(g_{\delta, \e}, 2 r_0^{-2}) \geq -r_0$ by Theorem~\ref{main example singular}.   Then, consider the rescaled metric 
	\begin{equation}\label{gr0}
	g_{r_0} = dr^2 + (r_0/5)^2f_{\e}^2(5r/r_0) h + \vphi_{\e}^2(5r/r_0 ) dx_{n+1}^2,
	\end{equation}
where $\e = \e(r_0)$, which satisfies $R \geq -r_0.$ and $\nu(g_{r_0}, 2)\geq -r_0$. 
	Note that after this rescaling, the metric $g_{r_0}$ agrees with the Euclidean metric outside of the strip $B_{g_{euc}}(\ell, r_0)$.   Finally, by restricting this metric to the set $|z|<r_0,$ we define the metric $g_{r_0}$ on the strip $\mathcal{S}$ .

Finally, let 
 $\tilde{g}_{r_0}$ be the metric on $\R^{n+1}$ be defined by 
\begin{equation}
\tilde{g}_{r_0} = \begin{cases}
	g_{euc} & \text{ if } x \in \R^{n+1} \setminus \bigcup \mathcal{S}_{i,j}(r_0),\\
	g_{r_0} & \text{ if } x \in  \mathcal{S}_{i,j}(r_0).
\end{cases}
\end{equation}

Direct computation shows that $\tilde g_{r_0}$ converges to $g_{\mathbb{R}^{n+1}}$ in $L^p_{loc}(\mathbb{R}^{n+1})$ for all $p \geq 1$. In particular, this implies that $\vol_{\tilde{g}_{r_0}}(\Omega) \to \vol_{euc} (\Omega)$ for any compact set $\Omega\subset \mathbb{R}^{n+1}$. By Theorem~\ref{global-thm} (and the proof of Proposition~\ref{constructon-limit-metric}), the sequence converges to $(\R^n, g_{euc}, 0^n)$ in the pointed $d_p$ sense for all $p \in [n+1, \infty)$. 
 However, in the pointed Gromov-Hausdorff topology, this sequence converges to the taxicab metric.
 
 To roughly explain this, consider the metrics $(\mathbb{R}^{n+1}, \hat g_{\epsilon})\equiv (\mathbb{R}^{n+1}, \epsilon^{-2}\tilde g_{r_0})$. Here, $\e =\e(r)$ is the parameter chosen above. Clearly the metrics $(\mathbb{R}^{n+1}, \hat g_{\epsilon})$ are  isometric to $(\mathbb{R}^{n+1}, \tilde g_{r_0})$ by a Euclidean dilation.  Let $\ell_{ij}\equiv \pi^{-1}_j(z_{ij})$ denote the lines we have glued around.  Then, roughly, we have on each such line $\ell_{ij}$ that $\hat g_\epsilon=1$ in the direction of this line, and that $\hat g_\epsilon\approx \epsilon^{-2}$ in all other directions and at all other points.  We also have, in coordinates, that these lines are $o(\epsilon)$ dense.  Clearly, a path of minimal length from $x$ to $y$ is now one which stays on these lines as long as possible, and moving from one line to another now causes an error which is approximately $\epsilon^{-1}o(\epsilon)=o(1)$.  In particular, we see that $d_{\hat{g}_\epsilon}(x,y)=\sum |x_i-y_i| + o(1)$.  Further, as $\epsilon\to 0$ we see a minimal path is any path which is always moving in coordinate directions (specifically, along our increasing dense collection of lines $\ell_{ij}$).  Hence, $(\mathbb{R}^n, \hat g_{\epsilon})$ is limiting to the taxi-cab metric.

}\end{example}

\vspace{2mm}

Next, we construct some examples in the compact setting.

\begin{example}[Collapsing circle in torus]\label{example-torus-1}
	{\rm In this example, we construct a sequence of metrics $\{g_i\}_{i\in \mathbb{N}}$ on the torus $\mathbb{T}^{n+1}$ for $n\geq 3$ such that each $g_i$ coincides with the flat metric away from a shrinking tubular neighborhood of a fixed ${S}^1\subset \mathbb{T}^{n+1}$. The sequence $g_i$ becomes degenerate along this ${S}^1$, and in the Gromov-Hausdorff limit, the $S^1$ collapses to a point. In particular,  the metric space arising as the Gromov-Hausdorff limit is not topologically a torus. The $d_p$ limit will be the flat torus for any $p\geq n+2$.

We begin the construction. Fix $n\geq 3$ and $r_0>0$. Let ${g}_{r_0}$ be the metric on Euclidean space $\R^{n+1}$ defined in \eqref{gr0} in the previous example, which we recall agrees with the Euclidean metric outside the strip $B_{g_{euc}}(\ell ,r_0)$ and is translation invariant in the $x_{n+1}$ direction. Now, consider the torus $\mathbb{T}^{n+1} = \R^{n+1}/ \mathbb{Z}^{n+1}$, equipped with the metric $\tilde{g}_{r_0}$ given by descending $g_{r_0}$ to the quotient.

	We now let  $r_0 \to 0$. 
	For every $r_0$, the smooth Riemannian manifold $(\mathbb{T}^{n+1}, \tilde{g}_{r_0})$ satisfies $R_{\tilde{g}_{r_0}}  \geq - r_0 $.
Moreover, for any $\delta>0$, there exists $\tau_0 = \tau_0(\delta)$ such that $\nu (g_{flat}, \tau_0) \geq -\delta/2$. So, arguing as in the proof of Theorem~\ref{thm: entropy goes to zero} below, we find that for $r_0 \leq  \bar{r}_0(\delta)$, we have  $\nu(\tilde{g}_{r_0}, \tau_0)\geq -\delta$. We directly see that the metrics $\tilde{g}_{r_0}$ converge in $L^p$ for every $p$ to the flat metric on $\mathbb{T}^{n+1}$. Applying the proof of  Theorem~\ref{thm: main thm, Lp def new} we see that $(\mathbb{T}^{n+1}, \tilde{g}_{r_0})$ converges to $(\mathbb{T}^{n+1}, g_{flat})$ in the $d_p$ sense for all $p \in [n+2, \infty)$. 	 On the other hand, in the Gromov-Hausdorff topology, the $S^1$ factor corresponding to the projection of the degenerating line $\ell$ collapses to a point in the limit. In particular, the metric space arising in the Gromov-Hausdorff limit is not topologically a torus.
	}
\end{example}
\vspace{2mm}

By replicating the construction of the degeneracy, we can  construct examples so that the sequence of metrics on the torus $\mathbb{T}^{n+1}$ converges to a metric  space $Y^k$ with $k<n+1$ or even $k=0$ (a point) in the Gromov-Hausdorff topology.
\begin{example}[Collapsing of $\mathbb{T}^{n+1}$ to $\mathbb{T}^n$]\label{example-torus-2}
	{\rm 
	In the previous example,  Example~\ref{example-torus-1}, we constructed sequence of metrics on $\mathbb{T}^{n+1}$ which degenerate along a single $S^1$ inside. In this example, we will modify the construction to obtain a sequence of metrics $g_i$ on the torus that  degenerate along  increasing dense sequences of parallel copies of $S^1$ and remain flat away from them. In the Gromov-Hausdorff topology, the $(\mathbb{T}^{n+1},g_i)$ collapses to the $n$-dimensional flat torus $\mathbb{T}^{n}$. In particular the Gromov-Hausdorff limit is one dimension lower than the dimension of each manifold in the sequence. On the other hand, the limit respect to $d_p$  is the flat torus $\mathbb{T}^{n+1}$ for $p\geq n+2$.

More precisely, we again fix an $n+1$-dimensional flat torus for $n\geq 3$, identified with $[0,1]^{n+1} / \sim$ and with coordinates $(x_1,\dots , x_{n+1})$. Now, consider a maximal $100 r_0$ dense set $\{z_i\}$ in $[0,1]^{n+1} /\sim$. 
	Let $S^1_i = \{ (x_1,\dots, x_n) = z_i \} \subset \mathbb{T}^{n+1}$, 
	 and let $\mathcal{S}_i$ be the strip around $S_i^1 $ of radius $r_0$ as in the previous example. We let 
	 \begin{equation}
	  \tilde{g}_{r_0} = \begin{cases}
	 	g_{euc} & \text{ for } x \in \mathbb{T}^{n+1} \setminus \bigcup \mathcal{S}_{i}(r_0),\\
	 	g_{r_0} & \text{ for } x \in \mathcal{S}_i.
	 \end{cases}
	 \end{equation}
Here, $g_{r_0}$ is the same metric on a strip around $S^1$ as defined in Example~\ref{example-torus-1} above.
	For every $r_0,$ the smooth Riemannian manifold $(\mathbb{T}^{n+1},\tilde{g}_{r_0})$ satisfies $R_{\tilde{g}_{r_0}}  \geq - r_0 $, and for every $\delta>0$, there exist $\tau_0= \tau_0(\delta)$ and $\bar{r}_0 = \bar{r}_0(\delta)$ such that for $r_0 \leq \bar{r}_0$, we have $\nu(\tilde{g}_{r_0}, \tau_0)\geq -\delta$. In the Gromov-Hausdorff topology, $(\mathbb{T}^{n+1},\tilde{g}_{r_0})$ converges to the $n$-dimensional flat torus $\mathbb{T}^n$ with the usual distance as $r_0 \to 0.$ On the other hand, $(\mathbb{T}^{n+1},\tilde{g}_{r_0})$ converges to the $n+1$-dimensional flat torus with respect to $d_p$ convergence for each $p\geq n+2$ by (the proof of) Theorem~\ref{thm: main thm, Lp def new}.
	}
\end{example}
\vspace{2mm}

\begin{example}[Collapsing $\mathbb{T}^{n+1}$ to a point]\label{example-torus-3}
	{\rm In this example, we further modify the construction in Examples~\ref{example-torus-1} and \ref{example-torus-2} to produce a sequence of metrics on $\mathbb{T}^{n+1}$ such that the sequence collapses to a point in the Gromov-Hausdorff and Intrinsic Flat topologies. Once again, the $d_p$ limit will still be the flat torus $\mathbb{T}^{n+1}$ for $p\geq n+2$.
	The basic idea of the construction is to choose an increasingly dense collection of strips around copies of $S^1 \subset \mathbb{T}^{n+1}$ with all different orientations, in a similar fashion to Example~\ref{example: lines on rn}, and then to paste the degenerating metrics of  Examples~\ref{example-torus-1} and \ref{example-torus-2} into each of these strips.
	
More specifically, we begin with the sequence of metrics $\tilde{g}_{r_0}$ on $\mathbb{R}^{n+1}$ constructed in Example~\ref{example: lines on rn}. Without loss of generality, we may assume that $r_0$ is always chosen so that for each $j = 1,\dots , n+1,$ we have that $1/(100n +10j) r_0$ is an integer. With this assumption, the metrics $\tilde{g}_{r_0}$ are invariant under the $\mathbb{Z}^{n+1}$ action on $\mathbb{R}^{n+1}$, so we may consider the quotient $\mathbb{T}^{n+1} = \R^{n+1} / \mathbb{Z}^{n+1}$ equipped with the metric descending from $\tilde{g}_{r_0}$ under the quotient, which we again denote by $\tilde{g}_{r_0}$.

The smooth Riemannian manifolds $(\mathbb{T}^{n+1}, \tilde{g}_{r_0})$ satisfy $R_{\tilde{r_0}} \geq -r_0$, and for any $\delta>0$, there exist $\tau_0 = \tau_0(\delta)$ and $\bar{r}_0 = \bar{r}_0(\delta)$ such that for $r_0 \leq \bar{r}_0$, we have  $\nu(\tilde{g}_{r_0} ,\tau_0 ) \geq -\delta$, provided $\e(r_0)$ is taken to be sufficiently small depending on $r_0$. Sending $r_0 \to 0$, the metrics converge in the Gromov-Hausdorff topology to a point. To see this, we claim that for any $x,y \in (\mathbb{T}^{n+1}, \tilde{g}_{r_0}),$
\begin{align}
	dist_{\tilde{g}_{r_0}}(x,y) \leq 10 \times 200 n^2 r_0 + n\e(r_0). 
\end{align}
Indeed, let $S^1_{ij}$ denote the $S^1$ factor in $\mathbb{T}^{n+1}$ that is the projection of the line $\pi_j^{-1}(z_{ij}) \in \mathbb{R}^{n+1}$ in the construction of Example~\ref{example: lines on rn}. Then, we have
$dist_{\tilde{g}_{r_0}}(y , \cup S^1_{ij}) \leq 200 n r_0$ and $dist_{\tilde{g}_{r_0}}(x , \cup S^1_{ij}) \leq 200 nr_0$ because the collection $\{ S^1_{ij}\}$ is $200n r_0$ dense. Furthermore, for any two points $\tilde{x}, \tilde{y} \in \cup S^1_{ij}$, we have $dist_{\tilde{g}_{r_0}}(\tilde{x} , \tilde{y}) \leq n\e(r_0) + \times 200n^2 r_0.$ Furthermore by \cite[Corollary 2.1]{sormani2011intrinsic}, we have $(\mathbb{T}^{n+1},\tilde g_{r_0})$ converging to the zero current in the Intrinsic Flat sense. However, we directly see that the metric tensors converge to the the flat metric on $\mathbb{T}^{n+1}$ in $L^p$ for all $p < \infty.$ Using this fact and  appealing to the proof of Theorem~\ref{thm: main thm, Lp def new}, we see that  $(\mathbb{T}^{n+1}, \tilde{g}_{r_0})$  converges to the flat $n+1$-dimensional torus in the $d_p$ sense for each $p \in [ n+2,\infty)$.

	}
\end{example}

\subsection{Scalar curvature of the metrics $g_{\delta, \e}$}\label{subsec: scalar lower bound} In this subsection, we show that negative part of the scalar curvature of $g_{\delta,\e}$ can be made arbitrarily small if $\delta,\e$ are small enough. More precisely, we have the following.
 \begin{proposition}\label{prop: scalar nonneg} 
 For any $n\geq 3$ and $\eta>0$, there exist  $\e_0>0$ and $\delta_0>0$ depending on $\eta$   such that for all $\e \leq \e_0$ and $\delta \leq \delta_0$, the metric $g_{\delta,\e}$ defined in \eqref{eqn: metric 2} satisfies $R_{g_{\delta,\e}}\geq -\eta$.
 \end{proposition}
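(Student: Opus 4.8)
\textbf{Proof plan for Proposition~\ref{prop: scalar nonneg}.} The plan is to compute the scalar curvature of the doubly warped product $g_{\delta,\e} = dr^2 + f_{\delta,\e}^2 h + \vphi_{\delta,\e}^2 dx^2$ explicitly and estimate each term on the regions where $\vphi_{\delta,\e}$ and $f_{\delta,\e}$ behave differently. Recall that for a metric of the form $dr^2 + f^2 h_{\mathbb{S}^{n-1}} + \vphi^2 dx^2$, the scalar curvature is
\begin{equation}\label{eqn: scalar of dwp}
	R = (n-1)(n-2)\frac{1-(f')^2}{f^2} - 2(n-1)\frac{f''}{f} - 2\frac{\vphi''}{\vphi} - 2(n-1)\frac{f'\vphi'}{f\vphi}.
\end{equation}
The first two terms are exactly (up to lower-order perturbation) the scalar curvature of the $n$-dimensional warped product $dr^2 + f^2 h$, which is designed to be a smoothed cone on $\R^n$; the last two terms are the ``error'' coming from the degenerate fiber $\vphi$. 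First I would record, from the definition \eqref{def-f} of $f_{\delta,\e}$ and the ODE \eqref{eqn: ODE}, that $dr^2 + f_{\delta,\e}^2 h$ is \emph{exactly} the Euclidean metric for $r$ small (where $\zeta(r/100\e)\equiv 1$ gives $\tilde f' \equiv 1$) and for $r$ large, and that on the transition region $r\in[50\e, 100\e]$ the cone-type curvature contribution $(n-1)(n-2)\frac{1-(f')^2}{f^2}$ is \emph{nonnegative} (since $|f'|\le 1$ there and $n\ge 3$), while $-2(n-1)f''/f$ is controlled: $|f''|\le C n\delta \e^{-1}\cdot \e^{-1}\cdot\ldots$; more carefully, $f'' = O(n\delta/\e)$ from \eqref{eqn: ODE} and $f\ge c\e$ on that region, so $|f''/f| = O(n\delta/\e^2)\cdot$, which is \emph{not} small pointwise—this is the key subtlety and I address it below.

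The main point is that the genuinely dangerous negative contributions are localized, and one exploits the sign of the cone term to absorb them. On the region $r \le 2\e$ roughly, write $f_{\delta,\e} = \tilde f_{\delta,\e}$ and observe $1-(f')^2 = 1 - [1 - 10^4 n\delta(1-\zeta)]^2 \ge 10^4 n\delta (1-\zeta) \cdot (2 - 10^4 n\delta) \ge 10^4 n\delta(1-\zeta)$ for $\delta$ small; meanwhile $f = \tilde f \le r$ and, crucially, $f \ge r(1 - 10^4 n \delta)\ge r/2$ by integrating the ODE, so
\[
	(n-1)(n-2)\frac{1-(f')^2}{f^2} \ge (n-1)(n-2)\,\frac{10^4 n\delta (1-\zeta(r/100\e))}{r^2}.
\]
On the other hand $|f''| = 10^4 n\delta\,|\zeta'(r/100\e)|\cdot(100\e)^{-1} \le 10^6 n\delta/\e$ is supported on $r\in[50\e,100\e]$, where $r\sim \e$, so $2(n-1)|f''/f| \le C_n \delta/\e^2 \sim C_n\delta/r^2$ there; since $\zeta' $ is supported precisely where $1-\zeta$ is bounded below by a positive constant (one may arrange the cutoff so that $1 - \zeta(x) \ge 1/4$ wherever $\zeta'(x)\ne 0$), the cone term with its factor $10^4 n\delta/r^2$ dominates $C_n\delta/r^2$ once $10^4 n > C_n$, which holds by choosing the constant $10^4 n$ in \eqref{eqn: ODE} appropriately (this is exactly why that large constant appears). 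Thus $R_{g_{\delta,\e}}\ge -\,(\text{the }\vphi\text{ terms})$ on this inner region. The hard part of the argument is therefore \emph{not} the cone/warping estimate but controlling the fiber terms $-2\vphi''/\vphi - 2(n-1)f'\vphi'/(f\vphi)$: here I invoke the derivative bounds \eqref{eqn: key derivative bounds for phi}, which give $|\vphi''/\vphi| \le 50\delta/r^2$ and $|f'\vphi'/(f\vphi)| \le (50\delta/r)\cdot(1/f) \le C\delta/r^2$ (using $f\ge r/2$). So these contribute at worst $-C_n\delta/r^2$, which must be absorbed by the \emph{same} cone term $10^4 n\delta/r^2$; again the numerology of the constant $10^4 n$ in \eqref{eqn: ODE} is chosen so that $(n-1)(n-2)\cdot 10^4 n/4$ exceeds the sum of all the $C_n\delta/r^2$-type errors, whence $R_{g_{\delta,\e}}\ge 0$ on the region $r \le r_1$ for some fixed scale, uniformly in $\e,\delta$ small.

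Finally, on the complementary region $r \ge r_1$ (a fixed positive scale, independent of $\e$ for $\e$ small), the function $\phi_\e$ is either $r$ or one of the fixed interpolations $\psi_2$, $1$ — none of which involves $\e$ — so $\vphi_{\delta,\e} = \phi_\e^\delta$ and all its derivative ratios in \eqref{eqn: key derivative bounds for phi} are bounded by $C\delta$ uniformly, while $f_{\delta,\e}$ and its derivatives are likewise uniformly controlled and $f\ge c > 0$. Plugging into \eqref{eqn: scalar of dwp}, every term is $O(1)$ except that each term carrying a $\vphi$-derivative ratio is $O(\delta)$, and the pure warped-product part $dr^2 + f^2 h$ differs from flat $\R^n$ only through the $O(\delta)$ modification in \eqref{eqn: ODE} and \eqref{def-f}, hence has scalar curvature $O(\delta)$; altogether $|R_{g_{\delta,\e}}| \le C_n \delta$ on $r\ge r_1$. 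Combining the two regions: $R_{g_{\delta,\e}} \ge -C_n\delta \ge -\eta$ once $\delta \le \delta_0 := \eta/C_n$ and $\e \le \e_0 := \e_0(\eta, n)$ is small enough that the transition region $[50\e,100\e]$ lies inside $\{r\le r_1\}$ and all the ``$\e$-independent'' statements above hold. I expect the bookkeeping of constants in the inner region — verifying that the positive cone term genuinely dominates the sum of \emph{all} negative contributions (the $f''$ term and both $\vphi$ terms) with the specific constant $10^4 n$ — to be the only real obstacle; everything else is a routine, if lengthy, substitution into \eqref{eqn: scalar of dwp} using \eqref{eqn: key derivative bounds for phi} and \eqref{eqn: ODE}.
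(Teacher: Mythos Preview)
Your overall strategy—substitute into the doubly-warped scalar curvature formula, then split into an inner region (where positive cone curvature from $f$ absorbs the $\vphi$-errors) and an outer region (where everything is $O(\delta)$)—is exactly the paper's approach, and your formula agrees with the paper's Lemma~\ref{Scalar} after regrouping.

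There is, however, a genuine gap. You correctly record the lower bound
\[
(n-1)(n-2)\,\frac{1-(f')^2}{f^2}\ \ge\ (n-1)(n-2)\cdot\frac{10^4n\delta\bigl(1-\zeta(r/100\e)\bigr)}{r^2},
\]
but then silently drop the $(1-\zeta)$ factor when you claim this cone term ``$10^4n\delta/r^2$'' absorbs the $\vphi$-contributions of size $C_n\delta/r^2$. The factor $1-\zeta(r/100\e)$ vanishes identically for $r\le 50\e$, while $\vphi_{\delta,\e}$ already varies from $r=\e/2$ onward; in particular on $[2\e,50\e]$ one has $f=r$ exactly (so the first two terms in your \eqref{eqn: scalar of dwp} vanish) and $\vphi=r^\delta$, giving
\[
R\ =\ -\frac{2\vphi''}{\vphi}-\frac{2(n-1)f'\vphi'}{f\vphi}\ =\ -\frac{2\delta(n-2+\delta)}{r^2},
\]
which at $r=2\e$ is of order $-\delta/\e^2$ and cannot be made $\ge-\eta$ with $\delta$ and $\e$ chosen independently. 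The absorption argument requires the conical defect of $f$ to be already present wherever $\vphi$ has nonzero derivatives, i.e.\ the transition of $f$ must occur at a scale \emph{below} $\e/2$, not at $100\e$. The paper's own Case~2 asserts ``$\sigma\equiv 10^4n\delta$ on $[\e/2,2]$,'' which is only consistent with the cutoff in \eqref{eqn: ODE} being taken at scale $\e/100$ rather than $100\e$; this is evidently the intended construction, and with that correction both your plan and the paper's case split go through verbatim.
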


To begin with, we need the following expression for  the scalar curvature of a metric  $g$ taking the general form of \eqref{eqn: metric 2}.
\begin{lemma}\label{Scalar}Let $M=\mathbb{R}_+\times \mathbb{S}^{n-1}\times \mathbb{R}$ and let $h$ denote the standard metric on $\mathbb{S}^{n-1}$. For any metric $g$ on $M$ taking the form
\begin{equation}
\label{eqn: metric: general form}	
g=dr^2+f^2(r)h+\varphi^2(x,r) dx^2,
\end{equation}
the scalar curvature $R_g$ of $g$ is given by
\begin{equation}\label{eqn: expression for R}
R_g=\frac{(n -1)}{f^2}[2-(f^2)'']+\frac{(n-4)(n-1)}{f^2}[1-(f')^2]-\frac{2\varphi''}{\varphi}-\frac{2(n-1)\varphi' f'}{\varphi f}.
\end{equation}
Here, the prime denotes a derivative  with respect to $r$.
\end{lemma}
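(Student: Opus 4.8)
The plan is to compute the scalar curvature directly from the formula for warped products, or equivalently by setting up an orthonormal coframe and using the structure equations. First I would introduce the orthonormal coframe
\begin{equation}
\theta^0 = dr, \qquad \theta^i = f(r)\,\omega^i \ (i=1,\dots,n-1), \qquad \theta^n = \varphi(x,r)\,dx,
\end{equation}
where $\{\omega^i\}$ is a local orthonormal coframe on $(\mathbb{S}^{n-1}, h)$ with connection forms $\omega^i{}_j$ and curvature $d\omega^i{}_j - \omega^i{}_k\wedge\omega^k{}_j = \tfrac12 R^h_{ijkl}\,\omega^k\wedge\omega^l$, recalling that $(\mathbb{S}^{n-1},h)$ has constant sectional curvature $1$. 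Then I would solve the first structure equations $d\theta^a = -\theta^a{}_b\wedge\theta^b$ to read off the connection $1$-forms: one finds $\theta^i{}_0 = (f'/f)\theta^i$, $\theta^n{}_0 = (\varphi'/\varphi)\theta^n$, $\theta^i{}_j = \omega^i{}_j$, and $\theta^n{}_i = 0$ (here the prime is $\partial_r$; the $x$-derivative of $\varphi$ does not contribute to $\theta^n{}_0$ since $d\theta^n = \varphi'\,dr\wedge dx + \partial_x\varphi\,dx\wedge dx = \varphi'\,dr\wedge dx$).

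Next I would compute the curvature $2$-forms $\Theta^a{}_b = d\theta^a{}_b + \theta^a{}_c\wedge\theta^c{}_b$ and extract the sectional curvatures of the coordinate $2$-planes. This gives
\begin{align}
K_{0i} &= -\frac{f''}{f}, \qquad K_{0n} = -\frac{\varphi''}{\varphi}, \qquad K_{ij} = \frac{1-(f')^2}{f^2}\ (i\neq j), \qquad K_{in} = -\frac{f'\varphi'}{f\varphi}.
\end{align}
The scalar curvature is then twice the sum of all sectional curvatures over unordered pairs:
\begin{equation}
R_g = 2\sum_{a<b} K_{ab} = 2(n-1)K_{0i} + 2K_{0n} + 2(n-1)K_{in} + (n-1)(n-2)K_{ij}.
\end{equation}
Substituting the expressions above,
\begin{equation}
R_g = -\frac{2(n-1)f''}{f} - \frac{2\varphi''}{\varphi} - \frac{2(n-1)f'\varphi'}{f\varphi} + \frac{(n-1)(n-2)(1-(f')^2)}{f^2}.
\end{equation}
Finally I would massage the first and last terms into the form stated in the lemma. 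Since $(f^2)'' = 2(f')^2 + 2ff''$, we have $-2(n-1)f''/f = -(n-1)(f^2)''/f^2 + 2(n-1)(f')^2/f^2$, so
\begin{equation}
-\frac{2(n-1)f''}{f} + \frac{(n-1)(n-2)(1-(f')^2)}{f^2} = \frac{n-1}{f^2}\big[2 - (f^2)''\big] + \frac{(n-1)(n-4)}{f^2}\big[1 - (f')^2\big],
\end{equation}
after collecting the coefficient of $1-(f')^2$: from the first piece we get an extra $+2(n-1)(f')^2/f^2 = -2(n-1)(1-(f')^2)/f^2 + 2(n-1)/f^2$, and combining constants $2(n-1)/f^2$ with the $(n-1)(2-(f^2)'')/f^2$ term bookkeeps correctly; the remaining $(1-(f')^2)$ coefficient is $(n-1)(n-2) - 2(n-1) = (n-1)(n-4)$. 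This yields exactly \eqref{eqn: expression for R}, noting that the stated form absorbs the curvature contribution into the $f$ terms in a way independent of whether $\varphi$ depends on $x$ (only $\partial_r\varphi$ and $\partial_r^2\varphi$ enter).

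The computation is entirely routine; the only mild subtlety—which I would flag explicitly—is checking that $\partial_x\varphi$ genuinely drops out. This is because $dx\wedge dx = 0$, so the term $\partial_x\varphi\,dx$ in $d\theta^n$ contributes nothing to the structure equation determining $\theta^n{}_0$, and similarly $\theta^n{}_i=0$ holds regardless. Hence the scalar curvature depends on $\varphi$ only through its $r$-derivatives, exactly as in the case where $\varphi = \varphi(r)$, and formula \eqref{eqn: expression for R} holds verbatim. No real obstacle is expected; the main care is just in the algebraic rearrangement of the $f$-dependent terms to match the precise form used later in the scalar curvature estimate.
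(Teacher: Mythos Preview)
Your proof is correct and reaches the same intermediate expression
\[
R_g = -\frac{2(n-1)f''}{f} - \frac{2\varphi''}{\varphi} - \frac{2(n-1)f'\varphi'}{f\varphi} + \frac{(n-1)(n-2)(1-(f')^2)}{f^2}
\]
as the paper, followed by the identical algebraic rearrangement using $(f^2)''=2(f')^2+2ff''$. The route, however, is different: the paper works in coordinates, computes all the Christoffel symbols (including $\Gamma^x_{xx}=\varphi_x/\varphi$), and then evaluates the Ricci components $R_{rr}$, $R_{ij}$, $R_{xx}$ before tracing. You instead use Cartan's moving-frame method with an orthonormal coframe, read off the connection $1$-forms from the first structure equations, and compute the four sectional curvatures $K_{0i}, K_{0n}, K_{ij}, K_{in}$ directly. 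Your approach is slightly more streamlined and makes the disappearance of $\partial_x\varphi$ transparent at the outset (via $dx\wedge dx=0$), whereas in the paper's computation one has to track $\Gamma^x_{xx}$ through the Ricci formula and observe that it cancels. The coordinate approach has the minor advantage of giving the individual Ricci components, which are not needed here but could be useful elsewhere. One small remark: your narration of the final bookkeeping (``combining constants $2(n-1)/f^2$ with the $(n-1)(2-(f^2)'')/f^2$ term'') is slightly imprecise---those two constant terms in fact cancel each other rather than combine---but the displayed identity and the final coefficient $(n-1)(n-4)$ are correct.
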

\begin{proof}
Let $i,j,k,...$ be the coordinates on $\mathbb{S}^{n-1}$. As noted in the statement of the lemma, for a function $F$, we will use $F'$ to denote $\partial_r F$. We let $F_x$ denote $\pa_x F.$  We first compute the Christoffel symbols that will be needed in our computation. First, we have 
\begin{equation}
\begin{split}
\Gamma_{rA}^B=\frac{1}{2} g^{BC}\left(\partial_r g_{AC}+\partial_A g_{rC}-\partial_Cg_{rA} \right)&=\frac{1}{2} g^{BC} \partial_r g_{AC}
=
\begin{cases}
\displaystyle\frac{f'}{f} \delta^i_j  &  \text{if}\;\; A=j, B=i;\\
	\displaystyle\frac{\varphi'}{\varphi} & \text{if}\;\; A=x, B=x;\\
	0 & \text{otherwise}.
\end{cases}
\end{split}
\end{equation}
Next, note that 
\begin{equation}
\begin{split}
\Gamma_{ij}^k&=\frac{1}{2} g^{kl}\left(\partial_i g_{jl}+\partial_jg_{il}-\partial_lg_{ij} \right)=\frac{1}{2} h^{kl}\left(\partial_i h_{jl}+\partial_jh_{il}-\partial_lh_{ij} \right)=\tilde\Gamma_{ij}^k,
\end{split}
\end{equation}
where $\tilde\Gamma_{ij}^k$ denotes the Christoffel symbol for the standard metric $h$. Next, we compute 
\begin{equation}
\begin{split}
\Gamma_{ij}^r&=-\frac{1}{2}g^{rr}\partial_r g_{ij} =-ff' h_{ij};\\
\Gamma_{xx}^r&
=-\frac12 \partial_r \varphi^2=-\varphi \varphi'; \\
\Gamma_{xx}^x&=\frac{1}{2}g^{xx}\left( \partial_x g_{xx}+\partial_xg_{xx}-\partial_x g_{xx}\right)=\frac{\varphi_x}{\varphi}.
\end{split}
\end{equation}
%
The remaining Christoffel symbols vanish: $\Gamma_{ij}^x=\Gamma_{ix}^\bullet=\Gamma_{xx}^i=0$. 

With these Christoffel symbols in hand, we compute the Ricci curvatures $R_{rr}, R_{ij}$ and $R_{xx}$. 
We have 
\begin{equation}
\begin{split}
R_{rr}&=\partial_A \Gamma^A_{rr}-\partial_r\Gamma^A_{rA}+\Gamma^A_{AB}\Gamma^B_{rr}-\Gamma_{rA}^B\Gamma_{Br}^A\\
&=0-(\partial_r\Gamma_{ri}^i+\partial_r \Gamma_{rx}^x)+0-(\Gamma_{ri}^j\Gamma^j_{ri}+\Gamma_{rx}^x \Gamma_{rx}^x)\\
&=-\sum_{i=1}^{n-1}\partial_r \left(\frac{f'}{f} \right)-\partial_r \left(\frac{\varphi'}{\varphi} \right)-\sum_{i=1}^{n-1}  \left(\frac{f'}{f}\right)^2-\frac{(\varphi')^2}{\varphi^2}=-(n-1)\frac{f''}{f}-\frac{\varphi''}{\varphi}.
\end{split}
\end{equation}
For $R_{ij}$, we have 
\begin{equation}
\begin{split}
R_{ij}&=\partial_A \Gamma^A_{ij}-\partial_j \Gamma^A_{iA}+\Gamma^A_{AB}\Gamma^B_{ij}-\Gamma_{jA}^B\Gamma^A_{Bi}\\
&=\tilde R_{ij}+(\partial_r \Gamma_{ij}^r+\partial_x \Gamma_{ij}^x)-(\partial_j\Gamma^r_{ir}+\partial_j\Gamma^x_{ix})  +(\Gamma^r_{rk}\Gamma_{ij}^k+\Gamma^x_{xk}\Gamma_{ij}^k+\Gamma^A_{Ar}\Gamma_{ij}^r+\Gamma_{Ax}^A\Gamma_{ij}^x)\\
&\quad -(\Gamma_{jk}^r\Gamma_{ri}^k+\Gamma_{jk}^x\Gamma_{xi}^k+\Gamma_{jr}^B\Gamma_{iB}^r+\Gamma_{jx}^B\Gamma_{iB}^x)\\
&=\tilde R_{ij}+\partial_r \Gamma_{ij}^r+\Gamma_{ij}^k(\Gamma_{rk}^r+\Gamma_{xk}^x)+\Gamma_{ij}^r (\Gamma_{kr}^k+\Gamma_{xr}^x) -(\Gamma_{jk}^r\Gamma_{ri}^k+\Gamma_{jr}^k\Gamma_{ik}^r)\\
&=\tilde R_{ij} -(f')^2 h_{i j}-ff'' h_{i\bar j}+(-ff' h_{ij})\left[(n-1)\frac{f'}{f}+\frac{\varphi'}{\varphi} \right]+2(f')^2h_{ij}\\
&=\tilde R_{ij}- h_{ij}\left(ff''+(n-2)(f')^2+\frac{\varphi' ff'}{\varphi}\right).
\end{split}
\end{equation}
Here, $\tilde{R}_{ij}$ denotes the Ricci curvature of $h$.
Finally, for $R_{xx}$, we have
\begin{equation}
\begin{split}
R_{xx}&=\partial_A \Gamma^A_{xx}-\partial_x\Gamma^A_{xA}+\Gamma^A_{AB}\Gamma^B_{xx}-\Gamma_{xA}^B\Gamma_{Bx}^A\\
&=(\partial_r \Gamma^r_{xx}+\partial_x \Gamma^x_{xx})-(\partial_x\Gamma_{xx}^x)+(\Gamma_{xx}^r\Gamma_{Ar}^A+\Gamma_{xx}^x\Gamma_{Ax}^A)-(\Gamma_{xr}^B\Gamma_{Bx}^r+\Gamma_{xx}^B\Gamma_{Bx}^x)\\
&=\left(-\varphi \varphi''-(\varphi')^2\right)+\Gamma_{xx}^r(\Gamma^i_{ir}+\Gamma^x_{rx})+\Gamma_{xx}^x\Gamma_{xx}^x-(\Gamma_{xr}^x\Gamma_{xx}^r+\Gamma_{xx}^r\Gamma_{rx}^x+\Gamma_{xx}^x\Gamma_{xx}^x)\\
&=\left(-\varphi \varphi''-(\varphi')^2\right)+\Gamma_{xx}^r\Gamma^i_{ir}-\Gamma_{xr}^x\Gamma_{xx}^r\\
&=\left(-\varphi \varphi''-(\varphi')^2\right)+(n-1)(-\varphi \varphi')(\frac{f'}{f})+(\varphi')^2\\
&=-\varphi\varphi''-(n-1)\frac{\varphi \varphi' f'}{f}.
\end{split}
\end{equation}

Therefore, using the fact that $\tilde R_{ij}=(n-2)h_{ij}$, we have
\begin{equation}
\begin{split}
R&=g^{rr}R_{rr}+g^{ij}R_{ij}+g^{xx}R_{xx}\\
&=-(n-1)\frac{f''}{f}-\frac{\varphi''}{\varphi}+\frac{1}{f^2}h^{ij}\left( \tilde R_{ij}- h_{ij}\left(ff''+(n-2)(f')^2+\frac{\varphi' ff'}{\varphi}\right)\right)\\
&\quad -\frac1{\varphi^2}\left( \varphi\varphi''+(n-1)\frac{\varphi \varphi' f'}{f}\right)\\
&=-(n-1)\frac{f''}{f}-\frac{2\varphi''}{\varphi}-\frac{(n-1)\varphi' f'}{\varphi f}+(n-1) \left[ \frac{n-2}{f^2}-\frac{f''}{f} -(n-2)\frac{(f')^2}{f^2}-\frac{\varphi' f'}{\varphi f}\right]\\
&=-\frac{2(n-1)f''}{f}-\frac{2\varphi''}{\varphi}+\frac{(n-2)(n-1)}{f^2}-\frac{(n-2)(n-1)(f')^2}{f^2}-\frac{2(n-1)\varphi' f'}{\varphi f}\\
&=-\frac{2(n-1)f''}{f}+\frac{(n-2)(n-1)}{f^2}[1-(f')^2]-\frac{2(n-1)\varphi' f'}{\varphi f} -\frac{2\varphi''}{\varphi}
\end{split}
\end{equation}
Finally, to arrive at \eqref{eqn: expression for R}, we note that $2 f''/f = (f^2)'' /f^2 - 2(f')^2/f^2$, and so 
\begin{equation}
-\frac{2(n-1)f''}{f} = - \frac{(n-1) (f^2)''}{f^2} + \frac{2(n-1) (f')^2}{f^2}= \frac{n-1}{f^2}[2-(f^2)''] - \frac{2(n-1)}{f^2}[1 - (f')^2].  
\end{equation}
Rearranging this expression, we arrive at \eqref{eqn: expression for R}, completing the proof.
\end{proof}

With Lemma~\ref{Scalar} in hand, we are now ready to prove Proposition~\ref{prop: scalar nonneg}.
\begin{proof}[Proof of Proposition~\ref{prop: scalar nonneg}]
For notational convenience, we will omit the indices $\delta$ and $\e$, letting $g, f,$ and $\varphi$ denote $g_{\delta,\e}$, $f_{\delta,\e}$, and $\varphi_{\delta,\e}$ respectively. We will assume that $\e \leq \e_0$ and $\delta \leq \delta_0$, where  $\e_0,\delta_0<\frac14$ will be fixed within the proof.  By Lemma~\ref{Scalar},  the scalar curvature of $g$  takes the form  
\begin{equation}\label{eqn: scalar I II III}
\begin{split}
R&=\frac{(n-1)}{f^2}\left(2-(f^2)''\right)+\frac{(n-4)(n-1)}{f^2}\left[ 1-(f')^2\right]-\frac{2\varphi''}{\varphi}-\frac{2(n-1)\varphi'f'}{\varphi f}\\
&=\mathbf{I}+\mathbf{II}+\mathbf{III}+\mathbf{IV}.
\end{split}
\end{equation}
We estimate the scalar curvature from below in three different intervals of $r$ in the cases below.
First, recall from the definition  \eqref{def-f} of $f$  that 
 \begin{equation}\label{eqn: f tilde f}
\begin{cases}
	f(r)= \tilde{f}(r) & \text{ for } r \leq 2,\\
	f(r)=r & \text{ for }r \geq 4,
\end{cases}
\end{equation}
where $\tilde{f}$ is the solution of the ODE \eqref{eqn: ODE}.
We first collect some useful estimates for $\tilde f$. For notational convenience, we denote $\sigma=10^4n\delta (1-\zeta)$ and $\sigma_0=10^4n\delta$ in the definition of \eqref{eqn: ODE} so that $\tilde f'=1-\sigma$ where $\sigma$ increases from $0$ to $\sigma_0=10^4n\delta$. We will assume $\delta_0$ is small enough so that $\sigma_0<\frac14$. Since $\tilde f(0)=0$, by integrating, we therefore find that
\begin{equation}\label{tildef-estimate}
\left\{
\begin{array}{ll}
\frac34 r<(1-\sigma_0)r\leq \tilde f(r)\leq r;\\
(1-\sigma_0)\leq \tilde f'(r)\leq 1.
\end{array}
\right.
\end{equation}

{\bf Case 1. $r\leq \frac{1}{2}\e$.} 
In this case, $\varphi(r) \equiv \e$, so that terms $\mathbf{III}$  and $\mathbf{IV}$ 
in \eqref{eqn: scalar I II III} vanish, and 
$f=\tilde f$  thanks to \eqref{eqn: f tilde f}. We first consider the case when $n\geq 4$ since $\mathbf{II}$ is clearly nonnegative by \eqref{eqn: ODE}. By \eqref{eqn: ODE} and \eqref{tildef-estimate}, if $\delta_0$ is sufficiently small so that $\sigma< 1$ then
\begin{equation}
\begin{split}
\mathbf{I}+\mathbf{II}\geq \frac{n-1}{ f^2}\left(2-( f^2)'' \right)&=\frac{2(n-1)}{ f^2}\left(1- f  f''-( f')^2 \right)\\
&\geq \frac{2(n-1)  (2-\sigma)\sigma}{ f^2}\\
&>\frac{2(n-1)\sigma}{f^2}.
\end{split}
\end{equation}
Here we have used $\sigma'\geq 0$ and hence $R>0$ when $n\geq 4$. It remains to consider $n=3$. In this case, we have 
\begin{equation}
\begin{split}
\mathbf{I}+\mathbf{II}&=\frac{2}{ f^2}\left(1-( f^2)''+( f')^2 \right)\\
&= \frac2{ f^2} \left( 1-2 f  f''-(f')^2\right)\\
&\geq \frac2{ f^2} \left( 1-(1-\sigma)^2\right)\geq  \frac{2\sigma}{ f^2}.
\end{split}
\end{equation}
And hence we also have $R>0$  when $n=3$ as long as $\delta_0$ is small enough.
\smallskip

{\bf Case 2. $r\in [\frac{1}{2}\e,2]$.} In this case, we still have $f=\tilde f$ by \eqref{eqn: f tilde f} and $\sigma\equiv 10^4n\delta$ in this range. Therefore by \eqref{tildef-estimate} and the computation in {\bf Case 1}, for $n\geq 3$ and sufficiently small $\delta_0$,
\begin{equation}\label{eqn: I lb case 2}
\mathbf{I}+\mathbf{II}\geq \frac{\sigma}{f^2}\geq \frac{10^4n\delta}{r^2}.
\end{equation}
On the other hand, by \eqref{eqn: key derivative bounds for phi}, we see that 
\begin{equation}
\begin{split}
\mathbf{III}&=-\frac{2\varphi''}{\varphi}
\geq -\frac{100\delta}{r^2}
\end{split}
\end{equation}
Similarly, using \eqref{eqn: key derivative bounds for phi} and  \eqref{tildef-estimate} we find that
\begin{equation}
\begin{split}
\mathbf{IV}&=-\frac{2(n-1)f'\varphi'}{f\varphi}\geq  -\frac{1200n\delta}{r^2}.
\end{split}
\end{equation}
Hence, $R=\mathbf{I}+\mathbf{II}+\mathbf{III}+\mathbf{IV}>0$.
\hfill\\



{\bf Case 3. $r> 2$.} In this case, $\varphi\equiv 1$, and so terms $\mathbf{III}$ and $\mathbf{IV}$
 in \eqref{eqn: scalar I II III} vanish. Furthermore, we directly see that $\mathbf{I}=\mathbf{II}=0$ when $r \geq 4$ since $f(r) = r$ there. 
So, it remains to show that $\mathbf{I}+\mathbf{II} \geq -\eta$ in the case when $r \in (2,4)$. Note that for $r$ in this interval, we have from \eqref{tildef-estimate} that 
\begin{equation}
f(r)=\zeta(r/4) \tilde f(r)+(1-\zeta(r/4))r\geq \frac{1}{2}r\geq 1.
\end{equation}
Therefore, it remains to estimate $|2-(f^2)''|$ and $|1-(f')^2|$ for $r \in (2,4)$. By rewriting $2-(f^2)''=(r^2-f^2)''$ and $1-(f')^2=(r')^2-(f')^2$, it suffices to estimate $(r-f)''$. By the construction of $\zeta$ and \eqref{tildef-estimate} for $r\in (2,4)$,
\begin{equation}
\begin{split}
|(f-r)''|&\leq |\zeta''(r/4)(\tilde f-r)|+|\zeta'(r/4) (\tilde f-r)'|+|\zeta(r/4) (\tilde f-r)''|\\
&\leq 10|\tilde f-r|+10|\tilde f'-1|\\
&\leq 100\sigma_0.
\end{split}
\end{equation}
By combinning with \eqref{tildef-estimate}, we conclude that $||f-r||_{C^2((2,4))}\leq C_n\delta_0$. And hence for $r\in (2,4)$,
\begin{equation}
\begin{split}
R&=\mathbf{I}+\mathbf{II}+\mathbf{III}+\mathbf{IV}\\
&\geq -(n-1)|(r^2-f^2)''|-(n-4)(n-1)|1-(f')^2|\\
&\geq -C_n\delta_0.
\end{split}
\end{equation}
Therefore, if $\delta_0$ is sufficiently small, then the right hand side will be larger than $-\eta$. This completes the proof.
\end{proof}


\subsection{Entropy lower bound for the metrics $g_{\delta,\e}$}\label{subsec: entropy lower bound}
In this section, we will show that the entropy of the metric $g_{\delta,\e}$ on $\mathbb{R}^{n+1}$ can be made arbitrarily small by taking $\e,\delta$ to be sufficiently small for $n\geq 3$.

\begin{proposition}\label{thm: entropy goes to zero}
	For any $n\geq 3,\eta,L>0$, there exists $\e_0>0$ such that for any $\e,\delta \leq \e_0,$ the metric $g_{\delta,\e}$ defined in \eqref{eqn: metric 2} satisfies
	\begin{align}\label{eqn: entropy bound}
		\nu (g_{\delta,\e},L) \geq -\eta.
	\end{align}
\end{proposition}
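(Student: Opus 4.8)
The plan is to bound the entropy $\nu(g_{\delta,\e},L)$ from below by comparing $g_{\delta,\e}$ to the Euclidean metric on $\R^{n+1}$, exploiting that the two metrics differ only on a region that shrinks as $\e,\delta\to 0$ and, more importantly, that the \emph{ratio} of volume forms and the distortion of gradients are controlled uniformly. Since $\nu(g_{euc},L)=0$ on $\R^{n+1}$ and, more to the point, since for any fixed $L$ we may choose a slightly smaller scale $L'<L$ with $\nu(g_{euc},L')=0$ and exploit monotonicity, it suffices to show that the $\W$-functional $\W(g_{\delta,\e},f,\tau)$ differs from the corresponding Euclidean quantity by at most $\eta$, uniformly over admissible $f$ and $\tau\le L$. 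Recall $\W(g,f,\tau)=(4\pi\tau)^{-(n+1)/2}\int (\tau(|\na f|^2+R)+f-(n+1))e^{-f}\,d\vol_g$; the two dangerous terms are the $\tau R$ term (handled by Proposition~\ref{prop: scalar nonneg}, which gives $R\ge -\eta$, so $\tau R\ge -L\eta$ pointwise, contributing a harmless error after integrating against the probability measure $(4\pi\tau)^{-(n+1)/2}e^{-f}d\vol_g$), and the remaining terms $\tau|\na f|^2+f-(n+1)$, which must be compared term by term with their Euclidean counterparts.

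\textbf{Key steps.} First I would set up the comparison map: $g_{\delta,\e}$ and $g_{euc}$ are both metrics on $\R^{n+1}$, and there is an explicit diffeomorphism (the ``geodesic polar'' identification built into \eqref{eqn: metric 2}, using $f_{\delta,\e}$ to identify $\R_+\times\mathbb{S}^{n-1}$ with $\R^n$) under which one can write $g_{euc}$ and $g_{\delta,\e}$ in the same coordinates. The estimates \eqref{tildef-estimate}, \eqref{eqn: key derivative bounds for phi}, and the structure of $\phi_\e$ show that away from the set $\{r\le 2\}$ the two metrics literally coincide, while on $\{r\le 2\}$ one has $|\tilde f - r|\le C\delta$, $C^2$-closeness of the $f$-part to the Euclidean one up to $O(\delta)$, and $\varphi_{\delta,\e}=\phi_\e^\delta$ with $\phi_\e\in[\e/2,2]$ in the smoothed regions, hence $\varphi_{\delta,\e}$ ranges between $\e^\delta$ and $O(1)$. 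Second, I would estimate the volume-form comparison: $d\vol_{g_{\delta,\e}} = \varphi_{\delta,\e}(r)\,(f_{\delta,\e}(r)/r)^{n-1}\,d\vol_{g_{euc}}$, and show that the multiplicative factor is $1+O(\delta)$ on $\{\e\le r\le 2\}$, bounded between $\e^\delta$ and $1+O(\delta)$ on $\{r\le \e\}$ (a set of Euclidean measure $O(\e^{n})$), and exactly $1$ for $r\ge 4$. Third, I would use the key observation that $\varphi_{\delta,\e}\le 1$ everywhere, so the fiber directions are only \emph{contracted}: this means $|\na f|^2_{g_{\delta,\e}} \ge$ (the $r$- and sphere-direction contributions) and, crucially, $d\vol_{g_{\delta,\e}}\le (1+O(\delta))\,d\vol_{g_{euc}}$ away from the tiny core. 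Fourth, given any $f$ with $(4\pi\tau)^{-(n+1)/2}\int e^{-f}d\vol_{g_{\delta,\e}}=1$, I would transplant it to an almost-admissible test function on $\R^{n+1}$ with the Euclidean metric, show the normalization is preserved up to $1+O(\delta)$, and compare the three pieces of $\W$. The Gaussian concentration of $e^{-f}$ (forced by the log-Sobolev structure at scale $\tau\le L$) confines the bulk of the mass to a bounded region, so the contribution from $\{r\le\e\}$, where the comparison is worst, is suppressed both by the small Euclidean volume $O(\e^n)$ and by the integrability of $e^{-f}$ against a measure that is at worst comparable to Lebesgue measure there.

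\textbf{The main obstacle.} The hard part will be handling the region $\{r\le \e\}$ where the fiber factor $\varphi_{\delta,\e}$ is as small as $\e^\delta$: here $g_{\delta,\e}$ is genuinely degenerate relative to $g_{euc}$ as $\e\to 0$, and one cannot simply quote a smooth convergence statement. The resolution is that on $\{\e/2\le r\le 2\e\}$ the gradient $|\na\varphi_{\delta,\e}|$ can be of size $\delta\e^{\delta-1}$, which when it appears in $|\na f|^2$ or in curvature-type quantities could in principle be large; but in the $\W$-functional only $|\na f|^2$ of the \emph{test function} enters, not of the metric coefficients, so the point is merely to bound how much the pullback of a Euclidean test function's gradient gets distorted — and since $g_{\delta,\e}^{-1}$ has its largest eigenvalue $\varphi_{\delta,\e}^{-2}=\phi_\e^{-2\delta}$, which is only $O(\e^{-2\delta})=e^{O(\delta|\log\e|)}$, this stays bounded as long as $\delta|\log\e|\to 0$ — but in fact we want it bounded for \emph{all} small $\delta,\e$ independently, which forces us instead to run the comparison in the \emph{other} direction: we transplant Euclidean test functions \emph{to} $g_{\delta,\e}$ for the upper bound on $\W$ (using $d\vol_{g_{\delta,\e}}\le(1+O(\delta))d\vol_{g_{euc}}$ and $|\na_{g_{\delta,\e}}f|\le|\na_{g_{euc}}f|$ for functions independent of the fiber variable $x$, since contracting the fiber only decreases the gradient norm), thereby showing $\mu(g_{\delta,\e},\tau)\le\mu(g_{euc},\tau)+O(\delta)+O(L\eta)$ — wait, that gives an \emph{upper} bound, which is the easy direction. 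For the needed \emph{lower} bound on $\nu$, one must take an actual minimizing (or nearly minimizing) $f$ for $\mu(g_{\delta,\e},\tau)$, which a priori could concentrate in the degenerate core; the key point to establish is that such an $f$ cannot do so, because concentrating mass where $d\vol_{g_{\delta,\e}}$ is tiny costs too much in the $f e^{-f}$ and entropy terms. I would prove this by a direct argument: split $\int = \int_{\{r\le\e\}} + \int_{\{r>\e\}}$, bound the first integral using $d\vol_{g_{\delta,\e}}\le d\vol_{g_{euc}}$ there and the elementary inequality $te^{-t}\ge -e^{-2}$, $|\na f|^2 e^{-f}\ge 0$, $Re^{-f}\ge -\eta e^{-f}$, so the first piece contributes at least $-C\,(4\pi\tau)^{-(n+1)/2}\,\mathrm{vol}_{euc}(\{r\le\e\}\cap B_{euc}(0,C\sqrt\tau))\ge -C(\e/\sqrt\tau)^{n}\ge -C\e^{n}L^{-n/2}$, which is $\ge -\eta/2$ for $\e$ small; on $\{r>\e\}$ the metric $g_{\delta,\e}$ is $(1+O(\delta))$-bi-Lipschitz to $g_{euc}$ in all directions, so $\W$ restricted there is $\ge (1-C\delta)\W_{euc}(\cdots) - C\delta - CL\eta \ge -\eta/2$ by the Euclidean bound $\mu(g_{euc},\tau)=0$ together with a standard cutoff/normalization argument. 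Summing the two pieces gives $\W(g_{\delta,\e},f,\tau)\ge -\eta$ for every admissible $f$ and every $\tau\le L$, hence $\nu(g_{\delta,\e},L)\ge-\eta$, completing the proof.
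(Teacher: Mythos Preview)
Your comparison strategy has a genuine gap that would make the argument fail as written. The crucial claim that on $\{r>\e\}$ the metric $g_{\delta,\e}$ is $(1+O(\delta))$-bi-Lipschitz to $g_{euc}$ is false: for $2\e\le r\le 1/2$ one has $\varphi_{\delta,\e}(r)=r^\delta$, so at (say) $r=1/100$ the fiber factor is $(1/100)^\delta$, which is not $1+O(\delta)$ for small $\delta$, and for $r$ near $2\e$ it equals $(2\e)^\delta=e^{\delta\log(2\e)}$, which tends to $0$ as $\e\to 0$ with $\delta$ held fixed. You noticed exactly this obstruction earlier (``this stays bounded as long as $\delta|\log\e|\to 0$ --- but in fact we want it bounded for all small $\delta,\e$ independently''), then forgot it in the final paragraph. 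The upshot is that the region where $g_{\delta,\e}$ is genuinely far from Euclidean is not the tiny core $\{r\le \e\}$ but the entire infinite strip $\{r\le 1/2\}\times\R$, and that strip has infinite Euclidean volume, so the ``small volume times pointwise bound'' argument for the bad region collapses. Your appeal to ``Gaussian concentration of $e^{-f}$ forced by the log-Sobolev structure at scale $\tau$'' is also circular: that concentration is exactly what an entropy lower bound would give, and is what you are trying to prove.

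The paper's proof is qualitatively different and addresses precisely this difficulty. It argues by contradiction: assuming a sequence $\e_i,\delta_i\to 0$, $\tau_i\in(0,L]$ with $\mu(\tau_i^{-1}g_{\delta_i,\e_i},1)=-\eta$, it takes the actual minimizers $u_i$ (whose existence and Gaussian decay are established in Lemma~\ref{lemma: existence of extremal}), locates their maximum points $\bar x_i$, and pulls back via the diffeomorphisms $\Phi_{\tau_i,\bar r_i,\delta_i,\e_i}$ of \eqref{eqn: good diffeomorphism}, which \emph{rescale the degenerating $y$-direction by $\varphi_{\delta_i,\e_i}(\bar r_i+\tau_i^{1/2})^{-1}$}. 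This is the key idea you are missing: the rescaling undoes the degeneration so that, by Lemma~\ref{lemma: good charts}, the pulled-back metrics converge to $g_{euc}$ in $C^\infty_{loc}$ away from a fixed line $\tilde\ell_\infty$ (and are only mildly singular there, cf.\ \eqref{eqn: lower bound near line}). A delicate Moser iteration (Lemma~\ref{lem: MVI}), which must be done carefully because of the mild degeneracy along $\tilde\ell_\infty$, gives uniform $L^\infty$ bounds on the pulled-back minimizers $v_i$, and the limit $v_\infty$ is a nontrivial bounded $W^{1,2}$ solution of the Euler-Lagrange equation on $\R^{n+1}$ with $\mu=-\eta<0$. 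Lemma~\ref{lem: euclidean gaussian} then forces $\mu=0$, a contradiction. In short, the paper does not compare the metrics directly; it shows instead that any putative bad minimizer survives a suitable blow-up limit as a bad minimizer on Euclidean space, which cannot exist.
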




 
Before we give the proof of Proposition~\ref{thm: entropy goes to zero}, we start with some basic notation and preliminaries. Throughout this section, it will be convenient to rescale the metric by $\tau^{-1}$ so that we only need to estimate $\mu(g, \tau)= \mu(\tau^{-1}g, 1)$. Given a minimizer $f_\tau$ of $\mu(\tau^{-1}g,1)$, we define the associated function
\[
u_\tau = (4\pi)^{-n/4} \exp\{-f_\tau/2\}
\]
which satisfies $\int_M u_{\tau}^2 \, d\vol_{\tau^{-1}g}=1.$  In a slight abuse of terminology, we will refer to this function $u_{\tau}$ as a minimizer of $\mu(\tau^{-1}g,1)$. A minimizer $u_\tau$ of $\mu(\tau^{-1}g,1)$  is a positive smooth function satisfying the Euler-Lagrange equation
\begin{equation}\label{eqn: EL lemma tau=1}
 	- 4 \Delta u_\tau + Ru_\tau - 2u_\tau \ln u_\tau -\left(\frac{n+1}{2}\log(4\pi) + (n+1) +\mu(\tau^{-1}g,1 )\right)u_\tau =0,
 \end{equation}
where the Laplacian and scalar curvature are with respect to $\tau^{-1}g$. On Euclidean space $\R^{n+1}$, it is well-known that the minimizers $u$ of $\mu(g_{euc}, 1)$ are uniquely given by the Gaussian functions 
 \begin{equation}
	\label{eqn: euclidean gaussian}
	u^2 = (4\pi)^{-(n+1)/2} \exp\{ -|x-y|^2/4\},
\end{equation}
where $|\cdot|$ is the Euclidean metric and $y$ is a fixed point on $\mathbb{R}^{n+1}$. Indeed, this is precisely the log-Sobolev inequality on Euclidean space. The following lemma shows that these functions are in fact  the only bounded $W^{1,2}$ sub-solutions of \eqref{eqn: EL lemma tau=1} on Euclidean space.
\begin{lemma}[Characterization of solutions on Euclidean space]\label{lem: euclidean gaussian} Fix $\mu \leq 0$ and 
	let $u \in W^{1,2}(\R^{n+1})$ be a bounded solution to 
		\begin{equation}\label{eqn: limit equation lem}
		4\Delta u +  u\log u^2 +\left( \frac{n+1}{2}\log(4\pi) +(n+1) + \mu\right) u \geq 0 
	\end{equation}
	in $\R^{n+1}$
	with 
	\begin{equation}\label{eqn: norm leq 1}
	\int_{\R^n}u^2 \,dx \leq 1.
	\end{equation}
	 Then $\mu = 0$ and $u$ takes the form \eqref{eqn: euclidean gaussian}
	  for some $y \in \R^{n+1}$.
\end{lemma}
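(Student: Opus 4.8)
\textbf{Proof proposal for Lemma~\ref{lem: euclidean gaussian}.}

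The plan is to use the log-Sobolev inequality on Euclidean space together with the structure of its equality cases to pin down $u$. First I would recall that for any $v \in W^{1,2}(\R^{n+1})$ with $\int_{\R^{n+1}} v^2\,dx = 1$, the Euclidean log-Sobolev inequality states that
\begin{equation}
	\int_{\R^{n+1}} \big( 4|\nabla v|^2 - v^2 \log v^2 \big)\,dx \geq (n+1) + \tfrac{n+1}{2}\log(4\pi),
\end{equation}
which is exactly the statement $\mu(g_{euc},1) = 0$, and that equality holds precisely when $v^2$ is a unit Gaussian of the form \eqref{eqn: euclidean gaussian}. So the heart of the matter is to show that the given subsolution $u$, after normalizing, is actually an equality case; the inequality \eqref{eqn: limit equation lem} is what makes this work in one direction only.

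The key steps, in order: (1) First dispose of the normalization: if $\int u^2 = c \le 1$, I would like to eventually show $c = 1$, but for now work with $\tilde u = u/\sqrt c$. (2) Multiply the differential inequality \eqref{eqn: limit equation lem} by $u$ and integrate over $\R^{n+1}$. The integration by parts $\int u \Delta u = -\int |\nabla u|^2$ must be justified using $u \in W^{1,2}$, $u$ bounded, and elliptic regularity applied to \eqref{eqn: limit equation lem} to get enough decay/integrability of $\nabla u$ and of $u^2\log u^2$ (here boundedness of $u$ controls $u^2 \log u^2$ from above, and the equation forces the needed decay — this is the one genuinely technical point; on a non-compact space one typically inserts a cutoff $\chi_R$, integrates by parts, and sends $R\to\infty$, using that $u$ solves a uniformly elliptic equation with bounded coefficients so that $u$ and $\nabla u$ have Gaussian-type decay, or at least enough decay for the cutoff error to vanish). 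This yields
\begin{equation}
	\int_{\R^{n+1}} \big( 4|\nabla u|^2 - u^2\log u^2\big)\,dx \;\le\; \Big( (n+1) + \tfrac{n+1}{2}\log(4\pi) + \mu\Big)\int_{\R^{n+1}} u^2\,dx.
\end{equation}
(3) Apply the log-Sobolev inequality to $\tilde u = u/\sqrt{c}$ where $c = \int u^2 \le 1$: expanding $\log \tilde u^2 = \log u^2 - \log c$ gives
\begin{equation}
	\int \big(4|\nabla u|^2 - u^2 \log u^2\big)\,dx \;\ge\; c\Big((n+1)+\tfrac{n+1}{2}\log(4\pi)\Big) - c\log c.
\end{equation}
Combining (2) and (3): $c\big((n+1)+\tfrac{n+1}{2}\log(4\pi)\big) - c\log c \le c\big((n+1) + \tfrac{n+1}{2}\log(4\pi) + \mu\big)$, i.e. $-\log c \le \mu \le 0$, so $\log c \ge 0$, forcing $c \ge 1$; with $c \le 1$ from hypothesis we get $c = 1$ and $\mu = 0$. (4) With $c=1$ and $\mu = 0$, the two inequalities in (2) and (3) are now equalities, so $u$ is an equality case of the Euclidean log-Sobolev inequality; by the classical rigidity (Carlen, or the fact that the Gaussian is the unique extremal up to translation), $u^2$ has the form \eqref{eqn: euclidean gaussian} for some $y \in \R^{n+1}$.

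The main obstacle is step (2): justifying the integration by parts and the absorption of the logarithmic term on a non-compact domain, given only $u \in W^{1,2}\cap L^\infty$ as the a priori hypothesis. I expect the right approach is to first bootstrap regularity — the equation \eqref{eqn: limit equation lem} (as an inequality, then one would want it to actually be satisfied with a sign allowing a maximum-principle or Harnack argument) shows $u$ is, say, $C^{1,\alpha}_{loc}$ and strictly positive if $u\not\equiv 0$; then either (a) establish pointwise Gaussian upper bounds for $u$ via a comparison/barrier argument exploiting that $\log u^2 \to -\infty$ where $u$ is small, which controls all the tail integrals, or (b) run the cutoff argument carefully, showing $\int_{\{R<|x|<2R\}}(|\nabla u|^2 + |u^2\log u^2|)\,dx \to 0$ along a subsequence of radii $R\to\infty$, which suffices for the integration by parts and is often enough even without sharp decay. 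I would also need to handle the degenerate case $u\equiv 0$, which is excluded by \eqref{eqn: norm leq 1} being an equality only trivially — but actually $u \equiv 0$ satisfies \eqref{eqn: limit equation lem} and \eqref{eqn: norm leq 1} vacuously, so the statement should implicitly assume $u \not\equiv 0$, or one notes $0$ is not "of the form \eqref{eqn: euclidean gaussian}"; I would add the hypothesis $u \not\equiv 0$ or interpret the conclusion as applying to nontrivial solutions.
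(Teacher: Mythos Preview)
Your argument is correct and matches the paper's proof essentially line for line: multiply by $u$, integrate by parts, normalize, apply the Euclidean log-Sobolev inequality, and force $\mu=0$, $\|u\|_{L^2}=1$, hence equality in log-Sobolev. The one technical obstacle you flag in step~(2) is exactly the one the paper handles, and it resolves it by invoking \cite[Lemma~2.3]{ZhangLS}, which gives Gaussian pointwise decay for bounded $W^{1,2}$ solutions of \eqref{eqn: limit equation lem} with $\|u\|_{L^2}\le 1$ --- i.e.\ your option~(a).
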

 \begin{proof}
By \cite[Lemma 2.3]{ZhangLS}, a  bounded solution $u$ of \eqref{eqn: limit equation lem} with  $\|u\|_{L^2(\R^{n+1})}\leq 1$  has Gaussian decay in the sense that for any $y\in \mathbb{R}^{n+1}$, there exist positive numbers $r_0, a$ and $A$ depending on $y$ such that 
\[
u(y) \leq A \exp\{-a|x-y|^2\} \qquad \text{ when }|x-y| \geq r_0.
\]
In particular, multiplying the equation \eqref{eqn: limit equation lem} by $u$ and integrating over $\R^{n+1}$, we are justified in integrating by parts to find
 	\begin{equation}\label{eqn: ibp}
 	\begin{split}
 		0 & \geq \int_{\R^{n+1}} -4u\Delta u  - u^2 \log u^2 -((n+1)+\frac{n+1}{2}\log(4\pi)+ \mu) u^2 \,dx \\
		& = \int_{\R^{n+1}} 4|\na u|^2 - u^2 \log u^2 -((n+1)+\frac{n+1}{2}\log(4\pi)+ \mu) u^2\,dx .	
 	\end{split}	
 	\end{equation}
	Now, set $w = u/\| u\|_{L^2(\R^{n+1})}$ so that $\| w\|_{L^2(\R^{n+1})}=1.$ Dividing \eqref{eqn: ibp} by $\| u\|_{L^2(\R^{n+1})}^2$, we see that
\begin{equation}
\begin{split}
		0 & \geq  \int_{\R^{n+1}} 4|\na w|^2 -w^2 \log w^2 -w^2 \log \|u\|_{L^2(\R^{n+1})}^2 - ((n+1)+\frac{n+1}{2}\log(4\pi)+\mu) w^2\, dx\\
		& =  \int_{\R^{n+1}} 4|\na w|^2 -w^2 \log w^2  - ((n+1)+ \frac{n+1}{2}\log(4\pi)) w^2\, dx- \log \|u\|_{L^2(\R^{n+1})}^2- \mu\\
		& \geq- \log \|u\|_{L^2(\R^{n+1})}^2- \mu,
\end{split}
\end{equation}
	where the final inequality is obtained by applying the  Euclidean log-Sobolev inequality to $w$. By \eqref{eqn: norm leq 1} and $\mu\leq 0$, we 
conclude that $\mu = 0$ and $\|u\|_{L^2(\R^{n+1})}=1$. Then from \eqref{eqn: ibp}, we see that $u$ is a minimizer of the Euclidean log-Sobolev inequality and so $u^2$ is a Gaussian as in \eqref{eqn: norm leq 1}. This concludes the proof.
\end{proof}

 The following lemma ensures the existence of the minimizer of the entropy for $(M,g_{\delta,\e})$ with exponential decay at infinity.
 \begin{lemma}[Existence and estimates for extremals]\label{lemma: existence of extremal}
	Fix $\e,\delta,L>0$ and $\tau \in (0,L)$ and let $g_{\delta,\e}$ be the metric defined in \eqref{eqn: metric 2}. A  minimizer $u_\tau$ of the entropy $\mu(\tau^{-1}g_{\delta,\e},1)$ exists.  Furthermore there exist constants $a, A>0$ and a point $y \in M$ depending on $\delta,\e$ and $\tau$ such that  $u_\tau $ satisfies
	\begin{equation}\label{eqn: gaussian decay lem}
			u_{\tau}(x) \leq A \exp\{-a d^2(x, y)\}.
	\end{equation}
	Here $d(\cdot, \cdot)$ denotes the geodesic distance with respect to $\tau^{-1} g_{\delta,\e}$.
\end{lemma}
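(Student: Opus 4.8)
\textbf{Proof strategy for Lemma~\ref{lemma: existence of extremal}.}
The plan is to establish existence of a minimizer via the direct method in the calculus of variations, carried out on the metric measure space $(M,\tau^{-1}g_{\delta,\e})$, and then to upgrade any minimizer to one with Gaussian decay by invoking the decay estimates for bounded $W^{1,2}$ solutions of the Euler--Lagrange equation. After rescaling, it suffices to treat the case $\tau = 1$, so write $g = \tau^{-1} g_{\delta,\e}$ and $\mu = \mu(g,1)$. First I would record the crucial structural facts about $(M,g)$ that make the direct method work: since $g = g_{euc}$ outside a compact region (for $r$ large the metric \eqref{eqn: metric 2} is exactly the product of Euclidean $\R^n$ and $\R$), the manifold has bounded geometry, uniformly bounded curvature, and in particular satisfies a uniform Sobolev inequality and a uniform local volume lower bound. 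Consequently the log-Sobolev functional $\W(g,f,1)$ is bounded below (this is essentially the content of \cite{ZhangLS} on manifolds asymptotic to Euclidean space), so $\mu > -\infty$, and one has the basic energy control $\int_M(|\na u|^2 + u^2)\,d\vol_g \le C$ along any minimizing sequence after normalizing $\int_M u^2 = 1$.

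The main obstacle, as always in such existence problems on noncompact manifolds, is \emph{loss of mass at infinity}: a minimizing sequence $u_k$ could have its $L^2$ mass escape to the Euclidean end. The plan to rule this out is a concentration-compactness / sub-additivity argument. Because $(M,g)$ is \emph{Euclidean at infinity}, the relevant ``problem at infinity'' is precisely the log-Sobolev problem on $\R^{n+1}$, whose infimum is $0$ by the sharp Euclidean log-Sobolev inequality (this is exactly what Lemma~\ref{lem: euclidean gaussian} encodes). Meanwhile $\mu(g,1) < 0$: indeed one can test $\W(g,\cdot,1)$ with a Gaussian supported in the region where $g$ has strictly positive scalar curvature --- by the scalar curvature estimates of Proposition~\ref{prop: scalar nonneg} (more precisely, the regions where Cases 1--2 of its proof give $R>0$), this produces a value strictly below the Euclidean value $0$. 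Hence $\mu(g,1) < 0 = \mu_\infty$, which gives the strict sub-additivity inequality needed to force the minimizing sequence to concentrate on a compact set; standard arguments (Ekeland's variational principle to get an almost-solution sequence, Rellich compactness on compact sets, and the strict-inequality exclusion of vanishing/dichotomy) then yield a nonnegative minimizer $u \in W^{1,2}(M)$ with $\int_M u^2 = 1$. Elliptic regularity applied to the Euler--Lagrange equation \eqref{eqn: EL lemma tau=1} shows $u$ is smooth, and the strong maximum principle (since $R$ is bounded and $-2u\log u$ has the right sign near $\{u=0\}$) shows $u > 0$ everywhere; boundedness of $u$ follows from Moser iteration using the uniform Sobolev inequality.

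For the decay estimate \eqref{eqn: gaussian decay lem}, the key point is that outside a compact set $K$ the equation \eqref{eqn: EL lemma tau=1} is \emph{exactly} the Euclidean equation \eqref{eqn: limit equation lem} with $\mu = \mu(g,1)$, since $R \equiv 0$ and $\Delta = \Delta_{euc}$ there. Thus $u$ is, on $\R^{n+1}\setminus K$, a bounded $W^{1,2}$ solution of the Euclidean equation, and one invokes the Gaussian decay bound from \cite[Lemma 2.3]{ZhangLS} (the same estimate quoted in the proof of Lemma~\ref{lem: euclidean gaussian}): there exist $r_0, a, A_0 > 0$ and a point $y$ such that $u(x) \le A_0 \exp\{-a\,d_{euc}(x,y)^2\}$ for $d_{euc}(x,y) \ge r_0$. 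Finally, since $g$ agrees with $g_{euc}$ near infinity and is uniformly equivalent to a fixed smooth metric on the compact region $K$, the geodesic distance $d(\cdot,\cdot)$ of $g$ is comparable to $d_{euc}(\cdot,\cdot)$ up to an additive and multiplicative constant on all of $M$; adjusting $a$ and $A$ to absorb these constants (and using $\sup_M u < \infty$ to cover the bounded region $\{d(x,y) \le r_0'\}$) yields \eqref{eqn: gaussian decay lem}. I expect the concentration-compactness step establishing $\mu(g,1) < 0$ and excluding escape of mass to be the part requiring the most care; everything after the minimizer is produced is routine elliptic theory plus the cited decay estimate.
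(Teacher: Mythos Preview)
Your overall strategy (direct method, rule out mass escaping to the Euclidean end by comparing $\mu(g,1)<0$ against the Euclidean value $0$, then invoke \cite[Lemma~2.3]{ZhangLS} for the decay) is essentially the paper's, but there is a genuine geometric gap. You assert that $g=g_{euc}$ ``outside a compact region''. This is false: the metric \eqref{eqn: metric 2} agrees with $g_{euc}$ only on $\{r\ge 4\}$, and the complement $\{r<4\}=B_{\R^n}(0,4)\times\R$ is an \emph{infinite tube}, not compact. The metric is translation-invariant in the $\R$-fiber, so a minimizing sequence whose mass escapes in the $y$-direction (at bounded $r$) sees the \emph{same} problem at infinity, not the Euclidean one; your strict inequality $\mu<0=\mu_\infty$ does not exclude this scenario. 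The paper handles this by exploiting the $y$-translation symmetry explicitly: it takes minimizers $v_k$ on exhausting balls $B_g(0,k)$, translates so that the maximum point has $y$-coordinate $0$, and then only has to rule out $|z_k|\to\infty$, which \emph{does} give the Euclidean limit and a contradiction via Lemma~\ref{lem: euclidean gaussian}. You need to insert this symmetry reduction.

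A second, smaller issue: your argument for $\mu(g,1)<0$ by testing with a Gaussian in the region where $R>0$ has the sign backwards. In the $\W$-functional the term is $+\tau R$, so positive scalar curvature \emph{increases} $\W$; a Gaussian centered in the $R>0$ region gives a value above the Euclidean value, not below. The strict inequality $\mu<0$ is still true, but it follows from the rigidity of Euclidean space in Lemma~\ref{lem: rigidity} (run the Ricci flow from $g_{\delta,\e}$, which has bounded curvature and is not isometric to $\R^{n+1}$), as the paper notes. Finally, for the decay you do not need to reduce to the Euclidean equation on the complement of a compact set (which, again, is not how the geometry works here): \cite[Lemma~2.3]{ZhangLS} applies directly on $(M,\tau^{-1}g_{\delta,\e})$.
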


\begin{proof}
The existence of minimizers  follows from  minor modifications of the existence proof of  \cite[Theorem 1.1(a)]{ZhangLS}; we therefore only outline these modifications. As in the proof of  \cite[Theorem 1.1(a)]{ZhangLS}, we let $v_k$ denote a minimizer of the entropy restricted to the ball $B_{g}(0, k)$ (here we let $g=\tau^{-1}g_{\delta,\e}$)  and let $x_k$ be a point where $v_k$ achieves its maximum. Viewing $g$ as a metric on $\R^n \times \R$ with coordinates $(z,y) \in \R^n \times \R$, the metric is translation invariant with respect to the $y$ component. Hence we may assume without loss of generality that $x_k = (z_k , 0) $ for all $k$. There are then two cases to consider: either the sequence $\{z_k\}$ is bounded in $\R^n$, or it is not. In the case that the sequence is bounded, the existence of a minimizer follows just as in  \cite[Theorem 1.1(a)]{ZhangLS}. The case that $\{z_k\}$ is unbounded is even simpler than the corresponding case in \cite{ZhangLS}: since $\mu(\tau^{-1} g_{\delta,\e} , 1) < 0,$ then arguing as in  \cite[Theorem 1.1(a)]{ZhangLS} we obtain a bounded $W^{1,2}$  solution to \eqref{eqn: limit equation lem} with $\mu <0$ on $\R^{n+1}$, a contradiction to Lemma~\ref{lem: euclidean gaussian}.
Finally, the Gaussian decay is established in \cite[Lemma 2.3]{ZhangLS}.
\end{proof}

\bigskip 
As usual, let $g_{\delta,\e}$ be the metric defined in \eqref{eqn: metric 2}. 
In the next lemma, we show that the rescaled metric $\tau^{-1}g_{\delta,\e}$ is uniformly close to the Euclidean metric  in any compact subset centered at $\bar x$ after appropriate change of coordinate. To do this, we define an explicit diffeomorphism centered at $\bar x$ that takes into  account how and in which direction the metric is degenerating. 

In the application, we will take $\bar x$ be a point such that $u_\tau$, the minimizer of $\mu(\tau^{-1}g_{\delta,\e},1)$, achieves its maximum at $\bar x$.  Thanks to the symmetry of $g_{\delta,\e}$, we can and will assume without loss of generality that $\bar y =0$ and that  $\bar z = \bar{r} \bar{e}$ for a fixed unit vector $\bar{e}\in \mathbb{S}^{n-1}\subset \mathbb{R}^n$. For this reason, in what follows we will always assume $\bar x$ as above. Before giving the precise statement, we would like to introduce some notation. Consider $\delta,\e>0$ and $\tau>0$ fixed. Let $\ell \in \R^n \times \R $ be the line defined by 
\begin{equation}
\ell = \{ (0^n,y) : y \in \R\}.
\end{equation}

For $\bar x = (\bar r\bar e, 0) \in M =\R^n \times \R$ where $\bar{e}\in \mathbb{S}^{n-1}\subset \mathbb{R}^n$,  we define an associated diffeomorphism $\Phi_{\tau,\bar{r},\delta, \e} : \R^n\times \R \to M$ by 
\begin{equation}\label{eqn: good diffeomorphism}
	\Phi_{\tau,\bar{r}, \delta,\e}(x) = \bigg(\tau^{1/2}z +\bar r \bar e, \ \frac{\tau^{1/2}}{\varphi_{\delta,\e}(\bar r + \tau^{1/2})}\, y\bigg).
\end{equation}
Notice that $\Phi_{\tau,\bar{r}, \delta,\e}(0)= \bar x$.  In principle, there are two rescalings performed by $\Phi_{\tau,\bar{r}, \delta,\e}$. Firstly, we rescale the coordinates centered at $\bar x$ by $\tau$ to compensate the scaling of metric by $\tau^{-1}$. Secondly, we additionally rescale the $y$ coordinate to account for the degeneracy of $\varphi_{\delta,\e}$. Under the diffeomorphism, the image of $\ell$ where the metric degenerates  will play a key role. Let $\bar \rho=\tau^{-1/2} \bar r$ and denote the pullback of $\ell$ as 
\begin{equation}
	\tilde{\ell} : = \Phi_{\tau, \bar{r}, \delta,\e}^*(\ell) = \{ ( - \bar \rho \bar {e}, y) : y \in \R\}\, .
\end{equation}
In particular, if $\bar \rho$ tends to infinity along the sequence, then the degeneracy $\tilde \ell$ is pushed off to infinity and becomes invisible in the limit. On the other hand, if $\bar \rho$ stays bounded, we will show that the singularity of the sequence of rescaled pull-back metric is still mild and close to Euclidean metric away from $\tilde {\ell}$.


The next lemma shows that the pull-back of $\tau^{-1}g_{\delta,\e}$ under $\Phi_{\tau,\bar{r}, \delta,\e}$ converges to Euclidean metric away from $\tilde\ell$ as $\delta,\e\rightarrow 0$.

\begin{lemma}[Good charts]\label{lemma: good charts}
Fix $n\geq 3, L>0$ and consider sequences $\e_i ,\delta_i\to 0,$ $\tau_i \in (0,L]$ and $\bar{r}_i \in (0,\infty)$. Let $\bar{\rho}_i = \tau_{i}^{-1/2}\bar r_i$ and assume $\bar{\rho}_i \to \bar{\rho}_{\infty} \in [0, \infty]$. Define $	\tilde{\ell}_\infty =  \{(-\bar \rho_\infty  \bar e, y) : y \in \R\}   $ if $\bar \rho_\infty <\infty$ and $\tilde{\ell}_\infty =  \emptyset$ if $\bar \rho = \infty$.

Then the metrics 
$
g_i := \Phi_{\tau_i, \bar{r}_i, \e_i}^*(\tau_i^{-1}g_{\delta_i,\e_i}) 
 $
 converge to the Euclidean metric $g_{\mathbb{R}^{n+1}}$ in $C^\infty_{loc}(\R^{n+1}\setminus \tilde{\ell}_{\infty})$. Furthermore, in the case when $\bar\rho_\infty <\infty,$ we have
\begin{equation}\label{eqn: lower bound near line}
\min\bigg\{\frac{1}{2},\bigg(\frac{|z+\bar\rho_i \bar e_i|}{ 1 + \bar \rho_i}\bigg)^{\delta_i}\bigg\}g_{euc} \leq 	g_i (z,y) \leq g_{euc}\, .
\end{equation}
Here $|\cdot|$ denotes the Euclidean norm on $\R^n$.
\end{lemma}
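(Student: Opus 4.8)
The plan is to compute $g_i = \Phi_{\tau_i,\bar r_i,\e_i}^*(\tau_i^{-1} g_{\delta_i,\e_i})$ explicitly in the coordinates $(z,y)\in \R^n\times\R$ and then analyze each block of the resulting metric. Recall that $g_{\delta,\e} = dr^2 + f_{\delta,\e}^2(r)h + \varphi_{\delta,\e}^2(r)\,dx^2$ where $(r,\theta)$ are polar coordinates on the $\R^n$ factor, $r = |z'|$ with $z'$ the $\R^n$-variable. Under $\Phi_{\tau,\bar r,\e}(z,y) = (\tau^{1/2}z + \bar r\bar e,\ \tau^{1/2}\varphi_{\delta,\e}(\bar r + \tau^{1/2})^{-1} y)$, the $\R^n$-part of the domain point maps to $z' = \tau^{1/2} z + \bar r \bar e$, so $|z'| = \tau^{1/2}|z + \bar\rho \bar e|$ with $\bar\rho = \tau^{-1/2}\bar r$. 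The rescaling by $\tau^{-1}$ exactly cancels the $\tau^{1/2}$ dilation on the Euclidean $\R^n$ factor, so the first two blocks $dr^2 + f^2 h$ become $|z'|^{-2}\tau^{-1}f_{\delta,\e}^2(|z'|)$ times the round part, i.e. we need $\tau^{-1/2}f_{\delta,\e}(\tau^{1/2}\rho)/\rho \to 1$ locally uniformly away from $\tilde\ell_\infty$, where $\rho = |z + \bar\rho\bar e|$; this is where the estimate \eqref{tildef-estimate} enters, giving $\frac34 \le f_{\delta,\e}(s)/s \le 1$ and $f_{\delta,\e}'\to 1$ as $\delta\to0$, and the derivative bounds \eqref{eqn: key derivative bounds for phi} give the $C^\infty_{loc}$ convergence of this block to $g_{euc}$ on $\R^n\setminus\{-\bar\rho_\infty\bar e\}$.

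The more delicate block is the $dx^2$ fiber. After pullback and rescaling it becomes
\[
\frac{\tau}{\varphi_{\delta,\e}(\bar r + \tau^{1/2})^2}\cdot \tau^{-1}\varphi_{\delta,\e}(|z'|)^2\, dy^2 = \Bigl(\frac{\varphi_{\delta,\e}(\tau^{1/2}\rho)}{\varphi_{\delta,\e}(\tau^{1/2}(\bar\rho+1))}\Bigr)^2 dy^2,
\]
using $|z'| = \tau^{1/2}\rho$ and $\bar r + \tau^{1/2} = \tau^{1/2}(\bar\rho + 1)$. Writing $\varphi_{\delta,\e} = \phi_\e^{\delta}$, this ratio is $\bigl(\phi_{\e}(\tau^{1/2}\rho)/\phi_\e(\tau^{1/2}(\bar\rho+1))\bigr)^{\delta}$. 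The first step here is: for $\rho$ in a fixed compact set away from $\tilde\ell_\infty$ (so $\rho$ bounded away from $0$ when $\bar\rho_\infty<\infty$, or $\rho$ bounded and $\bar\rho_i\to\infty$), and $\e_i$ small, both arguments lie in the régime where $\phi_\e$ is comparable to its argument up to factors controlled by the explicit definition of $\phi_\e$; more precisely one shows $c \le \phi_\e(\tau^{1/2}\rho)/\phi_\e(\tau^{1/2}(\bar\rho+1)) \le C$ for constants independent of $i$, whence the $\delta_i$-th power tends to $1$. For $C^\infty_{loc}$ convergence one differentiates and uses $|\varphi^{(k)}/\varphi| \le C_k\delta/r^k$ to see all derivatives of the ratio tend to $0$. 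This establishes $g_i \to g_{euc}$ in $C^\infty_{loc}(\R^{n+1}\setminus\tilde\ell_\infty)$.

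For the two-sided bound \eqref{eqn: lower bound near line} in the case $\bar\rho_\infty<\infty$, the upper bound $g_i \le g_{euc}$ follows because $f_{\delta,\e}(s)\le s$ (from $\tilde f' \le 1$ and $f = \min$-type interpolation toward $r$) and because $\phi_\e$ is non-decreasing with $\phi_\e(s)\le \max(s,1)$-type control, making each ratio $\le 1$ — one must check the interpolation regions $[\frac12,2]$ and $[\frac12\e,2\e]$ don't spoil this, using $\psi_2''\le 0$ and monotonicity. The lower bound is the crux: the fiber coefficient is $\bigl(\phi_\e(\tau^{1/2}\rho)/\phi_\e(\tau^{1/2}(\bar\rho+1))\bigr)^{2\delta}$, and one needs this $\ge \min\{\tfrac12, (\rho/(1+\bar\rho))^{\delta}\}^2$-ish. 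When $\tau^{1/2}\rho \ge 2\e$ we have $\phi_\e(\tau^{1/2}\rho)$ comparable to $\tau^{1/2}\rho$ from below (a factor like $\tfrac12$ lost only in the $[\frac12,2]$ interpolation where $\psi_2$ is concave), and $\phi_\e(\tau^{1/2}(\bar\rho+1))\le C\tau^{1/2}(1+\bar\rho)$, giving ratio $\gtrsim \rho/(1+\bar\rho)$; when $\tau^{1/2}\rho < 2\e$, $\phi_\e \equiv \e$ near there so the ratio is bounded below by a constant, absorbed into the $\tfrac12$. Raising to the $\delta$ power preserves the structure. I expect the main obstacle to be the careful bookkeeping through the several interpolation régimes of $\phi_\e$ and $f_{\delta,\e}$ — keeping the constants uniform in $i$ and checking that the concavity/monotonicity properties \eqref{construction} of $\psi_1,\psi_2,\zeta$ are exactly what is needed — rather than any conceptual difficulty; the convergence statement itself is a routine consequence of the pointwise estimates \eqref{eqn: key derivative bounds for phi} and \eqref{tildef-estimate} once the algebraic form of $g_i$ is written down.
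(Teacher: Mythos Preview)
Your outline is correct and follows essentially the same route as the paper: write the pullback metric explicitly as a cone block $g_{cone,f,\tau}$ plus a fiber block $\tilde\varphi^2\,dy^2$ with $\tilde\varphi(s)=\varphi_{\delta,\e}(\tau^{1/2}s)/\varphi_{\delta,\e}(\tau^{1/2}(1+\bar\rho))$, observe that the cone block converges to $g_{\R^n}$ by the estimates \eqref{tildef-estimate}, and then control $\tilde\varphi$.

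The one place where the paper is cleaner than your proposal is the treatment of $\tilde\varphi$. You propose to split into the various regimes of $\phi_\e$ (the constant, linear, and interpolation zones) and track the ratio $\phi_\e(\tau^{1/2}\rho)/\phi_\e(\tau^{1/2}(1+\bar\rho))$ case by case. The paper instead integrates the logarithmic derivative: since $|\varphi'/\varphi|\le 50\delta/r$ holds uniformly across all regimes (this is exactly \eqref{eqn: key derivative bounds for phi}), one has $|\tilde\varphi'(s)/\tilde\varphi(s)|\le 50\delta/s$, and integrating from $1+\bar\rho$ to $|z+\bar\rho\bar e|$ gives
\[
\bigl|\log\tilde\varphi(|z+\bar\rho\bar e|)\bigr|\le 50\delta\,\Bigl|\log\frac{|z+\bar\rho\bar e|}{1+\bar\rho}\Bigr|.
\]
This single inequality handles both the convergence (the right side tends to $0$ on compact sets away from $\tilde\ell_\infty$, since either $\delta_i\to 0$ with the log bounded, or the ratio tends to $1$ when $\bar\rho_i\to\infty$) and the lower bound in \eqref{eqn: lower bound near line} (exponentiate to get $\tilde\varphi\ge(|z+\bar\rho\bar e|/(1+\bar\rho))^{C\delta}$), with no regime bookkeeping at all. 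Your regime analysis would ultimately work, but this integral trick is what makes the argument short.

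One small correction: in your sketch of the lower bound you say that when $\tau^{1/2}\rho<2\e$ the fiber ratio is ``absorbed into the $\tfrac12$.'' That is not how the min arises. The $\tfrac12$ comes from the cone block (the estimate $g_{cone,f,\tau}\ge\tfrac12 g_{\R^n}$), while the power $(|z+\bar\rho\bar e|/(1+\bar\rho))^{\delta}$ comes from the fiber block via the log-integral bound above; the min of the two is then a lower bound for the full metric.
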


\begin{proof}
For notational convenience, we omit the index $i$ and let $\vphi = \vphi_{\delta, \e}$. Let $g = \Phi_{\tau, \bar{r},\delta, \e}^*(\tau^{-1}g_{\delta,\e})$. Then at a point $(z,y)\in \mathbb{R}^n \times \mathbb{R}$, we have 
\begin{equation}
\begin{split}
		g|_{(z,y)} 
		& = g_{cone, f, \tau} \left(z +\bar\rho \bar e \right) + \tilde{\vphi}^2 (|z+\bar \rho \bar{e}|)dy^2,
\end{split}
\end{equation}
 where $g_{cone,f,\tau}$ denotes the pull-back of the cone metric on the $\mathbb{R}^n$ component and we set
\begin{equation}\label{eqn: def phi tilde}
	\tilde{\vphi}(s) = \frac{\vphi(\tau^{1/2}s)}{\vphi(\tau^{1/2}(1+\bar \rho))}.
\end{equation}
It is clear that the cone metrics $g_{cone, f, \tau}$ converge smoothly to the standard Euclidean metric on $\R^n$ on any compact set as $i\rightarrow +\infty$. Therefore, to show the desired smooth convergence of the metrics away from $\tilde{\ell}_{\infty}$, it remains to show that
\begin{equation}\label{eqn: vphi to 1}
 \tilde{\vphi} (|\cdot + \bar \rho \bar e|) \to 1
\end{equation}
  smoothly on compact subsets of $\R^n \setminus\{  -\bar \rho_\infty \bar e\},$ 
  To this end, note that  $\tilde{\vphi}$ satisfies 
$	|{\tilde{\vphi}'(s)}/{\tilde{\vphi}(s)}| \leq 50\delta/s$ by \eqref{eqn: key derivative bounds for phi}
and that $\tilde{\varphi}( 1+\bar \rho)=1$ by definition.
In this way, we have 
\begin{equation}\label{eqn: integral of log phi}
\begin{split}
|\log \tilde \varphi(|z + \bar \rho \bar{e}|)| 
= \left| \int^{|z + \bar \rho \bar{e}|}_{1 +\bar \rho}\frac{\tilde{\varphi}'(s)}{\tilde{\varphi}(s)}\, ds \right|
&\leq  \left| \int^{|z + \bar \rho \bar{e}|}_{1+ \bar \rho}\frac{50\delta}{s}\, ds \right|=50\delta \left| \log \left(\frac{ |z + \bar \rho \bar e| }{ 1 + \bar \rho}\right)  \right|.
\end{split}
\end{equation}
To establish \eqref{eqn: vphi to 1}, and hence the desired convergence, we consider two cases.

{\bf Case 1. $\bar{\rho}_\infty = \infty$:}  When $\bar \rho = \infty$, it is clear that $\frac{ |\cdot + \bar \rho_i \bar e| }{1 + \bar \rho_i} \to 1$   uniformly on compact subsets as $i\to \infty$, and hence $\tilde{\vphi}(| \cdot + \bar \rho_i \bar e|) \to 1$ uniformly on compact subsets by \eqref{eqn: integral of log phi}. The higher order convergence follows analogously by using \eqref{eqn: key derivative bounds for phi}, and thus we establish \eqref{eqn: vphi to 1} in this case.

\hfill

{\bf Case 2. $\bar{\rho}_\infty <\infty$:} 
Fix $\gamma \in (0,1]$. For any $z\in \R^n$ in the annular region defined by
\begin{equation}
\g ( 1 +\bar \rho_\infty) \leq |z+ \bar \rho_\infty \bar e|  \leq \g^{-1}( 1 +\bar \rho_\infty),
\end{equation}
 we see from \eqref{eqn: integral of log phi} that 
$|\log \tilde \varphi(|z + \bar \rho \bar{e}|)|  \leq 2\delta |\log \g|$
for $i$ sufficiently large.
 Exponentiating both sides, we discover that 
\begin{equation}\label{decay-metric}
\begin{split}
\gamma^{2\delta}  \leq \tilde \varphi ( | z + \bar \rho\bar e|) \leq \gamma^{-2\delta}.
\end{split}
\end{equation}
So, as $\e,\delta $ tend to zero, we see that $\tilde\varphi( |z + \bar \rho\bar e|)$ converges uniformly to $1$ for all $z$ in this set. 
The convergence of the higher derivatives of $\vphi$ follows in the same way thanks to \eqref{eqn: key derivative bounds for phi}  and we see that \eqref{eqn: vphi to 1} holds in this case as well.


\medskip

Finally, we show \eqref{eqn: lower bound near line}. The upper bound is immediate from the construction of the metrics. To establish the lower bound  in \eqref{eqn: lower bound near line}, we note that 
\begin{equation}\label{eqn: lower bd cone}
g_{cone, f, \tau} \geq \frac{1}{2} g_{\mathbb{R}^n} 
\end{equation}
by construction. 
Next, notice that for any $z$ such that $|z+ \bar \rho \bar e|  \geq 1+ \bar \rho$, we have $\tilde{\vphi}(|z+ \bar \rho \bar e|) \geq 1$ because $\vphi$ is a monotone increasing function. On the other hand, for any $z $ with $0<|z + \rho \bar \e| \leq 1+ \bar \rho$, 
we exponentiate the left- and right-hand sides of \eqref{eqn: integral of log phi} to find that 
\begin{equation}\label{decay metric 2}
\tilde \varphi ( | z + \bar \rho\bar e|)\geq  \bigg( \frac{|z + \bar \rho \bar e|}{1+\bar \rho}\bigg)^{\delta} .
\end{equation}
Together \eqref{eqn: lower bd cone} and \eqref{decay metric 2}, we  establish \eqref{eqn: lower bound near line}. This concludes the proof of the lemma.
\end{proof}

In the proof of Proposition~\ref{thm: entropy goes to zero}, we would like to show that the entropy $\mu(\tau^{-1}g_{\delta,\e},1)$ converges to $0$ for any $\tau \in (0,L]$ as $\e, \delta \to 0$ by analyzing the limit of the corresponding sequence of minimizers. A key point will be to show that the limit is non-trivial, for which we need the following  uniform mean value inequality.

\begin{lemma}[Mean value inequality]\label{lem: MVI}
Fix $n\geq 3, L >0$ and consider sequences $\delta_i,\e_i \to 0,$ $\tau_i \in (0,L]$, and $\bar{r}_i \in (0,\infty)$. Let $u_i$ be a minimizer of $\mu(\tau_i^{-1}g_{\delta_i,\e_i},1)$, let $\Phi_{\tau_i,\bar r_i,\delta_i,\e_i}$ be the diffeomorphism defined in \eqref{eqn: good diffeomorphism}, and let $v_i=\Phi_{\tau_i,\bar r_i,\delta_i,\e_i}^*u_i$. 

Suppose that  $\bar{\rho}_i = \bar{r}_i/\tau_{i}^{1/2}\rightarrow \bar\rho_{\infty}$ for some $\bar\rho_\infty\in [0,+\infty)$. Then there is $N(n,\bar\rho_\infty)\in \mathbb{N}$ and $C(n)>0$ such that if $i>N$, then for all $x
\in \mathbb{R}^{n+1}$, we have  
\begin{equation}\label{mean-value}
\|v_i\|_{L^\infty(B_{euc}(x,{1/4}))}\leq C(n) \left(\int_{B_{euc}(x,1)}v_i^{2}\,d\vol_{g_i} \right)^{1/2}
\end{equation}
for $i$ sufficiently large.
Here, the balls are taken with respect to the Euclidean metric on  $\R^{n+1}$.
In particular, $\|v_i\|_{L^\infty(\mathbb{R}^{n+1})}\leq C(n)$.

If $\bar \rho_i\rightarrow +\infty$, then the $L^\infty$-estimate holds in the following sense: for all $\Omega\Subset \mathbb{R}^{n+1}$, there is $N(\Omega)\in \mathbb{N}$ such that if $i>N$, $\|v_i\|_{L^\infty(\Omega)}\leq C(n)$.
\end{lemma}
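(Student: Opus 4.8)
\textbf{Proof plan for Lemma~\ref{lem: MVI}.}

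The plan is to derive a differential inequality satisfied by $v_i$ in the charts given by $\Phi_{\tau_i,\bar r_i,\delta_i,\e_i}$, and then apply a De Giorgi--Nash--Moser iteration that is uniform in $i$. First I would pull back the Euler--Lagrange equation \eqref{eqn: EL lemma tau=1} satisfied by $u_i$ (with respect to the rescaled metric $\tau_i^{-1}g_{\delta_i,\e_i}$): writing $a_i = \frac{n+1}{2}\log(4\pi)+(n+1)+\mu(\tau_i^{-1}g_{\delta_i,\e_i},1)$, the function $u_i$ satisfies
\begin{equation}
-4\Delta_{g_i'} u_i + R_{g_i'} u_i - 2 u_i \log u_i - a_i u_i = 0,
\end{equation}
where $g_i' = \tau_i^{-1}g_{\delta_i,\e_i}$. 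Pulling back by the diffeomorphism, $v_i$ solves the same equation with the metric replaced by $g_i = \Phi^*_{\tau_i,\bar r_i,\delta_i,\e_i} g_i'$. Now I would invoke the uniform control coming from Lemma~\ref{lemma: good charts}: in the case $\bar\rho_\infty<\infty$, estimate \eqref{eqn: lower bound near line} gives $g_i$ uniformly comparable to $g_{euc}$ on $\R^{n+1}$ up to a degenerate but $L^q_{loc}$-controlled weight near $\tilde\ell_\infty$, while for $\bar\rho_\infty=\infty$ we get $C^\infty_{loc}$ convergence to $g_{euc}$ away from $\tilde\ell_\infty$, hence uniform ellipticity on any fixed $\Omega$ for $i$ large. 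The scalar curvature term $R_{g_i'}$ is bounded below (Proposition~\ref{prop: scalar nonneg}, applied to the rescaled metric) and, crucially, the negative part of $R$ only helps; the positive part we control by noting $|R_{g_{\delta,\e}}|$ is bounded on compact sets and using that $\tau_i\leq L$. The entropy $\mu(\tau_i^{-1}g_{\delta_i,\e_i},1) = \mu(g_{\delta_i,\e_i},\tau_i)$ is uniformly bounded (it lies in $[-\eta,0]$ once $\e_i,\delta_i$ are small, by Proposition~\ref{thm: entropy goes to zero}, or at worst in $[\mu(g_{euc},\tau),0]$ which is bounded for $\tau\in(0,L]$), so $a_i$ is bounded uniformly in $i$.

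The next step handles the logarithmic nonlinearity. Since $-2u_i\log u_i \leq \frac{2}{e}$ pointwise (the function $-s\log s$ is bounded above on $[0,\infty)$), and since we already know from Lemma~\ref{lemma: existence of extremal} that $u_i$, hence $v_i$, is bounded (a priori, with a bound possibly depending on $i$), on the region where $v_i \geq 1$ we have $-2v_i\log v_i \leq 0$, and on $\{v_i \le 1\}$ the term $-2v_i\log v_i$ is bounded by a dimensional constant. Therefore $v_i$ satisfies a linear differential inequality of the form $-\Delta_{g_i} v_i \leq C(n)\, v_i + C(n)$ in the barrier sense on $\{v_i > 1\}$, or more robustly $w_i := (v_i - 1)_+$ satisfies $-\Delta_{g_i} w_i \leq C(n) w_i$ weakly. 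I would then run the standard Moser iteration on $w_i$ on the ball $B_{euc}(x,1)$, using the uniform ellipticity of $g_i$ (and uniform volume comparison and Sobolev inequality on unit Euclidean balls, valid because $g_i$ is uniformly comparable to $g_{euc}$) to conclude
\begin{equation}
\|w_i\|_{L^\infty(B_{euc}(x,1/4))} \leq C(n)\Big(\int_{B_{euc}(x,1)} w_i^2 \, d\vol_{g_i}\Big)^{1/2} \leq C(n)\Big(\int_{B_{euc}(x,1)} v_i^2\, d\vol_{g_i}\Big)^{1/2},
\end{equation}
and hence $\|v_i\|_{L^\infty(B_{euc}(x,1/4))} \leq 1 + C(n)(\int_{B_{euc}(x,1)} v_i^2 d\vol_{g_i})^{1/2} \leq C(n)(1 + \int_{B_{euc}(x,1)} v_i^2 d\vol_{g_i})^{1/2}$, which gives \eqref{mean-value} after a harmless adjustment of constants (using $\int v_i^2 \le 1$ to absorb the additive $1$, since the global $L^2$ mass of $v_i$ with respect to $d\vol_{g_i}$ equals the mass of $u_i$ with respect to $d\vol_{g_i'}$, which is $1$). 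The final bound $\|v_i\|_{L^\infty(\R^{n+1})} \le C(n)$ then follows by taking the supremum over $x$ and using $\int_{\R^{n+1}} v_i^2\,d\vol_{g_i} = 1$. In the case $\bar\rho_i\to\infty$, the same argument runs verbatim on a fixed precompact $\Omega$ once $i$ is large enough that $\tilde\ell_\infty$ (which is empty) poses no issue and $g_i$ is uniformly elliptic on $\Omega$.

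The main obstacle is the degeneracy of $g_i$ along $\tilde\ell_\infty$ in the case $\bar\rho_\infty<\infty$: near the line $\tilde\ell_\infty$ the lower bound in \eqref{eqn: lower bound near line} degenerates like $|z+\bar\rho_i\bar e_i|^{\delta_i}$, so the metric is not uniformly elliptic there. I would deal with this by working with the Euclidean background metric $g_{euc}$ rather than $g_i$ as the reference: since $g_{euc}/2 \le g_i \le g_{euc}$ away from the degeneracy and $\det g_i = \det g_{cone,f,\tau}\cdot \tilde\varphi^2$ with $\tilde\varphi$ only mildly degenerate (in $L^q$ for $q$ as large as we like, since $\delta_i\to 0$), the weighted Sobolev and Poincaré inequalities on unit Euclidean balls hold with constants independent of $i$ — this is exactly the kind of estimate that the $L^p$ arguments elsewhere in the paper (e.g. around Remark~\ref{remark-on L^p} and Lemma~\ref{lem: sobolev sobolev}) are built to handle. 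A cleaner alternative, which I would actually carry out, is to observe that the equation for $v_i$ can be written in divergence form $\mathrm{div}(\sqrt{\det g_i}\, g_i^{jk}\partial_k v_i) = \sqrt{\det g_i}(\ldots)$ and that $\sqrt{\det g_i}$ and $\sqrt{\det g_i}\,g_i^{jk}$ are uniformly bounded above and bounded below in $L^q_{loc}$ with $q\to\infty$; this is the classical setting of Moser iteration for uniformly elliptic operators with $L^\infty$ upper bound and an $A_p$-type (here, high $L^q$) lower bound on the coefficients, for which the iteration produces constants depending only on $n$ and the $L^q$ norms, hence uniform in $i$. Passing $\delta_i\to0$ makes these norms converge to those of the identity, so the constant stabilizes to $C(n)$.
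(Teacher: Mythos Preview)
Your overall strategy---pull back the Euler--Lagrange equation and run Moser iteration with care near the degenerate line---matches the paper's, but your treatment of the logarithmic nonlinearity contains a sign error that breaks the argument. From \eqref{eqn: EL lemma tau=1} one has
\[
-4\Delta_{g_i} v_i \;=\; -R_{g_i} v_i \;+\; 2 v_i \log v_i \;+\; a_i v_i,
\]
so on $\{v_i>1\}$ the term $+2v_i\log v_i$ is \emph{positive} and grows superlinearly. Your observation that $-2v_i\log v_i\le 0$ there is correct, but that expression sits on the \emph{left} of the equation; once you isolate $-\Delta_{g_i}v_i$ the sign flips, and you cannot conclude $-\Delta_{g_i} w_i \le C(n)w_i$ for $w_i=(v_i-1)_+$. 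The maximum principle likewise gives only a lower bound on $\max v_i$ here, not an upper one. A secondary consequence: the additive $+1$ produced by your truncation cannot be ``harmlessly adjusted'' into the multiplicative form \eqref{mean-value}, since $\int_{B_{euc}(x,1)}v_i^2\,d\vol_{g_i}$ is not bounded below.

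The paper keeps the log term inside the energy identity. Multiplying by $\phi^2 v^{p-1}$ and integrating yields a right-hand side $\int \phi^2 v^p(-R + 2\log v + a_i)\,d\vol_g$; one uses only $R\ge -1$ (so the positive blow-up of $R$ near $\tilde\ell$ is irrelevant, contrary to your worry), and controls $\int \phi^2 v^p\log v$ via the sub-polynomial growth of $\log$ together with $\int v^2\,d\vol_g=1$. Concretely this forces the iteration gain down to $\sigma=(2n+3)/(2n+2)$, strictly below the full Sobolev gain $(n+2)/(n+1)$, and forces the starting exponent to be some $p_0\in\big(\tfrac{2n+4}{2n+3},2\big)$; a final H\"older step recovers the $L^2$ form \eqref{mean-value}. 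The degeneracy near $\tilde\ell_\infty$ is handled by the explicit weight $\Lambda(z,y)=\max\{2,(|z+\bar\rho\bar e|/(1+\bar\rho))^{-\delta}\}$ with $g_{euc}\le\Lambda\,g_i\le\Lambda\,g_{euc}$, trading between $g_i$ and $g_{euc}$ at the cost of powers of $\Lambda$ that are uniformly in $L^q$ as $\delta_i\to 0$---which is precisely the concrete version of the weighted/$A_p$-type scheme you sketch in your last paragraph.
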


\begin{proof}
We let $g_i = \Phi_{\tau_i,\bar r_i,\delta_i,\e_i}^* \tau_i^{-1}g_{\delta_i,\e_i}$. For notational convenience, we will omit the index $i$ when no confusion can arise. Moreover, each ball is taken with respect to the Euclidean metric.

If $\bar \rho_i \rightarrow  +\infty$, then by Lemma \ref{lemma: good charts}, $g_i\rightarrow g_{euc}$ in $C^k_{loc}(\mathbb{R}^{n+1})$ for all $k\in \mathbb{N}$,  and the result follows from standard Moser iteration;  see for example \cite{li2018} or \cite{ZhangLS}.
It therefore suffices to consider the case where $\bar \rho_\infty<\infty$. In this case, we modify the Moser iteration argument to account for the mild singularity of $g$ near $\ell$.

 Keeping in mind the lower bound for the metric \eqref{eqn: lower bound near line}  established in Lemma \ref{lemma: good charts}, we define the function $\Lambda(x)=\max\big\{ 2,\big(\frac{|z+\bar\rho \bar e|}{ 1 + \bar \rho}\big)^{-\delta}\big\}$ (where $x = (z,y) \in \R^n \times \R$) so that $g_{euc} \leq \Lambda(x) g$. We will make repeated use of the upper bound in \eqref{eqn: lower bound near line}, which implies that $d\vol_g \leq dx$ and $|\pa u| \leq |\na_g u|$ for any function $u$, where $|\pa u|$ denotes the Euclidean norm of the Euclidean gradient.

As a first step, fix any $p>1$ and $\Omega\Subset \mathbb{R}^{n+1}$. Provided $\delta<\delta_0(n,p)$, we note that  $\|v\|_{L^{2/p}(\Omega, g_{euc})}$ is uniformly bounded for any $i$. Indeed, apply H\"{o}lder's inequality and use the fact that  $\int_{\R^n} v^2 \,d\vol_g =1$ to see that
\begin{equation}\label{initial-Lp-bound}
\begin{split}
\int_{\Omega}v^{2/p}dx \leq \int_{\Omega}\Lambda^{(n+1)/2}v^{2/p} d\vol_g
&\leq  \left(\int_{\Omega}v^2 d\vol_g\right)^{1/p}\left( \int_{\Omega}\Lambda^{(n+1)p'/2}dx\right)^{1/p'}\leq C,
\end{split}
\end{equation}
where $C= C(diam(\Omega),p,n)$,
provided that $\delta$ is small enough so that $\Lambda^{(n+1)p'/2}$ is integrable. Here $p'$ denotes the H\"{o}lder conjugate of $p$.

Next, let $\bar x=(-\bar \rho\bar e,\bar y)$ for some $\bar y\in \mathbb{R}$. Let $1\geq r_1>r_0>1/4$ and $\phi$ be a Lipschitz cutoff on $\mathbb{R}^{n+1}$ defined by 
\begin{equation}
\phi(x)=
\left\{
\begin{array}{ll}
1 &\quad\text{on} \quad B(\bar x, r_0);\\
\displaystyle \frac{r_1-|x-\bar x|}{r_1-r_0}&\quad\text{on} \quad B(\bar x , r_1);\\
0 &\quad\text{outside} \quad B(\bar x,r_1).
\end{array}
\right.
\end{equation}

We will henceforth use $B_r$ to denote $B(\bar x,r )$ for simplicity. 
We now establish an energy estimate. On one hand,
since $v$ satisfies \eqref{eqn: EL lemma tau=1}, we multiply the equation by $v^{p-1}\phi^2$ and integrate to find that for $p>1$,
\begin{equation}\label{equ--1}
\begin{split}
 \quad \int_{B_{r_1}}\phi^2 v^{p-1}  (-4\Delta v) d\vol_g 
&= \int_{B_{r_1}} \phi^2 v^{p}  (-R + 2\log v +((n+1)+\mu+\log (4\pi)^{(n+1)/2}) d\vol_g\\
&\leq \int_{B_{r_1}} C_n \phi^2 v^{p}  +2 \phi^2 v^p \log v \; d\vol_g.
\end{split}
\end{equation}
In the second line, we have  used the fact that $R\geq -1$ thanks to Proposition~\ref{prop: scalar nonneg}, provided we choose $\e$ and $\delta$ sufficiently small (thus $i$ sufficiently large), and that $\mu = \mu(g_i,1) \leq 0$. Here the connection is with respect to $g$.
On the other hand, we integrate by parts and apply the Cauchy-Schwarz inequality to find
\begin{equation}\label{equ--2}
\begin{split}
 \int_{B_{r_1}} \phi^2 v^{p-1}  (-4\Delta v) d\vol_g
&=\int_{B_{r_1}}4 \nabla v \cdot \nabla (\phi^2 v^{p-1}) d\vol_g\\
&\geq  \int_{B_{r_1}} 4(p-1)\phi^2 v^{p-2}|\nabla v|^2-8v^{p-1} \phi|\nabla v||\nabla \phi| d\vol_g\\
&\geq  \int_{B_{r_1}}2(p-1)\phi^2 v^{p-2}|\nabla v|^2-\frac{8}{p-1}v^p  |\nabla \phi|^2d\vol_g.
\end{split}
\end{equation}
Combining \eqref{equ--1} and \eqref{equ--2}, we see that 
\begin{equation}\label{eqn: energy a}
2(p-1) \int_{B_{r_1}}\phi^2 v^{p-2}|\nabla v|^2 \, d\vol_g \leq 
\int_{B_{r_1}} C_n \phi^2 v^{p}  +2 \phi^2 v^p \log v +\frac{8}{p-1} v^p |\nabla \phi|^2\; d\vol_g
\end{equation}

Now, let us use the shorthand $\bar \mu=(n+2)/(n+1)>1$ and $\nu=(n+2)/(n+3)<1$. Noticing that $(2\nu)^* = 2(n+2) \nu /(n+2-  2\nu) = 2\bar\mu$, we apply the Euclidean  Sobolev inequality,
 followed by \eqref{eqn: lower bound near line} and then H\"{o}lder's inequality to see that
\begin{equation}\label{equ--3}
\begin{split}
 \bigg(\int_{B_{r_1}} |\phi v^{p/2}|^{2\bar\mu} dx \bigg)^{1/2\bar\mu}&\leq C_n\bigg(\int_{B_{r_1}} |\partial(\phi v^{p/2})|^{2\nu} dx\bigg)^{1/2\nu} \\
&\leq C_n\bigg(\int_{B_{r_1}}\Lambda^{(n+1)/2}|\nabla(\phi v^{p/2})|^{2\nu} d\vol_g\bigg)^{1/2\nu} \\
&\leq C_n\bigg(\int_{B_{r_1}} \Lambda^{(n+2)(n+1)/2} dx \bigg)^{1/2(n+1)}\left( \int_{B_r}|\nabla (\phi v^{p/2})|^2d\vol_g\right)^{1/2}\\
&\leq C_n(1+\bar \rho^2)^{c_n\delta}\left( \int_{B_r}|\nabla (\phi v^{p/2})|^2d\vol_g\right)^{1/2}.
\end{split}
\end{equation}

Using the same trick of interchanging $g$ and $g_{euc}$, \eqref{eqn: energy a} and H\"{o}lder's inequality imply that 
\begin{equation}
\begin{split}
\int_{B_{r_1}} |\nabla(\phi v^{p/2})|^{2} d\vol_g
&\leq  \int_{B_{r_1}}p^2\phi^2 v^{p-2} |\nabla v|^2+ 2v^{p}|\nabla \phi|^2 d\vol_g\\
&\leq   \int_{B_{r_1}} C_n p\phi^2 v^p+C_n p v^{p}|\nabla \phi|^2 +C_n p \phi^2 v^p \log v  \,  d\vol_g\\
&\leq  \int_{B_{r_1}}C_np\phi^2 v^p+C_n\Lambda v^{p}|\partial \phi|^2+C_n p  \phi^2 v^p \log v \, dx\\
&\leq \frac{C_n p(1+\bar \rho^2)^{c_n\delta}}{(r_1-r_0)^2} \bigg(\int_{B_{r_1}} v^{p\lambda} dx\bigg)^{1/\lambda}.
\end{split}
\end{equation}
Here we let  $\lambda=2(n+2)/(2n+3) \in (1,\bar\mu)$, and we have assumed that $p$ is bounded uniformly away from $1$ so that $p/(p-1)$ is bounded above by a universal constant. We have used the fact that $\int_{\mathbb{R}^{n+1}}v^2d\vol_g = 1$ to control the term arising from $\log v$. By combining this with \eqref{equ--3} and replacing $p$ by $p\lambda$, we conclude that for all $p>\frac{2n+4}{2n+3}$, 
\begin{equation}\label{moser}
\|v\|_{L^{p\sigma}(B_{r_0}, g_{euc})}\leq \left[\frac{C_np (1+\bar \rho^2)^{c_n\delta}}{(r_1-r_0)^2} \right]^{1/p}\|v\|_{L^{p}(B_{r_1}, g_{euc})}
\end{equation}
where $\sigma=\frac{2n+3}{2n+2}>1$. Let $R_0=1$ and $R_k=1-\sum_{i=1}^k2^{-i-1}$ so that $\lim_{k\rightarrow +\infty}R_k=1/2$. Applying \eqref{moser} inductively with $r_1=R_k$, $r_0=R_{k+1}$ and $p=p_0\sigma^k$ for $p_0\in (\frac{2n+4}{2n+3},2)$, we have if $\delta$ is small depending only on $n$, $p_0$ and $\bar \rho_\infty$, then 
\begin{equation}
\begin{split}
\log \|v\|_{L^{p_0\sigma^{k+1}}(B_{R_{k+1}}, g_{euc})}&\leq \log \|v\|_{L^{p_0}(B_{1}, g_{euc})}+\sum_{i=1}^k \frac{C_n}{\sigma^i}\log  \left(C_n(1+\bar \rho^2)^{c_n\delta}\sigma^{i} 2^i\right)\\
&\leq C(n)+ \log \|v\|_{L^{p_0}(B_{1}, g_{euc})}.
\end{split}
\end{equation}
Here we have used the fact that $\bar\rho_i\rightarrow \bar\rho_\infty$. By letting $k\rightarrow +\infty$, we conclude that 
\begin{equation}\label{p-mean-value}
\|v_i\|_{L^{\infty}(B_{1/2}(\bar x))}\leq C(n)\left(\int_{B_1(\bar x)}v_i^{p_0}  \, dx\right)^{1/p_0}
\end{equation}
for some $p_0\in (\frac{2n+4}{2n+3},2)$. Together with \eqref{initial-Lp-bound}, we have the upper bound of $v_i$ on $B_{1/2}(\bar x)$.  The result follows by choosing $p_0<2$ and apply H\"older's inequality once more using the integrability of $\Lambda$ when we replace the volume form from $dx$ to $d\vol_g$. For $\bar x=(\bar z, \bar y)$ where $|\bar z+\bar \rho_i\bar e|\geq 1/2$, we can repeat the argument but apply the iteration on a small ball $B_{1/4}(\bar x)$ so that $g$ is uniformly equivalent to $g_{euc}$. 
\end{proof}

Now we are ready to prove Proposition~\ref{thm: entropy goes to zero} 
\begin{proof}[Proof of Proposition~\ref{thm: entropy goes to zero}]

Let $\eta>0$ and for a complete Riemannian metric $g$, we define 
\begin{equation}
\tau_0(g)=\sup\{ s>0: \mu(g,\tau)>-\eta\;\; \text{for all}\; \tau\in (0,s)\}.
\end{equation}

Note that since $g_{\delta,\e}$ has bounded curvature and is not isometric to Euclidean space, we have $\mu(g_{\delta,\e},\tau)<0$;
 see Lemma~\ref{lem: rigidity} or the proof of \cite[Lemma 17.19]{Chow3}, replacing Perelman's differential Harnack estimate with its noncompact generalization 
 established in \cite{CTY11}.

{\bf Claim 1:}
For $\delta,\e\in (0,1)$ fixed, we have $\tau_0(g_{\delta,\e})>0$.
\begin{proof}
[Proof of Claim 1]
Since it is standard and similar to the proof in the compact case, see \cite[Proposition 17.20]{Chow3}, we sketch the proof only.
It suffices to show that $\lim_{\tau\rightarrow 0^+}\mu(g_{\delta,\e},\tau)=0$. Since the constants $\e$ and $\delta$ are fixed, we will omit the subscripts $\delta,\e$ for notational convenience.
 Suppose the claim is not true, we can find a sequence of $\tau_i\rightarrow 0^+$ such that $\lim_{i\rightarrow +\infty}\mu(g,\tau_i)<-\eta$ for some $\eta>0$.  Consider the rescaled metric $g_i=\tau_i^{-1}g$ so that $\mu(g_i,1)<-\eta$ for $i$ sufficiently large. By Lemma \ref{lemma: existence of extremal}, we can find a sequence of minimizers $u_i$ of $\mu(g_i,1)$ such that 
\begin{equation}\label{eqn: EL final pf}
 	- 4 \Delta u_i + R_iu_i - 2u_i \log u_i -\left(\frac{n+1}{2}\log(4\pi) + (n+1)+\mu(\tau_i^{-1}g,1 )\right)u_i =0.
 \end{equation}
 Since $u_i$ has exponential decay at infinity by Lemma \ref{lemma: existence of extremal}, we can find $p_i\in \mathbb{R}^{n+1}$ such that $u_i(p_i)=\max u_i$. Moreover, $u_i(p_i)\geq C(n,\eta)$ by applying the maximum principle to \eqref{eqn: EL final pf}. On the other hand, since $g$ is a smooth metric, it is easy to show that $(\mathbb{R}^{n+1},g_i,p_i)$ converges to $(\mathbb{R}^{n+1},g_{euc},0)$ as $i\rightarrow +\infty$ in the $C^\infty$-Cheeger-Gromov sense. Hence $u_i$ will converge to $u_\infty>0$ (modulo diffeomorphism) in $C^\infty_{loc}(\mathbb{R}^{n+1})$ which has $\int_{\mathbb{R}^{n+1}}u_\infty^2dx\leq 1$ and solves the equation
 \begin{equation}
 	4 \Delta u_\infty + 2u_\infty \log u_\infty +\left(\frac{n+1}{2}\log(4\pi) + (n+1)+\mu_\infty\right)u_\infty =0
 \end{equation}
 where $\mu_\infty = \lim_{i \to \infty} \mu(\tau_i^{-1}g, 1) \leq -\eta.$
 Moreover, a standard Moser iteration argument shows that $u_i$ are uniformly bounded and hence $u_\infty$ is bounded as well. This can be proved using the argument in the work of \cite{li2018} or \cite{ZhangLS}, or  by modifying the proof of Lemma \ref{lem: MVI}. Therefore, using the equation we see that $u_\infty\in W^{1,2}(\mathbb{R}^{n+1})$ and hence $\mu_\infty=0$ by Lemma~\ref{lem: euclidean gaussian},  which contradicts $\mu(\tau_i^{-1}g,1)<-\eta$ for $i$ sufficiently large.
\end{proof}

We now prove that there is $\e_0$ small enough such that for all $\e,\delta<\e_0$, we have $\tau_0(g_{\delta,\e})\geq L$. The proof is similar to that of Claim 1 above, but additional care must be taken with respect to the convergence of the metrics and their corresponding minimizers. Suppose the conclusion is not true, and thus we can find a sequence of $\delta_i,\e_i\rightarrow 0^+$ such that $\tau_i=\tau_0(g_{\delta_i,\e_i})<L$ for all $i$. By definition, we have $\mu_i=\mu(g_{\delta_i,\e_i},\tau_i)=\mu(\tau_i^{-1}g_{\delta_i,\e_i},1)=-\eta$. 
By Lemma \ref{lemma: existence of extremal}, we can find a minimizer $u_i$ of $\mu(\tau_i^{-1}g_{\delta_i,\e_i},1)$ which attains its maximum at some $\bar x_i=(\bar z_i,\bar y_i)\in \mathbb{R}^n\times \mathbb{R}$. We may assume without loss of generality that $\bar y_i=0$ and $\bar z_i=\bar r_i \bar e$ for some fixed $\bar e\in \mathbb{S}^{n-1}\subset \mathbb{R}^n$ by the symmetry of $g_{\delta_i,\e_i}$. We again let $\bar \rho_i = \tau_i^{1/2}/\bar r_i$.  Up to a subsequence, which we will not relabel, we have $\bar \rho_i \to \bar \rho_\infty \in [0,\infty].$

Let $\Phi_{\tau_i,\bar r_i,\delta_i,\e_i}$ be the diffeomorphism defined in \eqref{eqn: good diffeomorphism} using $\bar x_i$ above and let $g_i = \Phi_{\tau_i,\bar r_i,\delta_i,\e_i} (\tau_i^{-1} g_{\delta_i,\e_i})$. If $\bar \rho_\infty = +\infty,$ then  by Lemma~\ref{lemma: good charts}, we have $(\mathbb{R}^{n+1},\tau_i^{-1}g_{\delta_i,\e_i},\bar x_i)\rightarrow (\mathbb{R}^{n+1},g_{euc},0)$ in the $C^\infty$-Cheeger-Gromov sense. Then the proof of Claim 1 carries over and we reach a contradiction to the assumption that $\mu_i = -\eta$.
 It therefore suffices to consider the case when $\bar \rho_\infty<+\infty$.
 
 As in Lemma~\ref{lemma: good charts}, we let $\tilde{\ell}_\infty = \{ (-\bar \rho_\infty \bar e , y): y \in \R\}$.
By Lemma~\ref{lemma: good charts}, $g_i$ converges to $g_{euc}$ in $C^k_{loc}(\R^{n+1} \setminus \tilde \ell_\infty)$. Together with the $L^\infty$ estimate of $v_i=\Phi_{\tau_i,\bar r_i,\delta_i,\e_i}^*u_i$ from Lemma~\ref{lem: MVI}, we have $v_i\rightarrow v_\infty$ in $C^\infty_{loc}(\mathbb{R}^{n+1}\setminus \tilde \ell_\infty)$ and $L^p_{loc}(\mathbb{R}^{n+1})$ for $p>0$ by the Dominated Convergence Theorem. Therefore, $\int_{\mathbb{R}^{n+1}}v_\infty^2 dx\leq 1$ by Fatou's lemma and $v_\infty$ solves 
 \begin{equation}
 	4 \Delta v_\infty + 2v_\infty \log v_\infty +\left(\frac{n+1}{2}\log(4\pi) + (n+1) -\eta \right)v_\infty =0
 \end{equation}
on  $\R^{n+1} \setminus \tilde \ell_\infty$. It remains to establish the following claim.

{\bf Claim 2:}
The limit $v_\infty$ is non-trivial and $v_\infty\in W^{1,2}(\mathbb{R}^{n+1})$. 
\begin{proof}
[Proof of claim] 
We first show that $v_\infty$ is non-trivial. 
By Lemma~\ref{lem: MVI}, we have 
\begin{equation}
v_i(0)\leq C\bigg(\int_{B_1(0)}v_i^{2}d\vol_{g_i}\bigg)^{1/2}
\end{equation}
for some universal constant $C$ independent of $i$. On the other hand, by the decay rate of $v_i$ from Lemma~\ref{lemma: existence of extremal}, we may apply maximum principle to the Euler-Lagrange equation at its maximum point,  which by our selection of $\bar x_i$ is the origin to show that at $x=0$,
\begin{equation}
\begin{split}
2v_i \log v_i&\geq R_{g_i} v_i -\left(\frac{n+1}{2}\log(4\pi)+(n+1)-\eta \right)v_i
\end{split}
\end{equation}
and hence 
\begin{equation}
\int_{B_1(0)}v_i^{2}d\vol_{g_i} \geq c(n,\eta).
\end{equation}
This shows that $v_\infty \neq 0$ as $v_i\rightarrow v_\infty$ in $L^{p}_{loc}(\mathbb{R}^{n+1})$ and $g_i\rightarrow g_{\mathbb{R}^{n+1}}$ in $L^p_{loc}(\mathbb{R}^{n+1})$ by construction for all $p>0$.

It remains to show that $v_\infty\in W^{1,2}(\mathbb{R}^{n+1})$. Since $\int_{\mathbb{R}^{n+1}}v_\infty^2 dx\leq 1$, it remains to consider $||\partial v_\infty||_{L^2(\mathbb{R}^{n+1})}$. We first point out that for each $g_i$, $||\nabla v_i||_{L^2(\mathbb{R}^{n+1},g_i)}$ is uniformly bounded. This can be seen by integrating the Euler-Lagrange equation with a cutoff function $\phi$ with respect to $g_i$, 
\begin{equation}
\begin{split}
0&=\int_{\mathbb{R}^{n+1}} \phi v_i \left(-4\Delta_{g_i} v_i+R_{g_i}v_i-\left(\frac{n+1}{2}\log(4\pi)+(n+1)-\eta \right)v_i -2v_i\log v_i\right) d\vol_{g_i}
\end{split}
\end{equation}
 Since the scalar curvature of $g_i$ is uniformly bounded from below and $v_i$ is uniformly bounded by Lemma~\ref{lem: MVI} for $i$ sufficiently large, we have for suitable cutoff function $\phi$,
\begin{equation}
\begin{split}
C(n,\eta)\int_{\mathbb{R}^{n+1}}v_i^2d\vol_{g_i}&\geq \int_{\mathbb{R}^{n+1}} \phi v_i \left(-\Delta_{g_i} v_i\right) d\vol_{g_i}+8\int_{\mathbb{R}^{n+1}}v_i^2 \frac{|\nabla \phi|^2}{\phi} d\vol_{g_i}\\
&=\int_{\mathbb{R}^{n+1}} \nabla v_i \cdot \nabla (\phi v_i)d\vol_{g_i}+8\int_{\mathbb{R}^{n+1}}v_i^2 \frac{|\nabla \phi|^2}{\phi} d\vol_{g_i}\\
&\geq \frac12\int_{\mathbb{R}^{n+1}} \phi |\nabla v_i|^2 d\vol_{g_i} \, .
\end{split}
\end{equation}

 By letting $\phi\rightarrow 1$, this gives the uniform boundedness of $||\nabla v_i||_{L^2(\mathbb{R}^{n+1},g_i)}$. Now let $B_{\mathbb{R}^n}(-\bar\rho_\infty \bar e,r)$ be the ball of radius $r$ in $\mathbb{R}^n$ and $p>1$. By the metric equivalence from Lemma~\ref{lemma: good charts}, $\Lambda^{-1} g_{euc}\leq g_i\leq g_{euc}$ for some $\Lambda(z,y)\leq \max\{|z+\bar \rho_\infty \bar e|^{-\delta},2\}$,
\begin{equation}
\begin{split}
 \int_{B_{\mathbb{R}^n}(-\bar\rho_\infty \bar e,r)\times [-r,r]}|\partial v_i|^{2/p} dx
&\leq \int_{B_{\mathbb{R}^n}(-\bar\rho_\infty \bar e,r)\times [-r,r]}\Lambda^{n/2}|\nabla v_i|^{2/p} d\vol_{g_i}\\
&\leq \left(\int_{\mathbb{R}^{n+1}}|\nabla v_i|^2 d\vol_{g_i}\right)^{1/p}\left(\int_{B_{\mathbb{R}^n}(-\bar\rho_\infty \bar e,r)\times [-r,r]} \Lambda^{np^*/2}dx\right)^{1/p^*}\\ 
&\leq C^{1/p} \left[\frac{Cr^{n+1}}{1-c_n\delta_i p^*}\right]^{1/p^*}\, .
\end{split}
\end{equation}
Hence if we let $i\rightarrow +\infty$ followed by $p\rightarrow 1$ and $r\rightarrow +\infty$, we have $v_\infty \in W^{1,2}( \mathbb{R}^{n+1})$. 
\end{proof}

By the claim and the proof of Lemma~\ref{lem: euclidean gaussian}, we can deduce that $\lim_{i\rightarrow +\infty}\mu(\tau_i^{-1}g_{\delta_i,\e_i},1)=\mu_\infty=0$ which contradicts with  the fact that $\mu(\tau_i^{-1}g_{\delta_i,\e_i},1)=-\eta$. This completes the proof.
\end{proof}

\bibliographystyle{alpha}
\bibliography{epsReg}

\end{document}